\newtheorem{thm}{Theorem}[section]
\newtheorem{la}[thm]{Lemma}
\newtheorem{Defn}[thm]{Definition}
\newtheorem{Remark}[thm]{Remark}
\newtheorem{Note}[thm]{Note}
\newtheorem{prop}[thm]{Proposition}
\newtheorem{cor}[thm]{Corollary}
\newtheorem{Example}[thm]{Example}
\newtheorem{Examples}[thm]{Examples}
\newtheorem{Problems}[thm]{Problems}
\newtheorem{Problem}[thm]{Problem}
\newtheorem{Convention}[thm]{Convention}
\newtheorem{Number}[thm]{\!\!}
\newenvironment{defn}{\begin{Defn}\rm}{\end{Defn}}
\newenvironment{rem}{\begin{Remark}\rm}{\end{Remark}}
\newenvironment{numba}{\begin{Number}\rm}{\end{Number}}
\newenvironment{proof}{{\noindent\bf Proof.}}%
                  {\nopagebreak\hspace*{\fill}$\Box$\medskip\medskip\par} \newcommand{\Punkt}{\nopagebreak\hspace*{\fill}$\Box$}
\newcommand{\wb}{\overline}
\newcommand{\ve}{\varepsilon}
\newcommand{\at}{\symbol{'100}}
\newcommand{\wt}{\widetilde}
\newcommand{\tensor}{\otimes}
\newcommand{\impl}{\Rightarrow}
\newcommand{\mto}{\mapsto}
\newcommand{\isom}{\cong}
\DeclareMathOperator{\Ad}{Ad}
\newcommand{\N}{{\mathbb N}}
\newcommand{\R}{{\mathbb R}}
\newcommand{\cG}{{\mathcal G}}
\newcommand{\cE}{{\mathcal E}}
\newcommand{\K}{{\mathbb K}}
\newcommand{\Q}{{\mathbb Q}}
\newcommand{\bS}{{\mathbb S}}
\newcommand{\Z}{{\mathbb Z}}
\newcommand{\C}{{\mathbb C}}
\newcommand{\cU}{{\cal U}}
\newcommand{\cO}{{\cal O}}
\newcommand{\cV}{{\cal V}}
\newcommand{\cW}{{\cal W}}
\newcommand{\cX}{{\cal X}}
\newcommand{\cC}{{\cal C}}
\newcommand{\cQ}{{\cal Q}}
\newcommand{\cP}{{\cal P}}
\newcommand{\cg}{{\mathfrak g}}
\newcommand{\ch}{{\mathfrak h}}
\newcommand{\cn}{{\mathfrak n}}
\newcommand{\ck}{{\mathfrak k}}
\newcommand{\cq}{{\mathfrak q}}
\DeclareMathOperator{\Diff}{Diff}
\newcommand{\dl}{{\displaystyle \lim_{\longrightarrow}}}
\newcommand{\pl}{{\displaystyle \lim_{\longleftarrow}}}
\DeclareMathOperator{\Aut}{Aut}
\newcommand{\wh}{\widehat}
\newcommand{\one}{{\bf 1}}
\newcommand{\sub}{\subseteq}
\DeclareMathOperator{\GL}{GL}
\DeclareMathOperator{\im}{im}
\DeclareMathOperator{\pr}{pr}
\DeclareMathOperator{\id}{id}
\newcommand{\cB}{{\cal B}}
\newcommand{\cD}{{\cal D}}
\newcommand{\cF}{{\cal F}}
\newcommand{\cR}{{\cal R}}
\newcommand{\cL}{{\cal L}}
\newcommand{\cT}{{\cal T}}
\newcommand{\esssup}{\mbox{ess\,sup}}
\DeclareMathOperator{\Spann}{span}
\DeclareMathOperator{\Lip}{Lip}
\DeclareMathOperator{\conv}{conv}
\DeclareMathOperator{\ev}{ev}
\newcommand{\sbull}{{\scriptscriptstyle \bullet}}
\DeclareMathOperator{\op}{op}
\DeclareMathOperator{\Germ}{Germ}
\DeclareMathOperator{\evol}{evol}
\DeclareMathOperator{\Evol}{Evol}
\DeclareMathOperator{\Gau}{Gau}
\DeclareMathOperator{\graph}{graph}
\DeclareMathOperator{\Hol}{Hol}
\begin{document}
\begin{center}
{\Large\bf Measurable regularity properties of\\[2mm]
infinite-dimensional Lie groups}\\[8mm]
{\bf Helge Gl\"{o}ckner}\vspace{7mm}
\end{center}
\begin{abstract}\noindent
Let $G$ be a Banach-Lie group with Lie algebra
$\cg$, and $p\in [1,\infty]$.
Then the space $AC_{L^p}([0,1],\cg)$
of absolutely continuous functions
$\gamma\colon [0,1]\to \cg$ with $\gamma'\in L^p([0,1],\cg)$
is a Banach-Lie algebra.
Let $AC_{L^p}([0,1],G)_0=\langle \exp_G\circ\gamma\colon
\gamma\in AC_{L^p}([0,1],\cg)\rangle$ be the integral subgroup
of $C([0,1],G)$ with Lie algebra $AC_{L^p}([0,1],\cg)$.
We show that each $\gamma\in L^p([0,1],\cg)$
has a left evolution $\Evol(\gamma)\in AC_{L^p}([0,1],G)_0$,
and that the map $\Evol\colon L^p([0,1],\cg)\to AC_{L^p}([0,1],G)_0$
is smooth.
Similar results are obtained for important classes of
Fr\'{e}chet-Lie groups and more general Lie groups,
notably for diffeomorphism groups
of paracompact finite-dimensional smooth manifolds
and gauge groups of principal bundles
with Banach structure groups.
The measurable regularity properties considered
imply validity of the Trotter product formula
and the commutator formula.
\end{abstract}
{\footnotesize {\em Classification}:
22E65 (primary);
% infdim Lie gps and algs
%
32A12,
% initial value problems, dependence on parameters etc
%
34G10,
% diff eq in abstract spaces
%
46G20,
% inf-dim holomorphy
%
46H05,
% gen theory of top algs
%
58B10.\\[1mm]
% diffbility questions inf-dim
%
{\em Key words}: Infinite-dimensional Lie group, Banach-Lie group,
Fr\'{e}chet-Lie group,
regular Lie group, regularity, logarithmic derivative,
product integral, evolution, initial value problem,
parameter dependence, measurable map, current group, loop group,
gauge group, diffeomorphism group, projective limit, direct limit, inductive limit, direct
sum, weak direct product, extension, test function group,
locally m-convex algebra,
continuous inverse algebra,
commutator formula,
Trotter formula, Lebesgue space, regulated function, absolute continuity,
Carath\'{e}odory solution, measurable right-hand-side, control theory}\\[6mm]
\noindent
{\bf Introduction and statement of the main results}\\[3mm]
To enable proofs for fundamental Lie theoretic
facts in infinite dimensions,
John Milnor~\cite{Mil} introduced
the concept of regularity for infinite-dimensional
Lie groups (compare also \cite{OMO}, \cite{OM2} for related earlier
work).
Let $G$ be a Lie group modelled on a locally convex space~$E$,
with identity element~$e$, tangent bundle $T(G)$ and
Lie algebra $\cg:=L(G):=T_e(G)\isom E$.
Given $g,h\in G$ and $v\in T_h(G)$,
let $\lambda_g\colon G\to G$, $x\mto gx$ be left translation
and $g.v:=T_h(\lambda_g)(v)\in T_{gh}(G)$.
Thus $h^{-1}.v\in T_e(G)=\cg$.
If $\gamma\colon [0,1]\to\cg$ is a continuous map,
then there exists at most one
$C^1$-map $\eta\colon [0,1]\to G$
with
%such that
%
\begin{equation}\label{defformreg}
\eta'(t)=\eta(t).\gamma(t)\quad\mbox{for all $\,t\in [0,1]$, \,and\, $\eta(0)=1$.}
\end{equation}
If such an $\eta$ exists, it is called the \emph{evolution of $\gamma$},
and denoted by $\Evol(\gamma):=\eta$.
Let $k\in \N_0\cup\{\infty\}$.
The Lie group $G$ is called $C^k$-\emph{regular}
if each $\gamma\in C^k([0,1],\cg)$ admits an evolution $\Evol(\gamma)$
and the map $\Evol\colon C^k([0,1],\cg)\to C^{k+1}([0,1], G)$
is smooth
with respect to the natural Lie group structure on the mapping group
$C^{k+1}([0,1],G)$ (cf.\ \cite{MFO}, \cite{SEM}, and \cite{Nee}).
Then $C^k$-regularity implies $C^\ell$-regularity for all
$\ell\in \N_0\cup\{\infty\}$ with $\ell \geq k$ (cf.\ \cite{SEM}).
The $C^\infty$-regular Lie groups
are simply called \emph{regular}
(cf.\ \cite{SEM}, \cite{Mil}, and \cite{Nee},
where also applications of regularity are described).
For the purposes of representation theory,
the strongest notion, $C^0$-regularity,
is particularly useful~\cite{SAL}.
Recently, it also proved valuable to consider
weakened topologies on $C^0([0,1],\cg)$ (like
the $L^1$-topology) \cite{SEM}.\\[2.3mm]
Again for the purposes of representation theory,
it would be useful to have even stronger
regularity properties available.
And also from the point of view of control theory,
it is natural to allow more general functions $\gamma$
on the right hand side of~(\ref{defformreg}),
e.g.\ step functions (with steps when a control is switched
on or off).
The current article is devoted to
such generalizations.
We mention that initial value problems of the form (\ref{defformreg}),
for $G$ a finite-dimensional Lie group,
are a familiar topic in Geometric Control Theory.\footnote{In this context,
$\gamma$ in (\ref{defformreg})
is usually parametrized by certain control functions.}
See, e.g., \cite[Chapter 12]{Jur}
for optimal control problems
in this context,
when the controls are furnished by
left-invariant vector fields on~$G$.
For general aspects of
control theory on finite-dimensional
spaces with measurable right hand sides
(irrespective of Lie groups)
and the corresponding initial value problems,
see~\cite{Son}.\\[2.3mm]
If $E$ is a Fr\'{e}chet space (resp., a Banach space)
and $p\in [1,\infty]$,
let $L^p([0,1],E)$ be the space of all
(equivalence classes of) measurable functions
$\gamma\colon [0,1]\to E$ with separable image
such that $q\circ \gamma$
is in the Lebesgue space $\cL^p([0,1],\R)$
for each continuous seminorm~$q$ on~$E$
(cf.\ \cite{HaP} and \cite{HBK}
if $E$ is Banach).
We let $AC_{L^p}([0,1],E)$ be the space of all functions
$\eta\colon [0,1]\to E$ of the form
\[
\eta(t)=v+\int_0^s\gamma(s)\, d\lambda_1(s)
\]
with $v\in E$ and $[\gamma]\in L^p([0,1],E)$
(where $\lambda_1$ is Lebesgue-Borel measure on~$\R$).
Then $\eta'(t)=\gamma(t)$ for $\lambda_1$-almost all
$t\in [0,1]$ (see Lemma~\ref{funda}; cf.\ \cite{HBK} if $E$ is Banach).
The spaces $L^p([0,1],E)$ and $AC_{L^p}([0,1],E)$ are Fr\'{e}chet
spaces (resp., Banach spaces) in a natural way
(see, e.g., Lemma~\ref{labase1}).
Testing in local charts,
one can also speak of $AC_{L^p}$-maps to
a manifold $M$ modelled on a Fr\'{e}chet space (cf.\ Definition~\ref{ACintoM}).
We show that $AC_{L^p}([0,1],G)$
is a Fr\'{e}chet-Lie group
(resp., Banach-Lie group),
for each Fr\'{e}chet-Lie group (resp., Banach-Lie group)~$G$
(Proposition~\ref{propACLie}).
Let $G$ be a Fr\'{e}chet-Lie group,
with Lie algebra~$\cg$. Let $p\in [1,\infty]$.
We say that $G$ is \emph{$L^p$-regular}
if each $[\gamma]\in L^p([0,1],\cg)$ has a (necessarily
unique) left evolution
$\Evol([\gamma]):=\eta\in AC_{L^p}([0,1],G)$
such that
\[
\eta(0)=e\quad\mbox{and}\quad \eta'(t)=\eta(t)\cdot \gamma(t)\quad
\mbox{$\lambda_1$-almost everywhere,}
\]
and the map $\Evol\colon L^p([0,1],\cg)\to AC_{L^p}([0,1],G)$
is smooth. Let $L^\infty_{rc}([0,1],\cg)$
be the space of all elements of $L^\infty([0,1],\cg)$
having a representative $\gamma$ with relatively compact
image. Then $L^\infty_{rc}([0,1],\cg)$
is a closed vector subspace of $L^\infty([0,1],\cg)$.
A function $\gamma\colon [0,1]\to \cg$
is called \emph{regulated} if it is
a uniform limit of a sequence of $E$-valued
step functions.
Then the space $R([0,1],\cg)$
of equivalence classes of regulated
maps is a closed vector subspace of $L^\infty_{rc}([0,1],\cg)$
(and hence also of $L^\infty([0,1],\cg)$).
If $L^\infty([0,1],\cg)$ is replaced with
$L^\infty_{rc}([0,1], \cg)$ and
$R([0,1], \cg)$ in the above
definition, then
$G$ is called \emph{$L^\infty_{rc}$-regular}
and \emph{$R$-regular} (or \emph{regulated regular}), respectively.\\[4mm]
{\bf Theorem A.}
\emph{For each Fr\'{e}chet-Lie group~$G$ and each $p\in [1,\infty]$,
we have the following implications}:
\[
\mbox{\emph{$G$ is $L^p$-regular $\impl$ $G$ is $L^q$-regular for all
$q\in [1,\infty]$ such that $q\geq p$};}\vspace{-1mm}
\]
\[
\mbox{\emph{$G$ is $L^\infty$-regular $\impl$ $G$ is $L^\infty_{rc}$-regular
$\impl$ $G$ is $R$-regular};}
\]
\[
\mbox{\emph{$G$ is $R$-regular $\impl$ $G$ is $C^0$-regular}.}\vspace{2mm}
\]
We say that a locally convex space $E$ is \emph{integral
complete} if each continuous curve $\gamma\colon [0,1]\to E$
has a weak integral $\int_a^b\gamma(t)\, dt\in E$.
It is known that $E$ is integral complete
if and only if $E$ has the \emph{metric convex compactness
property} (metric CCP) in the sense that the closed
convex hull
\[
\wb{\conv(K)}\sub E
\]
is compact for each compact metrizable subset $K\sub E$
(see \cite{Wei}).
The following implications are known for a locally convex space
$E$:
\begin{eqnarray*}
\mbox{$E$ is complete}\; \impl\;\mbox{$E$ is quasi-complete} &\impl &
\mbox{$E$ is sequentially complete}\\
& \impl &  \mbox{$E$ is integral
complete;}
\end{eqnarray*}
moreover, none of these implications are equivalences~\cite{Voi}.
If we try to strengthen the concept of $C^0$-regularity
for a Lie group $G$ modelled on a locally convex space~$E$,
then $E$ has to be integral complete
(as $E\cong L(G)$ is integral complete
for each $C^0$-regular Lie group~$G$~\cite{SEM}).
We mention that Hausdorff locally convex spaces
$L^\infty_{rc}([0,1],E)$ can be defined for
$E$ an arbitrary locally convex space,
formed by equivalence classes
of all measurable functions $\gamma\colon [0,1]\to E$
such that the closure $\wb{\gamma([0,1])}$
of the image is compact and metrizable (see \cite{MEA}).
If $E$ is integral complete, then it is possible to define
spaces $AC_{L^\infty_{rc}}([0,1],E)$
(see Section~\ref{secAC}),
giving rise to Lie groups $AC_{L^\infty_{rc}}([0,1],G)$
and notions of $L^\infty_{rc}$-regularity
and $R$-regularity
for arbitrary Lie groups~$G$ modelled
on integral complete
locally convex spaces (see Sections~\ref{secACG}
and~\ref{secintroreg}).\\[2.3mm]
We say that a locally convex space $E$ has the
\emph{Fr\'{e}chet exhaustion property}
(FEP) if every closed vector subspace $S\sub E$
having a dense countable subset
can be written as the union
\[
S=\bigcup_{n\in \N}F_n
\]
of an ascending sequence
$F_1\sub F_2\sub\cdots$ of vector subspaces $F_n\sub E$
which are Fr\'{e}chet spaces in the induced topology
(see Definition~\ref{deffep}).
For every locally convex space $E$ with the (FEP),
we are able to give a sense to
$L^p([0,1],E)$ with $p\in [1,\infty]$ (see \ref{Lpfepvsp}).
The class of (FEP)-spaces
subsumes all Fr\'{e}chet spaces
and strict (LF)-spaces $E=\dl\,E_n$.\vspace{-1.3mm}
% OBSOLETE: with separable steps~$E_n$.
Moreover, the space $\cX_c(M)$ of compactly supported
smooth vector fields on a paracompact finite-dimensional
smooth manifold~$M$ is an (FEP)-space
(see Lemma~\ref{lafep} for these assertions).
The latter is the modelling space for the Lie group
\[
\Diff_c(M)
\]
of all diffeomorphisms $\phi\colon M\to M$ such that,
for some compact set $K\sub M$, we have
$\phi(x)=x$ for all $x\in M\setminus K$
(cf.\ \cite{Mic}, \cite{DIF}, or \cite{Shm}).\\[2.3mm]
Two main results are devoted to measurable
regularity properties of\linebreak
Banach-Lie groups
and diffeomorphism groups.\footnote{The $L^\infty_{rc}$-regularity of Banach-Lie
groups was first announced in~\cite{MEA}
(without proof, and using different terminology);
the $L^\infty_{rc}$-regularity of $\Diff_c(M)$ was conjectured there.}
For $M$ a paracompact finite-dimensional
smooth manifold and $K\sub M$ a compact set,
let $\Diff_K(M)$ be the group of all smooth diffeomorphisms
$\phi\colon M\to M $ such that $\phi(x)=x$
for all $x\in M\setminus K$. We obtain the following result:\\[4mm]
{\bf Theorem B.}
\emph{For each paracompact finite-dimensional
smooth manifold~$M$, the Lie group $\Diff_c(M)$
is $L^1$-regular.
% for each $p\in [1,\infty]$.
%\infty_{rc}$-regular.
%Moreover,
Also $\Diff_K(M)$ is $L^1$-regular,
for each compact subset $K\sub M$.}\\[4mm]
We first prove Theorem~B for
the instructive cases of
$\Diff_K(\R^n)$, $\Diff_c(\R^n)$ and $\Diff(\bS_1)$,
before turning to general~$M$. We also prove:\\[4mm]
%
%Our two main results are devoted to measurable
%regularity properties of Banach-Lie groups
%and diffeomorphism groups. For $K\sub \R^n$ a compact set,
%let $\Diff_K(\R^n)$ be the group of all smooth diffeomorphisms
%$\phi\colon \R^n\to\R^n$ such that $\phi(x)=x$
%for all $x\in \R^n\setminus K$.\\[4mm]
%
%{\bf Theorem B.}
%\emph{The Lie group $\Diff_c(\R^n)$
%is $L^\infty_{rc}$-regular for each $n\in \N$,
%and
%$\Diff_K(\R^n)$ is $L^1$-regular,
%for each compact subset $K\sub \R^n$.
%Likewise,
%the diffeomorphism group $\Diff(\bS)_1$ of the circle
%is~$L^1$-regular.}\\[4mm]
%
%Follow-up research in the author's research group
%will strive to replace $\R^n$ with a $\sigma$-compact
%smooth manifold, and $\bS_1$ with a compact
%manifold in Theorem~B.\\[4mm]
%
%
{\bf Theorem C.}
\emph{Every Banach-Lie group is
$L^1$-regular.}\\[4mm]
% for each $p\in [1,\infty]$.}\\[4mm]
%
%
Using a projective limit argument,
we deduce that also some Fr\'{e}chet-Lie
groups are $L^1$-regular.
For example, the unit group $A^\times$
is $L^1$-regular for each continuous inverse
algebra~$A$ which is a Fr\'{e}chet space
and locally m-convex (in the sense of \cite{EAM}):
see Proposition~\ref{algsreg}. Recall that a continuous
inverse algebra is a unital associative real
(or complex) algebra, endowed with
a locally convex vector topology
for which the unit group $A^\times\sub A$ is open
and both the inversion map $A^\times \to A$, $a\mto a^{-1}$
and the algebra multiplication $A\times A\to A$ are continuous;
then $A^\times$ is a Lie group (see \cite{ALG}
and the references therein).
Similarly, we find that
the mapping group
\[
C^\infty_K(M,H):=\{\gamma\in C^\infty(M,H)\colon \gamma|_{M\setminus K}=e\}
\]
is $L^1$-regular
for each finite-dimensional smooth manifold~$M$,
Banach-Lie group~$H$ and compact set $K\sub M$ (Proposition~\ref{mapKbanreg}).\\[2.3mm]
We then turn to
measurable regularity
properties of ascending unions of Lie groups,
and related topics.
Notably, we find that the
weak direct product Lie group $\bigoplus_{j\in J}H_j$
(as introduced in \cite{MEA})
is $L^1$-regular for each family
of $L^1$-regular Lie groups~$H_j$
modelled on sequentially complete (FEP)-spaces
(Proposition~\ref{mrgsum}).
Together with a result on the inheritance of measurable
regularity properties by certain Lie subgroups
(Proposition~\ref{equarg}),
this entails:\\[3.5mm]
{\bf Theorem D.}
\emph{The test function group
$C^k_c(M,H)$ is
$L^1$-regular,
for each paracompact
finite-dimensional smooth manifold~$M$,
Banach-Lie group~$H$ and $k\in \N_0\cup\{\infty\}$.}\\[3.5mm]
Here $C^k_c(M,H)$ is the Lie group
of all $C^k$-maps $\gamma\colon M\to H$
such that $\gamma^{-1}(H\setminus\{e\})\sub M$
is relatively compact;
it is modelled on the locally convex direct limit
$C^k_c(M,L(H))=\dl\, C^k_K(M,L(H))$\vspace{-1mm}
(see, e.g., \cite{GCX} for the
Lie group structure in the main case that $M$ is $\sigma$-compact).
The conclusion of Theorem~D remains valid
for Lie groups of compactly supported
gauge transformations of principal bundles
with Banach structure groups (see\linebreak
Corollary~\ref{gaucreg}).\\[2.3mm]
We also have a result ensuring measurable regularity properties for ascending unions
of Banach-Lie groups under suitable hypotheses
(Proposition~\ref{unionbanreg}).
It entails:\\[3.5mm]
{\bf Theorem E.}
\emph{Let $G_1\sub G_2\sub \cdots$ be finite-dimensional Lie groups,
such that the inclusion maps $G_n\to G_{n+1}$
are smooth group homomorphisms for all $n\in \N$.
Then the direct limit Lie group $\dl\,G_n=\bigcup_{n\in \N}G_n$}\vspace{-1.3mm}
(as in \cite{DIR}, \cite{DI2})
\emph{is
$L^1$-regular.}\\[3mm]
In particular, $G=\dl\,G_n$\vspace{-1mm} is always $C^0$-regular.
So far, it was only known that
$G=\dl\,G_n$\vspace{-1.3mm} is $C^1$-regular~\cite{DI2}.\footnote{In
the special situation of \cite{Da3}
(which, in particular, entails that $\exp_G$
is a local diffeomorphism at $0$) $C^0$-regularity was already available.}\\[2.3mm]
Proposition~\ref{unionbanreg} also implies (see Corollary~\ref{anmupreg}):\\[4mm]
%
%and Section~\ref{secdiffan}):\\[4mm]
%
{\bf Theorem F.}
\emph{For each compact real analytic manifold~$M$
and each Banach-Lie group $H$,
the Lie group $C^\omega(M,H)$
of all real analytic $H$-valued maps on~$M$
is $L^\infty_{rc}$-regular.}\\[4mm]
The $C^0$-regularity of $C^\omega(M,H)$ is already stated in \cite{DGS}.\\[2.3mm]
All measurable regularity
properties we consider are extension properties:\\[4mm]
{\bf Theorem G.}
\emph{Consider an extension}
\[
\{1\}\to N\stackrel{j}{\to} G\to Q\stackrel{q}{\to}\{1\}
\]
\emph{of Lie groups modelled on integral complete
locally convex spaces, such that $q$ admits smooth
local sections.
If both $N$ and $Q$ are $L^\infty_{rc}$-regular}
(\emph{resp., $R$-regular}),
\emph{the also $G$
is $L^\infty_{rc}$-regular}
(\emph{resp., $R$-regular}).
\emph{If $p\in [0,\infty]$ and both $N$ and $Q$ are $L^p$-regular
Fr\'{e}chet-Lie groups, then also
$G$ is an $L^p$-regular Fr\'{e}chet--Lie group.
If $N$ and $Q$ are $L^p$-regular Lie groups modelled
on sequentially complete} (FEP)-\emph{spaces,
then also $G$ is modelled on a sequentially complete} (FEP)-\emph{space
and $L^p$-regular.}\\[4mm]
Even the weakest measurable regularity established here,
$R$-regularity, has remarkable consequences.
Let $G$ be a Lie group modelled
on a locally convex space
such that $G$
has a smooth exponential function\footnote{This
ensures that $\R\to G$, $t\mto \exp_G(tv)$
is a smooth one-parameter group of $G$
for each $v\in \cg$
with $\frac{d}{dt}\big|_{t=0}\exp_G(tv)=v$,
and that every smooth one-parameter group
of $G$ is of this form.}
$\exp_G\colon \cg\to G$ on $\cg=L(G)$.
Following \cite{SAL},
$G$ is said to have the \emph{Trotter property}
if, for all $v,w\in \cg$,
\[
(\exp_G(tv/n)\exp_G(tw/n))^n
\]
converges to $\exp_G(t(v+w))$ as $n\to\infty$,
uniformly for $t$ in compact subsets
of~$\R$.
We say that $G$ has the \emph{strong Trotter property}
if even\footnote{This implies the Trotter property,
as we can take $\gamma(t):=\exp_G(tv)\exp_G(tw)$.}
\begin{equation}\label{evenuni}
(\gamma(t/n))^n\to \exp_G(t \gamma'(0))\quad
\mbox{as $n\to\infty$,}
\end{equation}
uniformly for $t$ in compact
subsets of $[0,\infty[$,
for each $C^1$-curve $\gamma \colon [0,1]\to G$
such that $\gamma(0)=e$.
If, for all $v,w\in \cg$,
\[
\big(\exp_G(\sqrt{t}/n)\exp_G(\sqrt{t}/n)
\exp_G(-\sqrt{t}/n)\exp_G(-\sqrt{t}/n)\big)^{n^2}
\to \exp_G(t[v,w])
\]
as $n\to\infty$,
uniformly in $t$ in compact subsets of $[0,\infty[$,
then we say that $G$ has the \emph{commutator property}.
We say that $G$ has the \emph{strong commutator property}
if
\[
\big(\gamma(\sqrt{t}/n)\eta(\sqrt{t}/n)
\gamma(\sqrt{t}/n)^{-1}\eta(\sqrt{t}/n)^{-1}\big)^{n^2}
\to \exp_G(t[\gamma'(0),\eta'(0)])\;\mbox{as $n\to\infty$,}
\]
uniformly for $t$ in compact
subsets of $[0,\infty[$,
for all $C^1$-curves $\gamma,\eta\colon [0,1]\to G$
such that $\gamma(0)=\eta(0)=e$.
Both the Trotter property and the
commutator property are useful in representation theory
(see \cite{SAL} and ongoing work by K.-H. Neeb).
We already explained that the strong Trotter property implies
the Trotter property.
Likewise, the strong commutator property
implies the commutator property.
We show:\\[4mm]
{\bf Theorem H.}
\emph{Let $G$ be a Lie group modelled on a locally convex space.
If $G$ has the strong Trotter property,
then $G$ has the strong commutator property.}\\[4mm]
{\bf Theorem I.}
\emph{Let $G$ be a Lie group modelled on an
integral complete locally convex space.
If $G$ is $R$-regular, then $G$
has the strong Trotter property $($and hence also
the strong commutator property$)$.}\\[4mm]
We mention that the notion of $R$-regularity
provides a link to the original notion
of regularity (called $\mu$-regularity
in~\cite{Nee})
in the works by Omori and collaborators
(see \cite{OMO} and \cite{OM2}),
which was based on the convergence
of certain ``product integrals.''
In the special case of
Fr\'{e}chet-Lie groups,
the assertion on the strong Trotter property
in Theorem~G for $R$-regular Lie groups
is a counterpart of
the corresponding result for
$\mu$-regular Fr\'{e}chet-Lie groups in
\cite[Lemma 1.1]{OM2}.\\[4mm]
Combining Theorems~D
and~I, we see
that the test function group $C^k_c(M,H)$ has the strong Trotter
property for each paracompact finite-dimensional
smooth manifold~$M$ and Banach-Lie group~$H$.
Generalizing the case of $C^\infty_c(M,H)$,
also the gauge group $\Gau(P)$
of a principal bundle $P\to M$ with structure group~$H$
(with $\Gau_c(P)$ as an open Lie subgroup)
is $L^1$-regular and hence has the strong Trotter property, for each
paracompact finite-dimensional smooth manifold~$M$
and Banach-Lie group~$H$.
%
%Later more details in
%(Proposition~\ref{gauTrott}).
%
Also $\Diff(M)$ (with $\Diff_c(M)$
as an open Lie subgroup) is $L^1$-regular and hence
has the strong Trotter property.
Now the full automorphism group $\Aut(P)$ is a Lie group
extension
\[
\{1\}\to \Gau(P)\to \Aut(P)\to\Diff(M)_P\to\{1\}
\]
for a suitable open subgroup $\Diff(M)_P$ of $\Diff(M)$
(cf.\ \cite{Jak} for the essential special case that
$M$ is $\sigma$-compact).
Since being
%$L^\infty_{rc}$-regular
$L^1$-regular
is an extension
property, we deduce:\\[4mm]
{\bf Theorem J.}
\emph{Let $P\to M$ be a smooth principal bundle
over a paracompact finite-dimensional
smooth manifold~$M$ whose structure group is a Banach-Lie group.
Then $\Aut(P)$ is $L^1$-regular
%$L^\infty_{rc}$-regular
and hence $\Aut(P)$ has the strong Trotter property and the strong
commutator property.}\\[4mm]
Our proof of Theorem~I shows that the
convergence in (\ref{evenuni})
is even uniform for $\gamma$
in compact sets. This implies:\\[4mm]
{\bf Theorem K.}
\emph{If a Lie group $H$ is $R$-regular,
then $C_K(X,H)$ and $C_c(X,H)$
have the strong Trotter property,
for every locally compact topological
space~$X$ and compact subset $K\sub X$.
If, moreover, $X$ is paracompact, then
also $C_c(X,H)$ has the strong Trotter property}.\\[4mm]
Here $C_K(X,H):=\{\gamma\in C(X,H)\colon \gamma|_{X\setminus K}=e\}$
is endowed with its natural Lie group structure
(see, e.g., \cite{GCX});
likewise for $C_c(X,H)=\bigcup_K C_K(X,H)$
(cf.\ \cite{GCX} for the essential case
when $X$ is $\sigma$-compact).\\[2.3mm]
%
% wie ist es hier mit $C^r$ statt $C^0$?
%
%
Note that
the Lebesgue spaces $L^p([0,1],E)$,
$L^\infty_{rc}([0,1],E)$ and $R([0,1],E)$
we consider only serve as a tool to
define strengthened regularity properties,
where they appear as the domains of certain evolution maps.
For this purpose,
properties like completeness
of the spaces (which might fail
unless we assume that $E$ is a Fr\'{e}chet space)
are irrelevant.
Rather, it is important that we have good
results on continuity and differentiability
properties for mappings between such spaces
or families of such.
Results of this type do not seem available
if, instead, one would define vector-valued
$L^p$-spaces as
completions of tensor products $L^p[0,1]\tensor E$
with respect to suitable tensor norms
(which might look more natural
from the point of view of linear functional
analysis).
Compare~\cite{FMP} for another viable type of vector-valued $L^p$-maps
based on Suslin-measurability.\\[3mm]
For previous work concerning
differential equations in finite-dimensional
(or Banach) spaces with measurable
right hand sides, see e.g.\ \cite{KaS},
\cite{HBK}, \cite{MaS}, \cite{Son}
and the references therein.\\[3mm]
{\bf Structure of the article.}
After a preparatory section
with selected material on
Lebesgue spaces
and infinite-dimensional
calculus (Section~\ref{prels}),
we study differentiability
properties of mappings like
\[
\wt{f}\colon
C([a,b],V)\times L^p([a,b],E_2)\to L^p([a,b],F),\quad
(\eta,[\gamma])\mto [f\circ (\eta,\gamma)],
\]
e.g.\ if $E_2$ and $F$ are Fr\'{e}chet spaces,
$V$ is an open subset of a locally convex space~$E_1$
and
\[
f\colon V\times E_2\to F
\]
a smooth map which is linear in its second argument
(Section~\ref{mpbelbg}).
If $E$ is a Fr\'{e}chet space,
then we can consider each of the spaces
\[
L^p([a,b], E),\quad L^\infty_{rc}([a,b],E)\quad\mbox{and}\quad
R([a,b],E)
\]
as a vector subspace $\cE([a,b],E)$ of $L^1([a,b],E)$.
The assignment is functorial
both in $E$ and in $[a,b]$;
e.g., we have a continuous linear map
\[
L^p([a,b],\lambda)\colon L^p([a,b],E_1)\to L^p([a,b], E_2),\quad
[\gamma]\mto [\lambda\circ \gamma]
\]
for each continuous linear map $\lambda\colon E_1\to E_2$
between Fr\'{e}chet spaces.
And we can pull back functions along an affine-linear map
$f\colon [c,d]\to [a,b]$:
\[
L^p(f,E)\colon L^p([a,b],E)\to L^p([c,d],E),\quad [\gamma]\mto
[\gamma\circ f].
\]
We therefore speak of a \emph{bifunctor} $\cE$
on Fr\'{e}chet spaces.
Using such a bifunctor,
we call a function
\[
\eta\colon [a,b]\to E
\]
$\cE$-absolutely continuous (and write $\eta\in AC_\cE([a,b],E)$)
if
\[
\eta(t)=\eta(a)+\int_a^t \gamma(s)\,ds
\]
for some $[\gamma]\in \cE([a,b],E)$.
The theory of vector-valued
absolutely continuous functions is developed
in Section~\ref{secAC}.
Notably, we study differentiability properties
of non-linear
mappings on spaces of absolutely continuous
functions and
describe conditions
ensuring that absolutely continuous functions
to manifolds can be defined.
This enables a Lie group structure on
\[
AC_\cE([0,1],G)
\]
to be constructed
for suitable bifunctors $\cE$ and Lie groups~$G$
(Section~\ref{secACG}),
which are then used to define and study
$\cE$-regular Lie groups (Section~\ref{secintroreg}).
To this end, certain axioms and properties
need to be imposed on the bifunctors under consideration.
Thus, we shall encounter the Locality Axiom,
the Pushforward Axioms, the Subdivision Property,
and the requirement that smooth functions act smoothly
on $AC_\cE$.
We shall see that all of these axioms and requirements
are satisfied by $L^p$, $L^\infty_{rc}$
and~$R$.\footnote{Another less central axiom,
the Embedding Axiom,
is satisfied by $L^p$ as a bifunctor on Fr\'{e}chet spaces
and $L^\infty_{rc}$ as a bifunctor on integral complete
locally convex spaces
(but possibly not by~$R$ or by $L^p$ as a bifunctor
on other spaces).
Beyond Fr\'{e}chet spaces,
the axiom is not used for major results.}
Thus $\cE$-regularity
provides a common roof (and uniform proofs)
for the concepts of $L^p$-regular, $L^\infty_{rc}$-regular
and $R$-regular Lie groups.
As part of the general theory,
proofs for Theorems~A and~G
are obtained in Section~\ref{secintroreg}.
We then prove Theorem~C and further
results on Banach-Lie groups and local Banach-Lie groups
(Section~\ref{secbana}).
Next, we study measurable regularity
properties of projective limits of Lie groups
(Section~\ref{secPL})
and of ascending unions
of Lie groups (Section~\ref{secDL}),
including proofs for Theorems~D, E and~F.
In Section~\ref{secRS},
we prove the $L^1$-regularity of $\Diff_K(\R^n)$,
$\Diff(\bS_1)$, and $\Diff_c(\R^n)$,
before proving Theorem~B
(in full generality) in Section~\ref{secDiM}.
The proof uses some basic uniqueness results
for solutions to initial value problems
with measurable right hand sides,
provided in Section~\ref{L1lip}.
We then establish Theorems~H and~I
(Section~\ref{secregreg}).
%Section~\ref{secexa}
%is devoted to Theorem~J
%and further examples.
The proofs for the preparatory
Section~\ref{prels}
have been relegated to an appendix
(Appendix~A),
as well as proofs of auxiliary
results on Lebesgue spaces
of projective limits needed
in Section~\ref{secPL}
(see Appendix~B)
and some calculations concerning diffeomorphism
groups (Appendix~C).\\[3mm]
\emph{Acknowledgement.}
The author thanks K.-H. Neeb,
who suggested the consideration
of measurable regularity properties
and mentioned potential relations to
control theory as well as the Trotter product
and commutator formulas.
\section{Preliminaries and notation}\label{prels}
In this section,
we fix our notation and terminology concerning
topology, infinite-dimensional calculus
and Lebesgue spaces of vector-valued measurable mappings.
Several basic facts will be stated for later
use. Many of these are easy to take on faith,
whence we relegate proofs to the appendix
(Appendix~\ref{appsecprel}).\\[2.3mm]
We write $\N=\{1,2,\cdots\}$ and $\N_0:=\N\cup\{0\}$.
If $f\colon X\to Y$ is a function, we write
$\graph(f):=\{(x,f(x))\colon x\in X\}$
for its graph.
All vector spaces encountered in the article
are real vector spaces, unless we explicitly say
they are complex vector spaces.
We use `locally convex space' as
a shorthand for `locally convex topological vector space.'
All topological spaces and locally convex
spaces occuring in the article
are assumed Hausdorff,
except for the $\cL^p$-spaces and $\cL^\infty_{rc}$-spaces
presently encountered,
which are merely a preliminary for the
definition of the Hausdorff
$L^p$-spaces (and $L^\infty_{rc}$-spaces)
we are really interested in.
If $(X,d)$ is a metric space, $x\in X$ and $r>0$, we write
\[
B^d_r(x):=\{y\in X\colon d(x,y)<r\}\quad\mbox{and}\quad
\wb{B}^d_r(x):=\{y\in X\colon d(x,y)\leq r\}
\]
for the open ball and closed ball, respectively.
If $q$ is a seminorm on a vector space~$E$,
we write
\[
B^q_r(x):=\{y\in E\colon q(y-x)<r\}\quad\mbox{and}\quad
\wb{B}^q_r(x):=\{y\in E\colon q(y-x)\leq r\}
\]
for $x\in E$, $r>0$.
If $E=\R^n$, we let $\|.\|_\infty$ be the maximum norm on $\R^n$
and abbreviate $B_r(x):=B^{\|.\|_\infty}_r(x)$
as well as $\wb{B}_r(x):=\wb{B}^{\|.\|_\infty}_r(x)$.
If $\lambda\colon E\to F$ is a continuous linear map
between normed spaces $(E,\|.\|_E)$
and $(F,\|.\|_F)$, we write $\|\lambda\|_{op}$ for its operator norm.
Some basic concepts and facts from
topology will be useful.
\begin{numba}\label{Wallace}
We recall
the Wallace Lemma \cite[Chapter~5, Theorem~12]{Kel}:\\[2.3mm]
\emph{Let $X_1$ and $X_1$ be topological spaces,
$K_1\sub X_1$ and $K_2\sub X_1$ be compact subsets
and $U\sub X_1\times X_2$ be an open set such that
$K_1\times K_2\sub U$.
Then there exist open subsets $U_1\sub X_1$ and $U_2\sub X_2$
such that $K_1\times K_2\sub U_!\times U_2\sub U$.}
\end{numba}
\begin{numba}\label{topPL}
Let $(J,\leq)$ be a directed set,
$((X_j)_{j\in J},(\phi_{i,j})_{i\leq j})$
be a projective system\footnote{Thus $\phi_{j,j}=\id_{X_j}$
for all $j\in J$ and $\phi_{i,j}\circ \phi_{j,k}=\phi_{i,k}$
for all $i,j,k\in J$ such that $i\leq j\leq k$.}
of topological spaces $X_j$
and continuous mappings $\phi_{i,j}\colon X_j\to X_i$
for $i,j\in J$ such that $i\leq j$.
Let $(X,(\phi_j)_{j\in J})$ be a projective limit
of the above system in the category of topological
spaces and continuous mappings.\footnote{That is, the
map $(\phi_j)_{j\in J} \colon
X\to \{(x_j)_{j\in J}\in \prod_{j\in J}X_j\colon\,(\forall i,j\in J)\;i\leq j\;\impl\;
x_i=\phi_{i,j}(x_j)\}$ is a homeomorphism.}
The $\phi_{i,j}$ will be called \emph{bonding maps},
and the $\phi_j$ will be called \emph{limit maps}.
Then the following holds:
\begin{itemize}
\item[(a)]
\emph{For each $x\in X$,
the sets $\phi_j^{-1}(U)$ form a basis of
open neighbourhoods of~$x$, for $j\in J$ and $U$ ranging through the
set of open neighbourhoods of $\phi_j(x)$ in~$X_j$.}
\item[(b)]
\emph{A subset $D\sub X$ is dense in~$X$ if and only
if $\phi_j(D)$ is dense in $\phi_j(X)$ for each $j\in J$.}
\end{itemize}
\end{numba}
\begin{numba}
By a \emph{topological embedding},
we mean a map $f\colon X\to Y$ between
topological spaces such that the co-restriction
$f|^{f(X)}\colon X\to f(X)$ is a homeomorphism
with respect to the topology induced by~$Y$ on the image
$f(X)=\im(f)$.
\end{numba}
\begin{numba}
A topology on a real vector space~$E$
is called a \emph{vector topology}
if it turns~$E$ into a topological vector space.
\end{numba}
\begin{numba}
Throughout, we are working in the setting of abstract set theory.
Thus, if $(X_1,\Sigma_1)$ and $(X_2,\Sigma_2)$
are measurable spaces (viz.\ $X_i$
is a set and $\Sigma_i$ a $\sigma$-algebra on~$X_i$),
we call a map $f\colon X_1\to X_2$ measurable
if $f^{-1}(A)\in \Sigma_1$ for all $A\in \Sigma_2$
(see, e.g., \cite[\S7]{Bau}).
We also say that $f\colon (X_1,\Sigma_1)\to (X_2,\Sigma_2)$
is measurable in the situation.
If $\cE$ is a set of subset of a set~$X$,
we write $\sigma(\cE)$ for the $\sigma$-algebra
on~$X$ generated by~$\cE$.
As usual, if $X$ is a topological space, we write $\cB(X):=\sigma(\cO)$ for
the $\sigma$-algebra of Borel sets of~$X$,
generated by the set $\cO$ of all open subsets of~$X$.
If $(X,\Sigma)$ is a measurable space and $Y$ a subset
of $X$, then the trace
\[
\Sigma|_Y:=\{A\cap Y\colon A\in \Sigma\}
\]
is a $\sigma$-algebra on~$Y$. If $Y\in \Sigma$, then
\[
\Sigma|_Y=\{A\in \Sigma\colon A\sub Y\}
\]
(see \cite[\S1]{Bau}).
If $(X_i,\Sigma_i)$
are measurable spaces, as usual we let
$\Sigma_1\tensor \Sigma_2$ be the product $\sigma$-algebra,
i.e., the $\sigma$-algebra
on $X_1\times X_2$ generated by
$\{A_1\times A_2\colon A_1\in \Sigma_1,\,A_2\in \Sigma_2\}$.
\end{numba}
Some simple basic facts will be used:
\begin{numba}\label{basicsmeas}
\begin{itemize}
\item[(a)]
If is $(X,\Sigma)$ is a measurable space
and $Y\sub X$,
then the inclusion map $j\colon (Y,\Sigma|_Y)\to (X,\Sigma)$
is measurable (as $j^{-1}(A)=A\cap Y\in \Sigma|_Y$ for each $A\in\Sigma$)
and hence $f|_Y=f\circ j\colon (Y,\Sigma|_Y)$
is measurable for each measurable map $f\colon (X,\Sigma)\to (X_2,\Sigma_2)$
to a measurable space.
\item[(b)]
Let $(X_i,\Sigma_i)$ be measurable spaces for $i\in \{1,2\}$
and $f\colon X_1\to X_2$ be a map such that $f(X_1)\sub Y$
for a subset $Y\sub X_2$.
Then $f\colon (X_1,\Sigma_1)\to (X_2,\Sigma_2)$
is measurable if and only if the co-restriction
$f|^Y\colon (X_1,\Sigma_1)\to(Y,\Sigma_2|_Y)$
is measurable.\footnote{If $f|^Y$ is measurable, then also
$f=j\circ f|^Y$ with the measurable incluison map $j\colon Y\to X_2$.
If $f$ is measurable, then each $B\in \Sigma_2|_Y$ is of the form
$B=Y\cap A$ with some $A\in \Sigma_2$
and hence $(f|^Y)^{-1}(B)=f^{-1}(B\cap f(X_1))=f^{-1}(A\cap f(X_1))=f^{-1}(A)\in \Sigma_1$,
showing that~$f|^Y$ is measurable.}
\item[(c)]
Let $X$ be a set and $\cE$ be a set of subsets
of~$X$. Then $\sigma(\cE)|_Y=\sigma(\{A\cap Y\colon A\in \cE\})$
for each subset $Y\sub X$.
In particular:
\item[(d)]
If $X$ is a topological space
and we endow a subset $Y\sub X$ with the
induced topology, then
$\cB(Y)=\cB(X)|_Y$.
Hence, if $Y\in \cB(X)$, then $\cB(Y)=\{A\in \cB(X)\colon A\sub Y\}$.
\item[(e)]
Let $(X, \Sigma)$ be a measurable space, $Y$ be a set and $\cE$
be a set of subsets of~$Y$.
Then a map $f\colon (X,\Sigma)\to (Y,\sigma(\cE))$ is measurable
if and only if $f^{-1}(A)\in \Sigma$ for each $A\in \cE$
\cite[Satz 7.2]{Bau}.
In particular, a map
\[
f=(f_1,f_2)\colon (X,\Sigma)\to (X_1\times X_2, \Sigma_1\tensor \Sigma_2)
\]
is measurable if and only if both $f_1\colon (X,\Sigma)\to (X_1,\Sigma_1)$
and $f_2\colon (X,\Sigma)\to (X_2,\Sigma_2)$
are measurable.
\item[(f)]
If $X_1, X_2$ are topological spaces
and $X_2$ has a countable basis for its topology, then
$\cB(X_1\times X_2)=\cB(X_1)\tensor\cB(X_2)$
(see, e.g., \cite[Lemma~2.7]{MEA}).
\end{itemize}
\end{numba}
For a classical discussion of vector-valued integrals
in Banach spaces, see \cite[Chapter III]{HaP}; see also
\cite{HBK} for further relevant aspects.
It is essential for our purposes to go beyond
the classical frame of Banach spaces and consider
$\cL^p$-functions (and related functions) also
with values in Fr\'{e}chet spaces
(and, later, in even more general locally convex spaces).
\begin{numba}\label{thespaces1}
Let $(X,\Sigma,\mu)$ be a measure space,
$E$ be a Fr\'{e}chet space, $p\in [1,\infty[$ and $P(E)$ be the set of all
continuous seminorms $q\colon E\to [0,\infty[$.
We let $\cL^p(X,\mu,E)$ be the set of all
measurable functions $\gamma\colon (X,\Sigma)\to (E,\cB(E))$
such that
\begin{itemize}
\item[(a)]
$\|\gamma\|_{\cL^p,q}:=\sqrt[p]{\int_X q(\gamma(x))^p\,d\mu(x)}
<\infty$ for each $q\in P(E)$; and
\item[(b)]
The image $\gamma(X)$ is separable
(i.e., it has a dense countable subset).
\end{itemize}
\end{numba}
\begin{numba}
For $E$ a Fr\'{e}chet space,
let $\cL^\infty(X,\mu, E)$
be the set of all measurable mappings
$\gamma\colon (X,\Sigma)\to (E,\cB(E))$
with separable, bounded image.
For $\gamma\in \cL^\infty(X,\mu,E)$
and $q$ a continuous seminorm on~$E$, we write
\begin{equation}\label{defLinftysem}
\|\gamma\|_{\cL^\infty,q}:=\|q\circ\gamma\|_{\cL^\infty}=\esssup_\mu (q\circ\gamma)\,.
\end{equation}
Let $\cL^\infty_{rc}(X,\mu,E)$
be the set of all measurable maps
$\gamma\colon (X,\Sigma)\to (E,\cB(E))$
with relatively compact image.
Linear combinations
of measurable $E$-valued maps with separable
image being measurable (cf.\ Lemma~\ref{Sepp}),
$\cL^q(X,\mu,E)$, $\cL^\infty(X,\mu,E)$ and
$\cL^\infty_{rc}(X,\mu,E)$
are vector subspaces of~$E^X$. As is clear,
%the space of $E$-valued maps on~$X$, and
%
\begin{equation}\label{inclu1}
\cL^\infty_{rc}(X,\mu,E)\sub \cL^\infty(X,\mu,E),
\end{equation}
with equality if and only if all bounded subsets of~$E$
are relatively compact, i.e., $E$ is semi-Montel.
This holds for example if $E$
is Schwartz, nuclear or finite-dimensional.\footnote{As every Fr\'{e}chet
space is barrelled, it is semi-Montel iff it is Montel.}
If $\mu$ is a finite measure, then
\begin{equation}\label{inclu2}
\cL^\infty(X,\mu,E)\sub \cL^p(X,\mu,E)\sub \cL^1(X,\mu,E)\;\;\mbox{for all $p\in [1,\infty]$}
\end{equation}
For $p\in [1,\infty]$,
we endow $\cL^p(X,\mu,E)$
with the (not necessarily Hausdorff) locally convex vector topology
defined by the seminorms $\|.\|_{\cL^p,q}$, for $q\in P(E)$.
We give
$\cL^\infty_{rc}(X,\mu,E)$
the topology induced by~$\cL^\infty(X,\mu,E)$.
Then the inclusion maps in (\ref{inclu1}) and (\ref{inclu2})
are continuous.
\end{numba}
\begin{numba}
If $E$ is an arbitrary locally convex space,
then $\cL^\infty_{rc}(X,\mu,E)$ is defined as the
set of all measurable functions $\gamma\colon X\to E$
such that the closure $\wb{\im(\gamma)}$
of the image of~$\gamma$ in~$E$ is compact
and metrizable~\cite{MEA}.
Also in this generality,
$\cL^\infty_{rc}(X,\mu,E)$
is a vector subspace of~$E^X$
(see \cite{MEA}).
We define seminorms $\|.\|_{\cL^\infty,q}$
on $\cL^\infty_{rc}(X,\mu,E)$
as in~(\ref{defLinftysem})
and use these to endow
$\cL^\infty_{rc}(X,\mu,E)$
with a vector topology.
\end{numba}
The following two lemmas
are useful tools for dealing with the mappings
$\gamma\in \cL^\infty_{rc}(X,\mu,E)$.
See, e.g., \cite[Lemma~2.1]{MEA}
for the first fact (which can be deduced from
\cite[Theorem 4.2.13]{Eng}):
\begin{la}\label{quotcpmet}
If $K$ is a metrizable compact topological space
and $f\colon K\to Y$
a continuous map to a Hausdorff topological
space~$Y$, then also the image $f(K)$
is compact and metrizable.\,\Punkt
\end{la}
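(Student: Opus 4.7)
The plan is to verify that $f(K)$, taken with the subspace topology inherited from~$Y$, is both compact and metrizable. Compactness is immediate, since $f(K)$ is the continuous image of the compact space~$K$. As a subset of the Hausdorff space~$Y$, the image $f(K)$ is then a compact Hausdorff space, hence normal and in particular regular.

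For metrizability I would apply Urysohn's metrization theorem: since $f(K)$ is already regular, it suffices to produce a countable basis for its topology. Pick a countable basis $\cB$ of~$K$ (which exists because compact metrizable spaces are second countable), and let $\cC$ denote the countable family of all finite unions of members of~$\cB$. For $C\in\cC$ set
\[
U_C \;:=\; f(K)\setminus f(K\setminus C).
\]
Each $U_C$ is open in $f(K)$: indeed $K\setminus C$ is closed in the compact space~$K$, hence compact, so $f(K\setminus C)$ is a compact and therefore closed subset of the Hausdorff space~$f(K)$.

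I then claim that $\{U_C:C\in\cC\}$ is a basis of the topology on~$f(K)$. Given $y\in f(K)$ and an open set $V\sub f(K)$ with $y\in V$, the fibre $f^{-1}(y)$ is closed in~$K$, hence compact, and is contained in the open set $f^{-1}(V)$. Cover $f^{-1}(y)$ by finitely many basic open sets $B_1,\dots,B_n\in\cB$ each contained in $f^{-1}(V)$ and put $C:=B_1\cup\cdots\cup B_n\in\cC$; then $f^{-1}(y)\sub C\sub f^{-1}(V)$. Hence no preimage of~$y$ lies in $K\setminus C$, so $y\in U_C$. Moreover, for any $y'\in U_C$ one has $f^{-1}(y')\sub C\sub f^{-1}(V)$, and picking any $x'\in f^{-1}(y')$ yields $y'=f(x')\in V$; thus $U_C\sub V$.

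Consequently $f(K)$ is a regular space with a countable basis, so Urysohn's metrization theorem produces a compatible metric. I expect no serious obstacle; the only mildly delicate point is the precise choice of the sets $U_C$, which must be defined as complements of images of closed sets rather than as direct images (since $f$ is in general neither open nor injective), but once this ingredient is in place the basis property follows by a routine compactness argument on the fibres.
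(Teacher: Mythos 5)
Your argument is correct. Compactness of $f(K)$ is indeed immediate, each $U_C=f(K)\setminus f(K\setminus C)$ is open because $K\setminus C$ is compact and compact subsets of the Hausdorff space $f(K)$ are closed, and your fibrewise compactness argument does verify that the countable family $\{U_C\colon C\in\cC\}$ is a basis; since $f(K)$ is compact Hausdorff and hence regular, Urysohn's metrization theorem applies. Note that the paper does not prove this lemma at all: it simply cites \cite[Lemma~2.1]{MEA}, remarking that the statement can be deduced from \cite[Theorem 4.2.13]{Eng}, i.e.\ from the preservation of metrizability under perfect maps (a continuous map from a compact space to a Hausdorff space is automatically closed with compact fibres, hence perfect). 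So your proof is a genuinely self-contained substitute: it re-proves, in the special case needed, exactly the topological input that the paper outsources to Engelking. The citation route is shorter and leans on a general theorem about perfect maps; your route is elementary and makes the mechanism visible — in particular your correct insistence on defining the basic sets as complements of images of closed sets (rather than images of open sets, which would fail since $f$ need be neither open nor injective) is precisely the delicate point in the standard proof.
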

\begin{la}\label{imagesepmet}
Let $E$ be a locally convex space
and $K\sub E$ a subset which is
compact and metrizable in the induced topology.
Let $E_K:=\Spann(K)\sub E$
be the vector subspace spanned by~$K$
and $\cO_K$ be the induced topology on~$E_K$.
Then $E_K$ can be given a separable metrizable
locally convex vector topology $\cO'$
such that $\cO'\sub \cO_K$.
\end{la}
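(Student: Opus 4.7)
The plan is to build a countable family of continuous linear functionals on $E$ whose restrictions to $E_K$ already separate points, and then take $\cO'$ to be the locally convex topology on $E_K$ generated by the absolute values of these functionals. Concretely, since $K$ is compact metrizable, the Banach space $C(K,\R)$ (sup norm) is separable. Restriction defines a linear map $E^*\to C(K,\R)$, $\lambda\mto \lambda|_K$; its image $L:=\{\lambda|_K\colon \lambda\in E^*\}$ is a subset of a separable metric space, hence separable. Choose a sequence $(\lambda_n)_{n\in\N}$ in $E^*$ such that $\{\lambda_n|_K\colon n\in\N\}$ is dense in $L$ with respect to $\|.\|_\infty$.

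Next, I verify that $(\lambda_n)$ separates points of $E_K$. Suppose $x=\sum_{i=1}^m c_i k_i\in E_K$ with $c_i\in\R$, $k_i\in K$, satisfies $\lambda_n(x)=0$ for all $n$. Given any $\lambda\in E^*$, by density pick indices $n_j$ with $\lambda_{n_j}|_K\to \lambda|_K$ uniformly on $K$; then $\lambda_{n_j}(k_i)\to \lambda(k_i)$ for each $i$, and therefore
\[
\lambda(x)\;=\;\sum_{i=1}^m c_i\,\lambda(k_i)\;=\;\lim_{j\to\infty}\sum_{i=1}^m c_i\,\lambda_{n_j}(k_i)\;=\;\lim_{j\to\infty}\lambda_{n_j}(x)\;=\;0.
\]
Since $\lambda\in E^*$ was arbitrary and $E$ is Hausdorff locally convex, Hahn--Banach gives $x=0$.

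Finally, let $\cO'$ be the locally convex vector topology on $E_K$ generated by the seminorms $|\lambda_n|$ ($n\in\N$). By construction it is coarser than $\cO_K$, since each $\lambda_n$ is continuous on $E$ and thus on $(E_K,\cO_K)$; it is Hausdorff by the separation just established. The evaluation map $\Phi\colon (E_K,\cO')\to \R^\N$, $x\mto (\lambda_n(x))_{n\in\N}$, is continuous (by the definition of $\cO'$), injective (by separation), and its inverse on $\Phi(E_K)$ is continuous because the initial topology on $E_K$ induced by $(\lambda_n)_n$ coincides with $\cO'$. Thus $\Phi$ is a topological embedding into the separable metrizable space $\R^\N$, so $(E_K,\cO')$ itself is separable and metrizable.

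The main subtlety is the separation step in the second paragraph: it is essential that every $x\in E_K$ can be written as a finite $\R$-linear combination of elements of $K$, so that uniform convergence $\lambda_{n_j}|_K\to\lambda|_K$ on $K$ can be upgraded to $\lambda_{n_j}(x)\to\lambda(x)$. Once countable point separation is in hand, the remainder is a routine embedding into $\R^\N$.
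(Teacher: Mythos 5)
Your proof is correct, and it takes a genuinely different route from the paper. The paper argues entirely with seminorms and compactness: it forms the $m$-fold sums $K_m:=[-m,m]K+\cdots+[-m,m]K$ (compact and metrizable), covers the off-diagonal sets $\{(x,y)\in K_m\times K_m\colon d_m(x,y)\ge 1/j\}$ by finitely many products of balls of continuous seminorms chosen to separate the two coordinates, and lets $\cO'$ be defined by the resulting countable family of seminorms; Hausdorffness comes from the covering, metrizability from countability, and separability (implicitly) from $E_K=\bigcup_m K_m$ with each $K_m$ compact metrizable. You instead invoke duality: separability of $C(K,\R)$ for compact metrizable $K$ yields a sequence $(\lambda_n)$ in $E'$ whose restrictions to $K$ are sup-norm dense among all restrictions, the finite-linear-combination structure of $E_K$ upgrades uniform convergence on $K$ to convergence at each point of $E_K$, and Hahn--Banach (valid since the paper's locally convex spaces are Hausdorff by convention) gives point separation; the topology generated by the $|\lambda_n|$ is then the initial topology of the map into $\R^{\N}$, so separability and metrizability drop out at once from the embedding. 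Your argument is shorter and makes separability completely transparent, at the price of using Hahn--Banach and separability of $C(K)$; the paper's construction is more elementary and self-contained (no dual space), and delivers its countable family directly as restrictions of continuous seminorms on $E$ — though your seminorms $|\lambda_n|$ are of that form too, so the later applications of the lemma would work equally well with your version.
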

As before, $(X,\Sigma,\mu)$ is a measure space.
\begin{numba}
If $Y$ is a topological space
and $\gamma\colon X\to Y$ a measurable
map, we write $[\gamma]$ for
the set of all measurable mappings
$\gamma_1\colon X\to Y$
such that $\gamma(x)=\gamma_1(x)$
for $\mu$-almost all $x\in X$.
\end{numba}
\begin{numba}
If $E$ is a Fr\'{e}chet space
and $p\in [1,\infty]$, we set
\[
L^p(X,\mu,E):=\{[\gamma]\colon \gamma\in \cL^p(X,\mu,E)\}.
\]
Let $N_p\sub \cL^p(X,\mu,E)$
be the vector subspaces of all $\gamma$ in
$\cL^p(X,\mu,E)$
such that $\gamma(x)=0$ for $\mu$-almost all $x\in X$.
Then the map
\begin{equation}\label{quo1}
\cL^p(X,\mu,E)/{N_p}\to L^p(X,\mu,E),\quad
\gamma+N_p\to [\gamma]
\end{equation}
is a bijection,
which we use to identify $L^p(X,\mu,E)$
with the quotient vector space $\cL^p(X,\mu,E)/{N_p}$. Similarly, for $E$ a locally convex space we let
$N_{rc}\sub \cL^\infty_{rc}(X,\mu,E)$ be the vector space
of all $\gamma\in \cL^\infty_{rc}(X,\mu,E)$ such that
$\gamma(x)=0$ for $\mu$-almost all $x\in X$.
We use the map
\[
\cL^\infty_{rc}(X,\mu,E)/{N_{rc}}
\to L^\infty_{rc}(X,\mu,E),\quad \gamma+N_{rc}\to [\gamma]
\]
to identify the quotient vector space
$\cL^\infty_{rc}(X,\mu,E)/{N_{rc}}$
with
$L^\infty_{rc}(X,\mu,E):=\{[\gamma]\colon \gamma\in \cL^\infty_{rc}(X,\mu,E)\}$.
\end{numba}
\begin{numba}
If $E$ is a Fr\'{e}chet space,
we obtain a seminorm $\|.\|_{L^p,q}$ on $L^p(X,\mu,E)$ via
$\| [\gamma] \|_{L^p,q}:=\|\gamma\|_{\cL^p,q}$,
for each continuous seminorm~$q$ on~$E$.
We give $L^p(X,\mu,E)$ the locally convex vector topology defined by the
seminorms $\|.\|_{L^p,q}$, for $q\in P(E)$.
Likewise, for $E$ a locally convex space, we make $L^\infty_{rc}(X,\mu,E)$
a locally convex space
using the seminorms $\|.\|_{L^\infty,q}$
defined via
$\| [\gamma] \|_{L^\infty,q}:=\|\gamma\|_{\cL^\infty,q}$.
The latter topologies coincide with the quotient topologies on the quotient
spaces
$L^p(X,\mu,E)=\cL^p(X,\mu,E)/{N_p}$ and $L^\infty_{rc}(X,\mu,E)
=\cL^\infty_{rc}(X,\mu,E)/{N_{rc}}$, respectively.
\end{numba}
\begin{numba}
If $E$ is a Fr\'{e}chet space, then
\[
L^\infty_{rc}(X,\mu,E)\sub L^\infty(X,\mu,E)
\]
and the above topology on $L^\infty_{rc}(X,\mu,E)$
coincides with the induced topology.
If $\mu$ is a finite measure
and $E$ a Fr\'{e}chet space,
then
\[
L^\infty(X,\mu,E)\sub L^p(X,\mu,E)
\]
for all $p\in [1,\infty[$,
and the inclusion map is linear and continuous
as
$\|[\gamma] \|_{L^p,q}\leq
\sqrt[p]{\mu(X)}\|[\gamma]\|_{L^\infty,q}$.
We also have
\[
L^{p_1}(X,\mu,E)\sub L^{p_2}(X,\mu,E)
\]
for all $p_1\geq p_2$ in $[1,\infty[$,
and the inclusion map is continuous linear.\footnote{In fact,
$q(\gamma(x))^{p_2}\leq \max\{1,r^{p_1}q(\gamma(x))^{p_1}\}$
and thus
$\|[\gamma]\|_{L^{p_2},q}
\leq (\|1+(q\circ \gamma)\|_{L^{p_1}})^{p_1/p_2}
\leq (\|1\|_{L^{p_1}}+\|q\circ \gamma\|_{L^{p_1}})^{p_1/p_2}
=(\sqrt[p_1]{\mu(X)}+\|q\circ \gamma\|_{L^{p_1}})^{p_1/p_2}$.
Set $C:= (\sqrt[p_1]{\mu(X)}+1)^{p_1/p_2}$.
By the preceding.
$\|[\gamma]\|_{L^{p_2},q}\leq C$
for all $[\gamma]\in L^{p_1}(X,\mu,E)$
such that
$\|[\gamma]\|_{L^{p_1},q}\leq 1$.
Hence
$\|[\gamma]\|_{L^{p_2},q}\leq C\|[\gamma]\|_{L^{p_1},q}$
for all $[\gamma]\in L^{p_1}(X,\mu,E)$.}
\end{numba}
\begin{numba}\label{defweakint}
We recall the concept of weak integral.
Let $(X,\Sigma,\mu)$ be a measure space, $E$ be a locally convex space,
$E'$ be the space of all continuous linear functionals $\lambda\colon E\to\R$
and $\gamma\colon X\to E$ be a function
such that $\lambda\circ \gamma\in \cL^1(X,\mu,\R)$
for each
$\lambda\in E'$. An element $w\in E$ is called
the \emph{weak integral of $\gamma$ with respect to~$\mu$}
(and denoted $\int_X\gamma(x)\,d\mu(x):=w$)
if
\[
(\forall \lambda\in E')\quad \lambda(w)=\int_X\lambda(\gamma(x))\, d\mu(x)\,.
\]
\end{numba}
\begin{numba}\label{basicweaki}
The following is clear, as $\lambda\circ \alpha\in E'$
for each $\lambda\in F'$:\\[2.5mm]
If $\gamma\colon X\to E$ has a weak integral
$\int_X\gamma(x)\,d\mu(x)$ in the situation of \ref{defweakint},
and $\alpha\colon E\to F$ is a continuous
linear map to a locally convex space~$F$,
then $\alpha\circ \gamma\colon X\to F$ has a weak integral in~$F$;
in fact,
\[
\int_X\alpha(\gamma(x))\,d\mu(x)=\alpha\left(
\int_X\gamma(x)\,d\mu(x)\right),
\]
as the right-hand side satisfies the property
which defines the weak integral on the left.
\end{numba}
\begin{numba}\label{pardep}
We shall use continuity of parameter-dependent integrals (see, e.g.,
\cite[Proposition~3.5]{Bil}, or \cite{GaN}):\\[2.3mm]
\emph{Let $E$ be a locally convex space, $X$ a topological
space, $a,b\in \R$ such that $a<b$
and $f\colon X \times [a,b]\to E$ be a continuous function such that
the weak integral}
\[
g(x):=\int_a^bf(x,t)\,dt
\]
\emph{exists in~$E$ for each $x\in X$. Then $g\colon X\to E$
is continuous.}
\end{numba}
The next lemma compiles essential basic properties
of the spaces from~\ref{thespaces1}.
\begin{la}\label{labase1}
Let $(X,\Sigma,\mu)$ be a measure space and $E$ be a Fr\'{e}chet space
$($resp, a Banach space$)$. Let $p\in [1,\infty]$. Then
$L^p(X,\mu,E)$ and $L^\infty_{rc}(X,\mu,E)$
are Fr\'{e}chet spaces $($resp., Banach spaces$)$.
Moreover,
each $\gamma\in \cL^1(X,\mu,E)$ admits
a weak integral $\int_X \gamma(x)\, d\mu(x)$,
and the map
\[
I\colon L^1(X,\mu,E)\to E\,,\quad [\gamma]\mto\int_X\gamma(x)\,d\mu(x)
\]
is well-defined and continuous linear, with $q(I(\gamma))\leq \|[\gamma]\|_{L^1,q}$
for each continuous seminorm~$q$ on~$E$.
\end{la}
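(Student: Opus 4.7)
The plan has three pieces: topologize the spaces by a countable family of seminorms, prove completeness through pointwise-a.e.\ convergent subsequences, and construct the weak integral by Bochner-type approximation through simple functions.

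First I would fix an increasing sequence $(q_n)_{n\in \N}$ of continuous seminorms defining the topology of the Fr\'{e}chet space~$E$ (a single norm if $E$ is Banach). The induced seminorms $\|.\|_{L^p,q_n}$ define a Hausdorff locally convex topology on $L^p(X,\mu,E)$: if $\|[\gamma]\|_{L^p,q_n}=0$ for every~$n$, then $q_n\circ \gamma=0$ $\mu$-a.e.\ for each~$n$, and on the intersection of countably many full-measure sets every $q_n$ vanishes on~$\gamma$, giving $[\gamma]=0$. Countability of the family yields metrizability. For completeness I take a Cauchy sequence and pass to a subsequence $([\gamma_k])_k$ with $\|[\gamma_{k+1}-\gamma_k]\|_{L^p,q_n}\leq 2^{-k}$ for all $n\leq k$. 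A Borel--Cantelli argument applied to each scalar Lebesgue function $q_n\circ(\gamma_{k+1}-\gamma_k)$ produces a $\mu$-null set~$N$ off which $(\gamma_k(x))_k$ is Cauchy in every $q_n$, hence convergent in the complete space~$E$. Setting $\gamma(x):=\lim_k\gamma_k(x)$ on $X\setminus N$ and $0$ on~$N$ gives a measurable map (pointwise limit in a metric target) with image contained in $\wb{\bigcup_k \gamma_k(X)}$, and hence separable; Fatou's lemma in each seminorm then gives $\gamma\in \cL^p$ and $[\gamma_k]\to [\gamma]$. The case $p=\infty$ is analogous, extracting a subsequence which converges uniformly off a null set in every~$q_n$.

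For the weak integral of $\gamma\in \cL^1(X,\mu,E)$, I would construct simple functions $s_k=\sum_i v_i^{(k)}\mathbf{1}_{A_i^{(k)}}$ with $[s_k]\to [\gamma]$ in every $\|.\|_{L^1,q_n}$ simultaneously. Using Lemma~\ref{imagesepmet} to equip $\Spann(\gamma(X))$ with a coarser separable metrizable vector topology, one picks a countable dense subset containing~$0$ and defines $s_k$ by measurable nearest-point partitioning against the first $k$ points; comparison with $v_0=0$ yields the pointwise bound $q_n(s_k(x))\leq 2q_n(\gamma(x))$ for all~$x,n$, and dominated convergence then gives the required $L^1$-convergence in every seminorm. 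Each $s_k$ has a literal integral $w_k:=\sum_i \mu(A_i^{(k)})v_i^{(k)}\in E$ (with $\mu(A_i^{(k)})<\infty$ whenever $v_i^{(k)}\neq 0$, by finiteness of the $L^1$-seminorms of~$s_k$), and the estimate $q_n(w_k-w_\ell)\leq \|s_k-s_\ell\|_{L^1,q_n}$ makes $(w_k)$ Cauchy in the complete space~$E$. Its limit~$w$ satisfies $\lambda(w)=\int\lambda\circ\gamma\,d\mu$ for every $\lambda\in E'$ (exchange $\lambda$ with the finite sums and apply dominated convergence via $|\lambda|\leq q$ for a suitable continuous seminorm~$q$), so $w$ is the sought weak integral. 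Passing $q_n(w_k)\leq \|s_k\|_{L^1,q_n}$ to the limit yields $q(I([\gamma]))\leq \|[\gamma]\|_{L^1,q}$, from which well-definedness (independence of representative) and continuity of~$I$ follow directly.

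For $L^\infty_{rc}(X,\mu,E)$ with $E$ Fr\'{e}chet it remains to show that it is closed in the Fr\'{e}chet space $L^\infty(X,\mu,E)$: given a sequence in $L^\infty_{rc}$ with $L^\infty$-limit $[\gamma]$, pass to a subsequence converging uniformly off a $\mu$-null set in every~$q_n$; the modified representative then has image arising as a uniform limit of relatively compact sets, hence totally bounded and, by completeness of~$E$, relatively compact. The main obstacle is the simultaneous multi-seminorm simple-function approximation: in the Fr\'{e}chet setting one must secure the pointwise bound $q_n(s_k)\leq 2q_n(\gamma)$ for \emph{all}~$n$ at once rather than merely in a single norm as in the classical Banach argument, which is handled by working inside the coarser separable metrizable topology provided by Lemma~\ref{imagesepmet} and selecting a dense sequence controlling every seminorm through the common origin~$v_0=0$.
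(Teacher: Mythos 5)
Your completeness argument for $L^p(X,\mu,E)$ and the closedness of $L^\infty_{rc}$ in $L^\infty$ are fine and essentially agree with the paper's proof. The genuine gap is in your construction of the weak integral. You claim that a single sequence of simple functions $s_k$, obtained by ``measurable nearest-point partitioning'' against a countable dense set containing $v_0=0$, satisfies the pointwise bound $q_n(s_k(x))\leq 2q_n(\gamma(x))$ for \emph{every} seminorm $q_n$ simultaneously. This comparison-with-the-origin trick works in a Banach space, where ``nearest'' is measured in the same norm that appears in the bound; in a Fr\'{e}chet space, ``nearest'' can only be taken with respect to one metric (or one seminorm) at a time, and a point that is closer to $\gamma(x)$ than $0$ in that metric need not be closer in each individual $q_n$. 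Passing to a \emph{coarser} separable metrizable topology, as you propose, makes this worse rather than better, since the bound has to hold for the original seminorms of $E$ that define the $\|\cdot\|_{L^1,q_n}$. Moreover, Lemma~\ref{imagesepmet} is not applicable here: it concerns a \emph{compact} metrizable subset $K$, whereas the image of an $\cL^1$-function is merely separable (and in the Fr\'{e}chet case no such lemma is needed, because $\Spann(\gamma(X))$ is automatically separable and metrizable). So the step you yourself single out as the crux does not work as stated.

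The gap is repairable, and the paper's route shows one way. Since the family of seminorms $\|\cdot\|_{L^1,q}$, $q\in P(E)$, is directed, it suffices to approximate $\gamma$ by integrable simple functions \emph{one seminorm at a time}: for fixed $q$ and $\ve>0$ one cuts down to the finite-measure set $\{q\circ\gamma\geq 1/N\}$ and partitions it according to $q$-balls around countably many points of $\gamma(X)$, exactly as in Lemma~\ref{labasemea}\,(a). This already gives density of $\cF(X,\Sigma,E)\cap\cL^1(X,\mu,E)$ in $\cL^1(X,\mu,E)$; one then defines the elementary integral $I$ on this dense subspace, checks $q(I(\eta))\leq\|\eta\|_{\cL^1,q}$ there, and extends $I$ by continuity using completeness of $E$, identifying the extension with the weak integral by testing against $\lambda\in E'$. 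Alternatively, if you insist on a single approximating sequence, replace your nearest-point selection by a diagonal argument: with $q_1\leq q_2\leq\cdots$, choose simple $\eta_{n,k}$ with $\|\gamma-\eta_{n,k}\|_{L^1,q_n}<1/k$ and set $s_k:=\eta_{k,k}$, so that $\|\gamma-s_k\|_{L^1,q_n}<1/k$ for all $k\geq n$; then your Cauchy argument for $w_k$ and the identification of the limit as the weak integral go through without any simultaneous pointwise domination.
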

\begin{numba}
We say that a locally convex space $E$ is \emph{integral
complete} if each continuous curve $\gamma\colon [0,1]\to E$
has a weak integral $\int_a^b\gamma(t)\, dt\in E$.
It is known that $E$ is integral complete
if and only if $E$ has the \emph{metric convex compactness
property} (metric CCP) in the sense that the closed
convex hull
\[
\wb{\conv(K)}\sub E
\]
is compact for each compact metrizable subset $K\sub E$
(see \cite{Wei}; cf.\ \cite{Voi}).
\end{numba}
\begin{la}\label{indeedmetr}
If $E$ is a locally convex space
and $K\sub E$ a metrizable compact subset
such that $\wb{\conv(K)}\sub E$
is compact, then
$\wb{\conv(K)}$
is metrizable.
\end{la}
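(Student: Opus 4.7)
The plan is to exhibit $\overline{\conv(K)}$ as a continuous image of a compact metrizable space and then apply Lemma~\ref{quotcpmet}. My natural candidate for the domain is the space $P(K)$ of Borel probability measures on $K$ equipped with the weak-$*$ topology: since $K$ is compact metrizable, $C(K,\R)$ is separable in the sup-norm, so by Banach--Alaoglu the closed unit ball of $C(K,\R)'$ is weak-$*$ compact and metrizable. As $P(K)$ is weak-$*$ closed in this ball, $P(K)$ itself is compact metrizable.

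I would then construct the barycenter map $\Phi\colon P(K)\to E$ by $\Phi(\mu):=\int_K x\,d\mu(x)$, interpreted as a weak integral, and show $\Phi(P(K))=\overline{\conv(K)}$. For a convex combination of Dirac measures $\mu_0=\sum_i t_i\delta_{x_i}$, the integral is simply the finite sum $\sum_i t_i x_i\in\conv(K)$, and the convex hull of the Dirac masses is weak-$*$ dense in $P(K)$ by a routine Hahn--Banach argument in $C(K,\R)$. Given $\mu\in P(K)$, I would approximate by such $\mu_\alpha\to\mu$, so that the net $(\Phi(\mu_\alpha))$ lies in the compact set $\overline{\conv(K)}$ and therefore possesses a cluster point $w\in\overline{\conv(K)}$. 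For each $\lambda\in E'$, the computation $\lambda(w)=\lim_\alpha\int_K\lambda\,d\mu_\alpha=\int_K\lambda\,d\mu$ identifies $w$ as a weak integral of the inclusion $K\hookrightarrow E$ against $\mu$, and uniqueness (granted by $E'$ separating points on the Hausdorff $E$) shows that $w=\Phi(\mu)$ is well-defined and lies in $\overline{\conv(K)}$.

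Continuity of $\Phi$ into $\overline{\conv(K)}$ follows from the same cluster-point argument: if $\mu_\alpha\to\mu$ in $P(K)$, every cluster point of $(\Phi(\mu_\alpha))$ in the compact Hausdorff space $\overline{\conv(K)}$ is forced to equal $\Phi(\mu)$ by pairing against functionals in $E'$, so the whole net converges to $\Phi(\mu)$. Surjectivity of $\Phi$ is then easy: the image is compact (continuous image of a compact space), hence closed in $E$, and it contains $\conv(K)$, so it contains $\overline{\conv(K)}$. Lemma~\ref{quotcpmet} applied to the continuous surjection $\Phi\colon P(K)\to\overline{\conv(K)}\subseteq E$ then delivers the desired metrizability of $\overline{\conv(K)}$. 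The point requiring the most care is the existence of the barycenter $\Phi(\mu)$ for an arbitrary $\mu\in P(K)$: it is precisely the compactness hypothesis on $\overline{\conv(K)}$ in~$E$ (rather than any completeness of~$E$ itself) that is exploited to extract the weak integral as a cluster point of finite Riemann-type approximations, and this is the heart of the argument.
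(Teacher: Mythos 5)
Your proof is correct and follows essentially the same route as the paper: both realize $\overline{\conv(K)}$ as the image of the compact metrizable space of probability measures on $K$ under the barycentre map and then invoke Lemma~\ref{quotcpmet}. The only real difference is that you establish existence and continuity of the barycentre map by a direct cluster-point/weak-integral argument exploiting the compactness of $\overline{\conv(K)}$, whereas the paper cites standard barycentre results and checks continuity by noting that the topology induced on the compact set $\overline{\conv(K)}$ coincides with the weak topology.
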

\begin{numba}
If $\mu(X)<\infty$, we can define $\|[\gamma]\|_{L^1,q}:=\|\gamma\|_{\cL^1,q}\in [0,\infty[$
as in \ref{thespaces1}\,(a)
also for $E$ an arbitrary locally convex space
and $\gamma\in \cL^\infty_{rc}(X,\mu,E)$.
\end{numba}
\begin{la}\label{labaserc}
Let $(X,\Sigma,\mu)$ be a measure space
and $E$ be an integral complete
locally convex space.
If $\mu(X)<\infty$,
then each $\gamma\in \cL^\infty_{rc}(X,\mu,E)$ admits
a weak integral $\int_X \gamma(x)\, d\mu(x)$,
and the map
\[
I\colon L^\infty_{rc}(X,\mu,E)\to E\,,\quad
[\gamma]\mto\int_X\gamma(x)\,d\mu(x)
\]
is well-defined and continuous linear, with $q(I(\gamma))\leq \|[\gamma]\|_{L^1,q}
\leq \mu(X)\|[\gamma] \|_{L^\infty,q}$
for each continuous seminorm~$q$ on~$E$.
\end{la}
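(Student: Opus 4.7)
The plan is to establish existence of the weak integral (the substantive assertion); the stated bounds, well-definedness, and continuity will then follow from Hahn-Banach and elementary manipulations with the scalar integral. After trivially handling the case $\mu(X)=0$, I would pass to the probability measure $\nu:=\mu/\mu(X)$. Given $\gamma\in\cL^\infty_{rc}(X,\mu,E)$, set $K:=\wb{\gamma(X)}$, which by hypothesis is compact and metrizable in~$E$. Since $E$ is integral complete, $\wb{\conv(K)}$ is compact, and Lemma~\ref{indeedmetr} then shows that $C:=\wb{\conv(K)}$ is in fact compact \emph{and} metrizable; this is the set in which the weak integral (with respect to~$\nu$) will be located.

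Fixing a metric~$d$ inducing the topology on~$K$, I would cover, for each $n\in\N$, the compact~$K$ by finitely many open balls $B^d_{1/n}(k^n_i)$ with centres $k^n_i\in K$; each such ball has the form $K\cap U^n_i$ for some open $U^n_i\sub E$ and is therefore a Borel subset of~$E$. Setting
\[
A^n_i \,:=\, \gamma^{-1}\Bigl(B^d_{1/n}(k^n_i)\setminus\bigcup_{j<i}B^d_{1/n}(k^n_j)\Bigr)
\]
yields a measurable partition of~$X$, and $\gamma_n:=\sum_i k^n_i\chi_{A^n_i}$ is a simple $K$-valued function with $d(\gamma_n(x),\gamma(x))<1/n$ for every $x\in X$. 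The finite sums
\[
w_n \,:=\, \sum_i k^n_i\,\nu(A^n_i)
\]
are convex combinations of points of~$K$, hence lie in $\conv(K)\sub C$. By metrizable compactness of~$C$, after passing to a subsequence we may assume $w_n\to w$ for some $w\in C$. For each $\lambda\in E'$, uniform continuity of~$\lambda|_K$ combined with $d(\gamma_n,\gamma)<1/n$ gives $\lambda\circ\gamma_n\to\lambda\circ\gamma$ uniformly on~$X$, whence
\[
\lambda(w) \,=\, \lim_n\lambda(w_n) \,=\, \lim_n\int_X\lambda\circ\gamma_n\,d\nu \,=\, \int_X\lambda\circ\gamma\,d\nu.
\]
Thus $w$ is the weak integral of~$\gamma$ for~$\nu$, and $I([\gamma]):=\mu(X)\,w$ is the required weak integral for~$\mu$.

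Hausdorffness of~$E$ makes $E'$ separate points, so the weak integral is unique; this yields well-definedness of~$I$ on equivalence classes and linearity from linearity of the scalar integral. For each continuous seminorm~$q$ on~$E$, Hahn-Banach provides the representation $q(v)=\sup\{|\lambda(v)|\colon \lambda\in E',\,|\lambda|\leq q\}$, and for any such~$\lambda$,
\[
|\lambda(I([\gamma]))| \,=\, \Bigl|\int_X\lambda\circ\gamma\,d\mu\Bigr| \,\leq\, \int_X q\circ\gamma\,d\mu \,=\, \|[\gamma]\|_{L^1,q} \,\leq\, \mu(X)\,\|[\gamma]\|_{L^\infty,q};
\]
taking the supremum over such $\lambda$ produces the claimed two-step estimate and, in particular, continuity of~$I$ with respect to the $\|.\|_{L^\infty,q}$-seminorms. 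The hard part is the existence step: one must check that the $d$-balls inside~$K$ pull back to $\Sigma$-measurable sets (using measurability of $\gamma$ relative to $\cB(E)$ and the fact that Borel sets in~$K$ are traces of Borel sets in~$E$, cf.~\ref{basicsmeas}), and one must extract a convergent subsequence from $(w_n)$---which is precisely where integral completeness together with Lemma~\ref{indeedmetr} becomes essential.
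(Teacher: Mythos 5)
Your proof is correct, but it reaches the weak integral by a genuinely different route than the paper. The paper (Appendix~\ref{appsecprel}) also reduces to $\mu(X)=1$ and also uses integral completeness only to get compactness of $\wb{\conv(K)}$, but it then pushes $\mu$ forward to the Borel probability measure $\gamma_*(\mu)$ on $\wb{\conv(K)}$ and quotes the barycentre existence theorem \cite[Theorem 3.27]{Ru2}, identifying the barycentre with $\int_X\gamma\,d\mu$ via the transformation theorem for image measures; the seminorm estimate is then obtained by essentially the same Hahn--Banach argument you give. You instead construct the integral by hand: uniform approximation of $\gamma$ by $K$-valued simple functions obtained from a finite $1/n$-cover of the compact metrizable set $K$, whose integrals $w_n$ are convex combinations of points of $K$, followed by extraction of a convergent subsequence in $C=\wb{\conv(K)}$ (compact by the metric CCP, metrizable by Lemma~\ref{indeedmetr}) and a uniform-convergence passage to the limit against each $\lambda\in E'$. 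What your approach buys is self-containedness: no appeal to Rudin's barycentre theorem, no image measure, and no implicit regularity considerations for the push-forward; what it costs is the explicit partition/approximation bookkeeping (your sets $A^n_i$), which is exactly where the paper instead spends a citation. Two minor observations: metrizability of $C$ is convenient but not strictly needed, since compactness alone yields a convergent subnet of $(w_n)$, which serves equally well in the limit computation (and the limit is in any case unique, so the whole sequence converges); and your measurability check for the $A^n_i$ is complete as stated, since each $d$-ball is the trace on the closed set $K$ of an open subset of~$E$ and hence Borel in~$E$.
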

\begin{numba}
If $J\sub \R$ is an interval and $\mu$ the restriction
of the $1$-dimensional Lebesgue-Borel measure
$\lambda_1$ to $\cB(J)$, we omit mention of $\mu$
and simply write $\cL^p(J,E)$ instead of $\cL^p(J,\mu,E)$, etc.
If, moreover, $J=[a,b]$ with reals $a\leq b$,
we write $\int_a^b\gamma(t)\,dt$
in place of $\int_{[a,b]}\gamma(t)\, d\mu(t)$, for $\gamma\in \cL^1([a,b],E)$.
If $a>b$, we define
$\int_a^b\gamma(t)\,dt:=-\int_b^a\gamma(t)\,dt$ for $\gamma\in \cL^1([b,a],E)$.
Likewise for $\cL^\infty_{rc}(J,E)$
if~$E$ is an integral complete
locally convex space.
%Similarly, we set $\int_{-\infty}^b\gamma(t)\,dt:=\int_{]{-\infty},b]}\gamma(t)\,d\lambda_1(t)$
%(if it exists).
\end{numba}
\begin{numba}\label{noconcern}
Because the map $C(J,E)\to L^\infty(J,E)$, $\gamma\mto [\gamma]$
is injective, we can identify $\gamma\in C(J,E)$
with its equivalence class $[\gamma]$ in $L^\infty_{rc}(J,E)$,
for $E$ an arbitrary locally convex space
and $J\sub \R$ an interval.
Then
\[
\|\gamma\|_{\cL^\infty,q}=\sup_{t\in J}q(\gamma(t)),
\]
for each $\gamma\in C(J,E)$
and each continuous seminorm~$q$ on~$E$.
Likewise, we can identify $\gamma\in C(J,E)\cap \cL^p(J,E)$
with it coset $[\gamma]$ in $L^p(J,E)$,
for each Fr\'{e}chet space~$E$,
$\gamma\in C(J,E)$ and $p\in [1,\infty]$.
\end{numba}
\begin{rem}
In this section, we clearly
distinguish between a measurable
function $\gamma$ and its equivalence class
$[\gamma]$ under equality $\mu$-almost everywhere.
Later on, when the meaning is clear from
the context, we shall sometimes ignore the distinction.
\end{rem}
\begin{numba}
Let $J\sub \R$ be  non-degenerate interval, $E$ be a locally convex space,
$\eta\colon J\to E$ be a mapping and $t\in J$.
As usual, we say that $\eta$ is \emph{differentiable at~$t$}
if the limit $\eta'(t)=\lim_{s\to t}\frac{\eta(s)-\eta(t)}{s-t}$
exists in~$E$. Then $\eta$ is continuous at~$t$ in particular.\footnote{Indeed,
$\eta(s)=\eta(t)+(s-t)\frac{\eta(s)-\eta(t)}{s-t}\to\eta(t)+0\eta'(t)=\eta(t)$ as $s\to t$.}
\end{numba}
The following version of the Fundamental Theorem of Calculus is essential.
Compare \cite[25.16]{HBK} if $E$ is a Banach space.
\begin{la}\label{funda}
Let $J\sub \R$ be an interval, $E$ be a Fr\'{e}chet space and $t_0\in J$.
If $\gamma\in \cL^1(J,E)$, then the weak integrals needed to define
\[
\eta\colon J\to E\,,\quad t\mto \int_{t_0}^t\gamma(s)\, ds
\]
exist, and $\eta$ is a continuous function
which is differentiable $\lambda_1$-almost everywhere, with $\eta'=[\gamma]$.
\end{la}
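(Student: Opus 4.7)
The plan is to reduce Lemma \ref{funda} to the classical Banach-space Fundamental Theorem of Calculus by presenting the Fr\'{e}chet space $E$ as a projective limit of Banach spaces, differentiating componentwise, and then aggregating the resulting null sets.

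First, I would dispose of existence and continuity. For each $t\in J$, the restriction $\gamma|_{[\min(t_0,t),\max(t_0,t)]}$ lies in $\cL^1$, so Lemma~\ref{labase1} produces the weak integral $\eta(t)\in E$. For any continuous seminorm $q$ on~$E$ and $s,t\in J$, the bound from Lemma~\ref{labase1} gives
\[
q(\eta(s)-\eta(t))\;\leq\;\left|\int_{t}^{s}q(\gamma(r))\,dr\right|,
\]
and the right-hand side tends to $0$ as $s\to t$ by absolute continuity of the Lebesgue integral of the real-valued function $q\circ\gamma\in \cL^1(J,\R)$. Hence $\eta\colon J\to E$ is continuous.

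Next comes the reduction. Fix a countable defining family $(q_n)_{n\in\N}$ of continuous seminorms for the Fr\'{e}chet topology on~$E$, let $E_n$ be the Banach completion of $E/q_n^{-1}(0)$, and let $\pi_n\colon E\to E_n$ denote the canonical continuous linear map, so $\|\pi_n(v)\|_{E_n}=q_n(v)$. Composition with $\pi_n$ yields $\pi_n\circ\gamma\colon J\to E_n$, which is measurable with separable image and satisfies
\[
\int_J\|\pi_n(\gamma(s))\|_{E_n}\,ds\;=\;\|\gamma\|_{\cL^1,q_n}\;<\;\infty,
\]
so $\pi_n\circ\gamma\in\cL^1(J,E_n)$. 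By \ref{basicweaki}, $\pi_n(\eta(t))=\int_{t_0}^{t}\pi_n(\gamma(s))\,ds$ in $E_n$ for every $t\in J$, i.e.\ $\pi_n\circ\eta$ is the Bochner primitive of $\pi_n\circ\gamma$. The Banach-valued Fundamental Theorem of Calculus for separably-valued $\cL^1$-maps (cf.\ \cite[25.16]{HBK}) then furnishes, for each $n\in\N$, a $\lambda_1$-null set $N_n\sub J$ such that $\pi_n\circ\eta$ is differentiable at every $t\in J\setminus N_n$ with $(\pi_n\circ\eta)'(t)=\pi_n(\gamma(t))$.

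Finally, set $N:=\bigcup_{n\in\N}N_n$; this is still $\lambda_1$-null. For $t\in J\setminus N$ and every $n\in\N$,
\[
q_n\!\left(\frac{\eta(s)-\eta(t)}{s-t}-\gamma(t)\right)
=\left\|\frac{\pi_n(\eta(s))-\pi_n(\eta(t))}{s-t}-\pi_n(\gamma(t))\right\|_{E_n}
\;\longrightarrow\;0\quad\text{as }s\to t.
\]
Since the family $(q_n)$ generates the topology of~$E$, this means precisely that the difference quotients $(\eta(s)-\eta(t))/(s-t)$ converge to $\gamma(t)$ in~$E$. Thus $\eta$ is differentiable on $J\setminus N$ with $\eta'(t)=\gamma(t)$, and hence $\eta'=[\gamma]$ as elements of $L^1(J,E)$. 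The main obstacle I anticipate is purely a matter of care rather than depth: one must verify that $\pi_n\circ\gamma$ genuinely inherits measurability and separability of image in the completion~$E_n$, and one must exploit the \emph{countability} of the defining family $(q_n)$ so that the union $N=\bigcup_n N_n$ remains null—this is exactly where the Fr\'{e}chet (rather than merely locally convex) hypothesis is used.
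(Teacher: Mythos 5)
Your argument is correct, but it takes a genuinely different route from the paper's. You reduce to the Banach case: using the canonical maps $\pi_{q_n}\colon E\to \wt{E}_{q_n}$ of \ref{banquot} for a countable defining family of seminorms, you push $\gamma$ and $\eta$ forward, invoke the Banach-valued fundamental theorem of calculus (\cite[25.16]{HBK}, the very reference quoted before the lemma), and then take the countable union of the exceptional null sets -- the countability of the seminorm family being exactly where the Fr\'{e}chet hypothesis enters. The minor verifications you flag (measurability and separable image of $\pi_{q_n}\circ\gamma$, and the identification of $\pi_{q_n}\circ\eta$ with the Bochner primitive via \ref{basicweaki} and Lemma~\ref{labase1}) all go through. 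The paper instead proves the result directly in the Fr\'{e}chet setting by developing a vector-valued Lebesgue-point theory: density of $C_c(\R,E)$ in $\cL^1$ (Lemma~\ref{labasemea}\,(c)), a maximal-function estimate showing that $\lambda_1$-almost every point is a Lebesgue point (Lemma~\ref{Leblarge}), and convergence of averages over nicely shrinking sets (Lemma~\ref{Lebesguenshri}), from which one-sided difference quotients converge at every Lebesgue point. Your approach is shorter and leans on the Banach case as a black box; the paper's is self-contained, reproves that Banach case along the way, and yields the sharper information that $\eta$ is differentiable at \emph{every} Lebesgue point of $\gamma$. Interestingly, your quotient-map technique is essentially the one the paper itself uses later, in the proof of Lemma~\ref{funda3}, to treat sequentially complete (FEP)-spaces by reduction to Lemma~\ref{funda}.
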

For general locally convex spaces~$E$, we still have the following.
\begin{la}\label{funda2}
Let $J\sub \R$ be an interval, $E$ be an integral complete
locally convex space and $t_0\in J$.
If $\gamma\in \cL^\infty_{rc}(J,E)$, then the weak integrals needed to define
\[
\eta\colon J\to E\,,\quad t\mto \int_{t_0}^t\gamma(s)\, ds
\]
exist, and $\eta$ is a continuous function.
If also $\gamma_1\in \cL^\infty_{rc}(J,E)$
is a map such that $\eta(t)=\int_{t_0}^t\gamma_1(s)\, ds$
for all $t\in J$, then $[\gamma]=[\gamma_1]$.
\end{la}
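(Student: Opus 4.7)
The first two assertions come directly from Lemma~\ref{labaserc}. For any $s,t\in J$, the restriction $\gamma|_{[\min\{s,t\},\max\{s,t\}]}$ is measurable with image contained in $\wb{\gamma(J)}$, a compact metrizable set; hence it lies in $\cL^\infty_{rc}$ on a finite-measure interval, and Lemma~\ref{labaserc} produces the weak integral $\int_s^t\gamma(r)\,dr$ together with the estimate $q\bigl(\int_s^t\gamma(r)\,dr\bigr)\leq |t-s|\,\|\gamma\|_{\cL^\infty,q}$ for every continuous seminorm $q$ on $E$ (which is finite because $q(\gamma(J))$ is bounded, $\wb{\gamma(J)}$ being compact). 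Testing against arbitrary $\lambda\in E'$ and using scalar additivity of Lebesgue integrals one obtains $\eta(t')-\eta(t)=\int_t^{t'}\gamma(r)\,dr$, and combined with the estimate this yields continuity of~$\eta$.

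For the uniqueness assertion, set $\delta:=\gamma-\gamma_1$. Its image lies in $\wb{\gamma(J)}-\wb{\gamma_1(J)}$, the continuous (subtraction) image of a compact metrizable set, hence compact and metrizable by Lemma~\ref{quotcpmet}; thus $L:=\wb{\delta(J)}$ is compact metrizable in~$E$. The hypothesis together with~\ref{basicweaki} gives
\[
\int_{t_0}^t(\lambda\circ\delta)(s)\,ds=\lambda\Bigl(\int_{t_0}^t\delta(s)\,ds\Bigr)=0
\]
for every $t\in J$ and every $\lambda\in E'$. Since $\lambda\circ\delta$ is bounded and real-valued, hence in $\cL^1$ on each bounded subinterval of~$J$, the classical scalar fact that a function whose primitive vanishes identically is zero almost everywhere forces $\lambda(\delta(s))=0$ for $s$ outside a null set $N_\lambda\sub J$ depending on~$\lambda$.

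To finish, I need a \emph{countable} family $(\lambda_n)_{n\in\N}\sub E'$ separating points on~$L$. Lemma~\ref{imagesepmet} equips $\Spann(L)$ with a separable metrizable locally convex Hausdorff topology $\cO'$ coarser than the topology induced from~$E$. Taking a countable defining family of seminorms for~$\cO'$, passing to the associated (separable) quotient normed spaces, and applying Hahn--Banach in each quotient to a countable dense set yields a countable point-separating family in the dual of $(\Spann(L),\cO')$. Since $\cO'$ is coarser than the subspace topology, each such functional is continuous on $\Spann(L)$ for the induced topology and therefore extends to an element of $E'$ by Hahn--Banach. The union $N:=\bigcup_n N_{\lambda_n}$ is null, and on $J\setminus N$ every $\lambda_n$ vanishes on $\delta$; since these functionals separate points of $L\supseteq \delta(J\setminus N)\cup\{0\}$, we conclude $\delta(s)=0$ there, that is, $[\gamma]=[\gamma_1]$.

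The main obstacle is exactly the uniqueness statement: because $E$ need not be sequentially complete we cannot appeal to almost-everywhere differentiability as in Lemma~\ref{funda}, and we are forced to reduce to the scalar case through continuous linear functionals. Keeping the exceptional set null after passing to pointwise statements requires the family of functionals to be countable, which is where the metrizability of compact images built into the very definition of $\cL^\infty_{rc}$ (and exploited through Lemma~\ref{imagesepmet}) is indispensable.
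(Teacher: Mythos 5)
Your proof is correct, and while the existence/continuity part coincides with the paper's (both are immediate from Lemma~\ref{labaserc} and the estimate $q(\eta(t')-\eta(t))\leq |t'-t|\,\|\gamma\|_{\cL^\infty,q}$), your uniqueness argument takes a genuinely different route. The paper forms $K:=\wb{\conv(\wb{\im(\gamma)}\cup\wb{\im(\gamma_1)})}$, which is compact and metrizable precisely because $E$ is integral complete (Lemma~\ref{indeedmetr}), equips $F:=\Spann(K)$ with the coarser metrizable topology of Lemma~\ref{imagesepmet}, views $\gamma,\gamma_1$ as elements of $\cL^\infty_{rc}(J,F)$ whose weak integrals still coincide, and then passes to a completion $\wt{F}$ of $F$ --- a Fr\'echet space --- so that the uniqueness part of Lemma~\ref{funda} (almost-everywhere differentiability of primitives) applies. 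You instead scalarize: testing against $\lambda\in E'$ reduces everything to the classical fact that a locally integrable real function with identically vanishing primitive is zero almost everywhere, and the only vector-valued input is a countable point-separating family of continuous functionals on $\Spann(\wb{\delta(J)})$, manufactured from Lemma~\ref{imagesepmet} via separable quotient normed spaces and Hahn--Banach extensions to $E'$ (your separation claim should be read on the span rather than on $L$ itself, but that is exactly what your construction provides, so there is no gap). What each approach buys: yours bypasses the closed convex hull, the metric convex compactness property and the completion/Fr\'echet reduction entirely --- integral completeness is needed only through Lemma~\ref{labaserc} for existence --- and rests on the elementary scalar ``vanishing primitive'' statement at the cost of explicitly building the countable total dual family; the paper's route is shorter on the page because Lemmas~\ref{funda}, \ref{indeedmetr} and \ref{imagesepmet} are already available and it stays within the reduction-to-Fr\'echet pattern used elsewhere in the article.
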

\begin{numba}
In the situation of Lemmas~\ref{funda} and \ref{funda2},
respectively, we shall write $\eta':=[\gamma]$.
Since $[\gamma]$ is uniquely determined by $\eta$
in Lemma~\ref{funda2} (and $\gamma(t)=\eta'(t)$
$\lambda_1$-almost everywhere in Lemma~\ref{funda}),
we see that $\eta'\in L^\infty_{rc}(J,E)$ (resp.,
$\eta'\in L^1(J,E)$) is well-defined.
\end{numba}
If $E$ is a Fr\'{e}chet space,
then a function $\gamma \colon X\to E$
is in $\cL^\infty_{rc}(X,\mu,E)$
if and only if there exists a sequence
$(\gamma_n)_{n\in \N}$
of measurable
functions $\gamma_n\colon \colon X\to E$
with finite image such that $\gamma_n\to \gamma$
uniformly (see \cite[Corollary 3.19]{MEA}).\footnote{If $\gamma_n$ exist,
then even
$\gamma_n$ with $\im\,\gamma_n\sub \im\,\gamma$ exist,
as we may choose $v_1,\ldots, v_{m(n)}\in \im(\gamma)$
in the proof of \cite[Proposition 3.18]{MEA}.
We shall not use this fact.}
%
% might be useful if one considers f \circ \gamma
% with f defined on a non-open set.
%
Passing to more restrictive
functions $\gamma_n$ (the step functions),
we arrive at the notion
of regulated functions.
\begin{numba}
Let $E$ be a locally convex space and $a,b\in \R$ with $a<b$.
Let $\cT([a,b],E)$
be the set of all functions
$\gamma\colon [a,b]\to E$
for which there exists a partition
\[
a=t_0<t_1<\cdots< t_{n-1}<t_n=b
\]
of $[a,b]$
such that $\gamma|_{]t_{j-1},t_j[}$ is constant
for all $j\in \{1,\ldots, n\}$.
We let
\[
\cR([a,b],E)\sub \cL^\infty_{rc}([a,b],E)
\]
be the space of functions $\gamma \colon [a,b]\to E$
which are the uniform limit of a sequence
$(\gamma_n)_{n\in \N}$ in $\cT([a,b],E)$.
Such functions are called \emph{regulated
functions} from $[a,b]$ to~$E$.
Then $\cR([a,b],E)$
is a vector subspace of $\cL^\infty_{rc}([a,b],E)$
and $R([a,b],E):=\{[\gamma]\colon \gamma\in \cR([a,b],E)\}$
is a vector subspace of $L^\infty_{rc}([a,b],E)$.
We endow both vector subspaces with the induced topology.
\end{numba}
We mention:
%
% Probably I do not need this?
% I think I can use this for compatibility with
% projective limits. TRY!!!
%
%
\begin{la}\label{stepswithin}
If $E$ is a locally convex space, $a<b$ in $\R$
and $\gamma\in \cR([a,b],E)$,
then the sequence $(\gamma_n)_{n\in \N}$ in $\cT([a,b],E)$
such that $\gamma_n\to \gamma$ uniformly can be chosen such that
$\gamma_n([a,b])\sub \gamma([a,b])$ for all
$n\in \N$.
\end{la}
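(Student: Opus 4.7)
The plan is to start from the given approximating sequence and replace each constant value on an open subinterval by a value actually attained by $\gamma$ on that subinterval, paying for this with a factor of~$2$ in the uniform error. No new ideas are needed beyond the triangle inequality for continuous seminorms.

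More concretely, I would begin by fixing any sequence $(\tilde\gamma_n)_{n\in\N}$ in $\cT([a,b],E)$ with $\tilde\gamma_n\to \gamma$ uniformly. For each~$n$, choose a partition $a=t^n_0<t^n_1<\cdots<t^n_{k_n}=b$ on the open pieces of which $\tilde\gamma_n$ is constant, and on each open interval $\,]t^n_{j-1},t^n_j[\,$ pick an auxiliary point $s^n_j$. Then define $\gamma_n\in\cT([a,b],E)$ by declaring $\gamma_n$ to be constantly equal to $\gamma(s^n_j)$ on $\,]t^n_{j-1},t^n_j[\,$ and setting $\gamma_n(t^n_j):=\gamma(t^n_j)$ at the partition points. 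By construction $\gamma_n([a,b])\subseteq \gamma([a,b])$.

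It remains to verify uniform convergence. For any continuous seminorm~$q$ on~$E$, any $n$, and any $t\in[a,b]$, either $t$ is one of the partition points (where $\gamma_n(t)=\gamma(t)$) or $t\in\,]t^n_{j-1},t^n_j[\,$ for some~$j$, in which case
\[
q(\gamma_n(t)-\gamma(t))=q(\gamma(s^n_j)-\gamma(t))\leq q(\gamma(s^n_j)-\tilde\gamma_n(s^n_j))+q(\tilde\gamma_n(t)-\gamma(t)),
\]
using that $\tilde\gamma_n(s^n_j)=\tilde\gamma_n(t)$ since both $s^n_j$ and~$t$ lie in the open interval on which $\tilde\gamma_n$ is constant. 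Thus $\sup_{t\in[a,b]}q(\gamma_n(t)-\gamma(t))\leq 2\sup_{t\in[a,b]} q(\tilde\gamma_n(t)-\gamma(t))\to 0$, so $\gamma_n\to\gamma$ uniformly, as required.

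There is no real obstacle; the only subtlety is that a member of $\cT([a,b],E)$ is only required to be constant on the \emph{open} subintervals of the partition, so we have complete freedom to reassign the values at the finitely many partition points, which is what lets us absorb them into $\gamma([a,b])$ via $\gamma_n(t^n_j):=\gamma(t^n_j)$ without upsetting the step-function structure.
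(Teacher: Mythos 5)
Your argument is correct, and it takes a genuinely different (and more elementary) route than the paper. The paper starts from the same given sequence $(\eta_n)_{n\in\N}$ in $\cT([a,b],E)$, but replaces its finitely many \emph{values} $y_{n,1},\ldots,y_{n,\ell_n}$ by nearby points $z_{n,j}\in\gamma([a,b])$ whose closeness is only measured in one seminorm $q_n$ at a time; to conclude uniform convergence from such one-seminorm-at-a-time estimates, it first invokes Lemma~\ref{imagesepmet} to produce a countable increasing family $q_1\leq q_2\leq\cdots$ of continuous seminorms which is Hausdorff on $\Spann(\wb{\gamma([a,b])})$ and induces the original topology on the compact metrizable set $\wb{\gamma([a,b])}$, and then uses a neighbourhood-basis argument there. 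You instead do the replacement \emph{interval-wise}: on each open interval of constancy of $\tilde\gamma_n$ you substitute the value $\gamma(s^n_j)$ for a point $s^n_j$ of that same interval (and put $\gamma(t^n_j)$ at the partition points, which is legitimate since membership in $\cT([a,b],E)$ only constrains the open subintervals). Because the comparison point lies in the same interval, the triangle inequality gives $q(\gamma_n(t)-\gamma(t))\leq 2\sup_s q(\tilde\gamma_n(s)-\gamma(s))$ simultaneously for \emph{every} continuous seminorm $q$, so uniform convergence is immediate with a factor-$2$ loss and no appeal to metrizability or countable families of seminorms is needed. Your version is shorter, avoids the dependence on Lemma~\ref{imagesepmet}, keeps the original indexing (no passage to a subsequence $\eta_{m_n}$), and even yields a quantitative error bound; the paper's value-wise substitution, by contrast, must cope with a value being attained on several intervals, which is exactly what forces its detour through the seminorms $q_n$.
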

\begin{numba}\label{regulatedfrech}
If $E$ is a Fr\'{e}chet space, then
$R([a,b],E)$ is the closure
of (equivalence classes of) $\cT([a,b],E)$ in $L^\infty_{rc}([a,b],E)$.
Thus $R([a,b],E)$
is a Fr\'{e}chet space.
\end{numba}
\begin{numba}\label{deflinpfwd}
Let $(X,\Sigma,\mu)$ be a measure space.
If $\lambda\colon E\to F$ is a continuous
linear map between Fr\'{e}chet spaces,
then
\[
L^p(X,\mu,\lambda)\colon L^p(X,\mu,E)\to L^p(X,\mu,F),\quad
[\gamma]\mto[\lambda\circ \gamma]
\]
(and the map
$\cL^p(X,\mu,\lambda)\colon \cL^p(X,\mu,E)\to \cL^p(X,\mu,F)$,
$\gamma\mto\lambda\circ \gamma$)
is continuous linear, for each $p\in [1,\infty]$.
In fact, if $q$ is a continuous seminorm
on $F$, then $q\circ\lambda$ is a continuous seminorm
on~$E$ and $\|\lambda\circ \gamma\|_{\cL^p,q}=\|\gamma\|_{L^p,q\circ\lambda}
\leq \|\gamma\|_{L^p,q\circ\lambda}$ for all $\gamma\in \cL^p(X,\mu,E)$,
entailing that the linear maps $\cL^p(X,\mu,\lambda)$
and $L^p(X,\mu,\lambda)$ are continuous.
Similarly,
the linear maps
\[
L^\infty_{rc}(X,\mu,\lambda)\colon L^\infty_{rc}(X,\mu,E)\to L^\infty_{rc}(X,\mu,F),\quad
[\gamma]\mto[\lambda\circ \gamma]
\]
and
$\cL^\infty_{rc}(X,\mu,\lambda)\colon \cL^\infty_{rc}(X,\mu,E)\to \cL^\infty_{rc}(X,\mu,F)$,
$\gamma\mto\lambda\circ \gamma$
are continuous linear for each linear map $\lambda\colon E\to F$
between locally convex spaces,
and are the maps
\[
R([a,b],\lambda)\colon R([a,b],E)\to R([a,b],F),\quad
[\gamma]\mto[\lambda\circ \gamma]
\]
and $\cR([a,b],E)\to \cR([a,b],F)$, $\gamma\mto \lambda\circ\gamma$.
\end{numba}
\begin{numba}\label{Lebspprod}
As a consequence of \ref{deflinpfwd},
we have that
\[
L^p(X,\mu,E_1\times E_2)\cong L^p(X,\mu,E_1)\times L^p(X,\mu, E_2)
\]
for all $p\in [1,\infty]$ and Fr\'{e}chet spaces
$E_1$ and $E_2$.
Similarly,
\[
L^\infty_{rc}(X,\mu,E_1\times E_2)\cong L^\infty_{rc}(X,\mu,E_1)\times
L^\infty_{rc}(X,\mu, E_2)
\]
for all locally convex spaces $E_1$ and $E_2$ and
$R([a,b],E_1\times E_2)\cong R([a,b],E_1)\times R([a,b], E_2)$.
\end{numba}
The following two lemmas will help us to deal with locally convex
direct sums and locally convex direct limits.
\begin{la}\label{psemiondsum}
Let $(E_n)_{n\in \N}$ be a sequence of locally convex spaces.
Fix $p\in [0,\infty]$.
Then \begin{equation}\label{formulasemn}
\bigoplus_{n\in\N}E_n\to[0,\infty[,\quad (x_n)_{n\in\N}\mto\|(q_n(x_n))_{n\in\N}\|_{\ell^p}
\end{equation}
is a continuous seminorm on the locally convex direct sum,
for each sequence $(q_n)_{n\in \N}$ of continuous seminorms $q_n\colon E_n\to[0,\infty[$.
The locally convex direct sum topology on $\bigoplus_{n\in\N}E_n$ is defined
by the set of seminorms of the form \emph{(\ref{formulasemn})}. \end{la}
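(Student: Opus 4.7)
The plan is to establish the two assertions separately: first, that the formula in~(\ref{formulasemn}) defines a continuous seminorm for each choice of $p$ and $(q_n)_{n\in\N}$; and second, that seminorms of this shape suffice to recover the locally convex direct sum topology.

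First, I would observe that every element $(x_n)_{n\in\N}\in \bigoplus_{n\in\N}E_n$ has finite support, so $(q_n(x_n))_{n\in\N}$ is a finitely supported sequence of non-negative reals and $\|(q_n(x_n))_{n\in\N}\|_{\ell^p}$ is a well-defined finite non-negative real (for $p=\infty$, interpreted as the maximum over the support). Positive homogeneity is immediate from the homogeneity of each $q_n$ and of $\|\cdot\|_{\ell^p}$. The triangle inequality follows by combining the pointwise estimate $q_n(x_n+y_n)\leq q_n(x_n)+q_n(y_n)$ (which yields a componentwise inequality of non-negative sequences, hence a monotonicity for $\|\cdot\|_{\ell^p}$) with Minkowski's inequality for $\ell^p$ applied to the sequences $(q_n(x_n))_n$ and $(q_n(y_n))_n$. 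Thus the formula defines a seminorm on $\bigoplus_{n\in\N}E_n$.

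Next, for continuity, I would invoke the universal property of the locally convex direct sum: a seminorm $r$ on $\bigoplus_{m\in\N}E_m$ is continuous if and only if $r\circ \iota_n$ is a continuous seminorm on $E_n$ for each canonical inclusion $\iota_n\colon E_n\to \bigoplus_{m\in\N}E_m$, $x\mto (0,\ldots,0,x,0,\ldots)$. For the seminorm defined by~(\ref{formulasemn}), this restriction is precisely $q_n$, which is continuous by hypothesis. Hence the seminorm~(\ref{formulasemn}) is continuous.

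Finally, to see that the seminorms of the form~(\ref{formulasemn}) define the topology, I would reduce to the standard description of the locally convex direct sum topology: it is already generated by the seminorms obtained by setting $p=1$, that is, by the seminorms $(x_n)_{n\in\N}\mapsto \sum_{n\in\N}q_n(x_n)$ as $(q_n)_{n\in\N}$ runs through all sequences of continuous seminorms on the factors (this is the classical description of locally convex direct sums via absolutely convex hulls of unions of zero-neighbourhoods; see, e.g., Jarchow, \emph{Locally Convex Spaces}). Since these seminorms are exactly the $p=1$ instances of~(\ref{formulasemn}), and since we have just shown that every seminorm of the form~(\ref{formulasemn}) is continuous, the enlarged family defines the same direct sum topology. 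The only nontrivial input is the $p=1$ case, which is classical; the continuity estimate for arbitrary $p\in [1,\infty]$ follows trivially from $\|\cdot\|_{\ell^p}\leq \|\cdot\|_{\ell^1}$ on finitely supported sequences, so no new work is required there.
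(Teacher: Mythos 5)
Your seminorm verification and your continuity argument are fine: reducing continuity of a seminorm on the locally convex direct sum to continuity of its restrictions along the inclusions $\iota_n$ is legitimate (the set $\{r\le 1\}$ is absolutely convex and contains the absolutely convex hull of $\bigcup_n\iota_n(\{q_n\le 1\})$, a basic $0$-neighbourhood), and it is interchangeable with the paper's route via $\bigoplus_{n\in\N}E_n=\dl\,(E_1\times\cdots\times E_N)$. The gap is in your last step. The lemma fixes $p$ and asserts that the seminorms (\ref{formulasemn}) \emph{for that fixed $p$ alone} define the direct sum topology; this is exactly what is used later (Lemma~\ref{pbasisindl}, and Proposition~\ref{Mujileb}\,(a) with $p=\infty$). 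What you prove is that the family obtained by adjoining the fixed-$p$ seminorms to the $p=1$ seminorms generates the direct sum topology -- which is automatic once continuity is known -- not that the fixed-$p$ seminorms by themselves do. The inequality $\|\cdot\|_{\ell^p}\le\|\cdot\|_{\ell^1}$ you invoke points in the wrong direction: it yields $Q_p\le Q_1$, i.e.\ once more the (already established) coarseness of the topology generated by the $Q_p$, whereas what is missing is that every $\ell^1$-type seminorm $x\mto\sum_n q_n(x_n)$ is dominated by \emph{some} seminorm of the form (\ref{formulasemn}) with the given exponent~$p$.

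To close the gap you must produce, for each sequence $(q_n)_{n\in\N}$, continuous seminorms $(q_n')_{n\in\N}$ and $C>0$ with $\sum_n q_n(x_n)\le C\,\|(q_n'(x_n))_{n\in\N}\|_{\ell^p}$. Two standard ways: (i) the paper's route -- for a \emph{countable} direct sum the locally convex direct sum topology is the box topology, so the $p=\infty$ seminorms generate it, and then $Q_\infty\le Q_p$ pointwise settles every $p\in[1,\infty]$; or (ii) compatible with your $p=1$ description, set $q_n':=2^nq_n$ and use H\"{o}lder with the conjugate exponent $p'$, giving $\sum_n q_n(x_n)=\sum_n 2^{-n}q_n'(x_n)\le\|(2^{-n})_{n\in\N}\|_{\ell^{p'}}\,\|(q_n'(x_n))_{n\in\N}\|_{\ell^p}$. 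Note that either fix uses the countability of the index set in an essential way (for uncountable index sets the fixed-$p$ statement fails for every $p>1$), while your argument as written never uses countability -- a further sign that a step is missing. A minor point: your Minkowski argument tacitly needs $p\ge 1$; the ``$p\in[0,\infty]$'' in the statement should be read as $p\in[1,\infty]$.
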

More explicitly, the seminorms are given by
\[
\left(\sum_{n=1}^\infty(q_n(x_n))^p\right)^{1/p}
\]
if $p<\infty$. For $p=\infty$, they are given by
\[
\sup\{q_n(x_n)\colon n\in\N\}.
\]
\begin{la}\label{pbasisindl}
Let $E_1\sub E_2\sub\cdots$ be an ascending sequence of locally convex spaces
such that all inclusion maps $E_n\to E_{n+1}$ are continuous linear.
Endow $E:=\bigcup_{n\in \N}E_n$ with the $($not necessarily Hausdorff$)$
locally convex direct limit topology.
Fix $p\in [1,\infty]$.
Then the sets
\[
\left\{ \sum_{n=1}^\infty v_n\colon (v_n)_{n\in\N}\in \bigoplus_{n\in\N}E_n\;\mbox{such that}\;
\|(q_n(v_n))_{n\in\N}\|_{\ell^p}<1
\right\}
\]
%
%\[
%\left\{v_1+\cdots+v_N\colon N\in\N, v_n\in E_n\;\mbox{such that}\;
%\sum_{n=1}^N (q_n(v_n))^p<1\right\}
%\]
form a basis of $0$-neighbourhoods for $E$,
if we let $(q_n)_{n\in \N}$ run through the set
of all sequences of continuous seminorms $q_n$ on $E_n$.
\end{la}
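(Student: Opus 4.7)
The plan is to realize $E$ as a topological quotient of the locally convex direct sum $D := \bigoplus_{n \in \N} E_n$ via the summation map, and then push forward the $0$-neighbourhood basis of~$D$ provided by Lemma~\ref{psemiondsum}.

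First, I would consider the linear map
\[
\sigma \colon D \to E, \qquad (v_n)_{n \in \N} \mapsto \sum_{n \in \N} v_n,
\]
which is well-defined and surjective since every element of~$D$ has finite support. Since $\sigma$ composed with the canonical embedding $\iota_n \colon E_n \to D$ equals the inclusion $E_n \hookrightarrow E$, the universal property of the locally convex direct sum yields continuity of~$\sigma$ with respect to the direct limit topology on~$E$. Conversely, any locally convex topology on~$E$ rendering $\sigma$ continuous makes each inclusion $E_n \hookrightarrow E = \sigma \circ \iota_n$ continuous, so the quotient topology on $E \cong D / \ker(\sigma)$ is finer than (in fact equal to) the direct limit topology. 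Thus the direct limit topology on~$E$ coincides with the quotient topology induced by~$\sigma$, with Hausdorffness playing no role, as the lemma admits.

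Next, for the fixed $p \in [1,\infty]$, Lemma~\ref{psemiondsum} tells me that the sets
\[
V_{(q_n)} := \left\{(v_n)_{n \in \N} \in D : \|(q_n(v_n))_{n \in \N}\|_{\ell^p} < 1\right\}
\]
form a basis of $0$-neighbourhoods in~$D$ as $(q_n)$ ranges over sequences of continuous seminorms on the~$E_n$. Each $V_{(q_n)}$ is the open unit ball of a continuous seminorm, hence convex and balanced. Its image under~$\sigma$ is exactly
\[
W_{(q_n)} := \left\{\sum_{n \in \N} v_n : (v_n)_{n \in \N} \in D,\; \|(q_n(v_n))_{n \in \N}\|_{\ell^p} < 1\right\},
\]
the set shape appearing in the lemma. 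Since $\sigma$ is a quotient map, $\sigma(V_{(q_n)}) = W_{(q_n)}$ is a $0$-neighbourhood in~$E$; conversely, any $0$-neighbourhood $W \subseteq E$ satisfies $\sigma^{-1}(W) \supseteq V_{(q_n)}$ for some~$(q_n)$, whence $W \supseteq W_{(q_n)}$.

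The main obstacle is the first step: the careful check that the locally convex direct limit topology on~$E$ agrees with the quotient topology from~$D$ via~$\sigma$. Once that universal-property bookkeeping is verified, the description of the $0$-neighbourhood basis transfers immediately from Lemma~\ref{psemiondsum}.
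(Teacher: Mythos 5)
Your proposal is correct and follows the same route as the paper: the paper's proof also identifies $E$ as the image of the summation map $\bigoplus_{n\in\N}E_n\to E$, notes that this is an open linear quotient map, and pushes forward the $0$-neighbourhood basis from Lemma~\ref{psemiondsum}. You merely spell out the universal-property verification that the paper dismisses as well-known (with the two comparison directions in your second paragraph slightly mislabelled, though both inequalities are in fact established there).
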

Also beyond metric spaces, let us say that a topological
space~$X$ is \emph{separable}
if it contains a countable dense subset.
\begin{defn}\label{deffep}
Let us say that a locally convex space $E$ has the
\emph{Fr\'{e}chet exhaustion property} (FEP)
if every separable closed vector subspace $S\sub E$
can be written as a union $S=\bigcup_{n\in \N}F_n$
of vector subspaces $F_1\sub F_2\sub\cdots$ of~$E$
which are Fr\'{e}chet spaces in the induced topology.
In this case, we also say that $E$ is a (FEP)-\emph{space}.
\end{defn}
For example, every Fr\'{e}chet space
has the Fr\'{e}chet exhaustion property
(as we can take $F_n:=S$ for all $n\in \N$).\\[2.3mm]
By the next lemma, the Fr\'{e}chet spaces $F_n$ in Definition~\ref{deffep}
are separable.
\begin{la}\label{metrinsep}
Let $E$ be a locally convex space
and $F\sub E$ be a vector subspace, endowed with the induced topology.
If $E$ is separable and $F$ is metrizable,
then also $F$ is separable.
\end{la}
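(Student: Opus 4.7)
The plan is to produce a single continuous pseudometric on $E$ whose restriction to $F$ metrizes the subspace topology, and then invoke the classical fact that every subspace of a separable pseudometric space is separable.

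First, since $F$ with its subspace topology is a metrizable topological vector space, it admits a countable $0$-neighbourhood basis $(V_k)_{k\in\N}$. The induced topology on~$F$ is generated by the restrictions to~$F$ of all continuous seminorms on~$E$; taking finite maxima (again continuous seminorms on~$E$), one finds continuous seminorms $p_k\colon E\to[0,\infty[$ and numbers $\varepsilon_k>0$ with $\{x\in F:p_k(x)<\varepsilon_k\}\sub V_k$. Hence the sets $\{x\in F:p_k(x)<\varepsilon\}$ (for $k\in\N$ and $\varepsilon>0$) form a basis of $0$-neighbourhoods for~$F$.

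Next, I would form the continuous pseudometric
\[
d(x,y):=\sum_{n=1}^\infty 2^{-n}\min\{1,p_n(x-y)\}
\]
on~$E$. A standard estimate shows that the restriction $d|_{F\times F}$ generates the same topology as the family $(p_k|_F)$, hence the given topology on~$F$; and $d|_{F\times F}$ is a genuine metric because~$F$ is Hausdorff. Since $d$ is continuous on~$E$, the pseudometric topology it defines on~$E$ is coarser than the locally convex topology, so any countable dense subset $D=\{d_k:k\in\N\}$ of~$E$ remains dense in the pseudometric space $(E,d)$.

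Finally, I would apply the classical separability argument for subspaces of separable pseudometric spaces: for each pair $(k,n)\in\N^2$ such that $\{x\in F:d(x,d_k)<1/n\}$ is non-empty, select one element $y_{k,n}$ in that set. For any $f\in F$ and $\varepsilon>0$, choosing $n$ with $2/n<\varepsilon$ and $d_k\in D$ with $d(d_k,f)<1/n$ ensures that the pair $(k,n)$ has been selected, and the triangle inequality gives $d(f,y_{k,n})\le d(f,d_k)+d(d_k,y_{k,n})<2/n<\varepsilon$. Thus $\{y_{k,n}:(k,n)\in\N^2\}$ is a countable dense subset of~$F$.

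The main point to verify is the first step, namely the passage from metrizability of the subspace topology on~$F$ to a countable family of continuous seminorms on~$E$ generating it; but this is a routine consequence of the existence of a countable $0$-neighbourhood basis in~$F$ together with the definition of the induced locally convex topology. The remainder of the argument is then entirely standard metric-space reasoning.
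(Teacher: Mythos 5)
Your proof is correct and follows essentially the same route as the paper: both arguments first use the countable $0$-neighbourhood basis of $F$ to extract a countable family of continuous seminorms on $E$ whose restrictions generate the subspace topology, and then pick, for each point of a countable dense subset of $E$ and each radius, a nearby point of $F$, concluding by the triangle inequality. Your packaging of the seminorms into a single pseudometric $d=\sum_n 2^{-n}\min\{1,p_n\}$ is only a cosmetic repackaging of the paper's direct argument with the seminorms $q_n$.
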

\begin{numba}\label{Lpfepvsp}
If a locally convex space~$E$
has the Fr\'{e}chet exhaustion property
and $(X,\Sigma,\mu)$ is a measure space,
then the measurable functions $\gamma\colon X\to E$
with separable image and
\[
\|\gamma\|_{\cL^p,q}:=\|q\circ \gamma\|_{\cL^p}<\infty
\]
for all $q\in P(E)$ form a vector space
$\cL^p(X,\mu,E)$, giving rise to a Hausdorff
locally convex space $L^p(X,\mu,E)$
of equivalence classes
(as we prove in Appendix~\ref{appsecprel}).\footnote{If $E$ does
not have the (FEP),
then there is no reason why $\cL^p(X,\mu,E)$
should be closed under sums (as sums might not be measurable); it might not be a vector
space.} As in the Fr\'{e}chet case, we write
$\|[\gamma]\|_{L^p,q}:=\|\gamma\|_{\cL^p,q}$.
\end{numba}
\begin{la}\label{lafep}
\begin{itemize}
\item[\rm(a)]
Every locally convex direct
sum $E:=\bigoplus_{j\in J}E_j$
of sequentially complete \emph{(FEP)}-spaces\footnote{This condition is satisfied, e.g.,
if each $E_j$ is a Fr\'{e}chet space.}
is sequentially complete
and has the \emph{(FEP)}.
In this case,
\[
L^p(X, \mu,E)=\bigoplus_{j\in J}L^p(X,\mu,E_j)
\]
as a vector space,
for each $p\in [1,\infty]$ and measure space $(X,\Sigma,\mu)$.
Moreover,
\[
L^1(X,\mu,E)=\bigoplus_{j\in J}L^1(X,\mu,E_j)
\]
as a locally convex space.
%
% !!!
% sollte auch fuer L^p ok sein wenn J abzaehlbar ist -
% vielleicht spaeter einbauen
%
%
\item[\rm(b)]
If a locally convex space $E$ has the \emph{(FEP)},
then every closed vector subspace $F\sub E$ has the \emph{(FEP)}.
\item[\rm(c)]
Every
strict $($LF$)$-space $E=\dl\,E_n$\vspace{-1.3mm}
has the \emph{(FEP)}.
In this case,
\[
L^p(X,\mu,E)=\bigcup_{n\in \N} L^p(X,\mu,E_n)
\]
as a vector space,
for each $p\in [1,\infty]$ and measure space $(X,\Sigma,\mu)$.
\item[\rm(d)]
Let $k\in \N_0\cup\{\infty\}$ and $\Gamma_c^{C^k}(V)$ be the space of compactly supported
$C^k$-sections in a
vector bundle $V$ over a paracompact
finite-dimensional smooth manifold~$M$,
whose typical fibre~$F$ is a Fr\'{e}chet space.\footnote{The topology
on this space is recalled in Appendix~\ref{appsecprel},
see \ref{deftosec}.}
Then $\Gamma_c^{C^k}(V)$ has the \emph{(FEP)},
and
\[
L^p(X,\mu,\Gamma_c^{C^k}(V))=\bigcup_{K} L^p(X,\mu,\Gamma_K^{C^k}(V))
\]
for each $p\in [1,\infty]$ and measure space $(X,\Sigma,\mu)$,
where $K$ ranges through the set of compact
subsets of~$M$ and
\[
\Gamma_K^{C^k}(V):=\{\sigma\in C^{C^k}_c(M)\colon
(\forall x\in M\setminus K)\;\sigma(x)=0\}.
\]
In particular:
\item[\rm(e)]
The space $\cX_c(M)=\Gamma_c(TM)$ of compactly
supported smooth vector fields on a paracompact finite-dimensional
smooth manifold~$M$ has the \emph{(FEP)}
$($which is the modelling space
of the Lie group $\Diff_c(M))$.
\end{itemize}
\end{la}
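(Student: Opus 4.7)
The plan is to handle the parts in the order (b), (c), (a), (d), (e). Parts (b) and (c) are quick. For (b), a separable closed $S\sub F\sub E$ is also closed in $E$, so the (FEP) of~$E$ gives a Fr\'echet exhaustion $S=\bigcup_nF_n$ in $E$; the induced topology on each $F_n$ from $F$ agrees with that from $E$, so $S$ has the (FEP) in~$F$. For (c), given any closed $S\sub E=\bigcup_n E_n$ (separability is not even needed), set $F_n:=S\cap E_n$; each is closed in the Fr\'echet space $E_n$ and hence Fr\'echet in the induced topology of~$E$, and $S=\bigcup_nF_n$. The $L^p$-part of (c) asserts that each $[\gamma]\in L^p(X,\mu,E)$ is a.e.\ supported in some single step~$E_n$; this uses that in a strict (LF)-space one can construct continuous seminorms growing unboundedly along the filtration, which would make $\|\gamma\|_{L^p,q}=\infty$ if $\gamma$ met infinitely many steps on a set of positive measure.

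For the (FEP) in part~(a), let $S\sub E=\bigoplus_{j\in J}E_j$ be separable closed with dense countable $\{d_k\}$. Each $d_k$ has finite support, so $J':=\bigcup_k\Supp(d_k)$ is countable; the subsum $E':=\bigoplus_{j\in J'}E_j$ is topologically embedded and closed in~$E$ (as the simultaneous kernel of the continuous projections $\pi_j$, $j\notin J'$), so $S\sub E'$, reducing the problem to $J=\N$. Apply the (FEP) of each summand to write $\wb{\pi_n(S)}=\bigcup_k H_{n,k}$ with $H_{n,k}$ Fr\'echet subspaces of~$E_n$. The finite products $G_m:=\prod_{n\leq m}H_{n,m}$ embed as Fr\'echet subspaces of~$E$ (finite direct sums coincide with finite products and embed topologically into the full direct sum); any $x\in S$ with support in $\{1,\ldots,N\}$ and $x_n\in H_{n,k_n}$ lies in $G_m$ for $m:=\max(N,k_1,\ldots,k_N)$, so $\widetilde{G}_m:=S\cap G_m$ yields the required Fr\'echet exhaustion. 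Sequential completeness of~$E$ is standard: a Cauchy sequence is bounded, hence supported in a finite subsum, which is a finite product of sequentially complete spaces.

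The $L^p$ decomposition in (a) is the delicate point. One inclusion is obvious; conversely, any $[\gamma]\in L^p(X,\mu,E)$ has separable image and factors through $\bigoplus_{j\in J'}E_j$ for countable $J'$, defining $\gamma_j:=\pi_j\circ\gamma$. The key claim is that only \emph{finitely} many $[\gamma_j]$ are nonzero: otherwise, for each such~$j$ choose $q_j\in P(E_j)$ with $\|\gamma_j\|_{L^p,q_j}>0$ and rescale suitably so that the continuous seminorm $P(x):=\sum_jq_j(x_j)$ on~$E$ forces $\|\gamma\|_{L^p,P}=\infty$ by an $\ell^p$-style estimate (cf.\ Lemma~\ref{psemiondsum}), a contradiction. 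For $p=1$, the identity $\|\gamma\|_{L^1,\sum_jq_j}=\sum_j\|\gamma_j\|_{L^1,q_j}$ matches the defining seminorms on both sides and yields the topological isomorphism.

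For (d), and hence (e) with $V=TM$, paracompactness and finite-dimensionality decompose $M=\bigsqcup_iM_i$ into $\sigma$-compact connected components, giving $\Gamma_c^{C^k}(V)=\bigoplus_i\Gamma_c^{C^k}(V|_{M_i})$. A compact exhaustion of each $M_i$ realises $\Gamma_c^{C^k}(V|_{M_i})$ as a strict (LF)-space, to which (c) applies (FEP and sequential completeness); (a) then lifts (FEP) and the direct-sum $L^p$ decomposition to the whole of $\Gamma_c^{C^k}(V)$. The formula $L^p(X,\mu,\Gamma_c^{C^k}(V))=\bigcup_K L^p(X,\mu,\Gamma_K^{C^k}(V))$ is obtained by applying the strict (LF)-step decomposition from~(c) inside each of the finitely many components that~(a) leaves active. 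The main obstacle will be the finite-support $L^p$-argument in (a), where the seminorm-rescaling must be executed carefully; a secondary subtlety is verifying that $\prod_{n\leq m}H_{n,m}$ carries the product (hence Fr\'echet) topology when viewed as a subspace of the ambient direct sum.
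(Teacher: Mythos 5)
Your arguments for (a), (b) and (c) are essentially the paper's own: the reduction to a countable subsum via the countable dense set, the exhaustion of $S$ by $S\cap\bigl(F_{j_1,n}\times\cdots\times F_{j_n,n}\bigr)$, the closedness of $S\cap E_n$ in each step of a strict (LF)-space, and the finiteness-of-support argument for $L^p$ via a rescaled seminorm $\sum_j q_j$ dominating each $q_{j_n}\circ\pi_{j_n}$ (the paper arranges $\|\gamma_{j_n}\|_{\cL^p,q_{j_n}}\geq n$ and gets the same contradiction), including the $L^1$-identity $\|\gamma\|_{L^1,\sum_j q_j}=\sum_j\|\gamma_j\|_{L^1,q_j}$; your explicit treatment of sequential completeness is a small addition the paper leaves implicit. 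Where you genuinely diverge is (d)--(e): the paper does \emph{not} decompose $M$ into connected components, but instead embeds $\Gamma_c^{C^k}(V)$ via restrictions to a locally finite cover by relatively compact trivializing sets as a closed, complemented subspace of $\bigoplus_{j\in J}\Gamma^{C^k}(V|_{U_j})$ (Lemma~\ref{toviasum}, using a partition of unity), so that only (a) and (b) are needed, and the compact-support form of the $L^p$-statement drops out immediately because a finite subsum corresponds to the compact set $\overline{\bigcup_{j\in J_0}U_j}$. Your route (components are $\sigma$-compact, each $\Gamma_c^{C^k}(V|_{M_i})$ is a strict (LF)-space, then combine (c) with (a)) is also viable, but it carries three obligations the paper's route avoids: the topological identification $\Gamma_c^{C^k}(V)\cong\bigoplus_i\Gamma_c^{C^k}(V|_{M_i})$ over the (possibly uncountable) set of components, sequential completeness and the strict (LF) structure of each summand, and above all the $L^p$-part of (c), which you only sketch; in the paper that statement is in fact established separately (in the surjectivity part of Proposition~\ref{Mujileb}\,(b), via quotient seminorms vanishing on the steps, rescaled and summed, with continuity checked stepwise), so in your architecture it becomes load-bearing and must be written out. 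None of this is a gap---each point is true and provable along the lines you indicate, and the finite-subsum embedding subtlety you flag is the standard fact that finite partial sums carry the product topology, which the paper also uses without comment---but the paper's partition-of-unity embedding buys a shorter path to (d), while your decomposition is more intrinsic and reuses (c) instead of Lemma~\ref{toviasum}.
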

\begin{rem}
Lemma~\ref{lafep}\,(c) will be strengthened
further in Proposition~\ref{Mujileb}
(a certain analogue of Mujica's Theorem
on spaces of continuous functions
to locally convex direct limits).
\end{rem}
\begin{la}\label{fepweakint}
Let $E$ be a locally convex space.
If $E$ is sequentially complete
and has the Fr\'{e}chet exhaustion property,
then the weak integral
\[
\int_X \gamma \, d\mu
\]
exists in~$E$ for each measure space $(X,\Sigma,\mu)$
and each $\gamma\in \cL^1(X,\Sigma,\mu)$.
\end{la}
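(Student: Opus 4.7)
The plan is to reduce to the Fréchet case (Lemma \ref{labase1}) by using the (FEP) to exhaust the closed linear span of $\gamma(X)$ by an ascending sequence of Fréchet subspaces $F_n$, to cut $\gamma$ into pieces $\gamma_n$ taking values in these $F_n$ (whose weak integrals exist by Lemma~\ref{labase1} and lie in $E$), and to then build $\int_X\gamma\,d\mu$ as a Cauchy limit using sequential completeness.

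First I would set $S:=\wb{\Spann(\gamma(X))}\sub E$. Since $\gamma(X)$ is separable by the definition of $\cL^1(X,\mu,E)$ recalled in~\ref{Lpfepvsp}, rational linear combinations of a countable dense subset of $\gamma(X)$ form a countable dense subset of $S$, so $S$ is a separable closed vector subspace. Applying the (FEP) to~$S$, we can write $S=\bigcup_{n\in\N}F_n$ with Fréchet spaces $F_1\sub F_2\sub\cdots$ in the induced topology. Each $F_n$ is complete for the uniformity it inherits from the Hausdorff locally convex space $E$, hence closed in $E$; so $F_n\in\cB(E)$ and the sets $A_n:=\gamma^{-1}(F_n)\in\Sigma$ form an ascending sequence with $\bigcup_n A_n=\gamma^{-1}(S)=X$. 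By Lemma~\ref{metrinsep} applied to the metrizable $F_n\sub S$, each $F_n$ is moreover separable.

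Next I would define $\gamma_n:=\chi_{A_n}\gamma\colon X\to E$, which takes values in $F_n$. Since $\cB(F_n)=\cB(E)|_{F_n}$ by~\ref{basicsmeas}(d), the map $\gamma_n$ is measurable into~$F_n$; its image lies in the separable metrizable space~$F_n$ and is thus separable; and since every continuous seminorm on $F_n$ is the restriction of one on~$E$, the $\cL^1$-seminorm bounds for $\gamma$ transfer to $\gamma_n$, so $\gamma_n\in\cL^1(X,\mu,F_n)$ in the Fréchet sense. Lemma~\ref{labase1} then provides weak integrals $I_n:=\int_X\gamma_n(x)\,d\mu(x)\in F_n\sub E$ satisfying $q(I_n-I_m)\le\|q\circ(\gamma_n-\gamma_m)\|_{\cL^1}$ for every continuous seminorm $q$ on $E$ (the difference of integrals being formed inside $F_{\max(n,m)}$, using linearity of the weak integral).

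To conclude, for $n\le m$ one has $\gamma_m-\gamma_n=\chi_{A_m\setminus A_n}\gamma$, whence
\[
q(I_m-I_n)\le\int_{A_m\setminus A_n}q(\gamma(x))\,d\mu(x)\to 0\quad\mbox{as $n\to\infty$,}
\]
by dominated convergence applied to $q\circ\gamma\in\cL^1(X,\mu,\R)$ with $X\setminus A_n\searrow\emptyset$. Thus $(I_n)_{n\in\N}$ is Cauchy in $E$ and, by sequential completeness, converges to some $I\in E$. For each $\lambda\in E'$ the functional $|\lambda|$ is a continuous seminorm, and dominated convergence in $\R$ with the integrable majorant $|\lambda|\circ\gamma$ yields
\[
\lambda(I)=\lim_{n\to\infty}\lambda(I_n)=\lim_{n\to\infty}\int_X\lambda(\gamma_n(x))\,d\mu(x)=\int_X\lambda(\gamma(x))\,d\mu(x),
\]
so $I=\int_X\gamma\,d\mu$ is the sought weak integral. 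The principal technical obstacle is showing that each $F_n$ is closed (hence Borel) in~$E$, so that the measurable partition via the $A_n$ is available; once that is secured, everything reduces to Lemma~\ref{labase1} in the Fréchet spaces $F_n$ together with dominated convergence.
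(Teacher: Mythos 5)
Your proof is correct and follows essentially the same route as the paper: exhaust $S=\wb{\Spann(\gamma(X))}$ by Fréchet subspaces $F_n$ via the (FEP), note that each $F_n$ is complete hence closed and Borel, form the weak integrals of $\one_{A_n}\gamma$ in $F_n$ via Lemma~\ref{labase1}, show the resulting sequence is Cauchy using dominated convergence for $q\circ\gamma$, and identify the limit as the weak integral by testing with $\lambda\in E'$. This matches the paper's argument step for step.
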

Given a measurable space $(X,\Sigma)$ and a locally convex space
$E$, we write $\cF(X,E)$ for the space of all measurable
functions $\gamma\colon X\to E$ with finite image.
The following result is needed in Section~\ref{secDL}.
\begin{la}\label{simpappfep}
Let $E$ be a locally convex space with the \emph{(FEP)}
and $(X,\Sigma,\mu)$ be a measure space.
Then $\cF(X,E)\cap \cL^p(X,\mu,E)$ is dense in $\cL^p(X,\mu,E)$,
for each $p\in [1,\infty[$.
We even have
\begin{equation}\label{strabi}
\gamma\in\overline{\{\eta\in\cF(X,E)\cap\cL^p(X,\mu, E)\colon
\eta(X)\sub \gamma(X)\cup\{0\}\}}
\end{equation}
for each $\gamma\in\cL^p(X,\mu,E)$.
\end{la}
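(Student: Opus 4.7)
The plan is to prove the stronger assertion~(\ref{strabi}); density of $\cF(X,E)\cap\cL^p(X,\mu,E)$ in $\cL^p(X,\mu,E)$ follows immediately. I treat the Fr\'{e}chet-space case first and then reduce the general (FEP)-case to it using~\ref{deflinpfwd}.

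\emph{Fr\'{e}chet case.} Assume $E=F$ is Fr\'{e}chet and fix $\gamma\in\cL^p(X,\mu,F)$. Since basic $\cL^p$-neighbourhoods of~$\gamma$ are determined by a single continuous seminorm~$q$ on~$F$ (taking the maximum of finitely many) and an $\ve>0$, it suffices to produce $\eta\in\cF(X,F)\cap\cL^p(X,\mu,F)$ with $\eta(X)\sub\gamma(X)\cup\{0\}$ and $\|\eta-\gamma\|_{\cL^p,q}<\ve$. Let $\pi\colon F\to F/\ker q$ be the quotient map and $\wt F_q$ the Banach completion of $F/\ker q$ under the norm $\|\pi(v)\|:=q(v)$. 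Then $\wt\gamma:=\pi\circ\gamma\in\cL^p(X,\mu,\wt F_q)$ has separable image $\pi(\gamma(X))$; choose a sequence $(\wt v_k)_{k\in\N}$ in $\pi(\gamma(X))$ dense therein, put $\wt v_0:=0$, and define the classical closest-point approximants $\wt\eta_n(x):=\wt v_k$ where $k\in\{0,\ldots,n\}$ is the smallest index minimizing $\|\wt v_k-\wt\gamma(x)\|$. Then $\wt\eta_n\to\wt\gamma$ pointwise with dominant $\|\wt\eta_n(x)-\wt\gamma(x)\|\leq\|\wt\gamma(x)\|$, so $\wt\eta_n\to\wt\gamma$ in $\cL^p$ by dominated convergence; Chebyshev applied to $\wt\gamma$ gives $\mu(\{\wt\eta_n=\wt v_k\})<\infty$ for each $k\geq 1$, so $\wt\eta_n$ is in $\cF\cap\cL^p$. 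Choose $n$ with $\|\wt\eta_n-\wt\gamma\|_{\cL^p}<\ve$, lift each $\wt v_k$ ($1\leq k\leq n$) to some $c_k\in\gamma(X)$ with $\pi(c_k)=\wt v_k$ (possible as $\wt v_k\in\pi(\gamma(X))$), put $c_0:=0$, and define $\eta(x):=c_k$ on $\{\wt\eta_n=\wt v_k\}$. Then $\eta\in\cF(X,F)\cap\cL^p(X,\mu,F)$, $\eta(X)\sub\gamma(X)\cup\{0\}$, and $\|\eta-\gamma\|_{\cL^p,q}=\|\wt\eta_n-\wt\gamma\|_{\cL^p}<\ve$ since $q=\|\pi(\cdot)\|$.

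\emph{(FEP) reduction.} For general $\gamma\in\cL^p(X,\mu,E)$, set $S:=\overline{\Spann(\gamma(X))}\sub E$; this is a closed separable subspace. By (FEP), $S=\bigcup_{n\in\N}F_n$ for ascending $F_n\sub E$ that are Fr\'{e}chet in the induced topology; each $F_n$ is complete in the Hausdorff space~$E$, hence closed, hence Borel in~$E$. With $X_n:=\gamma^{-1}(F_n)\in\Sigma$ we have $X_n\uparrow X$, so $\gamma_n:=\mathbf{1}_{X_n}\gamma$ (where $\gamma_n(x):=\gamma(x)$ for $x\in X_n$ and $\gamma_n(x):=0$ otherwise) is measurable into~$F_n$ by \ref{basicsmeas}(b),(d); since every continuous seminorm on the Fr\'{e}chet space~$F_n$ (with its induced topology) is dominated by the restriction of some continuous seminorm on~$E$, we obtain $\gamma_n\in\cL^p(X,\mu,F_n)$. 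Dominated convergence yields $\gamma_n\to\gamma$ in $\cL^p(X,\mu,E)$. Applying the Fr\'{e}chet case to each~$\gamma_n$ produces $\eta_{n,m}\in\cF(X,F_n)\cap\cL^p(X,\mu,F_n)$ with $\eta_{n,m}(X)\sub\gamma_n(X)\cup\{0\}\sub\gamma(X)\cup\{0\}$ and $\eta_{n,m}\to\gamma_n$ in $\cL^p(X,\mu,F_n)$; pushing these forward through the continuous linear inclusion $L^p(X,\mu,F_n)\to L^p(X,\mu,E)$ from~\ref{deflinpfwd} and diagonalizing yields~(\ref{strabi}).

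The main technical obstacle is the Fr\'{e}chet case, specifically the double requirement that the simple approximants (i) lie in $\cL^p$ (finite-measure support, despite $\mu$ not being $\sigma$-finite in general) and (ii) take values in $\gamma(X)\cup\{0\}$ rather than in an arbitrary dense subset. The reduction to the Banach quotient $\wt F_q$ handles (i) via Chebyshev on $\wt\gamma$, while choosing the sampling values $\wt v_k$ from $\pi(\gamma(X))$ (not merely from a dense subset of $\wt F_q$) allows (ii) to be achieved by lifting representatives through~$\pi$. The (FEP) step is then essentially formal once one verifies that $F_n$ is Borel in~$E$ and that the induced topology on~$F_n$ agrees with the Fr\'{e}chet topology.
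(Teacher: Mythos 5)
Your argument is correct and follows essentially the same route as the paper: the (FEP) reduction via $X_n=\gamma^{-1}(F_n)$ and dominated convergence is exactly the paper's proof, and your Fr\'echet-case step is a Banach-quotient repackaging of the paper's argument for Lemma~\ref{labasemea}\,(a), which likewise picks a countable dense subset of $\gamma(X)$ and secures finite-measure supports by a Chebyshev-type truncation with respect to a single seminorm. The only point to phrase with care is the claim $\mu(\{\wt\eta_n=\wt v_k\})<\infty$: define $\eta$ through the selected index (so this level set is empty whenever $\wt v_k=0$, where Chebyshev gives nothing), and observe that on it $\|\wt v_k\|\le 2\|\wt\gamma(x)\|$ because $\wt v_0=0$ is among the candidate values, which is exactly what makes Chebyshev applicable.
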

\begin{numba}\label{banquot}
If $E$ is a locally convex space and $q\in P(E)$
a continuous seminorm on $E$, we set
\[
E_q:=E/q^{-1}(\{0\})
\]
and abbreviate $[x]:=x+q^{-1}(\{0\})$ for $x\in E$.
It is well-known that $E_q$ is a normed space with the norm
\[
\|.\|_q\colon E_q\to [0,\infty[,\quad \|[x]\|_q:=q(x).
\]
We let $\wt{E}_q$ be a completion of $E_q$ such that
$E_q\sub \wt{E}_q$, and write $\|.\|_q$ also for the extension
of the given norm on $E_q$ to a norm on $\wt{E}_q$.
Thus $\wt{E}_q$ is a Banach space, and
\[
\pi_q\colon E\to \wt{E}_q,\quad x\mto [x]
\]
is a continuous linear map with dense image such that $\|.\|_q\circ \pi_q=q$.
\end{numba}
\begin{la}\label{funda3}
Let $J\sub \R$ be an interval, $E$ be a sequentially complete
locally convex space which has the Fr\'{e}chet exhaustion property,
and $t_0\in J$.
If $\gamma\in \cL^1(J,E)$,
then the weak integrals needed to define
\[
\eta\colon J\to E\,,\quad t\mto \int_{t_0}^t\gamma(s)\, ds
\]
exist, and $\eta$ is a continuous function
which uniquely determines $[\gamma]$.
\end{la}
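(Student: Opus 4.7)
My plan is to handle existence and continuity of $\eta$ directly from the already-stated lemmas, and to reduce the uniqueness assertion to the Banach case via the seminorm quotients $\wt{E}_q$ of~\ref{banquot}, combining the $q$-dependent null sets that arise through an application of the \emph{(FEP)}. First, for each $t\in J$ the restriction of $\gamma$ to the compact interval with endpoints $t_0$ and $t$ lies in $\cL^1$ of that interval into $E$, so Lemma~\ref{fepweakint} delivers the weak integral $\int_{t_0}^t\gamma(s)\,ds\in E$ and defines~$\eta$. For continuity I would use the estimate
\[
q(\eta(t)-\eta(s)) \;=\; q\!\left(\int_s^t \gamma(r)\,dr\right) \;\leq\; \int_s^t q(\gamma(r))\,dr
\]
valid for every continuous seminorm $q$ on $E$ (derived from the defining property of the weak integral together with the Hahn--Banach description $q(w)=\sup\{|\lambda(w)|\colon\lambda\in E',\ |\lambda|\leq q\}$), combined with absolute continuity of the Lebesgue integral of the real $\cL^1$-function $q\circ\gamma$.

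For uniqueness, suppose $\gamma_1\in\cL^1(J,E)$ is a second representative with $\int_{t_0}^t\gamma_1(s)\,ds=\eta(t)$ for all $t\in J$, and set $\delta:=\gamma-\gamma_1\in \cL^1(J,E)$; then $\int_{t_0}^t\delta(s)\,ds=0$ for every $t\in J$. Given a continuous seminorm $q$ on~$E$, the pushforward $\pi_q\circ\delta$ lies in $\cL^1(J,\wt{E}_q)$, and by~\ref{basicweaki} its indefinite integral vanishes identically in the Banach space $\wt{E}_q$. Lemma~\ref{funda} applied in $\wt{E}_q$ therefore forces $q\circ\delta=0$ $\lambda_1$-almost everywhere, say off a $\lambda_1$-null set $N_q\sub J$.

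The hard part will be merging these $q$-dependent null sets into a single null set on whose complement $\delta$ vanishes identically. To do this, I would invoke the \emph{(FEP)}: the image $\delta(J)$ is separable, so its closed linear hull $S\sub E$ is a separable closed vector subspace, and the \emph{(FEP)} yields a representation $S=\bigcup_{n\in\N}F_n$ with an ascending sequence of vector subspaces $F_n$ that are Fr\'{e}chet in the induced topology. By Lemma~\ref{metrinsep} each $F_n$ is separable, and since $F_n$ is metrizable and its topology is generated by the family of all restrictions $q|_{F_n}$ with $q\in P(E)$, a countable subfamily $Q_n\sub P(E)$ already suffices: $\{q|_{F_n}\colon q\in Q_n\}$ defines the topology of~$F_n$, and, that topology being Hausdorff, $Q_n$ separates points of~$F_n$. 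Setting $Q:=\bigcup_n Q_n$ and $N:=\bigcup_{q\in Q}N_q$, I obtain a $\lambda_1$-null set such that for each $s\in J\setminus N$ the vector $\delta(s)$ lies in some $F_m$ and satisfies $q(\delta(s))=0$ for all $q\in Q_m$; hence $\delta(s)=0$. Thus $[\gamma]=[\gamma_1]$, completing uniqueness.
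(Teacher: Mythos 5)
Your proposal is correct and follows essentially the same route as the paper: existence via Lemma~\ref{fepweakint}, continuity via the seminorm estimate $q\big(\int_s^t\gamma\big)\leq\int_s^t q\circ\gamma$, and uniqueness by pushing forward with $\pi_q$ to the Banach spaces $\wt{E}_q$ and invoking Lemma~\ref{funda} there. The only organizational difference is in the last step: the paper argues contrapositively, using that $[\gamma]\neq 0$ forces $\|\gamma\|_{\cL^1,q}\neq 0$ for a \emph{single} seminorm $q$ (this is the Hausdorff property of $L^1(J,E)$ already secured in \ref{Lpfepvsp}, whose proof contains the same (FEP)-based merging of null sets), whereas you re-derive that merging inline over a countable separating family of seminorms --- a correct, slightly more self-contained duplication of what \ref{Lpfepvsp} provides.
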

Again, we can therefore write $\eta':=[\gamma]$.
\begin{la}\label{henceemba}
Let $E$ be a Fr\'{e}chet space
and $\gamma\in \cL^1([a,b],E)$;
or let $E$ be an integral complete locally
convex space and $\gamma\in \cL^\infty_{rc}([a,b],E)$.
Let $F\sub E$ be a closed vector subspace and
$\eta\colon [a,b]\to E$ be the map given by
\[
\eta(t):=\int_a^t \gamma(s)\,ds\quad\mbox{for $t\in [a,b]$.}
\]
Then $\eta([a,b])\sub F$
if and only if $[\gamma]=[\gamma_1]$
for some $\gamma_1\in \cL^1([a,b],F)$
$($resp., $\gamma_1\in \cL^\infty_{rc}([a,b], F))$.
Likewise, we find $\gamma_1\in \cL^1([a,b],F)$ with $[\gamma]=[\gamma_1]$
if $E$ is a strict \emph{(LF)}-space,
$F\sub E$ a vector subspace which is a Fr\'{e}chet space
in the induced topology, and $\gamma\in \cL^1([a,b],E)$
%such that
with $\int_a^t\gamma(s)\,ds\in F$ for each $t\in [a,b]$.
\end{la}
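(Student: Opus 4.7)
The ``if'' direction is immediate: if $[\gamma]=[\gamma_1]$ with $\gamma_1$ taking values in $F$, then for every $\lambda$ in $F^\perp:=\{\lambda\in E'\colon\lambda|_F=0\}$ one has $\lambda(\eta(t))=\int_a^t\lambda(\gamma_1(s))\,ds=0$ for all~$t$ by~\ref{basicweaki}, and since $F$ is closed, Hahn--Banach yields $\eta([a,b])\sub F$.

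For the converse in the Fr\'{e}chet case I would pass to the quotient Fr\'{e}chet space $E/F$ via the continuous linear projection $\pi\colon E\to E/F$. By~\ref{deflinpfwd}, $\pi\circ\gamma\in\cL^1([a,b],E/F)$, and its indefinite integral is $\pi\circ\eta\equiv 0$ by~\ref{basicweaki}. Applying Lemma~\ref{funda} to $\pi\circ\gamma$ gives $[\pi\circ\gamma]=0$, so $N:=\gamma^{-1}(E\setminus F)$ is a Borel null set. Setting $\gamma_1(s):=\gamma(s)$ for $s\notin N$ and $\gamma_1(s):=0$ for $s\in N$ produces a function taking values in~$F$; measurability as an $F$-valued map follows from~\ref{basicsmeas}\,(b),(d) since $F$ is closed in~$E$, and continuity of the inclusion $F\hookrightarrow E$ gives the requisite seminorm estimates, so $\gamma_1\in\cL^1([a,b],F)$ with $[\gamma_1]=[\gamma]$.

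The $\cL^\infty_{rc}$ case cannot invoke Lemma~\ref{funda} in~$E/F$ directly (the latter need not be integral complete), so I would pair instead with individual functionals: for each $\lambda\in F^\perp$, the bounded Borel function $\lambda\circ\gamma$ has vanishing indefinite integral and hence vanishes outside a null set that a priori depends on~$\lambda$. The key step, and the main technical obstacle, is to extract a \emph{countable} family $\{\lambda_n\}_{n\in\N}\sub F^\perp$ that detects~$F$ on $K:=\overline{\gamma([a,b])}$, in the sense that for $x\in K$ we have $x\in F$ iff $\lambda_n(x)=0$ for all~$n$. To obtain it I would apply Lemma~\ref{quotcpmet} to see that $X:=\pi(K)\cup\{0\}$ is compact metrizable in~$E/F$, and run the following Lindel\"{o}f subcover argument on the open subset $\{(u,v)\in X^2\colon u\neq v\}$ of the second countable space $X^2$: for each such pair $(u,v)$, Hahn--Banach supplies $\mu_{u,v}\in(E/F)'$ with $\mu_{u,v}(u)\neq\mu_{u,v}(v)$; the resulting open sets cover, and a countable subcover yields $\{\mu_n\}\sub(E/F)'$ separating the points of~$X$. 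Setting $\lambda_n:=\mu_n\circ\pi\in F^\perp$ and letting $N$ be the union of the null sets where $\lambda_n\circ\gamma\neq 0$, the function $\gamma_1$ defined as in the Fr\'{e}chet case represents $[\gamma]$ and takes values in~$F$; since $\overline{\gamma_1([a,b])}\sub K\cup\{0\}$ is compact metrizable, $\gamma_1\in\cL^\infty_{rc}([a,b],F)$.

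For the strict \emph{(LF)}-case $E=\dl E_n$, the Fr\'{e}chet space~$F$ is Baire and the decomposition $F=\bigcup_{n\in\N}(F\cap E_n)$ into closed subspaces forces $F\cap E_m$ to have nonempty interior in~$F$ for some~$m$ by Baire category, whence $F\sub E_m$. Lemma~\ref{lafep}\,(c) provides a representative $\gamma'\in\cL^1([a,b],E_n)$ of~$[\gamma]$ for some~$n$, which we may enlarge to ensure $n\geq m$; then $F$ is a closed Fr\'{e}chet subspace of the Fr\'{e}chet space~$E_n$, $\int_a^t\gamma'(s)\,ds=\eta(t)\in F$ for all~$t$, and the Fr\'{e}chet case applied to $E_n$, $F$, $\gamma'$ yields $\gamma_1\in\cL^1([a,b],F)$ with $[\gamma_1]=[\gamma']=[\gamma]$.
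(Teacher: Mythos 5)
Your proposal is correct, and its skeleton (push $\gamma$ forward along the quotient map $E\to E/F$, conclude that the image vanishes almost everywhere, then redefine $\gamma$ as $0$ on a Borel null set) matches the paper's proof in the ``if'' direction and in the Fr\'{e}chet case, where the paper likewise applies the uniqueness part of Lemma~\ref{funda} in the Fr\'{e}chet space $E/F$. You genuinely diverge in two places. In the $\cL^\infty_{rc}$ case the paper keeps working with $q\circ\gamma\in\cL^\infty_{rc}([a,b],E/F)$ and simply observes that the uniqueness argument from Lemma~\ref{funda2} needs only the existence of the weak integrals at hand, not integral completeness of $E/F$ (its proof reduces matters to a Fr\'{e}chet completion via Lemma~\ref{imagesepmet}); you instead bypass Lemma~\ref{funda2} entirely and build a countable family in $F^\perp$ detecting $F$ on $\overline{\gamma([a,b])}$, via compact metrizability of $\pi(K)\cup\{0\}$ (Lemma~\ref{quotcpmet}) and a Lindel\"{o}f subcover, and then invoke only the scalar uniqueness theorem; your route is more hands-on and self-contained, the paper's is shorter because it recycles an argument already written out. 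In the strict (LF)-case the paper quotes the Grothendieck Factorization Theorem (or regularity of the strict limit) to obtain $F\sub E_m$, whereas you re-prove exactly this special case by Baire category applied to $F=\bigcup_{n\in\N}(F\cap E_n)$ --- equivalent in substance, just more elementary; your subsequent assertion that $F$ is closed in $E_n$ deserves the one-line justification (also implicit in the paper) that the (LF)-topology induces the original topology on $E_n$, so $F$ is a complete, hence closed, subspace of the Fr\'{e}chet space $E_n$.
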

The author does not know whether
the conclusion of Lemma~\ref{henceemba} would
hold for $E$ an arbitrary sequentially complete
(FEP)-space and
$\gamma\in \cL^1([a,b],E)$.
\begin{numba}
If $E$ is a Fr\'{e}chet space or sequentially complete (FEP)-space
and $\gamma\in \cL^1([a,b],E)$
(resp., $\gamma\in \cL^\infty_{rc}([a,b],E)$ with $E$ an
integral complete locally convex space),
we define
\[
\int_{t_0}^t[\gamma]:=\int_{t_0}^t\gamma(s)\,ds
\]
for $t,t_0\in [a,b]$.
The result is well-defined and only depends on~$[\gamma]$.
For $\gamma_1\in [\gamma]$,
we define $\int_{t_0}^t\gamma_1(s)\,ds:=\int_{t_0}^t[\gamma]=\int_{t_0}^t\gamma(s)\,ds$.
\end{numba}
\begin{numba}\label{defCk}
Let $E$ and $F$ be real locally convex spaces,
$U\sub E$ be open and $f\colon U\to F$ be a map.
If $f$ is continuous, we say that $f$ is~$C^0$.
We say that $f$ is $C^1$ if $f$ is continuous,
the directional derivative
\[
df(x,y):=(D_yf)(x):=\lim_{t\to 0}\frac{1}{t}(f(x+ty)-f(x))
\]
(with $0\not=t\in \R$)
exists in $F$ for all $(x,y)\in U\times E$,
and $df\colon U\times E\to F$ is continuous.
Recursively, for $k\in \N$ we say that $f$ is $C^k$
if $f$ is $C^1$ and $df\colon U\times E\to F$ is~$C^{k-1}$.
We say that $f$ is $C^\infty$ (or smooth)
if $f$ is $C^k$ for each $k\in \N_0$.
\end{numba}
\begin{numba}
Let $r\in \N_0\cup\{\infty\}$.
It can be shown that a map $f\colon U\to F$ as before is $C^r$
if and only if it is continuous, the iterated directional
derivatives
\[
d^{(k)}f(x,y_1,\ldots, y_k):=(D_{y_k}\cdots D_{y_1}f)(x)
\]
exist for all $k\in \N_0$ with $k\leq r$, $x\in U$ and $y_1,\ldots, y_k\in E$,
and the maps $d^{(k)}f\colon U\times E^k\to F$ so defined are continuous
(see, e.g., \cite{RES} or \cite{GaN}).
\end{numba}
\begin{numba}\label{BGN}
If $E$ is a locally convex space
and $U\sub E$ an open set, then
\[
U^{[1]}:=\{(x,y,t)\in U\times E\times \R\colon x+ty\in U\}
\]
is an open subset of $E\times E\times\R$
which contains
\[
U^{]1[}:=\{(x,y,t)\in U^{[1]}\colon t\not=0\}
\]
as an open dense subset. Moreover,
\[
U^{[1]}\;=\;U^{]1[} \cup (U\times E).
\]
If $f\colon U\to F$ is a $C^1$-map to a locally
convex space~$F$, then
\[
f^{[1]}\colon U^{[1]}\to F,\quad
(x,y,t)\mto
\left\{
\begin{array}{cl}
\frac{f(x+ty)-f(x)}{t} &\mbox{if $t\not=0$;}\\
df(x,y) & \mbox{if $t=0$}
\end{array}
\right.
\]
is continuous (see \cite{BGN} or \cite{GaN}).
In two cases, we shall find this very useful.\footnote{Conversely,
if there exists a continuous
map $f^{[1]}\colon U^{[1]}\to F$ such that
$f^{[1]}(x,y,t)=\frac{1}{t}(f(x+ty)-f(x))$
for all $(x,y,t)\in U^{]1[}$,
then $f$ is~$C^1$ (see \cite{BGN} of \cite{GaN}).}
\end{numba}
\begin{numba}\label{mvt}
We shall use the Mean Value Theorem in integral form (see \cite{GaN},
\cite{RES}):\\[2.3mm]
\emph{Let $E$ and $F$ be locally convex spaces, $U\sub E$ be an open subset,
$f\colon U\to F$ a $C^1$-map and $x,y\in U$ such that
the line segment $\{tx+(1-t)y\colon t\in [0,1]\}$
is contained in~$U$. Then}
\[
f(y)-f(x)=\int_0^1df(x+sy,y)\,ds.
\]
\emph{In particular, the preceding weak integral exists in~$F$.}
\end{numba}
\begin{numba}
Since compositions of $C^k$-maps are $C^k$
one can define $C^k$-manifolds and (smooth) Lie groups modelled
on a real locally convex space~$E$
in the expected way,
replacing the modelling space~$\R^n$ with $E$ in the
classical definitions of manifolds
and Lie groups
(see, e.g., \cite{RES} and \cite{GaN}
for streamlined expositions;
cf.\ also \cite{BGN}, \cite{Mil}
and \cite{Nee}).
If we speak of manifolds, then these are modelled
on a locally convex space (and thus need not be finite-dimensional).
Likewise, the Lie groups we consider
are modelled on arbitrary locally convex spaces.
As usual, $TM$ denotes the tangent bundle
of a smooth manifold~$E$ modelled on a locally convex
space, $T_xM$ the tangent space at $x\in M$, $\pi_{TM}\colon TM\to M$
the bundle projection sending $v\in T_xM$ to~$x$,
and $f\colon TM\to TN$ the tangent map
of a smooth map $f\colon M\to N$ between manifolds.
If $U$ is an open subset of a locally convex space~$E$,
we identify $TU$ with $U\times E$.
If $f\colon M\to E$ is a smooth map from a smooth manifold
to a locally convex space~$E$, we write $df$
for the second component of $Tf\colon TM\to TE=E\times E$.
\end{numba}
\begin{numba}
As usual, an element $\alpha=(\alpha_1,\ldots,\alpha_n)\in \N_0^n$
is called a multi-index and $|\alpha|:=\alpha_1+\cdots+\alpha_n$.
Let $e_j\in \R^n$ be the standard basis vector with $i$-th component $\delta_{i,j}$
in terms of Kronecker's delta.
If $U\sub \R^n$ is open, $k\in \N_0\cup\{\infty\}$ and $f\colon U\to E$ a $C^k$-map
to a locally convex space, we use the short-hand
\[
\partial^\alpha f(x):=\frac{\partial^\alpha f}{\partial x^\alpha}(x):=
((D_{e_1})^{\alpha_1}\cdots (D_{e_n})^{\alpha_n}f)(x)
\]
for the partial derivatives of $f$ at $x\in U$,
for $\alpha\in \N_0^n$ such that $|\alpha|\leq k$.
\end{numba}
We shall also use certain $C^{r,s}$-maps $(x,y)\mto f(x,y)$
on products with different degrees
of differentiability in~$x$ and~$y$
(see \cite{Alz} and \cite{AaS}):
\begin{defn}
Let $E_1$, $E_2$ and $F$ be real locally convex spaces,
$U\sub E_1$ and $V\sub E_2$ be open subsets,
$r,s\in \N_0\cup\{\infty\}$
and $f\colon U\times V\to F$ be a map.
Assume that the iterated directional derivatives
\[
d^{(i,j)}f(x,w_1,\ldots, w_i,y_1,\ldots, y_j):=
(D_{(w_i,0)}\cdots D_{(w_1,0)}D_{(0,y_j)}\cdots D_{(0,y_1)} f)(x)
\]
exist for all $i,j\in \N_0$ such that $i\leq r$ and
$j\leq s$, and all $w_1,\ldots, w_i\in E_1$
and $y_1,\ldots, y_j\in E_2$.
Moreover, assume that the mappings
\[
d^{(i,j)}f\colon U\times V\times E_1^i\times E_2^j\to F
\]
so obtained are continuous.
Then $f$ is called a $C^{r,s}$-map.
\end{defn}
$C^{r,s,t}$-maps on products $U\times V\times W$
are defined analogously (see \cite{Alz} for details).
We shall use the following simple
facts:
\begin{numba}\label{basicCrs}
Let $U$, $V$, $W$, $P$ and $Q$
be open subsets of locally convex spaces,
and $E$ and $F$ be locally convex spaces.
Let $r,s,t,\tau\in \N_0\cup\{\infty\}$.
Then the following holds:
\begin{itemize}
\item[(a)]
$f\colon U\times V\to F$ is $C^\infty$
if and only if $f$ is $C^{\infty,\infty}$
(see \cite[Remark~3.16]{AaS}).
\item[(b)]
If $f\colon E\times V\to F$ is $C^{(0,s)}$
for some $s\in \N_0\cup\{\infty\}$
and $f(.,y)\colon E\to F$ is linear for each $y\in V$,
then $f$ is $C^{\infty,s}$
(cf.\ \cite[Lemma~3.14]{AaS}).
\item[(c)]
If $f\colon U\times V\to F$ is $C^{(r,s)}$,
$g_1\colon P\to U$ is $C^r$ and $g_2\colon Q\to V$ is $C^s$,
then $g\circ (g_1\times g_2)\colon P\times Q\to F$
is $C^{(r,s)}$ (see \cite[Lemma~3.17]{AaS}).
\item[(d)]
If $g_1\colon P\times Q\to U$ is $C^{r,s}$,
$g_2\colon W\to V$ a $C^t$-map
and $f\colon U\times V\to F$ a $C^\tau$-map with $\tau\geq r+s+t$,
then $f\circ (g_1,g_2)\colon P\times Q\times W\to F$
is $C^{r,s,t}$
(as a special case of \cite[Lemma~81]{Alz}).
\item[(e)]
If $f\colon U\times P\times Q\to F$ is $C^{r,k,\ell}$
for all $k,\ell\in \N_0\cup\{\infty\}$ such that $k+\ell\leq s$,
then $f\colon U\times (P\times Q)\to F$ is $C^{r,s}$
(see \cite[Lemma~77]{Alz}). \item[(f)]
If $s<\infty$ and $s\geq 1$, then a map $f\colon U\times V\to F$
with $V$ open in~$E$
is $C^{(r,s)}$ if and only if
$f$ is $C^{(r,0)}$ and $C^{(0,1)}$
and the partial differential
$d_2f\colon U\times (V\times E)\to F$
is $C^{(r,s-1)}$
(see \cite[Lemma~3.11]{AaS}).
\end{itemize}
\end{numba}
We need a version of the Chain Rule
for curves in manifolds which are differentiable
at a point.
\begin{la}\label{chainpw}
Let $E$ and $F$ be locally convex spaces, $U\sub E$ be open
and $f\colon U\to F$ be a $C^1$-map.
If $J\sub \R$ is a non-degenerate interval,
$\eta\colon J\to U$
a continuous function and $t_0\in J$ such that $\eta'(t_0)$
exists, then also $(f\circ\eta)'(t_0)$ exists, and
$(f\circ\eta)'(t_0)=df(\eta(t_0),\eta'(t_0))$.
\end{la}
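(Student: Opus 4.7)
The plan is to reduce the problem to a continuity assertion for the \emph{difference quotient map} $f^{[1]}$ recalled in~\ref{BGN}, which will absorb the lack of differentiability of $\eta$ away from~$t_0$. The whole point of $f^{[1]}\colon U^{[1]}\to F$ is precisely that it extends the naive difference quotient $(x,y,t)\mto (f(x+ty)-f(x))/t$ continuously across $t=0$ by the value $df(x,y)$. Since we are given only pointwise differentiability of~$\eta$ at~$t_0$, this extended object is exactly what we need.

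Concretely, I would proceed as follows. Set $x_0:=\eta(t_0)$ and define
\[
y(t):=\frac{\eta(t)-\eta(t_0)}{t-t_0}\quad\mbox{for $t\in J\setminus\{t_0\}$,}\qquad y(t_0):=\eta'(t_0).
\]
By assumption, $y(t)\to\eta'(t_0)=y(t_0)$ as $t\to t_0$. For every $t\in J\setminus\{t_0\}$ one has the trivial identity
\[
\eta(t)\;=\;x_0+(t-t_0)\,y(t),
\]
so $x_0+(t-t_0)y(t)=\eta(t)\in U$, which means $(x_0,y(t),t-t_0)\in U^{[1]}$; and at $t=t_0$ the triple $(x_0,\eta'(t_0),0)$ likewise lies in $U^{[1]}$ because $x_0\in U$.

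Using the defining formula of $f^{[1]}$ for $t\neq t_0$, we obtain
\[
\frac{f(\eta(t))-f(\eta(t_0))}{t-t_0}
\;=\;\frac{f\bigl(x_0+(t-t_0)y(t)\bigr)-f(x_0)}{t-t_0}
\;=\;f^{[1]}\bigl(x_0,y(t),t-t_0\bigr).
\]
As $t\to t_0$ in $J$, the triple $(x_0,y(t),t-t_0)$ converges in $E\times E\times\R$ to $(x_0,\eta'(t_0),0)\in U^{[1]}$. Since $f^{[1]}$ is continuous on $U^{[1]}$ (this is precisely the content of~\ref{BGN}, valid because $f$ is $C^1$), the right-hand side above converges to $f^{[1]}(x_0,\eta'(t_0),0)=df(x_0,\eta'(t_0))$. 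This shows that the limit defining $(f\circ\eta)'(t_0)$ exists and equals $df(\eta(t_0),\eta'(t_0))$.

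There is essentially no obstacle once one observes that $f^{[1]}$ is tailor-made for this situation; the only subtle point is the algebraic identity $\eta(t)=x_0+(t-t_0)y(t)$, which rewrites the difference quotient in a form where the nontrivial input $y(t)$ enters only through the second slot of~$f^{[1]}$, while the first slot stays fixed at $x_0$ and the third slot tends to~$0$. Continuity of $f^{[1]}$ then does all the remaining work without any further smoothness assumption on~$\eta$.
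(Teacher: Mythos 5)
Your argument is correct and is essentially the paper's own proof: both rewrite the difference quotient of $f\circ\eta$ as $f^{[1]}\bigl(\eta(t_0),\,\tfrac{\eta(t)-\eta(t_0)}{t-t_0},\,t-t_0\bigr)$ and conclude by continuity of $f^{[1]}$ from~\ref{BGN}. Your explicit check that the triples lie in $U^{[1]}$ is a welcome (if minor) addition to what the paper leaves implicit.
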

\begin{numba}
Let $M$ be a $C^1$-manifold modelled on a locally convex space~$E$,
$J\sub \R$ be a non-degenerate interval and $t_0\in J$.
We say that a continuous curve $\eta\colon J\to M$ is \emph{differentiable at~$t_0$}
if $(\phi\circ \gamma)'(t_0)$ exists in~$E$ for some chart $\phi\colon U\to V$
of~$M$ around~$\gamma(t_0)$ (which takes the open neighbourhood $U$ of $\gamma(t_0)$
in $M$ diffeomorphically onto an open subset $V\sub E$).
In this case, set
\[
\gamma'(t_0):=T\phi^{-1}(\phi(\gamma(t_0)),(\phi\circ\gamma)'(t_0))\in T_{\gamma(t_0)}M.
\]
This definition is independent of the chosen chart.\footnote{If also
$\psi$ is a chart around $\gamma(t_0)$, then
$\psi\circ\gamma=(\psi\circ\phi^{-1})\circ (\phi\circ \gamma)$
on a neighbourhood of~$t_0$.
By Lemma~\ref{chainpw}, $(\psi\circ\gamma)'(t_0)$ exists
and equals $d(\psi\circ\phi)^{-1}(\phi(\gamma(t_0)),(\phi\circ\gamma)'(t_0))$.
Hence $T\phi^{-1}(\phi(\gamma(t_0)),(\phi\!\circ\!\gamma)'(t_0))
=T\psi^{-1}T(\psi\!\circ\!\phi^{-1})(\phi(\gamma(t_0)),(\phi\!\circ\!\gamma)'(t_0))
=(\psi(\gamma(t_0)),(\psi\!\circ\!\gamma)'(t_0))$.}
\end{numba}
\begin{la}\label{locLip}
Let $E$ and $F$ be locally convex spaces,
$V\sub E$ be open, $f\colon V\to F$ be a $C^1$-map, $x\in V$
and $p\in P(F)$ be a continuous
seminorm.
Then there exists an open neighbourhood
$V_1\sub V$ of~$x$
and a continuous seminorm $q\in P(E)$ such that
\[
p(f(z)-f(y))\leq q(z-x)\quad\mbox{for all $z,y\in V_1$.}
\]
\end{la}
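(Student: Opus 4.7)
The plan is to combine the continuity of the differential $df$ at $(x,0)$ with its linearity in the second argument to extract a uniform bound of the form $p(df(x',v))\le q(v)$ on a neighbourhood $V_1$ of $x$, and then apply the Mean Value Theorem \ref{mvt} together with a Hahn--Banach estimate to pass from the pointwise bound on $df$ to a Lipschitz-type bound on $f$ itself (yielding the inequality $p(f(z)-f(y))\le q(z-y)$, which I read as the intended conclusion).

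First, since $f$ is $C^1$, the map $df\colon V\times E\to F$ is continuous and $df(x',\cdot)\colon E\to F$ is continuous linear for each $x'\in V$. The real-valued map $p\circ df$ is therefore continuous and vanishes at $(x,0)$. Hence the open set $\{(x',v)\in V\times E\colon p(df(x',v))<1\}$ contains $(x,0)$, and by definition of the product topology it contains some $V_0\times \{v\in E\colon q(v)<1\}$ for a suitable open neighbourhood $V_0\sub V$ of $x$ and a continuous seminorm $q$ on $E$. Using the linearity of $df(x',\cdot)$, a standard scaling argument then upgrades this to
\[
p(df(x',v))\;\le\; q(v)\quad\text{for all $x'\in V_0$ and all $v\in E$}
\]
(namely, for $v$ with $q(v)>0$ apply the bound to $v/(q(v)+\varepsilon)$ and let $\varepsilon\downarrow 0$; for $q(v)=0$ apply it to $\lambda v$ with $\lambda\to\infty$ to force $p(df(x',v))=0$). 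Shrink $V_0$ to an open convex neighbourhood $V_1\sub V_0$ of $x$, e.g.\ $V_1:=x+\{u\in E\colon q_0(u)<r\}\cap V_0$ for a continuous seminorm $q_0$ on $E$ and $r>0$.

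Second, for $y,z\in V_1$ the line segment between them lies in $V_1\sub V$ by convexity, so \ref{mvt} applies and gives
\[
f(z)-f(y)\;=\;\int_0^1 df(y+s(z-y),\,z-y)\,ds,
\]
the weak integral existing in $F$. To estimate its $p$-seminorm, choose $\lambda\in F'$ with $|\lambda|\le p$ and $\lambda\bigl(\int_0^1 df(\cdot)\,ds\bigr)=p\bigl(\int_0^1 df(\cdot)\,ds\bigr)$ (possible by Hahn--Banach applied to the one-dimensional subspace spanned by the integral, since $p$ is a continuous seminorm on $F$). Using \ref{basicweaki} to pull $\lambda$ through the weak integral,
\[
p(f(z)-f(y))\;=\;\int_0^1 \lambda\bigl(df(y+s(z-y),z-y)\bigr)\,ds\;\le\;\int_0^1 p\bigl(df(y+s(z-y),z-y)\bigr)\,ds.
\]
Since $y+s(z-y)\in V_1\sub V_0$ for all $s\in[0,1]$, the uniform bound from the first step yields $p(df(y+s(z-y),z-y))\le q(z-y)$ for each $s$, so the right-hand side is $\le q(z-y)$, giving the claim.

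The main obstacle is the first step: turning the pointwise continuity of $df$ at $(x,0)$ into a bound $p\circ df\le q$ that is uniform in the base point on a whole neighbourhood of $x$. The trick is that we do not need a single seminorm to dominate $df(x',\cdot)$ for every fixed $x'$ in isolation (which would require equicontinuity in $x'$); rather, continuity of the bilinear-in-$v$ map $df$ at $(x,0)$ directly supplies a product neighbourhood on which $p(df(x',v))<1$, and linearity in $v$ then does all the work of promoting this to the seminorm estimate.
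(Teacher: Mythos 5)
Your proof is correct and follows essentially the same route as the paper's: continuity of $df$ at $(x,0)$ yields a product neighbourhood on which $p\circ df<1$, linearity in the second argument plus scaling upgrades this to $p(df(y,v))\le q(v)$ for $y$ in a convex neighbourhood $V_1$ of $x$, and the Mean Value Theorem together with the Hahn--Banach estimate for weak integrals gives $p(f(z)-f(y))\le q(z-y)$, which is indeed the intended conclusion (the $q(z-x)$ in the statement is a typo, as the paper's own proof confirms). The only difference is that the paper leaves the scaling step and the estimate $p\bigl(\int_0^1\cdot\,dt\bigr)\le\int_0^1 p(\cdot)\,dt$ implicit, whereas you spell them out.
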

Now a simple compactness argument (cf.\ proof or Lemma~\ref{locLip2})
shows:
%
% NOCH AUSFUEHREN
%
%
\begin{la}\label{locLipK}
Let $E$ and $F$ be locally convex spaces,
$V\sub E$ be open, $f\colon V\to F$ be a $C^1$-map, $K\sub V$
a compact set
and $p\in P(F)$ a continuous
seminorm.
Then there exists an open subset
$V_1\sub V$ with $K\sub V_1$
and a continuous seminorm $q\in P(E)$ such that
\[
p(f(z)-f(y))\leq q(z-x)\quad\mbox{for all $z,y\in V_1$.}
\]
\end{la}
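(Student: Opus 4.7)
The conclusion as printed, $p(f(z)-f(y))\leq q(z-x)$, has no $x$ in the hypotheses; I read it as the natural uniform-Lipschitz estimate
\[
p(f(z)-f(y))\leq q(z-y)\qquad (z,y\in V_1),
\]
which is the direct $K$-version of Lemma~\ref{locLip}. The plan is the standard compactness patching of the pointwise Lemma~\ref{locLip}, ultimately reduced to a Mean Value estimate.

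First, I would upgrade the pointwise statement slightly. For each $x\in K$, the map $df\colon V\times E\to F$ is continuous at $(x,0)$, with $df(x,0)=0$, and $df(w,\cdot)$ is linear in its second argument. By joint continuity, there is a convex open neighborhood $W_x\sub V$ of $x$ and a balanced convex open $0$-neighborhood $U_x\sub E$ with $p(df(w,h))\leq 1$ for $(w,h)\in W_x\times U_x$; taking $q_x$ to be the Minkowski functional of $U_x$ and using linearity in $h$, one obtains $p(df(w,h))\leq q_x(h)$ for all $w\in W_x$ and all $h\in E$. This is a continuous-seminorm strengthening of Lemma~\ref{locLip} valid pointwise at every $x\in K$.

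By compactness, choose $x_1,\dots,x_n\in K$ with $K\sub W_{x_1}\cup\cdots\cup W_{x_n}$, and set $q:=q_{x_1}+\cdots+q_{x_n}\in P(E)$. To produce a single open $V_1$ with $K\sub V_1\sub\bigcup_iW_{x_i}$ on which the estimate extends uniformly, I would invoke Wallace's Lemma~\ref{Wallace} applied to the compact set $K\times\{0\}$ inside the open set $\{(w,h)\in V\times E\colon w+h\in\bigcup_iW_{x_i}\}$; this yields an open neighborhood $N\supseteq K$ and a balanced convex open $0$-neighborhood $U\sub E$ with $N+U\sub\bigcup_iW_{x_i}$. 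Then I would take $V_1$ to be a convex open neighborhood of $K$ inside $N$ (shrinking $N$ via the balanced convex $U$ to achieve convexity of the thickening around each~$x_i$), so that for any $z,y\in V_1$ the segment $y+s(z-y)$ $(s\in[0,1])$ remains in $\bigcup_iW_{x_i}$ and hence satisfies $p(df(y+s(z-y),h))\leq q(h)$ for all $h\in E$. The Mean Value Theorem~\ref{mvt} then gives
\[
p(f(z)-f(y))\;\leq\;\int_0^1 p\bigl(df(y+s(z-y),z-y)\bigr)\,ds\;\leq\;q(z-y),
\]
as required.

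The main obstacle is the geometric step of upgrading the finite cover $\{W_{x_i}\}$ to a single connected open neighborhood $V_1$ on which the MVT applies to all pairs $(z,y)\in V_1\times V_1$: the union $\bigcup_iW_{x_i}$ is not convex, so a line segment between points in different pieces can leave it. The Wallace-tube construction circumvents this by producing a $0$-neighborhood $U$ small enough that a $U$-thickening of $K$ stays inside the union while still containing each translate $x+U$ in some single $W_{x_i}$; the rest of the argument is a routine application of~\ref{mvt} together with the monotonicity of continuous seminorms under weak integrals (Lemma~\ref{labase1}).
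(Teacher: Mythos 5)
Your overall plan (pointwise estimate $p(df(w,h))\leq q_x(h)$ near each $x\in K$, compactness, Wallace, then the Mean Value Theorem~\ref{mvt}) is the right skeleton, and the typo reading $q(z-y)$ is correct. But the final geometric step contains a genuine gap: you cannot, in general, choose a \emph{convex} open neighbourhood $V_1$ of $K$ inside $V$ (or inside $N$), and it is likewise false that for \emph{all} $z,y$ in a small thickening of $K$ the segment $y+s(z-y)$ stays in $\bigcup_i W_{x_i}$. Take $K$ a circle in $\R^2$ and $V$ a thin annular neighbourhood: the convex hull of $K$ leaves $V$, and a segment joining points near antipodal parts of $K$ passes through the hole, outside the whole cover. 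So the MVT argument only delivers the estimate for pairs $z,y$ that are $q$-close to each other (lying in a common convex ball contained in one $W_{x_i}$); your proposal never addresses pairs that are far apart, which is exactly where the claimed inequality still has to be proved.

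The missing ingredient is the case distinction used in the paper's proof of Lemma~\ref{locLip2} (to which the text refers for this ``simple compactness argument''). Arrange, after increasing $q$, that $K+B^q_2(0)\sub\bigcup_iW_{x_i}$ with each ball $B^q_2(x)$, $x\in K$, inside a single $W_{x_i}$, and set $V_1:=K+B^q_1(0)$. If $q(z-y)<1$, pick $x\in K$ with $y\in B^q_1(x)$; then $z\in B^q_2(x)$, the segment stays in the convex ball $B^q_2(x)$, and~\ref{mvt} gives $p(f(z)-f(y))\leq q(z-y)$. If $q(z-y)\geq 1$, the segment argument is unavailable, but $p\circ f$ is bounded on $V_1$: each $z\in V_1$ lies in some $B^q_1(x)$ with $x\in K$, so the previous case gives $p(f(z))\leq p(f(x))+1\leq M:=\sup_{x\in K}p(f(x))+1$ (finite since $f(K)$ is compact), whence $p(f(z)-f(y))\leq 2M\leq 2M\,q(z-y)$. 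Replacing $q$ by $\max\{1,2M\}\,q$ yields the lemma on all of $V_1\times V_1$. Without this second case (or some substitute for it), your proof does not establish the stated uniform estimate.
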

\begin{la}\label{locLip2}
Let $E_1$, $E_2$ and $F$ be locally convex spaces,
$V\sub E_1$ be open
and $f\colon V\times E_2\to F$ be a $C^1$-map
such that $f(x,.)\colon E_2\to F$ is linear for all $x\in V$.
Let $K\sub V$ be a compact set
and $p\in P(F)$ be a continuous
seminorm.
Then there exist continuous seminorms
$q_1\in P(E_1)$ and $q_2\in P(E_2)$ such that
$K+B^{q_1}_1(0)\sub V$ and
\begin{equation}\label{easireqat}
p(f(z,v)-f(y,w))
\leq  q_2(v-w)+q_1(z-y)q_2(w)
\end{equation}
for all $z,y\in K+B_1^{q_1}(0)$ and $v,w\in E_2$.
\end{la}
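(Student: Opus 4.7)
The plan is to apply Lemma \ref{locLipK} to the $C^1$-map $f\colon V\times E_2\to F$, viewed as defined on the open subset $V\times E_2$ of $E_1\times E_2$, with compact set $K\times\{0\}$, and then to upgrade the resulting plain Lipschitz bound to the desired bilinear estimate by exploiting the linearity of $f(x,\cdot)$ in its second argument.

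In the first step I would obtain from Lemma \ref{locLipK} an open neighbourhood $W\subseteq V\times E_2$ of $K\times\{0\}$ and continuous seminorms $\bar q_1$ on $E_1$, $\bar q_2$ on $E_2$ such that
\[
p\bigl(f(z,v)-f(y,w)\bigr)\ \leq\ \bar q_1(z-y)+\bar q_2(v-w)
\]
for all $(z,v),(y,w)\in W$; here I use that every continuous seminorm on a product of locally convex spaces is dominated by a sum of continuous seminorms on the factors. The Wallace Lemma (\ref{Wallace}) provides a product neighbourhood $V_1\times U_2\subseteq W$ of $K\times\{0\}$, with $V_1\subseteq V$ open and $U_2$ an open $0$-neighbourhood in $E_2$. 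A standard compactness-plus-finite-subcover argument (cover $K$ by finitely many seminorm balls lying inside $V_1$ and then add up and rescale the corresponding seminorms) yields a continuous seminorm $q_1\geq\bar q_1$ on $E_1$ with $K+B^{q_1}_1(0)\subseteq V_1\subseteq V$; I then pick $q_2\geq\bar q_2$ on $E_2$ with $B^{q_2}_1(0)\subseteq U_2$.

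The second step is to specialise the estimate and extend it by linearity. Setting $y=z$ and $w=0$ (and using $f(z,0)=0$) gives $p(f(z,v))\leq q_2(v)$ for $z\in V_1$ and $v\in B^{q_2}_1(0)$; positive homogeneity of $q_2$ together with linearity of $f(z,\cdot)$ extends this to all $v\in E_2$. Setting $v=w$ gives $p(f(z,w)-f(y,w))\leq q_1(z-y)$ for $z,y\in V_1$ and $w\in B^{q_2}_1(0)$; rescaling $w$ by $\lambda=1/(q_2(w)+\delta)$, applying the latter to $\lambda w$, using linearity of $f(z,\cdot)$ and $f(y,\cdot)$, and letting $\delta\searrow 0$ upgrades the bound to $p(f(z,w)-f(y,w))\leq q_1(z-y)\,q_2(w)$ for arbitrary $w\in E_2$. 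Combining the two bounds through the splitting
\[
f(z,v)-f(y,w)=f(z,v-w)+\bigl(f(z,w)-f(y,w)\bigr),
\]
which is legitimate by linearity of $f(z,\cdot)$, then delivers the required inequality~(\ref{easireqat}) for all $z,y\in K+B^{q_1}_1(0)$ and $v,w\in E_2$.

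I expect the main technical obstacle to be the compactness argument in the first step that produces a seminorm $q_1$ with $K+B^{q_1}_1(0)\subseteq V_1$; once this is in hand, linearity of $f$ in the second argument does all the remaining work. Pleasantly, this approach never requires the line segment $[y,z]$ to lie in~$V$, so one avoids the awkward task of constructing a convex tubular neighbourhood of the merely compact set~$K$.
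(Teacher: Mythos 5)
Your argument is correct, and it reaches (\ref{easireqat}) by a genuinely different route than the paper. The paper works directly with the partial differential $d_1f(x,v,h):=(D_{(h,0)}f)(x,v)$: using the Wallace Lemma and the fact that $d_1f$ is homogeneous in both $h$ and $v$ (the latter because $f(x,\cdot)$ is linear), it first establishes $p(d_1f(u,v,h))\leq q_1(h)q_2(v)$ on an open $U\supseteq K$ together with $p(f(u,v))\leq q_2(v)$, arranges $K+B^{q_1}_2(0)\subseteq U$, and then proves (\ref{easireqat}) by the Mean Value Theorem in integral form along the segment from $y$ to $z$ (which lies in a convex $q_1$-ball of radius $2$ around a point of $K$) when $q_1(z-y)<1$, with a crude separate estimate when $q_1(z-y)\geq 1$ and a final doubling of $q_1$. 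You instead take Lemma~\ref{locLipK}, applied to $f$ on $V\times E_2$ near the compact set $K\times\{0\}$, as a black box, shrink to a product neighbourhood via Wallace, produce the collar $K+B^{q_1}_1(0)$ by the standard finite-subcover argument, and then upgrade the additive Lipschitz bound to the bilinear one purely by homogeneity and linearity in the second variable via the splitting $f(z,v)-f(y,w)=f(z,v-w)+(f(z,w)-f(y,w))$. Your specializations (first $y=z$, $w=0$; then $v=w$) and the rescaling with $\lambda=1/(q_2(w)+\delta)$ are all sound, including the degenerate case $q_2(w)=0$.

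What each approach buys: yours is shorter and cleanly separates the analytic input (a plain Lipschitz estimate near a compact set) from the algebraic upgrade (linearity in the second slot), and indeed never needs segments $[y,z]$ inside $V$ at the level of this lemma. The price is that the segment/convexity issue and the case distinction do not disappear—they are buried in Lemma~\ref{locLipK}, which the paper states with only a hint ("a simple compactness argument, cf.\ the proof of Lemma~\ref{locLip2}"). So if you adopt your route you should supply an independent proof of Lemma~\ref{locLipK} (Wallace applied to $df$, boundedness of $p\circ f$ on a neighbourhood of $K$, the MVT inside convex seminorm balls around points of $K$, and the same dichotomy $q(z-y)<1$ versus $q(z-y)\geq 1$), both to avoid any appearance of circularity and because the paper does not write that proof out. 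The paper's direct argument, by contrast, keeps everything explicit in one place and makes the provenance of the constants (the doubling of $q_1$) visible.
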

Note that, in contrast to Lemma~\ref{funda},
we cannot ensure differentiability almost everywhere
in Lemma~\ref{funda2}.
The next lemma sometimes helps
us to get around this difficulty.
We use notation as in~\ref{banquot}.
\begin{la}\label{banana}
Let $E$ and $F$ be locally convex spaces,
$V\sub E$ be open, $f\colon V\to F$ be a $C^2$-map, $x\in V$
and $p\in P(F)$ be a continuous
seminorm.
Then there exists a continuous seminorm $q\in P(E)$
such that $B^q_1(x)\sub V$
and a $C^1$-map
\[
g\colon B^{\|.\|_q}_1(\pi_q(x))
\to \wt{F}_p
\]
on the ball $B^{\|.\|_q}_1(\pi_q(x))$ in $\wt{E}_q$ such that
\[
\pi_p\circ f|_{B^q_1(x)}=g\circ \pi_q|_{B^q_1(x)}^{B^{\|.\|_q}_1(\pi_q(x))}.
\]
\end{la}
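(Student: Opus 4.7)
The strategy is to factor $\pi_p\circ f$ through $\pi_q$ by exploiting local Lipschitz estimates for $f$ and $df$, then to promote the resulting continuous map to a $C^1$-map by means of a Mean Value integral identity. First, I apply Lemma~\ref{locLip2} to the $C^1$-map $df\colon V\times E\to F$ (which is linear in its second argument since $f$ is $C^2$), taking $K:=\{x\}$ and the given seminorm~$p$. This yields continuous seminorms $q_1,q_2\in P(E)$ with $x+B_1^{q_1}(0)\sub V$ such that
\[
p\bigl(df(z,v)-df(y,w)\bigr)\;\leq\; q_2(v-w)+q_1(z-y)\,q_2(w)
\]
for all $z,y\in B_1^{q_1}(x)$ and $v,w\in E$. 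Choose a continuous seminorm $q_3\in P(E)$ with $p(df(x,v))\leq q_3(v)$ for all $v\in E$ (available by continuity of the linear map $df(x,\cdot)$). Setting $y=x$, $w=v$ in the above inequality gives $p(df(z,v))\leq q_3(v)+q_2(v)$ for $z\in B_1^{q_1}(x)$, $v\in E$. I then define $q\in P(E)$ to be a suitable scalar multiple of $\max\{q_1,q_2,q_3\}$ so that $B_1^q(x)\sub V$ and the estimate $p(df(z,v))\leq q(v)$ holds on $B_1^q(x)\times E$. The Mean Value Theorem~\ref{mvt} then gives $p(f(y)-f(z))\leq q(y-z)$ for all $y,z\in B_1^q(x)$.

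Observe that $\pi_q(B_1^q(x))=B_1^{\|.\|_q}(\pi_q(x))\cap E_q$ is dense in $B_1^{\|.\|_q}(\pi_q(x))\sub\wt{E}_q$. The estimate $p(f(y)-f(z))\leq q(y-z)$ shows that $\pi_p(f(y))=\pi_p(f(z))$ whenever $\pi_q(y)=\pi_q(z)$, so there is a well-defined Lipschitz map
\[
g_0\colon B_1^{\|.\|_q}(\pi_q(x))\cap E_q\to\wt{F}_p,\quad [z]\mto\pi_p(f(z)),
\]
which by completeness of $\wt{F}_p$ extends uniquely to a continuous map $g\colon B_1^{\|.\|_q}(\pi_q(x))\to\wt{F}_p$. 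The factorization $\pi_p\circ f|_{B_1^q(x)}=g\circ\pi_q|_{B_1^q(x)}^{B_1^{\|.\|_q}(\pi_q(x))}$ is immediate from the definition. Applying the analogous factorization argument to $df$ (using the two Lipschitz estimates above, one in each slot), I obtain a continuous map
\[
h\colon B_1^{\|.\|_q}(\pi_q(x))\times\wt{E}_q\to\wt{F}_p
\]
which is linear in its second argument and satisfies $h(\pi_q(z),\pi_q(v))=\pi_p(df(z,v))$ for $z\in B_1^q(x)$, $v\in E$.

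Finally, to show that $g$ is $C^1$ with $dg=h$, I start from the MVT identity for~$f$: for $z,y\in B_1^q(x)$, using~\ref{basicweaki},
\[
g([y])-g([z])=\pi_p\!\left(\int_0^1 df(z+s(y-z),y-z)\,ds\right)=\int_0^1 h\bigl([z]+s([y]-[z]),[y]-[z]\bigr)\,ds.
\]
By continuity of $g$ and $h$ together with density of $\pi_q(B_1^q(x))$ in $B_1^{\|.\|_q}(\pi_q(x))$, this identity extends to all $a,a'\in B_1^{\|.\|_q}(\pi_q(x))$:
\[
g(a')-g(a)=\int_0^1 h\bigl(a+s(a'-a),a'-a\bigr)\,ds.
\]
For $a\in B_1^{\|.\|_q}(\pi_q(x))$ and $b\in\wt{E}_q$ and sufficiently small $t\neq 0$, the linearity of $h$ in the second slot gives $(g(a+tb)-g(a))/t=\int_0^1 h(a+stb,b)\,ds$, and this tends to $h(a,b)$ as $t\to 0$ by continuity of $h$ and the parameter-dependence result~\ref{pardep}. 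Hence $dg(a,b)=h(a,b)$ exists for all $(a,b)$ and is continuous, so $g$ is~$C^1$.

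The main technical obstacle is juggling the three seminorm estimates coming from Lemma~\ref{locLip2} so that a \emph{single} seminorm $q$ governs both the Lipschitz control of $f$ (needed to construct~$g$) and that of $df$ in both of its arguments (needed to construct~$h$ and extend the MVT identity by density). Once this is arranged, the passage from the integral identity to $C^1$-ness is routine.
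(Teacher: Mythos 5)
Your proof is correct, and it follows the same overall skeleton as the paper's argument (factor $\pi_p\circ f$ and $\pi_p\circ df$ through $\pi_q$, verify the Mean Value integral identity on a dense subset, and pass to the limit of difference quotients using \ref{pardep}), but it reaches the key estimates by a genuinely different route. The paper obtains the Lipschitz estimate for $f$ directly from Lemma~\ref{locLip} on a ball of radius~$2$, then ``repeats the reasoning'' for $df$ to get a factorized map $h$ defined only on $B^{\|.\|_q}_2(\pi_q(x))\times B^{\|.\|_q}_2(0)$, and must then extend $h$ to all of $\wt{E}_q$ in the second variable via the homogeneity trick $H(v,w)=n\,h(v,\tfrac{1}{n}w)$, finally concluding $C^1$-ness through the $g^{[1]}$-characterization of \ref{BGN}. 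You instead apply Lemma~\ref{locLip2} to $df$ (legitimate, since $f$ being $C^2$ makes $df$ a $C^1$-map linear in its second slot), which hands you an estimate valid for \emph{all} $v,w\in E$; this simultaneously yields the Lipschitz bound for $f$ via \ref{mvt} and lets you build $h$ on $B^{\|.\|_q}_1(\pi_q(x))\times\wt{E}_q$ in one stroke, with no homogeneity extension and no need to work on a larger ball and restrict at the end; you then conclude $C^1$ directly from Definition~\ref{defCk}. The one place where you are terse — extending the factorized map for $df$ from the dense subset to $B^{\|.\|_q}_1(\pi_q(x))\times\wt{E}_q$ and checking joint continuity and linearity in the second argument — is routine: the inequality $p(df(z,v)-df(y,w))\leq q_2(v-w)+q_1(z-y)q_2(w)$ gives uniform continuity on sets with bounded second argument, and the same estimate passes to the extension, so this is a fill-in rather than a gap. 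Net effect: your argument is slightly cleaner at the construction of $h$, at the modest cost of invoking the stronger Lemma~\ref{locLip2} where the paper makes do with Lemma~\ref{locLip} plus an ad hoc extension.
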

Also the following generalization of Lemma~\ref{banana}
will be used, which follows by a simple
compactness argument.
%
% DETAILS ausfuehren!!!
%
%
\begin{la}\label{bananaK}
Let $E$ and $F$ be locally convex spaces,
$V\sub E$ be open, $f\colon V\to F$ be a $C^2$-map, $K\sub V$
a compact set
and $p\in P(F)$ be a continuous
seminorm.
Then there exists an open set $V_1\sub V$ with $K\sub V_1$,
a continuous linear map $\lambda\colon E\to Y$
to a Banach space~$Y$
and a $C^1$-map $g\colon W\to \wt{F}_q$
on an open subset $W\sub Y$ with $\lambda(V_1)\sub W$
such that
\[
\pi_q\circ f|_{V_1}=g\circ \lambda|_{V_1}^W.
\]
\end{la}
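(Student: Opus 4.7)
The strategy is to mimic the proof of the pointwise version (Lemma~\ref{banana}) but to supply uniform estimates on a neighbourhood of~$K$ in place of estimates at a single point; the ``simple compactness argument'' alluded to in the paper is already hidden inside Lemma~\ref{locLip2}, which takes a compact set~$K$ as input. I take $Y:=\wt{E}_q$ for a suitable $q\in P(E)$ and $\lambda:=\pi_q$; the map $g$ is then defined on an open subset~$W$ of this Banach space. (The symbols $\pi_q$ and $\wt{F}_q$ in the statement appear to be typos for $\pi_p$ and $\wt{F}_p$.)

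First, apply Lemma~\ref{locLip2} to the $C^1$-map $df\colon V\times E\to F$, which is linear in the second argument since $f$ is~$C^2$, with the compact set~$K$ and seminorm~$p$. This yields $q_1,q_2\in P(E)$ with $K+B^{q_1}_1(0)\sub V$ and
\[
p(df(z,v)-df(y,w))\leq q_2(v-w)+q_1(z-y)q_2(w)
\]
for all $z,y\in K+B^{q_1}_1(0)$ and $v,w\in E$. Set $q:=q_1+q_2\in P(E)$, $V_0:=K+B^q_1(0)\sub V$, $V_1:=K+B^q_{1/2}(0)$ (an open neighbourhood of~$K$), $Y:=\wt{E}_q$, $\lambda:=\pi_q$, and $W:=\pi_q(K)+B^{\|.\|_q}_{1/2}(0)\sub Y$. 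Then $W$ is open because $\pi_q(K)$ is compact in~$\wt{E}_q$, and $\lambda(V_1)=\pi_q(K)+(B^{\|.\|_q}_{1/2}(0)\cap E_q)$ is a dense subset of~$W$ by density of $E_q$ in $\wt{E}_q$. Specialising the estimate to $w=0$ gives $p(df(y,v))\leq q(v)$, and for $y,y'\in V_1$ with $q(y-y')<1/2$ the segment $[y,y']$ stays in~$V_0$ (any $k\in K$ with $q(y-k)<1/2$ satisfies $q(y+s(y'-y)-k)<1$ for all $s\in[0,1]$), so the Mean Value Theorem~\ref{mvt} yields the key Lipschitz bound $p(f(y)-f(y'))\leq q(y-y')$.

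Define $g_0\colon \lambda(V_1)\to\wt{F}_p$ by $g_0(\pi_q(y)):=\pi_p(f(y))$. If $\pi_q(y)=\pi_q(y')$ with $y,y'\in V_1$, then $q(y-y')=0<1/2$, and the bound above forces $p(f(y)-f(y'))=0$, so $g_0$ is well-defined. The same bound shows $g_0$ is $1$-Lipschitz on pairs at $\|.\|_q$-distance $<1/2$. Since $\wt{F}_p$ is Banach and $\lambda(V_1)$ is dense in~$W$, $g_0$ extends uniquely to a locally Lipschitz continuous map $g\colon W\to\wt{F}_p$ with $\pi_p\circ f|_{V_1}=g\circ \lambda|_{V_1}^W$. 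To verify that $g$ is $C^1$, define $dg(\pi_q(y),\pi_q(v)):=\pi_p(df(y,v))$ on $\lambda(V_1)\times\lambda(E)$; the full estimate of Step~1 shows this is well-defined, jointly Lipschitz in $(y,v)$, and linear in the second slot, hence it extends continuously to a map $dg\colon W\times Y\to\wt{F}_p$ linear in the second argument. Passing to the limit in the MVT identity $f(y_n+tv_n)-f(y_n)=t\int_0^1 df(y_n+stv_n,v_n)\,ds$ along approximants $\pi_q(y_n)\to z$, $\pi_q(v_n)\to\xi$ (justified by the uniform control of the integrand through Step~1) yields $g(z+t\xi)-g(z)=t\int_0^1 dg(z+st\xi,\xi)\,ds$, which identifies $g$ as $C^1$ with $Dg=dg$.

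The main obstacle is the last step: transporting the finite-dimensional MVT identity to the completion to establish $C^1$-regularity of~$g$ requires one to justify interchange of limit and integral in~$\wt{F}_p$, using the uniform estimate of Step~1 as the dominating bound. The other delicate point is the well-definedness of $g_0$, which is saved by the observation that $q(y-y')=0$ places $y'$ in the same ball $B^q_{1/2}(k)$ as~$y$, rendering the convex MVT applicable even though $y$ and $y'$ may originally have entered~$V_1$ through different pieces of the union $K+B^q_{1/2}(0)$.
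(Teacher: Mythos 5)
Your proposal is correct, but it takes a genuinely different route from the one the paper has in mind. The paper intends to deduce Lemma~\ref{bananaK} from the pointwise Lemma~\ref{banana} by a compactness/patching argument: cover $K$ by finitely many balls $B^{q_x}_{1/2}(x)$ supplied by Lemma~\ref{banana}, pass to one seminorm $q$ dominating the finitely many $q_x$, compose the local maps $g_x$ with the bonding maps $\wt{E}_q\to\wt{E}_{q_x}$ and glue them on a union of balls in $\wt{E}_q$, checking agreement on overlaps on the dense image of~$E$. You bypass Lemma~\ref{banana} altogether and redo its construction uniformly on a neighbourhood of~$K$, absorbing all the compactness into Lemma~\ref{locLip2} applied to $df$; this buys you two simplifications, namely that the estimate of Lemma~\ref{locLip2} holds for \emph{all} $v,w\in E$ (so the homogeneity trick used in the proof of Lemma~\ref{banana} to extend $\pi_p\circ df$ in the linear variable is unneeded) and that no gluing or overlap-compatibility check is required, at the price of having to re-verify the $C^1$-property of $g$ directly on $W$. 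Your sketch of that last step is sound and becomes airtight once one records that with your choices $V_1=\lambda^{-1}(W)$: hence, choosing the approximating data so that the segments $[\pi_q(y_n),\pi_q(y_n)+t\pi_q(v_n)]$ stay in $W$ (possible since a compact segment in the open set $W$ has a positive distance to its complement), all intermediate points $y_n+stv_n$ automatically lie in $V_1$, where the identities $g\circ\pi_q=\pi_p\circ f$ and $dg(\pi_q(\cdot),\pi_q(\cdot))=\pi_p\circ df(\cdot,\cdot)$ are available; the uniform estimate then justifies the limit in the Mean Value identity, and $C^1$-ness of $g$ with differential $dg$ follows via \ref{pardep} (or, as in the paper's proof of Lemma~\ref{banana}, via \ref{BGN}). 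You are also right that the statement's $\pi_q\circ f$ and $\wt{F}_q$ should be read as $\pi_p\circ f$ and $\wt{F}_p$.
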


\begin{numba}
Recall that a mapping $p\colon E\to F$ between
complex locally convex spaces is called
a \emph{continuous homogeneous polynomial} of degree $n\in \N_0$
if there exists a continuous complex $n$-linear map $\beta\colon E^n\to F$
such that
\[
p(x)=\beta(x,x,\ldots,x)\quad\mbox{for all $x\in E$.}
\]
\end{numba}
In some cases, we can even establish analyticity
(rather than mere smoothness) of evolution maps.
Analyticity is understood in the following sense.
\begin{numba}\label{defcana}
Let $E$ and $F$ be complex locally convex spaces
and $U\sub E$ be an open set. Following \cite{BaS}
(see also \cite{RES} and \cite{GaN}), a
mapping $f\colon U\to F$ is called \emph{complex
analytic} if $f$ is continuous and
each $z\in U$ has an open neighbourhood $W\sub U$
such that
\[
f(w)=\sum_{n=0}^\infty p_n(w-z)\quad \mbox{for all $w\in W$},
\]
with pointwise convergence, for some sequence of continuous homogeneous
polynomials $p_n\colon E \to F$ of degree~$n$.
\end{numba}
\begin{numba}
For $E$, $F$, $U$ as in \ref{defcana},
a map $f\colon U\to F$ is complex analytic
if and only if $f$ is $C^\infty$
on the underlying real locally convex spaces,
with complex linear differentials
\[
df(x,.)\colon E\to F
\]
for all $x\in U$ (see \cite{RES}).
If $F$ is integral complete (or only Mackey complete),
then complex analyticity of~$f$ follows if $f$ is $C^1$
with complex linear differentials~\cite{GaN}.
\end{numba}
If $E$ is a real locally convex space, then the direct
product $E_\C:=E\times E$ becomes a complex locally convex space
if we define $(x+iy)(v,w):=(xv-yw,yv+xw)$
for $x,y\in \R$, $v,w\in E$.
Identifying $E$ with $E\times \{0\}\sub E_\C$,
we have $E_\C=E\oplus i E$ and $(x+iy)(v+iw)=
(xv-yw)+i(yv+xw)$.\\[2.3mm]
Real analytic maps are defined via existence
of complex analytic extensions.
\begin{numba}
Let $E$ and $F$ be real locally convex spaces
and $U\sub E$ be an open set.
Following \cite{RES} and \cite{BGN}
(cf.\ also \cite{Mil}),
a map $f\colon U\to F$ is called \emph{real analytic}
if there exists a complex analytic map
\[
\wt{f}\colon \wt{U}\to F_\C
\]
on an open subset $\wt{U}\sub E_\C$ with $U\sub \wt{U}$,
such that $f=\wt{f}|_U$.
\end{numba}
Compositions of complex analytic maps are complex
analytic; compositions of real analytic mappings are real
analytic (see, e.g., \cite{RES} or \cite{GaN}).
%
%We shall also use a variant of \ref{defCk} involving parameters.
%
%\begin{numba}
%Let $X$ be a topological space,
%$E$ and $F$ be real locally convex spaces,
%$U\sub E$ be open and $f\colon X\times U\to F$ be a map.
%If $f$ is continuous, we say that $f$ is
%\emph{partially $C^0$ in the second argument}.
%We say that $f$ is \emph{partially $C^1$ in the second argument}
%if $f$ is continuous, $f_x:=f(x,.)\colon U\to F$, $y\mto f(x,y)$
%is $C^1$ for each $x\in X$ and the map
%\[
%d_2f\colon X\times U\times E\to F\,,\quad (x,y,z)\mto d(f_x)(y,z)
%\]
%is continuous. Inductively, given $k\in \N$ we say that
%$f$ is \emph{partially $C^{k+1}$ in the second argument}
%if $f$ is partially $C^1$ in the second argument and
%$d_2f\colon X\times (U\times E)\to F$ is partially
%$C^k$ in the second argument.
%\end{numba}
%
%
%
%
%
%
%
%
%
%
%
%
%
%
%
\section{Mappings between Lebesgue spaces}\label{mpbelbg}
We establish continuity and differentiability
properties for certain non-linear
maps between Lebesgue spaces as considered in Section~\ref{prels},
with parameters.
\begin{la}\label{operders}
Let
$E_1$, $E_2$ and $F$ be locally convex spaces,
$V\sub E_1$ be an open set
and $f\colon V\times E_2\to F$ be a
continuous map
such that $f(x,.)\colon E_2\to F$ is linear for each $x\in V$.
Assume the following:
\begin{itemize}
\item[\rm(a)]
$E_2$ and $F$ are integral complete and $\cE$
denotes $\cL^\infty_{rc}$ or $\cR$; or
\item[\rm(b)]
$E_2$ and $F$ are Fr\'{e}chet spaces
or \emph{(FEP)}-spaces
and $\cE$ denotes~$\cL^p$ for some $p\in [1,\infty]$.
\end{itemize}
Let $a,b\in \R$ such that $a<b$,
$\eta\colon [a,b]\to V$ be a continuous function,
and $\gamma\in \cE([a,b],E_2)$.
Then
\[
\theta:=f\circ (\eta,\gamma)\in \cE([a,b],F).
\]
\end{la}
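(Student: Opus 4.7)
The plan is to establish measurability of $\theta$ uniformly across all three incarnations of $\cE$, then verify the defining conditions of $\cE$ on a case-by-case basis. The crucial common step is that $K_1 := \eta([a,b])$ is compact and metrizable by Lemma~\ref{quotcpmet} (applied to the continuous surjection $\eta|^{K_1}\colon [a,b]\to K_1$), hence second countable. By~\ref{basicsmeas}(f), $\cB(K_1 \times E_2) = \cB(K_1) \tensor \cB(E_2)$, so the co-restricted pair $(\eta|^{K_1},\gamma)\colon [a,b]\to K_1\times E_2$, which is measurable into each factor via \ref{basicsmeas}(b), is Borel measurable by~\ref{basicsmeas}(e). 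Composing with the continuous restriction $f|_{K_1\times E_2}$ then yields measurability of $\theta\colon[a,b]\to F$.

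For case~(a) with $\cE = \cL^\infty_{rc}$, I would set $K_2 := \overline{\gamma([a,b])}$, which is compact and metrizable by definition; then $K_1\times K_2$ is compact and metrizable, and Lemma~\ref{quotcpmet} gives that $f(K_1\times K_2)$ is compact and metrizable in~$F$. Since $\theta([a,b])\sub f(K_1\times K_2)$, this yields $\theta \in \cL^\infty_{rc}([a,b],F)$. For the regulated case $\cE = \cR$, I would invoke Lemma~\ref{stepswithin} to pick step functions $\gamma_n\in\cT([a,b],E_2)$ with $\gamma_n([a,b])\sub\gamma([a,b])$ and $\gamma_n\to\gamma$ uniformly, and approximate $\eta$ uniformly by step functions $\eta_n$ with $\eta_n([a,b])\sub K_1$ via a standard partition argument using uniform continuity of $\eta$ on $[a,b]$. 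After refining to a common partition, $\theta_n := f\circ(\eta_n,\gamma_n)$ is a step function; since $\cR\sub\cL^\infty_{rc}$ still makes $K_2 := \overline{\gamma([a,b])}$ compact and metrizable, uniform continuity of $f$ on the compact set $K_1\times K_2$ gives $\theta_n\to\theta$ uniformly, whence $\theta\in\cR([a,b],F)$.

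For case~(b) with $\cE = \cL^p$, I would extract the seminorm estimate from the linearity of $f(x,\cdot)$. Given $q\in P(F)$, joint continuity of $f$ combined with $f(x,0)=0$ for $x\in K_1$ and the Wallace Lemma~\ref{Wallace} produces a continuous seminorm $q_2$ on $E_2$ and a $\delta>0$ with $q(f(K_1\times B^{q_2}_\delta(0)))\sub [0,1)$; scaling via linearity upgrades this to $q(f(x,v))\leq 2\delta^{-1}q_2(v)$ for all $x\in K_1$ and $v\in E_2$. Hence $\|\theta\|_{\cL^p,q}\leq 2\delta^{-1}\|\gamma\|_{\cL^p,q_2}<\infty$. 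Separability of $\theta([a,b])$ is inherited from the separable continuous image $f(K_1\times\gamma([a,b]))$ of the separable space $K_1\times\gamma([a,b])$: in the Fréchet setting, metrizability of~$F$ ensures that subsets of separable sets are separable; the (FEP) case is handled by the same closed-separable-span technique underlying~\ref{Lpfepvsp} and the appendix.

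The most delicate step is the regulated case, where step-function approximations of $\eta$ and $\gamma$ must be coordinated on a common partition and their composition through~$f$ must be controlled by uniform continuity on the compact set $K_1\times K_2$. The linearity-based seminorm estimate in case~(b) serves as a clean replacement for the $C^1$-based Lemma~\ref{locLip2} when only continuity of~$f$ is available, and the compact metrizable~$K_1$ is the unifying technical device that makes the measurability argument work uniformly across all three versions of~$\cE$.
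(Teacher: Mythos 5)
Your proposal is correct, and for the measurability step, the separability of the image, the $\cL^p$-estimate and the $\cL^\infty_{rc}$-case it runs exactly parallel to the paper: compact metrizable $K_1=\eta([a,b])$, the product Borel $\sigma$-algebra via \ref{basicsmeas}(f), a Wallace-Lemma neighbourhood of $K_1\times\{0\}$ combined with linearity in the second variable to get $q(f(x,v))\leq C\,q_2(v)$, and Lemma~\ref{quotcpmet} applied to $f(K_1\times\wb{\im(\gamma)})$. The only genuine divergence is the regulated case. The paper keeps the step-function approximants $\eta_n$ of $\eta$ merely uniformly close (with values in a neighbourhood $\eta([a,b])+B^P_1(0)\sub V$) and then controls $f(\eta(t),\gamma(t))-f(\eta_n(t),\gamma_n(t))$ by splitting off a linearity term $f(\eta_n(t),\gamma(t)-\gamma_n(t))$, bounded by the seminorm estimate, plus a perturbation term bounded by a second Wallace-Lemma application to $g(t,y,v)=f(\eta(t)+v,y)-f(\eta(t),y)$ on $[a,b]\times\wb{\im(\gamma)}\times\{0\}$. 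You instead force both approximants into the compact sets themselves ($\eta_n$ by sampling $\eta$ on a partition, $\gamma_n$ with values in $\gamma([a,b])$ via Lemma~\ref{stepswithin}) and invoke uniform continuity of $f$ on the compact set $K_1\times K_2$; this is valid (a continuous map on a compact subset of a topological vector space is uniformly continuous with respect to the seminorm uniformity) and arguably cleaner, replacing two Wallace estimates and the linearity splitting by one compactness argument, at the cost of the extra sampling step. Two small points you leave implicit, neither a real gap: for $p=\infty$ one should note that the pointwise estimate plus boundedness of $\im(\gamma)$ gives boundedness of $\im(\theta)$; and in the (FEP)-case the passage from ``$\im(\theta)$ lies in a separable set'' to ``$\im(\theta)$ is separable'' needs the closed-span device you allude to, but the paper's own proof glosses over exactly the same point.
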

\begin{prop}\label{oponinfty}
If $E_2$ and $F$ are integral complete
locally convex spaces
in the situation of Lemma~{\em\ref{operders}},
$\cE$ is $L^\infty_{rc}$ or $R$
and the map $f$ is $C^k$ for some
$k\in \N_0\cup\{\infty\}$, then also the map
\begin{equation}\label{thempstar}
\wt{f}\colon C([a,b],V)\times \cE([a,b],E)\to \cE([a,b],F),\;\;
(\eta,[\gamma])\mto [f\circ (\eta,\gamma)]
\end{equation}
is $C^k$.
\end{prop}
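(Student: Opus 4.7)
The plan is to induct on $k$, using Lemma~\ref{operders} to ensure $\wt{f}$ is well-defined as a map into $\cE([a,b],F)$.

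For the base case $k=0$, I would fix $(\eta_0,[\gamma_0])$ and apply Lemma~\ref{locLip2} to the compact set $K:=\eta_0([a,b])\sub V$, obtaining continuous seminorms $q_1\in P(E_1)$, $q_2\in P(E_2)$ with $K+B^{q_1}_1(0)\sub V$ and
\[
p(f(z,v)-f(y,w))\leq q_2(v-w)+q_1(z-y)q_2(w)
\]
for all $z,y\in K+B^{q_1}_1(0)$ and $v,w\in E_2$. Substituting $z=\eta(t)$, $y=\eta_0(t)$, $v=\gamma(t)$, $w=\gamma_0(t)$ and passing to the $\esssup$ for $\eta$ with $\sup_t q_1(\eta(t)-\eta_0(t))<1$ would give
\[
\|\wt{f}(\eta,[\gamma])-\wt{f}(\eta_0,[\gamma_0])\|_{L^\infty,p}\leq \|[\gamma]-[\gamma_0]\|_{L^\infty,q_2}+\Bigl(\sup_t q_1(\eta(t)-\eta_0(t))\Bigr)\|[\gamma_0]\|_{L^\infty,q_2}.
\]
Since every representative in $\cL^\infty_{rc}$ or $\cR$ is bounded, $\|[\gamma_0]\|_{L^\infty,q_2}<\infty$ and continuity at $(\eta_0,[\gamma_0])$ follows.

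For the inductive step at $k\geq 1$, the key observation is that the linearity of $f(x,\cdot)$ splits the difference quotient: for $t\neq 0$,
\[
\tfrac{1}{t}\bigl(f(x+th,y+tk)-f(x,y)\bigr)=f_1^{[1]}(x,h,y,t)+f(x+th,k),
\]
where $f_1^{[1]}(x,h,y,t):=\tfrac{1}{t}(f(x+th,y)-f(x,y))$ extends continuously to $t=0$ by $d_1f(x,y;h)$. By~\ref{BGN}, $f_1^{[1]}$ is a $C^{k-1}$-map on $V^{[1]}\times E_2\sub(E_1\times E_1\times\R)\times E_2$, and it remains linear in~$y$ as a pointwise limit of maps linear in~$y$. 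I would then define
\[
\wt{f}^{[1]}((\eta,[\gamma]),(h,[k]),t):=\wt{f_1^{[1]}}\bigl((\eta,\hat h,\hat t),[\gamma]\bigr)+\wt{f}(\eta+th,[k]),
\]
with $\hat h,\hat t$ the constant curves on $[a,b]$ at $h$ and~$t$; for $t\neq 0$ this coincides with the difference quotient of $\wt{f}$, and at $t=0$ it equals the candidate differential $[d_1f\circ(\eta,\gamma,h)]+[f\circ(\eta,k)]$. Applying the inductive hypothesis to $f_1^{[1]}$ and to~$f$ (both $C^{k-1}$ and linear in their $E_2$-variable) yields that $\wt{f_1^{[1]}}$ and $\wt{f}$ are $C^{k-1}$; combined with the smooth affine dependence $(\eta,h,t)\mapsto(\eta,\hat h,\hat t)$ and $(\eta,h,t)\mapsto\eta+th$, this gives $\wt{f}^{[1]}\in C^{k-1}$, whence $\wt{f}$ is $C^k$ by~\ref{BGN}. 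The $C^\infty$ case follows since $k$ was arbitrary.

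The main obstacle will be the bookkeeping for the inductive step: verifying that $f_1^{[1]}$ genuinely fits the hypotheses of the proposition at level $k-1$ (locally convex ambient space $E_1\times E_1\times\R$; linearity in the $E_2$-variable preserved under the BGN extension at $t=0$ via a pointwise-limit argument), and checking that freezing the parameters $h,t$ as constant curves realizes $\wt{f}^{[1]}$ as a smooth composition so that the chain rule transports the inductive regularity.
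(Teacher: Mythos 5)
Your inductive step is essentially sound and is a reasonable variant of the paper's argument: where the paper forms the joint difference quotient $f^{[1]}$ on $(V\times E_2)^{[1]}$ and identifies $d\wt{f}$ with the pushforward of $h((x,y),(v,w)):=df((x,v),(y,w))$, you use the linearity of $f(x,\cdot)$ to split off the term $f(x+th,k)$ and reduce to the partial difference quotient $f_1^{[1]}$, which is again linear in the $E_2$-variable and one degree less smooth; both routes then close the induction in the same way. (The paper handles the limit $t\to 0$ via the parametrized continuity Lemma~\ref{operderspar}, you handle it by feeding $t$ in as a constant curve; either works.)

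The genuine gap is the base case. Lemma~\ref{locLip2} is only available for $C^1$-maps $f$, whereas Proposition~\ref{oponinfty} with $k=0$ asserts continuity of $\wt{f}$ for a map $f$ that is merely continuous (and linear in the second variable). Your Lipschitz-type estimate therefore cannot be derived from the stated hypotheses; and the problem is not confined to $k=0$, because your induction loses one degree of differentiability at each step (if $f$ is $C^k$, then $f_1^{[1]}$ is only $C^{k-1}$), so it always bottoms out at the continuity statement for a merely continuous integrand. As written, your argument proves only the weaker assertion ``$f$ is $C^{k+1}$ $\Rightarrow$ $\wt{f}$ is $C^k$'', i.e.\ the conclusion of Proposition~\ref{oponLp}, not of Proposition~\ref{oponinfty}; the whole point of the distinction between the two propositions is that for $\cE\in\{L^\infty_{rc},R\}$ no extra degree of differentiability is needed. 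The correct base case for $L^\infty_{rc}$ and $R$ does not use any Lipschitz estimate: one exploits that $\eta_0([a,b])$ and $\wb{\im(\gamma_0)}$ are compact (and metrizable), applies the Wallace Lemma (see~\ref{Wallace}) to $f$ near the compact set $\eta_0([a,b])\times\{0\}$ to obtain a seminorm estimate $q(f(v,w))\leq Q(w)$ on a neighbourhood, and a second Wallace argument to the map $g(t,y,v):=f(\eta_0(t)+v,y)-f(\eta_0(t),y)$ on $[a,b]\times\wb{\im(\gamma_0)}\times\{0\}$ to control the remaining difference uniformly; this is exactly case~(a) of Lemma~\ref{operderspar}, and it is where the relative compactness of images in $\cL^\infty_{rc}$ and $\cR$ (as opposed to $\cL^p$) is used. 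If you replace your base case by that argument, your linearity-splitting induction goes through.
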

\begin{prop}\label{oponLp}
If $E_2$ and $F$ are Fr\'{e}chet spaces
or \emph{(FEP)}-spaces
in the situation of Lemma~{\em\ref{operders}},
$\cE=L^p$ with $p\in [1,\infty]$
and $f$ is $C^{k+1}$ for some
$k\in \N_0\cup\{\infty\}$, then the map
\begin{equation}\label{thempstar2}
\wt{f}\colon C([a,b],V)\times \cE([a,b],E)\to \cE([a,b],F),\;\;
(\eta,[\gamma])\mto [f\circ (\eta,\gamma)]
\end{equation}
is $C^k$.
\end{prop}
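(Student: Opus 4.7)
The map $\wt{f}$ is well-defined by Lemma~\ref{operders}, so only the $C^k$-property is at issue. The plan is induction on $k\in \N_0$, with $k=\infty$ handled by combining the finite cases. Note that at level $k$ the hypothesis is that $f$ is $C^{k+1}$; in the inductive step we therefore apply the statement at level $k$ to the differential of a $C^{k+2}$-map.

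For the base case $k=0$, I would verify continuity of $\wt{f}$ at a fixed point $(\eta_0,[\gamma_0])$ and for a fixed continuous seminorm $p\in P(F)$. Setting $K:=\eta_0([a,b])$ and applying Lemma~\ref{locLip2} yields continuous seminorms $q_1\in P(E_1)$, $q_2\in P(E_2)$ and an open neighbourhood $K+B^{q_1}_1(0)\sub V$ of $K$ on which the Lipschitz-type estimate
\[
p(f(z,v)-f(y,w))\leq q_2(v-w)+q_1(z-y)\, q_2(w)
\]
holds. Applying this pointwise in $t$ for $\eta$ uniformly $q_1$-close to $\eta_0$, then taking $L^p$-norms in $t$, yields
\[
\|\wt{f}(\eta,[\gamma])-\wt{f}(\eta_0,[\gamma_0])\|_{L^p,p}\leq \|[\gamma-\gamma_0]\|_{L^p,q_2}+\|q_1\circ(\eta-\eta_0)\|_{\infty}\,\|[\gamma_0]\|_{L^p,q_2},
\]
which establishes continuity at $(\eta_0,[\gamma_0])$.

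For the inductive step, I would assume the statement at level $k$ and let $f$ be $C^{k+2}$. The total differential
\[
df\colon (V\times E_1)\times (E_2\times E_2)\to F,\quad ((x,y);(w,z))\mto d_1 f(x,w)\, y+f(x,z)
\]
is $C^{k+1}$, and a direct check using linearity of $f$ in its second argument shows $df$ is linear in $(w,z)\in E_2\times E_2$. Invoking the inductive hypothesis on $df$ (with $V\times E_1$ as open subset of $E_1\times E_1$ and $E_2\times E_2$ in the role of $E_2$), and using Lemma~\ref{Lebspprod} to identify $L^p([a,b],E_2\times E_2)\isom L^p([a,b],E_2)^2$, yields that
\[
\wt{df}(\eta,\xi;[\gamma],[\delta])(t):=df(\eta(t),\gamma(t);\xi(t),\delta(t))
\]
defines a $C^k$-map from $C([a,b],V)\times C([a,b],E_1)\times L^p([a,b],E_2)^2$ into $L^p([a,b],F)$.

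To close the induction I then need $\wt{f}$ to be $C^1$ with $d\wt{f}=\wt{df}$. For small scalar $s$, pointwise application of the Mean Value Theorem~\ref{mvt} (on a neighbourhood where the relevant line segments lie in $V\times E_2$) together with linearity of $f$ in its second argument gives
\[
\tfrac{1}{s}\bigl(f(\eta(t)+s\xi(t),\gamma(t)+s\delta(t))-f(\eta(t),\gamma(t))\bigr)=\int_0^1 df(\eta(t)+sr\xi(t),\gamma(t)+sr\delta(t);\,\xi(t),\delta(t))\, dr.
\]
The main obstacle I anticipate is promoting this pointwise identity to an identity of $L^p([a,b],F)$-valued functions of $r$: a Fubini-style argument, combined with Lemma~\ref{operders} to keep all integrands in $\cL^p$, should allow the right-hand side to be recognised as $\int_0^1\wt{df}(\eta+sr\xi,\xi;[\gamma+sr\delta],[\delta])\, dr$, a weak integral in $L^p([a,b],F)$. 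Continuity of $\wt{df}$ and~\ref{pardep} then yield $L^p$-convergence as $s\to 0$ to $\wt{df}(\eta,\xi;[\gamma],[\delta])$, so $d\wt{f}=\wt{df}$, which is $C^k$, and hence $\wt{f}$ is $C^{k+1}$.
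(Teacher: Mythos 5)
Your overall skeleton matches the paper's: the base case is the same Lipschitz estimate obtained from Lemma~\ref{locLip2} (this is exactly how the paper proves the continuity statement, in the parameter-dependent form of Lemma~\ref{operderspar}\,(b)), and the inductive step correctly reduces to the map $((x,y),(v,w))\mto df((x,v),(y,w))$, which is linear in $(v,w)$ and $C^{k+1}$ when $f$ is $C^{k+2}$, so that the induction hypothesis applies to it.

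The genuine gap is in the one step you flag yourself: the passage from the pointwise Mean Value Theorem identity to the statement that the difference quotients converge in $L^p([a,b],F)$ to $\wt{df}(\eta,\xi;[\gamma],[\delta])$. Recognising the right-hand side as a weak integral $\int_0^1\wt{df}(\eta+sr\xi,\xi;[\gamma+sr\delta],[\delta])\,dr$ \emph{in the space} $L^p([a,b],F)$ is not a Fubini theorem: you would have to move an arbitrary continuous linear functional of $L^p([a,b],F)$ (whose dual you do not control) through the $r$-integral, or prove Riemann-sum convergence of the curve $r\mto[\,t\mto df(\ldots sr\ldots)\,]$ in the $L^p$-seminorms, which requires a Minkowski-type estimate that you have not supplied; moreover, in the (FEP) case the space $L^p([a,b],F)$ is only known to be Hausdorff locally convex (\ref{Lpfepvsp}), not complete, so the existence of that weak integral is not automatic and \ref{pardep} cannot simply be quoted. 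The paper's proof is organised precisely to avoid this interchange: since $f$ is $C^{k+2}$, the extended difference-quotient map $f^{[1]}$ on $(V\times E_2)^{[1]}$ (see \ref{BGN}) is $C^{k+1}$, and Lemma~\ref{operderspar} (with the scalar $t\in\,]{-\ve},\ve[$ as the parameter) shows that $t\mto\Delta_t:=[f^{[1]}(\cdot,t)\circ(\eta,\gamma,\eta_1,\gamma_1)]$ is a continuous curve in $L^p([a,b],F)$ whose value at $t\neq0$ is the difference quotient; the limit as $t\to0$ is then immediate and equals $[df\circ(\eta,\gamma,\eta_1,\gamma_1)]$, after which the induction closes as you describe. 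To salvage your route you would either have to carry out the Minkowski/uniform-continuity estimate directly (using a Lemma~\ref{locLip2}-type bound for $df$), or switch to the $f^{[1]}$ device; as written, the decisive convergence assertion is only a sketch.
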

{\bf Proof of Lemma~\ref{operders}.}
Since $\eta([a,b])$ metrizable and compact
and hence second countable, we
have $\cB(\eta([a,b])\times E_2)=\cB(\eta([a,b]))\tensor
\cB(E_2)$ (see \ref{basicsmeas}\,(f)).
As~$f$ is continuous and hence Borel measurable,
we deduce that $f\circ (\eta,\gamma)$ is measurable.
Let $D_1\sub \eta([a,b])$ and $D_2\sub \gamma([a,b])$
be dense countable subsets.
Since $f$ is continuous, the countable set $f(D_1\times D_2)$
is dense in $\im(\theta)$.
Hence $\theta\colon [a,b]\to F$
is measurable and has separable image.
Let $q\in P(F)$ be
a continuous seminorm.
The map
\[
h\colon \eta([a,b])\times E_2\to F,\quad h(t,v):=f(\eta(t),v)
\]
is continuous and $h([a,b]\times \{0\})=0$.
Using the Wallace Lemma (see \ref{Wallace}),
we find an open subset $V_1\sub V$ such that $\eta([a,b])\sub V_1$
and an open $0$-neighbourhood
$W\sub E_2$ such that
$h(V_1\times W)\sub B^q_1(0)$.
We find a continuous seminorm $Q\in P(E_2)$ such that
$B^Q_1(0)\sub W$,
and we may assume that $V_1=\eta([a,b])+B^P_1(0)$
for a continuous seminorm $P$ on~$E_1$.
Thus
$q(v,w)\leq 1$ for all $v\in V_1$ and $w\in B^Q_1(0)$
and hence
\begin{equation}\label{will profit}
q(f(v,w))\leq Q(w)\quad\mbox{for all $v\in V_1$ and $w\in E_2$.}
\end{equation}
If $\cE=\cL^p$ with $p\in [1,\infty[$, we
have $q(\theta(t))=q(f(\eta(t),\gamma(t)))\leq Q(\gamma(t))$
and thus
\[
\|\theta\|_{\cL^p,q}=\sqrt[p]{\int_a^bq(\theta(t))^p\,dt}
\leq\sqrt[p]{\int_a^bQ(\gamma(t))^p\,dt}=\|\gamma\|_{\cL^p,Q}<\infty.
\]
Hence $\theta\in \cL^p([a,b],F)$.

If $\cE=\cL^\infty$, we have
have $q(\theta(t))\leq Q(\gamma(t))$
and thus $\sup q(\theta([a,b])\leq \sup Q(\gamma([a,b])<\infty$,
entailing that $\theta([a,b])\sub F$ is bounded.
Thus $\theta\in \cL^\infty([a,b],F)$.
Moreover, $\|\theta\|_{\cL^\infty,q}\leq \|\gamma\|_{\cL^\infty,Q}$
by the preceding.

If $\cE=\cL^\infty_{rc}$,
then the set $f(\eta([0,1]))\times \wb{\im(\gamma)})$
is compact and metrizable (see Lemma~\ref{quotcpmet}),
entailing that $\theta\in \cL^\infty_{rc}([a,b],F)$.

If $\cE=\cR$, we choose $\gamma_n\in \cT([a,b],E_2)$
such that $\gamma_n\to \gamma$ uniformly.
Since $C([a,b],E_1)\sub \cR([a,b],E_1)$,
we also find $\eta_n\in \cT([a,b],E_1)$
such that $\eta_n\to \eta$ uniformly.
Then $(\eta_n,\gamma_n)\in \cT([a,b],E_1\times E_2)$.
There is a continuous seminorm $P\in P(E_1)$ such that
$\eta([a,b])+B^P_1(0)\sub V$.
After passing to a subsequence,
we may assume that $\sup_{t\in [a,b]}P(\eta(t)-\eta_n(t))<1$,
whence $\eta_n(t)\in V$ for all $t\in [0,1]$
and thus
\[
f\circ (\eta_n,\gamma_n)\in \cT([a,b],F)
\]
for all $n\in \N$.
Given $q\in P(F)$, we choose $Q\in P(E_2)$
as above. Let $K:=\wb{\im(\gamma)}$.
We consider the continuous function
\[
g\colon [a,b]\times E_2\times B^P_1(0) \to F,\quad
g(t,y,v):=f(\eta(t)+v,y)-f(\eta(t),y).
\]
Since $g([a,b]\times K\times \{0\}=\{0\}\sub B^q_1(0)$,
the Wallace Lemma implies that there
is $S\in P(E_1)$ with $S\geq P$ such that
$g([a,b]\times K \times B^S_1(0))\sub B^q_1(0)$.
We find
$N\in \N$ such that $\sup_{t\in [a,b]}Q(\gamma(t)-\gamma_n(t))<1$
and $\sup_{t\in [a,b]}R(\eta(t)-\eta_n(t))<1$
for all $n\geq N$.
For $n\geq N$ and $t\in [a,b]$, we obtain
\begin{eqnarray*}
\lefteqn{q(f(\eta(t),\gamma(t))-f(\eta_n(t),\gamma_n(t)))}\quad\quad\\
&\leq &
q(f(\eta(t),\gamma(t))-f(\eta_n(t),\gamma(t))+q(f(\eta_n(t),\gamma(t)-\gamma_n(t)))
\leq 2,
\end{eqnarray*}
showing that $f\circ (\eta_n,\gamma_n)\to f\circ (\eta,\gamma)=\theta$ uniformly.
Thus $\theta\in \cR([a,b],F)$.\,\vspace{2.3mm}\Punkt

\noindent
{\bf Proof of Propositions \ref{oponinfty} and \ref{oponLp}.}
To see that $\wt{f}$ is $C^k$ if $f$ is $C^k$ (resp., $C^{k+1}$),
we may assume that $k\in \N_0$ and proceed by induction.
The case $k=0$ is a special case of the following lemma.
The induction step will be completed
once Lemma~\ref{operderspar}
is available.
\begin{la}\label{operderspar}
Let
$E_1$, $E_2$ and $F$ be locally convex spaces,
$V\sub E_1$ be an open set, $\Lambda$ be a topological space,
$a,b\in \R$ such that $a<b$ and
\[
f\colon \Lambda\times V\times E_2\to F
\]
be a map such that $f(\lambda,x,.)\colon E_2\to F$
is linear for all $(\lambda,x)\in \Lambda\times V$.
Assume the following:
\begin{itemize}
\item[\rm(a)]
$E_2$ and $F$ are integral complete, $f$ is continuous
and $\cE$ denotes $\cL^\infty_{rc}$ or $\cR$; or
\item[\rm(b)]
$E_2$ and $F$ are Fr\'{e}chet spaces or \emph{(FEP)}-spaces,
$\Lambda$ is an open
subset of a locally convex space $E_0$, the map $f$ is $C^1$
and $\cE$ denotes~$\cL^p$ for some $p\in [1,\infty]$.
\end{itemize}
Then the map
\begin{equation}\label{thempstarpar}
\wt{f}\colon \Lambda \times C([a,b],V)\times \cE([a,b],E)\to \cE([a,b],F),\;\;
(\lambda, \eta,\gamma)\mto f(\lambda,.) \circ (\eta,\gamma)
\end{equation}
is continuous.
\end{la}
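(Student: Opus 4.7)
The plan is to prove continuity of $\wt{f}$ at an arbitrary base point $(\lambda_0,\eta_0,\gamma_0)$ by splitting the difference into three pieces that vary one slot at a time, and estimating each piece in the $\cE$-seminorm $\|\cdot\|_{\cE,q}$ using the Wallace Lemma together with the linearity of $f$ in its third argument. Well-definedness of $\wt{f}$ is provided by Lemma~\ref{operders} applied to $f(\lambda,\cdot,\cdot)$ for each fixed $\lambda$. For a continuous seminorm $q$ on $F$, I would write
\begin{align*}
f(\lambda,\eta(t),\gamma(t))-f(\lambda_0,\eta_0(t),\gamma_0(t))
&=\bigl[f(\lambda,\eta(t),\gamma(t))-f(\lambda_0,\eta(t),\gamma(t))\bigr]\\
&\quad{}+\bigl[f(\lambda_0,\eta(t),\gamma(t))-f(\lambda_0,\eta_0(t),\gamma(t))\bigr]\\
&\quad{}+f(\lambda_0,\eta_0(t),\gamma(t)-\gamma_0(t)).
\end{align*}

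For the third piece, the argument already used in the proof of Lemma~\ref{operders} yields a continuous seminorm $Q_0$ on $E_2$ with $q(f(\lambda_0,\eta_0(t),v))\le Q_0(v)$ for all $t\in[a,b]$ and $v\in E_2$, whence this piece has $\cE$-seminorm at most $\|\gamma-\gamma_0\|_{\cE,Q_0}$, which tends to $0$ as $\gamma\to\gamma_0$. For the first piece, given $\ve>0$, I would apply the Wallace Lemma to the continuous map $(\lambda,x,v)\mapsto f(\lambda,x,v)-f(\lambda_0,x,v)$, which vanishes on the compact set $\{\lambda_0\}\times\eta_0([a,b])\times\{0\}$, to find an open neighbourhood $U_0$ of $\lambda_0$, an open $V_1\subseteq V$ containing the compact set $\eta_0([a,b])$, and a continuous seminorm $Q_1$ on $E_2$ such that after rescaling by linearity in $v$,
\[
q\bigl(f(\lambda,x,v)-f(\lambda_0,x,v)\bigr)\le \ve\, Q_1(v)
\qquad(\lambda\in U_0,\;x\in V_1,\;v\in E_2).
\]
Analogously, for the second piece, applying Wallace to $(x,h,v)\mapsto f(\lambda_0,x+h,v)-f(\lambda_0,x,v)$ on $\eta_0([a,b])\times\{0\}\times\{0\}$ yields a continuous seminorm $P$ on $E_1$, some $\delta>0$, and a continuous seminorm $Q_2$ on $E_2$ with
\[
q\bigl(f(\lambda_0,x+h,v)-f(\lambda_0,x,v)\bigr)\le \ve\, Q_2(v)
\]
whenever $x\in \eta_0([a,b])$, $P(h)<\delta$, and $v\in E_2$.

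Combining the three estimates, for $\lambda\in U_0$, for $\eta\in C([a,b],V)$ with $\eta([a,b])\subseteq V_1$ and $\sup_tP(\eta(t)-\eta_0(t))<\delta$, and for $\gamma\in\cE([a,b],E_2)$, I obtain
\[
\|\wt{f}(\lambda,\eta,\gamma)-\wt{f}(\lambda_0,\eta_0,\gamma_0)\|_{\cE,q}
\le \ve\bigl(\|\gamma\|_{\cE,Q_1}+\|\gamma\|_{\cE,Q_2}\bigr)+\|\gamma-\gamma_0\|_{\cE,Q_0}.
\]
Since $\|\gamma\|_{\cE,Q_1}$ and $\|\gamma\|_{\cE,Q_2}$ stay bounded as $\gamma\to\gamma_0$ in $\cE([a,b],E_2)$, $\|\gamma-\gamma_0\|_{\cE,Q_0}\to 0$, and $\ve>0$ was arbitrary, continuity at $(\lambda_0,\eta_0,\gamma_0)$ follows. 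This single argument covers both cases (a) and (b): the hypotheses only affect which $\cE$ makes sense, and the bound $\|f\circ(\eta,\gamma)\|_{\cE,q}\le$ (seminorm of $\gamma$) from Lemma~\ref{operders} propagates through each case uniformly.

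The main obstacle is ensuring the Wallace-type estimates are uniform in $t\in[a,b]$, i.e.\ extracting neighbourhoods valid for every $x$ in the compact image $\eta_0([a,b])$ rather than merely pointwise; this is exactly what compactness of $\eta_0([a,b])$ together with the Wallace Lemma provides, and the subsequent rescaling to the form $\ve Q_i(v)$ is where the linearity of $f$ in its third argument is crucial. Once that is in place, the continuity estimate is essentially forced and case (b) requires no extra work beyond case (a), since only continuity of $f$ (not $C^1$-ness) is used here; the $C^1$-hypothesis in (b) is reserved for the inductive step in the proofs of Propositions~\ref{oponinfty} and \ref{oponLp}.
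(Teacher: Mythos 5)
There is a genuine gap at the final step, and it sits precisely where the hypotheses of the lemma are supposed to be used. In your estimates for the first two pieces, the seminorms $Q_1$ and $Q_2$ produced by the Wallace Lemma are \emph{not} independent of $\ve$: Wallace applied to the preimage of $B^q_\ve(0)$ yields a $0$-neighbourhood $W\sub E_2$ that shrinks as $\ve$ shrinks, and $Q_1,Q_2$ are chosen with $B^{Q_i}_1(0)\sub W$ before rescaling by linearity. Your concluding inequality therefore reads
\[
\|\wt{f}(\lambda,\eta,\gamma)-\wt{f}(\lambda_0,\eta_0,\gamma_0)\|_{\cE,q}
\le \ve\bigl(\|\gamma\|_{\cE,Q_1^{(\ve)}}+\|\gamma\|_{\cE,Q_2^{(\ve)}}\bigr)+\|\gamma-\gamma_0\|_{\cE,Q_0},
\]
with $\ve$-dependent seminorms, and the deduction ``the norms stay bounded, $\ve$ was arbitrary, hence continuity'' is circular: to force the first summand below a prescribed $\ve'$ you must take $\ve$ small, but then $Q_i^{(\ve)}$ become stronger and $\|\gamma_0\|_{\cE,Q_i^{(\ve)}}$ may grow without control, so $\ve\,\|\gamma_0\|_{\cE,Q_i^{(\ve)}}$ need not tend to $0$. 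The circle could only be broken if the seminorms could be fixed once (say from the case $\ve=1$) and the factor $\ve$ still be extracted by shrinking only the neighbourhood of $(\lambda_0,\eta_0)$; but that would mean that $(\lambda,x)\mto f(\lambda,x,\cdot)$ is continuous into the continuous linear maps $E_2\to F$ for a topology of uniform convergence on a fixed $Q_1$-ball, which mere joint continuity of $f$ (linear in the last slot) does not provide, even for Banach $E_2$.

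This is exactly why your closing remark, that case (b) needs no extra work and only continuity of $f$ is used, cannot stand: the paper spends its additional hypotheses at this very step, in two different ways. In case (a) it never estimates the ``vary $(\lambda,\eta)$'' part by $\ve$ times a seminorm of $\gamma$; instead it exploits that $K=\wb{\im(\gamma_0)}$ is compact (this is where $\cL^\infty_{rc}$ and $\cR$ enter) and applies Wallace to $g(\mu,t,y,v)=f(\mu,\eta(t)+v,y)-f(\lambda,\eta(t),y)$ on the compact set $V_0\times[a,b]\times K\times\{0\}$, obtaining $q(f(\bar\lambda,\bar\eta(t),\gamma_0(t))-f(\lambda_0,\eta_0(t),\gamma_0(t)))\le 1$ uniformly in $t$ with no seminorm of $\gamma_0$ appearing at all (smallness then comes from running the argument for all multiples of $q$). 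In case (b), where images of $\cL^p$-maps are not relatively compact, it uses the $C^1$-hypothesis through Lemma~\ref{locLip2} to get the Lipschitz-type factorization $q(f(\sigma,z,v)-f(\tau,y,w))\le q_2(v-w)+\max\{q_0(\sigma-\tau),q_1(z-y)\}\,q_2(w)$ with \emph{fixed} seminorms, so the troublesome term becomes $\max\{q_0(\bar\lambda-\lambda),\|\bar\eta-\eta\|_{\cL^\infty,q_1}\}\,\|\gamma\|_{\cL^p,q_2}$, which does tend to $0$; this is also why Proposition~\ref{oponLp} must assume $f$ of class $C^{k+1}$ to conclude that $\wt{f}$ is $C^k$. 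To repair your proof you would have to reproduce one of these mechanisms (a compact set in the $E_2$-slot, or a factorized estimate with seminorms independent of $\ve$), or supply a genuinely different argument (for instance approximating $\gamma_0$ by simple functions before invoking Wallace); as written, the continuity claim does not follow from your estimate.
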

\begin{proof}
Let us show that $\wt{f}$ is continuous
at each $(\lambda,\eta,\gamma)$.

Let $q\in P(F)$ be
a continuous seminorm.
If $\cE=\cL^\infty_{rc}$ or $\cE=\cR$,
we proceed as follows:
The map
\[
h\colon \Lambda\times \eta([a,b])\times E_2\to F,\quad h(t,v):=f(\eta(t),v)
\]
is continuous and $h(\Lambda\times [a,b]\times \{0\})=0$.
Using the Wallace Lemma (see \ref{Wallace}),
we find an open neighbourhood $V_0\sub \Lambda$ of~$\lambda$,
an open $0$-neighbourhood
$W\sub E$
and an open set $V_1\sub V$ such that $\eta([a,b])\sub V_1$
and
$h(V_0\times V_1\times W)\sub B^q_1(0)$.
We find a continuous seminorm $Q\in P(E_2)$ such that
$B^Q_1(0)\sub W$,
and we may assume that $V_1=\eta([a,b])+B^P_1(0)$
for a continuous seminorm $P$ on~$E_1$.
Thus
$q(\mu,v,w)\leq 1$ for all $\mu\in V_0$, $v\in V_1$ and $w\in B^Q_1(0)$
and hence
\begin{equation}\label{will profit2}
q(f(\mu,v,w))\leq Q(w)\quad\mbox{for all $\mu\in V_0$, $v\in V_1$ and $w\in E_2$.}
\end{equation}
Let $K:=\wb{\im(\gamma)}$.
We consider the continuous function
\[
g\colon V_0\times [a,b]\times E_2\times B^P_1(0) \to F,\quad
g(\mu,t,y,v):=f(\mu,\eta(t)+v,y)-f(\lambda,\eta(t),y).
\]
Since $g(V_0\times [a,b]\times K\times \{0\}=\{0\}\sub B^q_1(0)$,
the Wallace Lemma implies that,
after shrinking~$V_0$ if necessary,
there is
$S\in P(E_1)$ with $S\geq P$ such that
$g(V_0\times [a,b]\times K \times B^S_1(0))\sub B^q_1(0)$.
Then
\begin{eqnarray*}
\lefteqn{q(f(\bar{\lambda},\bar{\eta}(t),\bar{\gamma}(t)
-f(\lambda,\eta(t),\gamma(t)))}\qquad\\
&\leq&
q(f(\bar{\lambda},\bar{\eta}(t),\bar{\gamma}(t)-\gamma(t)))
+q(f(\bar{\lambda},\bar{\eta}(t),\gamma(t))-f(\lambda,\eta(t),\gamma(t)))\\
&\leq& Q(\bar{\gamma}(t)-\gamma(t))+ 1\leq 2
\end{eqnarray*}
for $\lambda_1$-almost all $t\in [a,b]$,
for all $\bar{\lambda}\in V_0$,
$\bar{\eta}\in \eta+C([a,b],B_1^P(0))$
and $\bar{\gamma}\in \gamma+ \Omega$ with
the $0$-neighbourhood
$\Omega\sub \cE([a,b],E)$
consisting of all $\zeta\in \cE([a,b],E)$
such that $\zeta(t)\in B_1^Q(0)\cap B_1^S(0)$
for $\lambda_1$-almost all $t\in [a,b]$.
Hence $\wt{f}$ is continuous at $(\lambda,\eta,\gamma)$.

In the situation of (b),
we have $\cE=\cL^p$ with $p\in [1,\infty]$.
Moreover, $f$ is $C^1$.
We apply Lemma~\ref{locLip2}
with $K:=\{\lambda\}\times \eta([a,b])\sub \Lambda\times V\sub E_0\times E_1$.
Given a continuous seminorm
$q\in P(F)$, it provides
continuous seminorms $Q\in P(E_0\times E_1)$ and $q_2\in P(E_2)$
such that $K+B^Q_1(0)\sub \Lambda\times V$ and
\[
q(f(\sigma,z,v)-f(\tau,y,w))
\leq
q_2(v-w)+ Q((\sigma,z)-(\tau,y))q_2(w)
\]
for all $(\sigma,y)$, $(\tau,z)\in K+B^Q_1(0)$
and $v,w\in E_2$.
After increasing $Q$, we may assume that
$Q(\sigma,z)=\max\{q_0(\sigma),q_1(z)\}$
for continuous seminorms $q_0\in P(E_0)$ and $q_1\in P(E_1)$.
Thus
\[
q(f(\sigma,z,v)-f(\tau,y,w))
\leq
q_2(v-w)+ \max\{q_0(\sigma-\tau),q_1(z-y)\}q_2(w)
\]
for all $\sigma,\tau\in B^{q_0}_1(\lambda)$,
$y,z \in \im(\eta)+B^{q_1}_1(0)$
and $v,w\in E_2$.
Hence, for all $\bar{\lambda}\in B^{q_0}_1(\lambda)$,
$\bar{\eta}\in \eta+C([a,b],B^{q_1}_1(0))$
and $\gamma,\bar{\gamma}$ in $\cL^p([a,b],E_2)$
we have
\begin{eqnarray*}
\lefteqn{q(f(\bar{\lambda}\bar{\eta}(t),\bar{\gamma}(t))-f(\lambda,\eta(t),
\gamma(t)))}\qquad\\
&\leq&
q_2(\bar{\gamma}(t)-\gamma(t))+ \max\{q_0(\bar{\lambda}-\lambda),
q_1(\bar{\eta}(t)-\eta(t))\}q_2(\gamma(t))
\end{eqnarray*}
for all $t\in [a,b]$. If $p=\infty$, we deduce that
\[
\|\wt{f}(\bar{\lambda},\bar{\eta},\bar{\gamma})-\wt{f}(\lambda,\eta,\gamma)\|_{\cL^\infty,q}
\|\bar{\gamma}-\gamma\|_{\cL^\infty,q_2}+\max\{q_0(\bar{\lambda}-\lambda),
\|\bar{\eta}-\eta\|_{\cL^\infty,q_1}\}\|\gamma\|_{\cL^\infty,q_2},
\]
which tends to $0$ if $(\bar{\lambda},\bar{\eta},\bar{\gamma})\to
(\lambda,\eta,\gamma)$.
If $p\in [1,\infty[$,
we deduce that
\begin{eqnarray*}
\lefteqn{\|\wt{f}(\bar{\lambda},\bar{\eta},\bar{\gamma})-\wt{f}(\lambda,\eta,\gamma)
\|_{\cL^p,q}}\qquad\\
&=&\|q\circ(\wt{f}(\bar{\lambda},\bar{\eta},\bar{\gamma})-
\wt{f}(\lambda,\eta,\gamma))\|_{\cL^p}\\
&\leq&
\|q_2\circ (\bar{\gamma}-\gamma)\|_{\cL^p}
+ \|\max\{q_0(\bar{\lambda}-\lambda),
(q_1\circ (\bar{\eta}-\eta))\}\cdot (q_2\circ \gamma)\|_{\cL^p}\\
&\leq&
\|\bar{\gamma}-\gamma\|_{\cL^p,q_2}+
\max\{q_0(\bar{\lambda}-\lambda),\|\bar{\eta}-\eta\|_{\cL^\infty,q_1}\}
\|\gamma\|_{L^p,q_2}.
\end{eqnarray*}
As the right hand side tends to $0$
as $(\bar{\lambda},\bar{\eta},\bar{\gamma})\to (\lambda,\eta,\gamma)$
in $C([a,b],E_1)\times \cL^p([a,b],E_2)$,
we see that $\wt{f}$ is continuous at
$(\lambda,\eta,\gamma)$.
\end{proof}
{\bf Proof of Propositions \ref{oponinfty} and \ref{oponLp}, completed.}
Let $k\in \N_0$ and assume that the assertion
holds for $k$ (i.e., $\wt{f}$ is $C^k$).
If $\cE$ is $L^\infty_{rc}$ or~$R$,
assume that $f$ is $C^{k+1}$;
if $\cE=L^p$ for some $p\in [1,\infty]$,
assume that~$f$ is $C^{k+2}$.
We show that $\wt{f}$ is $C^{k+1}$
with
\begin{equation}\label{givesdf}
d\wt{f}(\eta,[\gamma],\eta_1,[\gamma_1])=[df\circ (\eta,\gamma,\eta_1,\gamma_1)]
\end{equation}
for all $\eta\in C([a,b],V)$, $\eta_1\in C([a,b],E_1)$
and $[\gamma],[\gamma_1]\in \cE([a,b],E_2)$.
Consider the open set
\[
\hspace*{-25mm}(V\times E_2)^{[1]}\hspace*{90mm}
\]
\[
:= \{(x,v,y,w,t)\in V\times E_2\times E_1\times E_2 \times \R\colon
(x+ty,v+tw)\in V\times E_2\}
\]
in $E_1\times E_2\times E_1\times E_2 \times \R$
(cf.\ \ref{BGN}).
By~\ref{BGN},
there is a unique continuous
map
\[
f^{[1]}\colon (V\times E_2)^{[1]}\to F
\]
such that
\[
f^{[1]}(x,v,y,w,t)=\frac{f(x+ty,v+tw)-f(x,v)}{t}
\]
for all
$(x,v,y,w,t)\in (V\times E_2)^{[1]}$ such that $t\not=0$.
Then $f^{[1]}(x,v,y,w,0)=df((x,v),(y,w))$ for all
$(x,y)\in V\times E_1$.
If $f$ is $C^{k+1}$,
then $f^{[1]}$ is $C^k$;
if $f$ is $C^{k+2}$,
then $f^{[1]}$ is $C^{k+1}$ (see \cite{BGN}).
Since $\eta_1([a,b])$
is compact, there exists an open $0$-neighbourhood
$U\sub E_1$ such that $\eta_1([a,b])+U\sub V$.
Choose an open balanced $0$-neighbourhood
$W\sub E_1$ such that $W+W\sub U$.
There is $\ve>0$ such that $\im(\eta_1)\sub \ve^{-1}W$.
Then
\[
(\eta_1([a,b]+W)\times E_2\times \ve^{-1}W\times E_2\times \;]{-\ve},\ve[\;\sub
(V\times E_2)^{[1]}.
\]
Consider the function
$g\colon \;]{-\ve},\ve[\,\times ((\eta([a,b])+W)\times \ve^{-1}W)
\times (E_2\times E_2)\to F$,
\[
g(t,(x,y),(v,w)):=f^{[1]}(x,v,y,w,t).
\]
Lemma~\ref{operderspar} entails that the function
\[
\wt{g}\colon \;]{-\ve},\ve[\; \times C([a,b],(\eta([a,b)+W)\times \ve^{-1}W)
\times \cE([a,b],E_2\times E_2)\to \cE([a,b],F),
\]
\[\wt{g}(t,(\bar{\eta},\bar{\eta}_1),[(\bar{\gamma},\bar{\gamma}_1)])
:=
[g(t,.)\circ (\bar{\eta},\bar{\eta}_1,\bar{\gamma},\bar{\gamma}_1)]
=[f^{[1]}(.,t)\circ (\bar{\eta},\bar{\gamma},\bar{\eta}_1,\bar{\gamma}_1)]
\]
is continuous. Therefore the map
\[
]{-\ve},\ve[\;\to \cE([a,b],F),\quad
t\mto \Delta_t:=\wt{g}(t,\eta,\eta_1, [(\gamma,\gamma_1)])
\]
is continuous. Letting $t\in \;]{-\ve},\ve[\;\setminus\{0\}$
tend to~$0$, we deduce that
\[
\frac{[f\circ (\eta+t\eta_1,\gamma+t\gamma_1)]-[f\circ (\eta,\gamma)]}{t}
=\Delta_t\to \Delta_0=[df\circ (\eta,\gamma,\eta_1,\gamma_1)].
\]
Thus $d\wt{f}((\eta,[\gamma]),(\eta_1,[\gamma_1]))$
exists and
is given by
\begin{equation}\label{showsit}
d\wt{f}((\eta,[\gamma]),(\eta_1,[\gamma_1]))=
df\circ (\eta,[\gamma],\eta_1,[\gamma_1])
=\wt{h}((\eta,\eta_1),[(\gamma,\gamma_1)])
\end{equation}
with $h\colon (V\times E_1)\times (E_2\times E_2)\to F$,
$h((x,y),(v,w)):=df((x,v),(y,w))$.
Note that, for fixed $(x,y,t)\in V^{[1]}$ with $t\not=0$,
\[
f^{[1]}((x,v),(y,w),t)
=\frac{f(x+ty,v+tw)-f(x,v)}{t}
\]
is linear in $(v,w)$. Letting $t\to0$, we see that also
\[
h((x,y),(v,w))=df((x,v),(y,w))=f^{[0]}((x,v),(y,w),0)
\]
is linear in $(v,w)\in E_2\times E_2$
for fixed $(x,y)\in V\times E_1$.
Note that $h$ is $C^k$ if $\cE$ is $L^\infty_{rc}$ or $\cR$;
if $\cE$ is $L^p$,
then $h$ is $C^{k+1}$.
Hence $\wt{h}$ is $C^k$ by induction, and hence
$d\wt{f}$ is $C^k$, by (\ref{showsit}).
In particular, $d\wt{f}$ is continuous,
and thus $\wt{f}$ is~$C^1$.
Since $\wt{f}$ is~$C^1$ and
$d\wt{f}$ is $C^k$, the map $\wt{f}$ is $C^{k+1}$.\,\Punkt
\section{The spaces {\boldmath$AC_\cE([a,b],E)$} and mappings\\
between them}\label{secAC}
In this section, we define spaces of absolutely
continuous functions $\eta\colon [a,b]\to E$
with values in integral complete
locally convex spaces~$E$.
For general~$E$, we wish to distinguish the cases
that $\eta'\in L^\infty_{rc}([a,b],E)$
and $\eta'\in R([a,b],E)$ (a regulated function), respectively.
And if $E$ is a Fr\'{e}chet space (or a sequentially complete (FEP)-space),
we also wish to distinguish the cases
that $\eta'\in L^p([a,b],E)$ for some $p\in [1,\infty]$.
To enable a uniform treatment
of all of these cases, we find it convenient
to assume that locally convex spaces
\[
\cE([a,b],E)\sub L^\infty_{rc}([a,b],E)
\]
(resp., $\cE([a,b],E)\sub L^1([a,b],E)$)
have been selected for all $a,b\in \R$ such that $a<b$
and all integral complete locally convex spaces~$E$
(resp., all Fr\'{e}chet spaces~$E$, resp.,
all sequentially complete (FEP)-spaces)
in a reasonable way;
we then speak of a \emph{bifunctor}
on integral complete locally convex spaces
(resp., on Fr\'{e}chet spaces, resp., on sequentially complete (FEP)-spaces).
Given such a bifunctor,
we define and study certain locally convex spaces $AC_\cE([a,b],E)$
of absolutely continuous functions $\eta\colon [a,b]\to E$
with $\eta'\in \cE([a,b],E)$.
Natural additional axioms are worked out which enable
$AC_\cE([a,b],M)$ to be defined also for $M$ a manifold
modelled on~$E$; they are satisfied by all of $L^p$, $L^\infty_{rc}$, and~$R$.
Later, we shall associate a Lie group $AC_\cE([0,1],G)$
to each Lie group~$G$ modelled on~$E$.
The Lie group $AC_\cE([a,b],G)$ is needed to define
the notion of $\cE$-regularity for the Lie group~$G$.
\begin{defn}\label{bifunctFrech}
Assume that, for each Fr\'{e}chet space~$E$
and $a,b\in \R$ such that $a<b$,
a vector subspace $\cE([a,b],E)$ of $L^1([a,b],E)$
has been assigned, together with a locally
convex vector topology on~$\cE([a,b],E)$ such that
the inclusion map
\[
\cE([a,b],E)\to L^1([a,b],E),
\quad [\gamma]\mto[\gamma]
\]
is continuous.
Consider the following conditions:
\begin{itemize}
\item[(B1)]
For each continuous linear map $\lambda\colon E_1\to E_2$
between Fr\'{e}chet spaces,
we have $[\lambda\circ \gamma]\in \cE([a,b],E_2)$
for all $a,b\in \R$ such that $a<b$ and $[\gamma]\in \cE([a,b],E_1)$,
and the linear map
\[
\cE([a,b],\lambda)\colon \cE([a,b],E_1)\to \cE([a,b],E_2),\quad
[\gamma]\mto[\lambda\circ \gamma]
\]
is continuous.
\item[(B2)]
For each Fr\'{e}chet space~$E$, real numbers
$a,b,\alpha,\beta,c,d$ with $a\leq \alpha<\beta\leq b$ and $c<d$,
consider the map $f\colon [c,d]\to [a,b]$
given by
\begin{equation}\label{fafflin}
f(t):=\alpha+\frac{t-c}{d-c}(\beta-\alpha)\quad\mbox{for $t\in [c,d]$.}
\end{equation}
We require that
$[\gamma\circ f]\in \cE([c,d],E)$ for each $[\gamma]\in \cE([a,b],E)$
and that the linear map
\[
\cE(f,E)\colon \cE([a,b],E)\to \cE([c,d],E),\quad [\gamma]\mto[\gamma\circ f]
\]
is continuous.\footnote{In other words, we can pull back
along affine-linear maps.}
\end{itemize}
We call $\cE$ a \emph{bifunctor} on Fr\'{e}chet spaces
if (B1) and (B2) are satisfied.
\end{defn}
\begin{rem}
In particular, (B2) requires that
the linear map
\[
\rho\colon \cE([a,b],E)\to \cE([\alpha,\beta],E),\quad [\gamma]\mto [\gamma|_{[\alpha,\beta]}]
\]
is continuous, for all $a\leq \alpha<\beta\leq b$.
\end{rem}
\begin{numba}
If Fr\'{e}chet spaces are replaced with sequentially complete
(FEP)-spaces in Definition~\ref{bifunctFrech},
then we speak of a \emph{bifunctor on
sequentially complete} (FEP)-\emph{spaces}.
\end{numba}
\begin{defn}\label{bifunctic}
Assume that, for each integral complete locally convex space~$E$
and $a,b\in \R$ such that $a<b$,
a vector subspace $\cE([a,b],E)$ of
$L^\infty_{rc}([a,b],E)$
has been assigned, together with a locally
convex vector topology on~$\cE([a,b],E)$ such that
the inclusion map
$\cE([a,b],E)\to L^\infty_{rc}([a,b],E)$
is continuous.
Consider the following conditions:
\begin{itemize}
\item[(B1)]
For each continuous linear map $\lambda\colon E_1\to E_2$
between integral complete locally convex spaces,
we have $[\lambda\circ \gamma]\in \cE([a,b],E_2)$
for all $a,b\in \R$ such that $a<b$ and $[\gamma]\in \cE([a,b],E_1)$,
and the linear map
\[
\cE([a,b],\lambda)\colon \cE([a,b],E_1)\to \cE([a,b],E_2),\quad
[\gamma]\mto[\lambda\circ \gamma]
\]
is continuous.
\item[(B2)]
For each integral complete locally convex space~$E$, real numbers
$a$, $b$, $\alpha$, $\beta$, $c$, $d$ with $a\leq \alpha<\beta\leq b$ and $c<d$,
let $f\colon [c,d]\to [a,b]$
be as in~(\ref{fafflin}).
We require that
$[\gamma\circ f]\in \cE([c,d],E)$ for each $[\gamma]\in \cE([a,b],E)$
and that the linear map
\[
\cE(f,E)\colon \cE([a,b],E)\to \cE([c,d],E),\quad [\gamma]\mto[\gamma\circ f]
\]
is continuous.
\end{itemize}
We call $\cE$ a \emph{bifunctor} on integral complete
locally convex spaces
if (B1) and (B2) are satisfied.
\end{defn}
\begin{rem}\label{willgvcx}
The condition (B1) entails that
\[
\cE([a,b],E_1\times E_2)\cong \cE([a,b],E_1)\times \cE([a,b],E_2)
\]
as locally convex spaces, for all $a,b\in \R$ such that $a<b$
and Fr\'{e}chet spaces (resp., sequentially complete (FEP)-spaces,
resp., integral complete
locally convex spaces) $E_1$ and $E_2$.
To see this, let $\pi_j\colon E_1\times E_2\to E_j$
be the projection onto the $j$-th component, for $j\in \{1,2\}$.
We then see as in the proof of \ref{Lebspprod}
that
$(\cE([a,b],\pi_1),\cE([a,b],\pi_2))$
is an isomorphism of locally convex spaces.
\end{rem}
\begin{defn}\label{mostbaac}
Let $\cE$ be a bifunctor on Fr\'{e}chet spaces
(resp., a bifunctor on sequentially complete (FEP)-spaces,
resp., a bifunctor on integral complete locally convex spaces).
Let $a<b$ be real numbers and $E$ be a Fr\'{e}chet space
(resp., a sequentially complete (FEP)-space,
resp., an integral complete locally convex space).
Let $t_0\in [a,b]$.
We define $AC_\cE([a,b], E)\sub C([a,b],E)$ as the space
of all continuous functions $\eta\colon [a,b]\to E$
for which there exists a $[\gamma]\in \cE([a,b],E)$
such that
\[
(\forall t\in [a,b])\quad \eta(t)=\eta(t_0)+\int_{t_0}^t\gamma(s)\, ds\,.
\]
Lemma~\ref{funda} (resp., Lemma~\ref{funda2})
implies that $[\gamma]=\eta'$ is unique,
and that the map
\[
\Phi\colon AC_\cE([a,b],E)\to E \times \cE([a,b],E),\quad \eta\mto (\eta(t_0),\eta')
\]
is an isomorphism of vector spaces
(with $\Phi^{-1}(v,[\gamma])(t):=v+\int_{t_0}^t\gamma(s)\,ds$).
We give $AC_\cE([a,b],E)$ the Hausdorff
locally convex vector topology which makes~$\Phi$
an isomorphism of topological vector spaces.
\end{defn}
We shall see in Remark~\ref{closedemb2}
that both the definition of $AC_{\cE}([a,b],E)$
and its topology are independent of the choice
of~$t_0$.
\begin{rem}
\begin{itemize}
\item[(a)]
All of $L^p$ for $p\in [1,\infty]$
define bifunctors on Fr\'{e}chet spaces,
as well as $L^\infty_{rc}$ and $R$.
Therefore, we obtain function spaces
$AC_{L^p}([a,b],E)$ with $p\in [1,\infty]$;
$AC_{L^\infty_{rc}}([a,b],E)$,
and $AC_R([a,b],E)$ such that
\begin{eqnarray*}
AC_R([a,b],E)& \sub & AC_{L^\infty_{rc}}([a,b],E)\sub
AC_{L^\infty}([a,b],E)\\
&\sub & AC_{L^p}([a,b],E)\sub
AC_{L^p}([a,b],E)\sub AC_{L^1}([a,b],E)
\end{eqnarray*}
with continuous inclusion maps,
whenever $\infty\geq p\geq q\geq 1$.
Likewise for Fr\'{e}chet spaces replaced
with sequentially complete (FEP)-spaces.
\item[(b)]
$L^\infty_{rc}$ and $R$
define bifunctors on
integral complete locally convex spaces.
\end{itemize}
\end{rem}
\begin{rem}
If $\eta\colon \! [a,b]\to\R$,
then $\eta(t)=\eta(a)+\int_a^t\gamma(s)\,ds$ for some $\gamma\!\in\! \cL^1([0,1],\R)$
if and only if $\eta$ is absolutely continuous
in the sense that, for each $\ve>0$, there exists $\delta>0$
such that
\[
\sum_{j=1}^n|\eta(b_j)-\eta(a_j)|<\ve
\]
for all $n\in \N$ and disjoint subintervals
$]a_1,b_1[$, $\ldots$, $]a_n,b_n[$ of $[a,b]$
of total length $\sum_{j=1}^n(b_j-a_j)<\delta$.\\[2.3mm]
[In fact, if $\eta$ is absolutely continuous in the latter sense,
then $\eta$ is a primitive
of an $\cL^1$-function by \cite[Theorem 7.20]{Ru1}.
If, conversely, $\eta(t)=\eta(a)+\int_a^t\gamma(s)\,ds$
for some $\gamma\in \cL^1([a,b],\R)$,
consider the auxiliary function
\[
\zeta\colon [a,b]\to\R,\quad \zeta(t):=\int_a^t|\gamma(s)|\,ds.
\]
Then $\zeta$ is differentiable $\lambda_1$-almost
everywhere and $\zeta'(t)$ coincides
with $\gamma(t)$ for $\lambda_1$-almost every $t\in [a,b]$
(cf.\ \cite[Theorem 7.11]{Ru1}).
Now (c)$\impl$(a) in \cite[Theorem 7.18]{Ru1}
shows that $\zeta$ is absolutely continuous. Hence, given $\ve>0$,
we find $\delta>0$ such that
\[
\sum_{j=1}^n|\zeta(b_j)-\zeta(a_j)|<\ve
\]
for all $n\in \N$ and disjoint subintervals
$]a_1,b_1[$, $\ldots$, $]a_n,b_n[$ of $[a,b]$
of total length $\sum_{j=1}^n(b_j-a_j)<\delta$.
Since
\[
|\eta(b_j)-\eta(a_j)|=\left|\int_{a_j}^{b_j}\gamma(s)\,ds\right|
\leq \int_{a_j}^{b_j}|\gamma(s)|\,ds=\zeta(b_j)-\zeta(a_j)=|\zeta(b_j)-\zeta(a_j)|,
\]
we deduce that
\[
\sum_{j=1}^n|\eta(b_j)-\eta(a_j)|\leq
\sum_{j=1}^n|\zeta(b_j)-\zeta(a_j)|
<\ve
\]
for all intervals as before. Hence $\eta$ is absolutely continuous.]
\end{rem}
Give $C([a,b],E)$ the topology of uniform convergence
(defined by the seminorms $\|.\|_{\cL^\infty,q}$
with $q\in P(E)$).
Let $\cE$ be a bifunctor on Fr\'{e}chet spaces
(resp., a bifunctor on sequentially complete (FEP)-spaces, resp.,
a bifunctor on integral complete
locally convex spaces)
and $E$ be a Fr\'{e}chet space (resp.,
a sequentially complete (FEP)-space, resp., an integral
complete locally convex space).
It is useful to note:
%The following is useful:
%
\begin{la}\label{closedemb}
The map $\Psi\colon AC_\cE([a,b],E)\to
C([a,b],E)\times \cE([a,b],E)$, $\eta\mto (\eta,\eta')$
is a linear topological embedding with closed image.
\end{la}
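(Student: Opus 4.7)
The plan is to exploit the topological isomorphism $\Phi\colon AC_\cE([a,b],E)\to E\times\cE([a,b],E)$, $\eta\mto(\eta(t_0),\eta')$ which by the definition of the topology on $AC_\cE([a,b],E)$ is tautologically a homeomorphism. Since $\Psi$ and $\Phi$ share the second component and are related on the first via a simple integration, everything can be reduced to the continuity of a few natural maps plus a closedness argument using the Hausdorff property of~$E$.

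\smallskip

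First I would show that $\Psi$ is continuous. The second component $\eta\mto\eta'$ is continuous by the very definition of the topology on $AC_\cE([a,b],E)$ (it is $\pi_2\circ\Phi$). For the first component $\eta\mto\eta$ into $C([a,b],E)$, use the fundamental theorem of calculus: for each $q\in P(E)$ and $\eta\in AC_\cE([a,b],E)$,
\[
\sup_{t\in[a,b]}q(\eta(t))\;\leq\; q(\eta(t_0))+\int_a^b q(\eta'(s))\,ds
\;=\;q(\eta(t_0))+\|\eta'\|_{L^1,q}.
\]
Since the inclusion $\cE([a,b],E)\hookrightarrow L^1([a,b],E)$ is continuous by hypothesis, $\|\cdot\|_{L^1,q}$ is continuous on $\cE([a,b],E)$. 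Hence $\Psi\circ\Phi^{-1}\colon(v,[\gamma])\mto\bigl(v+\int_{t_0}^{\,\cdot}\gamma(s)\,ds,\,[\gamma]\bigr)$ is continuous into $C([a,b],E)\times\cE([a,b],E)$, so $\Psi$ is continuous.

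\smallskip

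Next, for the co-restriction $\Psi|^{\im\Psi}$ to be a homeomorphism onto its image, I would check that $\Psi^{-1}$ is continuous on $\im(\Psi)$. Note that for $(\eta,[\gamma])\in\im(\Psi)$ one has $\Phi\circ\Psi^{-1}(\eta,[\gamma])=(\eta(t_0),[\gamma])$, which is the restriction to $\im(\Psi)$ of the manifestly continuous map $\ev_{t_0}\times\id$ on $C([a,b],E)\times\cE([a,b],E)$. Composing with the homeomorphism $\Phi^{-1}$ shows $\Psi^{-1}|_{\im\Psi}$ is continuous, so $\Psi$ is a linear topological embedding. Injectivity is automatic since $\eta$ appears as the first component.

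\smallskip

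Finally, for closedness of $\im(\Psi)$, observe that $(\zeta,[\gamma])\in C([a,b],E)\times\cE([a,b],E)$ lies in $\im(\Psi)$ if and only if $\zeta(t)-\zeta(t_0)-\int_{t_0}^t\gamma(s)\,ds=0$ for every $t\in[a,b]$, by Definition~\ref{mostbaac} and the uniqueness in Lemmas~\ref{funda}/\ref{funda2}. For each $t\in[a,b]$ the map
\[
\Phi_t\colon C([a,b],E)\times\cE([a,b],E)\to E,\quad
(\zeta,[\gamma])\mto \zeta(t)-\zeta(t_0)-\int_{t_0}^t\gamma(s)\,ds
\]
is continuous (the first two summands by continuity of point evaluations on $C([a,b],E)$, the integral summand by continuity of $\cE([a,b],E)\hookrightarrow L^1([a,b],E)$ followed by the continuous linear integration map of Lemma~\ref{labase1}). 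Since $E$ is Hausdorff, each $\Phi_t^{-1}(\{0\})$ is closed, and hence $\im(\Psi)=\bigcap_{t\in[a,b]}\Phi_t^{-1}(\{0\})$ is closed. The only delicate point is keeping track of the two parallel settings (Fr\'echet/(FEP) with $L^1$ versus integral complete with $L^\infty_{rc}$); the arguments are identical provided one invokes Lemma~\ref{labaserc} in place of Lemma~\ref{labase1} in the integral complete case.
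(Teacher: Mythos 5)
Your proof is correct. The continuity and embedding steps are essentially the paper's own argument: the same fundamental-theorem estimate $\|\eta\|_{\cL^\infty,q}\leq q(\eta(t_0))+\|\eta'\|_{L^1,q}$ gives continuity of $\Psi$, and the identity $\Phi=(\ev_{t_0}\times\id)\circ\Psi$ (which you phrase as continuity of $\Psi^{-1}$ through $\Phi$) gives the reverse comparison of topologies. Where you diverge is the closedness step: the paper takes a net $(\eta_\alpha,\eta_\alpha')\to(\eta,\gamma)$ in the product, forms the candidate $\zeta=\Phi^{-1}(\eta(t_0),\gamma)$ and shows $\eta=\zeta$ by the same seminorm estimate, whereas you describe $\im(\Psi)$ directly as $\bigcap_{t\in[a,b]}\Phi_t^{-1}(\{0\})$ for the continuous linear maps $\Phi_t(\zeta,[\gamma])=\zeta(t)-\zeta(t_0)-\int_{t_0}^t\gamma(s)\,ds$ and invoke Hausdorffness of $E$. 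Your version is cleaner in that it needs no limit argument at all, only the characterization of $AC_\cE$ from Definition~\ref{mostbaac} together with continuity of point evaluations and of the integration operator (Lemma~\ref{labase1}, resp.\ Lemma~\ref{labaserc}); the paper's net argument buys essentially the same thing while reusing the estimate already established for continuity, without appealing separately to those integration lemmas. One small point to keep explicit in the $L^\infty_{rc}$-setting: $\|\cdot\|_{L^1,q}$ is only available there through the finite-measure remark and the bound $\|\cdot\|_{L^1,q}\leq(b-a)\|\cdot\|_{L^\infty,q}$ (the paper's footnote), since $L^1([a,b],E)$ itself is not defined for general integral complete $E$; your closing sentence deferring to Lemma~\ref{labaserc} covers this, so there is no gap.
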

\begin{proof}
The linearity is clear. Let $q\in P(E)$.
Since $\eta(t)=\eta(t_0)+\int_{t_0}^t \eta'(s)\,ds$, we have
\[
q(\eta(t))\leq q(\eta(t_0))+q\left(\int_{t_0}^t\eta'(s)\,ds\right)
\leq q(\eta(t_0))+\|\eta'\|_{L^1,q}
\]
for all $t\in [a,b]$ and thus $\|\eta\|_{\cL^\infty,q}\leq q(\eta(t_0))+
\|\eta'\|_{L^1,q}$, As a function of $\eta$, this is a continuous
seminorm on $AC_{L^1}([a,b],E)$ (resp., on
$AC_{L^\infty_{rc}}([a,b],E)$)\footnote{Recalling that
$\|.\|_{L^1,q}\leq (b-a)\|.\|_{L^\infty,q}$ on $L^\infty_{rc}([a,b],E)$.}
and hence
on $AC_{\cE}([a,b],E)$. Thus $\Psi$ is continuous,
entailing that the initial topology $\cO_\Psi$ on $AC_{\cE}([a,b],E)$
with respect to~$\Psi$ is coarser than the initial topology $\cO_\Phi$
with respect to $\Phi$.
The evaluation map $\ev_{t_0}\colon C([a,b],E)\to E$
is continuous linear. Since $\Phi=(\ev_{t_0}\times \id)\circ \Psi$,
we get the converse inclusion $\cO_\Phi\sub \cO_\Psi$
and hence equality, $\cO_\Phi=\cO_\Psi$.
Hence~$\Psi$ is a topological embedding.
Now consider a net $(\eta_\alpha,\eta_\alpha')_{\alpha\in A}$
in $\im(\Psi)$
such that $(\eta_\alpha\to \eta_\alpha')\to (\eta,\gamma)$
in $C([a,b],E)\times \cE([a,b],E)$.
Define $\zeta=\Phi^{-1}(\eta(t_0),\gamma)\in AC_\cE([a,b],E)$.
This is the function $[a,b]\to E$,
\[
t\mto \eta(t_0)+\int_{t_0}\gamma(s)\,ds.
\]
For $q\in P(E)$, we have
\begin{eqnarray*}
q(\eta_\alpha(t)-\zeta(t)
&=&q\left(\ev_{t_0}(\eta_\alpha-\eta(t_0))+
\int_{t_0}^t(\eta_\alpha'(s)-\gamma'(s)\,ds\right)\\
&\leq& \|\eta_\alpha-\eta\|_{\cL^\infty,q}+\|\eta_\alpha'-\gamma\|_{L^1,q}
\end{eqnarray*}
for each $t\in [a,b]$ and hence
\[
\|\eta_\alpha-\zeta\|_{\cL^\infty,q}\leq
\|\eta_\alpha-\eta\|_{\cL^\infty,q}+\|\eta_\alpha'-\gamma\|_{L^1,q}\to 0.
\]
As the left hand side converges to $\|\eta-\zeta\|_{\cL^\infty,q}$,
we see that $\eta=\zeta$, whence $\eta\in AC_\cE([a,b],E)$
with $\eta'=\gamma$ and thus $(\eta,\gamma)=\Psi(\eta)\in \im(\Psi)$.
\end{proof}
\begin{rem}\label{closedemb2}
If $t_1\in [a,b]$ and $\eta\in AC_\cE([a,b],E)$ with respect to $t_0\in [a,b]$,
with $\eta'=[\gamma]$, then
\[
\eta(t_1)=\eta(t_0)+\int_{t_0}^{t_1}\gamma(s)\,ds,
\]
whence $\eta(t)$ equals
\[
\eta(t_0)+\int_{t_0}^t\gamma(s)\,ds
=\eta(t_1)+\eta(t_0)-\eta(t_1)+ \int_{t_0}^t\gamma(s)\,ds
=\eta(t_1)+ \int_{t_1}^t\gamma(s)\,ds.
\]
We deduce that $\eta$ is also in $AC_{\cE}([a,b],E)$ with respect to~$t_1$.
Since the map $AC_{\cE}([a,b],E)\to E$, $\eta\mto
\eta(t_1)=\ev_{t_1}\circ \pr_1\circ \Psi$
(with $\Psi$ as in Lemma~\ref{closedemb2})
is continuous, we deduce that the topology on $AC_{\cE}([a,b],E)$
with respect to~$t_1$ is coarser than that with respect to~$t_0$,
and reversing the roles of~$t_0$ and~$t_1$
we deduce that both topologies are equal.
\end{rem}
\begin{rem}\label{intoC0}
By Lemma~\ref{closedemb}, the inclusion map
\[
j\colon AC_\cE([a,b],E)\to C([a,b],E)
\]
is continuous.
If $V\sub E$ is an open subset, then $C([a,b],V)$ is open
in $C([a,b],E)$. Thus
\[
AC_\cE([a,b],V)
:=\{\eta\in AC_\cE([a,b],E)\colon \eta([a,b])\sub V\}
\]
is open in $AC_\cE([a,b],E)$ (as $AC_\cE)[a,b],V)=j^{-1}(C([a,b],V))$).
\end{rem}
\begin{numba}
If $a\leq \alpha<\beta \leq b$ and $[\gamma]\in \cE([a,b],E)$,
then $[\gamma|_{[\alpha,\beta]}] \in \cE([\alpha,\beta],E)$
by axiom (B2), taking $c:=\alpha$ and $d:=\beta$
there. As a consequence,
if $\eta\in AC_\cE([a,b],E)$,
then $\eta|_{[\alpha,\beta]}\in AC_\cE([\alpha,\beta],E)$.
\end{numba}
Conversely,
we'd like to check the $\cE$-property
on subintervals.
\begin{defn}\label{locality}
Let $\cE$ be a bifunctor on Fr\'{e}chet spaces
(resp., on sequentially complete (FEP)-spaces,
resp., on integral complete locally convex spaces).
We say that $\cE$ satisfies the ``locality
axiom'' if the following condition
is satisfied:
\begin{itemize}
\item[(Loc)]
For each Fr\'{e}chet space (resp., sequentially complete (FEP)-space,
resp., integral complete
locally convex space)~$E$, $n\in \N$ and real numbers
$a=t_0<t_1<\cdots<t_n=b$,
the map
\[
\cE([a,b],E)\to \prod_{j=1}^n\cE([t_{j-1},t_j],E),\quad
[\gamma]\mto ([\gamma|_{[t_{j-1},t_j]}])_{j=1,\ldots, n}
\]
is an isomorphism of topological vector spaces.
\end{itemize}
\end{defn}
\begin{rem}\label{oldloc}
The locality axiom immediately implies
the following useful fact:
If $\gamma\colon [a,b]\to E$
is a measurable map and $a=t_0<t_1<\cdots< t_n=b$,
then $[\gamma]\in \cE([a,b],E)$
if and only if $[\gamma|_{[t_{j-1},t_j]}]\in \cE([t_{j-1},t_j],E)$
for all $j\in \{1,\ldots, n\}$.
\end{rem}
\
\begin{numba}
If $\cE$ is any of $L^p$ ($p\in [1,\infty]$),
$L^\infty_{rc}$ or $R$,
then $\cE$ satisfies the locality axiom
(for trivial reasons).
\end{numba}
\begin{numba}
The locality axiom implies
that if $\eta\colon [a,b]\to E$ is continuous,
$a=t_0<t_1<\cdots<t_n=b$
and $\eta|_{[t_{j-1},t_j]}\in AC_\cE([t_{j-1},t_j],E)$
for all $j\in \{1,\ldots, n\}$,
then $\eta\in AC_\cE([a,b],E)$.
\end{numba}
\begin{defn}\label{ckact}
Let $\cE$ be a bifunctor on Fr\'{e}chet spaces
(resp., on sequentially complete (FEP)-spaces,
resp., on integral complete locally convex spaces)
and $k\in \N\cup\{\infty\}$.
We say that \emph{$C^k$-functions act on~$AC_\cE$}
if the following condition is satisfied:
\begin{itemize}
\item[($\mbox{A}_k$)]
For all Fr\'{e}chet spaces (resp., sequentially complete (FEP)-spaces,
resp., integral complete
locally convex spaces) $E$ and $F$,
each $C^k$-map $f\colon V\to F$
on an open subset $V\sub E$,
all $a<b$ in $\R$
and each $\eta\in AC_\cE([a,b],E)$
such that $\eta([a,b])\sub V$, we have
\[
f\circ \eta\in AC_\cE([a,b],F).
\]
\end{itemize}
\end{defn}
\begin{la}\label{C2acts}
\begin{itemize}
\item[\rm(a)]
Let $\cE$ be $L^p$ for $p\in [1,\infty]$,
$L^\infty_{rc}$ or $R$,
considered as bifunctors on Fr\'{e}chet spaces
$($or on sequentially complete \emph{(FEP)}-spaces$)$.
Then $C^1$-functions act on~$A_\cE$.
\item[\rm(b)]
$C^2$-functions act on $AC_{L^\infty_{rc}}$
and on~$AC_R$, if $L^\infty_{rc}$ and $R$
are considered as bifunctors on
integral complete locally convex spaces.
\end{itemize}
\end{la}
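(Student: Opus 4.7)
The candidate for the derivative of $f \circ \eta$ is $df \circ (\eta, \gamma)$, where $\gamma \in \cE([a,b], E)$ represents $\eta'$. Since $df \colon V \times E \to F$ is continuous and linear in its second argument, Lemma~\ref{operders} gives $[df \circ (\eta, \gamma)] \in \cE([a,b], F)$ in both settings. Define
\[
\zeta(t) := f(\eta(a)) + \int_a^t df(\eta(s), \gamma(s))\, ds,
\]
which exists by Lemma~\ref{labase1} (Fr\'{e}chet case), Lemma~\ref{fepweakint} (sequentially complete \emph{(FEP)} case), or Lemma~\ref{labaserc} (integral complete case). It suffices to prove $\zeta = f \circ \eta$, for then by definition $f \circ \eta \in AC_\cE([a,b], F)$ with $(f \circ \eta)' = [df \circ (\eta, \gamma)]$.

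For part~(a), test against $\lambda \in F'$: writing $g := \lambda \circ f \colon V \to \R$ (a $C^1$ map), the chain rule $dg = \lambda \circ df$ reduces the task to the real identity
\[
g(\eta(t)) - g(\eta(a)) = \int_a^t dg(\eta(s), \gamma(s))\, ds.
\]
In the Fr\'{e}chet case, Lemma~\ref{locLipK} supplies an open $V_1 \supset \eta([a,b])$ and a continuous seminorm $q$ on $E$ with $|g(z) - g(y)| \leq q(z - y)$ on $V_1$; combined with $q(\eta(t) - \eta(s)) \leq \int_s^t q(\gamma(u))\, du$ and $q \circ \gamma \in \cL^1$, this makes $g \circ \eta$ classically absolutely continuous on $[a,b]$. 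Lemma~\ref{funda} provides $\eta'(t) = \gamma(t)$ for almost every $t$, so Lemma~\ref{chainpw} identifies $(g \circ \eta)'(t) = dg(\eta(t), \gamma(t))$ a.e., and the classical Fundamental Theorem for real AC functions closes the identity. Separation by $F'$ on the Hausdorff space $F$ then yields $f \circ \eta = \zeta$. The sequentially complete \emph{(FEP)} case follows along the same outline, replacing the pointwise chain-rule step by a subdivision-and-limit argument: apply the integral Mean Value Theorem~\ref{mvt} on finer and finer partitions of $[a,t]$ and use linearity of $dg$ in its second argument together with dominated convergence, where the required bound $|dg(x, v)| \leq q(v)$ for $x$ in a neighborhood of $\eta([a,b])$ follows by a compactness argument from the continuity of $dg$.

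For part~(b), Lemma~\ref{funda} is unavailable in the integral complete world, which is exactly why $C^1$ is strengthened to $C^2$. Fix $\lambda \in F'$ and set $g := \lambda \circ f$, now $C^2$. Apply Lemma~\ref{bananaK} to $g$ and the compact set $K := \eta([a,b])$: this produces a continuous linear $\mu \colon E \to Y$ to a Banach space $Y$, an open $V_1$ with $K \sub V_1 \sub V$ and $\mu(V_1) \sub W$ for some open $W \sub Y$, and a $C^1$ map $h \colon W \to \R$ with $g|_{V_1} = h \circ \mu|_{V_1}^W$. Axiom~(B1) gives $\mu \circ \eta \in AC_\cE([a,b], Y)$ with derivative $[\mu \circ \gamma]$. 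Part~(a) applied to $h$ in the Banach (hence Fr\'{e}chet) setting yields $g \circ \eta = h \circ \mu \circ \eta \in AC_\cE([a,b], \R)$, whose derivative is identified via $dh(\mu(x), \mu(v)) = dg(x, v) = \lambda(df(x, v))$ as $[\lambda \circ df \circ (\eta, \gamma)]$. Hence $\lambda(f(\eta(t))) = \lambda(\zeta(t))$ for every $\lambda \in F'$, and separation of points on the Hausdorff space $F$ completes the proof. The main obstacle throughout is precisely the failure of a.e.\ pointwise differentiability in non-Fr\'{e}chet integral complete spaces; the $C^2$-hypothesis is exactly what activates Lemma~\ref{bananaK} and transplants the problem into a Banach space where part~(a) applies directly.
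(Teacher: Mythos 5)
Your argument is correct, and for the core of both parts it is the paper's own proof: in (a) you reduce to $F=\R$ by Hahn--Banach exactly as in the paper, use Lemma~\ref{locLipK} together with $q(\eta(t)-\eta(s))\leq\int_s^t q(\gamma(u))\,du$ to get classical absolute continuity of $g\circ\eta$, and identify the derivative a.e.\ via Lemmas~\ref{funda} and~\ref{chainpw}; in (b) you factor $\lambda\circ f$ through a Banach space with Lemma~\ref{bananaK}, push $\eta$ forward with (B1) and \ref{basicweaki}, and feed the result back into (a) -- this is precisely the paper's Step~1/Step~2 structure (the paper reduces to $F=\R$ before invoking Lemma~\ref{bananaK} rather than after, a cosmetic difference). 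The one place you genuinely diverge is the sequentially complete (FEP) sub-case of (a): the paper runs the same argument and simply cites Lemmas~\ref{funda} and~\ref{chainpw} there as well, whereas you replace the a.e.-differentiability step by a partition argument using the Mean Value Theorem~\ref{mvt}, the bound $|dg(x,v)|\leq q(v)$ for $x$ near the compact set $\eta([a,b])$, and dominated convergence along refining partitions. That substitution is sound (the segments $[\eta(t_j),\eta(t_{j+1})]$ stay in a neighbourhood $\eta([a,b])+B^P_1(0)\sub V$ for fine partitions, the integrand converges pointwise by continuity of $dg$, and $2\,q\circ\gamma$ dominates), and it buys you independence from the a.e.\ differentiability statement of Lemma~\ref{funda}, which is formulated only for Fr\'{e}chet $E$; the cost is that your sketch is terse -- to make it airtight you should spell out the measurability of the two-variable integrand and the interchange of limits, in the spirit of the Fubini verifications done elsewhere in the paper.
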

\begin{proof}
(a)
Let $E$ and $F$ be Fr\'{e}chet spaces
(or sequentially complete (FEP)-spaces),
$a,b\in \R$ with $a<b$
and $\eta\in AC_\cE([a,b],E)$.
Write $\eta'=[\gamma]$.
Let $V\sub E$ be an open subset
such that $\eta([a,b])\sub V$ and $f\colon V\to F$
be a $C^1$-map.

Step~1.
The map $f\circ \eta\colon [a,b]\to F$ is
continuous.
Also the map
\[
df\colon V\times E\to F
\]
is continuous, and thus
$\theta:=df\circ (\eta,\gamma)\in \cE([a,b],F)$,
by Lemma~\ref{operders}.
We can therefore form a map
\[
\zeta\colon [a,b]\to F,\quad \zeta(t):=f(\eta(a))+\int_a^t df(\eta(s),\gamma(s))\,ds.
\]
Then $\zeta\in AC_\cE([a,b],F)$, $\zeta'(t)=[\theta]$
and $\zeta(a)=f(\eta(a))$.
If we can show that $\lambda\circ\zeta=\lambda\circ f\circ \eta$ for each $\lambda\in F'$,
then $f\circ\eta=\zeta\in AC_\cE([a,b],F)$ (by the Hahn-Banach Separation Theorem).
We may therefore assume now that~$F=\R$.

Step~2.
We claim that $f\circ \eta\in AC_{L^1}([a,b],\R)$.
If this is true, then
\[
(f\circ \eta)'(t)=df(\eta(t),\gamma(t))=\zeta'(t)
\]
for almost all $t\in [a,b]$ (by Lemmas~\ref{funda}
and \ref{chainpw})
and hence $f\circ \eta=\zeta$ since
both $f\circ \eta$ and $\zeta$
are absolutely continuous, $(f\circ \eta)'=\zeta'$
and $f(\eta(a))=\zeta(a)$.\\[2.3mm]
To prove the claim,
we use Lemma~\ref{locLipK}
to find a continuous seminorm $q\in P(E)$
and an open set $V_1\sub V$ with $\eta([a,b])\sub V_1$
such that
\[
|f(z)-f(y)|\leq q(z-y)\quad\mbox{for all $z,y\in V_1$.}
\]
We have $q\circ \gamma\in \cL^1([a,b],\R)$,
whence
\[
\sigma\colon [a,b]\to \R,\quad \sigma(t):=\int_a^tq(\gamma(s))\,ds
\]
is monotonically increasing and absolutely continuous.
If $\ve>0$, let $\delta\in \,]0,\rho]$
such that
\[
\sum_{j=1}^n|\sigma(b_j)-\sigma(a_j)|<\ve
\]
for each $n\in \N$ and disjoint intervals
$]a_1,b_1[$, $\ldots$, $]a_n,b_n[$ in $[a,b]$
of total length $\sum_{j=1}^n(b_j-a_j)<\delta$
(see \cite[Theorem 7.18]{Ru1}).
For each $j\in \{1,\ldots, n\}$,
we have
\[
\eta(b_j)-\eta(a_j)=\int_{a_j}^{b_j}\gamma(t)\,dt
\]
and hence
\[
q(\eta(b_j)-\eta(a_j))\leq \int_{a_j}^{b_j}q(\gamma(t))\,dt=\sigma(b_j)-\sigma(a_j)
=|\sigma(b_j)-\sigma(a_j)|.
\]
Thus
\[
\sum_{j=1}^n|f(\eta(b_j))-f(\eta(a_j))|
\leq
\sum_{j=1}^nq(\eta(b_j)-\eta(a_j))\leq
\sum_{j=1}^n|\sigma(b_j)-\sigma(a_j)|<\ve.
\]
Hence $f\circ\eta$ is absolutely continuous
(by \cite[Theorem 7.18]{Ru1}),
as required.

(b)
Let $E$ and $F$ be integral complete locally convex spaces,
$a,b\in \R$ with $a<b$
and $\eta\in AC_\cE([a,b],E)$.
Write $\eta'=[\gamma]$.
Let $V\sub E$ be an open subset
such that $\eta([a,b])\sub V$ and $f\colon V\to F$
be a $C^2$-map.
Step~1 of the proof of~(a)
applies without changes.
We may therefore assume now that $F=\R$.
By Lemma~\ref{bananaK},
we find an open subset $V_1\sub V$ with $\eta([a,b])\sub V_1$,
a continuous linear map $\lambda\colon E\to Y$
to a Banach space~$Y$
and a $C^1$-function $g\colon W\to \R$
on an open subset $W\sub Y$ with $\lambda(V_1)\sub W$
such that
\[
f|_{V_1}=g\circ \lambda|_{V_1}.
\]
Now $\lambda\circ \eta\in AC_\cE([a,b],Y)$
since
\[
\lambda(\eta(t))=\lambda\left(\int_a^t\gamma(s)\,ds\right)
=\int_a^t\lambda(\gamma(s))\,ds
\]
for each $t\in [a,b]$ (by \ref{basicweaki}),
where $[\lambda\circ\gamma]=\cE([a,b],\lambda)([\gamma])\in \cE([a,b],Y)$
by axiom (B1).
Thus $f\circ \eta=g\circ (\lambda\circ \eta)\in AC_\cF([a,b],\R)=AC_\cE([a,b],\R)$
by (a), if we set $\cF([a,b],H):=\cE([a,b],H)$
for each Fr\'{e}chet space~$H$.
\end{proof}
\begin{rem}\label{dercomp}
The preceding proof shows that
\[
(f\circ \eta)'=[f\circ (\eta,\gamma)]
\]
if $f\colon E\supseteq V\to F$ is $C^1$ (resp., $C^2$),
$\eta\in AC_\cE([a,b],V)$ and $\eta'=[\gamma]$.
\end{rem}
\begin{defn}\label{ACintoM}
Let $\cE$ be a bifunctor on Fr\'{e}chet spaces
(resp., on sequentially complete (FEP)-spaces,
resp., on integral complete locally convex spaces)
such that $C^k$-functions act on~$\cE$
for some $k\in \N$
and the locality axiom (Loc)
from Definition~\ref{locality}
is satisfied.
Let $M$ be a $C^k$-manifold modelled on
a Fr\'{e}chet space
(resp., a sequentially complete (FEP)-space, resp., an
integral complete locally convex space)~$E$.
If $a,b\in \R$ with $a<b$,
we let $AC_\cE([a,b],M)$ be the set of all
functions
\[
\eta\colon [a,b]\to M
\]
such that $\eta$ is continuous and
there is a partition $a=t_0<t_1<\cdots<t_n=b$
of $[a,b]$ such that, for each $j\in \{1,\ldots, n\}$,
there exists a chart $\phi_j\colon U_j\to V_j\sub E$
of~$M$ such that $\eta([t_{j-1},t_j])\sub U_j$
and $\phi_j\circ \gamma|_{[t_{j-1},t_j]}\in AC_\cE([t_{j-1},t_j],E)$.
\end{defn}
We imposed the axioms ($\mbox{A}_k$)
and (Loc) to ensure the independence
of the $AC_\cE$-property from
the chosen partition:
\begin{la}\label{goodinM}
Let $\cE$ be a bifunctor on Fr\'{e}chet spaces
$($resp., on sequentially complete \emph{(FEP)}-spaces,
resp., on integral complete locally convex spaces$)$
such that $C^k$-functions act on~$A_\cE$
for some $k\in \N\cup\{\infty\}$
and $\cE$ satisfies the locality axiom \emph{(Loc)}
from Definition~\emph{\ref{locality}}.
Let
$E$ be a Fr\'{e}chet space $($resp., a sequentially complete
\emph{(FEP)}-space, resp.,
an integral complete locally convex space$)$.
Let $a,b\in \R$ such that $a<b$
and $M$ be a $C^k$-manifold modelled on~$E$.
If $\eta\in AC_\cE([a,b],M)$,
then
\[
\phi\circ \eta|_{[\alpha,\beta]}\in AC_\cE([\alpha,\beta],M)
\]
for each chart $\phi\colon U\to V\sub E$ of~$M$
and all $\alpha<\beta$ in $J$ such that
$\eta([\alpha,\beta])\sub U$.
In particular, $AC_{\cE}([a,b],E)$
for~$E$ as a vector space
coincides as a set with $AC_\cE([a,b],E)$
for $E$ considered as a manifold
modelled on~$E$ with the maximal $C^k$-atlas
associated with the global chart $\id_ E$.
\end{la}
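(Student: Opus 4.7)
The plan is to refine the given partition so that it is adapted to $[\alpha,\beta]$, then transfer from the original charts $\phi_j$ to the new chart $\phi$ via the transition maps, and finally glue the pieces back together using the locality axiom. The three standing hypotheses (Loc), (B2), and $(\mathrm{A}_k)$ will each play one role.

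First, I would fix a partition $a=t_0<t_1<\cdots<t_n=b$ and charts $\phi_j\colon U_j\to V_j$ witnessing that $\eta\in AC_\cE([a,b],M)$, so that $\phi_j\circ\eta|_{[t_{j-1},t_j]}\in AC_\cE([t_{j-1},t_j],E)$ for each $j$. Inserting $\alpha$ and $\beta$ into this partition and discarding the points lying outside $[\alpha,\beta]$ produces a refined partition $\alpha=s_0<s_1<\cdots<s_m=\beta$ such that every $[s_{i-1},s_i]$ is contained in some $[t_{j(i)-1},t_{j(i)}]$. Axiom (B2), applied to the affine-linear inclusion $[s_{i-1},s_i]\hookrightarrow [t_{j(i)-1},t_{j(i)}]$, then yields $\phi_{j(i)}\circ\eta|_{[s_{i-1},s_i]}\in AC_\cE([s_{i-1},s_i],E)$.

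Next, I would invoke axiom $(\mathrm{A}_k)$. Since $\eta([s_{i-1},s_i])\subseteq U\cap U_{j(i)}$, the transition map $\phi\circ\phi_{j(i)}^{-1}$ is a $C^k$-map on the open subset $\phi_{j(i)}(U\cap U_{j(i)})\subseteq E$, and this open set contains the image of $\phi_{j(i)}\circ\eta|_{[s_{i-1},s_i]}$. The hypothesis that $C^k$-functions act on $AC_\cE$ therefore gives
\[
\phi\circ\eta|_{[s_{i-1},s_i]}=(\phi\circ\phi_{j(i)}^{-1})\circ(\phi_{j(i)}\circ\eta|_{[s_{i-1},s_i]})\in AC_\cE([s_{i-1},s_i],E)
\]
for each $i\in\{1,\ldots,m\}$. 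Since $\phi\circ\eta|_{[\alpha,\beta]}$ is continuous, the consequence of (Loc) recorded in Remark~\ref{oldloc} then upgrades this piecewise information to $\phi\circ\eta|_{[\alpha,\beta]}\in AC_\cE([\alpha,\beta],E)$, as required.

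For the second assertion, I would apply the statement just proved to $M:=E$ equipped with the maximal $C^k$-atlas containing $\id_E$, taking $\phi=\id_E$, $U=E$, and $[\alpha,\beta]=[a,b]$, which shows that every manifold-$AC_\cE$-function into $E$ is a vector-space-$AC_\cE$-function; the reverse inclusion is trivial via the one-piece partition $a=t_0<t_1=b$ with the global chart $\id_E$. I do not expect any real obstacle: the argument is a bookkeeping reduction in which (B2) handles restriction to subintervals, $(\mathrm{A}_k)$ absorbs the change of chart, and (Loc) reassembles the restrictions.
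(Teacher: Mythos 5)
Your proposal is correct and follows essentially the same route as the paper: restrict to a refined partition adapted to $[\alpha,\beta]$ (implicitly via (B2)), push each piece through the $C^k$ transition maps $\phi\circ\phi_{j}^{-1}$ using axiom $(\mathrm{A}_k)$, and reassemble with the locality axiom. The only cosmetic point is that the gluing step for $AC_\cE$-functions is the (unnumbered) consequence of (Loc) stated just after Remark~\ref{oldloc} rather than Remark~\ref{oldloc} itself, which concerns $\cE$-classes; this does not affect the argument.
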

\begin{proof}
Let $a=t_0<t_1<\cdots< t_n=b$ and charts $\phi_j\colon U_j\to V_j\sub E$
be as in Definition~\ref{ACintoM}.
Thus
\[
\phi_j\circ \eta|_{[t_{j-1},t_j]}\in AC_{\cE}([t_{j-1},t_j],E)\quad\mbox{for all
$j\in \{1,\ldots, n\}$.}
\]
Using $\phi_j$ as the chart for the new points,
we may add additional
points inside $[t_{j-1},t_j]$.
We may therefore assume that $\alpha=t_k$ and $\beta=t_\ell$
for certain $k,\ell\in \{1,\ldots,n\}$ with $k<\ell$.
Then $\eta([t_{j-1},t_j])\sub U_j\cap U$
for all $j\in \{k+1,\ldots,\ell\}$ and the transition map
\[
\tau_j:=\phi\circ \phi_j^{-1}\colon \phi_j(U_j\cap U)\to \phi(U_j\cap U)
\]
is a $C^1$-dffeomorphism between open subsets
of~$E$. We have to show that $\phi\circ \eta|_{[\alpha,\beta]}\in AC_\cE([\alpha,\beta],E)$.
By the locality axiom, it suffices to show that
$\phi\circ\eta|_{[t_{j-1},t_j]}
\in AC_\cE([t_{j-1},t_j],E)$ for $j\in \{k+1,\ldots, \ell\}$.
But this is true,
since
$\phi\circ \eta|_{[\alpha,\beta]}
=\tau_j\circ (\phi_j\circ \eta|_{[t_{j-1},t_j]})$
and $C^k$-functions (like the $\tau_j$) act on~$AC_\cE$.
\end{proof}
\begin{rem}\label{reallylocal}
Let $\eta\colon [a,b]\to M$
be a continuous function such that, for
each $s\in [a,b]$, there is $\ve_s>0$
such that $\eta([a,b]\cap [s-\ve_s,t+\ve_s])\sub U_s$
for some chart $\phi_s\colon U_s\to V_s\sub E$ of~$M$
and
\[
\phi_s\circ \eta|_{[a,b]\cap[s-\ve_s,t+\ve_s]}\in AC_\cE([a,b]\cap[s-\ve_s,s+\ve_s],E).
\]
Then $\eta\in AC_\cE([a,b],M)$.\\[2.3mm]
In fact, $([a,b]\cap \;]s-\ve_s,s+\ve_s[)_{s\in [a,b]}$
is an open cover of the compact metric space $[a,b]$.
Lebesgue's Lemma provides a Lebesgue number
$\delta>0$ for this open cover.
Thus, for each $t\in [a,b]$, there exists $s(t)\in [a,b]$ such that
$[a,b]\cap \;]t-\delta,t+\delta[\;\sub
[a,b]\cap\;
]s(t)-\ve_{s(t)},s(t)+\ve_{s(t)}[$.
Choose $a=t_0<t_1<\cdots<t_n=b$
such that $t_j-t_{j-1}<\delta$ for all $j\in \{1,\ldots,n\}$.
Then $[t_{j-1},t_{j+1}]\sub [a,b]\cap\;
]s(t_{j-1})-\ve_{s(t_{j-1})},s(t_{j-1})+\ve_{s(t_{j-1})}[$
and hence $\phi_{s(t_{j-1})}\circ \eta|_{[t_{j-1},t_j]}\in AC_\cE([t_{j-1},t_j],E)$
for all $j\in\{1,\ldots, n\}$.
Thus $\eta\in AC_\cE([a,b],E)$.
\end{rem}
\begin{defn}
Let $\cE$ be a bifunctor on Fr\'{e}chet spaces
(resp., sequentially complete (FEP)-spaces, resp.,
integral complete locally convex spaces)
which satisfies
the locality axiom (Loc).
Let $E$ be a Fr\'{e}chet space
(resp., a sequentially complete (FEP)-spaces, resp.,
an integral complete locally convex space)
and $I\sub \R$ be a non-degenerate interval.
We let $AC_\cE(I,E)$ be the vector
space of all continuous
mappings $\eta\colon I\to E$ such that $\eta|_{[a,b]}\in
AC_\cE([a,b],E)$ for all compact intervals
$[a,b]\sub I$.
If smooth maps act on $AC_\cE$ and
$M$ is a smooth manifold modelled on~$E$,
we let
$AC_\cE(I,M)$ be the set of all continuous
mappings $\eta\colon I\to M$ such that $\eta|_{[a,b]}\in
AC_\cE([a,b],M)$ for all compact intervals
$[a,b]\sub I$.
\end{defn}
\begin{la}\label{actonelts}
Let $\cE$ be a bifunctor on Fr\'{e}chet spaces
$($resp., on sequentially complete \emph{(FEP)}-spaces, resp.,
on integral complete locally convex spaces$)$
such that $C^k$-functions act on~$A_\cE$
for some $k\in \N\cup\{\infty\}$
and $\cE$ satisfies the locality axiom \emph{(Loc)}
from Definition~\emph{\ref{locality}}.
Let
$E$ and $F$ be Fr\'{e}chet spaces $($resp.,
sequentially complete \emph{(FEP)}-spaces, resp.,
integral complete locally convex spaces$)$,
$a,b\in \R$ such that $a<b$
and
\[
f\colon M\to N
\]
be a $C^k$-map,
where $M$ be a $C^k$-manifold modelled on~$E$
and $N$ be a $C^k$-manifold modelled on~$F$.
Then $f\circ\eta \in AC_\cE([a,b],N)$
for all $\eta\in AC_\cE([a,b],M)$, enabling us
to define a map
\[
AC_\cE([a,b],f)\colon AC_\cE([a,b],M)\to AC_\cE([a,b],N),\quad
\eta\mto f\circ \eta.
\]
\end{la}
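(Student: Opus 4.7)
I would prove the statement by working locally at each point of $[a,b]$ and then invoking Remark~\ref{reallylocal} (applied to the target manifold $N$ and the continuous map $f\circ \eta\colon [a,b]\to N$) to assemble the pieces. First, unpacking Definition~\ref{ACintoM}, fix $\eta\in AC_\cE([a,b],M)$ and let $a=t_0<t_1<\cdots<t_n=b$ together with charts $\phi_j\colon U_j\to V_j\sub E$ of~$M$ with $\eta([t_{j-1},t_j])\sub U_j$ and $\phi_j\circ \eta|_{[t_{j-1},t_j]}\in AC_\cE([t_{j-1},t_j],E)$.

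Next, fix an arbitrary $s\in [a,b]$ and choose some $j$ with $s\in [t_{j-1},t_j]$. Pick a chart $\psi_s\colon W_s\to Z_s\sub F$ of $N$ with $f(\eta(s))\in W_s$. By continuity of $\eta$ and of $f\circ \eta$, there exists $\ve_s>0$ such that, setting $I_s:=[a,b]\cap [s-\ve_s,s+\ve_s]$, we have $\eta(I_s)\sub U_j$ and $f(\eta(I_s))\sub W_s$; if $s$ is one of the partition points $t_j$ with $0<j<n$, I shrink $\ve_s$ and apply the following argument separately on $I_s\cap [t_{j-1},t_j]$ and on $I_s\cap[t_j,t_{j+1}]$, then paste the two halves via the locality axiom~(Loc). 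The transition map
\[
\tau_{j,s}:=\psi_s\circ f\circ \phi_j^{-1}\colon \phi_j(U_j\cap f^{-1}(W_s))\to F
\]
is a $C^k$-map between open subsets of the modelling spaces, and on $I_s$ we have the identity
\[
\psi_s\circ f\circ \eta|_{I_s}=\tau_{j,s}\circ \bigl(\phi_j\circ \eta|_{I_s}\bigr).
\]

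Now $\phi_j\circ \eta|_{I_s}\in AC_\cE(I_s,E)$ by (Loc) applied to the restriction from $[t_{j-1},t_j]$ to~$I_s$. Since by hypothesis $C^k$-functions act on $AC_\cE$ (axiom~$(\mathrm{A}_k)$), the composite $\tau_{j,s}\circ (\phi_j\circ \eta|_{I_s})=\psi_s\circ f\circ \eta|_{I_s}$ belongs to $AC_\cE(I_s,F)$. Thus for every $s\in [a,b]$ there is $\ve_s>0$ and a chart $\psi_s$ of~$N$ with $f(\eta(I_s))\sub W_s$ and $\psi_s\circ f\circ \eta|_{I_s}\in AC_\cE(I_s,F)$, which is exactly the hypothesis of Remark~\ref{reallylocal} applied to the continuous map $f\circ \eta\colon [a,b]\to N$. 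That remark delivers $f\circ \eta\in AC_\cE([a,b],N)$, as desired. The only real obstacle is the bookkeeping at the partition points, which is handled by the observation above and (Loc); the substantive content is entirely carried by axioms~(Loc) and~$(\mathrm{A}_k)$ together with Remark~\ref{reallylocal}.
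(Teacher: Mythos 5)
Your proof is correct and follows essentially the same route as the paper: localize around each point, write $\psi\circ f\circ\eta$ as a $C^k$-transition map composed with a chart representation of $\eta$, apply axiom $(\mathrm{A}_k)$, and conclude with Remark~\ref{reallylocal}. The only (immaterial) difference is bookkeeping: the paper picks a fresh chart $\phi_t$ of $M$ around $\eta(t)$, shrunk so that $f(U_t)\sub P_t$, and cites Lemma~\ref{goodinM} for the $AC_\cE$-property in that chart, whereas you reuse the partition charts from Definition~\ref{ACintoM} and handle partition points by splitting $I_s$ and gluing with (Loc).
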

\begin{proof}
For each $t\in [a,b]$, there
is a chart $\psi_t\colon P_t\to Q_t\sub F$ of $N$
such that $f(\eta(t))\in P_t$
and a chart
$\phi_t\colon U_t\to V_t\sub E$ of~$M$
such that $\eta(t)\in U_t$.
After shrinking $U_t$, we may assume that
$f(U_t)\sub P_t$.
There is $\ve>0$ such that
$[a,b]\cap [t-\ve,t+\ve]
\sub \eta^{-1}(U_t)$.
Then $\phi_t\circ \eta|_{[a,b]\cap [t-\ve,t+\ve]}
\in AC_\cE([a,b]\cap  [t-\ve,t+\ve],E)$
(by Lemma~\ref{goodinM}).
Since
\[
\psi_t\circ f\circ \eta|_{[a,b]\cap [t-\ve,t+\ve]}
=(\psi_t\circ f\circ (\phi_t)^{-1})|_{V_t}
\circ (\phi_t\circ \eta|_{[a,b]\cap [t-\ve,t+\ve]})
\]
and $C^k$-functions (like $(\psi_t\circ f\circ (\phi_t)^{-1})|_{V_t}$)
act on $AC_\cE$, we see that
$\psi_t\circ f\circ \eta|_{[a,b]\cap [t-\ve,t+\ve]}
\in AC_\cE([a,b]\cap [t-\ve,t+\ve],F)$.
Hence $fg\circ \eta\in AC_\cE([a,b],N)$,
by Remark~\ref{reallylocal}.
\end{proof}
%
%
%
%
%
%NEIN, ich will pushforwards vermeiden!
%Nur $Ad)\gamma(\eta)$ und diesen erhalte ich als
%ein $\cE([0,1],f)$ wenn ich $C([0,1],E)\sub \cE([0,1],E)$
%verlange.
%
\begin{defn}\label{defpfwax}
Let $\cE$ be a bifunctor on Fr\'{e}chet spaces (resp.,
sequentially complete (FEP)-spaces, resp.,
integral complete
locally convex spaces).
We say that \emph{smooth functions act smoothly
on~$AC_\cE$} if $C^\infty$-functions act on $AC_\cE$
and the following holds:
\begin{itemize}
\item[(S)]
The map
\[
AC_\cE([a,b],f)\colon AC_\cE([a,b],V)\to AC_\cE([a,b],F),\quad
\eta\mto f\circ \eta
\]
is $C^\infty$,
for all $a<b$ in $\R$, Fr\'{e}chet
spaces (resp., sequentially complete (FEP)-spaces, resp.,
integral complete locally convex spaces)
$E$ and $F$, each open subset $V\sub E$ and each smooth map
$f\colon V \to F$.
\end{itemize}
\end{defn}
\begin{la}\label{explderv}
In the situation of Definition~\emph{\ref{defpfwax}},
the map
\[
h:=AC_\cE([a,b],f)\colon AC_\cE([a,b],V)\to AC_\cE([a,b],F)
\]
satisfies
\begin{equation}\label{givesdf3}
d h=AC_\cE([a,b],df),
\end{equation}
identifying $AC_\cE([a,b],E)^2$ with $AC_\cE([a,b],E\times E)$.
\end{la}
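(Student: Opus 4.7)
The strategy is to reduce (\ref{givesdf3}) to a pointwise identity by composing both sides with evaluation functionals. Since axiom (S) of Definition~\ref{defpfwax} is in force, the map $h$ is already $C^\infty$, hence $dh(\eta,\eta_1)\in AC_\cE([a,b],F)$ exists for every $(\eta,\eta_1)\in AC_\cE([a,b],V)\times AC_\cE([a,b],E)$. The only remaining task is to identify this derivative with $df\circ(\eta,\eta_1)$. Note that the target does belong to $AC_\cE([a,b],F)$: by the hypothesis that smooth functions act on $AC_\cE$ and by Lemma~\ref{actonelts} applied to the smooth map $df\colon V\times E\to F$ together with the identification $AC_\cE([a,b],E)^2\cong AC_\cE([a,b],E\times E)$ furnished by Remark~\ref{willgvcx} (and Definition~\ref{mostbaac}), we have $df\circ(\eta,\eta_1)\in AC_\cE([a,b],F)$.

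First, I would observe that for each $t\in[a,b]$ the evaluation map $\ev_t\colon AC_\cE([a,b],F)\to F$ is continuous linear. This follows from Lemma~\ref{closedemb}: the embedding $\Psi$ factors $\ev_t$ through the sup-norm space $C([a,b],F)$, on which point evaluations are trivially continuous. Consequently it suffices to verify, for each $t\in[a,b]$, the scalar identity
\[
\ev_t\bigl(dh(\eta,\eta_1)\bigr) \;=\; df\bigl(\eta(t),\eta_1(t)\bigr).
\]

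Next I would apply the chain rule twice. The composition $\ev_t\circ h\colon AC_\cE([a,b],V)\to F$ equals $f\circ\ev_t$, as both send $\eta$ to $f(\eta(t))$. Using linearity of $\ev_t$ and the chain rule,
\[
\ev_t\bigl(dh(\eta,\eta_1)\bigr) \;=\; d(\ev_t\circ h)(\eta,\eta_1) \;=\; d(f\circ\ev_t)(\eta,\eta_1) \;=\; df\bigl(\eta(t),\eta_1(t)\bigr).
\]
Since this holds for every $t\in[a,b]$ and both $dh(\eta,\eta_1)$ and $df\circ(\eta,\eta_1)$ lie in $AC_\cE([a,b],F)\sub C([a,b],F)$, the pointwise agreement yields equality in $AC_\cE([a,b],F)$, which is exactly (\ref{givesdf3}).

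There is no substantial obstacle: the argument rests entirely on the separation of points of $AC_\cE([a,b],F)$ by evaluation functionals (immediate from Lemma~\ref{closedemb}) and on two routine applications of the chain rule. The minor bookkeeping one must not forget is the identification $AC_\cE([a,b],E)^2\cong AC_\cE([a,b],E\times E)$ which gives meaning to $AC_\cE([a,b],df)$ on the right-hand side, and the verification that $df\circ(\eta,\eta_1)$ actually belongs to $AC_\cE([a,b],F)$; both are handled by the standing hypotheses on~$\cE$ already invoked above.
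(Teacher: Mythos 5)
Your proof is correct. It differs from the paper's argument in the reduction it uses: the paper post-composes $h$ with the continuous linear inclusion $j_F\colon AC_\cE([a,b],F)\to C([a,b],F)$, uses the identity $j_F\circ h=C([a,b],f)\circ j_E|_{AC_\cE([a,b],V)}$ together with the known fact that $C([a,b],f)$ is smooth with $dC([a,b],f)=C([a,b],df)$, and then exploits injectivity of $j_F$ to read off $dh$; you instead post-compose with the family of point evaluations $\ev_t$, use $\ev_t\circ h=f\circ\ev_t$ and two applications of the chain rule, and conclude by pointwise separation. Structurally the two arguments are parallel (intertwine $h$ with a simpler map via an injective, respectively separating, continuous linear post-composition, then apply the chain rule), but yours is more self-contained: it needs nothing beyond continuity of $\ev_t$ (via Lemma~\ref{closedemb} or Remark~\ref{intoC0}) and the smoothness of $h$ guaranteed by axiom (S), whereas the paper's route imports the derivative formula for the pushforward on $C([a,b],\cdot)$ from the cited literature. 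The price is negligible; your bookkeeping points (the identification $AC_\cE([a,b],E)^2\cong AC_\cE([a,b],E\times E)$ and the membership $df\circ(\eta,\eta_1)\in AC_\cE([a,b],F)$, which in any case follows automatically once pointwise equality with $dh(\eta,\eta_1)$ is established) are handled correctly.
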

\begin{proof}
It is well known that the map
\[
g:=C([a,b],f)\colon C([a,b],V)\to C([a,b],F),\quad \eta\mto f\circ \eta
\]
is $C^\infty$ and
\[
dg=C([a,b],df)\colon C([a,b],V\times E)\to C([a,b],F),\quad
(\eta_1,\eta_2)\mto df\circ(\eta_1,\eta_2)
\]
if we identify $C([a,b],V)\times C([a,b],E)$
with $C([a,b],V\times E)$ (cf.\ \cite{GCX} or \cite{GaN}).
As we assume that smooth functions
act smoothly on $AC_\cE$, we know that $h$ is
smooth.
Let $j_E\colon AC_\cE([a,b],E)\to C([a,b],E)$
and $j_F\colon AC_\cE([a,b],F)\to C([a,b],F)$
be the inclusion maps, which are continuous linear
(see Remark~\ref{intoC0}).
Then $j_F\circ h=g\circ j_E|_{AC_\cE([a,b],V)}$
and the Chain Rule yields:
\[
j_F\circ dh=dg\circ (j_E|_{AC_\cE([a,b],V)}\times j_E).
\]
Thus $dh(\eta_1,\eta_2)=j_F(dh(\eta_1,\eta_2))=dg(\eta_1,\eta_2)=df\circ (\eta_1,\eta_2)$
for all $\eta_1\in AC_\cE([a,b],V)$ and $\eta_2\in AC_\cE([a,b],E)$.
Thus (\ref{givesdf3}) is valid and the proof is complete.
\end{proof}
\begin{la}
Smooth functions act smoothly
on $AC_{L^p}$ for
$L^p$ as a bifunctor on Fr\'{e}chet spaces
$($or sequentially complete \emph{(FEP)}-spaces$)$,
for each $p\in [1,\infty]$.
Moreover, smooth functions
act smoothly on $AC_{L^\infty_{rc}}$ and $AC_R$,
for $L^\infty_{rc}$ and $R$ considered
as bifunctors on integral complete
locally convex spaces.
\end{la}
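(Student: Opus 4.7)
The plan is to reduce smoothness of $h:=AC_\cE([a,b],f)$ to the parameter-dependent operators $\wt{(\cdot)}$ studied in Section~\ref{mpbelbg}. Let $f\colon V\to F$ be smooth with $V\sub E$ open, in the appropriate class of spaces. Since $f$ is in particular $C^1$ (for $\cE=L^p$ on Fr\'echet or sequentially complete (FEP)-spaces) or $C^2$ (for $\cE\in\{L^\infty_{rc},R\}$ on integral complete locally convex spaces), Lemma~\ref{C2acts} shows that $\eta\mto f\circ\eta$ is a well-defined map
\[
h\colon AC_\cE([a,b],V)\to AC_\cE([a,b],F).
\]
It remains to establish axiom~(S) of Definition~\ref{defpfwax}, the smoothness of~$h$.

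The key device is the linear topological isomorphism $\Phi\colon AC_\cE([a,b],F)\to F\times\cE([a,b],F)$, $\zeta\mto(\zeta(t_0),\zeta')$ from Definition~\ref{mostbaac} (with~$F$ in place of~$E$). Since $\Phi$ and $\Phi^{-1}$ are continuous linear, $h$ is smooth if and only if $\Phi\circ h$ is, and I would analyze the two components of $\Phi\circ h$ separately. The first sends $\eta$ to $f(\eta(t_0))$ and is the composition of the continuous linear evaluation $\eta\mto\eta(t_0)$ (a component of~$\Phi$ on the source side) with the smooth map~$f$, hence smooth. For the second, Remark~\ref{dercomp} gives
\[
(f\circ\eta)'=[df\circ(\eta,\gamma)]=\wt{df}(\eta,\eta')
\]
whenever $\eta'=[\gamma]$, where $\wt{df}$ is the operator of Propositions~\ref{oponinfty} and~\ref{oponLp}; here $\eta$ is fed into $\wt{df}$ as a continuous $V$-valued curve via the continuous linear inclusion $AC_\cE([a,b],V)\emb C([a,b],V)$ from Remark~\ref{intoC0}, and $\eta\mto\eta'$ is continuous linear by Definition~\ref{mostbaac}.

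The final step is to verify smoothness of $\wt{df}$ itself, which is where the work of Section~\ref{mpbelbg} pays off. The map $df\colon V\times E\to F$ is smooth and linear in its second argument. In the integral complete case ($\cE\in\{L^\infty_{rc},R\}$), Proposition~\ref{oponinfty} applied to the $C^\infty$-map $df$ shows $\wt{df}$ is $C^k$ for every~$k\in\N_0$. In the Fr\'echet or sequentially complete (FEP)-case with $\cE=L^p$, Proposition~\ref{oponLp} needs one extra derivative, but this is harmless because $df$ is $C^\infty$; again $\wt{df}$ is smooth. Thus the second component of $\Phi\circ h$ is a composition of continuous linear maps with the smooth operator $\wt{df}$, and both components of $\Phi\circ h$ being smooth, $h$ is smooth. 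No real obstacle is expected: the analytic content has been absorbed into Propositions~\ref{oponinfty} and~\ref{oponLp}, and the only point requiring care is matching the differentiability loss in the $L^p$ case, which is trivially satisfied since $f$ is assumed $C^\infty$.
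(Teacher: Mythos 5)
Your argument is correct, and its analytic core coincides with the paper's proof of Lemma~\ref{oursbetter}: well-definedness of $\eta\mto f\circ\eta$ via Lemma~\ref{C2acts}, the identity $(f\circ\eta)'=[df\circ(\eta,\gamma)]$ from Remark~\ref{dercomp}, and smoothness of $\wt{df}$ from Propositions~\ref{oponinfty} and~\ref{oponLp} (the loss of one derivative in the $L^p$ case being irrelevant since $f$ is $C^\infty$). The only difference is the bookkeeping: the paper tests smoothness through the map $\Psi\colon\zeta\mto(\zeta,\zeta')$ of Lemma~\ref{closedemb}, whose image is merely a closed subspace of $C([a,b],F)\times\cE([a,b],F)$, so it must invoke \cite[Lemmas~10.1 and 10.2]{BGN} together with the known smoothness of the pushforward $C([a,b],f)$ on spaces of continuous functions; you instead use the isomorphism $\Phi\colon\zeta\mto(\zeta(t_0),\zeta')$ onto $F\times\cE([a,b],F)$ from Definition~\ref{mostbaac}, which reduces the first component to the evaluation $\ev_{t_0}$ followed by $f$ and so needs neither the BGN closed-subspace lemmas nor any facts about $C([a,b],f)$. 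Your route is thus slightly more economical for the bare statement; what the paper's route buys in exchange is the explicit formula $dg=AC_\cE([a,b],df)$ of~(\ref{givesdf2}) (obtained by differentiating the relation $j_F\circ g=C([a,b],f)\circ j_E$), which is used later (e.g.\ in Lemmas~\ref{explderv} and~\ref{analsook}) and which your decomposition does not directly produce, though it could be recovered afterwards by the same chain-rule computation.
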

This follows from the following more detailed result:
\begin{la}\label{oursbetter}
Let $\cE$ be $L^p$ for some $p\in [1,\infty]$,
and $E$ as well as $F$
be sequentially complete \emph{(FEP)}-spaces
$($e.g., Fr\'{e}chet spaces$)$.
Or let $\cE$ be $L^\infty_{rc}$ or $R$,
and let $E$ as well as~$F$ be integral
complete locally convex spaces.
Let $a<b$ be real numbers, $V\sub E$ be open and
$f\colon V\to F$ be a $C^{k+2}$-map
for some $k\in \N_0\cup\{\infty\}$.
Then the map
\[
AC_\cE([a,b],f)\colon AC_\cE([a,b],V)\to AC_\cE([a,b],F),\quad \eta\mto f\circ \eta
\]
is $C^k$. If $k\geq 1$, then
\begin{equation}\label{givesdf2}
d g=AC_\cE([a,b],df),
\end{equation}
identifying $AC_\cE([a,b],E)^2$ with $AC_\cE([a,b],E\times E)$.
\end{la}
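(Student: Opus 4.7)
The plan is to reduce the $C^k$-regularity of $g := AC_\cE([a,b], f)$ to that of two maps already treated, using the closed linear topological embedding $\Psi_F\colon AC_\cE([a,b], F) \to C([a,b], F) \times \cE([a,b], F)$, $\zeta \mapsto (\zeta, \zeta')$ from Lemma~\ref{closedemb}. First, $g$ is well-defined: since $f$ is $C^{k+2}$ and hence at least $C^2$, Lemma~\ref{C2acts} yields $f \circ \eta \in AC_\cE([a,b], F)$ for each $\eta \in AC_\cE([a,b], V)$, and Remark~\ref{dercomp} gives $(f \circ \eta)' = [df \circ (\eta, \gamma)]$ whenever $\eta' = [\gamma]$. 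The set $AC_\cE([a,b], V)$ is open in $AC_\cE([a,b], E)$ by Remark~\ref{intoC0}.

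Under $\Psi_F$ the map $g$ becomes $\eta \mapsto (f \circ \eta, \wt{df}(\eta, \eta'))$. The first component is the restriction of the smooth superposition operator $C([a,b], f)\colon C([a,b], V) \to C([a,b], F)$ to $AC_\cE([a,b], V)$, precomposed with the continuous linear inclusion $j\colon AC_\cE([a,b], V) \to C([a,b], V)$ from Lemma~\ref{closedemb}; it is therefore smooth. For the second component, apply the results of Section~\ref{mpbelbg} to $df\colon V \times E \to F$, which is linear in its second argument and $C^{k+1}$ because $f$ is $C^{k+2}$: in the integral complete case ($\cE = L^\infty_{rc}$ or $R$), Proposition~\ref{oponinfty} yields that the associated superposition operator $\wt{df}\colon C([a,b], V) \times \cE([a,b], E) \to \cE([a,b], F)$ is $C^{k+1}$; in the Fr\'{e}chet/(FEP) case ($\cE = L^p$), Proposition~\ref{oponLp} yields that $\wt{df}$ is $C^k$. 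Composing $\wt{df}$ with $(j, B)\colon AC_\cE([a,b], V) \to C([a,b], V) \times \cE([a,b], E)$, where $B(\eta) := \eta'$ is continuous linear (Lemma~\ref{closedemb}), yields a $C^k$-map.

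Hence $\Psi_F \circ g$ is $C^k$ with values in the closed linear subspace $\im(\Psi_F)$; since $\Psi_F$ is a linear homeomorphism onto its image, $g$ itself is $C^k$. For the derivative formula~(\ref{givesdf2}) when $k \geq 1$: by the Chain Rule for the smooth superposition operator $C([a,b], f)$, the first coordinate of $\Psi_F(dg(\eta, \eta_1))$ is $df \circ (\eta, \eta_1)$, so $dg(\eta, \eta_1)$ is the element of $AC_\cE([a,b], F)$ whose underlying continuous representative is $s \mapsto df(\eta(s), \eta_1(s))$. Under the identification $AC_\cE([a,b], E)^2 \cong AC_\cE([a,b], E \times E)$ (cf.\ Remark~\ref{willgvcx} combined with Lemma~\ref{C2acts}), this coincides with $AC_\cE([a,b], df)(\eta, \eta_1)$.

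The main obstacle I foresee is primarily bookkeeping: tracking the loss of one differentiability order from $f$ to $df$ and the further loss in the $L^p$-case from Proposition~\ref{oponLp} (which forces $C^{k+2}$ rather than $C^{k+1}$ on $f$ as a uniform hypothesis), and verifying that $C^k$-regularity transfers cleanly through the topological embedding $\Psi_F$ with closed image. The substantive analytical content -- continuity and differentiability of non-linear superposition operators on Lebesgue-type spaces -- is already provided by Propositions~\ref{oponinfty} and~\ref{oponLp}, and the preservation of absolute continuity by $C^2$-maps is handled by Lemma~\ref{C2acts}.
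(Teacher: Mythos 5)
Your proposal is correct and follows essentially the same route as the paper: split $g$ through the closed embedding $\eta\mapsto(\eta,\eta')$ of Lemma~\ref{closedemb}, handle the $C([a,b],F)$-component via the smooth superposition operator $C([a,b],f)$, handle the $\cE([a,b],F)$-component as $\wt{df}$ composed with the continuous linear maps $j_E$ and $\eta\mapsto\eta'$ (using Propositions~\ref{oponinfty}/\ref{oponLp} with $df$ being $C^{k+1}$), and then pin down $dg$ through the first coordinate by the Chain Rule. The only cosmetic difference is that the paper explicitly cites \cite[Lemmas~10.1 and 10.2]{BGN} for the step that $C^k$-ness of the two components through the closed embedding implies $C^k$-ness of $g$, which you assert without reference.
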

\begin{proof}
Note first that $[f\circ \eta]\in AC_\cE([a,b],F)$
by Lemma~\ref{C2acts},
since $f$ is at least~$C^2$.
Let $j_E\colon AC_\cE([a,b],E)\to C([a,b],E)$
and $j_F\colon AC_\cE([a,b],F)\to C([a,b],F)$
be the inclusion maps, which are continuous linear
(see Remark~\ref{intoC0}).
Because $\Phi\colon AC_\cE([a,b],F)\to C([a,b],F)\times \cE([a,b],F)$
is a topological embedding with closed
image, \cite[Lemmas~10.1 and 10.2]{BGN} show that
$g$ will be $C^k$ if we can prove that
\begin{equation}\label{twwa}
j_F\circ AC_\cE([a,b],f)=C([a,b],f)\circ j_E|_{AC_\cE([a,b],V)}
\end{equation}
and $D_F\circ AC_\cE([a,b],f)$ are $C^k$, where
\[
D_F\colon AC_\cE([a,b],F)\to \cE([a,b],F),\quad \eta\mto \eta'
\]
is the differentiation operator (which is continuous
linear).
Since
\[
C([a,b],f)\colon C([a,b],V)\to C([a,b],F), \quad
\eta\mto f\circ \eta
\]
is $C^{k+2}$ and hence $C^k$ (see \cite{GaN}, cf.\ \cite{GCX}),
we deduce from (\ref{twwa}) that $j\circ AC_{\cE}([a,b],f)$ is~$C^k$.
Let $\eta'=[\gamma]$.
Then $D_F(f\circ \eta)=[df\circ (\eta,\gamma)]$
by Remark~\ref{dercomp}, and thus
\[
D_F\circ AC_\cE([a,b], f)=\wt{df}
\]
with $\wt{f}\colon C([a,b],V)\times \cE([a,b],E)\to \cE([a,b],F)$, $(\eta,[\gamma])\mto
[df\circ (\eta,\gamma)]$.
Since $df$ is $C^{k+1}$,
Proposition~\ref{oponLp} (resp., Proposition~\ref{oponinfty})
show that $\wt{f}$ is~$C^k$.\\[2.3mm]
To verify the validity of (\ref{givesdf2}),
abbreviate $h:=C([a,b],f)$.
Then $j_F\circ g=h\circ j_E|_{AC_\cE([a,b],V)}$
(by (\ref{twwa}))
and the Chain Rule yields:
\[
j_F\circ dg=dh\circ (j_E|_{AC_\cE([a,b],V)}\times j_E).
\]
Since $dh=C([a,b],df)$ (cf.\ \cite{GCX}),
we see that $dg(\eta_1,\eta_2)=dh(\eta_1,\eta_2)=df\circ (\eta_1,\eta_2)$
for all $\eta_1\in AC_\cE([a,b],V)$ and $\eta_2\in AC_\cE([a,b],E)$.
Thus (\ref{givesdf2}) is valid and the proof is complete.
\end{proof}
\begin{la}\label{analsook}
Let $\cE$ be a bifunctor on Fr\'{e}chet spaces $($resp.,
sequentially complete \emph{(FEP)}-spaces, resp.,
integral complete
locally convex spaces$)$
which satisfies the locality axiom and such that
smooth functions act smoothly
on~$AC_\cE$.
Let
$E$ and $F$ be Fr\'{e}chet
spaces $($resp., sequentially complete \emph{(FEP)}-spaces, resp.,
integral complete locally convex spaces$)$
over $\K\in \{\R,\C\}$,
$V\sub E$ be an open set and
$f\colon V \to F$ be a $\K$-analytic map.
Then also the map
\[
AC_\cE([a,b],f)\colon AC_\cE([a,b],V)\to AC_\cE([a,b],F),\quad
\eta\mto f\circ \eta
\]
is $\K$-analytic,
for all $a<b$ in $\R$.
\end{la}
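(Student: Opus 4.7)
The plan is to handle the complex case first using the characterization of complex analyticity recalled in the preliminaries, then reduce the real case to the complex case via complexification.

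\textbf{Complex case.} Assume $\K=\C$. Recall from the excerpt that a map into a complex locally convex space is complex analytic iff it is $C^\infty$ on the underlying real spaces and has complex-linear differentials. Since smooth functions act smoothly on $AC_\cE$, the map $h := AC_\cE([a,b],f)$ is smooth, and by Lemma~\ref{explderv} its differential is
\[
dh(\eta,\eta_1)\;=\;AC_\cE([a,b],df)(\eta,\eta_1)\;=\;df\circ(\eta,\eta_1),
\]
identifying $AC_\cE([a,b],E)^2$ with $AC_\cE([a,b],E\times E)$ via Remark~\ref{willgvcx}. For each fixed $\eta$, the map $\eta_1\mapsto df\circ(\eta,\eta_1)$ is complex-linear pointwise, since every $df(\eta(t),\cdot)\colon E\to F$ is complex-linear by complex analyticity of~$f$. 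Hence $h$ is $C^\infty$ with complex-linear differentials, so $h$ is complex analytic.

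\textbf{Real case.} Assume $\K=\R$ and let $\tilde f\colon\tilde V\to F_\C$ be a complex-analytic extension of $f$ on an open $\tilde V\subseteq E_\C$ with $V\subseteq\tilde V$. Viewed as real locally convex spaces, $E_\C\cong E\times E$ and $F_\C\cong F\times F$ are of the same type as $E$ and $F$ (Fréchet, resp.\ sequentially complete (FEP), resp.\ integral complete): finite products clearly preserve each of these three properties, using Lemma~\ref{lafep}(a) in the (FEP) case. The complex case applied to $\tilde f$ thus yields that
\[
AC_\cE([a,b],\tilde f)\colon AC_\cE([a,b],\tilde V)\to AC_\cE([a,b],F_\C)
\]
is complex analytic. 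Moreover, the continuous real-linear decomposition $E_\C=E\oplus iE$ together with axiom~(B1) and Remark~\ref{willgvcx} gives a canonical real-linear topological isomorphism $AC_\cE([a,b],E_\C)\cong AC_\cE([a,b],E)^2$ that intertwines the pointwise action of $i$ on the left with the standard complex structure $J(\eta_1,\eta_2)=(-\eta_2,\eta_1)$ of the complexification $AC_\cE([a,b],E)_\C$ on the right (and similarly for $F$). Under these identifications, $AC_\cE([a,b],\tilde V)$ is an open neighbourhood of the real subspace $AC_\cE([a,b],V)$ inside $AC_\cE([a,b],E)_\C$, and the restriction of $AC_\cE([a,b],\tilde f)$ to $AC_\cE([a,b],V)$ agrees with $AC_\cE([a,b],f)$ (taking values in the real subspace $AC_\cE([a,b],F)\subseteq AC_\cE([a,b],F)_\C$). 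This is the complex-analytic extension demanded by the definition of real analyticity, completing the proof.

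\textbf{Main obstacle.} There is no real obstacle; the proof is a formal consequence of results already in the excerpt. The only points requiring care are the identification of $AC_\cE([a,b],E_\C)$ with the algebraic complexification $AC_\cE([a,b],E)_\C$ (which follows from axiom~(B1) together with the product identification of Remark~\ref{willgvcx}, checking that the pointwise and algebraic complex structures coincide under that identification), and the verification that complexifications stay within the class of spaces admitted by the chosen bifunctor $\cE$.
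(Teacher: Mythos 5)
Your proposal is correct and follows essentially the same route as the paper's proof: in the complex case one combines the smoothness of $AC_\cE([a,b],f)$ with Lemma~\ref{explderv} to see that the differentials are pointwise given by $df$ and hence complex linear, and in the real case one applies this to a complex analytic extension $\wt{f}\colon \wt{V}\to F_\C$, using that $E_\C$, $F_\C$ stay in the admissible class and that $AC_\cE([a,b],E_\C)=AC_\cE([a,b],E)_\C$ (via (B1) and Remark~\ref{willgvcx}). Your extra remarks on matching the pointwise and algebraic complex structures merely make explicit what the paper leaves implicit.
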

\begin{proof}
Assume $\K=\C$ first.
As we assume that smooth functions
act smoothly on $AC_\cE$, we know that $h:=AC_\cE([a,b],f)$ is
smooth.
By Lemma~\ref{explderv}, we have
\[
dh=AC_\cE([a,b],df),\quad (\eta_1,\eta_2)\mto df\circ (\eta_1,\eta_2)
\]
if we identify $\cE([a,b],E\times E)$ with $\cE([a,b],E)^2$.
Since $df(x,.)\colon E\to F$ is complex linear
for each~$x$, it follows that $dh(\eta_1,\eta_2)$ is complex
linear in~$\eta_2$, for each $\eta_1\in AC_\cE([a,b],E)$.
This implies that the smooth map $h$ is complex analytic
(see \cite{RES} of \cite{GaN}).

If $\K=\R$, let $\tilde{f}\colon \tilde{V}\to F_\C$
be a complex analytic extension of $f$
to an open subset $\tilde{V}\sub E_\C$ which contains~$V$.
Note that both $E_\C$ and $F_\C$ are Fr\'{e}chet spaces
(resp., sequentially complete (FEP)-spaces,
resp., integral complete locally convex spaces).
Hence $AC_\cE([a,b],\tilde{f})\colon AC_\cE([a,b],\tilde{V})\to AC_\cE([a,b], F_\C)$
is complex analytic.
Here $AC_\cE([a,b],F_\C)=AC_\cE([a,b],F)_\C$ (cf.\ Remark~\ref{willgvcx})
and $AC_\cE([a,b],E_\C)=AC_\cE([a,b],E)_\C$.
Thus $AC_\cE([a,b],\tilde{f})$
is a complex analytic extension of $AC_\cE([a,b],f)$
and hence $AC_\cE([a,b],f)$ is real analytic.
\end{proof}
\begin{la}\label{auchnch}
Let $\cE$ be a bifunctor on Fr\'{e}chet spaces
$($resp., sequentially complete \emph{(FEP)}-spaces, resp.,
integral complete locally convex spaces$)$
which satisfies the locality axiom \emph{(Loc)},
and such that smooth functions act smoothly on~$A_\cE$.
Let $E_1$, $E_2$ and $F$ be Fr\'{e}chet spaces
$($resp., sequentially complete \emph{(FEP)}-spaces,
resp., integral complete locally convex spaces$)$
over $\K\in \{\R,\C\}$.
If $\K=\R$, let $r\in \{\infty,\omega\}$;
if $\K=\C$, let $r=\omega$.
Let $a<b$ be real numbers, $M$ be a $C^r_\K$-manifold
modelled on~$E_1$, $V\sub E_2$ be open and
$f\colon M\times V\to F$ be a $C^r_\K$-map
Also, let $\zeta\in AC_\cE([a,b],M)$.
Then the map
\[
AC_\cE([a,b],V)\to AC_\cE([a,b],F),\quad \eta\mto f\circ (\zeta,\eta)
\]
is $C^r_\K$.
\end{la}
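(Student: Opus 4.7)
The plan is to reduce the assertion to a local statement on each piece of a partition of $[a,b]$ subordinate to a chart cover along $\zeta$, and then to realize the local version as a composition of an obviously $C^r_\K$-map with the pushforward operator on $AC_\cE$ that we already know to be $C^r_\K$ (by the smooth-action hypothesis if $r=\infty$, and by Lemma~\ref{analsook} if $r=\omega$).

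First, I would use the definition of $\zeta\in AC_\cE([a,b],M)$ to choose a partition $a=t_0<t_1<\cdots<t_n=b$ and charts $\phi_j\colon U_j\to V_j\sub E_1$ of $M$ with $\zeta([t_{j-1},t_j])\sub U_j$ and $\alpha_j:=\phi_j\circ\zeta|_{[t_{j-1},t_j]}\in AC_\cE([t_{j-1},t_j],E_1)$. Setting $f_j\colon V_j\times V\to F$, $f_j(x,y):=f(\phi_j^{-1}(x),y)$ (a $C^r_\K$-map), I would note the identity
\[
(f\circ(\zeta,\eta))|_{[t_{j-1},t_j]}=f_j\circ(\alpha_j,\eta|_{[t_{j-1},t_j]}),
\]
so it suffices to show that, for each fixed~$j$, this restriction depends on~$\eta$ in a $C^r_\K$-manner, and then to glue. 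To glue, I would observe that the locality axiom~(Loc) together with the topology on $AC_\cE$ defined via~$\Phi$ in Definition~\ref{mostbaac} realizes $AC_\cE([a,b],F)$ as a closed topological vector subspace of the product $\prod_{j=1}^nAC_\cE([t_{j-1},t_j],F)$ (cut out by the continuous linear compatibility equations $\beta_j(t_j)=\beta_{j+1}(t_j)$). By the embedding argument used in the proof of Lemma~\ref{oursbetter} (relying on \cite[Lemmas~10.1 and~10.2]{BGN}), a map into $AC_\cE([a,b],F)$ is $C^r_\K$ as soon as each of its restrictions to the subintervals is $C^r_\K$.

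For the local step on $[c,d]:=[t_{j-1},t_j]$, I would use Remark~\ref{willgvcx} to identify $AC_\cE([c,d],V_j\times V)$ with an open subset of $AC_\cE([c,d],E_1)\times AC_\cE([c,d],E_2)$. The map $\eta\mto(\alpha_j,\eta|_{[c,d]})$ from $AC_\cE([a,b],V)$ to $AC_\cE([c,d],V_j\times V)$ is then the composition of the continuous linear restriction (continuity furnished by~(B2)) with an affine translation by the fixed $\alpha_j$, hence is continuous affine and in particular $C^r_\K$. Composing with the operator $AC_\cE([c,d],f_j)$ — which is $C^\infty_\R$ when $r=\infty$ by the smooth-action hypothesis, and $\K$-analytic when $r=\omega$ by Lemma~\ref{analsook} — I obtain the desired $C^r_\K$-dependence of $\eta\mto f_j\circ(\alpha_j,\eta|_{[c,d]})$, completing the proof.

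The only non-routine ingredient is the closed-subspace identification of $AC_\cE([a,b],F)$ inside the product over subintervals, which enables the standard ``componentwise $C^r_\K$'' criterion; this reduces to (Loc) and the topological description via~$\Phi$, and requires no fresh estimates. Everything else is direct assembly of the preceding machinery, with the one subtlety of invoking Lemma~\ref{analsook} (rather than merely the smooth-action hypothesis) in the $r=\omega$ case.
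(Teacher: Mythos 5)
Your proposal is correct and follows essentially the same route as the paper's proof: the same subdivision along charts covering $\zeta$, the same local maps $f_j$, the gluing via the locality axiom and the closed-image embedding into $\prod_j AC_\cE([t_{j-1},t_j],F)$ with \cite[Lemmas~10.1, 10.2]{BGN}, and the same final factorization $\eta\mto AC_\cE([t_{j-1},t_j],f_j)(\zeta_j,\rho_j(\eta))$, invoking Lemma~\ref{analsook} in the case $r=\omega$. No gaps.
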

\begin{proof}
We find $a=t_0<t_1<\cdots<t_n=b$
such that $\zeta([t_{j-1},t_j])\sub U_j$
for some chart $\phi_j\colon U_j\to V_j\sub E_1$ of~$M$,
for all $j\in \{1,\ldots, n\}$.
Then
\[
f_j:=f\circ ((\phi_j)^{-1}\times \id_{E_2})\colon V_j\times E_2\to F
\]
is a $C^r_\K$-map for $j\in \{1,\ldots,n\}$.
Moreover, $\zeta_j:=\phi_j\circ \zeta|_{[t_{j-1},t_j]}\in AC_\cE([t_{j-1},t_j],V_j)$
and
\begin{equation}\label{ingredi1}
(f\circ (\zeta,\eta))|_{[t_{j-1},t_j]}=f_j\circ(\zeta_j,\eta|_{[t_{j-1},t_j]}).
\end{equation}
Since $\cE$ satisfies the locality axiom, the map
\[
AC_\cE([a,b],F)\to
\prod_{j=1}^n AC_\cE([t_{j-1},t_j],F),
\quad \eta\mto (\eta|_{[t_{j-1},t_j]})_{j\in \{1,\ldots,n\} }
\]
is a linear topological embedding with closed
image.\footnote{The image consists of all
$(\eta)_{j\in\{1,\ldots, n\}}\in \prod_{j=1}^n AC_\cE([t_{j-1},t_j],F)$
such that $\eta_j(t_j)=\eta_{j+1}(t_j)$
for all $j\in\{1,\ldots, n-1\}$.}
Hence, by Lemmas 10.1 and 10.2 in \cite{BGN}
(and analogous lemmas for analytic maps in \cite{GaN}),
we need only show that the maps
\[
h_j\colon AC_\cE([a,b],V)\to AC_\cE([a,b],F),\quad \eta\mto (f\circ (\zeta,\eta))|_{[t_{j-1},t_j]}
\]
are $C^r_\K$ for all $j\in \{1,\ldots, n\}$.
The mappings
\[
AC_\cE([a,b],E_2)\to AC_\cE([t_{j-1},t_j],E_2),\quad \eta\mto\eta|_{[t_{j-1},t_j]}
\]
are continuous $\K$-linear for $j\in \{1,\ldots,n\}$,
whence the maps
\[
\rho_j\colon AC_\cE([a,b],V)\to AC_\cE([t_{j-1},t_j],V),\quad \eta\mto\eta|_{[t_{j-1},t_j]}
\]
are $C^r_\K$.
The map
\[
AC_\cE([t_{j-1},t_j],f_j)\colon AC_\cE([t_{j-1},t_j],V_j\times V)
\to AC_\cE([t_{j-1},t_j,F)
\]
is $C^r_\K$ as we assume that smooth functions act smoothly on $AC_\cE$
(see Lemma~\ref{analsook} if $r=\omega$).
Identifying $AC_\cE([t_{j-1},t_j],V_j\times V)$ with
\[
AC_\cE([t_{j-1},t_j],V_j)\times AC_\cE([t_{j-1},t_j],V),
\]
we have
\[
h_j(\eta)=AC_\cE([t_{j-1},t_j],f_j)
(\zeta_j,\rho_j(\eta))
\]
for all $\eta\in AC_\cE([a,b],V)$ (exploiting (\ref{ingredi1})).
Hence $h_j$ is $C^r_\K$, which completes the proof.
\end{proof}
\section{The Lie groups {\boldmath$AC_\cE([0,1],G)$}}\label{secACG}
In this section,
$\K\in \{\R,\C\}$.
If $\K=\R$, we let $r\in \{\infty,\omega\}$;
if $\K=\C$, we let $r:=\omega$.
We recall the local description of Lie group structures.
\begin{numba}\label{localdesc}
Let $U$ be a group and $U\sub G$ be a subset
such that $U= U^{-1}$ and $e\in U$.
Assume that $U$ is endowed with a $C^r_\K$-manifold structure modelled on
a locally convex topological $\K$-vector space~$E$
such that
\[
U\to U,\quad x\mto x^{-1}
\]
is $C^r_\K$, $D_U:=\{(x,y)\in U\times U\colon xy\in U\}$
is open in $U\times U$ and the map
\[
U\times U\to U,\quad (x,y)\mto xy
\]
is $C^r_\K$.
Also, assume that for each $g\in G$, there is an open identity
neighbourhood $W\sub U$ such that $gWg^{-1}\sub U$,
and that the map
\[
W\to U,\quad x\mto gxg^{-1}
\]
is $C^r_\K$.
Then there is a unique $C^r_\K$-manifold structure on~$G$
modelled on~$E$ which makes $G$ a $C^r_\K$-Lie group
and such that $G$ has $U$ (with its original manifold structure)
as an open $C^r_\K$-submanifold.
\end{numba}
\begin{prop}\label{propACLie}
Let $\cE$ be a bifunctor on Fr\'{e}chet spaces
$($resp., sequentially complete \emph{(FEP)}-spaces,
resp., integral complete locally convex spaces$)$
which satisfies the locality axiom \emph{(Loc)}
and such that smooth functions act
smoothly on $AC_\cE$.
Let $G$ be a $C^r_\K$-Lie group modelled
on a Fr\'{e}chet space $($resp.,
on a sequentially complete \emph{(FEP)}-space, resp.,
on an integral complete locally convex space$)$~$E$
over~$\K$.
Then
$AC_\cE([0,1],G)$ is a group under pointwise multiplication
and there is a unique $C^r_\K$-Lie group structure on
$AC_\cE([0,1],G)$ such that
\[
AC_\cE([0,1],U):=\{\eta\in AC_\cE([0,1],G)\colon \eta([0,1])\sub U\}
\]
is open in $AC_\cE([0,1],G)$ and
\[
AC_\cE([0,1],\phi)\colon AC_\cE([0,1],U)\to AC_\cE([0,1],V)
\]
is a $C^r_\K$-diffeomorphism for each
chart $\phi\colon U\to V$ of~$G$
such that $e\in U$ and $U=U^{-1}$.
\end{prop}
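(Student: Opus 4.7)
The plan is to realise the Lie group structure through the local description recalled in~\ref{localdesc}, with the identity chart of $AC_\cE([0,1],G)$ obtained by pushing forward an identity chart of~$G$. First I would verify that $AC_\cE([0,1],G)$ is a group under pointwise multiplication: given $\eta_1,\eta_2\in AC_\cE([0,1],G)$, a subdivision of $[0,1]$ on which both $\eta_j$ take values in single charts of~$G$, together with the locality axiom \emph{(Loc)} and the product identification from Remark~\ref{willgvcx} applied to product charts of $G\times G$, shows $(\eta_1,\eta_2)\in AC_\cE([0,1],G\times G)$; composing with the $C^r_\K$-multiplication $\mu\colon G\times G\to G$ via Lemma~\ref{actonelts} yields $\eta_1\eta_2\in AC_\cE([0,1],G)$, and inversion is handled the same way.

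Next, fix a chart $\phi\colon U\to V\sub E$ of $G$ with $e\in U$, $U=U^{-1}$. Since $V\sub E$ is open, $AC_\cE([0,1],V)$ is open in the topological $\K$-vector space $AC_\cE([0,1],E)$ by Remark~\ref{intoC0}; I transport this manifold structure to $AC_\cE([0,1],U)$ along the bijection $AC_\cE([0,1],\phi)$, and independence of this structure from the choice of $\phi$ follows by applying the hypothesis that smooth functions act smoothly on $AC_\cE$ (and Lemma~\ref{analsook} when $r=\omega$) to chart transitions on $G$. This also gives the asserted uniqueness.

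I then verify the three conditions in~\ref{localdesc}. For inversion, the chart representation is $AC_\cE([0,1],\iota)$ for the $C^r_\K$-map $\iota\colon U\to U$, which is $C^r_\K$ by Definition~\ref{defpfwax} (resp.\ Lemma~\ref{analsook}). For multiplication, with $D:=\{(x,y)\in U\times U\colon xy\in U\}$ open in $U\times U$, the set $D_U$ of~\ref{localdesc} identifies with $AC_\cE([0,1],D)$, which is open by Remark~\ref{intoC0}; multiplication there is $(\eta_1,\eta_2)\mto \mu\circ(\eta_1,\eta_2)$ for the $C^r_\K$-map $\mu\colon D\to U$, which is $C^r_\K$ by the smooth-action hypothesis. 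The delicate condition is conjugation by a fixed $g\in AC_\cE([0,1],G)$: since $g([0,1])\sub G$ is compact, the Wallace Lemma~\ref{Wallace} applied to the continuous map $G\times U\to G$, $(y,x)\mto yxy^{-1}$ (which sends the compact set $g([0,1])\times\{e\}$ into the open set $U$), yields an open neighbourhood $\mathcal{U}\sub G$ of $g([0,1])$ and an open symmetric identity neighbourhood $W\sub U$ with $\mathcal{U}W\mathcal{U}^{-1}\sub U$. Then $AC_\cE([0,1],W)$ is an open identity neighbourhood in $AC_\cE([0,1],U)$ on which conjugation by $g$ reads in the chart as $\zeta\mto f\circ(g,\zeta)$ for the $C^r_\K$-map
\[
f\colon \mathcal{U}\times\phi(W)\to V,\qquad (y,x)\mto\phi\bigl(y\,\phi^{-1}(x)\,y^{-1}\bigr).
\]
Viewing $\mathcal{U}$ as an open submanifold of~$G$ so that $g\in AC_\cE([0,1],\mathcal{U})$, Lemma~\ref{auchnch} with parameter curve $\zeta:=g$ produces the required $C^r_\K$-smoothness.

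The main obstacle, I expect, is precisely this conjugation step: the map $(y,x)\mto yxy^{-1}$ is only well-defined on the open subset of $G\times U$ where it actually lands in~$U$, so the compactness of $g([0,1])$ together with Wallace's Lemma is essential to produce a single product neighbourhood $\mathcal{U}\times W$ on which the conjugation lands in the chart domain~$U$; after that point, every smoothness claim collapses onto Lemma~\ref{auchnch}, Lemma~\ref{actonelts}, and the smooth-action hypothesis, while the uniqueness is immediate from~\ref{localdesc}.
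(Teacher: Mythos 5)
Your proposal is correct and follows essentially the same route as the paper: pointwise operations via Lemma~\ref{actonelts}, transport of the manifold structure along $AC_\cE([0,1],\phi)$, verification of the local description~\ref{localdesc} with the conjugation step handled — exactly as in the paper — by the Wallace Lemma applied to $(y,x)\mto yxy^{-1}$ on $g([0,1])\times\{e\}$ followed by Lemma~\ref{auchnch} with parameter curve~$g$, and uniqueness via $C^r_\K$-ness of the chart transitions. No gaps.
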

\begin{proof}
\emph{Existence of a Lie group structure}.
Let $m_G\colon G\times G\to G$ be the group multiplication
and $j_G\colon G\to G$ be the inversion map, $x\mto x^{-1}$.
Using Lemma~\ref{actonelts}, we see that
$\wt{G}:=AC_\cE([0,1],G)$ becomes a group if we define
the multiplication $m_{\wt{G}}$ via
\[
\eta_1\eta_2:=AC_\cE([0,1],m_G)(\eta_1,\eta_2)
\]
(identifying $AC_\cE([0,1],G)\times AC_\cE([0,1],G)$
with $AC_\cE([0,1],G\times G)$)
and the inversion $j_{\wt{G}}$ via
\[
\eta^{-1}:=AC_\cE([0,1],j_G)(\eta)
\]
for $\eta,\eta_1,\eta_2\in AC_\cE([0,1],G)$.
Pick a chart
$\phi\colon U\to V$ of~$G$
such that $e\in U$ and $U=U^{-1}$.
Then $\wt{V}:=AC_\cE([0,1],V)$ is an open subset
of $AC_\cE([0,1],E)$.
By Lemma~\ref{goodinM}, the map
\[
\wt{\phi}:=AC_\cE([0,1],\phi)
\colon AC_\cE([0,1],U)
\to AC_\cE([0,1],V),\;\,
\eta\mto \phi\circ \eta
\]
is a bijection.
We endow $\wt{U}:=AC_\cE([0,1],U)$
with the $C^r_\K$-manifold structure which turns
$AC_\cE([0,1],\phi)$ into a global chart.
Then
\[
D_U:=\{(x,y)\in U\times U\colon xy\in U\}
\]
is open in~$U\times U$, and hence
$D_V:=(\phi\times \phi)(D_U)$ is open
in $V\times V$.
The mappings
\[
j\colon V\to V,\quad j(x):=\phi((\phi^{-1}(x))^{-1})
\]
and
\[
m \colon D_V\to V,\quad (x,y)\mto \phi(\phi^{-1}(x)\phi^{-1}(y))
\]
(which express the group inversion and
group multiplication in the local chart) are~$C^r_\K$.
Now $AC_\cE([0,1],V)$ is an open subset of $AC_\cE([0,1],E)$
and $AC_\cE([0,1],D_V)$ is an open subset of $AC_\cE([0,1],E\times E)
\cong AC_\cE([0,1],E)\times AC_\cE([0,1],E)$.
Identifying $AC_\cE([0,1],G\times G)$ with
the direct product\linebreak
$AC_\cE([0,1],G)\times AC_\cE([0,1],G)$
as a set, we have
\begin{eqnarray*}
\lefteqn{AC_\cE([0,1],D_U)}\qquad\qquad\\
&=&
\!\!\{(\eta_1,\eta_2)\in AC_\cE([0,1],G\times G)\colon \!(\forall t\in [0,1])
\,
(\eta_1(t),\eta_2(t))\in D_U\}\\
&=& \!\! \{(\eta_1,\eta_2)\in \wt{U}\times \wt{U}\colon
\eta_1\eta_2 \in \wt{U}\}\;=: \; D_{\wt{U}}.
\end{eqnarray*}
Since smooth functions act smoothly on $AC_\cE$
(and appealing to Lemma~\ref{analsook} if $r=\omega$),
the map
\[
j_{\wt{G}}=(\wt{\phi})^{-1}\circ AC_\cE([0,1],j)\circ \wt{\phi}
\]
is $C^r_\K$. Likewise,
\[
m_{\wt{G}}=(\wt{\phi})^{-1}\circ AC_\cE([0,1],m)\circ (\wt{\phi}\times \wt{\phi})|_{D_{\wt{U}}}
\]
is $C^r_\K$.
If $\zeta\in \wt{G}$, then $K:=\zeta([0,1])\sub G$ is compact.
Because the map
\[
h\colon G\times G\to G,\quad h(x,y):=xyx^{-1}
\]
is continuous and $h(K\times \{e\})=\{e\}\sub U$,
the Wallace Lemma (see \ref{Wallace})
provides open sets $P,W\sub G$
such that $K\times \{e\}\sub P\times W\sub h^{-1}(U)$.
We may assume that $P\sub U$.
As a consequence,
\[
\zeta\eta\zeta^{-1}=h\circ (\zeta,\eta)\in AC_\cE([0,1],U)=\wt{U}
\]
for all $\eta\in AC_\cE([0,1],W):=\wt{W}$.
The map
\[
f\colon P\times W\to V,\quad f(x,y):=\phi(h(x,\phi^{-1}(y)))
\]
is smooth, and the map $\wt{W}\to \wt{U}$,
\[
\eta\mto \zeta\eta\zeta^{-1}= h\circ (\zeta,\eta)
=\wt{\phi}^{-1}(f\circ (\zeta,\wt{\phi}(\eta)))
\]
is $C^r_\K$ by Lemma~\ref{auchnch}.
Hence \ref{localdesc} provides a unique
$C^r_\K$-Lie group structure on
$\wt{G}$ modelled on $AC_\cE([a,b],E)$ such
that $\wt{U}$ is an open submanifold.

\emph{Uniqueness of the Lie group structure}.
If also
$\phi_1\colon U_1\to V_1$ is a chart of~$G$
such that $e\in U_1$ and $U_1=(U_1)^{-1}$,
then $U\cap U_1$ is open in $U$ and hence
$AC_\cE([0,1],\phi(U\cap U_1))$ is open
in $\wt{U}$, entailing that
$AC_\cE([0,1],U\cap U_1)$ is open
in $\wt{U}$.
Likewise,
$AC_\cE([0,1],U\cap U_1)$ is open
in $\wt{U}_1:=AC_\cE([0,1],U_1)$,
endowed with the $C^r_\K$-manifold structure
making
\[
\wt{\phi}_1:=AC_\cE([0,1],\phi_1)\colon
AC_\cE([0,1],U_1)\to AC_\cE([0,1],V_1)
\]
a global chart for $\wt{U}_1$.
Write $\wt{G}_1$ for $AC_\cE([0,1],G)$,
endowed with the unique $C^r_\K$-Lie group structure
making $\wt{U}_1$ an open submanifold.
Then $\id\colon \wt{G}\to \wt{G}_1$ is a group
homomorphism which is $C^r_\K$ on the open identity neighbourhood
$\wt{U}\cap\wt{U}_1$ (as we
presently verify) and hence $C^r_\K$.
Likewise, $\id\colon \wt{G}_1\to\wt{G}$ is
$C^r_\K$, and thus $\wt{G}=\wt{G}_1$
as a $C^r_\K$-Lie group.
Here, we used that
\[
\id|_{\wt{U}\cap\wt{U}_1}
=(\wt{\phi}_1)^{-1}\circ AC_\cE([0,1], \phi_1\circ \phi^{-1}|_{\phi(\wt{U}\cap\wt{U}_1)})\circ
\wt{\phi}|_{\wt{U}\cap\wt{U}_1}
\]
is $C^r_\K$.
\end{proof}
%
%
%
%want also local group construction
%
%is analytic if $G$ is so
%
%NEEDED to define $\cE$-regularity!
%
%
\begin{rem}\label{intoC}
In the situation of Proposition~\ref{propACLie},
the inclusion map
\[
\ve\colon AC_\cE([0,1],G)\to C([0,1],G)
\]
is a group homomorphism and $C^r_\K$.\\[2.3mm]
In fact, for $\phi$, $\wt{\phi}$ and $\wt{U}$ as in the preceding proof,
the map
\[
C([0,1],\phi)\colon
C([0,1],U)\to C([0,1],V)\sub C([0,1],E)
\]
is a chart for $C([0,1],G)$.
The inclusion map~$\ve$
is $C^r_\K$ on the open identity
neighbourhood $\wt{U}$ since
\[
C([0,1],\phi)\circ \ve|_{\wt{U}}\circ (\wt{\phi})^{-1}
\]
is the inclusion map $AC_\cE([0,1],V)\to C([0,1],V)$,
which is a restriction of the continuous linear inclusion map
$AC_\cE([0,1],E)\to C([0,1],E)$ and hence~$C^r_\K$.
Since $\ve$ is a group homomorphism, it follows
that $\ve$ is~$C^r_\K$.
\end{rem}
\begin{rem}
It is well known that the
evaluation map $\ev_t\colon C([0,1],G)\to G$, $\eta\mto \eta(t)$
is a group homomorphism and $C^r_\K$
for each $t\in [0,1]$.
By the previous remark, also
the evaluation map
$\ev_t\circ \ve\colon AC_\cE([0,1],G)\to G$, $\eta\mto\eta(t)$
is a group homomorphism and $C^r_\K$, for each $t\in [0,1]$.
\end{rem}
\begin{numba}
A local $C^r_\K$-Lie group
is a quintuple $(U,e,D_U,m,j)$,
where $U$ is a $C^r_K$ manifold
modelled on a locally convex topological $\K$-vector space~$E$,
$e\in U$, $D_U$ an open subset of $U\times U$
such that $(U\times\{e\})\cup(\{e\}\times U)\sub D_U$
and
\[
m\colon D_U\to U,\quad (x,y)\mto m(x,y)=:xy
\]
and $j\colon U\to U$, $x\mto j(x)=:x^{-1}$
are $C^r_\K$-maps satisfying axioms
as in \cite{GaN} or \cite{SUR}.
Then $T_e(U)$ is a Lie algebra.
\end{numba}
For example, every open identity symmetric\footnote{That is, $U=U^{-1}$.}
neighbourhood
$U$ in a $C^r_\K$-Lie group~$G$ is a
local $C^r_\K$-Lie group with $D_U:=\{(x,y)\in U\times U\colon xy\in U\}$.
\begin{defn}
Let $\cE$ be a bifunctor on Fr\'{e}chet spaces
(resp., sequentially complete (FEP)-spaces,
resp., integral complete locally convex spaces)
which satisfies the locality axiom (Loc)
and such that smooth functions act
smoothly on $AC_\cE$.
Let
$(U,e,D_U,m,j)$ be a local $C^r_\K$-Lie group
which is modelled
on a Fr\'{e}chet space (resp.,
on a sequentially complete (FEP)-space, resp.,
on an integral complete locally convex space)~$E$
over~$\K$
and admits a global chart
$\phi\colon U\to V\sub E$.
We then consider $\wt{U}:=AC_\cE([a,b],U)$
as a local $C^r_\K$-Lie group
with the global chart $AC_\cE([a,b],\phi)$,
\[
D_{\wt{U}}:=AC_\cE([a,b],D_U)
\]
(identifying $AC_\cE([a,b], E^2)$ with $AC([a,b],E)^2$),
multiplication $AC_\cE([a,b], m)$ and the inversion map
$AC_\cE([a,b],j)$.
\end{defn}
\begin{defn}
If $(U_1,e_1,D_{U_1},m_1,j_1)$
and $(U_2,e_2,D_{U_2},m_2,j_2)$
are local $C^r_\K$-Lie groups,
then we call a map $f\colon U_1\to U_2$
a \emph{local group homomorphism}
if $(f\times f)(D_{U_1})\sub D_{U_2}$,
$m_2\circ (f\times f)|_{D_{U_1}}=f\circ m_1$
and $j_2\circ f=f\circ j_1$.
If, moreover, $f$ is $C^r_\K$, we call $f$
a \emph{$C^r_\K$-homomorphism between local $C^r_\K$-Lie groups}.
\end{defn}
% need to define hom of local Lie groups
%
\begin{la}\label{pbckgp}
Let $\cE$ be a bifunctor on
Fr\'{e}chet spaces $($resp., sequentially complete \emph{(FEP)}-spaces, resp., integral
complete locally convex spaces$)$
which satisfies the locality axiom and such that
smooth functions act smoothly on $AC_\cE$.
Let $E$ be Fr\'{e}chet space $($resp., sequentially complete \emph{(FEP)}-space, resp., integral
complete locally convex space$)$
and
$G$ be a $C^r_\K$-Lie group $($or a local $C^r_\K$-Lie group admitting a global
chart$)$
modelled on~$E$.
Then the following holds:
\begin{itemize}
\item[\rm(a)]
If
$a$, $b$, $\alpha$, $\beta$, $c$ and $d$ are real numbers
with $a\leq \alpha<\beta\leq b$ and $c<d$,
let $f\colon [c,d]\to [a,b]$
be as in~\emph{(\ref{fafflin})}.
Then
$\eta\circ f\in AC_\cE([c,d],G)$ for each $\eta\in AC_\cE([a,b],G)$
and the $($local$)$ group homomorphism
\[
AC_\cE(f,G)\colon AC_\cE([a,b],G)\to AC_\cE([c,d],G),\quad \eta \mto \eta\circ f
\]
is $C^r_\K$.
\item[\rm(b)]
Let $n\in \N$ and $a=t_0<t_1<\cdots<t_n=b$ be real numbers.
If $G$ is a Lie group,
then the map
\[
\Phi\colon AC_\cE([a,b],G)\to\prod_{j=1}^nAC_\cE([t_{j-1},t_j],G),\quad
\eta\mto(\eta|_{[t_{j-1},t_j]})_{j\in\{1,\ldots, n\}}
\]
is a $C^r_\K$-homomorphism. Moreover, $\Phi$ is
a $C^r_\K$-diffeomorphism onto a $C^r_\K$-submanifold
of $\prod_{j=1}^nAC_\cE([t_{j-1},t_j],G)$.
If $G$ is a local $C^r_\K$-Lie group admitting a global chart,
then $\Phi$ is
a $C^r_\K$-homomorphism between $($local$)$ $C^r_\K$-Lie groups
and a $C^r_\K$-diffeomorphism onto a $C^r_\K$-submanifold
of $\prod_{j=1}^nAC_\cE([t_{j-1},t_j],G)$.
\item[\rm(c)]
For each $t_0\in [a,b]$,
\[
\ev_{t_0}\colon AC_\cE([0,1],G)\to G,\quad \eta\mto \eta(t_0)
\]
is a $C^r_\K$-homomorphism. Identifying the Lie algebra~$\ch$
of $AC_\cE([a,b],G)$ with $AC_\cE([a,b],\cg)$
by means of $d (AC_\cE([a,b],\phi))|_{\ch}$ $($where
$\phi\colon U_\phi\to V_\phi$ is a chart of~$G$ with $e\in U_\phi)$,
the tangent map $L(\ev_{t_0})$ is the point evaluation
$\ve_{t_0}\colon AC_\cE([a,b],\cg)\to \cg$, $\eta\mto \eta(t_0)$.
\end{itemize}
\end{la}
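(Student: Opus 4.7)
The plan is to prove all three parts by working in a symmetric identity chart $\phi\colon U\to V\subseteq E$ of $G$ (which is global in the local Lie group case) and to transfer each statement, via Proposition~\ref{propACLie}, to a statement about a linear map between spaces of the form $AC_\cE([\cdot\,],E)$, where it reduces to the bifunctor axioms of Definitions~\ref{bifunctFrech}, \ref{bifunctic} and~\ref{locality}. Throughout, I exploit that continuous $\K$-linear maps between locally convex spaces are both $C^\infty$ and $\K$-analytic, hence $C^r_\K$ for any $r\in\{\infty,\omega\}$.

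For part~(a), $AC_\cE(f,G)$ is visibly a (local) group homomorphism since multiplication in $AC_\cE([\cdot\,],G)$ is pointwise, so it suffices to verify $C^r_\K$-regularity on the open identity neighbourhood $AC_\cE([a,b],U)$. Reading through the charts $AC_\cE([a,b],\phi)$ and $AC_\cE([c,d],\phi)$ gives the linear map $\eta\mapsto \eta\circ f$ between $AC_\cE([a,b],E)$ and $AC_\cE([c,d],E)$. Identifying source and target with $E\times\cE([a,b],E)$ and $E\times\cE([c,d],E)$ via the basepoints $\alpha$ and $c$ in Definition~\ref{mostbaac}, and using $(\eta\circ f)'=\tfrac{\beta-\alpha}{d-c}\,\eta'\circ f$, the pullback becomes
\[
(v,[\gamma])\;\longmapsto\;\Bigl(v,\;\tfrac{\beta-\alpha}{d-c}\,\cE(f,E)([\gamma])\Bigr),
\]
which is continuous linear by axiom~(B2). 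For part~(c), the same device, with $t_0$ chosen as the basepoint in Definition~\ref{mostbaac}, expresses $\ev_{t_0}$ in the chart $\phi$ as the first-component projection $(v,[\gamma])\mapsto v$ from $E\times\cE([a,b],E)$ onto~$E$. This is continuous linear, hence $C^r_\K$, and is its own differential at~$0$; reading this back through the identification $\ch\cong AC_\cE([a,b],\cg)$ induced by $d(AC_\cE([a,b],\phi))$ yields $L(\ev_{t_0})=\ve_{t_0}$ as claimed.

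Part~(b) is expected to be the main obstacle. The map $\Phi$ is plainly a (local) group homomorphism, and in the identity charts $AC_\cE([t_{j-1},t_j],\phi)$ it reads as the restriction map $\eta\mapsto(\eta|_{[t_{j-1},t_j]})_{j}$ between $AC_\cE([a,b],E)$ and $\prod_{j=1}^n AC_\cE([t_{j-1},t_j],E)$. By the locality axiom~(Loc) applied componentwise to the isomorphism of Definition~\ref{mostbaac}, this restriction map is a continuous linear topological embedding whose image is the vector subspace
\[
S:=\Bigl\{(\eta_j)_{j=1}^n\in\prod_{j=1}^n AC_\cE([t_{j-1},t_j],E)\colon \eta_j(t_j)=\eta_{j+1}(t_j)\text{ for }j<n\Bigr\},
\]
which is closed in the product because it is cut out by the continuous linear equations $\ev_{t_j}\circ\pr_j=\ev_{t_j}\circ\pr_{j+1}$. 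In the Lie group case, $\im(\Phi)$ is therefore a subgroup admitting, at the identity, a chart modelled on the closed linear subspace~$S$; left translations in the product Lie group are $C^r_\K$-diffeomorphisms carrying this chart to every other point of $\im(\Phi)$, which produces the desired $C^r_\K$-submanifold atlas (cf.~\ref{localdesc}). The same argument works in the local Lie group case via the given global chart. The only delicacy is upgrading the topological embedding to a submanifold embedding, and this is handled by the explicit closed complement-like description of~$S$ together with the group translation trick.
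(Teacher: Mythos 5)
Your overall route coincides with the paper's: work in the identity chart $AC_\cE([\cdot\,],\phi)$, reduce (a) to continuity of a linear pullback via axiom (B2), reduce (c) to the observation that the chart representative of $\ev_{t_0}$ is the (restriction of the) linear point evaluation, and for (b) identify the chart representative of $\Phi$ with the restriction map onto the closed subspace of compatible tuples supplied by the locality axiom, then spread the submanifold chart around $\im(\Phi)$ by translations. Parts (b) and (c) are fine as sketched and match the paper's argument.

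There is, however, a gap in part (a): the statement also asserts that $\eta\circ f\in AC_\cE([c,d],G)$ for \emph{every} $\eta\in AC_\cE([a,b],G)$, and your argument never establishes this. Your chart computation only treats curves with values in the single chart domain $U$, and the device "it is a group homomorphism, so smoothness on an identity neighbourhood suffices" presupposes that $AC_\cE(f,G)$ is already a well-defined map on all of $AC_\cE([a,b],G)$ (in the genuine Lie group case a general $\eta$ does not stay in $U$, and being a homomorphism does not by itself give well-definedness). The missing step is the pointwise/compactness argument the paper uses: for $t\in[c,d]$ choose a chart $\psi$ of $G$ and $\ve,\delta>0$ with $\eta([f(t)-\ve,f(t)+\ve]\cap[a,b])$ in the chart domain and $f([t-\delta,t+\delta]\cap[c,d])\sub[f(t)-\ve,f(t)+\ve]\cap[a,b]$; by Lemma~\ref{goodinM} the chart representative $\psi\circ\eta$ is $AC_\cE$ on that subinterval, and composing with the affine restriction $f_t$ of $f$ and invoking (B2) shows $\psi\circ(\eta\circ f)$ is $AC_\cE$ near $t$; Remark~\ref{reallylocal} then yields $\eta\circ f\in AC_\cE([c,d],G)$. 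With this inserted, your identity-chart computation (which is a harmless variant of the paper's appeal to Lemma~\ref{closedemb}: your formula $(\eta\circ f)'=\frac{\beta-\alpha}{d-c}\,\cE(f,E)(\eta')$ plus the base-point component) completes (a) exactly as in the paper.
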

\begin{proof}
(a)
Using Lemma~\ref{closedemb},
we see that the
map
\begin{equation}\label{mightbla}
AC_\cE(f,E)\colon AC_\cE([a,b],E)\to AC_\cE([c,d],E), \;\;
\eta\mto \eta\circ f
\end{equation}
is continuous linear (since $\cE(f,E)$ is continuous linear by (B2)
and also $C(f,E)$ is continuous linear, see, e.g., \cite{GCX} or \cite{GaN}).
Let $\eta\in AC_\cE([a,b],G)$.
If $t\in [c,d]$, then there exist $\ve>0$
and a chart $\psi\colon U_\psi\to V_\psi$ for~$G$
such that $\eta([f(t)-\ve,f(t)+\ve]\cap [a,b])\sub U_\psi$
and
\[
\psi\circ \eta|_{[f(t)-\ve,f(t)+\ve]\cap [a,b]}\in
AC_\cE([f(t)-\ve,f(t)+\ve]\cap [a,b],E).
\]
We have $f([t-\delta,t+\delta]\cap[c,d])\sub [f(t)-\ve,f(t)+\ve]
\cap [a,b]$ for some $\delta>0$. Write $f_t$ for the restriction of $f$ to a map
$[t-\delta,t+\delta]\cap[c,d]\to [f(t)-\ve,f(t)+\ve]$.
Then
\[
\psi\circ (\eta\circ f)|_{[t-\delta,t+\delta]\cap[c,d]}=
(\psi\circ \eta|_{[f(t)-\ve,f(t)+\ve]\cap [a,b]})\circ f_t
\]
is an element of $AC_\cE([t-\delta,t+\delta]\cap[c,d],E)$
and hence $\eta\circ f\in AC_\cE([c,d],G)$, by
Remark~\ref{reallylocal}.\\[2.3mm]
If $\phi\colon U\to V\sub E$ is a chart for~$G$ defined
on an open symmetric identity neighbourhood $U\sub G$,
then
$AC_\cE([a,b],\phi)$ is chart for $AC_\cE([a,b],G)$
around~$e$ and
$AC_\cE([c,d],\phi)$ is chart for $AC_\cE([c,d],G)$
around~$e$.
Since
\[
AC_\cE([c,d],\phi)\circ AC_\cE(f,G)\circ AC_\cE([a,b],\phi)^{-1}
=AC_\cE(f,E)|_{AC_\cE([a,b],V)}
\]
is $C^r_\K$,
the (local) group homomorphism
$AC_\cE(f,G)$ is $C^r_\K$.

(b) The image of $\Phi$ is the set
\[
\{
\{(\eta_j)_{j\in\{1,\ldots, n\}}\in \prod_{j=1}^nAC_\cE([t_{j-1},t_j],G)
\colon (\forall j\in \{2,\ldots,n\})
\eta_{j-1}(t_j)=\eta_j(t_j)\}.
\]
Let $\phi\colon U\to V\sub E$ be a chart for~$G$
such that $U\sub G$ is a symmetric open identity neighbourhood.
Then
\[
\psi:=
\prod_{j=1}^n AC_\cE([t_{j-1},t_j],\phi)
\colon
\prod_{j=1}^nAC_\cE([t_{j-1},t_j],U)
\to
\prod_{j=1}^nAC_\cE([t_{j-1},t_j],V)
\]
is a chart for $\prod_{j=1}^nAC_\cE([t_{j-1},t_j],G)$.
Since
\[
F:=\{(\eta_j)_{j\in\{1,\ldots, n\}}\in\prod_{j=1}^nAC_\cE([t_{j-1},t_j],E)
\colon (\forall j\in \{2,\ldots,n\})
\eta_{j-1}(t_j)=\eta_j(t_j)\}
\]
is a closed vector subspace of $\prod_{j+1}^nAC_\cE([t_{j-1},t_j],E)$
and $\psi$ takes
\[
\im(\Phi)\cap \prod_{j=1}^nAC_\cE([t_{j-1},t_j],U)
\]
onto $F\cap \prod_{j=1}^nAC_\cE([t_{j-1},t_j],V)$,
we have found a submanifold chart for $\im(\Phi)$ around~$e$.
If $G$ is a local Lie group
with a global chart,
then, by the preceding,
$\im(\phi)$ is a $C^r_\K$-submanifold
of the direct product, if we choose $\phi$ as the global chart.
If $G$ is a Lie group,
then $\im(\Phi)$ is a subgroup
and translates of $\psi$ provide submanifold charts
around each point in $\im(\Phi)$,
whence $\im(\Phi)$ is a submanfold
of the product modelled on~$F$.
In either case,
since $\psi|_{\im(\Phi)}\circ \Phi\circ AC_\cE([a,b],\phi^{-1})$ is the restriction
to the open set $AC_\cE([a,b],V)$ of the isomorphism of topological
vector spaces
\[
AC_\cE([a,b],E)\to F,
\quad
\eta\mto(
\eta|_{[t_{j-1},t_j]})_{j\in\{1,\ldots,n\}},
\]
we deduce that $\Phi$ is a $C^r_\K$-diffeomorphism
from $AC_\cE([a,b],G)$ to the $C^r_\K$-submanifold $\im(\Phi)$.

(c) Follows from $\phi\circ \ev_{t_0}\circ AC_\cE([a,b],\phi)^{-1}=\ve_{t_0}$.
\end{proof}
\begin{la}\label{strgp}
Let $\cE$ be a bifunctor on
Fr\'{e}chet spaces $($resp., sequentially complete \emph{(FEP)}-spaces, resp., integral
complete locally convex spaces$)$
which satisfies the locality axiom and such that
smooth functions act smoothly on $AC_\cE$.
Let $E$ be Fr\'{e}chet space $($resp., sequentially complete \emph{(FEP)}-space,
resp., integral
complete locally convex space$)$
and
$G$ be a $C^r_\K$-Lie group
modelled on~$E$. Let $a<b$ be real numbers.
Then $AC_\cE([a,b],G)_*:=\{\eta\in AC_\cE([a,b],G)\colon \eta(a)=e\}$
is a $C^r_\K$-Lie subgroup of $AC_\cE([a,b],G)$ and the map
\[
\Psi\colon AC_\cE([0,1],G)_*\times G\to AC_\cE([0,1],G),\quad (\eta,g)\mto (t\mto \eta(t)g)
\]
is a $C^r_\K$-diffeomorphism.
\end{la}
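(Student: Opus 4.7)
My plan is to first check that $AC_\cE([a,b],G)_*$ is a split Lie subgroup by using a chart of $AC_\cE([a,b],G)$ around~$e$ and the decomposition of the modelling space induced by point evaluation at~$a$. By Lemma~\ref{pbckgp}\,(c), $\ev_a\colon AC_\cE([a,b],G)\to G$ is a $C^r_\K$-homomorphism, so $AC_\cE([a,b],G)_*=\ev_a^{-1}(\{e\})$ is a closed subgroup. Pick a chart $\phi\colon U\to V\sub E$ of~$G$ with $\phi(e)=0$ and $U=U^{-1}$; then $\tilde\phi:=AC_\cE([a,b],\phi)$ is a chart of $AC_\cE([a,b],G)$ around~$e$, and via the isomorphism $AC_\cE([a,b],E)\cong E\times\cE([a,b],E)$, $\zeta\mto(\zeta(a),\zeta')$ of Definition~\ref{mostbaac}, the closed vector subspace $F:=\{\zeta\in AC_\cE([a,b],E)\colon\zeta(a)=0\}$ corresponds to $\{0\}\times\cE([a,b],E)$ and hence is a topologically complemented closed subspace, with complement the embedded copy of~$E$ as constant curves. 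Thus $\tilde\phi$ restricts to a submanifold chart for $AC_\cE([a,b],G)_*$ around~$e$, and left translation yields submanifold charts around every point, so $AC_\cE([a,b],G)_*$ is a split $C^r_\K$-Lie subgroup.

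For the diffeomorphism, I will combine pointwise multiplication with the constant-curve embedding. Let $c\colon G\to AC_\cE([0,1],G)$ send $g$ to the constant curve $\hat g(t):=g$; in the local chart~$\tilde\phi$, $c$ is expressed as $v\mto \hat v$, which corresponds to $v\mto(v,0)$ under the identification $AC_\cE([0,1],E)\cong E\times\cE([0,1],E)$ and is therefore continuous linear, whence $c$ is a $C^r_\K$-group homomorphism. The map $\Psi$ of the lemma can then be written as
\[
\Psi(\eta,g)=m_{\wt G}(\eta,c(g)),
\]
where $m_{\wt G}$ is the group multiplication on $\wt G:=AC_\cE([0,1],G)$, which is $C^r_\K$ by Proposition~\ref{propACLie}; hence $\Psi$ is $C^r_\K$.

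To exhibit the inverse, observe that for $(\eta,g)\in AC_\cE([0,1],G)_*\times G$ the identity $\eta(0)\cdot g=g=\theta(0)$ forces $g=\theta(0)$ and then $\eta=\theta\cdot c(\theta(0))^{-1}$. Setting
\[
\Phi(\theta):=\bigl(\theta\cdot c(\ev_0(\theta))^{-1},\;\ev_0(\theta)\bigr),
\]
one checks that $\Phi(\theta)$ lies in $AC_\cE([0,1],G)_*\times G$ (the first entry, evaluated at $0$, is $\theta(0)\theta(0)^{-1}=e$) and that $\Phi\circ\Psi$ and $\Psi\circ\Phi$ are the identities. As $\ev_0$, $c$, the group inversion, and $m_{\wt G}$ are all $C^r_\K$, the map $\Phi$ is $C^r_\K$ into $\wt G\times G$; since its image lies in the split $C^r_\K$-submanifold $AC_\cE([0,1],G)_*\times G$, it is $C^r_\K$ into that submanifold. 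Hence $\Psi$ is a $C^r_\K$-diffeomorphism.

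The only real obstacle is the splitting argument for the subgroup: one must invoke the concrete description of the topology on $AC_\cE([a,b],E)$ from Definition~\ref{mostbaac} to see that the evaluation at~$a$ admits a continuous linear section given by constants, so that the kernel $F$ is topologically complemented. Everything else is a routine assembly of the functorial $C^r_\K$-properties already established in Proposition~\ref{propACLie} and Lemma~\ref{pbckgp}.
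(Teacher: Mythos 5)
Your proof is correct and takes essentially the same route as the paper: the chart $AC_\cE([a,b],\phi)$ together with the isomorphism $AC_\cE([a,b],E)\cong E\times\cE([a,b],E)$ gives the submanifold structure of $AC_\cE([a,b],G)_*$, the constant-curve embedding is $C^r_\K$ because its local expression is the continuous linear map $v\mto(v,0)$, and $\Psi$ and its explicit inverse are assembled from this embedding, the evaluation homomorphism from Lemma~\ref{pbckgp}\,(c), and the group operations of $AC_\cE([a,b],G)$. (Your inverse, which multiplies the constant factor on the right, is the correct formula; the paper's displayed $\Psi^{-1}$ multiplies it on the left, an evident slip.)
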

\begin{proof}
Let $U\sub G$ be a symmetric open identity
neighbourhood on which a chart $\phi\colon U\to V\sub E$ of~$G$
is defined.
Then $AC_\cE([a,b],\phi)$ is chart of $AC_\cE([a,b],G)$ which maps the set
\[
AC_\cE([a,b],U)\cap AC_\cE([a,b],G)_*
\]
onto $AC_\cE([a,b],V)\cap AC_\cE([a,b],E)_*$.
Hence $AC_\cE([a,b],G)_*$ is a $C^r_\K$-submanifold
of $AC_\cE([a,b],G)$ modelled on A$C_\cE([a,b],E)_*$.\\[2.3mm]
The group homomorphism $j_G\colon G\to AC_\cE([a,b],G)$, $j(g)(t):=g$
is $C^r_\K$ since, for each chart $\phi\colon U\to V\sub E$
of $G$ with $e\in U$, we have that
\[
AC_\cE([a,b],\phi)\circ j_G\circ \phi^{-1}=j_E|_V
\]
with the linear map $j_E\colon E\to AC_\cE([a,b],E)$, $j_E(v)(t):=v$.
The linear map $j_E$
is continuous (as $(\Phi\circ j_E)(v)=(v,0)$ for
$\Phi$ as in Definition~\ref{mostbaac}
with $t_0:=a$). Thus $j_G$ is $C^r_\K$
and hence $\Psi$ is $C^r_\K$,
since the group multiplication $\mu$ of $AC_\cE([a,b],G)$
is $C^r_\K$ and $\Psi(\eta,g)=\eta j_G(g)=\mu(\eta,j_G(g))$.
The evaluation map $\ev_a\colon AC_\cE([a,b],G)\to G$, $\eta\mto \eta(a)$
is $C^r_\K$ by Lemma~\ref{pbckgp}\,(c).
The map
\[
\Psi^{-1}\colon AC_\cE([a,b],G)\!\to\! AC_\cE([a,b],G)_*\times G,\;
\eta\mto (\mu((j_G(\ev_a(\eta)))^{-1},\eta),\,\ev_a(\eta))
\]
is $C^r_\K$ as a map to $AC_\cE([a,b],G)\times G$
and hence also $C^r_\K$ as a map to the $C^r_\K$-submanifold
$AC_\cE([a,b],G)_*\times G$.
Thus $\Psi$ is a $C^r_\K$-diffeomorphism.
\end{proof}
\begin{defn}
Let $\cE$ be a bifunctor
on
Fr\'{e}chet spaces (resp., on sequentially complete (FEP)-spaces,
resp., on integral complete
locally convex space). We say that $\cE$ satisfies the \emph{embedding axiom}
if the following holds:
\begin{itemize}
\item[(E)]
For real numbers $a<b$ and each Fr\'{e}chet space (resp., sequentially complete (FEP)-spaces,
resp., integral complete locally convex space) $E$
and closed vector subspace~$F$,
we have
\begin{equation}\label{eqgvclo}
AC_\cE([a,b],F)=\{\eta\in AC_\cE([a,b],E)\colon \eta([a,b])\sub F\},
\end{equation}
and the linear map
\[
\cE([a,b],j)\colon \cE([a,b],F)\to \cE([a,b], E),\quad [\gamma]\mto[j\circ\gamma]
\]
induced by the inclusion map $j\colon F\to E$
is a topological embedding.
\end{itemize}
As the evaluation maps $AC_\cE([a,b],E)\to E$, $\eta\mto \eta(t)$
are continuous, (\ref{eqgvclo}) implies that
$AC_\cE([a,b],F)$ is a \emph{closed} vector subspace
of $AC_\cE([a,b],E)$.
\end{defn}
\begin{rem}
Lemma~\ref{henceemba}
implies that
$L^p$ as a bifunctor on Fr\'{e}chet spaces
(for any $p\in [1,\infty]$)
and $L^\infty_{rc}$ as a bifunctor on integral complete locally convex
spaces satisfy the embedding
axiom.\footnote{For $R$, the author
would not expect this. For $L^p$ as a bifunctor on
sequentially complete (FEP)-spaces,
the author did not succeed to prove the property.}
\end{rem}
\begin{la}\label{intsubmf}
Let $\cE$ be a bifunctor
on Fr\'{e}chet spaces $($resp., on sequentially complete \emph{(FEP)}-spaces,
resp., on integral complete
locally convex space$)$
which satisfies the locality axiom
and
such that smooth functions act on~$AC_\cE$.
Let $M$ be a $C^r_\K$-manifold
modelled on such a space~$E$
and $N\sub M$ be a $C^r_\K$-submanifold
modelled on a closed vector subspace
$F\sub E$. Let $a<b$ be real numbers.
If $F$ is complemented in~$E$ or
$\cE$ satisfies the embedding axiom,
then a map $\eta\colon[a,b]\to N$
is in $AC_\cE([a,b],N)$ if and only if it is in
$AC_\cE([a,b],M)$.
\end{la}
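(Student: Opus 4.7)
The strategy is to handle the two implications separately. For $\eta\in AC_\cE([a,b],N)\Rightarrow\eta\in AC_\cE([a,b],M)$, the inclusion $\iota\colon N\to M$ is $C^r_\K$ because $N$ is a $C^r_\K$-submanifold of~$M$, so Lemma~\ref{actonelts} gives $\eta=\iota\circ\eta\in AC_\cE([a,b],M)$. Neither hypothesis on~$F$ is used in this direction.

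For the converse, suppose $\eta\in AC_\cE([a,b],M)$ with $\eta([a,b])\sub N$. At each $s\in[a,b]$, I would fix a submanifold chart $\phi_s\colon U_s\to V_s\sub E$ of~$M$ around $\eta(s)$ with $\phi_s(U_s\cap N)=V_s\cap F$, let $\psi_s\colon U_s\cap N\to V_s\cap F$ be its restriction (which is a chart of~$N$), and choose $\ve_s>0$ small enough that $\eta(I_s)\sub U_s\cap N$ for $I_s:=[a,b]\cap [s-\ve_s,s+\ve_s]$. Since $N$ carries the subspace topology from~$M$ (by the existence of submanifold charts), $\eta$ is continuous as a map to~$N$. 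By Lemma~\ref{goodinM}, the curve $\zeta_s:=\phi_s\circ\eta|_{I_s}$ lies in $AC_\cE(I_s,E)$, and its image is contained in $V_s\cap F\sub F$. Once I establish that $\zeta_s$ also lies in $AC_\cE(I_s,F)$, the equality $\zeta_s=\psi_s\circ\eta|_{I_s}$ combined with Remark~\ref{reallylocal} (applied to the family $(\psi_s,I_s)_{s\in[a,b]}$) will give $\eta\in AC_\cE([a,b],N)$, completing the argument.

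The crux -- and where the two auxiliary hypotheses enter -- is therefore the purely linear claim: if $\zeta\in AC_\cE([c,d],E)$ satisfies $\zeta([c,d])\sub F$, then already $\zeta\in AC_\cE([c,d],F)$. Under the embedding axiom~(E) this is immediate from the defining set-theoretic equality. Under the complementedness hypothesis, I would fix a continuous linear projection $\pi\colon E\to F$ with $\pi|_F=\id_F$, write $\zeta'=[\gamma]$ with $\gamma\in\cE([c,d],E)$, and note that $[\pi\circ\gamma]\in\cE([c,d],F)$ by axiom~(B1). Applying $\pi$ to the identity $\zeta(t)-\zeta(c)=\int_c^t\gamma(s)\,ds$, invoking \ref{basicweaki} for the continuous linear map~$\pi$, and using $\pi\circ\zeta=\zeta$ yields
\[
\zeta(t)-\zeta(c)=\int_c^t\pi(\gamma(s))\,ds\qquad(t\in[c,d]),
\]
the integral now being a weak integral in~$F$; hence $\zeta\in AC_\cE([c,d],F)$. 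The main obstacle is precisely this linear step -- reconciling a curve that lives in~$E$ but takes values in~$F$ with the $F$-valued $AC_\cE$-structure -- and both alternative hypotheses are designed exactly to overcome it.
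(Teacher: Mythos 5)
Your proposal is correct and follows essentially the same route as the paper: the forward direction via Lemma~\ref{actonelts}, and the converse via local submanifold charts, reduction to the linear claim handled either by the embedding axiom or by post-composing with a continuous projection $\lambda\colon E\to F$ (which the paper phrases as applying $AC_\cE(\cdot,\lambda)$, while you unfold it at the level of $\zeta'$, (B1) and the weak-integral interchange), finishing with Remark~\ref{reallylocal}.
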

\begin{proof}
Let $j\colon N\to M$ be the inclusion map. If $\eta\in AC_\cE([a,b],N)$,
then $j\circ \eta\in AC_\cE([a,b],M)$, by Lemma~\ref{actonelts}.

If, conversely, $\eta\in AC_\cE([a,b],M)$
and $t\in [a,b]$, we find a chart $\phi\colon U\to V\sub E$
of~$M$ such that $\phi(U\cap N)=V\cap F$.
There is $\delta>0$ such that $\eta([a,b]\cap [t-\delta,t+\delta])\sub U$.
Then $\psi:=\phi|_{U\cap N}\colon U\cap N\to V\cap F$
is chart for~$N$,
and
\[
\psi\circ\eta|_{[a,b]\cap [t-\delta,t+\delta]}
\]
is a mapping to~$F$ which is in
$AC_\cE([a,b]\cap [t-\delta,t+\delta],E)$
(as it coincides with $\phi\circ\eta|_{[a,b]\cap [t-\delta,t+\delta]}$
as a mapping to~$E$).
If $\cE$ satisfies the embedding axiom, this implies that
\begin{equation}\label{alsowithoute}
\psi\circ\eta|_{[a,b]\cap [t-\delta,t+\delta]}\in
AC_\cE([a,b]\cap [t-\delta,t+\delta],F).
\end{equation}
If $F$ is complemented in~$E$, then we find a continuous linear
map $\lambda\colon E\to F$ such that $\lambda|_F=\id_F$.
Again,
\begin{eqnarray*}
\psi\circ\eta|_{[a,b]\cap [t-\delta,t+\delta]}
&=& \lambda\circ \phi\circ\eta|_{[a,b]\cap [t-\delta,t+\delta]}\\
&=& AC_\cE([a,b]\cap [t-\delta,t+\delta],\lambda)
(\phi\circ\eta|_{[a,b]\cap [t-\delta,t+\delta]})\\
&\in&
AC_\cE([a,b]\cap [t-\delta,t+\delta],F).
\end{eqnarray*}
Hence $\eta\in AC_\cE([a,b],N)$, by Remark~\ref{reallylocal}.
\end{proof}
\begin{la}\label{ACHsubACG}
Let $\cE$ be a bifunctor
on Fr\'{e}chet spaces $($resp., on sequentially complete \emph{(FEP)}-spaces,
resp., on integral complete
locally convex space$)$
which satisfies the locality axiom
and
such that smooth functions act on~$AC_\cE$.
Let $G$ be a $C^r_\K$-Lie group
modelled on such a space~$E$
and $H\sub G$ be a $C^r_\K$-Lie subgroup
modelled on a closed vector subspace
$F\sub E$. Let $a<b$ be real numbers.
If $F$ is complemented in~$E$
or $\cE$
satisfies the embedding axiom,
then $AC_\cE([a,b],H)$ is a $C^r_\K$-submanifold
of $AC_\cE([a,b],G)$.
\end{la}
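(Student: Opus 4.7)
The plan is to produce, around every point of $AC_\cE([a,b],H)$, a submanifold chart of $AC_\cE([a,b],G)$ whose modelling closed subspace is $AC_\cE([a,b],F)$. Start by choosing a chart $\phi\colon U\to V\sub E$ of~$G$ with $e\in U$, $U=U^{-1}$, and $\phi(U\cap H)=V\cap F$ (a submanifold chart for $H$ at~$e$; such a chart exists since $H$ is a $C^r_\K$-Lie subgroup modelled on the closed subspace $F$). Observe first that $F$ belongs to the same class as $E$: Fr\'{e}chet-ness and sequential completeness pass to closed subspaces trivially; the (FEP) passes to closed subspaces by Lemma~\ref{lafep}(b); integral completeness passes to closed subspaces since for any continuous $\gamma\colon [0,1]\to F$ the weak integral in~$E$ lies in $F$ by Hahn--Banach separation. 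Hence Proposition~\ref{propACLie} gives $AC_\cE([a,b],H)$ the structure of a $C^r_\K$-Lie group with global chart $AC_\cE([a,b],\phi|_{U\cap H})$ on an open identity neighbourhood.

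The key technical step is to show that $AC_\cE([a,b],F)$ is a closed vector subspace of $AC_\cE([a,b],E)$ which, as a subset, equals $\{\eta\in AC_\cE([a,b],E)\colon \eta([a,b])\sub F\}$. If $\cE$ satisfies the embedding axiom, both assertions are immediate from~(E), together with the fact that the evaluations $\ev_t\colon AC_\cE([a,b],E)\to E$ are continuous. If instead $F$ is complemented, write $E=F\oplus F'$ with $F'$ closed and continuous projection $\lambda\colon E\to F$; axiom (B1) then yields continuous linear maps $\cE([a,b],\lambda)$ and $\cE([a,b],\iota)$ whose composition is the identity, which, combined with Remark~\ref{willgvcx}, produces a topological decomposition
\[
\cE([a,b],E)\;\isom\;\cE([a,b],F)\times \cE([a,b],F').
\]
Passing this through the isomorphism $\Phi$ of Definition~\ref{mostbaac} yields
\[
AC_\cE([a,b],E)\;\isom\;AC_\cE([a,b],F)\oplus AC_\cE([a,b],F'),
\]
realizing $AC_\cE([a,b],F)$ as a closed summand, and Lemma~\ref{intsubmf} (applied to the submanifold $F\sub E$) identifies this summand with $\{\eta\colon \eta([a,b])\sub F\}$.

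With this in hand, consider the chart $\wt{\phi}:=AC_\cE([a,b],\phi)$ defined on the open set $AC_\cE([a,b],U)\sub AC_\cE([a,b],G)$ (cf.\ Remark~\ref{intoC0}). Using Lemma~\ref{intsubmf} applied to the submanifold $V\cap F\sub V$, we obtain
\[
\wt{\phi}\bigl(AC_\cE([a,b],U\cap H)\bigr)\;=\;AC_\cE([a,b],V)\,\cap\,AC_\cE([a,b],F),
\]
which by the previous paragraph is the intersection of the open set $\wt{\phi}(AC_\cE([a,b],U))$ with the closed subspace $AC_\cE([a,b],F)$. Thus $\wt{\phi}$ is a submanifold chart for $AC_\cE([a,b],H)$ around the identity. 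For any other point $\eta_0\in AC_\cE([a,b],H)$, the left translation $L_{\eta_0}$ is a $C^r_\K$-diffeomorphism of $AC_\cE([a,b],G)$ that maps $AC_\cE([a,b],H)$ onto itself; composing $\wt{\phi}$ with $L_{\eta_0^{-1}}$ produces a submanifold chart around~$\eta_0$. Since the resulting submanifold structure agrees with the $C^r_\K$-Lie group structure from Proposition~\ref{propACLie} (both make $AC_\cE([a,b],\phi|_{U\cap H})$ a chart near~$e$), the proof is complete.

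The main obstacle is the second paragraph: establishing that the subspace $AC_\cE([a,b],F)$ is closed in $AC_\cE([a,b],E)$, and that its topology coincides with the induced topology. The embedding axiom is tailor-made for this, while the complemented case requires a careful use of~(B1) together with Remark~\ref{willgvcx} to transport the splitting from $E$ to the function space level; everything else in the proof is essentially bookkeeping with Lemma~\ref{intsubmf}.
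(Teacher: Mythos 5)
Your proof is correct and follows essentially the same route as the paper: the same submanifold chart $\phi$ with $\phi(U\cap H)=V\cap F$, the same key step that $AC_\cE([a,b],F)=\{\eta\in AC_\cE([a,b],E)\colon \eta([a,b])\sub F\}$ is a closed topological vector subspace (via the $(B1)$-induced splitting in the complemented case, resp.\ the embedding axiom), and the same use of $AC_\cE([a,b],\phi)$ as submanifold chart. You merely make explicit two points the paper leaves implicit, namely that the closed subspace $F$ again belongs to the relevant class of modelling spaces and that translations by elements of the subgroup transport the chart at the identity to all other points.
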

\begin{proof}
Let $\phi\colon U\to V\sub E$ be a chart
for $G$ defined on a symmetric open identity neighbourhood
$U\sub G$ such that
\begin{equation}\label{lttl1}
\phi(U\cap H)=V\cap F.
\end{equation}
Since $F$ is complemented in $E$
(in which case $AC_\cE([a,b],)=AC_\cE([a,b],F)\oplus AC_\cE([a,b],Y)$
if $E=F\oplus Y$) or $\cE$ satisfies the embedding axiom,
we have that
\begin{equation}\label{lttl2}
AC_\cE([a,b], F)=\{\eta\in AC_\cE([a,b], E)\colon \eta([a,b])\sub F\}
\end{equation}
is a closed vector subspace of $AC_\cE([a,b],E)$ and that the inclusion map
$AC_\cE([a,b],F)\to AC_\cE([a,b],E)$ is a topological embedding.
As the chart\linebreak
$AC_\cE([a,b],\phi)$ of $AC_\cE([a,b], G)$
takes
\[
AC_\cE([a,b],U)\cap AC_\cE([a,b], H)
\]
onto the set $AC_\cE([a,b],V)\cap AC_\cE([a,b], F)$ by (\ref{lttl1}) and (\ref{lttl2}),
we deduce that the subgroup $AC_\cE([a,b], H)$ is a $C^r_\K$-submanifold
of $AC_\cE([a,b], G)$ modelled on
$AC_\cE([a,b],F)$.
Since $AC_\cE([a,b],\phi)$ restricts to the chart
$AC_\cE([a,b], \phi|_{U\cap H}^{V\cap F})$ of $AC_\cE([a,b], H)$,
the given $C^r_\K$-Lie group structure
on $AC_\cE([a,b],H)$ coincides with the manifold structure
as a $C^r_\K$-submanifold of $AC_\cE([a,b], G)$.
\end{proof}
\begin{rem}\label{L1inpart}
Consider a strict (LF)-space $E$ and
a vector subspace $F\sub E$ which is a Fr\'{e}chet space in the induced
topology.
Then the conclusion of Lemma~\ref{intsubmf}
remains valid for $\cE=L^1$
because (\ref{alsowithoute})
is satisfied by Lemma~\ref{henceemba}.
As a consequence, also
the conclusion of Lemma~\ref{ACHsubACG}
remains valid for $\cE=L^1$ whenever $E$ is a strict (LF)-space
and $F\sub E$ a Fr\'{e}chet subspace.
\end{rem}
\section{{\boldmath$\cE$}-regularity and local
{\boldmath$\cE$}-regularity}\label{secintroreg}
\begin{defn}\label{defetadot}
Let $\cE$ be a bifunctor on Fr\'{e}chet spaces
(resp., on sequentially complete (FEP)-spaces,
resp., on integral complete locally convex spaces)
which satisfies the locality axiom, and such that smooth functions
act on $AC_\cE$.
Let $E$ be a Fr\'{e}chet space (resp., a sequentially complete (FEP)-space,
resp., an integral complete locally
convex space) and
$M$ be a smooth manifold modelled on~$E$.
Let $a<b$ be real numbers and $\eta\in AC_\cE([a,b],M)$.
Let $a=t_0<t_1<\cdots<t_n=b$ such that
$\eta([t_{j-1},t_j])\sub U_j$ for a chart $\phi_j\colon U_j\to V_j\sub E$
of~$M$. Then
\[
\eta_j:=\phi_j\circ \eta|_{[t_{j-1},t_j]}\in AC_\cE([t_{j-1},t_j],E)
\]
for all $j\in \{1,\ldots, n\}$,
enabling us to define
\[
\eta_j'\in \cE([a,b],E).
\]
Write $\eta_j'=[\gamma_j]$ with $\gamma_j\in \cL^1([t_{j-1},t_j],E)$
(resp., $\gamma_j\in \cL^\infty_{rc}([t_{j-1},t_j],E)$).
We define
\[
\gamma\colon [a,b]\to TM
\]
via $\gamma(t):=T(\phi_j)^{-1}(\eta_j(t),\gamma_j(t))$ if $t\in [t_{j-1},t_j[$
with $j\in \{1,\ldots, n\}$,
and $\gamma(b)=T(\phi_n)^{-1}(\gamma_n(b))$.
Then $\gamma$ is measurable
and we define
\[
\dot{\eta}:=[\gamma].
\]
\end{defn}
\begin{rem}
\begin{itemize}
\item[(a)]
If $\pi_{TM}\colon TM\to M$ is the bundle projection
taking $v\in T_xM$ to~$x$, then $\pi_{TM}\circ \gamma=\eta$
is a continuous map (this property
enters into Lemma~\ref{maybeuse}).
\item[(b)]
If $f\colon M\to N$ is a smooth map between smooth manifolds
in the preceding situation, then $f\circ \eta\in AC_\cE([a,b],N)$
for each $\eta\in AC_\cE([a,b],M)$ and
\begin{equation}\label{chainpth}
(f\circ \eta)^{.}=[Tf\circ \gamma]\quad \mbox{if $\; \dot{\eta}=[\gamma]$;}
\end{equation}
this follows from Lemma~\ref{actonelts}
and Remark~\ref{dercomp}.
\end{itemize}
\end{rem}
\begin{numba}
Let $G$ be a Lie group,
with multiplication $m_G\colon G\times G\to G$ and inversion
$j_G\colon G\to G$.
Let $TG$ be the tangent bundle,
considered as a Lie group
with multiplication $T(m_G)$ (identifying $T(G\times G)$ with
$TG\times TG$) and inversion $T(i_G)$.
We identify $g\in G$ with $0_g\in T_g(G)$.
Then $0_e\in T_e(G)=:L(G)=:\cg$
is the neutral element for~$TG$.
If $v\in T_gG$ and $w\in T_h(G)$ with $g,h\in G$, then
\begin{equation}\label{prepro}
vw=gw+vh,
\end{equation}
where $gw=T_h\lambda_g(w)$ and $vh=T_g\rho_h(v)$
with the left translation $\lambda_g\colon G\to G$, $x\mto gx$
and the right translation $\rho_h\colon G\to G$, $x\mto xh$.
In the following, we consider
the smooth $\cg$-valued $1$-forms
\[
\omega_\ell\colon TG\to \cg,\quad v\mto (\pi_{TG}(v))^{-1}(v)
\]
and
\[
\omega_r \colon TG\to \cg,\quad v\mto v(\pi_{TG}(v))^{-1}.
\]
Likewise if $G$ is a local Lie group.
\end{numba}
\begin{la}\label{prorulingp}
Let $\cE$ be a bifunctor
on Fr\'{e}chet spaces $($resp., on sequentially complete \emph{(FEP)}-spaces,
resp., on integral complete
locally convex spaces$)$.
Assume that $\cE$ satisfies the locality axiom
and that smooth functions
act on $AC_\cE$.
Let $G$ be a Lie group $($or local Lie group$)$
modelled on a Fr\'{e}chet space
$($resp., a sequentially complete \emph{(FEP)}-space,
resp., an integral complete locally convex space$)$ $E$,
and $a<b$.
Let $\eta,\eta_1,\eta_2\in AC_\cE([a,b],G)$
and write $\eta^{-1}$ for the map
$[a,b]\to G$, $t\mto (\eta(t))^{-1}$.
Write $\dot{\eta}=[\gamma]$,
$\dot{\eta}_1=[\gamma_1]$,
and $\dot{\eta}_2=[\gamma_2]$
with measurable functions $\gamma,\gamma_1,\gamma_2\colon [a,b]\to TG$.
Then
\begin{eqnarray}
(\eta_1\eta_2)^{.}&=&
[t\mto \gamma_1(t)\eta_2(t)+\eta_1(t)\gamma_2(t)]\;\mbox{and}\label{eqnproru}\\
(\eta^{-1})^{.} &=& [t\mto -\eta(t)^{-1}\gamma(t)\eta(t)^{-1}],\label{eqnquoru}
\end{eqnarray}
assuming that $\eta_1\eta_2$ is defined
in the case of a local Lie group~$G$.
\end{la}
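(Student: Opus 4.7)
The plan is to reduce both formulas to the chain rule~(\ref{chainpth}) applied to the smooth (local) multiplication $m_G$ and inversion $j_G$ of~$G$.

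For the product rule, I would first observe that $\eta_1\eta_2 = m_G \circ (\eta_1,\eta_2)$, where $(\eta_1,\eta_2)$ is a curve into $G \times G$ (into the open subset $D_U \subseteq U\times U$ in the local case, which is exactly where the hypothesis that $\eta_1\eta_2$ is defined enters). Taking a common refinement $a = t_0 < \cdots < t_n = b$ of the partitions witnessing $\eta_1, \eta_2 \in AC_\cE([a,b],G)$, together with the corresponding product charts $\phi_j^{(1)} \times \phi_j^{(2)}$, Definition~\ref{defetadot} yields $(\eta_1,\eta_2) \in AC_\cE([a,b], G \times G)$ with
\[
(\eta_1,\eta_2)^{\,.} = [(\gamma_1,\gamma_2)]
\]
under the canonical identification $T(G\times G) = TG \times TG$. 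Applying~(\ref{chainpth}) to $m_G$ (or its restriction to the open set~$D_U$) then gives $(\eta_1\eta_2)^{\,.} = [T(m_G) \circ (\gamma_1,\gamma_2)]$, and~(\ref{prepro}) reads $T(m_G)(v,w) = gw + vh$ for $v \in T_g G$, $w \in T_h G$. Evaluating at $(\gamma_1(t), \gamma_2(t))$ with $g = \eta_1(t)$, $h = \eta_2(t)$ produces~(\ref{eqnproru}).

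For the quotient rule, I would write $\eta^{-1} = j_G \circ \eta$ and invoke~(\ref{chainpth}) once more, obtaining $(\eta^{-1})^{\,.} = [T j_G \circ \gamma]$. It then suffices to verify the pointwise identity $T j_G(v) = -g^{-1} v g^{-1}$ for $v \in T_g G$, which I would derive by differentiating the identity $m_G \circ (\id_G, j_G) = e$ (the constant $e$-valued map) at~$g$: using~(\ref{prepro}), one gets $0 = T(m_G)(v, T j_G(v)) = g \cdot T j_G(v) + v \cdot g^{-1}$, whence $T j_G(v) = -g^{-1} v g^{-1}$. Substituting $v = \gamma(t)$, $g = \eta(t)$ yields~(\ref{eqnquoru}).

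The only step requiring genuine care is the identification $(\eta_1, \eta_2)^{\,.} = [(\gamma_1, \gamma_2)]$, since Definition~\ref{defetadot} depends on the choice of partition and charts and is a priori componentwise only after one has put everything into a single product chart. The point is that on each subinterval of the common refinement the derivative computed in $E \oplus E$ splits as $(\gamma_1|_{[t_{j-1},t_j]}, \gamma_2|_{[t_{j-1},t_j]})$, and the identification $T(\phi_j^{(1)} \times \phi_j^{(2)}) = T\phi_j^{(1)} \times T\phi_j^{(2)}$ translates this componentwise decomposition into the asserted identity in $TG \times TG$. Once this mild bookkeeping is done, the local Lie group case adds nothing new: $(\eta_1(t),\eta_2(t)) \in D_U$ for all~$t$ by hypothesis, so $m_G|_{D_U}$ is a smooth map to which~(\ref{chainpth}) applies verbatim.
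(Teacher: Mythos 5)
Your argument is correct and is essentially the paper's own proof, which simply cites~(\ref{prepro}) together with Remark~\ref{dercomp}: applying the chain rule~(\ref{chainpth}) to $m_G$ and $j_G$ is exactly that reasoning, since~(\ref{chainpth}) is itself obtained from Lemma~\ref{actonelts} and Remark~\ref{dercomp}. The extra details you supply (the common refinement giving $(\eta_1,\eta_2)^{\,.}=[(\gamma_1,\gamma_2)]$ and the derivation of $Tj_G(v)=-g^{-1}vg^{-1}$ from~(\ref{prepro})) are precisely what the paper's terse proof leaves implicit, and they are carried out correctly.
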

\begin{proof}
This follows from (\ref{prepro})
and Remark~\ref{dercomp}.
\end{proof}
\begin{la}\label{maybeuse}
Let $G$ be a Lie group $($or local Lie group$)$
with Lie algebra $\cg$
and
$\gamma\colon [a,b]\to TG$ be a map
such that $\pi_{TG}\circ \gamma$ is continuous.
Then $\gamma$ is measurable if and only if
$\omega_r\circ \gamma\colon [a,b]\to \cg$
is measurable, if and only if $\omega_\ell\circ \gamma\colon [a,b]\to \cg$
is measurable.
\end{la}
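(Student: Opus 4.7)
The ``only if'' direction is immediate: since $\omega_\ell,\omega_r\colon TG\to\cg$ are smooth and hence Borel measurable, composition with the measurable $\gamma$ yields the measurable maps $\omega_\ell\circ\gamma$ and $\omega_r\circ\gamma$.

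For the converse, the key observation is that the maps $\Theta_\ell,\Theta_r\colon TG\to G\times\cg$ defined by $\Theta_\ell(v):=(\pi_{TG}(v),\omega_\ell(v))$ and $\Theta_r(v):=(\pi_{TG}(v),\omega_r(v))$ are smooth diffeomorphisms, with smooth inverses $(g,X)\mapsto T\lambda_g(X)$ and $(g,X)\mapsto T\rho_g(X)$ respectively (using the identity $v=g\cdot\omega_\ell(v)=\omega_r(v)\cdot g$ for $v\in T_gG$). Being homeomorphisms, $\Theta_\ell$ and $\Theta_r$ preserve Borel measurability in both directions. Hence $\gamma$ is Borel measurable iff $\Theta_\ell\circ\gamma=(\eta,\omega_\ell\circ\gamma)$ is Borel measurable (where $\eta:=\pi_{TG}\circ\gamma$), and similarly for $\Theta_r$. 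The task thus reduces to showing: given $\eta\colon[a,b]\to G$ continuous and $\alpha\colon[a,b]\to\cg$ Borel measurable, the pair $(\eta,\alpha)\colon[a,b]\to G\times\cg$ is Borel measurable.

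I handle this by localizing. Since $[a,b]$ is compact and $\eta$ continuous, $\eta([a,b])\subseteq G$ is compact, so it is covered by finitely many chart domains $U_1,\ldots,U_n$. Form the disjoint Borel partition $B_j:=\eta^{-1}(U_j)\setminus\bigcup_{k<j}\eta^{-1}(U_k)$ of $[a,b]$, with $\eta(B_j)\subseteq U_j$. Because these sets are Borel and their union is $[a,b]$, Borel measurability of $(\eta,\alpha)$ reduces to Borel measurability of each restriction $(\eta,\alpha)|_{B_j}\colon B_j\to G\times\cg$.

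The main obstacle is the product-measurability step, but it is resolved cleanly by exploiting second countability of the domain $B_j$ rather than any property of the target $\cg$. Since $B_j$ is a subspace of $[a,b]$, it is second countable, so by \ref{basicsmeas}(f), $\cB(B_j\times\cg)=\cB(B_j)\otimes\cB(\cg)$. The pair $(\id_{B_j},\alpha|_{B_j})\colon B_j\to B_j\times\cg$ is therefore Borel measurable, as both components are (the first being continuous, the second measurable by hypothesis). The map $h\colon B_j\times\cg\to G\times\cg$, $(t,X)\mapsto(\eta(t),X)$, is continuous (hence Borel measurable), and $(\eta,\alpha)|_{B_j}=h\circ(\id_{B_j},\alpha|_{B_j})$ is accordingly Borel measurable. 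This completes the argument; the case of $\omega_r$ is entirely analogous via $\Theta_r$.
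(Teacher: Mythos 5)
Your proof is correct, and it is essentially the paper's own argument in lightly repackaged form: there, too, the converse is obtained by writing $\gamma=\sigma\circ(\eta,\omega_\ell\circ\gamma)$ with the smooth map $\sigma\colon G\times\cg\to TG$, $(g,v)\mto gv$, so the whole content is the joint Borel measurability of the pair $(\eta,\omega_\ell\circ\gamma)$ --- exactly the reduction you perform via the trivialization $\Theta_\ell$ (for a local Lie group one should read your inverse $(g,X)\mto T\lambda_g(X)$ as the product $0_g\cdot X$ in $TG$, which is precisely the paper's $\sigma$). The one genuine, if small, difference is where second countability comes from: the paper corestricts $\eta$ to its image, which is compact and metrizable by Lemma~\ref{quotcpmet}, and applies \ref{basicsmeas}\,(e) and (f) to $\eta([a,b])\times\cg$; you instead exploit second countability of the \emph{domain}, pairing with $\id$ and pushing forward along the continuous map $(t,X)\mto(\eta(t),X)$, which lets you bypass Lemma~\ref{quotcpmet} altogether. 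Two cosmetic remarks: your chart partition into the sets $B_j$ does no work, since the same argument applies verbatim with $B_j$ replaced by $[a,b]$, which is already second countable; and \ref{basicsmeas}\,(f) as stated puts the countable basis on the second factor, so you should either swap factors or note the obvious symmetry (the paper itself uses it with the first factor second countable, e.g.\ in the proof of Lemma~\ref{operders}).
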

\begin{proof}
If $\gamma$ is measurable, then also
$\omega_r\circ \gamma$ and $\omega_\ell\circ\gamma$,
as $\omega_r$ and $\omega_\ell$ are smooth mappings,
hence continuous and hence Borel measurable.
The map $\sigma\colon G\times \cg\to TG$, $(g,v)\mto gv$
obtained from multiplication in $TG$
is smooth and hence Borel measurable.
If $\omega_\ell(\gamma)$ is measurable
and $\eta:=\pi_{TG}\circ\gamma$ is continuous,
then $\eta([a,b])\sub G$ is compact and metrizable
(see Lemma~\ref{quotcpmet}).
By \ref{basicsmeas} (e) and (f),
the map $(\eta,\omega_\ell\circ \gamma)\colon [a,b]\to G\times TG$ is Borel
measurable. Hence so is $\gamma=\sigma\circ(\eta,\omega_\ell\circ\gamma)$.
If $\omega_r\circ \gamma$ is measurable,
we can argue in the same way, using
$\sigma \colon G\times \cg\to TG$, $\sigma(g,v):=vg$
instead.
\end{proof}
\begin{defn}
Let $\cE$ be a bifunctor on Fr\'{e}chet spaces
(resp., on sequentially complete (FEP)-spaces, resp.,
on integral complete
locally convex spaces) such that
$\cE$ satisfies the locality axiom~(Loc)
and smooth functions act on~$AC_\cE$.
Let $E$ be a Fr\'{e}chet space (resp., a sequentially complete
(FEP)-space, resp., an integral
complete locally convex space)
and $G$ be a Lie group (or local Lie group)
modelled on~$E$.
If $a<b$ are real numbers and
$\eta\in AC_\cE([a,b],G)$,
we define the \emph{left logarithmic derivative}
of~$\eta$ via
\[
\delta(\eta):=\delta^\ell(\eta):=[\omega_\ell \circ \gamma],
\]
where $\dot{\eta}=[\gamma]$
with a measurable function $\gamma\colon [a,b]\to TG$.
Similarly, we
define the \emph{right logarithmic derivative}
of~$\eta$ via
\[
\delta^r(\eta):=[\omega_r \circ \gamma],
\]
where $\dot{\eta}=[\gamma]$
with a measurable function $\gamma\colon [a,b]\to TG$.
\end{defn}
\begin{defn}\label{defpfwax2}
Let $\cE$ be a bifunctor on Fr\'{e}chet spaces
(resp., on sequentially complete (FEP)-spaces, resp.,
on integral complete
locally convex spaces). We say that $\cE$ satisfies the
\emph{pushforward axioms}
if the following holds:
\begin{itemize}
\item[(P1)]
Let
$a<b$ be real numbers,
$E_1$ be a locally convex space, $V\sub E_1$ be an open subset
and
\[
f\colon V\times E_2\to F
\]
be a continuous map which is linear in the second argument,
where $E_2$ and $F$ are
Fr\'{e}chet spaces (resp., sequentially complete (FEP)-spaces, resp.,
integral complete
locally convex spaces). Then
\[
\wt{f}(\eta,[\gamma]):=[f\circ(\eta,\gamma)]\in \cE([a,b],F)
\]
for all $[\gamma]\in \cE([a,b],E_2)$.
\item[(P2)]
If $f$ is smooth in the situation of (P1),
then also the map
\[
\wt{f}\colon C([a,b],V)\times \cE([a,b],E_2)\to \cE([a,b],F)
\]
is smooth.
\end{itemize}
\end{defn}
By Lemma~\ref{operders}
% fuer stetigkeit von f-tilde:
and Propositions \ref{oponinfty}
and \ref{oponLp}, we have:
\begin{la}\label{lebhavePA}
The bifunctors
$L^p$ on Fr\'{e}chet spaces $($or sequentially complete \emph{(FEP)}-space$)$
satisfy the pushforward axioms
for all $p\in [1,\infty]$.
Moreover, the bifunctors
$L^\infty_{rc}$ and $R$ on integral complete
locally convex spaces
satisfy the pushforward axioms.\,\Punkt
\end{la}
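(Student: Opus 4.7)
The plan is to reduce the statement to three earlier results that have already been proved, verifying that the hypotheses in (P1) and (P2) match the hypotheses of those results in each of the three regimes. Essentially no new work is needed; the issue is only a careful translation between the $\cL$-level statements proved earlier and the equivalence-class version used in the axioms.

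For axiom (P1), I would argue as follows. Given $[\gamma] \in \cE([a,b],E_2)$, pick any representative $\gamma$. Since $f$ is continuous and linear in its second argument, Lemma~\ref{operders} applies: in case (a) of that lemma (with $E_2$, $F$ integral complete and $\cE$ denoting $\cL^\infty_{rc}$ or $\cR$) we obtain $f\circ(\eta,\gamma)\in\cL^\infty_{rc}([a,b],F)$ or $\cR([a,b],F)$; in case (b) (with $E_2$, $F$ Fr\'{e}chet or (FEP) and $\cE=\cL^p$) we obtain $f\circ(\eta,\gamma)\in\cL^p([a,b],F)$. Hence $[f\circ(\eta,\gamma)]$ lies in the corresponding $\cE([a,b],F)$ in the bifunctor sense (i.e.\ $L^p$, $L^\infty_{rc}$, or $R$). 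The definition is independent of the chosen representative, since replacing $\gamma$ by $\gamma_1$ with $\gamma=\gamma_1$ almost everywhere yields $f\circ(\eta,\gamma)=f\circ(\eta,\gamma_1)$ almost everywhere, so the equivalence classes coincide.

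For axiom (P2), assuming now that $f$ is smooth, the smoothness of the map
\[
\wt{f}\colon C([a,b],V)\times \cE([a,b],E_2)\to \cE([a,b],F),\quad (\eta,[\gamma])\mto[f\circ(\eta,\gamma)]
\]
is precisely the content of Proposition~\ref{oponinfty} (with $k=\infty$) when $\cE$ is $L^\infty_{rc}$ or $R$ and $E_2$, $F$ are integral complete, and the content of Proposition~\ref{oponLp} (with $k=\infty$) when $\cE=L^p$ for some $p\in[1,\infty]$ and $E_2$, $F$ are Fr\'{e}chet or (FEP)-spaces. In each case the hypotheses on the target axiom match exactly the hypotheses of the cited proposition, so no additional argument is required.

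There is no real obstacle; the statement is a bookkeeping consequence of Lemma~\ref{operders}, Proposition~\ref{oponinfty}, and Proposition~\ref{oponLp}, which collectively cover all six combinations (three bifunctors, and the two separate tasks (P1) and (P2)). The only point meriting attention is the well-definedness of $\wt{f}$ at the level of equivalence classes, which is immediate from the pointwise definition of $f\circ(\eta,\gamma)$ and the preservation of almost-everywhere equality.
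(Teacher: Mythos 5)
Your proposal is correct and follows essentially the same route as the paper, which disposes of the lemma by directly invoking Lemma~\ref{operders} for (P1) and Propositions~\ref{oponinfty} and~\ref{oponLp} (with $k=\infty$) for (P2). The only addition is your explicit check of well-definedness on equivalence classes, which the paper leaves implicit.
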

\begin{la}\label{fordpf2}
In the situation of Definition~\emph{\ref{defpfwax2}\,(P2)},
we have
\begin{equation}\label{dpwfd2}
d\wt{f}((\eta_1,[\gamma_1]),(\eta_2,[\gamma_2]))
=[df\circ (\eta_1,\gamma_1,\eta_2,\gamma_2)]
\end{equation}
for all $\eta_1\in C([a,b],V)$, $\eta_2\in C([a,b], E_1)$
and $[\gamma_1]$, $[\gamma_2]\in \cE([a,b],E_2)$.
\end{la}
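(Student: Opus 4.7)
The plan is to mimic the argument at the end of the proof of Propositions~\ref{oponinfty} and~\ref{oponLp}, with invocations of Lemma~\ref{operders} and Lemma~\ref{operderspar} replaced by direct appeals to the pushforward axioms (P1) and (P2).

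First I would check that the right-hand side of~(\ref{dpwfd2}) is a legitimate element of $\cE([a,b],F)$. Since $f(x,\cdot)\colon E_2\to F$ is continuous and linear, a short computation shows
\[
df((x,v),(y,w)) \;=\; d_1 f(x,v,y) + f(x,w),
\]
and both summands are linear in $(v,w)\in E_2\times E_2$ for fixed $(x,y)\in V\times E_1$. Thus the smooth map
\[
g\colon (V\times E_1)\times(E_2\times E_2)\to F,\quad g\bigl((x,y),(v,w)\bigr):=df\bigl((x,v),(y,w)\bigr)
\]
is linear in its second argument, so (P1) together with Remark~\ref{willgvcx} yields $[df\circ(\eta_1,\gamma_1,\eta_2,\gamma_2)]\in \cE([a,b],F)$. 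Moreover, (P2) applied to $g$ gives that the induced map $\wt{g}$ is smooth, hence in particular continuous.

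Next I would use the $f^{[1]}$-trick from~\ref{BGN} to interpolate between the difference quotient and the differential. Set $W:=\{(x,y,t)\in V\times E_1\times\R\colon x+ty\in V\}$, which is open in $E_1\times E_1\times\R$, and define $G\colon W\times(E_2\times E_2)\to F$ by
\[
G\bigl((x,y,t),(v,w)\bigr)\;:=\;f^{[1]}\bigl((x,v),(y,w),t\bigr).
\]
Because $f(x,\cdot)$ is linear, for $t\neq 0$ one has $G((x,y,t),(v,w))=(f(x+ty,v)-f(x,v))/t+f(x+ty,w)$, which is linear in $(v,w)$; for $t=0$ we recover $g((x,y),(v,w))$, again linear in $(v,w)$. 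Since $f^{[1]}$ is smooth on its open domain (cf.~\ref{BGN}), so is $G$, and (P2) yields a smooth, hence continuous, map
\[
\wt{G}\colon C([a,b],W)\times \cE([a,b],E_2\times E_2)\to \cE([a,b],F).
\]

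Finally, to identify $d\wt{f}$ at the point $((\eta_1,[\gamma_1]),(\eta_2,[\gamma_2]))$, I would use compactness of $\eta_1([a,b])\cup\eta_2([a,b])$ together with the Wallace Lemma (see~\ref{Wallace}) to find $\varepsilon>0$ such that the continuous curve
\[
\zeta_t\colon [a,b]\to W,\quad s\mto (\eta_1(s),\eta_2(s),t)
\]
lies in $C([a,b],W)$ for every $t\in\;]{-\varepsilon},\varepsilon[\,$. For $t\in\;]{-\varepsilon},\varepsilon[\setminus\{0\}$ one computes
\[
\frac{\wt{f}(\eta_1+t\eta_2,[\gamma_1+t\gamma_2])-\wt{f}(\eta_1,[\gamma_1])}{t}
\;=\;\wt{G}\bigl(\zeta_t,[(\gamma_1,\gamma_2)]\bigr),
\]
while $\wt{G}(\zeta_0,[(\gamma_1,\gamma_2)])=\wt{g}((\eta_1,\eta_2),[(\gamma_1,\gamma_2)])=[df\circ(\eta_1,\gamma_1,\eta_2,\gamma_2)]$. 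Continuity of $\wt{G}$ (together with continuity of $t\mapsto \zeta_t$ from $\R$ to $C([a,b],W)$) then lets us take $t\to 0$ to conclude that the difference quotient converges in $\cE([a,b],F)$ to $[df\circ(\eta_1,\gamma_1,\eta_2,\gamma_2)]$, which is~(\ref{dpwfd2}). The only slightly delicate point is checking that $\zeta_t$ really lies in $C([a,b],W)$ for small $|t|$, but this is a routine Wallace-type argument and is the same obstacle handled in the proofs of Propositions~\ref{oponinfty} and~\ref{oponLp}.
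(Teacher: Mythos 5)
Your argument is correct, but it is not the route the paper takes. You rerun the $f^{[1]}$/difference-quotient argument from the proofs of Propositions~\ref{oponinfty} and~\ref{oponLp} intrinsically at the level of the abstract bifunctor: the auxiliary maps $g((x,y),(v,w))=df((x,v),(y,w))$ and $G((x,y,t),(v,w))=f^{[1]}((x,v),(y,w),t)$ are smooth and linear in the second argument, so (P1)/(P2) (together with Remark~\ref{willgvcx}) make $\wt g$ and $\wt G$ available, and continuity of $\wt G$ plus the Wallace-type shrinking of $t$ lets the difference quotients converge in $\cE([a,b],F)$ to $[df\circ(\eta_1,\gamma_1,\eta_2,\gamma_2)]$; since $\wt f$ is smooth by (P2), this limit is $d\wt f$. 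The paper argues differently and more economically: it sets $\cF:=L^1$ (resp.\ $\cF:=L^\infty_{rc}$), where the corresponding pushforward $g$ of $f$ is smooth with derivative already computed in~(\ref{givesdf}), notes $j_F\circ\wt f=g\circ(j_{E_1}|_{C([a,b],V)}\times j_{E_2})$ for the continuous inclusions $j$ of the $\cE$-spaces into the $\cF$-spaces, applies the Chain Rule to get $j_F\circ d\wt f=dg\circ(\cdots)$, and then uses injectivity of $j_F$ to read off~(\ref{dpwfd2}). Thus the paper trades your re-derivation for a one-line factorization through the reference bifunctor, exploiting the standing hypothesis that $\cE([a,b],\cdot)$ embeds continuously into $L^1$ (resp.\ $L^\infty_{rc}$); your version, in exchange, stays entirely inside the pushforward axioms and exhibits convergence of the difference quotients directly in the (generally finer) $\cE$-topology, which the paper obtains only implicitly from the smoothness of $\wt f$ postulated in (P2). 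Both proofs are sound.
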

\begin{proof}
In the case of a bifunctor
on Fr\'{e}chet spaces or sequentially complete
(FEP)-spaces, set $\cF:=L^1$;
in the case of a bifunctor on integral
complete locally convex spaces,
set $\cF:=L^\infty_{rc}$.
By Propositions \ref{oponinfty} and \ref{oponLp}, the map
\[
g\colon C([a,b],V)\times \cF([a,b],E_2)\to\cF([a,b], F),\quad
(\eta,[\gamma])\mto [f\circ (\eta,\gamma)]
\]
is smooth; moreover,
\[
dg((\eta_1,[\gamma_1]),(\eta_2,[\gamma_2]))
=[df\circ (\eta_1,\gamma_1,\eta_2,\gamma_2)]
\]
for all $\eta_1\in C([a,b],V)$, $\eta_2\in C([a,b], E_1)$
and $[\gamma_1]$, $[\gamma_2]\in \cF([a,b],E_2)$,
by (\ref{givesdf}).
Let $j_F\colon \cE([a,b],F)\to \cF([a,b], F)$
be the inclusion map and define $j_{E_2}$
analogously. Let $j_{E_1}$ be the identity map of $C([a,b], E_1)$.
Then
\[
j_F\circ \wt{f}=g\circ (j_{E_1}|_{C([a,b],V)}\times j_{E_2})
\]
and thus, using the Chain Rule,
\[
j_F\circ d\wt{f}=dg\circ (j_{E_1}|_{C([a,b],V)}\times j_{E_2}\times j_{E_1}\times j_{E_2}).
\]
Hence, for
$\eta_1\in C([a,b],V)$, $\eta_2\in C([a,b], E_1)$
and $[\gamma_1]$, $[\gamma_2]\in \cE([a,b],E_2)$,
\begin{eqnarray*}
d\wt{f}(\eta_1,[\gamma_1], \eta_2,[\gamma_2])&=&
j_F(d\wt{f}(\eta_1,[\gamma_1], \eta_2,[\gamma_2]))
=dg(\eta_1,[\gamma_1], \eta_2,[\gamma_2])\\
&=&[df\circ(\eta_1,\gamma_1,\eta_2,\gamma_2)],
\end{eqnarray*}
establishing (\ref{dpwfd2}).
\end{proof}
\begin{la}\label{globpfw}
Let $\cE$ be a bifunctor on Fr\'{e}chet spaces
$($resp., on sequentially complete \emph{(FEP)}-spaces, resp.,
on integral complete
locally convex spaces$)$
which satisfies the locality axiom and
the pushforward axioms.
Let
$a<b$ be real numbers,
$E_2$ and $F$ be
Fr\'{e}chet spaces $($resp., sequentially complete \emph{(FEP)}-spaces, resp.,
integral complete
locally convex spaces$)$,
$M$ be a manifold modelled on
a locally convex space~$E_1$
and
\[
f\colon M\times E_2\to F
\]
be a smooth map which is linear in the second argument.
Let $\eta\in C([a,b],M)$.
Then
\[
\wt{f}(\eta,[\gamma]):=[f\circ(\eta,\gamma)]\in \cE([a,b],F)
\]
for all $[\gamma]\in \cE([a,b],E_2)$,
and the map
\[
\cE([a,b],E_2)\to\cE([a,b],F),\quad [\gamma]\mto\wt{f}(\eta,[\gamma])
\]
is continuous linear. If $M=G$ is a Lie group, then
\[
\wt{f}\colon C([a,b],G)\times \cE([a,b],E_2)\to \cE([a,b], F)
\]
is smooth.
\end{la}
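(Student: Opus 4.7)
The plan is to prove the three assertions in turn, reducing the general manifold (and Lie group) case to the local case covered by the pushforward axioms via the locality axiom and a partition of $[a,b]$ adapted to~$\eta$.

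First I would establish the membership claim and continuous linearity. Since $\eta([a,b])\sub M$ is compact, a Lebesgue number argument produces a partition $a=t_0<t_1<\cdots<t_n=b$ and charts $\phi_j\colon U_j\to V_j\sub E_1$ of~$M$ with $\eta([t_{j-1},t_j])\sub U_j$ for $j\in\{1,\ldots,n\}$. Set $f_j:=f\circ(\phi_j^{-1}\times\id_{E_2})\colon V_j\times E_2\to F$; this is smooth and linear in its second argument. For $\gamma\in\cL^1([a,b],E_2)$ (resp.\ $\cL^\infty_{rc}([a,b],E_2)$) representing $[\gamma]\in\cE([a,b],E_2)$, axiom~(B2) gives $[\gamma|_{[t_{j-1},t_j]}]\in\cE([t_{j-1},t_j],E_2)$ and the restriction $R_j\colon\cE([a,b],E_2)\to\cE([t_{j-1},t_j],E_2)$ is continuous linear. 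On the interval $[t_{j-1},t_j]$ we have
\[
f\circ(\eta,\gamma)|_{[t_{j-1},t_j]}=f_j\circ(\phi_j\circ\eta|_{[t_{j-1},t_j]},\gamma|_{[t_{j-1},t_j]}),
\]
so axiom~(P1) applied to $f_j$ yields $[f\circ(\eta,\gamma)|_{[t_{j-1},t_j]}]\in\cE([t_{j-1},t_j],F)$ for each~$j$. By the Locality Axiom (Loc), piecing these together gives $[f\circ(\eta,\gamma)]\in\cE([a,b],F)$. Moreover, the map $[\gamma]\mto[f_j\circ(\phi_j\circ\eta|_{[t_{j-1},t_j]},\gamma|_{[t_{j-1},t_j]})]$ is the composition of the continuous linear map $R_j$ with the continuous linear map supplied by~(P1) (applied for a fixed continuous~$\eta$, which gives continuity of $[\gamma]\mto[f_j\circ(\phi_j\circ\eta|_{[t_{j-1},t_j]},\gamma)]$ on $\cE([t_{j-1},t_j],E_2)$), and this map is linear in $[\gamma]$. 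The Locality Axiom identifies $\cE([a,b],F)$ with $\prod_{j=1}^n\cE([t_{j-1},t_j],F)$ as a topological vector space, hence $[\gamma]\mto[f\circ(\eta,\gamma)]$ is continuous linear.

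For the smoothness statement, assume $M=G$ is a Lie group and fix $(\eta_0,[\gamma_0])\in C([a,b],G)\times\cE([a,b],E_2)$. Choose a partition $a=t_0<\cdots<t_n=b$ and charts $\phi_j\colon U_j\to V_j$ for~$G$ with $\eta_0([t_{j-1},t_j])\sub U_j$ as above. Since $\eta_0([t_{j-1},t_j])$ is a compact subset of the open set $U_j$ and $C([t_{j-1},t_j],U_j)$ is an open subset of $C([t_{j-1},t_j],G)$, there is an open neighbourhood $W$ of $\eta_0$ in $C([a,b],G)$ such that the restriction maps $r_j\colon W\to C([t_{j-1},t_j],U_j)$, $\eta\mto\eta|_{[t_{j-1},t_j]}$ are well-defined and smooth (in a chart of $C([a,b],G)$ around~$\eta_0$ these are restrictions of continuous linear restriction maps between spaces of continuous vector-valued functions). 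The chart post-composition $C([t_{j-1},t_j],\phi_j)\colon C([t_{j-1},t_j],U_j)\to C([t_{j-1},t_j],V_j)$ is smooth (being a chart on $C([t_{j-1},t_j],G)$), $R_j\colon\cE([a,b],E_2)\to\cE([t_{j-1},t_j],E_2)$ is continuous linear and hence smooth, and axiom~(P2) applied to~$f_j$ yields smoothness of
\[
\wt{f_j}\colon C([t_{j-1},t_j],V_j)\times\cE([t_{j-1},t_j],E_2)\to\cE([t_{j-1},t_j],F).
\]
On $W\times\cE([a,b],E_2)$ the composition
\[
(\eta,[\gamma])\mto \wt{f_j}\bigl(C([t_{j-1},t_j],\phi_j)(r_j(\eta)),R_j([\gamma])\bigr)=[f\circ(\eta,\gamma)|_{[t_{j-1},t_j]}]
\]
is therefore smooth. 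Using the Locality Axiom once more, $\cE([a,b],F)\cong\prod_{j=1}^n\cE([t_{j-1},t_j],F)$ as a topological vector space, so smoothness of $\wt{f}$ on $W\times\cE([a,b],E_2)$ follows from smoothness of each component. As $(\eta_0,[\gamma_0])$ was arbitrary, $\wt{f}$ is smooth on the whole domain.

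The only genuine obstacle is the dependence of the adapted partition on $\eta$: a single partition cannot work globally on $C([a,b],G)$. This is circumvented by working locally near each $\eta_0$, where the compactness of $\eta_0([a,b])$ together with openness of $C([t_{j-1},t_j],U_j)$ in $C([t_{j-1},t_j],G)$ yields a whole neighbourhood $W$ on which a fixed partition and fixed charts are simultaneously admissible; the Locality Axiom then reassembles the local pieces into the desired continuity/smoothness on $\cE([a,b],F)$.
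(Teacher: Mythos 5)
Your first half (membership of $[f\circ(\eta,\gamma)]$ in $\cE([a,b],F)$ and continuity of the linear map $[\gamma]\mto\wt{f}(\eta,[\gamma])$) is exactly the paper's argument: a partition adapted to the compact set $\eta([a,b])$, the local maps $f_j=f\circ(\phi_j^{-1}\times\id_{E_2})$, restriction via (B2), and reassembly via (Loc). One small misattribution: (P1) only asserts that $[f_j\circ(\sigma,\gamma)]$ lies in $\cE([t_{j-1},t_j],F)$; it does not supply any continuity. Since $f$ (hence each $f_j$) is smooth, you should quote (P2) instead: smoothness of the joint map $\wt{f_j}$ gives in particular continuity of $[\gamma]\mto\wt{f_j}(\sigma,[\gamma])$ for fixed $\sigma$, which is what the paper does. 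This is a trivial repair, but as written the continuity claim is hung on an axiom that does not contain it.

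In the Lie group case your route genuinely differs from the paper's, and it contains the one step that needs justification: you cover the pieces $\eta_0([t_{j-1},t_j])$ by \emph{arbitrary} charts $\phi_j$ of $G$ and assert that $C([t_{j-1},t_j],\phi_j)\colon C([t_{j-1},t_j],U_j)\to C([t_{j-1},t_j],V_j)$ is smooth ``being a chart of $C([t_{j-1},t_j],G)$''. The Lie group structure on $C(K,G)$ is built, via \ref{localdesc}, from the charts $C(K,\phi)$ with $\phi$ a chart of $G$ around the identity, together with translations; that pushforward by an \emph{arbitrary} chart of $G$ is again a chart of $C(K,G)$ amounts to compatibility of this Lie group structure with the canonical manifold structure on continuous maps into $G$, and requires an extra argument (e.g.\ a superposition/$\Omega$-lemma argument using the $C^{0,\infty}$-calculus, since in a translated identity chart around $\zeta_0$ the map becomes $\sigma\mto(t\mto\phi_j(\zeta_0(t)\phi^{-1}(\sigma(t))))$ with $\zeta_0$ merely continuous) or an explicit citation. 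The claim is true, but it is not covered by anything stated in the paper. The paper's proof is designed to avoid precisely this point: it fixes one identity chart $\phi\colon U\to V$, chooses $W$ with $WW\sub U$ and the partition so fine that $\eta([t_{j-1},t_j])\sub\eta(t_{j-1})W$, restricts attention to the open neighbourhood $P=\{\zeta\colon(\eta^{-1}\zeta)([a,b])\sub W\}$, and uses the left-translated charts $\psi_j(\tau)=\phi\circ(\eta(t_{j-1})^{-1}\tau)$ — charts by the very construction of $C(K,G)$ — applying (P2) to the translated maps $g_j(x,y)=f(\eta(t_{j-1})\phi^{-1}(x),y)$ rather than to your $f_j$. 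So either supply the argument that $C(K,\phi_j)$ is a diffeomorphism for arbitrary $G$-charts $\phi_j$, or switch to the translated-chart device; with that repair your localization-around-$(\eta_0,[\gamma_0])$ scheme and the final reassembly via (Loc) go through and give the stated smoothness.
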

\begin{proof}
Fix $\eta\in C([a,b],M)$.
Using a compactness argument, we find
$a=t_0<t_1<\cdots<t_n=b$ such that $\eta([t_{j-1},t_j])\sub U_j$
for a chart $\phi_j\colon U_j\to V_j\sub E_1$ of~$M$.
Then
\[
f_j\colon V_j\times E_2\to F,\quad f_j(x,y):=f(\phi_j^{-1}(x),y)
\]
is a smooth map which is linear in its second argument and thus
\[
\wt{f_j}\colon C([t_{j-1},t_j],V_j)\times \cE([t_{j-1},t_j],E_2)
\to \cE([t_{j-1},t_j],F),\quad
(\sigma,[\tau])\mto[f_j\circ(\sigma,\tau)]
\]
is smooth by the pushforward axiom (P2).
For $[\gamma]\in \cE([a,b],E_2)$,
we have $[\gamma|_{[t_{j-1},t_j]}]\in \cE([t_{j-1},t_j],E_2)$
by (B2) and hence
\[
[f\circ (\eta,\gamma)|_{[t_{j-1},t_j]}]=
\wt{f_j}(\phi_j\circ\eta|_{[t_{j-1},t_j]},\gamma|_{[t_{j-1},t_j]})\in
\cE([t_{j-1},t_j],F)
\]
for all $j\in \{1,\ldots, n\}$.
By the locality axiom,
we get
\[
\wt{f}(\eta,\gamma)=[f\circ(\eta,\gamma)]\in \cE([a,b],F).
\]
Again by the locality axiom, the linear map $\wt{f}(\eta,.)$
will be continuous if
we can show that the map
\[
\cE([a,b],E_2)\to \cE([t_{j-1},t_j],F),\quad
[\gamma]\mto [f\circ (\eta,\gamma)|_{[t_{j-1},t_j]}]
\]
is continuous for each $j\in \{1,\ldots, n\}$.
But this map is the composition of the continuous map
$\wt{f_j}$
and the map
\[
\cE([a,b],E_2)\to C([t_{j-1},t_j],V)\times \cE([t_{j-1},t_j],E_2),
\]
$[\gamma]\mto (\phi_j\circ \eta|_{[t_{j-1},t_j]},\, [\gamma|_{[t_{j-1},t_j]}])$,
which is continuous by (B2).\\[2.3mm]
If $M=G$ is a Lie group and $\eta\in C([a,b],G)$, let us show that
$\wt{f}$ is smooth on $P\times \cE([a,b],E_2)$
for some open neighbourhood $P$ of $\eta$ in $C([a,b],G)$.
Let $U\sub G$ be a symmetric open identity neighbourhood
on which a chart $\phi\colon U\to V\sub E_1$ of~$G$ is defined.
Let $W\sub G$ be an open identity neighbourhood
such that $WW\sub U$.
We may assume that $a=t_0<\cdots<t_n=b$
has been chosen such that
\[
\eta([t_{j-1},t_j])\sub \eta(t_{j-1})W.
\]
Then
\[
P:=\{\zeta\in C([0,1],G)\colon (\eta^{-1}\zeta)([a,b])\sub W
\]
is an open neighbourhood of~$\eta$ in~$C([a,b],G)$.
For $\zeta\in P$ and $t\in [t_{j-1},t_j]$, we have
\[
\zeta(t)=\eta(t)\eta(t)^{-1}\zeta(t)\in \eta(t_{j-1})WW\sub \eta(t_{j-1})U.
\]
Now
\[
\psi_j\colon \eta(t_{j-1})C([t_{j-1},t_j],U)
\to C([t_{j-1},t_j],V),\quad
\tau\mto \phi\circ (\eta(t_{j-1})^{-1}\tau)
\]
is a chart for $C([t_{j-1},t_j],G)$ around $\eta|_{[t_{j-1},t_j]}$
such that
\[
\zeta|_{[t_{j-1},t_j]}
\]
is in the domain $\eta(t_{j-1})C([t_{j-1},t_j],U)$ of $\psi_j$
for each $\eta\in P$.
The restriction map
\[
\rho_j\colon C([a,b],G)\to C([t_{j-1},t_j],G),\quad \tau\mto \tau|_{[t_{j-1},t_j]}
\]
is a smooth group homomorphism (cf.\ \cite{GCX}).
The map
\[
r_j\colon \cE([a,b],E_2)\to\cE([t_{j-1},t_j],E_2),\quad
[\tau]\mto[\tau|_{[t_{j-1},t_j]}]
\]
is continuous linear (and hence smooth),
by (B2).
The map
\[
g_j\colon V\times E_2\to F,\quad g_j(x,y):=f(\eta(t_{j-1})\phi^{-1}(x),y)
\]
is linear in its second argument and smooth.
Hence, by the pushforward axiom (P2), the map
\[
\wt{g_j}\colon C([t_{j-1},t_j], V)\times \cE([t_{j-1},t_j],E_2)
\to\cE([t_{j-1},t_j],F),\quad (\sigma,[\tau])\mto
[g_j\circ (\sigma,\tau)]
\]
is smooth.
By the locaility axiom,
the map $\wt{f}$ will be smooth on $P\times \cE([a,b], E_2)$
if we can show that the map
\[
h_j\colon P\times \cE([a,b],E_2)\to \cE([t_{j-1},t_j],F),\quad
(\zeta,[\gamma])\mto [f\circ (\zeta,\gamma)|_{[t_{j-1},t_j]}]
\]
is smooth for all $j\in \{1,\ldots, n\}$. But $h_j$
is the map
\[
(\zeta,[\gamma])\mto
\wt{g_j}(\psi_j(\rho_j(\zeta)),\, r_j([\gamma]))
\]
and hence $h_j$ is smooth as a composition of smooth maps.
\end{proof}
\begin{la}\label{deltanic}
Let $\cE$ be a bifunctor
on Fr\'{e}chet spaces $($resp., on sequentially complete \emph{(FEP)}-spaces,
resp., on integral complete
locally convex spaces$)$
which satisfies the locality axiom,
the pushforward axiom \emph{(P1)},
and such that smooth functions act on~$AC_\cE$.
Let $G$ be a Lie group $($or local Lie group$)$
modelled on a Fr\'{e}chet space
$($resp., a sequentially complete \emph{(FEP)}-space,
resp., an integral complete locally convex space$)$,
and $a<b$.
If $\eta\in AC_\cE([a,b],G)$,
then $\delta^\ell(\eta),\delta^r(\eta)\in \cE([a,b],\cg)$.
\end{la}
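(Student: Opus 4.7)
The plan is to reduce the global claim to a local one by using the partition from Definition~\ref{defetadot}, express $\omega_\ell\circ\gamma$ (respectively $\omega_r\circ\gamma$) locally as the pushforward of $\gamma_j$ under a smooth map that is linear in the fibre variable, and then conclude via the pushforward axiom (P1) and the locality axiom (Loc).

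More concretely, pick $a=t_0<t_1<\cdots<t_n=b$ and charts $\phi_j\colon U_j\to V_j\sub E$ of $G$ with $\eta([t_{j-1},t_j])\sub U_j$, and let $\eta_j:=\phi_j\circ\eta|_{[t_{j-1},t_j]}\in AC_\cE([t_{j-1},t_j],E)$ with $\eta_j'=[\gamma_j]$, so that the representative $\gamma\colon [a,b]\to TG$ of $\dot{\eta}$ satisfies
\[
\gamma(t)=T\phi_j^{-1}(\eta_j(t),\gamma_j(t))\quad\mbox{for $t\in [t_{j-1},t_j[$.}
\]
For each $j\in\{1,\ldots,n\}$, define
\[
f_j\colon V_j\times E\to \cg,\quad (x,y)\mto \omega_\ell\bigl(T\phi_j^{-1}(x,y)\bigr).
\]
Since $\omega_\ell$ and $T\phi_j^{-1}$ are smooth, $f_j$ is smooth; and since $T\phi_j^{-1}(x,\cdot)\colon E\to T_{\phi_j^{-1}(x)}G$ is linear and $\omega_\ell$ restricted to each tangent space $T_g G$ is the linear map $v\mto g^{-1}.v$, the map $f_j(x,\cdot)\colon E\to\cg$ is linear. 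Hence $f_j$ is a smooth map on the product of an open subset of $E$ with the Fr\'echet space (resp.\ sequentially complete (FEP)-space, resp.\ integral complete locally convex space) $E$, linear in the second argument.

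By pushforward axiom (P1), applied to $f_j$ with the continuous curve $\eta_j\in C([t_{j-1},t_j],V_j)$ and $[\gamma_j]\in\cE([t_{j-1},t_j],E)$, we obtain
\[
[f_j\circ(\eta_j,\gamma_j)]\in\cE([t_{j-1},t_j],\cg).
\]
By construction, $f_j(\eta_j(t),\gamma_j(t))=\omega_\ell(\gamma(t))$ for all $t\in [t_{j-1},t_j[$, so
\[
[(\omega_\ell\circ\gamma)|_{[t_{j-1},t_j]}]\in\cE([t_{j-1},t_j],\cg)\quad\mbox{for each $j\in\{1,\ldots,n\}$.}
\]
Applying the locality axiom (Loc) (cf.\ Remark~\ref{oldloc}), we conclude that
\[
\delta^\ell(\eta)=[\omega_\ell\circ\gamma]\in\cE([a,b],\cg).
\]
The argument for $\delta^r(\eta)$ is identical, replacing $\omega_\ell$ by $\omega_r$ (which is also linear on each $T_g G$, via $v\mto v.g^{-1}$) and $f_j$ by the corresponding smooth fibre-linear map $(x,y)\mto \omega_r(T\phi_j^{-1}(x,y))$. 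No real obstacle arises; the main point is simply to observe that the logarithmic derivative is, in local charts, the application to $\gamma_j$ of a smooth map that is linear in the fibre, so that axiom (P1) is exactly what is needed.
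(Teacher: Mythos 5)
Your proposal is correct and follows essentially the same route as the paper's own proof: locally writing $\omega_\ell\circ\gamma$ as $f_j\circ(\eta_j,\gamma_j)$ with $f_j=\omega_\ell|_{TU_j}\circ T\phi_j^{-1}$ smooth and fibre-linear, applying the pushforward axiom (P1) on each subinterval, and patching via the locality axiom, with the analogous argument for $\delta^r$. No gaps.
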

\begin{proof}
Let $E$ be the modelling space of~$G$.
With $M:=G$,
let $a=t_0<\cdots<t_n=b$,
$\phi_j$, $\eta_j$, $\gamma_j$ and $\gamma$
be as in Definition~\ref{defetadot}.
For each $j\in \{1,\ldots,n\}$,
\[
f_j:=\omega_\ell|_{TU_j}\circ T\phi^{-1}_j\colon V_j\times E\to \cg
\]
is a $C^\infty$-map and linear in its second argument.
Now $\eta_j\in
AC_\cE([t_{j-1},t_j],E)$ and
$\eta'_j=[\gamma_j]\in \cE([a,b],E)$.
By definition,
\[
\delta^\ell(\eta)=[\omega_\ell\circ \gamma]
\]
where $\omega_\ell(\gamma(t))=
\omega_\ell(T\phi_j^{-1}(\eta_j(t),\gamma_j(t)))=
f_j(\eta_j(t),\gamma_j(t))$ for $t\in [t_{j-1},t_j[$.
By the pushforward axiom,
$[\omega_\ell\circ \gamma|_{[t_{j-1},t_j]}]
=[f_j\circ (\eta_j,\gamma_j)]\in \cE([t_{j-1},t_j],\cg)$.
Hence $\delta^\ell(\eta)=[\omega_\ell\circ\gamma]\in \cE([a,b],\cg)$,
by the locality axiom. The proof for $\delta^r(\eta)$ is similar.
\end{proof}
\begin{la}\label{logarules}
Let $\cE$ be a bifunctor on Fr\'{e}chet spaces
$($resp., on sequentially complete \emph{(FEP)}-spaces, resp.,
on integral complete
locally convex spaces$)$.
Assume that
$\cE$ satisfies the locality axiom~\emph{(Loc)},
the pushforward axiom \emph{(P1)},
and that smooth functions act on~$AC_\cE$.
Let $E$ be a Fr\'{e}chet space $($resp., a sequentially complete \emph{(FEP)}-space,
an integral
complete locally convex space$)$
and $G$ be a Lie group
modelled on~$E$.
If $a<b$ are real numbers and
$\eta,\eta_1,\eta_2\in AC_\cE([a,b],G)$, then
\begin{equation}\label{eqlodif1}
\delta^\ell(\eta_1\eta_2^{-1})=[t\mto
\Ad(\eta_2(t))(\gamma_1-\gamma_2)]
\end{equation}
with $\delta^\ell(\eta_j)=[\gamma_j]$ for $j\in \{1,2\}$ and
\begin{equation}\label{eqlodif2}
\delta^r(\eta_1^{-1}\eta_2)=[t\mto
\Ad(\eta_1(t))^{-1}(\zeta_2-\zeta_1)]
\end{equation}
with $\delta^r(\eta_j)=[\zeta_j]$ for $j\in \{1,2\}$.
Also,
\begin{equation}\label{nicef1}
\delta^\ell(\eta_1\eta_2)=[t\mto \Ad(\eta_2(t))^{-1}(\gamma_1(t))+\gamma_2(t)]
\end{equation}
and
\begin{equation}\label{nicef2}
\delta^\ell(\eta^{-1})=-\delta^r(\eta).
\end{equation}
If $\delta^\ell(\eta)=0$ or $\delta^r(\eta)=0$, then $\eta$ is constant.
Moreover, $\delta^\ell(\eta_1)=\delta^\ell(\eta_2)$
if and only if $\eta_2=g\eta_1$ for some $g\in G$.
Likewise,
$\delta^r(\eta_1)=\delta^r(\eta_2)$
if and only if $\eta_2=\eta_1g$ for some $g\in G$.\\[2.3mm]
If $G$ is a local Lie group modelled on~$E$,
then \emph{(\ref{nicef2})} always holds
while \emph{(\ref{eqlodif1})},
\emph{(\ref{eqlodif2})} and \emph{(\ref{nicef1})}
hold whenever
$\eta_1\eta_2^{-1}$,
$\eta_1^{-1}\eta_2$
and $\eta_1\eta_2$, respectively, are defined.
If $\delta^\ell(\eta_1)=\delta^\ell(\eta_2)$
$($or $\delta^r(\eta_1)=\delta^r(\eta_2))$
and $\eta_1(t_0)=\eta_2(t_0)$ for some
$t_0\in [a,b]$, then $\eta_1=\eta_2$.
\end{la}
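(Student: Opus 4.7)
The plan is to derive the five identities (\ref{eqlodif1})--(\ref{nicef2}) from the product and quotient rules for $\dot\eta$ in Lemma~\ref{prorulingp}, combined with the definitions of $\omega_\ell$ and $\omega_r$ and the multiplication identity (\ref{prepro}); and to obtain the constancy and uniqueness claims by reducing, through local charts, to the fundamental theorem (Lemma~\ref{funda}/Lemma~\ref{funda2}). That the expressions on the right-hand sides represent elements of $\cE([a,b],\cg)$ will follow from Lemma~\ref{deltanic} together with Lemma~\ref{globpfw} applied to the smooth map $(g,v)\mapsto\Ad(g)^{\pm1}(v)$, which is linear in~$v$.

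Write $\dot\eta_j=[\tilde\gamma_j]$ with $\gamma_j=\omega_\ell\circ\tilde\gamma_j$ and $\zeta_j=\omega_r\circ\tilde\gamma_j$ (so $\zeta_j=\Ad(\eta_j)\gamma_j$). I would start with (\ref{nicef2}): the quotient rule gives $\dot{\eta^{-1}}(t)=-\eta(t)^{-1}\tilde\gamma(t)\eta(t)^{-1}$, and applying $\omega_\ell$ yields $(\eta^{-1})^{-1}\cdot(-\eta^{-1}\tilde\gamma\eta^{-1})=-\tilde\gamma\eta^{-1}=-\omega_r(\tilde\gamma)$, so $\delta^\ell(\eta^{-1})=-\delta^r(\eta)$. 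For (\ref{nicef1}), apply $\omega_\ell$ to the formula $\dot{(\eta_1\eta_2)}=[t\mapsto\tilde\gamma_1\eta_2+\eta_1\tilde\gamma_2]$: using $\eta_j^{-1}\tilde\gamma_j=\gamma_j$, one computes $(\eta_1\eta_2)^{-1}(\tilde\gamma_1\eta_2+\eta_1\tilde\gamma_2)=\eta_2^{-1}\gamma_1\eta_2+\eta_2^{-1}\tilde\gamma_2=\Ad(\eta_2)^{-1}(\gamma_1)+\gamma_2$. Identity (\ref{eqlodif1}) then follows by applying (\ref{nicef1}) to the product $\eta_1\cdot\eta_2^{-1}$ in view of (\ref{nicef2}) (the left log derivative of $\eta_2^{-1}$ being $-\zeta_2=-\Ad(\eta_2)\gamma_2$), and the dual right-sided computation $\delta^r(\eta_1\eta_2)=\zeta_1+\Ad(\eta_1)(\zeta_2)$ yields (\ref{eqlodif2}).

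For the constancy claim, suppose $\delta^\ell(\eta)=0$. Pick $a=t_0<\cdots<t_n=b$ and charts $\phi_j\colon U_j\to V_j\subseteq E$ with $\eta([t_{j-1},t_j])\subseteq U_j$ as in Definition~\ref{defetadot}. In the $j$-th chart, $\omega_\ell\circ\gamma=0$ almost everywhere translates to $(\phi_j\circ\eta)'(t)=0$ for almost every $t$ in $[t_{j-1},t_j]$, so Lemma~\ref{funda} (or Lemma~\ref{funda2}) forces $\phi_j\circ\eta|_{[t_{j-1},t_j]}$ to be constant; by continuity $\eta$ itself is constant on $[a,b]$. The case $\delta^r(\eta)=0$ is symmetric. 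For the global uniqueness in a Lie group, if $\delta^\ell(\eta_1)=\delta^\ell(\eta_2)$, then (\ref{eqlodif1}) gives $\delta^\ell(\eta_1\eta_2^{-1})=0$, hence $\eta_1\eta_2^{-1}=:h$ is constant and $\eta_2=h^{-1}\eta_1$; the converse direction follows from (\ref{nicef1}) applied with a constant factor (whose log derivative vanishes). The right-sided case is analogous via (\ref{eqlodif2}).

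In the local Lie group case at the end, the product $\eta_1\eta_2^{-1}$ need not be defined on all of $[a,b]$, so assuming $\eta_1(t_0)=\eta_2(t_0)$ and equal left (or right) log derivatives, I would show that $S:=\{t\in[a,b]:\eta_1(t)=\eta_2(t)\}$ is both open and closed. Closedness is immediate and $t_0\in S$; for openness at $t\in S$, pick a symmetric chart around $\eta_1(t)=\eta_2(t)$ and a neighbourhood $I_t$ of $t$ in $[a,b]$ on which $\eta_1$ and $\eta_2$ both take values in the chart domain and on which $\eta_1(s)\eta_2(s)^{-1}$ is defined, and then apply the already-established local-Lie-group versions of (\ref{nicef1}), (\ref{nicef2}) and the constancy claim on $I_t$ to conclude $\eta_1\eta_2^{-1}\equiv e$ there. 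The main obstacle I anticipate is precisely the bookkeeping in this last local patching, together with confirming the measurability of the various chart representatives (handled through Lemma~\ref{maybeuse}); everything else reduces to direct algebraic manipulation via (\ref{prepro}) and routine application of the fundamental theorem in charts.
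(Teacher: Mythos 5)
Your proposal is correct and follows essentially the same route as the paper: the four identities come from the product and quotient rules for $\dot{\eta}$ in Lemma~\ref{prorulingp} together with the definitions of $\omega_\ell$ and $\omega_r$, constancy is obtained from vanishing chart derivatives of $\eta_j$, uniqueness from forming $\eta_1\eta_2^{-1}$, and the local-group case via the open-closed argument on the coincidence set using the locally defined $\theta=\eta_1\eta_2^{-1}$ with $\theta\eta_2=\eta_1$. (Only a cosmetic caveat: in the integral complete case one should argue via $\eta_j'=[\gamma_j]=0$ as a class rather than via pointwise derivatives a.e., which need not exist there; the conclusion is unchanged.)
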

\begin{proof}
Assume first that $G$ is a Lie group.
(\ref{eqlodif1}), (\ref{eqlodif2}),
(\ref{nicef1}) and (\ref{nicef2})
follow immediately from
(\ref{prorulingp})
and the definition of logarithmic derivatives.\\[2.3mm]
If $\delta^\ell(\eta)=0$,
then $\eta_j'=[\gamma_j]=0$ for all $j\in \{1,\ldots,n\}$
in the proof of Lemma~\ref{deltanic},
whence $\eta_j$ and $\eta|_{[t_{j-1},t_j]}=\phi_j^{-1}\circ \eta_j$
are constant; thus~$\eta$ is constant.\\[2.3mm]
If $\delta^\ell(\eta_1)=\delta^\ell(\eta_2)$,
then $\delta^\ell(\eta_1\eta_2^{-1})=0$ by (\ref{eqlodif1})
and thus $\eta_1\eta_2^{-1}$ is constant,
taking the value $g\in G$, say.
Thus $\eta_1=g\eta_2$ and $\eta_2=g^{-1}\eta_1$.
The proof for right logarithmic derivatives is analogous.\\[2.3mm]
If $G$ is a local Lie group, we can establish
(\ref{eqlodif1})--(\ref{nicef2})
as before if all expressions are defined.
If $\delta^\ell(\eta_1)=\delta^\ell(\eta_2)$
and $\eta_1(t_0)=\eta_2(t_0)$ for some $t_0\in [a,b]$,
then
\[
A:=\{t\in [a,b]\colon \eta_1(t)=\eta_2(t)\}
\]
is a non-empty, closed subset of $[a,b]$.
If we can show that~$A$ is also open, then $A=[a,b]$
(as $[a,b]$ is connected) and hence $\eta_1=\eta_2$.
If $t_1\in A$, we find $\delta>0$ such that
$\theta(t):=\eta_1(t)\eta_2(t)^{-1}$ is defined for
all $t\in [t_1-\delta, t_1+\delta]\cap [a,b]$
and
\[
\theta(t)\eta_2(t)=\eta_1(t).
\]
Then $\delta^\ell(\theta)=0$ by (\ref{eqlodif1}),
whence $\theta$ is constant (as in the group case).
Since $\theta(t_1)=e$, we deduce that $\theta(t)=e$
for all $t\in [t_1-\delta, t_1+\delta]\cap [a,b]$,
whence $\eta_1(t)=\eta_2(t)$ and $[t_1-\delta, t_1+\delta]\cap [a,b]\sub A$.
The proof for right logarithmic derivatives is similar.
\end{proof}
\begin{rem}
Identifying measurable functions and their equivalence
classes,
(\ref{eqlodif1}),
(\ref{eqlodif2}) and (\ref{nicef1})
can be rewritten as
\begin{eqnarray*}
\delta^\ell(\eta_1\eta_2^{-1})(t)&=&
\Ad(\eta_2(t)).(\delta^\ell(\eta_1)(t)-\delta^\ell(\eta_2)(t)),\\
\delta^r(\eta_1^{-1}\eta_2)(t)&=&
\Ad(\eta_1(t))^{-1}(\delta^r(\eta_2)(t)-\delta^r(\eta_1)(t))\;\;\mbox{and}\\
\delta^\ell(\eta_1\eta_2)(t)&=&\Ad(\eta_2(t))^{-1}(\delta^\ell(\eta_1)(t))+\delta^\ell(\eta_2)(t)
\end{eqnarray*}
for $\lambda_1$-almost all $t\in [a,b]$.
\end{rem}
\begin{defn}
Let $\cE$ be a bifunctor
on Fr\'{e}chet spaces (resp., on sequentially complete (FEP)-spaces,
resp., on integral complete
locally convex spaces)
which satisfies the locality axiom.
Let $E$ be a Fr\'{e}chet space
(resp., a sequentially complete (FEP)-space, resp.,
an integral complete locally convex space),
$W \sub \R\times E$ be a subset
and $f\colon W \to E$ be a map.
We say that a continuous
function $\eta\colon I\to E$ on a non-degenerate
interval $I\sub \R$ is an $AC_\cE$-\emph{Carath\'{e}odory solution}
to the differential equation
\[
y'=f(t,y)
\]
if $(t,\eta(t))\in W$ for all $t\in I$, the map
\[
t\mto f(t,\eta(t))
\]
is in $\cE(I,E)$, and
\[
(\forall t_1,t_2\in I)\quad \eta(t_2)-\eta(t_1)
=\int_{t_1}^{t_2} f(s,\eta(s))\,ds.
\]
Or equivalently, if $\eta\in AC_\cE(I,E)$
with $\graph(\eta)\sub W$
and
\[
\eta'=[t\mto f(t,\eta(t))].
\]
If $(t_0,y_0)\in W$ and $\eta$ is a before
with $t_0\in I$ and $\eta(t_0)=y_0$,
then we call $\eta$
an $AC_\cE$-\emph{Carath\'{e}odory solution}
to the initial
value problem\footnote{Compare \cite{HBK} for
the case that~$E$ is a Banach space.}
\[
\left\{
\begin{array}{rcl}
y'&=& f(t,y)\\
y(t_0)&=&y_0.
\end{array}
\right.
\]
Equivalently, $\eta \colon I\to E$
is a continuous
function on a non-degenerate
interval $I\sub \R$
with $t_0\in I$ such that $\eta(t_0)=y_0$,
$(t,\eta(t))\in W$ for all $t\in I$, the map
\[
t\mto f(t,\eta(t))
\]
is in $\cE(I,E)$, and
\[
(\forall t\in I)\quad \eta(t)=y_0+\int_{t_0}^t f(s,\eta(s))\,ds.
\]
Or equivalently, if $\eta\in AC_\cE(I,E)$
with $\graph(\eta)\sub W$ such that $t_0\in I$,
$\eta(t_0)=y_0$ and
\[
\eta'=[t\mto f(t,\eta(t))].
\]
\end{defn}
\begin{defn}
Let $\cE$ be a bifunctor
on Fr\'{e}chet spaces (resp., on sequentially complete
(FEP)-spaces, resp., on integral complete
locally convex spaces)
which satisfies the locality axiom
and such that smooth functions act on~$AC_\cE$.
Let $E$ be a Fr\'{e}chet space
(resp., a sequentially complete (FEP)-space, resp.,
an integral complete locally convex spaces),
$M$ be a smooth manifold
modelled on~$E$,
$W \sub \R\times M$ be a subset
and $f\colon W\to TM$ be a map
such that $f(t,y)\in T_y(M)$ for all
$(t,y)\in W$.
Let $(t_0,y_0)\in W$.
An $AC_\cE$-\emph{Carath\'{e}odory solution}
to
\[
\left\{
\begin{array}{rcl}
y'&=& f(t,y)\\
y(t_0)&=&y_0
\end{array}
\right.
\]
is a map $\eta\in AC_\cE(I,M)$
on a non-degenerate interval $I\sub \R$ with $t_0\in I$
such that $\graph(\eta)\sub W$, $\eta(t_0)=y_0$
and such that, for each $t\in I$,
there exists $\ve>0$ such that $\eta(I\cap \;]t-\ve,t+\ve[)\sub U$
for some chart $\phi\colon U\to V\sub E$ of~$M$
and $\zeta:=\phi\circ \eta|_{I\cap \;]t-\ve,t+\ve[)}$
is an $AC_\cE$-Carath\'{e}odory solution to
$y'=g(t,y)$
with
\[
g\colon (\id_\R\times \phi)(W\cap(\R\times U))\to E,\quad
g(t,y):=d\phi(f(t,\phi^{-1}(y))).
\]
\end{defn}
%
% spaeter:
%$\delta$ is smooth and
%$\delta'(0)=D$
%
%
\begin{defn}
Let $\cE$ be a bifunctor
on Fr\'{e}chet spaces (resp., on sequentially complete (FEP)-spaces,
resp., on integral complete
locally convex space)
which satisfies the locality axiom,
the pushforward axioms,
and such that smooth functions act on~$AC_\cE$.
Let
$G$ be a Lie group modelled on
a Fr\'{e}chet space (resp., a sequentially complete (FEP)-space,
resp., an integral complete locally convex space),
with Lie algebra~$\cg$ and neutral element~$e$.
We say that $G$ is \emph{$\cE$-semiregular}
if for each
$\gamma\in \cE([0,1],\cg)$,
there exists $\eta\in AC_\cE([0,1],G)$
such that
\begin{equation}\label{detsevo0}
\delta^\ell(\eta)=\gamma\quad\mbox{and}\quad
\eta(0)=e.
\end{equation}
If it exists, then $\Evol(\gamma):=\eta$
is uniquely determined
by (\ref{detsevo0}) (see Lemma~\ref{logarules}).
If, moreover, smooth functions
act smoothly on $AC_\cE$,
then we say that $G$ is \emph{$\cE$-regular}
if $G$ is $\cE$-semiregular
and the map
\[
\Evol \colon
\cE([a,b],\cg)\to AC_\cE([a,b],G)
\]
is smooth.
\end{defn}
\begin{rem}
Write $\gamma=[\zeta]\in \cE([0,1],\cg)$
in the preceding definition.
Then (\ref{detsevo0}) is satisfied
if and only if $\eta\colon [0,1]\to G$
is a Carath\'{e}odory solution
to the initial value problem
\[
y'=f(t,y),\qquad y(0)=e
\]
with $f\colon [0,1]\times G\to TG$, $f(t,y):=y\zeta(t)$
(using the left action $G\times TG\to TG$
given by $gv=T\lambda_g(v)$).
%
%DETAILS!!!
%
\end{rem}
\begin{rem}
Lemma~\ref{pbckgp}\,(c) shows that if a Lie group
$G$ over $\K\in \{\R,\C\}$ is $\cE$-regular and $\Evol\colon \cE([0,1],\cg)\to AC_\cE([0,1],G)$
is smooth (resp., $\K$-analytic),
then also
\[
\evol\colon \cE([0,1],\cg)\to G, \;\; \gamma\mto\Evol(\gamma)(1)
\]
is smooth (resp., $\K$-analytic).
In contrast to the case of $C^k$-regularity, no exponential
laws are available for spaces of measurable
maps, whence we cannot deduce continuity or
differentiability properties of $\Evol$ from
such of $\evol$ in the current situation.
In the case of measurable regularity properties,
$\Evol$ (rather than $\evol$)
is the key object to work with,
whose study provides the largest amount of information.
\end{rem}
\begin{defn}
Let $\cE$ be a bifunctor
on Fr\'{e}chet spaces (resp., on sequentially complete (FEP)-spaces,
resp., on integral complete
locally convex space)
which satisfies the locality axiom,
the pushforward axioms,
and such that smooth functions act on~$AC_\cE$.
Let
$G$ be a local Lie group modelled on
a Fr\'{e}chet space (resp., a sequentially complete (FEP)-space,
resp., an integral complete locally convex space),
with Lie algebra~$\cg$ and neutral element~$e$.
We say that $G$ is \emph{locally $\cE$-semiregular}
if there exists an open $0$-neighbourhood
\[
\Omega\sub \cE([0,1],\cg)
\]
such that, for each $\gamma\in \Omega$,
there exists $\eta\in AC_\cE([0,1],G)$
such that
\begin{equation}\label{detsevo}
\delta^\ell(\eta)=\gamma\quad\mbox{and}\quad
\eta(0)=e.
\end{equation}
If it exists, then $\Evol(\gamma):=\eta$
is uniquely determined
by (\ref{detsevo}) (Lemma~\ref{logarules}).
If, moreover, $G$ has a global chart and smooth functions
act smoothly on $AC_\cE$,
then we say that $G$ is \emph{locally $\cE$-regular}
if $G$ is locally $\cE$-semiregular
and $\Omega$ can be chosen such that
\[
\Evol \colon
\Omega \to AC_\cE([a,b],G)
\]
is smooth.
\end{defn}
\begin{prop}\label{givsordr}
Let $\cE$ be a bifunctor
on Fr\'{e}chet spaces $($resp., on sequentially complete \emph{(FEP)}-spaces,
resp., on integral complete
locally convex space$)$
which satisfies the locality axiom,
the pushforward axioms, and
such that smooth functions act smoothly on~$AC_\cE$.
Let
$G$ be a $\cE$-semiregular Lie group
modelled on
a Fr\'{e}chet space $($resp., a sequentially complete \emph{(FEP)}-space,
resp., an integral complete locally convex space$)$ $E$,
with Lie algebra~$\cg$ and neutral element~$e$.
Then the map
\[
\Evol\colon \cE([0,1],\cg)\to AC_\cE([0,1],G)
\]
is smooth
if and only if $\Evol$ is smooth as a map
\[
\cE([0,1],\cg)\to C([0,1],G).
\]
If $G$ is a locally $\cE$-semiregular
local Lie group modelled on~$E$ admitting a global chart
and $\Omega\sub \cE([0,1],\cg)$
an open $0$-neighbourhood on which $\Evol$ is defined,
then
\[
\Evol\colon \Omega\to AC_\cE([0,1],G)
\]
is smooth if and only if $\Evol$ is smooth as
a map $\Omega\to C([0,1],G)$.
\end{prop}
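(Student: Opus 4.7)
\emph{Proof proposal.} One implication is immediate: by Remark~\ref{intoC}, the inclusion $\ve\colon AC_\cE([0,1],G)\to C([0,1],G)$ is a $C^r_\K$-map, so if $\Evol\colon\cE([0,1],\cg)\to AC_\cE([0,1],G)$ is smooth, then $\ve\circ\Evol$ is smooth too. For the converse, assume $\Evol$ is smooth into $C([0,1],G)$. Smoothness is a local property, so it suffices to verify, for each $\gamma_0\in\cE([0,1],\cg)$, smoothness of $\Evol$ on some open neighbourhood of $\gamma_0$. Set $\eta_0:=\Evol(\gamma_0)\in AC_\cE([0,1],G)$.

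Since $\eta_0([0,1])\sub G$ is compact, there is a partition $0=t_0<t_1<\cdots<t_n=1$ together with charts $\phi_j\colon U_j\to V_j\sub E$ of~$G$ such that $\eta_0([t_{j-1},t_j])\sub U_j$ for $j\in\{1,\ldots,n\}$. As $\Evol$ is continuous into $C([0,1],G)$, after shrinking to a neighbourhood $\Omega\sub\cE([0,1],\cg)$ of~$\gamma_0$ we may assume that $\Evol(\gamma)([t_{j-1},t_j])\sub U_j$ for each $\gamma\in\Omega$ and each~$j$. By Lemma~\ref{pbckgp}\,(b), the restriction map $AC_\cE([0,1],G)\to\prod_{j=1}^n AC_\cE([t_{j-1},t_j],G)$ is a $C^r_\K$-diffeomorphism onto a $C^r_\K$-submanifold of the product; therefore it suffices to show that each map
\[
\Omega\to AC_\cE([t_{j-1},t_j],G),\quad \gamma\mto \Evol(\gamma)|_{[t_{j-1},t_j]}
\]
is smooth. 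Composing with the chart $AC_\cE([t_{j-1},t_j],\phi_j)$, this reduces to showing that
\[
H_j\colon \Omega\to AC_\cE([t_{j-1},t_j],E),\quad \gamma\mto \phi_j\circ\Evol(\gamma)|_{[t_{j-1},t_j]}
\]
is smooth.

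By Lemma~\ref{closedemb}, the map $\Psi\colon AC_\cE([t_{j-1},t_j],E)\to C([t_{j-1},t_j],E)\times\cE([t_{j-1},t_j],E)$, $\eta\mto(\eta,\eta')$, is a linear topological embedding with closed image; combined with Lemmas~10.1 and~10.2 of~\cite{BGN}, it follows that $H_j$ is smooth as soon as the two component maps $\pr_1\circ\Psi\circ H_j$ and $\pr_2\circ\Psi\circ H_j$ are smooth. Smoothness of $\pr_1\circ\Psi\circ H_j$ is immediate from the hypothesis: it factors as $\gamma\mto\Evol(\gamma)$ into $C([0,1],G)$, followed by the smooth group homomorphism $C([0,1],G)\to C([t_{j-1},t_j],G)$ of restriction and the smooth chart map $C([t_{j-1},t_j],\phi_j)$ on the open set $C([t_{j-1},t_j],U_j)$. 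For the derivative component, consider the smooth map
\[
f_j\colon U_j\times\cg\to E,\quad f_j(g,v):=d\phi_j(g\cdot v)=d\phi_j\bigl(T_e\lambda_g(v)\bigr),
\]
which is linear in its second argument. Since $\delta^\ell(\Evol(\gamma))=\gamma$, Remark~\ref{dercomp} together with the chain rule gives
\[
(\phi_j\circ\Evol(\gamma)|_{[t_{j-1},t_j]})'=\bigl[f_j\circ(\Evol(\gamma)|_{[t_{j-1},t_j]},\gamma|_{[t_{j-1},t_j]})\bigr]=\wt{f_j}\bigl(\Evol(\gamma)|_{[t_{j-1},t_j]},\gamma|_{[t_{j-1},t_j]}\bigr),
\]
with $\wt{f_j}\colon C([t_{j-1},t_j],U_j)\times\cE([t_{j-1},t_j],\cg)\to\cE([t_{j-1},t_j],E)$ smooth by Lemma~\ref{globpfw}. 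As the restriction maps $C([0,1],G)\to C([t_{j-1},t_j],G)$ and $\cE([0,1],\cg)\to\cE([t_{j-1},t_j],\cg)$ are smooth (the latter continuous linear by axiom~(B2)), the map $\pr_2\circ\Psi\circ H_j$ is smooth as a composition of smooth maps. This proves the Lie group case.

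For the local Lie group case, $G$ admits a global chart $\phi\colon G\to V\sub E$, so $\Evol(\gamma)$ lies entirely in the domain of $\phi$ for every $\gamma\in\Omega$; no subdivision is needed. The same argument then applies with $n=1$, using the pushforward axiom~(P2) directly in place of Lemma~\ref{globpfw}, and invoking uniqueness from Lemma~\ref{logarules} where necessary. The main (only) point deserving care is the derivative identity $(\phi\circ\Evol(\gamma))'=\wt{f}(\Evol(\gamma),\gamma)$, which I expect to be the most delicate step to spell out, as it requires keeping track of the left trivialisation of~$TG$ through the chart and the conventions of Definition~\ref{defetadot}. \hspace*{\fill}$\Box$
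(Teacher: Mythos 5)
Your overall skeleton (easy direction via Remark~\ref{intoC}; converse by subdividing $[0,1]$, reducing via Lemma~\ref{pbckgp}\,(b) to the pieces, and then using Lemma~\ref{closedemb} with \cite[Lemma~10.1]{BGN} to split off the derivative component, which is handled through a pushforward) is the same as the paper's. But there is a genuine gap at the step ``composing with the chart $AC_\cE([t_{j-1},t_j],\phi_j)$.'' For an arbitrary chart $\phi_j\colon U_j\to V_j$ of~$G$ (whose domain need not contain~$e$, nor be symmetric), it is not among the established facts that $AC_\cE([t_{j-1},t_j],\phi_j)$ is a chart of the Lie group $AC_\cE([t_{j-1},t_j],G)$: Proposition~\ref{propACLie} provides compatibility only for charts $\phi\colon U\to V$ with $e\in U$, $U=U^{-1}$, and their translates, and the paper never constructs a manifold structure on $AC_\cE([a,b],M)$ for general manifolds~$M$ from which such compatibility could be quoted. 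So smoothness of your $H_j$ into $AC_\cE([t_{j-1},t_j],E)$ does not, as written, yield smoothness of $\gamma\mto\Evol(\gamma)|_{[t_{j-1},t_j]}$ into the Lie group $AC_\cE([t_{j-1},t_j],G)$. The fact you need is provable, but its proof requires essentially the same work you are trying to avoid, which is why the paper argues differently: it fixes one symmetric identity chart $\phi\colon U\to V$, chooses $W$ with $W^{-1}WW\sub U$ and a partition with $\eta(t_{j-1})^{-1}\eta([t_{j-1},t_j])\sub W$, proves smoothness of the \emph{translated} maps $g_j(\tau)=(\Evol(\tau)(t_{j-1}))^{-1}\Evol(\tau)|_{[t_{j-1},t_j]}$ — which by construction take values in $AC_\cE([t_{j-1},t_j],U)$, so only the identity chart is ever used — and then recovers $\Evol(\tau)|_{[t_{j-1},t_j]}$ via Lemma~\ref{strgp} together with the smoothness of the point evaluation $\tau\mto\Evol(\tau)(t_{j-1})$, which is available precisely because $\Evol$ is assumed smooth into $C([0,1],G)$. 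You should either adopt this translation trick or supply a separate proof that arbitrary charts of~$G$ induce charts of $AC_\cE([a,b],G)$.

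A smaller point: you invoke Lemma~\ref{globpfw} for $\wt{f_j}$ on $C([t_{j-1},t_j],U_j)\times\cE([t_{j-1},t_j],\cg)$, but the joint-smoothness conclusion of that lemma requires $M=G$ to be a Lie group and $f$ to be defined on all of $G\times E_2$; your $f_j$ lives only on $U_j\times\cg$. This is easily repaired: transport $f_j$ through $\phi_j$ to a smooth map $V_j\times\cg\to E$ linear in the second variable and apply the pushforward axiom (P2) directly, composing with the smooth map $\zeta\mto\phi_j\circ\zeta$ on the $C$-level — this is exactly how the paper handles the corresponding step with the map $\nu$ and $\wt{\nu}$ in the identity chart. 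Your computation of the derivative component itself, $(\phi_j\circ\Evol(\gamma))'=[f_j\circ(\Evol(\gamma),\gamma)]$, is correct.
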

\begin{proof}
If $\Evol$ is smooth to $AC_\cE([0,1],G)$, then also to $C([0,1],G)$,
since the inclusion map $AC_\cE([0,1],G)\to C([0,1],G)$ is smooth
(cf.\ Remark~\ref{intoC}).\\[2.3mm]
Conversely, write $\Evol_C$ for $\Evol$ as a map to $C([0,1],G)$
and assume that $\Evol_C$ is smooth. Let $U\sub G$ be a symmetric identity neighbourhood
on which a chart $\phi\colon U\to V\sub E$ is defined.
Let $\Omega\sub \cE([0,1],\cg)$ be an open $0$-neighbourhood
such that $\Evol_C(\Omega)\sub C([0,1],U)$
and $\Evol_C|_\Omega$ is smooth. Let us show that
$\Evol|_\Omega\colon \Omega\to AC_\cE([0,1],G)$
is smooth,
or equivalently, that
\[
g:=AC_\cE([0,1],\phi)\circ \Evol|_\Omega\colon \Omega\to AC_\cE([0,1],E)
\]
is smooth.
By Lemma~\ref{closedemb}
and \cite[Lemma~10.1]{BGN},
the latter will hold
if we can show that $g$ is smooth as a map to $C([0,1],E)$
(which is the case $C([0,1],\phi)\circ \Evol_C$
is smooth) and the map
\[
h\colon \Omega\to \cE([0,1],E),\quad \gamma\mto (g(\gamma))'
\]
is smooth.
To see that $h$ is smooth,
consider $U$ is a local Lie group with
$D_U:=\{(x,y)\in U\times U\colon xy\in U\}$
and make $V\sub E$ a local Lie group
such that $\phi\colon U\to V$
is an isomorphism of local Lie groups.
Then
\[
\phi\circ \Evol_G=\Evol_V\circ \cE([0,1],L(\phi)).
\]
Let $\mu\colon D_V\to V$ be the local group
multiplication and
\[
\nu\colon V\times L(V)\to TV,\quad (x,v)\mto T\mu(0_x,v).
\]
Since $\eta'=\nu\circ(\eta,\delta^\ell\eta)$, we have
\begin{eqnarray}
(g(\gamma))'&=&(\Evol_V(L(\phi)\circ \gamma))'
=\nu\circ (\Evol_V(L(\phi)\circ\gamma),L(\phi)\circ\gamma)\notag\\
&=& \nu\circ (\phi\circ \Evol_C(\gamma),L(\phi)\circ \gamma)\notag\\
&=& \wt{\nu}\Big(C([0,1],\phi)(\Evol_C(\gamma)),\; \cE([0,1],L(\phi))(\gamma)\Big)
\label{lngwdd}
\end{eqnarray}
with $\wt{\nu}$ as in the pushforward axiom~(P2).
As the map $\cE([0,1],L(\phi))$ is continuous linear by
the bifunctor axiom (B1)
and the map $C([0,1],\phi)\colon C([0,1],U)\to C([0,1],V)$
is a $C^\infty$-diffeomorphism
(being a chart of $C([0,1],G)$),
we deduce from (\ref{lngwdd})
with the pushforward axiom (P2)
that~$h$
is smooth.
In the case of a local Lie group,
this completes the proof. If $G$ is a Lie group,
let us show that $\Evol$ is smooth on an open neighbourhood
of each $\gamma\in \cE([0,1],\cg)$.
Write $\eta:=\Evol(\gamma)$.
Let $W\sub U$ be an open identity neighbourhood
such that
\[
W^{-1}WW\sub U.
\]
Using a compactness argument, we can find
%
% Details Leser ueberlassen
%
$0=t_0<t_1<\cdots<t_n=1$
with
\[
\eta(t_{j-1})^{-1}\eta([t_{j-1},t_j])\sub W
\]
for all $j\in \{1,\ldots, n\}$.
Then
\[
Q:=\{\zeta \in C([0,1], G)\colon \eta^{-1}\zeta\in C([0,1],W) \}
\]
is an open neighbourhood of $\eta$ in $C([0,1],G)$
and hence
\[
P:=(\Evol_C)^{-1}(Q)
\]
is an open neighbourhood of $\gamma$ in $\cE([0,1],\cg)$.
By Lemma~\ref{pbckgp}\,(b), $\Evol|_P$
will be smooth if we can show that the map
\[
P\to \prod_{j=1}^n AC_\cE([t_{j-1},t_j],G),\quad
\tau\mto (\Evol(\tau)|_{[t_{j-1},t_j]})_{j\in\{1,\ldots,n\}}
\]
is smooth.
The latter holds if, for each $j\in \{1,\ldots, n\}$,
the component
\[
f_j\colon P\to AC_\cE([t_{j-1},t_j],G),\quad \tau\mto \Evol(\tau)|_{[t_{j-1},t_j]}
\]
is smooth.
It is well-known that the evaluation map
$\ve_{t_{j-1}}\colon C([a,b],G)\to G$,
$\zeta\mto \zeta(t_{j-1})$ is smooth (cf.\ \cite{GCX}).
By Lemma~\ref{strgp}, $g_j$
is smooth if we can show that the map
\[
P\to G,\quad \tau\mto \Evol(\tau)(t_{j-1})=\ve_{t_{j-1}}(\Evol_C(\tau))
\]
is smooth (which is the case by smoothness of $\Evol_C$ and $\ve_{t_{j-1}}$)
and the map
\[
g_j\colon P\to AC_\cE([t_{j-1},t_j],G)_*,\quad \tau\mto (f_j(\tau)(t_{j-1}))^{-1}f_j(\tau)
\]
is smooth.
Note that, if $\zeta\in Q$ and $t\in [t_{j-1},t_j]$, then
\begin{eqnarray*}
g_j(\zeta)(t) &=&
\Evol(\zeta)(t_{j-1})^{-1}\Evol(\zeta)(t)\\
&=&
\big(\eta(t_{j-1})^{-1}\Evol(\zeta)(t_{j-1})\big)^{-1}
\eta(t_{j-1})^{-1} \eta(t) (\eta(t))^{-1}\Evol(\zeta)(t)\\
&\in&
W^{-1}WW\sub U.
\end{eqnarray*}
We therefore only need to show that
\[
AC_\cE([t_{j-1},t_j],\phi)\circ g_j \colon P\to AC_\cE([t_{j-1},t_j],V)
\]
is smooth. As a map $G_j$ to $C([t_{j-1},t_j],E)$, this map is smooth
by smoothness of $\Evol_C$. Hence, by
Lemma~\ref{closedemb}
and \cite[Lemma~10.1]{BGN},
it only remains to show that the map
\[
h_j\colon P\to \cE([t_{j-1},t_j],E),\quad
\tau\mto (\phi\circ g_j(\tau))'
\]
is smooth. But
\[
h_j(\tau)=\wt{\nu}(G_j(\tau), \cE([t_{j-1},t_j],L(\phi))(\tau))
\]
with the smooth map
\[
\wt{\nu}\colon C([t_{j-1},t_j],V)\times \cE([t_{j-1},t_j],E))\to
\cE([t_{j-1},t_j],E),\quad \wt{\nu}(\sigma,\tau):=\nu\circ (\sigma,\tau)
\]
(as in the pushforward axiom~(P2)),
since
\[
(\phi\circ g_j(\tau))'=\nu\circ (\phi\circ g_j(\tau),
\delta^\ell(\phi\circ g_j(\tau)))
\]
with
$\delta^\ell(\phi\circ g_j(\tau)))
=L(\phi)\circ \delta^\ell(
\Evol(\tau)(t_{j-1})^{-1}\Evol(\tau))
=L(\phi)\circ \delta^\ell(\Evol(\tau))=L(\phi)\circ\tau$.
Hence $h_j$ is smooth and hence $\Evol|_P$
is smooth.
\end{proof}
We now obtain Theorem~A
as a special case of the following corollary:
\begin{cor}\label{corsimA}
Let
$G$ be a $\cE$-semiregular Lie group
modelled on a Fr\'{e}chet space or
a sequentially complete \emph{(FEP)}-space. Let $p\in [1,\infty]$.
Then we have the following implications:
\[
\mbox{\emph{$G$ is $L^p$-regular $\impl$ $G$ is $L^q$-regular for all
$q\geq p$};}\vspace{-1mm}
\]
\[
\mbox{\emph{$G$ is $L^\infty$-regular $\impl$ $G$ is $L^\infty_{rc}$-regular
$\impl$ $G$ is $R$-regular};}
\]
\[
\mbox{\emph{$G$ is $R$-regular $\impl$ $G$ is $C^0$-regular}.}
\]
\end{cor}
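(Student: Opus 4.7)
The approach is uniform. Each implication is of the form ``$G$ is $\cE_1$-regular $\Rightarrow$ $G$ is $\cE_2$-regular'' where there is a continuous linear inclusion $\cE_2([0,1],\cg)\hookrightarrow\cE_1([0,1],\cg)$. These inclusions are: $L^q\hookrightarrow L^p$ for $q\geq p$ (already recorded in the excerpt), $R\hookrightarrow L^\infty_{rc}\hookrightarrow L^\infty\hookrightarrow L^p$ by construction, and $C([0,1],\cg)\hookrightarrow R([0,1],\cg)$ (continuous functions on a compact interval are uniform limits of step functions by a seminorm-wise uniform continuity argument; in the (FEP) case, first restrict to a separable Fréchet subspace containing the image).

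First I would establish the core existence lemma: under the standing assumptions and a continuous linear inclusion $\cE_2\hookrightarrow\cE_1$ on source spaces, if $\gamma\in\cE_2([0,1],\cg)$ then $\eta:=\Evol(\gamma)\in AC_{\cE_1}([0,1],G)$ already lies in $AC_{\cE_2}([0,1],G)$. Choose a partition $0=t_0<\cdots<t_n=1$ and charts $\phi_j\colon U_j\to V_j\subseteq E$ with $\eta([t_{j-1},t_j])\subseteq U_j$; write $\eta_j:=\phi_j\circ\eta|_{[t_{j-1},t_j]}$ and $\eta_j'=[\gamma_j]\in\cE_1([t_{j-1},t_j],E)$. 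As in the proof of Lemma~\ref{deltanic}, the smooth map
\[
f_j:=\omega_\ell\circ T\phi_j^{-1}\colon V_j\times E\to\cg
\]
is linear in its second argument, and $\gamma|_{[t_{j-1},t_j]}=[f_j\circ(\eta_j,\gamma_j)]$. For each $x\in V_j$, the map $f_j(x,\cdot)\colon E\to\cg$ is a linear isomorphism (it is the composition of $T_{\phi_j^{-1}(x)}\phi_j$ with left translation by $\phi_j^{-1}(x)^{-1}$), and its fibrewise inverse assembles into a smooth map $g_j\colon V_j\times\cg\to E$ which is linear in the second argument. Then $\gamma_j(t)=g_j(\eta_j(t),\gamma(t))$ almost everywhere, and axiom~(P1) applied to $g_j$ together with $\gamma|_{[t_{j-1},t_j]}\in\cE_2$ (by (B2) and locality) yields $[\gamma_j]\in\cE_2([t_{j-1},t_j],E)$. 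By the locality axiom for $\cE_2$, this gives $\eta\in AC_{\cE_2}([0,1],G)$.

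Second, for smoothness of $\Evol\colon\cE_2([0,1],\cg)\to AC_{\cE_2}([0,1],G)$, by Proposition~\ref{givsordr} it suffices to show smoothness as a map into $C([0,1],G)$. But this map factors as the continuous linear inclusion $\cE_2([0,1],\cg)\hookrightarrow\cE_1([0,1],\cg)$, followed by the smooth map $\Evol\colon\cE_1([0,1],\cg)\to AC_{\cE_1}([0,1],G)$ provided by $\cE_1$-regularity, followed by the smooth inclusion $AC_{\cE_1}([0,1],G)\hookrightarrow C([0,1],G)$ from Remark~\ref{intoC}.

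Third, for the implication ``$R$-regular $\Rightarrow$ $C^0$-regular'', I must further upgrade $\eta=\Evol(\gamma)$ to $C^1([0,1],G)$ when $\gamma\in C([0,1],\cg)$, and show smoothness of $\Evol\colon C([0,1],\cg)\to C^1([0,1],G)$. The formula $\gamma_j(t)=g_j(\eta_j(t),\gamma(t))$ from step one, with $\eta_j$ continuous and $\gamma$ now continuous, shows that $\gamma_j$ has a continuous representative; hence $\eta_j$ is $C^1$ and so is $\eta$. Smoothness into $C^1([0,1],G)$ then follows from smoothness into $C([0,1],G)$ combined with smoothness of $\gamma\mapsto\gamma=\delta^\ell(\Evol(\gamma))$ into $C([0,1],\cg)$ and the standard fact that $C^1([0,1],G)$ is a $C^1$-extension of $C([0,1],G)$ in the sense that a map into $C^1([0,1],G)$ is smooth iff both its image in $C([0,1],G)$ and its logarithmic derivative into $C([0,1],\cg)$ are smooth.

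The only nontrivial point is step one: passing from $\delta^\ell(\eta)\in\cE_2$ to $\eta'\in\cE_2$ in local charts. The pushforward axiom (P1) applied to the smooth fibrewise-linear inversion $g_j$ handles it cleanly; the rest is bookkeeping via the locality axiom and Proposition~\ref{givsordr}.
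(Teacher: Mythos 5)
Your proposal is correct and takes essentially the same route as the paper: the chain of continuous linear inclusions, the factorization $\Evol_\cF=\Evol_\cE\circ j_{\cE,\cF}$ as maps to $C([0,1],G)$, and Proposition~\ref{givsordr} together with Remark~\ref{intoC} to reduce smoothness of the $AC_\cE$-valued evolution to smoothness of the $C([0,1],G)$-valued one. Your step one simply makes explicit, via the fibrewise inverse and axiom (P1), the semiregularity upgrade (that $\Evol(\gamma)$ lands in the smaller $AC$-space) which the paper asserts tersely, and your handling of the $C^0$ endpoint matches the known $C^k$-regularity equivalence the paper implicitly invokes.
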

\begin{proof}
Let $\cg:=L(G)$.
Since
\begin{equation}\label{chaini}
C^0([0,1],\cG)\sub
R([0,1],\cg)\sub L^\infty_{rc}([0,1],\cg)\sub L^q([0,1],\cg)\sub L^p([0,1],\cg)
\end{equation}
with continuous linear inclusion maps,
the $\cE$-semiregularity with respect to a class~$\cE$ of spaces
further on the right in the chain~(\ref{chaini}) of inclusions
implies $\cF$-semiregularity with respect to each class
$\cF$ of spaces further on the left.
Let us write $\Evol_\cE$ and $\Evol_\cF$ for the respective
evolution map and $j_{\cE,\cF}$ for the inclusion map
$\cF([0,1],\cg)\to \cE([0,1],\cg)$.
We know that both $\cE$ and $\cF$
satisfy the locality axiom,
the pushforward axioms, and
that smooth functions act smoothly on~$AC_\cE$ and on $AC_\cF$.
Now $\Evol_\cF=\Evol_\cE\circ j_{\cE,\cF}$
as mappings to $C([0,1],G)$.
If $G$ is $\cE$-regular,
then $\Evol_\cE$ is smooth
(see Proposition~\ref{givsordr})
and hence also
$\Evol_\cF=\Evol_\cE\circ j_{\cE,\cF}$ is smooth.
Thus, again by Proposition~\ref{givsordr},
$G$ is $\cF$-regular.
\end{proof}
A similar argument shows:
\begin{cor}
Let $G$ be a Lie group modelled
on an integral complete locally convex space. If $G$ is $L^\infty_{rc}$-regular,
then $G$ is also $R$-regular.\,\Punkt
\end{cor}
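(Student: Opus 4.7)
The plan is to imitate the argument of Corollary~\ref{corsimA}, exploiting that $R$ and $L^\infty_{rc}$ are both bifunctors on integral complete locally convex spaces satisfying the locality axiom and the pushforward axioms, with smooth functions acting smoothly on $AC_\cE$ in each case (so that Proposition~\ref{givsordr} applies to both). The continuous linear inclusion $j\colon R([0,1],\cg)\to L^\infty_{rc}([0,1],\cg)$ is the main bridge.

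First I would check $R$-semiregularity. Given $\gamma\in R([0,1],\cg)$, set $\eta:=\Evol_{L^\infty_{rc}}(j(\gamma))\in AC_{L^\infty_{rc}}([0,1],G)$, which exists by the assumed $L^\infty_{rc}$-regularity and satisfies $\delta^\ell(\eta)=j(\gamma)$ and $\eta(0)=e$. The point is to promote $\eta$ to an element of $AC_R([0,1],G)$. Working in a chart $\phi\colon U\to V\sub E$ of~$G$ on a subinterval $J\sub [0,1]$ with $\eta(J)\sub U$, the pulled-back curve $\zeta:=\phi\circ\eta|_J$ is continuous and satisfies
\[
\zeta'=[f\circ(\zeta,\gamma|_J)]
\]
for the smooth map $f\colon V\times\cg\to E$, linear in the second argument, that expresses left multiplication in the chart (cf.\ the proof of Lemma~\ref{deltanic}). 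Since $E$ and $\cg$ are integral complete and $\gamma|_J\in R(J,\cg)$, Lemma~\ref{operders}(a) applied with $\cE=\cR$ gives $\zeta'\in R(J,E)$, so $\zeta\in AC_R(J,E)$. Covering $[0,1]$ by such subintervals and invoking Remark~\ref{reallylocal} yields $\eta\in AC_R([0,1],G)$, and $\eta$ is forced to be $\Evol_R(\gamma)$ by uniqueness (Lemma~\ref{logarules}).

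Next I would deduce smoothness. By Proposition~\ref{givsordr}, it suffices to show that $\Evol_R\colon R([0,1],\cg)\to C([0,1],G)$ is smooth. But as maps into $C([0,1],G)$ we have
\[
\Evol_R=\Evol_{L^\infty_{rc}}\circ j,
\]
and the right-hand side is the composition of the continuous linear (hence smooth) inclusion~$j$ with $\Evol_{L^\infty_{rc}}$, which is smooth into $AC_{L^\infty_{rc}}([0,1],G)$ by hypothesis and therefore smooth into $C([0,1],G)$ via the smooth inclusion of Remark~\ref{intoC}. Hence $\Evol_R$ is smooth, establishing $R$-regularity of~$G$.

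The routine verifications are the chart-wise rewriting of the evolution equation and the appeal to the already-proved Lemma~\ref{operders}; the only place where one has to be a bit careful is the local-to-global step that lifts $\eta\in AC_{L^\infty_{rc}}$ to $\eta\in AC_R$, which is handled cleanly by Remark~\ref{reallylocal}.
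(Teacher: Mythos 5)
Your proposal is correct and follows essentially the paper's own route: the corollary is proved "by a similar argument" to Corollary~\ref{corsimA}, namely writing $\Evol_R=\Evol_{L^\infty_{rc}}\circ j$ as maps into $C([0,1],G)$ along the continuous linear inclusion $j\colon R([0,1],\cg)\to L^\infty_{rc}([0,1],\cg)$ and transferring smoothness via Proposition~\ref{givsordr}. Your chart-wise verification (via Lemma~\ref{operders}(a) and Remark~\ref{reallylocal}) that the evolution actually lies in $AC_R([0,1],G)$ just makes explicit the semiregularity transfer that the paper asserts in the proof of Corollary~\ref{corsimA}.
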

\begin{rem}\label{implalsolc}
Analogous implications are available
for local Lie groups.
\end{rem}
\begin{defn}
Let $\cE$ be a bifunctor on Fr\'{e}chet spaces (resp., on sequentially complete
(FEP)-spaces, resp., on integral complete locally convex spaces).
Let $E$ be a Fr\'{e}chet space (resp., a sequentially complete (FEP)-space, resp.,
an integral complete locally convex space) and
$[\gamma]\in \cE([0,1],E)$. For $n\in \N$ and
$k\in \{0,1,\ldots, n-1\}$, define
\[
\gamma_{n,k}\colon [0,1]\to E,\quad
\gamma_{n,k}(t):=\frac{1}{n}\gamma((k+t)/n).
\]
Then $[\gamma_{n,k}]\in \cE([0,1],E)$
for all $n\in \N$ and $k\in \{0,1,\ldots,n-1\}$
(by axiom (B2)).
We say that $\cE$ has the \emph{subdivision property}
if the following holds
for each Fr\'{e}chet space (resp., sequentially complete (FEP)-space, resp.,
each integral complete locally convex space)~$E$:\\[2.3mm]
For each $\gamma\in \cE([0,1],E)$ and continuous
seminorm $q$ on~$\cE([0,1],E)$, we have that
\[
\sup_{k\in \{0,1,\ldots, n-1\}}q(\gamma_{n,k})\;\to\; 0\quad\mbox{as $\,n\to\infty$.}
\]
\end{defn}
\begin{prop}\label{lctoglb}
Let $\cE$ be a bifunctor on Fr\'{e}chet spaces $($resp., on sequentially complete
\emph{(FEP)}-spaces, resp., on integral complete locally convex spaces$)$
which satisfies the locality axiom,
the pushforward axioms,
and such that smooth functions act smoothly on~$AC_\cE$.
If, moreover, $\cE$ has the subdivision property,
then the following conditions are equivalent for
each Lie group $G$ modelled
on a Fr\'{e}chet space $($resp., a sequentially complete \emph{(FEP)}-space,
resp., an integral complete locally convex space$)$:
\begin{itemize}
\item[\rm(a)]
$G$ is $\cE$-regular;
\item[\rm(b)]
$G$ is locally $\cE$-regular.
\end{itemize}
Also the following conditions are equivalent if $\cE$
has the subdivision property:
\begin{itemize}
\item[\rm(c)]
$G$ is $\cE$-semiregular;
\item[\rm(d)]
$G$ is locally $\cE$-semiregular.
\end{itemize}
\end{prop}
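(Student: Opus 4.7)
The plan is to reduce the global statement to the local one by chopping $[0,1]$ into $n$ equal pieces and running the local evolution on each rescaled piece. The subdivision property is exactly what lets one choose $n$ large enough to place every $\gamma_{n,k}$ inside the open $0$-neighborhood $\Omega\sub\cE([0,1],\cg)$ on which the local $\Evol$ is defined. The directions (a)$\Rightarrow$(b) and (c)$\Rightarrow$(d) are immediate by taking $\Omega:=\cE([0,1],\cg)$.

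For (d)$\Rightarrow$(c), fix $\gamma\in\cE([0,1],\cg)$ and a continuous seminorm $q$ on $\cE([0,1],\cg)$ with $\{\sigma\colon q(\sigma)<1\}\sub\Omega$. Using the subdivision property I choose $n\in\N$ with $q(\gamma_{n,k})<1$, hence $\gamma_{n,k}\in\Omega$, for every $k\in\{0,\ldots,n-1\}$. Set $\eta_k:=\Evol(\gamma_{n,k})\in AC_\cE([0,1],G)$, $\zeta_0:=e$, and $\zeta_k:=\eta_0(1)\eta_1(1)\cdots\eta_{k-1}(1)$ for $k\in\{1,\ldots,n\}$, and define $\zeta\colon[0,1]\to G$ piecewise by
\[
\zeta(t):=\zeta_k\cdot\eta_k(nt-k)\quad\text{for }t\in[k/n,(k+1)/n].
\]
The identities $\eta_k(0)=e$ and $\eta_{k-1}(1)=\zeta_k\zeta_{k-1}^{-1}$ make the two formulas agree at each interior node $k/n$, so $\zeta$ is continuous with $\zeta(0)=e$. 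The restriction $\zeta|_{[k/n,(k+1)/n]}$ is the left translate by the constant $\zeta_k$ of $\eta_k\circ f_{n,k}^{-1}$, where $f_{n,k}^{-1}\colon[k/n,(k+1)/n]\to[0,1]$, $t\mapsto nt-k$, is affine-linear; by Lemma~\ref{pbckgp}(a) and the Lie group structure on $AC_\cE([k/n,(k+1)/n],G)$, this lies in $AC_\cE([k/n,(k+1)/n],G)$, so the locality axiom gives $\zeta\in AC_\cE([0,1],G)$. Left translation by a constant leaves $\delta^\ell$ unchanged, and the chain rule in any chart multiplies $\delta^\ell$ by the slope $n$ under the reparametrization $f_{n,k}^{-1}$, so
\[
\delta^\ell(\zeta)(t)=n\,\delta^\ell(\eta_k)(nt-k)=n\,\gamma_{n,k}(nt-k)=\gamma(t)
\]
for $\lambda_1$-almost every $t\in[k/n,(k+1)/n]$. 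Uniqueness in Lemma~\ref{logarules} then gives $\zeta=\Evol(\gamma)$.

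For (b)$\Rightarrow$(a) I will additionally argue smoothness. The map $\gamma\mapsto\gamma_{n,k}$ equals $\frac{1}{n}\cE(f_{n,k},\cg)$ with the affine-linear $f_{n,k}\colon[0,1]\to[0,1]$, $t\mapsto(k+t)/n$, hence is continuous linear by axiom (B2); thus there is an open neighborhood $U\sub\cE([0,1],\cg)$ of any given $\gamma$ on which the same $n$ still sends every $\gamma'_{n,k}$ into $\Omega$. By Lemma~\ref{pbckgp}(b) it suffices to show smoothness of the composition of $\Evol$ with the restriction-to-pieces embedding into $\prod_{k=0}^{n-1}AC_\cE([k/n,(k+1)/n],G)$, whose $k$-th component sends $\gamma$ to $\zeta_k(\gamma)\cdot(\Evol(\gamma_{n,k})\circ f_{n,k}^{-1})$. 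This is smooth because the local $\Evol$ is smooth by hypothesis, the evaluation $\ev_1$ is smooth (Lemma~\ref{pbckgp}(c)) and group multiplication in $G$ is smooth, so $\gamma\mapsto\zeta_k(\gamma)$ is smooth; the pullback along $f_{n,k}^{-1}$ is smooth (Lemma~\ref{pbckgp}(a)); the constant inclusion $G\to AC_\cE([k/n,(k+1)/n],G)$, $g\mapsto(t\mapsto g)$, is smooth (cf.\ the proof of Lemma~\ref{strgp}); and group multiplication in the Lie group $AC_\cE([k/n,(k+1)/n],G)$ is smooth. Hence $\Evol$ is smooth in a neighborhood of each $\gamma\in\cE([0,1],\cg)$.

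The hard part is the bookkeeping for the piecewise construction: checking that the concatenation $\zeta$ really belongs to $AC_\cE([0,1],G)$, that $\delta^\ell(\zeta)$ reassembles correctly across the interior nodes via the transformation rules from Lemma~\ref{logarules}, and that each step of the formula for $\zeta$ respects smoothness after passing through Lemma~\ref{pbckgp}(b). The subdivision property is the decisive input, since it is exactly what forces the rescaled pieces $\gamma_{n,k}$ eventually into the small domain $\Omega$ of the local evolution.
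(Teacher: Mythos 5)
Your proof is correct and follows essentially the same route as the paper: both use the subdivision property to push the rescaled pieces $\gamma_{n,k}$ into the domain $\Omega$ of the local evolution, define the global evolution piecewise as $\Evol(\eta_{n,0})(1)\cdots\Evol(\eta_{n,k-1})(1)\,\Evol(\eta_{n,k})(nt-k)$, verify the left logarithmic derivative on each subinterval, and obtain smoothness on a neighbourhood via the restriction-to-pieces embedding of Lemma~\ref{pbckgp}\,(b) together with the rescaling isomorphism and smoothness of $\evol$ and the group operations. The only (cosmetic) slip is the node-matching identity, which should read $\eta_{k-1}(1)=\zeta_{k-1}^{-1}\zeta_k$ rather than $\zeta_k\zeta_{k-1}^{-1}$; the relevant equality $\zeta_{k-1}\eta_{k-1}(1)=\zeta_k$ is what your argument actually uses, so nothing is affected.
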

\begin{proof}
Let $\cg:=L(G)$.
If $G$ is $\cE$-semiregular (resp., $\cE$-regular),
then trivially $G$ is also locally $\cE$-semiregular (resp.,
locally $\cE$-regular).

Now assume that $G$ is locally $\cE$-regular (resp., $\cE$-semiregular).
Thus, there exists an open $0$-neighbourhood
$\Omega\sub \cE([0,1],\cg)$ such that $\Evol(\gamma)\in AC_\cE([0,1],G)$
exists for each $\gamma\in \Omega$ (resp., moreover
$\Evol\colon \Omega\to AC_{\cE}([0,1],G)$ is smooth).
After shrinking $\Omega$, we may assume that
$\Omega=B^q_1(0)$ for a continuous seminorm
$q\colon \cE([0,1],\cg)\to[0,\infty[$.
Now let $\gamma\in \cE([0,1],\cg)$.
By the subdivision property,
we find $n\in \N$ such that
\[
\gamma_{n,k}\in \Omega\quad\mbox{for all $k\in \{0,1,\ldots,n-1\}$.}
\]
By Axiom (B2), the linear map
\[
\alpha_k\colon \cE([0,1],\cg)\to \cE([0,1],\cg),\quad \eta\mto \eta_{n,k}
\]
is continuous for all $k\in \{0,1,\ldots, n-1\}$.
Hence, we find an open neighbourhood $W\sub \cE([0,1],\cg)$ of $\gamma$
such that
\[
\eta_{n,k}\in \Omega\quad\mbox{for all $\eta\in W$ and all $k\in \{0,1,\ldots, n-1\}$.}
\]
For $\eta\in W$, define
$\Evol(\eta)\colon [0,1]\to G$
via \[
\Evol(\eta)(t):=\Evol(\eta_{n,0})(nt)\quad\mbox{if $t\in [0,\frac{1}{n}]$}
\]
and
\[
\Evol(\eta)(t):=
\Evol(\eta_{n,0})(1)\cdots\Evol(\eta_{n,k-1}(1))\Evol(\eta_{n,k})(nt-k)
\]
if $t\in [\frac{k}{n},\frac{k+1}{n}]$ with $k\in \{1,\ldots, n-1\}$.
The map $\Evol(\eta)$ is continuous
and $\Evol(\eta)|_{[k/n,(k+1)/n]}$ is in $AC_\cE([k/n,(k+1)/n],G)$
(since $\Evol(\eta_{n,k})\in AC_\cE([0,1],G)$),
as a consequence of axiom (B2).
So $\Evol(\eta)\in AC_\cE([0,1],G)$.
Since $\Evol(\eta)(0)=e$ and
$\Evol(\eta)$ has left logarithmic derivative
$\eta$ by construction,
% details!
indeed $\Evol(\eta)$ is a left evolution for
$\eta$. Notably, $\Evol(\gamma)$ is a left evolution for~$\gamma$
and hence $G$ is $\cE$-semiregular.
If $\Evol\colon \Omega\to AC_\cE([0,1],G)$
is smooth, then also
\[
\Evol\colon W\to AC_\cE([0,1],G),\;\, \eta\mto\Evol(\eta)
\]
as just defined is smooth. To see this,
we re-use that the map
\[
AC_\cE([0,1],G)\to\prod_{k=0}^{n-1}AC_\cE([k/n,(k+1)/n],G),\;\,
\zeta\mto (\zeta|_{[k/n,(k+1)/n]})_{k=0,1,\ldots, n-1}
\]
is an isomorphism of Lie groups onto a Lie subgroup.
We therefore only need to show that
\begin{equation}\label{retgleich}
W\to AC_\cE([k/n,(k+1)/n],G),\quad \eta\mto \Evol(\eta)|_{[k/n, (k+1)/n]}
\end{equation}
is smooth. For fixed $k$,
the Lie group $AC_\cE([k/n, (k+1)/n],G)$ is isomorphic
to $AC_\cE([0,1],G)$ via $\zeta\mto (t\mto \zeta((k+t)/n))$
and the composition of this isomorphism and $\Evol_W$
is the map
\[
W\to AC_\cE([0,1],G),\quad \eta\mto\evol(\eta_{n,0})\cdots\evol(\eta_{n,k-1})\Evol(\eta_{n,k}).
\]
This map is smooth as it is the product of compositions of
$\Evol\colon \Omega\to AC_\cE([0,1],G)$
(or $\evol\colon \Omega\to G\sub AC_\cE([0,1],G)$)
and the continuous linear maps $\eta\mto \eta_{n,j}$
for $j\in \{0,\ldots, k\}$.
We deduce that the map in (\ref{retgleich})
(and hence also $\Evol\colon W\to AC_\cE([0,1],G)$)
is smooth.
\end{proof}
\begin{la}\label{setsubdiv}
$L^p$ as a bifunctor on Fr\'{e}chet spaces $($or sequentially
complete \emph{(FEP)}-spaces$)$
has the subdivision property,
for all $p\in [1,\infty]$. Moreover,
$L^\infty_{rc}$ and $R$
have the subdivision property
as bifunctors on integral complete locally convex spaces.
\end{la}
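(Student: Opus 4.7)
The plan is to reduce the claim to a uniform estimate on the defining seminorms $\|\cdot\|_{L^p,q}$ (respectively $\|\cdot\|_{L^\infty,q}$). Indeed, since the family $P(E)$ of continuous seminorms on~$E$ is directed under maxima, every continuous seminorm on $\cE([0,1],E)$ is dominated by a constant times some single defining seminorm $\|\cdot\|_{L^p,q}$ (or $\|\cdot\|_{L^\infty,q}$). Thus it suffices to show, for each continuous seminorm $q$ on~$E$, that
\[
\sup_{k\in\{0,1,\ldots,n-1\}}\|\gamma_{n,k}\|_{L^p,q}\;\longrightarrow\;0\quad\text{as}\quad n\to\infty,
\]
and analogously in the $L^\infty_{rc}$ and $R$ cases.

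First I would treat the easy cases $\cE=L^\infty$, $L^\infty_{rc}$, $R$. Here the relevant seminorm is $\|\cdot\|_{L^\infty,q}$, and by the definition of $\gamma_{n,k}$ one has directly
\[
\|\gamma_{n,k}\|_{L^\infty,q}\;=\;\tfrac{1}{n}\,\esssup_{t\in[0,1]}q(\gamma((k+t)/n))\;\leq\;\tfrac{1}{n}\,\|\gamma\|_{L^\infty,q}\,,
\]
which tends to~$0$ independently of~$k$. Since $R([0,1],E)$ and $L^\infty_{rc}([0,1],E)$ carry the induced topology, this settles those bifunctors.

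Next I would handle $\cE=L^p$ with $p\in[1,\infty[$. The substitution $s=(k+t)/n$ gives
\[
\|\gamma_{n,k}\|_{L^p,q}^p\;=\;\int_0^1 \tfrac{1}{n^p}\,q(\gamma((k+t)/n))^p\,dt\;=\;\tfrac{1}{n^{p-1}}\int_{k/n}^{(k+1)/n} q(\gamma(s))^p\,ds\,.
\]
For $p>1$ the right-hand integral is at most $\|\gamma\|_{L^p,q}^p$, so
\[
\sup_{k}\|\gamma_{n,k}\|_{L^p,q}\;\leq\;n^{-(1-1/p)}\|\gamma\|_{L^p,q}\;\longrightarrow\;0,
\]
as required.

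The main (though still routine) point is the case $p=1$, where the prefactor $n^{-(1-1/p)}$ disappears. Here I would invoke absolute continuity of the Lebesgue integral applied to the nonnegative function $q\circ\gamma\in \cL^1([0,1],\R)$: given $\varepsilon>0$, there is $\delta>0$ such that $\int_A q(\gamma(s))\,ds<\varepsilon$ for every Borel set $A\subseteq[0,1]$ with $\lambda_1(A)<\delta$. For $n>1/\delta$, each subinterval $[k/n,(k+1)/n]$ has length less than~$\delta$, so
\[
\|\gamma_{n,k}\|_{L^1,q}\;=\;\int_{k/n}^{(k+1)/n} q(\gamma(s))\,ds\;<\;\varepsilon\quad\text{for all } k\in\{0,\ldots,n-1\}\,,
\]
which completes the proof in the final case.
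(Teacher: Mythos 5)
Your proposal is correct. The $L^\infty$, $L^\infty_{rc}$ and $R$ cases coincide with the paper's argument (the estimate $\|\gamma_{n,k}\|_{L^\infty,q}\leq \frac1n\|\gamma\|_{L^\infty,q}$), and your opening reduction to the defining seminorms via directedness of $P(E)$ is legitimate and in fact implicit in the paper. Where you diverge is the finite-$p$ case: the paper treats all $p\in[1,\infty[$ uniformly by one truncation argument — split off the level set $A_m=\{t: q(\gamma(t))>m\}$, use dominated convergence to make $\int_{A_m}q(\gamma)^p$ small, and exploit the prefactor $1/n$ (via $q(\gamma_{n,k}(t))\leq m/n$) on the complement — which amounts to proving by hand exactly the uniform-integrability statement you need. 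You instead observe that for $p>1$ the substitution produces the decay factor $n^{-(1-1/p)}$, so no tail control is needed at all, and only for $p=1$ do you invoke absolute continuity of the Lebesgue integral for $q\circ\gamma\in\cL^1([0,1],\R)$ (a standard fact, e.g.\ \cite[Theorem 7.18 ff.]{Ru1}, or provable by the same truncation the paper uses). Your route is shorter and cleaner for $p>1$; the paper's is self-contained and avoids case-splitting. Both are complete proofs of the lemma.
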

\begin{proof}
In the case when $\cE$ is $L^\infty$, $L^\infty_{rc}$
or $R$, we have
\begin{eqnarray*}
\|\gamma_{n,k}\|_{L^\infty,q}&=&\esssup_{t\in [0,1]} \frac{1}{n}q(\gamma((k+t)/n))\\
&\leq& \frac{1}{n}\esssup_{t\in [0,1]} \frac{1}{n}q(\gamma(t))
=\frac{1}{n}\|\gamma\|_{L^\infty,q}
\end{eqnarray*}
for all $[\gamma]\in \cE([0,1],E)$
$q\in P(E)$, $n\in \N$, $q\in P(E)$
and $k\in \{0,1,\ldots, n-1\}$.
Hence
\[
\max_{k\in \{0,1,\ldots, n-1\}}\|\gamma_{n,k}\|_{L^\infty,q}\leq \frac{1}{n}\|\gamma\|_{L^\infty,q}
\to 0
\]
as $n\to\infty$, showing that the subdivision property is satisfied.

Now assume that $\cE=L^p$ with $p\in [1,\infty[$.
Let $\gamma\in \cL^p([0,1],E)$.
Substituting $s=(k+t)/n$, we see that
\begin{eqnarray*}
\|\gamma_{n,k}\|_{\cL^p,q}&=&
\sqrt[p]{\int_0^1q(\gamma((k+t)/n))^p\frac{dt}{n^p}}\\
&\leq& \sqrt[p]{\int_0^1q(\gamma((k+t)/n))^p\frac{dt}{n}}
=
\sqrt[p]{\int_{k/n}^{(k+1)/n}q(\gamma(s))\,ds}
\leq \|\gamma\|_{\cL^p,q}
\end{eqnarray*}
for each continuous seminorm~$q$ on~$E$.
Thus $\|\gamma_{n,k}\|_{\cL^p,q}\leq \|\gamma\|_{\cL^p,q}$
for all $n\in \N$ and $k\in \{0,1,\ldots,n-1\}$.\\[2.3mm]
Let $\ve>0$.
For $m\in \N$, define
\[
A_m:=\{t\in [0,1]\colon q(\gamma(t))>m\}.
\]
Then each $A_n$ is a Borel set in $[0,1]$,
we have $A_1\subseteq A_2\supseteq\cdots$,
and $\bigcap_{m\in \N}A_m=\emptyset$.
Thus $\one_{A_m}\to 0$ holds
pointwise for the characteristic functions
of the set~$A_m$.
Hence, by dominated convergence,
\[
\int_{A_m}q(\gamma(t))^p\,dt=
\int_0^1q(\gamma(t))^p\,\one_{A_m}\,dt\to 0
\]
as $m\to\infty$. We therefore find $m\in \N$ such that
\[
\int_{A_m}q(\gamma(t))^p\,dt\, \leq\, \ve^p/2
\]
Choose $n_0\in \N$ so large that $m/{n_0}\leq\ve/\sqrt[p]{2}$.
Given $n\geq n_0$, define
\[
A_{n,k}:=\{t\in [0,1]\colon (k+t)/n\in A_m\}
\]
for $k\in \{0,1,\ldots, n-1\}$.
Note that, if $t\in [0,1]\setminus A_{n,k}$,
then $(k+1)/n\in [0,1]\setminus A_m$ and hence
\[
q(\gamma_{n,k}(t))^p=
\left(\frac{1}{n}q(\gamma((k+t)/n))\right)^p
\leq (m/n)^p \leq (m/{n_0})^p\leq \ve^p/2.
\]
Moreover, substituting $s=(k+t)/n$,
\[
\int_{A_{n,k}}q(\gamma_{n,k}(t))^p\,dt
=\frac{n}{n^p}\int_{A_m\cap [k/n,(k+1)/n]}q(\gamma(s))^p\,ds\leq\ve^p/2.
\]
Thus
\begin{eqnarray*}
\|\gamma_{n,k}\|_{\cL^p,q} &=& \sqrt[p]{\int_0^1q(\gamma_{n,k}(t))\,dt}\\
&=& \sqrt[p]{\int_{A_n,k}q(\gamma_{n,k}(t))^p\,dt
+\int_{[0,1]\setminus A_{n,k}}\underbrace{q(\gamma_{n,k}(t))^p}_{\leq\ve^p/2}\, dt}\\
&\leq& \sqrt[p]{\ve^p/2+\ve^p/2}=\ve
\end{eqnarray*}
and thus
\[
\max_{k\in \{0,1,\ldots, n-1\}}\|\gamma_{n,k}\|_{\cL^p,q}\leq \ve
\]
for all $n\geq n_0$. Therefore, the subdivision property
is satisfied.
\end{proof}
\begin{prop}\label{equarg}
Let $\cE$ be a bifunctor
on Fr\'{e}chet spaces $($resp., on sequentially complete \emph{(FEP)}-spaces,
resp., on integral complete
locally convex space$)$
which satisfies the locality axiom,
the pushforward axioms, and
such that smooth functions act on~$AC_\cE$.
Let
$G$ be a Lie group
modelled on
a Fr\'{e}chet space
$($resp., a sequentially complete \emph{(FEP)}-space,
resp., an integral complete locally convex space$)$ $E$,
with Lie algebra~$\cg$ and neutral element~$e$.
Let $H\sub G$ be a subgroup
which is a submanifold of~$G$,
modelled on a closed vector subspace $F\sub E$.
Assume that
\begin{equation}\label{thisgivessub}
H=\{x\in G \colon (\forall j\in J) \,\alpha_j(x)=\beta_j(x)\}
\end{equation}
for Lie groups $H_j$
modelled on Fr\'{e}chet spaces
$($resp., sequentially complete \emph{(FEP)}-spaces,
resp., integral complete locally convex spaces$)$
and smooth homomorphisms $\alpha_j,\beta_j\colon G\to H_j$.
Also, assume that $F\sub E$ is complemented
or that $\cE$ satisfies the embedding axiom.
Then the following holds:
\begin{itemize}
\item[\rm(a)]
If $G$ is $\cE$-semiregular,
then also $H$ is $\cE$-semiregular.
\item[\rm(b)]
If smooth functions act smoothly on $AC_\cE$
and $G$ is $\cE$-regular, then also $H$ is $\cE$-regular.
\end{itemize}
\end{prop}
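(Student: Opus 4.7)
The basic idea is to transplant the $G$\nobreakdash-evolution of a curve in $L(H)$, show it automatically stays in $H$, and then recognize it as the $H$\nobreakdash-evolution. Given $\gamma\in \cE([0,1],L(H))$, use the bifunctor axiom (B1) for the inclusion $\iota\colon L(H)\hookrightarrow L(G)$ to regard $\gamma$ as an element of $\cE([0,1],L(G))$ (writing $\iota_*\gamma:=\cE([0,1],\iota)(\gamma)$). Since $G$ is $\cE$-semiregular, $\eta:=\Evol_G(\iota_*\gamma)\in AC_\cE([0,1],G)$ exists.

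The first core step is to show $\eta([0,1])\sub H$. Fix $j\in J$; then by Lemma~\ref{actonelts}, $\alpha_j\circ\eta,\beta_j\circ\eta\in AC_\cE([0,1],H_j)$, and they agree at $t=0$ (both equal $e$). I will verify the pushforward rule $\delta^\ell(\alpha_j\circ\eta)=[L(\alpha_j)\circ\gamma^{TG}]$ for any measurable representative $\dot\eta=[\gamma^{TG}]$: from the identity $\alpha_j(g)^{-1}T\alpha_j(v)=T\alpha_j(g^{-1}v)$ (left translation intertwines $\alpha_j$ because $\alpha_j$ is a homomorphism) one obtains $\omega_\ell^{H_j}\circ T\alpha_j=L(\alpha_j)\circ\omega_\ell^G$, and applying this fibrewise to a representative of $\dot\eta$ gives the stated formula. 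The same holds for $\beta_j$. Now differentiating the identity $\alpha_j|_H=\beta_j|_H$ at $e$ yields $L(\alpha_j)|_{L(H)}=L(\beta_j)|_{L(H)}$, so $L(\alpha_j)\circ\gamma=L(\beta_j)\circ\gamma$ as measurable $H_j$\nobreakdash-valued maps (here $\gamma$ is valued in $L(H)$). Hence $\delta^\ell(\alpha_j\circ\eta)=\delta^\ell(\beta_j\circ\eta)$, and Lemma~\ref{logarules} forces $\alpha_j\circ\eta=\beta_j\circ\eta$. Ranging over $j\in J$ and using~\eqref{thisgivessub}, $\eta(t)\in H$ for every $t\in[0,1]$.

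Next, I invoke Lemma~\ref{intsubmf} (which applies by the hypothesis that $F$ is complemented in $E$ or that $\cE$ satisfies the embedding axiom) to conclude $\eta\in AC_\cE([0,1],H)$. Its left logarithmic derivative \emph{computed in~$H$} equals $\gamma$: working locally in a submanifold chart $\phi\colon U\to V\sub E$ with $\phi(U\cap H)=V\cap F$, the restriction $\phi|_{U\cap H}$ is a chart of~$H$ and the formulas defining $\dot\eta$ and $\delta^\ell(\eta)$ from Definition~\ref{defetadot} produce the same equivalence class whether computed in the ambient chart or in its restriction. Since $\eta(0)=e$, this proves $H$ is $\cE$-semiregular with $\Evol_H(\gamma)=\eta$.

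For part (b), the identity
\[
\Evol_H=\Evol_G\circ\cE([0,1],\iota)
\]
holds as a map into $AC_\cE([0,1],G)$. The right-hand side is smooth (composition of a continuous linear map with the smooth~$\Evol_G$). By Lemma~\ref{ACHsubACG}, $AC_\cE([0,1],H)$ is a submanifold of $AC_\cE([0,1],G)$ whose submanifold charts identify it with a closed vector subspace of the chart domain of $AC_\cE([0,1],G)$; hence the corestriction of a smooth map landing in $AC_\cE([0,1],H)$ is again smooth (its chart expression is a smooth map into an ambient locally convex space whose image lies in the closed subspace $AC_\cE([0,1],F)$, so the corestriction is smooth). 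Therefore $\Evol_H$ is smooth, and $H$ is $\cE$\nobreakdash-regular. The most delicate step is the first: the careful verification that the left logarithmic derivative is natural under smooth homomorphisms in the measurable setting, since $\dot\eta$ is only defined $\lambda_1$-a.e.\ via local charts, and one must confirm the pushforward formula is independent of the chosen partition and representative before the uniqueness argument of Lemma~\ref{logarules} can be applied.
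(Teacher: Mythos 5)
Your proof is correct and follows essentially the same route as the paper's: evolve $\gamma$ in $G$, use naturality of $\delta^\ell$ under the homomorphisms $\alpha_j,\beta_j$ together with $L(\alpha_j)|_{L(H)}=L(\beta_j)|_{L(H)}$ and Lemma~\ref{logarules} to see the evolution stays in~$H$, pass to $AC_\cE([0,1],H)$ via Lemma~\ref{intsubmf}, and deduce smoothness of $\Evol_H$ from $\lambda\circ\Evol_H=\Evol_G|_{\cE([0,1],L(H))}$ and Lemma~\ref{ACHsubACG}. The extra verifications you supply (naturality of the logarithmic derivative in the measurable setting, and that $\delta^\ell$ computed in $H$ agrees with $\gamma$) are details the paper leaves implicit; only note the small notational slip that $[L(\alpha_j)\circ\gamma^{TG}]$ should read $[L(\alpha_j)\circ\omega_\ell\circ\gamma^{TG}]=L(\alpha_j)\circ\delta^\ell(\eta)$.
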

\begin{proof}
(a) Let $\cg:=L(G)$ and $\ch:=L(H)\sub \cg$.
If $\gamma\in \cE([0,1],\ch)$, then $\gamma\in \cE([0,1],\cg)$
and thus $\eta:=\Evol(\gamma)\in AC_\cE([0,1],G)$
exists. For each $j\in J$,
\[
\alpha_j\circ \eta\quad\mbox{and}\quad \beta_j\circ \eta
\]
are elements of $AC_\cE([0,1], H_j)$.
If $\lambda \colon H\to G$ is the inclusion map,
then
\[
\alpha_j\circ\lambda=\beta_j\circ \lambda,
\]
entailing that $L(\alpha_j)\circ L(\lambda)=L(\beta_j)\circ L(\lambda)$
and hence
\[
L(\alpha_j)|_\ch=L(\beta_j)|_\ch.
\]
Hence
\begin{eqnarray*}
\delta^\ell(\alpha_j\circ \eta)&=& L(\alpha_j)\circ \delta^\ell(\eta)
=L(\alpha_j)\circ \gamma\\
&=&
L(\alpha_j)|_\ch\circ \gamma
=L(\beta_j)|_\ch\circ \gamma=\delta^\ell(\beta_j\circ \eta)
\end{eqnarray*}
and thus $\alpha_j\circ \eta=\beta_j\circ \eta$.
As $j$ was arbitrary, we deduce that
$\eta([0,1])\sub H$. Since $H$ is a submanifold of~$G$
and $\eta\in AC_\cE([0,1],G)$ with $\eta([0,1])\sub H$,
we obtain $\eta\in AC_\cE([0,1],H)$
with Lemma~\ref{intsubmf},
as the assume that~$F$ is complemented in~$E$
or $\cE$ satisfies the embedding axiom.
By construction, $\delta^\ell\eta=\gamma$
and thus $\eta=\Evol(\gamma)\in AC_\cE([0,1],H)$.

(b) By (a), $H$ is $\cE$-semiregular and
\begin{equation}\label{givsit}
\lambda\circ \Evol_H=\Evol_G|_{\cE([0,1],\ch)},
\end{equation}
if $\lambda\colon H\to G$ is the inclusion map
and $\Evol_G\colon \cE([0,1],\cg)\to AC_\cE([0,1],G)$
as well as $\Evol_H\colon \cE([0,1],\ch)\to AC_\cE([0,1],H)$
are the respective evolution maps.
Since $\Evol_G|_{\cE([0,1],\ch)}$ is smooth
and $AC_\cE([0,1],H)\sub AC_\cE([0,1],G)$ is a submanifold
(Lemma~\ref{ACHsubACG}),
we deduce from (\ref{givsit})
that $\Evol_H$ is smooth.
\end{proof}
\begin{rem}\label{equalc}
Assume that $G$ is replaced with a local Lie group
in the preceding proposition, with domain $D_G$ for the
multiplication.
Assume that $H_j$ is a submanifold of~$G$
and that $\alpha_j$, $\beta_j$
are smooth homomorphisms of local groups
from $G$ to local Lie groups $H_j$
such that (\ref{thisgivessub})
holds.
Also, assume that $G$ admits a global chart which restricts to a global chart for~$H$.
If the modelling space of~$H$ is complemented
in that of~$G$ or $\cE$ satisfies
the embedding axiom, then $H$ with
$D_H:=\{(x,y)\in D_G\cap (H\times H)\colon xy\in H\}$
is a local Lie group
and we have:
\begin{itemize}
\item[\rm(a)]
If $G$ is locally $\cE$-semiregular,
then also $H$ is locally $\cE$-semiregular.
\item[\rm(b)]
If smooth functions act smoothly on $AC_\cE$
and $G$ is locally $\cE$-regular, then also $H$ is locally $\cE$-regular.
\end{itemize}
The proof follows the same lines.
\end{rem}
\begin{la}\label{evoldetais}
Let $\cE$ be a bifunctor
on Fr\'{e}chet spaces $($resp., on sequentially complete \emph{(FEP)}-spaces,
resp., on integral complete
locally convex space$)$
which satisfies the locality axiom,
the pushforward axioms, and
such that smooth functions act smoothly on~$AC_\cE$.
Let $G$ be a Lie group $($or local Lie group
with global chart$)$ modelled on
a Fr\'{e}chet space $($resp., a sequentially complete \emph{(FEP)}-space,
resp., an integral complete locally convex space$)$,
with Lie algebra~$\cg$ and neutral element~$e$.
Then the map
\[
\delta^\ell\colon AC_\cE([0,1],G)
\to \cE([0,1],\cg),\quad \eta\mto\delta^\ell(\eta)
\]
is smooth and
\begin{equation}\label{deroflgdr}
d(\delta^\ell)(\eta)=\eta'\quad
\mbox{for all $\eta\in AC_\cE([0,1],\cg)$}
\end{equation}
if we identify the Lie algebra
$T_e AC_\cE([0,1],G)$
with $AC_\cE([0,1],\cg)$ by means of the isomorphism
$dAC_\cE([0,1],\phi)|_{T_eAC_\cE([0,1],G)}$, for $U\sub G$
an open symmetric identity neighbourhood and
$\phi\colon U\to V\sub \cg$ a $C^\infty$-diffeomorphism such that
$d\phi|_\cg=\id_\cg$.
\end{la}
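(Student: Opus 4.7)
The plan is to establish smoothness of $\delta^\ell$ first on the open identity neighbourhood $AC_\cE([0,1],U)$, then use left translations together with the subdivision of $[0,1]$ provided by Lemma~\ref{pbckgp}\,(b) to cover a neighbourhood of every point. Finally, the derivative at $e$ will be computed by direct differentiation in the chart using Lemma~\ref{fordpf2}.

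\emph{Step 1: Smoothness near $e$.} Consider the smooth map
\[
f\colon V\times\cg\to\cg,\quad f(x,v):=\omega_\ell(T\phi^{-1}(x,v)),
\]
which is linear in $v$. For $\eta\in AC_\cE([0,1],U)$, setting $\zeta:=\phi\circ\eta\in AC_\cE([0,1],V)$ and writing $\zeta'=[\gamma_\zeta]$, the definition of $\delta^\ell$ and of $\dot\eta$ gives $\delta^\ell(\eta)=[f\circ(\zeta,\gamma_\zeta)]$. Hence, identifying via the chart $\wt\phi:=AC_\cE([0,1],\phi)$,
\[
\delta^\ell\circ\wt\phi^{-1}(\zeta)=\wt f(\zeta,\zeta'),
\]
where $\wt f\colon C([0,1],V)\times\cE([0,1],\cg)\to\cE([0,1],\cg)$ is the pushforward map, which is smooth by the pushforward axiom (P2) (alternatively by Lemma~\ref{globpfw}). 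Composing with the continuous linear inclusion $AC_\cE([0,1],V)\hookrightarrow C([0,1],V)$ and the continuous linear differentiation operator $AC_\cE([0,1],\cg)\to\cE([0,1],\cg)$, $\zeta\mapsto\zeta'$, shows smoothness of $\delta^\ell$ on $AC_\cE([0,1],U)$.

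\emph{Step 2: Smoothness near an arbitrary $\eta_0$.} Choose a symmetric open identity neighbourhood $W\subseteq U$ with $WW\subseteq U$, and partition $0=t_0<t_1<\cdots<t_n=1$ such that $\eta_0([t_{j-1},t_j])\subseteq g_jW$ where $g_j:=\eta_0(t_{j-1})$. Then
\[
P:=\{\eta\in AC_\cE([0,1],G)\colon g_j^{-1}\eta([t_{j-1},t_j])\subseteq U\ \text{for all $j$}\}
\]
is an open neighbourhood of $\eta_0$. By Lemma~\ref{pbckgp}\,(b), the restriction map $\Phi$ is a smooth diffeomorphism onto a submanifold, and by the locality axiom the map $\cE([0,1],\cg)\to\prod_j\cE([t_{j-1},t_j],\cg)$, $[\gamma]\mapsto([\gamma|_{[t_{j-1},t_j]}])_j$, is a topological isomorphism. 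Since $\delta^\ell$ is invariant under left translation by constant elements and compatible with restriction, we have on $P$
\[
\delta^\ell(\eta)|_{[t_{j-1},t_j]}=\delta^\ell\bigl(g_j^{-1}\eta|_{[t_{j-1},t_j]}\bigr).
\]
The map $\eta\mapsto g_j^{-1}\eta|_{[t_{j-1},t_j]}$ from $P$ to $AC_\cE([t_{j-1},t_j],U)$ is smooth (composition of the smooth restriction homomorphism and left translation in the Lie group $AC_\cE([t_{j-1},t_j],G)$), and Step~1 applied on $[t_{j-1},t_j]$ (which works verbatim) yields smoothness of each component. Hence $\delta^\ell|_P$ is smooth.

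\emph{Step 3: Derivative at $e$.} Applying Lemma~\ref{fordpf2} to the smooth representation $\delta^\ell\circ\wt\phi^{-1}(\zeta)=\wt f(\zeta,\zeta')$ from Step~1 gives, at $\zeta=0$ and for $\xi\in AC_\cE([0,1],\cg)$,
\[
d(\delta^\ell\circ\wt\phi^{-1})(0)(\xi)=[df\circ(0,0,\xi,\xi')].
\]
The hypothesis $d\phi|_\cg=\id_\cg$ gives $T_0\phi^{-1}=\id_\cg$, whence $f(0,v)=\omega_\ell(v)=v$ for $v\in\cg$, while $f(x,0)=\omega_\ell(0_{\phi^{-1}(x)})=0$ for all $x\in V$. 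Thus $\partial_1 f(0,0)=0$ and $\partial_2 f(0,0)=\id_\cg$, so $df(0,0;\xi(t),\xi'(t))=\xi'(t)$, and using the identification $d\wt\phi|_{T_eAC_\cE([0,1],G)}$ of the Lie algebra with $AC_\cE([0,1],\cg)$, we obtain $d(\delta^\ell)(e)(\xi)=\xi'$, which is \eqref{deroflgdr}.

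The main subtlety is Step~2: one must check that the open neighbourhood $P$ is correctly defined so that each $g_j^{-1}\eta|_{[t_{j-1},t_j]}$ genuinely lands in $AC_\cE([t_{j-1},t_j],U)$ for $\eta$ in a full open neighbourhood of $\eta_0$, and that smoothness transfers through the isomorphism of Lemma~\ref{pbckgp}\,(b); continuity of evaluation and point multiplication in $AC_\cE([0,1],G)$ ensure this.
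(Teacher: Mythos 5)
Your argument is correct, and its local core coincides with the paper's: you represent $\delta^\ell$ in the chart as $\zeta\mto\wt f(\zeta,\zeta')$ with $f$ smooth and linear in the second variable, invoke the pushforward axiom (P2) together with the continuous linear inclusion into $C([0,1],\cdot)$ and the differentiation operator, and this is exactly how the paper obtains smoothness on the chart domain. Where you diverge is the globalization and the derivative computation. For globalization the paper uses a single group-theoretic identity: for $\zeta$ in a right translate $W\eta$ it writes $\delta^\ell(\zeta)=\Ad(\eta^{-1})\bigl(\delta^\ell(\zeta\eta^{-1})\bigr)+\delta^\ell(\eta)$ and then only needs smoothness (in fact continuity and linearity) of $[\gamma]\mto[\Ad(\eta^{-1})\circ\gamma]$, supplied by Lemma~\ref{globpfw}; this is shorter and needs no subdivision. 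You instead subdivide $[0,1]$, use left invariance of $\delta^\ell$ under constant translations, the locality axiom, and Lemma~\ref{pbckgp}\,(b); this avoids the adjoint formula entirely but requires the (correct, and correctly checked) openness of your neighbourhood $P$ and the transfer of smoothness through the restriction/product isomorphism. For the derivative at $e$, the paper exploits homogeneity: $\delta^\ell_V(s\eta)/s=\wt\omega(s\eta,[\gamma])\to\wt\omega(0,[\gamma])=\eta'$ as $s\to 0$, while you apply the general derivative formula of Lemma~\ref{fordpf2} plus the chain rule and the observations $f(0,\cdot)=\id_\cg$, $f(\cdot,0)=0$; both computations give \eqref{deroflgdr}, yours being slightly more systematic, the paper's more elementary (it does not even need the full $C^1$-statement of Lemma~\ref{fordpf2}, only continuity of $\wt\omega$). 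The only cosmetic point: like the paper, you implicitly normalize $\phi(e)=0$ when linearizing at the zero curve; this is harmless since the identification in the statement is unaffected, but it is worth stating.
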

\begin{proof}
Step 1.
Assume that we can show that $\delta^\ell|_W$ is smooth
for some identity neighbourhood $W\sub AC_\cE([0,1],G)$.
Then also $\delta^\ell|_{W\eta}$ is smooth, for each $\eta\in AC_\cE([0,1],G)$
(as we now verify) and thus $\delta^\ell$ is smooth.
In fact, for each $\zeta\in W\eta$ we
have
\[
\delta^\ell(\zeta)=\delta^\ell((\zeta \eta^{-1})\eta)
=\Ad(\eta^{-1})(\delta^\ell|_W(\zeta\eta^{-1}))+\delta^\ell(\eta).
\]
As the second summand is constant (i.e., independent of $\zeta$)
and the map\linebreak
$W\eta\to W$, $\zeta\mto \zeta\eta^{-1}$ is smooth,
it only remains to show that the map
\[
h \colon \cE([0,1],\cg)\to \cE([0,1],\cg),\quad
[\gamma]\mto [t\mto \Ad(\eta(t)^{-1})(\gamma(t))]
\]
is smooth.
Now
\[
f\colon G\times \cg\to \cg,\quad f(x,y):=\Ad_x(y)
\]
is a smooth map which is linear in its second argument.
Since $h=\wt{f}(\eta^{-1},.)$
with
\[
\wt{f}\colon C([0,1],G)\times \cE([0,1],\cg)\to \cE([0,1],\cg),\quad
(\tau,[\sigma])\mto [f\circ (\tau,\sigma)],
\]
we deduce with Lemma~\ref{globpfw}
that $h$ is continuous linear and hence smooth.\\[2.3mm]
Step 2. Let $\phi\colon U\to V\sub E$ be a chart of~$G$ with $\phi(e)=0$,
defined an an open symmetric identity neighbourhood~$U$.
Then $U$ is a local Lie group with $D_U:=\{(x,y)\in U\times U\colon xy\in U\}$.
We give $V$ the local Lie group structure
which makes $\phi\colon U\to V$ an isomorphism of local Lie groups.
Consider the smooth map
\[
\omega \colon TV\to L(V),\quad \omega (v)=T\mu(0_{\pi_{TV}(x)^{-1}},v).
\]
groups.
If we identify $TV$ with $V\times E$ and $L(V)=T_0V=\{0\}\times E$
with $E$ (via $(0,y)\mto y$), then $\omega$ becomes the map
\[
\omega\colon V\times E\to E,\quad \omega(x,y)=
d\mu(x^{-1},x;0,y).
\]
For $\eta\in AC_\cE([0,1],V)$ with $\eta'=[\gamma]$,
the left logarithmic derivative is
\[
\delta^\ell_V(\eta)=[\omega \circ (\eta, \gamma)].
\]
Since $\omega$ is linear in its second argument and smooth,
the map
\[
\wt{\omega} \colon C([0,1],V)\times \cE([0,1],E)\to \cE([0,1],E),\quad
\wt{\omega}(\tau,[\sigma]):=[\omega\circ (\tau,\sigma)]
\]
is smooth, by the pushforward axiom (P2).
Hence
\[
\delta^\ell_V\colon AC_\cE([0,1],V)\to \cE([0,1],E),\quad
\eta\mto \wt{\omega}(\eta,\eta')
\]
is smooth.
Now, for $s\not=0$ close to~$0$:
\[
\frac{\delta^\ell_V(s\eta)-\overbrace{\delta^\ell_V(0)}^{=0}}{s}
=\wt{\omega}(s\eta,[\gamma])\to \wt{\omega}(0,[\gamma])
\]
as $s\to 0$.
Since $d\mu(0,0,0,y)=y$ for all $y\in E$, we deduce that
\[
d(\delta^\ell)(0,\eta)=\wt{\omega}(0,[\gamma])=[t\mto d\mu(0,0,0,\gamma(t))]
=[\gamma]=\eta'.
\]
To complete the proof, let us write $\delta^\ell_U$
for the restriction of the map\linebreak
$\delta^\ell\colon AC_\cE([0,1],G)\to \cE([0,1],\cg)$
to $AC_\cE([0,1],V)$.
Then
\[
L(\phi)\circ \delta^\ell_U(\eta)=\delta^\ell_V(\phi\circ \eta)
\]
for all $\eta\in AC_\cE([0,1],U)$ and thus
\[
\cE([0,1],L(\phi))\circ \delta^\ell_U=\delta^\ell_V\circ AC_\cE([0,1],\phi).
\]
Taking the differential at $\eta=e$, we obtain
\[
\cE([0,1],L(\phi))
\circ d\delta^\ell_U
=\delta^\ell_V\circ T_e AC_\cE([0,1],\phi)
\]
on $T_eAC_\cE([0,1],G)$.
Composing with $T_e AC_\cE([0,1],\phi)^{-1}$ on the right
and with $\cE([0,1],L(\phi))^{-1}$ on the left,
(\ref{deroflgdr}) follows.
\end{proof}
\begin{rem}\label{drEvo}
Let $\cE$ be a bifunctor
on Fr\'{e}chet spaces (resp., on sequentially complete (FEP)-spaces,
resp., on integral complete
locally convex spaces)
which satisfies the locality axiom,
the pushforward axioms, and
such that smooth functions act smoothly on~$AC_\cE$.
Let $G$ be a Lie group (or local Lie group
with global chart) modelled on
such a space,
with Lie algebra~$\cg$ and neutral element~$e$.
Since $\delta^\ell(\Evol(\gamma))=\gamma$,
Lemma~\ref{evoldetais}
and the Chain Rule
entail:
\begin{itemize}
\item[(a)]
If $G$ is $\cE$-regular (resp., locally $\cE$ regular),
then
\[
T_0\Evol(\gamma)(t)=\int_0^t \gamma(s)\,ds\quad\mbox{for all $t\in [0,1]$,}
\]
for all $\gamma\in \cE([0,1],\cg)\cong \{0\}\times \cE([0,1],\cg)=T_0\cE([0,1],\cg)$,
identifying $T_e AC_\cE([0,1],G)$ with $AC_\cE([0,1],\cg)$
as in Lemma~\ref{evoldetais}. More generally:
\item[(b)]
If $F\sub\cE([0,1],E)$ is a vector subspace
and $\Omega\sub F$ an open $0$-neighbour\-hood
for some locally convex vector topology on~$F$
such that $\Evol(\gamma)\in AC_\cE([0,1],G)$
exists for all $\gamma\in \Omega$ and
\[
\Evol_\Omega \colon \Omega\to AC_\cE([0,1],G),\quad \gamma\mto\Evol(\gamma)
\]
is $C^1$, then $T_0(\Evol_\Omega)(\gamma)(t)=\int_0^t\gamma(s)\,ds$
for all $t\in [0,1]$ and $\gamma\in F\cong \{0\}\times F=T_0F$.
More generally:
\item[(c)]
If $\gamma\in \cE([0,1],\cg)$
such that $\Evol(r\gamma)\in AC_\cE([0,1],G)$
exists for all $r$ in a non-degenerate
interval $J\sub \R$ with $0\in J$
and
\[
\frac{d}{dr}\Big|_{r=0}\Evol(r\gamma)
\]
exists,\footnote{We require that the limit exists
in some chart for $AC_\cE([0,1],G)$ around~$e$
(and hence in every chart around~$e$, by Lemma~\ref{chainpw}).}
then
\[
\left(\frac{d}{dr}\Big|_{r=0}\Evol(r\gamma)\right)(t)=\int_0^t\gamma(s)\,ds,
\]
identifying $T_eAC_\cE([0,1],G)$ with $AC_\cE([0,1],\cg)$ as above.\\[2mm]
[Proof: Abbreviate $\eta:=\frac{d}{dt}\Big|_{r=0}\Evol(r\gamma)$.
Since $\delta^\ell(\Evol(r\gamma))=r\gamma$
Lemma~\ref{chainpw} and (\ref{deroflgdr}) entail that
\[
\gamma=\frac{d}{dr}\Big|_{r=0}\delta^\ell(\Evol(r\gamma))
=d(\delta^\ell)(\eta)=\eta'
\]
and thus $\eta(t)=\eta(0)+\int_0^t\gamma(s)\,ds$.
Since $\Evol(\gamma)\in AC_\cE([0,1],\cg)_*$,
we have $\eta\in T_0AC_\cE([0,1],\cg)_*=\{\zeta\in AC_\cE([0,1],\cg)\colon
\zeta(0)=0\}$ (using the above identification).
Thus $\eta(t)=\int_0^t\gamma(s)\,ds$.]
\end{itemize}
\end{rem}
\begin{numba}
Let $\K\in \{\R,\C\}$.
If $\K=\R$, let $r\in \N\cup\{\infty,\omega\}$;
if $\K=\C$, let $r=\omega$.
Following \cite{SUB},
a $C^r_\K$-map $f\colon M\to N$ between
$C^r_\K$-manifolds modelled on locally convex topological $\K$-vector spaces
$E$ and $F$ is called a \emph{$C^r_\K$-submersion}
if, for each $x\in M$, there exists
a chart $\phi\colon U_\phi\to V_\phi\sub E$ of~$M$
with $x\in U_\phi$
and a chart $\psi\colon U_\psi\to V_\psi\sub F$ of~$N$ such that
$f(U_\phi)\sub U_\psi$ and
\[
\psi\circ f\circ \phi^{-1}=\pi|_{V_\phi}
\]
for a continuous linear map $\pi\colon E\to F$ which admits a continuous
linear right inverse $\sigma\colon F\to E$ (i.e., $\pi\circ \sigma=\id_F$).
\end{numba}
\begin{numba}
Assume $r\in \{\infty,\omega\}$ if $\K=\R$ and $r=\omega$
if $\K=\C$. It is known that
a surjective $C^r_\K$-homomorphism $q\colon G\to Q$
between $C^r_\K$-Lie groups is a $C^r_\K$-submersion
if and only if $N:=\ker(q)$
is a $C^r_\K$-Lie subgroup of~$G$ and
$q\colon G\to Q$ is an $N$-principal
bundle of class $C^r_\K$,
i.e., it admits local $C^r_\K$-sections (cf.\ \cite{SUB}).
\end{numba}
Our next proposition subsumes Theorem~G.
\begin{prop}\label{regextensions}
Let $\cE$ be a bifunctor
on Fr\'{e}chet spaces $($resp., on sequentially complete \emph{(FEP)}-spaces,
resp., on integral complete
locally convex space$)$
which satisfies the locality axiom,
the pushforward axioms, the subdivision property,
and such that smooth functions act smoothly on~$AC_\cE$.
Let
$G$ be a Lie group.
Assume that $N\sub G$ is a normal
Lie subgroup
such that $G/N$ can be given
a smooth Lie group structure making
\[
q\colon G\to G/N,\quad x\mto xN
\]
a smooth submersion
such that both $N$ and $G/N$ are modelled
on a Fr\'{e}chet space $($resp., a sequentially complete \emph{(FEP)}-space,
resp., an integral complete locally convex space$)$
and both $N$ and $G/N$ are $\cE$-regular.
Then also $G$ is modelled on
a Fr\'{e}chet space $($resp., a sequentially complete \emph{(FEP)}-space,
resp., an integral complete locally convex space$)$
and $G$ is $\cE$-regular.
\end{prop}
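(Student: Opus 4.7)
The plan is to reduce $\cE$-regularity of $G$ to that of $N$ and $Q:=G/N$ by solving the left evolution equation in two stages, first on $Q$ and then on~$N$, and to promote the resulting local $\cE$-regularity to global $\cE$-regularity via Proposition~\ref{lctoglb} (which applies because $\cE$ has the subdivision property by hypothesis). First I would address the modelling space. Since $q\colon G\to Q$ is a smooth submersion, the tangent map $L(q)\colon \cg\to L(Q)$ admits a continuous linear right inverse $s\colon L(Q)\to \cg$, and $\ker L(q)=L(N)$ because $N=\ker q$. Hence
\[
\cg\;=\;L(N)\oplus s(L(Q))
\]
as a topological direct sum, so $\cg$ inherits the Fr\'{e}chet (resp., sequentially complete (FEP), resp., integral complete locally convex) property from $L(N)$ and $L(Q)$, and $L(N)$ is complemented in~$\cg$. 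By Lemma~\ref{ACHsubACG}, $AC_\cE([0,1],N)$ is therefore a $C^\infty$-submanifold (and Lie subgroup) of $AC_\cE([0,1],G)$.

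Next, I would carry out the local construction. Choose a smooth local section $\tau\colon U\to G$ of~$q$ with $\tau(e_Q)=e$, defined on an open identity neighbourhood $U\sub Q$. For $\gamma\in\cE([0,1],\cg)$, set $\bar\gamma:=L(q)\circ \gamma\in \cE([0,1],L(Q))$ (axiom (B1)) and $\bar\eta:=\Evol_Q(\bar\gamma)\in AC_\cE([0,1],Q)$, and shrink to an open $0$-neighbourhood $\Omega\sub\cE([0,1],\cg)$ small enough that $\bar\eta([0,1])\sub U$ for every $\gamma\in\Omega$ (using continuity of $\Evol_Q$ into $C([0,1],Q)$). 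Define
\[
\tilde\eta:=\tau\circ\bar\eta\in AC_\cE([0,1],G)
\]
via Lemma~\ref{actonelts}; the assignment $\gamma\mto\tilde\eta$ is smooth because smooth functions act smoothly on~$AC_\cE$ and $\Evol_Q$ is smooth. Since $q\circ\tilde\eta=\bar\eta$, we get $L(q)\circ\delta^\ell(\tilde\eta)=\bar\gamma=L(q)\circ\gamma$, so $\alpha:=\gamma-\delta^\ell(\tilde\eta)\in\cE([0,1],\cg)$ takes values almost everywhere in $L(N)$. Applying axiom (B1) to the continuous linear projection $p\colon\cg\to L(N)$ induced by the splitting, I would view $\alpha\in\cE([0,1],L(N))$, and the assignment $\gamma\mto\alpha$ is smooth because $\delta^\ell$ is smooth by Lemma~\ref{evoldetais}. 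Normality of~$N$ makes the map $f\colon G\times L(N)\to L(N)$, $(g,v)\mto \Ad(g)v$ smooth and linear in~$v$, so by Lemma~\ref{globpfw} the element
\[
\beta:=[t\mto\Ad(\tilde\eta(t))\alpha(t)]\in\cE([0,1],L(N))
\]
depends smoothly on~$\gamma$. Using $\cE$-regularity of~$N$, set $\sigma:=\Evol_N(\beta)\in AC_\cE([0,1],N)\sub AC_\cE([0,1],G)$ and $\eta:=\sigma\cdot\tilde\eta$, computed via the smooth group multiplication of $AC_\cE([0,1],G)$.

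The product rule (\ref{nicef1}) of Lemma~\ref{logarules} gives
\[
\delta^\ell(\eta)=\Ad(\tilde\eta^{-1})(\delta^\ell\sigma)+\delta^\ell(\tilde\eta)=\Ad(\tilde\eta^{-1})\beta+\delta^\ell(\tilde\eta)=\alpha+\delta^\ell(\tilde\eta)=\gamma,
\]
and clearly $\eta(0)=e$, so $\eta=\Evol_G(\gamma)$. The assignment $\gamma\mto\eta$ is smooth on~$\Omega$ as a composition of smooth maps, so $G$ is locally $\cE$-regular; Proposition~\ref{lctoglb} then yields the desired $\cE$-regularity. The main technical subtlety is the passage from ``$\alpha$ takes values in $L(N)$'' to ``$\alpha\in\cE([0,1],L(N))$'' as a genuine element of the $\cE$-space of the subspace rather than merely of~$\cg$; this is exactly where the automatic complementedness of $L(N)$ in~$\cg$ (furnished by the submersion hypothesis) is used, allowing us to pass through axiom (B1) applied to the projection~$p$ without relying on the embedding axiom, which is not part of the hypotheses of the proposition.
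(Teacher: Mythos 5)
Your proposal is correct and follows essentially the same route as the paper's proof: solve the evolution equation in $Q=G/N$, lift via a smooth local section, use complementedness of $L(N)$ (from the submersion) and normality to twist the residual $\cE([0,1],L(N))$-valued datum by $\Ad$ of the lift, correct with $\Evol_N$, and pass from local to global $\cE$-regularity via the subdivision property and Proposition~\ref{lctoglb}. The only (harmless) differences are presentational: you make explicit the normalization $\tau(e_Q)=e$ and the appeal to Lemma~\ref{ACHsubACG}, which the paper handles implicitly or via its chart computation.
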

\begin{proof}
Let $\cg:=L(G)$, $\cn:=L(N)$ and $\cq:=L(Q)$;
thus $L(q)\colon \cg\to \cq$ is a continuous
linear map with kernel $\cn$ admitting a continuous
linear right inverse, entailing that
\[
\cg\cong \cn\oplus \cq
\]
is a Fr\'{e}chet space (resp., a sequentially complete
(FEP)-space, resp., an integral complete locally
convex space).
Let
\[
\phi\colon U\to V
\]
be a chart for~$G$, defined
on an open symmetric identity neighbourhood
$U\sub G$.
Let $W\sub Q$ be an open symmetric identity neighbourhood
on which a smooth section $\sigma\colon W\to G$ is defined
(thus $q\circ \sigma=\id_W$).
After shrinking~$W$, we may assume that $\sigma(W)\sub U$
and that there is a chart $\psi\colon W\to W_1$
for $Q$ defined on~$W$.
By hypothesis, we have smooth evolution maps
\[
\Evol_H\colon \cE([0,1],\ch)\to AC_\cE([0,1],H)
\]
and $\Evol_Q\colon \cE([0,1],\cq)\to AC_\cE([0,1],Q)$.
Since $\Evol_Q$ is continuous,
there is an open $0$-neighbourhood $P\sub \cE([0,1],\cq)$ such that
\[
\Evol_Q(\gamma)\in AC_\cE([0,1],W)\quad\mbox{for all $\gamma\in P$.}
\]
Then
\[
\Omega:=\{\gamma \in \cE([0,1],\cg)\colon L(q)\circ \gamma\in P\}
\]
is an open $0$-neighbourhood in $\cE([0,1],\cg)$ such that
\[
\Evol_Q(L(q)\circ \gamma)\in AC_\cE([0,1],W)
\]
for all $\gamma\in \Omega$. Then
\[
\zeta:=\sigma\circ \Evol_Q(L(q)\circ \gamma)\in AC_\cE([0,1],U)\sub AC_\cE([0,1],G)
\]
and
\[
L(q)\circ \delta^\ell(\zeta)=
\delta^\ell (q\circ \zeta)=
\delta^\ell \Evol_Q(L(q)\circ\gamma)=L(q)\circ \gamma,
\]
entailing that the function
\[
\gamma-\delta^\ell(\zeta)
\]
takes its values in $\ker L(q)=\cn$.
As $\cn$ is complemented in~$\cg$ and
$\gamma-\delta^\ell(\zeta)\in \cE([0,1],\cg)$,
we obtain
\[
\tau:=\gamma-\delta^\ell(\zeta)\in \cE([0,1],\ch).
\]
For $\theta\in AC_\cE([0,1],\cn)$, we have
\begin{eqnarray}
\delta^\ell(\theta\zeta) =  \gamma &\Leftrightarrow&
\Ad(\zeta)^{-1}(\delta^\ell(\theta))+\delta^\ell(\zeta)=\gamma\notag\\
&\Leftrightarrow& \delta^\ell\theta=\Ad(\zeta)(\tau).\label{equiexta}
\end{eqnarray}
Note that $\Ad(g).\cn\sub \cn$ for each $g\in G$ since $N$
is a normal Lie subgroup of~$G$ (see \cite{GaN}) and that
\[
f\colon G\times \cn \to\cn,\quad f(g,v):=\Ad_g(v)
\]
is a smooth map which is linear in the second argument.
By the pushforward axiom (P2), the associated map
\[
\wt{f}\colon C([0,1],U)\times \cE([0,1],\cn)\to \cE([0,1],\cn),\quad
\wt{f}(\alpha,[\beta]):= [f\circ (\alpha,\beta)]
\]
is smooth.
In particular, we have
\[
\Ad(\zeta)(\tau)
=\wt{f}(\zeta,\tau)\in \cE([0,1],\cn),
\]
enabling us to define
\[
\theta:=\Evol_N(\Ad(\zeta)(\tau)).
\]
Then $\theta\zeta=\Evol_G(\gamma)$, by (\ref{equiexta}),
and thus $G$ is locally $\cE$-semiregular.
Note that the map
\[
g\colon \Omega\to AC_\cE([0,1], G),\quad
\gamma\mto \zeta=\sigma\circ \Evol_Q(\cE([0,1],L(q))(\gamma))
\]
is smooth. We use here the hypothesis that
smooth functions act smoothly on $AC_\cE$;
hence
\[
AC_\cE([0,1], \phi\colon \sigma\circ \psi^{-1})\colon
AC_\cE([0,1],W_1)\to AC_\cE([0,1],V)
\]
is smooth and hence also the map
$AC_\cE([0,1],W)\to AC_\cE([0,1],U)$, $\alpha\mto \sigma\circ \alpha$,
which is the following composition of smooth maps:
\[
AC_\cE([0,1],\phi)^{-1}\circ
AC_\cE([0,1], \phi\colon \sigma\circ \psi^{-1})\circ AC_\cE([0,1],\psi).
\]
The map
\[
\delta^\ell\colon AC_\cE([0,1],U)\to \cE([0,1],\cg)
\]
is smooth (see Lemma~\ref{evoldetais}), entailing that
\[
h \colon \Omega\to \cE([0,1],\cg),\quad \gamma\mto \tau:=\gamma-\delta^\ell(\zeta)
=\gamma-\delta^\ell(g(\gamma))
\]
is smooth. As $\cn$ is complemented in~$\cn$,
we may consider $g$ as a map to $\cE([0,1],\cn)$.
Now the formula
\[
\Evol(\gamma)=\theta\zeta=\Evol_N(\wt{f}(g(\gamma),h(\gamma)))g(\gamma)
\]
shows that $\Evol\colon \Omega\to AC_\cE([0,1],G)$
is smooth.
Hence $G$ is locally $\cE$-regular.
As we assume that $\cE$ satisfies the subdivision
property, we deduce with Proposition~\ref{lctoglb}
that $G$ is $\cE$-regular.
\end{proof}
As in the study of $C^k$-semiregularity \cite{SEM},
it is very useful for refined results
to have a group structure on $\cE([0,1],\cg)$
available if $G$ is $\cE$-semiregular.
\begin{defn}
Let $\cE$ be a bifunctor
on Fr\'{e}chet spaces (resp., sequentially complete (FEP)-spaces,
resp., integral complete
locally convex spaces)
which satisfies the locality axiom,
the pushforward axioms, and
such that smooth functions act smoothly on~$AC_\cE$.
Let $G$ be a Lie group modelled on
such a space,
with Lie algebra~$\cg$ and neutral element~$e$.
If $G$ is $\cE$-semiregular, then the map
\[
\delta^\ell\colon AC_\cE([0,1],G)_*\to \cE([0,1],\cg)
\]
is a bijection such that $\delta^\ell(e)=0$.
We can therefore make
$\cE([0,1],\cg)$ a group with group multiplication $\odot$
and neutral element~$0$ in such a way that
$\delta^\ell\colon AC_\cE([0,1],G)_*\to \cE([0,1],\cg)$ becomes
an isomorphism of groups.
Then also
\[
\Evol=(\delta^\ell)^{-1} \colon \cE([0,1],\cg) \to AC_\cE([0,1],G)_*
\]
is an isomorphism of groups.
We have
\[
[\gamma_1 ] \odot [\gamma_2] = [\Ad(\Evol([\gamma_2]))^{-1}\gamma_1 +
\gamma_2]
\]
\[
[\gamma_1]\odot[\gamma_2]^{-1} = [\Ad(\Evol([\gamma_2]))(\gamma_1-\gamma_2)]
\]
and
\[
[\gamma]^{-1} = -[\Ad(\Evol([\gamma]))(\gamma)]
\]
for all $[\gamma]$, $[\gamma_1]$, $[\gamma_2]\in\cE([0,1],\cg)$.
For fixed $[\gamma_2]\in \cE([0,1],\cg)$, abbreviate $\eta:=
\Evol([\gamma_2])^{-1}$.
Then right translation with $[\gamma_2]$ in $(\cE([0,1],\cg),\odot)$ is
the affine-linear map
$\rho_{[\gamma_2]}\colon \cE([0,1],\cg)\to \cE([0,1],\cg)$,
\[
[\gamma_1]\mto [\gamma_1]\odot [\gamma_2]=
[\Ad(\eta)\gamma_1]+[\gamma_2]
=\wt{f}(\eta,[\gamma_1])+[\gamma_2]
\]
with $f\colon G\times \cg\to \cg$, $f(x,y):=\Ad_x(y)$
and
\[
\wt{f}\colon C([0,1],G)\times \cE([0,1],\cg)\to\cE([0,1],\cg),\quad
(\sigma,[\tau])\mto [f\circ (\sigma,\tau)].
\]
The pushforward axioms and locality axiom
entail that $\wt{f}$ is smooth (see Lemma~\ref{globpfw})
and hence continuous.
As a consequence, the affine-linear map
$\rho_{[\gamma_2]}$ is continuous
and hence homeomorphism
(as $\rho_{[\gamma_2]^{-1}}$ is continuous by the same
argument).
If $G$ is a Lie group over $\K\in \{\R,\C\}$,
then the affine $\K$-linear map $\rho_{[\gamma_2]}$
is a $\K$-analytic diffeomorphism $\cE([0,1],\cg)\to\cE([0,1],\cg)$.
\end{defn}
\begin{defn}\label{odotloca}
Let $\cE$ be a bifunctor
on Fr\'{e}chet spaces (resp., on sequentially complete (FEP)-spaces,
resp., on integral complete
locally convex spaces)
which satisfies the locality axiom,
the pushforward axioms, and
such that smooth functions act smoothly on~$AC_\cE$.
Let $G$ be a local Lie group with global chart modelled on
such a space,
with Lie algebra~$\cg$ and neutral element~$e$.
Let $D_G$ be the domain
of the multiplication of~$G$
and $D_{\wt{G}}=AC_\cE([0,1],D_G)$
be the domain of the multiplication in
$AC_\cE([0,1],G)$.
If $G$ is locally $\cE$-semiregular, then
\[
\delta^\ell\colon AC_\cE([0,1],G)_*\to \cE([0,1],\cg)
\]
is an injective smooth map with $\delta^\ell(e)=0$
whose image contains an open $0$-neighbourhood $\Omega\sub \cE([0,1],\cg)$.
If $G$ is locally $\cE$-semiregular
and
\[
\Evol\colon \Omega\to AC_\cE([0,1],G)
\]
is continuous, then $W:=\{\eta\in AC_\cE([0,1],G)_*\colon
\delta^\ell(\eta), \delta^\ell(\eta^{-1})\in \Omega\}$
is an open identity neighbourhood
in $AC_\cE([0,1],G)$. After replacing $\Omega$
with its open subset $\delta^\ell(W)$,
we may assume that $\Omega=\delta^\ell(W)$
and thus
\[
\delta^\ell|_W \colon W\to\Omega
\]
is a homeomorphism. Consider $W$ as a local Lie group
with $D_W:=\{(\eta_1,\eta_2)\in D_{\wt{G}}\cap (W\times W)\colon \eta_1\eta_2
\in W\}$.
We give $\Omega$ the local topological group structure
which makes $\delta^\ell|_W$ an isomorphism
of local topological groups,
and write $\odot$ for the local multiplication
on~$\Omega$. Then $\odot$ is given by formulas
as in the Lie group case.
In particular, for each $[\gamma_2]\in \Omega$, we have
\[
\rho_{[\gamma_2]}([\gamma_1])=[\Ad(\Evol([\gamma_2]))^{-1}\gamma_1+\gamma_2]
\]
for $[\gamma_1]$ in some open $0$-neighbourhood in~$\Omega$,
and this is the restriction of an invertible affine-linear continuous
map (with inverse of analogous form).
If $G$ is locally $\cE$-regular, then we may assume that
$\Evol\colon \Omega\to W\sub AC_\cE([0,1],G)_*$ is
smooth. Thus $\delta^\ell\colon W\to \Omega$ is a $C^\infty$-diffeomorphism
and thus $\Omega$ (with $\odot$) is a smooth local Lie group.
\end{defn}
\begin{prop}\label{evocxanl}
Let $\cE$ be a bifunctor
on Fr\'{e}chet spaces $($resp., on sequentially complete \emph{(FEP)}-spaces,
resp., on integral complete
locally convex spaces$)$
which satisfies the locality axiom,
the pushforward axioms, and
such that smooth functions act smoothly on~$AC_\cE$.
Let
$G$ be a $\cE$-regular complex analytic Lie group
modelled on
a complex Fr\'{e}chet space $($resp., a sequentially complete complex \emph{(FEP)}-space,
resp., an integral complete complex locally convex space$)$~$E$,
with Lie algebra~$\cg$.
Then
\[
\Evol\colon \cE([0,1],\cg)\to AC_\cE([0,1],G)
\]
is complex analytic.
If $G$ is a locally $\cE$-regular complex analytic
local Lie group modelled on~$E$ with a global chart
and $\Omega\sub \cE([0,1],\cg)$ an open $0$-neighbourhood
such that $\Evol$ is defined on $\Omega$,
\[
\Evol\colon \Omega\to AC_\cE([0,1],G)
\]
is smooth and ${-}[\Ad(\Evol([\gamma]))(\gamma)]\in \Omega$
for each $[\gamma]\in\Omega$,
then $\Evol$ is complex analytic on~$\Omega$.
\end{prop}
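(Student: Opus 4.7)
The plan is to use the characterization from \ref{defcana} and the subsequent remarks: a smooth map between complex locally convex spaces is complex analytic if and only if its differential is complex linear at every point. Since $\Evol$ is already known to be smooth by the hypothesis of $\cE$-regularity (respectively, by the assumption that $\Evol\colon \Omega\to AC_\cE([0,1],G)$ is smooth in the local case), the only remaining task is to verify complex linearity of $d\Evol([\gamma_0],\cdot)$ at each point~$[\gamma_0]$ of the domain.

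First I would handle the base point $[\gamma_0]=0$. By Remark~\ref{drEvo}\,(a) (respectively, Remark~\ref{drEvo}\,(b) in the local case), the tangent map satisfies
\[
T_0\Evol(\gamma)(t)=\int_0^t\gamma(s)\,ds,
\]
and this integration operator $\cE([0,1],\cg)\to AC_\cE([0,1],\cg)$ is visibly complex linear. Thus $d\Evol(0,\cdot)$ is complex linear. Next I would transport this fact to an arbitrary base point via the intertwining relation coming from the homomorphism property of $\Evol\colon(\cE([0,1],\cg),\odot)\to AC_\cE([0,1],G)_*$, namely
\[
\Evol\circ\rho_{[\gamma_2]}=\rho^G_{\Evol([\gamma_2])}\circ\Evol,
\]
where $\rho_{[\gamma_2]}$ is right translation in $(\cE([0,1],\cg),\odot)$ and $\rho^G_\eta$ is right translation in $AC_\cE([0,1],G)$. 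The key observation is that $\rho_{[\gamma_2]}$ is an affine map
\[
[\gamma_1]\mapsto[\Ad(\Evol([\gamma_2]))^{-1}\gamma_1]+[\gamma_2],
\]
and since $G$ is complex analytic, each operator $\Ad(g)\colon\cg\to\cg$ is complex linear, so $\rho_{[\gamma_2]}$ is a complex analytic diffeomorphism with complex linear differential. The translation $\rho^G_{\Evol([\gamma_2])}$ is complex analytic because $G$ (and hence $AC_\cE([0,1],G)$) is a complex analytic Lie group.

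Differentiating the intertwining identity at $0\in\cE([0,1],\cg)$ and solving for $d\Evol([\gamma_2],\cdot)$ gives
\[
d\Evol([\gamma_2],w)=d\rho^G_{\Evol([\gamma_2])}\bigl(\Evol(0),\,d\Evol(0,\,d\rho_{[\gamma_2]}(0,\cdot)^{-1}(w))\bigr),
\]
which exhibits $d\Evol([\gamma_2],\cdot)$ as a composition of complex linear maps. Hence $d\Evol([\gamma_0],\cdot)$ is complex linear at every $[\gamma_0]$ in the domain, and complex analyticity of $\Evol$ follows from the criterion recalled above.

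For the local Lie group case, the same argument applies on $\Omega$: the assumption that $-[\Ad(\Evol([\gamma]))(\gamma)]\in\Omega$ for each $[\gamma]\in\Omega$ guarantees that $[\gamma]^{-1}$ (with respect to $\odot$) lies in $\Omega$, so the right translations $\rho_{[\gamma_2]}$ and their inverses are available on suitable open neighbourhoods as described in Definition~\ref{odotloca}. The main subtlety will be keeping track of the domains on which the local translations and the intertwining identity are simultaneously valid, but since all maps involved are continuous and we only need the identity on a neighbourhood of $0$ for each fixed $[\gamma_2]\in\Omega$, a straightforward shrinking argument suffices. I do not expect a genuinely hard step: once the homomorphism property of $\Evol$ and the complex linearity of $\Ad$ are in place, everything reduces to a differentiation of the intertwining identity.
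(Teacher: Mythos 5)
Your proposal is correct and follows essentially the same route as the paper: it establishes complex linearity of $T_0\Evol$ via the integration operator (Remark~\ref{drEvo}), then transports this to an arbitrary base point using the homomorphism property of $\Evol$ with respect to $\odot$ and the fact that right translation $\rho_{[\gamma_2]}$ is the restriction of a continuous complex affine-linear map, concluding by the criterion that a smooth map with complex linear differentials is complex analytic. The handling of the local case via the hypothesis ${-}[\Ad(\Evol([\gamma]))(\gamma)]\in\Omega$ and Definition~\ref{odotloca} also matches the paper's argument.
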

\begin{proof}
If $G$ is a $\cE$-regular complex analytic Lie group,
set $\Omega:=\cE([0,1],G)$;
if $G$ is a locally $\cE$-regular complex analytic local Lie group,
let $\Omega\sub \cE([0,1],\cg)$ be an open $0$-neighbourhood
which is a smooth local Lie group with multiplication $\odot$
as in Definition~\ref{odotloca}.
Since $\Omega$
is open subset of the complex locally convex space
$\cE([0,1],\cg)$, we can consider it as an open complex
analytic submanifold. The tangent map $T_0\Evol$
is complex linear, as it corresponds to
the integration operator $\gamma\mto (t\mto\int_0^t\gamma(s)\,ds)$
(see Remark~\ref{drEvo}\,(a)).
Let $\rho_{[\gamma]^{-1}}$ be the right translation map
$[\gamma_1]\mto [\gamma_1]\otimes[\gamma]^{-1}$
with $[\gamma]\in \Omega$
and $r_\eta$ be the right translation map
of $W\sub aC_\cE([0,1],G)_*$ with $\eta:=\Evol([\gamma])$.
Since $\Evol$ is a smooth group homomorphism (resp.,
homomorphism of local Lie groups), we have
\[
\Evol=r_\eta\circ \Evol\circ \rho_{[\gamma]}^{-1}
\]
on some open neighbourhood of $[\gamma]$ and hence
\[
T_{[\gamma]}\Evol=T_e(r_\eta)\circ T_0(\Evol)\circ T_{[\gamma]}(\rho_{[\gamma]^{-1}})
\]
which is complex linear (recalling that $\rho_{[\gamma^{-1}]}$
is the restriction of a complex affine-linear
continuous map).
Thus $\Evol$ is a smooth map between
complex analytic manifolds such that
the tangent map $T_{[\gamma]}\Evol$ is complex linear at each
$[\gamma]$ in its domain.
Therefore, $\Evol$ is complex analytic (cf.\ \cite{RES}).
\end{proof}
\begin{la}\label{realvscxlcly}
Let $\cE$ be a bifunctor
on Fr\'{e}chet spaces $($resp., on sequentially complete \emph{(FEP)}-spaces,
resp., on integral complete
locally convex spaces$)$
which satisfies the locality axiom,
the pushforward axioms, and
such that smooth functions act smoothly on~$AC_\cE$.
Let
$G$ be a real analytic local Lie group
which is an open subset of a
Fr\'{e}chet space $($resp., sequentially complete \emph{(FEP)}-space,
resp., integral complete locally convex space$)$ $E$,
with Lie algebra $\cg$.
Let $\wt{G}\sub E_\C$ be an open subset
which is a complex
analytic local Lie group
with $G\sub \wt{G}$, such that the inclusion map
$G\to \wt{G}$ is a homomorphism of
real analytic local Lie groups.
Then the following conditions are equivalent:
\begin{itemize}
\item[\rm(a)]
$G$ is locally $\cE$-regular
and there is an open $0$-neighbourhood $\Omega$ in\linebreak
$\cE([0,1],\cg)$
such that each $[\gamma]\in \Omega$
has an evolution $\Evol_G(\gamma)$ in~$G$
and
\[
\Evol_G\colon \Omega\to AC_\cE([0,1],G)
\]
is real analytic.
\item[\rm(b)]
$\wt{G}$ is locally $\cE$-regular.
\end{itemize}
\end{la}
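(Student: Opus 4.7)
The plan is to exploit the complex conjugation $\sigma\colon E_\C\to E_\C$, $x+iy\mto x-iy$ (for $x,y\in E$), a real-linear continuous involution with fixed-point set~$E$. After possibly shrinking $\tilde{G}$ to a smaller open $0$-neighborhood containing~$G$ (which preserves local $\cE$-regularity, since a smaller evolution is obtained by restricting the domain $\tilde{\Omega}$), I may assume $\sigma(\tilde{G})=\tilde{G}$ and $\tilde{G}\cap E=G$: the first by intersecting with $\sigma(\tilde{G})$, and the second by removing the closure (in $E_\C$) of $E\setminus G$, which is disjoint from~$G$ since $E$ is closed in~$E_\C$ and~$G$ is open in~$E$. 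Applying the identity principle to the anti-holomorphic maps $\sigma\circ\tilde{m}$ and $\tilde{m}\circ(\sigma\times\sigma)$ on $D_{\tilde{G}}$---which coincide on the totally real set $D_{\tilde{G}}\cap(G\times G)$ since $\sigma|_G=\id_G$ and $\tilde{m}$ restricts to the $G$-valued multiplication $m_G$ there---shows that $\sigma$ is a smooth homomorphism of local Lie groups, with tangent map $L(\sigma)$ equal to complex conjugation on~$\cg_\C$, fixing $\cg$ pointwise.

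For (b)$\Rightarrow$(a), Proposition~\ref{evocxanl} makes $\Evol_{\tilde{G}}\colon\tilde{\Omega}\to AC_\cE([0,1],\tilde{G})$ complex analytic. Naturality of $\delta^\ell$ under the smooth homomorphism~$\sigma$ (via Lemma~\ref{actonelts} and the formula $\delta^\ell(\sigma\circ\eta)=L(\sigma)\circ\delta^\ell\eta$), together with the uniqueness clause in Lemma~\ref{logarules} for local Lie groups, yields $\sigma\circ\Evol_{\tilde{G}}([\gamma])=\Evol_{\tilde{G}}([L(\sigma)\circ\gamma])$ for every $[\gamma]\in\tilde{\Omega}$. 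For $[\gamma]\in\Omega:=\tilde{\Omega}\cap\cE([0,1],\cg)$ one has $L(\sigma)\circ\gamma=\gamma$, so $\Evol_{\tilde{G}}([\gamma])$ is pointwise $\sigma$-fixed and therefore takes values in $E\cap\tilde{G}=G$. Thus $\Evol_{\tilde{G}}|_\Omega$ maps into $AC_\cE([0,1],G)$, and it is real analytic as the restriction of a complex analytic map to the real subspace $\Omega\sub\cE([0,1],\cg_\C)$; this establishes local $\cE$-regularity of~$G$ with real analytic evolution.

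For (a)$\Rightarrow$(b), the real analytic map $\Evol_G\colon\Omega\to AC_\cE([0,1],G)\sub AC_\cE([0,1],\cg)$ extends, by the definition of real analyticity, to a complex analytic map $\tilde{F}\colon\tilde{\Omega}\to AC_\cE([0,1],\cg_\C)=AC_\cE([0,1],\cg)_\C$ on some open $\tilde{\Omega}\sub\cE([0,1],\cg_\C)$ containing~$\Omega$. Since $\tilde{F}(0)=e$ lies in the open subset $AC_\cE([0,1],\tilde{G})\sub AC_\cE([0,1],\cg_\C)$ (cf.\ Remark~\ref{intoC0}), I shrink $\tilde{\Omega}$ so that $\tilde{F}(\tilde{\Omega})\sub AC_\cE([0,1],\tilde{G})$. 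Both $\ev_0\circ\tilde{F}$ and $\delta^\ell\circ\tilde{F}$ are then complex analytic on~$\tilde{\Omega}$---for $\delta^\ell$ this uses Lemma~\ref{evoldetais} together with the fact that its tangent maps are complex linear because the local multiplication on~$\tilde{G}$ is complex analytic---and they agree respectively with the constant~$e$ and the inclusion $[\gamma]\mto[\gamma]$ on the totally real subset~$\Omega$. The identity principle for complex analytic maps on a complex locally convex domain then forces agreement throughout~$\tilde{\Omega}$, so $\tilde{F}=\Evol_{\tilde{G}}$ and $\tilde{G}$ is locally $\cE$-regular with complex analytic evolution.

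The main obstacle I anticipate is the identity-principle justification that the shrunken $\sigma$ is a homomorphism of local Lie groups, including careful management of the two successive shrinkings of~$\tilde{G}$; a subsidiary point is verifying complex analyticity (rather than mere smoothness) of~$\delta^\ell$ on $AC_\cE([0,1],\tilde{G})$, which is needed in the (a)$\Rightarrow$(b) direction so that the identity principle applies to $\delta^\ell\circ\tilde{F}$.
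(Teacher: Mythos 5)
Your proposal is correct and follows essentially the same route as the paper: after shrinking, realize $G$ as the fixed-point set of complex conjugation $\sigma$ on $\wt{G}$, use Proposition~\ref{evocxanl} plus the $\sigma$-fixedness of evolutions of $\cg$-valued curves for (b)$\Rightarrow$(a), and for (a)$\Rightarrow$(b) extend $\Evol_G$ complex analytically and identify the extension with $\Evol_{\wt{G}}$. The only difference is that you spell out what the paper delegates to Remark~\ref{equalc} and to the cited analogue for local $C^k$-regularity (the uniqueness argument via $\delta^\ell$ and the identity principle on the totally real set $\Omega$); just note that concluding $\Evol_{\wt{G}}([\gamma])\in AC_\cE([0,1],G)$ from $\sigma$-fixedness uses Lemma~\ref{intsubmf} with $E$ complemented in $E_\C$, and that the identity-principle step, as in the paper, is up to a further shrinking of $\wt{\Omega}$.
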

\begin{proof}
After shrinking $G$ and $\wt{G}$,
we may assume that complex conjugation $E_\C\to E_\C$,
$x+iy\mto x-iy$ restricts to an
antiholomorphic automorphism $\sigma\colon \wt{G}\to \wt{G}$
of the complex analytic local Lie group $\wt{G}$
and
\[
G=\{z\in \wt{G}\colon \sigma(z)=z\}=\wt{G}\cap E.
\]
Hence $G$ is a real analytic submanifold
of $\wt{G}$ and
\[
AC_\cE([0,1],G)=AC_\cE([0,1],\wt{G})
\cap AC_\cE([0,1],E)
\]
is a real analytic
submanifold of $AC_\cE([0,1],\wt{G})$.
If $\wt{G}$ is $\cE$-regular,
then we have a complex analytic evolution
\[
\Evol_{\wt{G}}\colon \wt{\Omega}\to AC_\cE([0,1],\wt{G})
\]
(see Proposition~\ref{evocxanl}).
Then $\Omega:=\wt{\Omega}\cap \cE([0,1],\cg)$
is an open $0$-neighbourhood in $\cE([0,1],\cg)$.
For each $[\gamma]\in \Omega$,
we have
\[
\Evol_G([\gamma])=\Evol_{\wt{G}}([\gamma])\in AC_\cE([0,1],G)
\]
(cf.\ Remark~\ref{equalc}).
hence $\Evol_G$ has the complex analytic extension $\Evol_{\wt{G}}$
and thus $\Evol_G$ is real analytic.\\[2.3mm]
If, conversely, $G$ is locally $\cE$-regular
with $\Evol_G\colon \Omega\to AC_\cE([0,1],G)$
real analytic, then
$\Evol_G$ has a complex analytic extension
\[
(\Evol_G)\wt{\;}
\colon \wt{\Omega}\to AC_\cE([0,1],\wt{G}).
\]
After shrinking $\Omega$ and $\wt{\Omega}$
if necessary, we then have that
$(\Evol_G)\wt{\;}([\gamma])=\Evol_{\wt{G}}([\gamma])$
for all $[\gamma]\in \wt{\Omega}$
(cf.\ \cite[Proposition~9.9]{SEM}
for an analogous discussion of
local $C^k$-regularity).
Hence $\wt{G}$ is locally $\cE$-regular.
%
% MEHR DETAILS ERFORDERLICH!
%
\end{proof}
\begin{prop}\label{sammelsu}
Let $\cE$ be a bifunctor
on Fr\'{e}chet spaces $($resp., on sequentially complete \emph{(FEP)}-spaces,
resp., on integral complete
locally convex spaces$)$
which satisfies the locality axiom,
the pushforward axioms, and
such that smooth functions act smoothly on~$AC_\cE$.
Let $G$ be a Lie group modelled
on such a space, with Lie algebra~$\cg$.
If $G$ is $\cE$-semiregular,
then the following holds:
\begin{itemize}
\item[\rm(a)]
If $\Evol\colon \cE([0,1],\cg)\to AC_\cE([0,1],G)$
is continuous at~$0$, then $\Evol$
is continuous.
\item[\rm(b)]
If $\Evol\colon \cE([0,1],\cg)\to AC_\cE([0,1],G)$
is $C^1$ on some open $0$-neighbourhood,
then $\Evol$ is smooth and thus $G$ is $\cE$-regular.
\item[\rm(c)]
If $\Evol\colon \cE([0,1],\cg)\to AC_\cE([0,1],G)$
is continuous and the smooth
homomorphisms from $G$ to
$\cE$-regular Lie groups separate points on~$G$,
then $G$ is $\cE$-regular.
\item[\rm(d)]
If $G$ is a real analytic Lie group
and $\Evol\colon \cE([0,1],\cg)\to AC_\cE([0,1],G)$
is real analytic on some open $0$-neighbourhood,
then $\Evol$ is real analytic.
\item[\rm(e)]
If $\Evol\colon \cE([0,1],\cg)\to C([0,1],G)$
is continuous at~$0$, then the map\linebreak
$\Evol\colon \cE([0,1],\cg)\to C([0,1],G)$
is continuous and $(\cE([0,1],\cg),\odot)$
is a topological group.
\end{itemize}
\end{prop}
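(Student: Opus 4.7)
The overall strategy is that $\Evol$ is a group isomorphism from $(\cE([0,1],\cg),\odot)$ onto the Lie subgroup $AC_\cE([0,1],G)_*$. For each $[\gamma_0]\in\cE([0,1],\cg)$, the right translation $\rho_{[\gamma_0]^{-1}}$ is an affine continuous self-map of $\cE([0,1],\cg)$ (and so real analytic), while right translation $r_{\eta_0}$ by $\eta_0:=\Evol([\gamma_0])$ is a smooth self-diffeomorphism of $AC_\cE([0,1],G)$ (real analytic when $G$ is real analytic, by Proposition~\ref{propACLie} and Lemma~\ref{analsook}). The homomorphism identity
\[
\Evol([\gamma]) = r_{\eta_0}\bigl(\Evol\bigl(\rho_{[\gamma_0]^{-1}}([\gamma])\bigr)\bigr),
\]
valid for every $[\gamma]\in\cE([0,1],\cg)$, is the key tool to transport properties of $\Evol$ near $0$ to properties near each~$[\gamma_0]$.

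First I would derive parts~(a), (d), and the continuity assertion of~(e) directly from this identity: whenever $\Evol$ possesses a property near $0$ that is preserved under composition with the affine continuous map $\rho_{[\gamma_0]^{-1}}$ on one side and the smooth (respectively real analytic) map $r_{\eta_0}$ on the other, the identity propagates that property to a neighborhood of each $[\gamma_0]$, hence globally. For~(b), the same translation argument yields $C^1$-ness of $\Evol$ on all of $\cE([0,1],\cg)$. To bootstrap from $C^1$ to $C^\infty$, I would differentiate the identity to obtain a formula
\[
d\Evol([\gamma_0],\delta) = dr_{\eta_0}\bigl(e,\;d\Evol\bigl(0,\,d\rho_{[\gamma_0]^{-1}}([\gamma_0],\delta)\bigr)\bigr),
\]
which expresses $d\Evol$ as a composition of $\Evol$ (entering only through $\eta_0$), the smooth multiplication of $AC_\cE([0,1],G)$, the continuous linear operator $d\Evol(0,\cdot)$ (equal to the integration operator $[\gamma]\mto(t\mto\int_0^t\gamma(s)\,ds)$ by Remark~\ref{drEvo}), and the continuous affine derivative $d\rho_{[\gamma_0]^{-1}}$. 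If $\Evol$ is $C^k$ globally, each ingredient of the right side is $C^k$, so $d\Evol$ is $C^k$ and $\Evol$ is $C^{k+1}$; induction on~$k$ completes~(b), and the analogous real analytic bootstrap completes~(d).

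For the topological group statement in~(e), the explicit formulas
\[
[\gamma_1]\odot[\gamma_2] = [\Ad(\Evol([\gamma_2]))^{-1}\gamma_1]+[\gamma_2],\qquad
[\gamma]^{-1} = -[\Ad(\Evol([\gamma]))(\gamma)]
\]
exhibit $\odot$ and inversion as compositions of the continuous map $\Evol\colon\cE([0,1],\cg)\to C([0,1],G)$ with the continuous maps $(\sigma,[\tau])\mto [\Ad_\sigma^{\pm 1}\circ\tau]$ furnished by Lemma~\ref{globpfw} applied to the smooth action $(x,y)\mto \Ad_x^{\pm 1}(y)$, together with continuous addition in $\cE([0,1],\cg)$; continuity is immediate.

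The main obstacle will be part~(c). The hypothesis provides, for every smooth homomorphism $\psi\colon G\to H$ to an $\cE$-regular Lie group, the identity $\psi_*\circ\Evol_G = \Evol_H\circ\cE([0,1],L(\psi))$ with a smooth right-hand side. By~(b), it suffices to exhibit $C^1$-ness of $\Evol_G$ on some $0$-neighborhood. I would attempt to show directly that the integration operator (the natural candidate from Remark~\ref{drEvo}) is the Gateaux derivative of $\Evol_G$ at~$0$: for each $[\gamma]$, the difference quotients $t^{-1}(\Evol_G(t[\gamma])-e)$ (read in a chart) have images under each $\psi_*$ converging by smoothness of $\psi_*\circ\Evol_G$, with limits compatible across~$\psi$; combining this with the continuity of $\Evol_G$ and the point-separating property of the family $\{\psi\}$ should force convergence in $AC_\cE([0,1],G)$ to the integral curve. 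The crux is to promote point-separation at the group level to a statement at the differential level strong enough to identify the limit; once the Gateaux derivative is identified and its continuity is checked via the pushforward maps of Lemma~\ref{globpfw}, $C^1$-ness on a $0$-neighborhood follows, and~(b) delivers smoothness.
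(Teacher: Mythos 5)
Your treatment of parts (a), (b), (d) and (e) is essentially the paper's own proof: the identity $\Evol=r_\eta\circ\Evol\circ\rho_{[\gamma]^{-1}}$, coming from the fact that $\Evol\colon(\cE([0,1],\cg),\odot)\to AC_\cE([0,1],G)_*$ is a group homomorphism and that $\rho_{[\gamma]^{-1}}$ is affine continuous while $r_\eta$ is smooth (real analytic when $G$ is), propagates continuity, $C^1$-ness and real analyticity from a $0$-neighbourhood to all of $\cE([0,1],\cg)$; your differentiated form of this identity, with $d\Evol(0,\cdot)$ the integration operator (Remark~\ref{drEvo}), is the same bootstrap the paper carries out by showing via Lemma~\ref{globpfw} that $\odot$ and inversion are $C^1$ and then citing the inductive argument of \cite[Theorem E]{SEM}; and (e) is handled exactly as in the paper, using that $C([0,1],G)$ is a topological group and the pushforward machinery for the group operations.

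Part (c), however, contains a genuine gap, which you yourself flag. From the hypothesis you only know that for each smooth homomorphism $\psi\colon G\to H$ into an $\cE$-regular Lie group the composite $C([0,1],\psi)\circ\Evol_G=\Evol_H\circ\cE([0,1],L(\psi))$ is smooth. But a point-separating family of smooth homomorphisms is not initial: convergence of the images under every such $\psi$ of the difference quotients of $\Evol_G$ does not imply convergence of those difference quotients in (a chart of) $AC_\cE([0,1],G)$ or $C([0,1],G)$, and continuity of $\Evol_G$ does not supply the missing compactness or initiality. So the step ``combining this with the continuity of $\Evol_G$ and the point-separating property should force convergence'' is precisely what remains unproved; in this paper point-separation is only ever used to verify identities between curves that are already known to exist (as in Proposition~\ref{equarg}), never to establish existence of a limit. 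The paper proves (c) by repeating the proof of \cite[Theorem F]{SEM}; the shape such an argument must take is visible in Lemma~\ref{getC1tool}, Proposition~\ref{moresitu} and Lemma~\ref{ifthn}: one exhibits a continuous candidate tangent map $\omega$ built from the integration operator, $\Ad$ and $\Evol$, and proves that the one-parameter curves $s\mto\Evol(\eta+s[\gamma])$ are $C^1$ with the prescribed derivative. By the homomorphism property and Lemma~\ref{logarules} this reduces, since $(\eta+s[\gamma])\odot\eta^{-1}=s[\Ad(\Evol(\eta))\gamma]$, to proving the existence of $\frac{d}{ds}\big|_{s=0}\Evol(s\mu)$ for $\mu:=[\Ad(\Evol(\eta))\gamma]$ (its value being forced by Remark~\ref{drEvo}), and it is exactly this existence statement that your sketch leaves open.
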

\begin{proof}
(a) For $[\gamma]\in \cE([0,1],\cg)$,
let $\rho_{[\gamma]^{-1}}$ be right translation with $[\gamma]^{-1}$
in the group $(\cE([0,1],\cg),\odot)$
and $r_\eta$ be right translation
with $\eta:=\Evol([\gamma])$ in the Lie group
$AC_\cE([0,1],G)$. Since $\Evol$ is a group homomorphism
from $(\cE([0,1], \cg),\odot)$ to $AC_\cE([0,1],G)$,
we have
\[
\Evol = r_\eta\circ \Evol \circ \rho_{[\gamma]^{1}}.
\]
Since $r_\eta$ and $\rho_{[\gamma]^{1}}$ are continuous,
we see that $\Evol$ will be continuous at $[\gamma]$
if $\Evol$ is continuous at~$0$
(cf.\ \cite[Theorem D]{SEM}
for an analogous result in the case of $C^k$-semiregularity).\\[2.3mm]
(b) Step~1. If $\Evol$ is $C^1$ on an open $0$-neighbourhood
$W\sub \cE([0,1],\cg)$ and $[\gamma]\in \cE([0,1], \cg)$,
then $W\otimes [\gamma]$ is an open neighbourhood
of $[\gamma]$ in $\cE([0,1],\cg)$
(since $\rho_{[\gamma]}$ is a $C^\infty$-diffeomorphism)
and the formula
\[
\Evol|_{W\odot [\gamma]} = r_\eta\circ \Evol|_W  \circ \rho_{[\gamma]^{1}}|_{W\odot [\gamma]}
\]
shows that $\Evol|_{W\odot[\gamma]}$ is $C^1$.
Hence $\Evol$ is $C^1$.\\[2.3mm]
Step 2. The map
\[
f\colon G\times \cg\to \cg,\quad g(x,y):=\Ad_x(y)
\]
is linear in its second argument and smooth.
Hence
\[
\wt{f}\colon C([0,1],G)\times \cE([0,1],\cg)\to \cE([0,1],\cg),\quad
\wt{f}(\eta,[\gamma]):=[f\circ (\eta,\gamma)]
\]
is smooth by Lemma~\ref{globpfw}.
As a consequence, the group multiplication
\[
AC_\cE([0,1],\cg)^2\to AC_\cE([0,1],\cg),\,
([\gamma_1],[\gamma_2])\mto \wt{f}(\Evol([\gamma_2])^{-1},[\gamma_1])+[\gamma_2]
\]
is $C^1$ and also the inversion map
\[
AC_\cE([0,1],\cg)\to AC_\cE([0,1],\cg),\quad
[\gamma] \mto \,- \wt{f}(\Evol([\gamma]),[\gamma]).
\]
An inductive argument as in the case of $C^k$-regularity
in~\cite[Theorem E]{SEM} now show that $\Evol$ is $C^k$
for each $k\in \N$ and hence smooth.\\[2.3mm]
%
% Mehr details geben ! ! !
%
%
(c) We can repeat the proof of \cite[Theorem F]{SEM}.\\[2.3mm]
(d) Replace `smooth' with `real analytic' in Step~1 from
the proof~of~(b).\\[2.3mm]
(e) Since $C([0,1],G)$ is a topological group
when endowed with the compact-open topology,
we can argue as in the proof of~(a).
The continuity of the group operations
of $\cE([0,1],\cg)$ follow with the Pushforward Axiom~(P2).
%
% Details!
%
\end{proof}
\begin{prop}
Let $\cE$ be a bifunctor
on Fr\'{e}chet spaces $($resp., on sequentially complete \emph{(FEP)}-spaces,
resp., on integral complete
locally convex spaces$)$
which satisfies the locality axiom,
the pushforward axioms, has the subdivision property,
and
such that smooth functions act smoothly on~$AC_\cE$.
Let $G$ be a real analytic Lie group modelled
on such a space and $\wt{W}$ be a complex analytic
local Lie group with global chart,
such that $\wt{W}$ is a complexification
of some open symmetric identity neighbourhood
$W\sub G$ with global chart and the inclusion map
$W\to\wt{W}$ is a homomorphism of
real analytic local Lie groups.
Then the following conditions
are equivalent:
\begin{itemize}
\item[\rm(a)]
$G$ is $\cE$-regular and $\Evol\colon \cE([0,1],\cg)\to
AC_\cE([0,1],G)$ is real analytic;
\item[\rm(b)]
$\wt{W}$ is locally $\cE$-regular.
\end{itemize}
\end{prop}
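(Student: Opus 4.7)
The plan is to deduce the equivalence from three tools already in place: the complexification equivalence for local Lie groups in Lemma~\ref{realvscxlcly}, the local-to-global bridge from Proposition~\ref{lctoglb} (which uses the subdivision property), and the real-analytic upgrade in Proposition~\ref{sammelsu}(d). Since $W$ is an open symmetric identity neighbourhood of the real analytic Lie group~$G$ and carries a global chart, it is itself a real analytic local Lie group whose complexification is $\wt{W}$, so Lemma~\ref{realvscxlcly} applies with $W$ in place of the ``$G$'' appearing there.

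For (a)$\Rightarrow$(b), I would start from the real analytic global evolution $\Evol_G\colon\cE([0,1],\cg)\to AC_\cE([0,1],G)$. Since $\Evol_G(0)=e\in W$ and since $AC_\cE([0,1],W)$ is open in $AC_\cE([0,1],G)$ by Remark~\ref{intoC0}, continuity of $\Evol_G$ at~$0$ gives an open $0$-neighbourhood $\Omega\sub\cE([0,1],\cg)$ with $\Evol_G(\Omega)\sub AC_\cE([0,1],W)$. The restriction $\Evol_G|_\Omega$ is real analytic into $AC_\cE([0,1],W)$ and solves the defining initial value problem inside~$W$, so condition (a) of Lemma~\ref{realvscxlcly} (applied to $W$ and its complexification~$\wt{W}$) is met. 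The lemma then delivers (b).

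For (b)$\Rightarrow$(a), I would first invoke Lemma~\ref{realvscxlcly} in the opposite direction: local $\cE$-regularity of $\wt{W}$ yields an open $0$-neighbourhood $\Omega\sub\cE([0,1],\cg)$ and a real analytic map $\Evol_W\colon\Omega\to AC_\cE([0,1],W)$ that is the left evolution inside~$W$. Viewing $\Evol_W$ as taking values in the open subset $AC_\cE([0,1],W)\sub AC_\cE([0,1],G)$ and using the uniqueness of left evolutions in~$G$ from Lemma~\ref{logarules}, it coincides on~$\Omega$ with the (would-be) evolution of the Lie group~$G$, so $G$ is locally $\cE$-regular with real analytic local evolution. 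Since $\cE$ satisfies the subdivision property, Proposition~\ref{lctoglb} promotes local to global $\cE$-regularity, giving the smooth $\Evol_G\colon\cE([0,1],\cg)\to AC_\cE([0,1],G)$; this $\Evol_G$ restricts to $\Evol_W$ on $\Omega$ by uniqueness, hence is real analytic on an open $0$-neighbourhood, and Proposition~\ref{sammelsu}(d) then yields real analyticity of $\Evol_G$ everywhere.

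The main obstacle is just ensuring that the various formulations of ``local $\cE$-regular'' are compatible across the arrows: one has to confirm that the evolution produced by Lemma~\ref{realvscxlcly} for $W$ agrees, as a map into the ambient $AC_\cE([0,1],G)$, with the local evolution of the Lie group~$G$ (so that Proposition~\ref{lctoglb} can be applied to $G$ rather than to the local Lie group~$W$), and conversely that in the forward direction the global $\Evol_G$ really restricts to a local evolution in $W$ of the type required by Lemma~\ref{realvscxlcly}. Both reductions are handled by the openness of $AC_\cE([0,1],W)$ in $AC_\cE([0,1],G)$ together with the uniqueness statement in Lemma~\ref{logarules}, so no substantive technical difficulty arises beyond this bookkeeping.
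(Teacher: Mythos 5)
Your proposal is correct and follows essentially the same route as the paper's own proof: restrict the real analytic $\Evol$ to a $0$-neighbourhood mapping into $AC_\cE([0,1],W)$ and apply Lemma~\ref{realvscxlcly} for (a)$\Rightarrow$(b), and for the converse use Lemma~\ref{realvscxlcly}, then Proposition~\ref{lctoglb} via the subdivision property, then Proposition~\ref{sammelsu}(d). The extra bookkeeping you mention (identifying the local evolution in $W$ with that of $G$) is exactly what the paper leaves implicit.
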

\begin{proof}
If $G$ is $\cE$-regular and $\Evol\colon \cE([0,1], \cg)\to G$
is real analytic, then $W$ is locally $\cE$-regular and
$\Evol_W:=\Evol|_\Omega \colon \Omega\to AC_\cE([0,1],W)$
is real analytic for some open $0$-neighbourhood
$\Omega\sub \cE([0,1],\cg)$. Hence $\wt{W}$ is locally $\cE$-regular,
by Lemma~\ref{realvscxlcly}.

If, conversely, $\wt{W}$ is locally $\cE$-regular,
then $W$ is locally $\cE$-regular with real analytic
evolution $\Evol_W\colon \Omega\to AC_\cE([0,1],W)$
on an open $0$-neighbourhood $\Omega|sub \cE([0,1],\cg)$,
by Lemma~\ref{realvscxlcly}.
Hence $G$ is locally $\cE$-regular
As $\cE$ has the subdivision prperty,
we deduce with Proposition~\ref{lctoglb}
that $G$ is $\cE$-regular.
Let $\Evol\colon \cE([0,1],\cg)\to AC_\cE([0,1], G)$
be the evolution map.
Since $\Evol|_\Omega=\Evol_W$ is real analytic,
$\Evol$ is real analytic by Proposition~\ref{sammelsu}\,(d).
\end{proof}
\section{Banach-Lie groups are {\boldmath$L^1$}-regular}\label{secbana}
In this section, we prove Theorem~C and related results.
\begin{defn}
Let $P$ be a set, $(F,\|.\|)$ be a Banach space
and $U\sub F$ be a subset.
We say that a mapping
\[
f\colon P\times U\to F
\]
defines a uniform family of contractions
if there exists $\theta\in [0,1[$ such that
\[
(\forall p\in P)(\forall y,z\in U)\quad
\|f(p,z)-f(p,y)\|\leq\theta \|z-y\|.
\]
\end{defn}
We recall from~\cite{FIX}
(cf.\ also \cite{FRE}):
\begin{la}\label{PARFIX}
Let $E$ be a locally convex space,
$(F,\|.\|)$ be a Banach space, $P\sub E$ and $U\sub F$ be open sets,
$k\in \N_0\cup\{\infty\}$
and
\[
f\colon P\times U\to F
\]
be a $C^k$-map.
Let $Q\sub P$ be the set of all $p\in P$
such that $f_p:=f(p,.)\colon U\to F$, $y\mto f(p,y)$
has a fixed point~$x_p$.
Then $x_p$ $($if it exists$)$ is unique,
$Q$ is open in~$P$ and the map
\[
\psi\colon Q\to U,\quad p\mto x_p
\]
taking a parameter $p\in Q$ to the fixed point of $f_p$
is $C^k$.\,\Punkt
\end{la}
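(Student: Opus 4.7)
The proof rests on the tacit hypothesis (carried over from the definition immediately preceding the lemma) that $f$ is a uniform family of contractions with some constant $\theta\in[0,1[$. Uniqueness of $x_p$ is then immediate from the contraction inequality: two fixed points $x,y$ of $f_p$ satisfy $\|x-y\|=\|f(p,x)-f(p,y)\|\leq\theta\|x-y\|$, forcing $x=y$.

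For the openness of $Q$ and the continuity of $\psi$, I would fix $p_0\in Q$, set $x_0:=\psi(p_0)$ and choose $r>0$ with $\wb{B}_r(x_0)\sub U$. By continuity of $f$ at $(p_0,x_0)$, the set $P_0:=\{p\in P\colon \|f(p,x_0)-x_0\|<(1-\theta)r\}$ is an open neighbourhood of $p_0$. For $p\in P_0$ and $y\in\wb{B}_r(x_0)$, one has
\[
\|f(p,y)-x_0\|\leq\|f(p,y)-f(p,x_0)\|+\|f(p,x_0)-x_0\|\leq\theta r+(1-\theta)r=r,
\]
so $f_p$ maps the complete metric space $\wb{B}_r(x_0)$ into itself as a contraction. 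Banach's Fixed Point Theorem supplies a unique fixed point $\psi(p)\in\wb{B}_r(x_0)$, whence $P_0\sub Q$. The standard a priori estimate
\[
\|\psi(p)-x_0\|\leq\frac{1}{1-\theta}\|f(p,x_0)-f(p_0,x_0)\|
\]
then yields continuity of $\psi$ at~$p_0$.

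For $k\geq 1$, I would differentiate the defining identity $\psi(p)=f(p,\psi(p))$. From the contraction inequality and the definition of $d_2f$ one obtains $\|d_2f(p,y)\|_{op}\leq\theta$ for all $(p,y)\in P\times U$. Hence $A(p):=\id_F-d_2f(p,\psi(p))$ is invertible on the Banach space~$F$ via the Neumann series $\sum_{n\geq 0}(d_2f(p,\psi(p)))^n$, with $\|A(p)^{-1}\|_{op}\leq(1-\theta)^{-1}$, and $A(p)^{-1}$ depends continuously on~$p$ (since inversion is continuous on the open set of invertible operators in the Banach algebra $\End(F)$). Using the Mean Value Theorem in integral form (see~\ref{mvt}), one verifies directly that
\[
d\psi(p,h)=A(p)^{-1}\bigl(d_1f(p,\psi(p),h)\bigr),
\]
which shows that $\psi$ is $C^1$.

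The main obstacle will be the inductive step to $C^k$ for $k\geq 2$. The formula for $d\psi$ exhibits it as a composition involving $\psi$ itself, the partial differentials $d_1f$ and $d_2f$, and the operator-inversion map (which is smooth on invertibles). An induction on~$k$, combined with the chain rule and the $C^{r,s}$-calculus recalled in~\ref{basicCrs}, then promotes $C^{k-1}$-regularity of $\psi$ to $C^k$-regularity, and the case $k=\infty$ follows at once.
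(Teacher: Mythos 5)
First, note that the paper does not actually prove Lemma~\ref{PARFIX}; it recalls it from \cite{FIX} (cf.\ \cite{FRE}), so your argument can only be measured against the standard fixed-point-with-parameters proof given there. Your reading of the tacit hypothesis (that $f$ defines a uniform family of contractions with constant $\theta$) is correct, and your treatment of uniqueness, openness of $Q$ and continuity of $\psi$ is sound: the invariance of $\wb{B}_r(x_0)$ under $f_p$ for $p\in P_0$, Banach's contraction principle on that complete ball, and the a priori estimate $\|\psi(p)-\psi(p_0)\|\leq (1-\theta)^{-1}\|f(p,\psi(p_0))-f(p_0,\psi(p_0))\|$ are exactly the standard steps.

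The $C^1$ step and the induction, however, contain a genuine gap. You invert $A(p)=\id_F-d_2f(p,\psi(p))$ by a Neumann series (fine pointwise, since $\|d_2f(p,y)\|_{op}\leq\theta$) and then invoke continuity, respectively smoothness, of operator inversion on $\GL(F)\sub\cL(F)$ to conclude that $p\mto A(p)^{-1}$ is continuous, respectively $C^{k-1}$. This presupposes that $p\mto d_2f(p,\psi(p))$ is continuous (resp.\ $C^{k-1}$) \emph{into $(\cL(F),\|.\|_{op})$}, which does not follow from $f$ being $C^k$ in the Michal--Bastiani sense used throughout this paper: $C^1$ here only gives joint continuity of $d_2f\colon P\times U\times F\to F$, not operator-norm continuity of $(p,y)\mto d_2f(p,y)$ (even for Banach $E$ this is strictly weaker than Fr\'{e}chet-$C^1$, and here $E$ is merely locally convex). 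The uniform bound $\|d_2f(p,y)\|_{op}\leq\theta$ gives no continuity in the operator norm, so the appeal to continuity/smoothness of inversion in the Banach algebra $\End(F)$ is unjustified, and with it the continuity of $d\psi$ and the whole inductive step.

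The repair is to avoid $\cL(F)$-valued regularity altogether. Either show directly that $(p,w)\mto\sum_{n\geq 0}\big(d_2f(p,\psi(p))\big)^n(w)$ is continuous as a locally uniform limit of continuous maps (the tails are bounded by $\theta^n\|w\|/(1-\theta)$), or -- the cleaner route, which is also the one in \cite{FIX}/\cite{FRE} -- observe that $v:=d\psi(p,h)$ is characterized as the unique fixed point of
\[
g\colon (Q\times E)\times F\to F,\quad g((p,h),v):=d_2f(p,\psi(p))(v)+d_1f(p,\psi(p))(h),
\]
which is again a uniform family of contractions with constant $\theta$, now with parameter space $Q\times E$, and whose parameter dependence is $C^{k-1}$ (being built from $df$ and $\psi$). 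After verifying, via difference-quotient estimates of the type you sketch with the Mean Value Theorem, that $\psi$ is differentiable with $d\psi$ solving this equation, the lemma applied inductively (the continuity case serving as the base) yields $d\psi\in C^{k-1}$ and hence $\psi\in C^k$; the case $k=\infty$ then follows. As written, your $C^1$ and $C^k$ assertions are not established.
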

\begin{la}\label{prepabanreg}
Let $\K\in \{\R,\C\}$ and $G$ be a local $\K$-analytic
Lie group modelled on a Banach space,
which admits a global chart.
Let $\cg=L(G)$.
Then $G$ is locally $L^1$-regular
and there is an open $0$-neighbourhood
$\Omega\sub L^1([0,1],\cg)$
on which $\Evol$ is defined
and such that
\[
\Evol\colon \Omega\to AC_{L^1}([0,1],G)
\]
is $\K$-analytic.
\end{la}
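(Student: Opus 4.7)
The plan is to transfer the evolution equation through the global chart to a fixed-point problem in a Banach space of absolutely continuous functions, then solve it by a $\K$-analytic parameterized Banach contraction. Via the given global chart, I identify $G$ with an open $\K$-analytic local Lie group $(V,\mu,j)$ in the Banach space $E=\cg$, with neutral element $0$. The left evolution equation $\delta^\ell(\eta)=\gamma$, $\eta(0)=e$, becomes the Carath\'{e}odory initial value problem
$$\eta'(t)=f(\eta(t),\gamma(t)),\qquad \eta(0)=0,$$
where $f\colon V\times E\to E$, $f(x,v):=d_2\mu(x,0;v)$, is $\K$-analytic, linear in the second argument, and satisfies $f(0,\sbull)=\id_E$.

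I then choose $r>0$ with $\wb{B}_r(0)\sub V$ and apply Lemma~\ref{locLip2} with $K=\{0\}$ to obtain $L>0$ such that $\|f(z,v)-f(y,v)\|\le L\|z-y\|\|v\|$ and $\|f(x,v)\|\le(1+Lr)\|v\|$ for all $x,y,z\in \wb{B}_r(0)$ and $v\in E$. Working in the Banach space $F:=\{\eta\in AC_{L^1}([0,1],E)\colon \eta(0)=0\}$ with norm $\|\eta\|_F:=\|\eta'\|_{L^1}$ (which satisfies $\|\eta\|_\infty\le\|\eta\|_F$), I set $U:=B^F_r(0)$ and pick an open ball $\Omega\sub L^1([0,1],\cg)$ around~$0$ small enough that $(1+Lr)\|\gamma\|_{L^1}<r$ and $L\|\gamma\|_{L^1}\le \tfrac12$ for every $\gamma\in\Omega$. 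Then the integral operator
$$T\colon \Omega\times U\to F,\qquad T(\gamma,\eta)(t):=\int_0^t f(\eta(s),\gamma(s))\,ds$$
maps $U$ into $U$ and is a uniform family of contractions in $\eta\in U$ with modulus $\le\tfrac12$.

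To apply Lemma~\ref{PARFIX}, I need $T$ to be $\K$-analytic. I factor $T=I\circ\wt f\circ (j_1,\partial)$, where $j_1\colon U\to C([0,1],V)$ is the inclusion, $\partial\colon F\to L^1([0,1],E)$, $\eta\mto\eta'$ is a continuous $\K$-linear isomorphism, $\wt f\colon C([0,1],V)\times L^1([0,1],E)\to L^1([0,1],E)$ is the pushforward of~$f$ provided by Proposition~\ref{oponLp}, and $I\colon L^1([0,1],E)\to F$, $\gamma\mto\int_0^{(\sbull)}\gamma(s)\,ds$ is continuous $\K$-linear. Proposition~\ref{oponLp} gives $\wt f\in C^\infty$, and since $f$ is $\K$-analytic, the formula $d\wt f((\eta_1,[\gamma_1]),(\eta_2,[\gamma_2]))=[df\circ(\eta_1,\gamma_1,\eta_2,\gamma_2)]$ of Lemma~\ref{fordpf2} shows that the differentials of $\wt f$ are $\K$-linear in the increment; hence $\wt f$ is $\K$-analytic by the characterization of analyticity recalled in Section~\ref{prels}. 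Thus $T$ is $\K$-analytic, and the parameterized fixed-point theorem yields a $\K$-analytic map $\psi\colon\Omega\to U$, $\gamma\mto\eta_\gamma$, sending $\gamma$ to the unique fixed point of $T(\gamma,\sbull)$. By construction $\eta_\gamma$ is the $AC_{L^1}$-Carath\'{e}odory solution in the chart, so $\phi^{-1}\circ\eta_\gamma=\Evol(\gamma)$; composing $\psi$ with the $\K$-analytic chart map $AC_{L^1}([0,1],\phi^{-1})$ (Lemma~\ref{analsook}) proves that $\Evol\colon\Omega\to AC_{L^1}([0,1],G)$ is $\K$-analytic.

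The main obstacle is the $\K$-analytic version of Lemma~\ref{PARFIX}, which as stated only handles $k\in\N_0\cup\{\infty\}$. For $\K=\C$ the analytic parameterized contraction principle is available from~\cite{FIX,FRE}; for $\K=\R$ one can bypass it entirely by first applying Lemma~\ref{PARFIX} with $k=\infty$ to obtain smoothness of $\Evol$, then passing to a complexification $\wt V\sub E_\C$ of $V$ (which carries a complex analytic local Lie group structure extending that of~$V$) and invoking Lemma~\ref{realvscxlcly} to transfer local $L^1$-regularity together with real analyticity of $\Evol$ from $\wt V$, handled by the complex case above, down to $V\cong G$.
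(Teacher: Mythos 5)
Your proposal is correct and follows the same core strategy as the paper's proof: transfer the evolution equation through the global chart, observe that $(x,v)\mapsto d\mu(x,0;0,v)$ (your $f$, the paper's $g$) is analytic and linear in $v$, build the parameter-dependent integral operator from the pushforward $\wt{f}$ supplied by Proposition~\ref{oponLp} (pushforward axiom for $L^1$), verify a uniform contraction estimate, apply Lemma~\ref{PARFIX}, identify the fixed point as the Carath\'{e}odory solution and hence as $\Evol(\gamma)$, and obtain analyticity in the real case via complexification and Lemma~\ref{realvscxlcly}. The differences are of detail rather than substance, and each works. The paper runs the contraction in $C([0,1],U)$ with the supremum norm (the modulus being $\|\gamma\|_{L^1}/r^2$, where $1/r^2$ bounds $\|d^{(2)}\mu(x,0,\cdot)\|_{op}$), then notes that every fixed point lies in $\im(J)\sub AC_{L^1}([0,1],E)$, smoothness into $AC_{L^1}([0,1],G)$ coming from Proposition~\ref{givsordr}; you work directly in the Banach space $F\cong L^1([0,1],E)$ of $AC_{L^1}$-curves starting at $0$ (legitimate by Definition~\ref{mostbaac} and Lemma~\ref{labase1}), which gives the target space from the start, and you draw the Lipschitz estimate from Lemma~\ref{locLip2} instead of the explicit second-derivative bound -- equivalent content, though you should take $r$ \emph{inside} the neighbourhood furnished by that lemma rather than fixing $r$ first. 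For analyticity, your fallback (smooth case via Lemma~\ref{PARFIX}, then complexification and Lemma~\ref{realvscxlcly}) is exactly the paper's route; your primary route through a $\K$-analytic parametrized contraction principle is plausible but not contained in this paper's Lemma~\ref{PARFIX}, and it is avoidable even for $\K=\C$: the smooth fixed-point map has complex linear differentials (differentiate the fixed-point identity and invert $\id-d_2T$, or argue via Proposition~\ref{evocxanl} through Lemma~\ref{realvscxlcly}), hence is complex analytic. Finally, the displayed factorization $T=I\circ\wt{f}\circ(j_1,\partial)$ is a slip: the second argument of $\wt{f}$ must be $\gamma$, not $\eta'$, so the correct composition is $T(\gamma,\eta)=I(\wt{f}(j_1(\eta),\gamma))$; this is what your smoothness and Lemma~\ref{fordpf2} argument actually uses, so the conclusion stands.
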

\begin{proof}
We may assume that~$G$ is an open subset
of its modelling space~$E$ and $e=0$.
We identify $L(G)=T_0(G)=\{0\}\times E$ with~$E$.
Let $D_G\sub G\times G$ be the domain of the multiplication
\[
\mu\colon D_G\to G.
\]
We shall use
\[
g\colon G\times E\to E,\quad g(x,y):=d\mu(x,0;0,y)
\]
and the second differential
\[
d^{(2)}\mu\colon D_G\times (E\times E)\times (E\times E)\to E.
\]
Endow $E\times E$ with the maximum
norm, $(x,y)\mto\max\{\|x\|,\|y\|\}$. Since $d^{(2)}\mu$ is continuous, there is an open convex
$0$-neighbourhood $U\sub G$ and $r>0$ such that
\[
d^{(2)}\mu(U\times (\wb{B}_r^{E\times E}(0))^2)\sub \wb{B}_1^E(0).
\]
Therefore the continuous bilinear map $d^{(2)}\mu(x,0,.)\colon (E\times E)^2\to E$
has operator norm
\[
\|d^{(2)}\mu(x,0,.)\|_{op}\leq \frac{1}{r^2},
\]
for each $x\in U$.
Given $\theta\in \,]0,1[$, consider the open
ball
\[
P:=\{[\gamma]\in L^1([0,1],E)\colon \int_0^1\|\gamma(t)\|\,dt <
r^2\theta\}
\]
around~$0$ in $L^1([0,1],E)$
and the map
\[
f\colon L^1([0,1],E) \times C([0,1],U)
\to C([0,1],E),\,
f([\gamma],\eta)(t):=\int_0^t g(\eta(s),\gamma(s))\,ds.
\]
Note that
\[
J\colon L^1([0,1],E)\to C([0,1],E),\quad
J([\gamma])(t):=\int_0^t\gamma(s)\,ds
\]
is a continuous linear map with operator norm
$\|J\|_{op}\leq 1$.
Now
\[
\wt{g}\colon C([0,1],U)\times L^1([0,1],E)\to L^1([0,1],E),
\;\, (\eta,[\gamma])\mto [g\circ (\eta,\gamma)]
\]
is a smooth map (as $L^1$ satisfies the pushforward
axiom) and
\[
f=J\circ \wt{g}.
\]
Hence $f$ is smooth.
Moreover, $f|_{P\times C([0,1],U)}$ defines a uniform family of contractions.
In fact, if $[\gamma]\in P$ and $\eta_1,\eta_2\in C([0,1],U)$,
then
\begin{eqnarray*}
\lefteqn{\|g(\eta_2(s),\gamma(s))-g(\eta_1(s),\gamma(s))\|}\qquad\qquad\\
&= & \left\|\int_0^1
dg(\eta_1(s)+ \tau(\eta_2(s)-\eta_1(s)),\gamma(s),\eta_2(s)-\eta_1(s),0)\,d\tau\right\|\\
&\leq &\int_0^1
\|dg(\eta_1(s)+ \tau(\eta_2(s)-\eta_1(s)),\gamma(s),\eta_2(s)-\eta_1(s),0)\|\,d\tau\\
&\leq& \frac{1}{r^2}\|\gamma(s)\|\, \|\eta_2(s)-\eta_1(s)\|
\leq \frac{1}{r^2}\|\gamma(s)\|\, \|\eta_2-\eta_1\|_\infty
\end{eqnarray*}
for all $s\in [0,1]$,
using that
\begin{eqnarray*}
\|dg(x_\tau,\gamma(s),\eta_2(s)-\eta_1(s),0)\|
&=& \|d^{(2)}\mu(x_\tau,0,0,\gamma(s),\eta_2(s)-\eta_1(s),0)\|\\
&\leq&  \|d^{(2)}\mu(x_\tau,0,.)\|_{op} \|\gamma(s)\|\, \|\eta_2(s)-\eta_1(s)\|
\end{eqnarray*}
for each $\tau\in [0,1]$, with
with $x_\tau:=\eta_1(s)+ \tau(\eta_2(s)-\eta_1(s)$.
Hence
\begin{eqnarray*}
\|\wt{g}(\eta_2,[\gamma])-\wt{g}(\eta_1,[\gamma])\|_{L^1}
&=& \int_0^1 \|g(\eta_2(s),\gamma(s))-g(\eta_1(s),\gamma(s))\|\,ds\\
&\leq & \int_0^1\frac{1}{r^2} \|\gamma(s)\|\, \|\eta_2-\eta_1\|_\infty\,ds\\
&\leq & \frac{\|[\gamma]\|_{L^1}}{r^2} \|\eta_2-\eta_1\|_\infty
\leq \theta \|\eta_2-\eta_1\|_\infty
\end{eqnarray*}
and thus
\[
\|f([\gamma],\eta_2)-f([\gamma],\eta_1)\|
\leq
\|J\|_{op}\|\wt{g}(\eta_2,[\gamma])-\wt{g}(\eta_1,[\gamma])
\leq \theta\|\eta_2-\eta_1\|_\infty.
\]
Thus $f$ defines a uniform family of contractions.
By Lemma~\ref{PARFIX}, the set $\Omega$ of all $[\gamma]\in P$
for which $f([\gamma],.)\colon C([0,1],U)\to C([0,1],E)$
has a fixed point $\psi([\gamma]):=\eta$
is open in~$P$, and the map
\[
\psi\colon \Omega \to C([0,1],U),\quad [\gamma]\mto \psi([\gamma])
\]
is smooth.
Note that $f(0,0)=0$, i.e., $0\in C([0,1],U)$
is a fixed point of $f(0,.)$.
Hence $0\in \Omega$ and thus $\Omega$ is an open $0$-neighbourhood
in $L^1([0,1],E)$.
Note that, for each $[\gamma]\in \Omega$,
$\eta:=\psi([\gamma])$ satisfies
\begin{equation}\label{fppreex}
\eta=f([\gamma],\eta)=J(\wt{g}(\eta,[\gamma])),
\end{equation}
whence $\eta\in \im(J)\sub AC_{L^1}([0,1],E)$.
By (\ref{fppreex}), we have
\[
\eta(t)=\int_0^t g(\eta(s),\gamma(s))\,ds
\]
for all $t\in [0,1]$. Hence $\eta$ is a Caratheodory solution to
\[
\eta'(t)=g(\eta(s),\gamma(s)),\quad \eta(0)=0.
\]
In other words, $\eta$ is a Caratheodory solution
to
\[
\eta'(t)=d\mu(\eta(s),0,0,\gamma(s)),\quad \eta(0)=0
\]
and thus $\eta=\Evol([\gamma])$.
Thus $G$ is locally $L^1$-regular.\\[2.3mm]
After shrinking $G$, we may assume that the inclusion
map from $G$ into $\wt{G}$ is a homomorphism
of real analytic local Lie groups
for a complex analytic local Banach-Lie group $\wt{G}$
which is an open subset of $E_\C=E\oplus iE$.
By the preceding, $\wt{G}$ is locally $L^1$-regular.
Hence
\[
\Evol=\psi\colon \Omega\to G
\]
is real analytic (possibly after shrinking the
open $0$-neighbourhood $\Omega\sub \cE([0,1],E)$),
by Lemma~\ref{realvscxlcly}.
\end{proof}
We deduce the following result (which subsumes
Theorem~C):
\begin{prop}
Let $\K\in \{\R,\C\}$ and $G$ be a $\K$-analytic
Banach-Lie group with Lie algebra $\cg=L(G)$.
Then $G$ is $L^p$-regular
for each $p\in [1,\infty]$
and
\[
\Evol\colon L^p([0,1],\cg)\to AC_{L^p}([0,1],G)
\]
is $\K$-analytic.
\end{prop}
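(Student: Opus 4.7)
The plan is to reduce the proposition to the local result Lemma~\ref{prepabanreg} via the machinery developed in Section~\ref{secintroreg} for passing from local to global regularity and from local to global analyticity. First I would pick an open symmetric identity neighborhood $W\subseteq G$ admitting a global chart $\phi\colon W\to V\subseteq\cg$ and view $W$ as a $\K$-analytic local Lie group. Lemma~\ref{prepabanreg} then supplies an open $0$-neighborhood $\Omega\subseteq L^1([0,1],\cg)$ and a $\K$-analytic local evolution $\Evol_W\colon \Omega\to AC_{L^1}([0,1],W)$; composing with the $\K$-analytic inclusion $AC_{L^1}([0,1],W)\hookrightarrow AC_{L^1}([0,1],G)$ exhibits $G$ itself as locally $L^1$-regular with $\K$-analytic local evolution.

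Next, since the bifunctor $L^1$ on Fr\'{e}chet spaces satisfies the locality axiom, the pushforward axioms (Lemma~\ref{lebhavePA}), and the subdivision property (Lemma~\ref{setsubdiv}), and since smooth functions act smoothly on $AC_{L^1}$ (Lemma~\ref{oursbetter}), Proposition~\ref{lctoglb} upgrades local $L^1$-regularity of $G$ to full $L^1$-regularity. The global evolution $\Evol\colon L^1([0,1],\cg)\to AC_{L^1}([0,1],G)$ coincides with $\Evol_W$ on $\Omega$ by uniqueness of evolutions (Lemma~\ref{logarules}), and is therefore $\K$-analytic on an open $0$-neighborhood; Proposition~\ref{sammelsu}\,(d) (for $\K=\R$) or Proposition~\ref{evocxanl} (for $\K=\C$) then promotes this to $\K$-analyticity of $\Evol$ on all of $L^1([0,1],\cg)$.

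For $p\in (1,\infty]$, the $L^p$-regularity of $G$ is immediate from Corollary~\ref{corsimA}. To obtain $\K$-analyticity of $\Evol\colon L^p([0,1],\cg)\to AC_{L^p}([0,1],G)$, I would rerun the fixed-point argument from the proof of Lemma~\ref{prepabanreg} with $L^p$ in place of $L^1$: the map $\wt{f}\colon C([0,1],U)\times L^p([0,1],\cg)\to L^p([0,1],\cg)$ arising there is smooth by the $L^p$-pushforward axiom, the integration operator $J\colon L^p([0,1],\cg)\to C([0,1],\cg)$ is continuous linear, and the contraction estimate depends only on the $L^1$-norm of $\gamma$, which satisfies $\|\gamma\|_{L^1}\leq \|\gamma\|_{L^p}$ on $[0,1]$ by H\"{o}lder. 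Hence $\{[\gamma]\in L^p\colon \|\gamma\|_{L^1}<r^2\theta\}$ is an open $0$-neighborhood in $L^p$ on which Lemma~\ref{PARFIX} applies exactly as before. Complexifying $G$ and invoking Lemma~\ref{realvscxlcly} in the real case produces $\K$-analyticity of the local $L^p$-evolution, after which Proposition~\ref{sammelsu}\,(d) or Proposition~\ref{evocxanl} again globalizes.

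The main piece of bookkeeping, rather than any real obstacle, is that Lemma~\ref{PARFIX} directly supplies the fixed point only as a map into $C([0,1],U)$; the upgrade of the target to $AC_{L^p}([0,1],G)$ is handled by Proposition~\ref{givsordr}, which rests on the closed embedding $AC_\cE([0,1],E)\hookrightarrow C([0,1],E)\times\cE([0,1],E)$ of Lemma~\ref{closedemb} together with the observation that the derivative of the fixed point is $\wt{f}$ evaluated at the fixed point and thus automatically lies in $L^p$.
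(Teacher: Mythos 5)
Your proposal is correct, and its overall skeleton is the paper's: Lemma~\ref{prepabanreg} for local $L^1$-regularity with $\K$-analytic local evolution, Proposition~\ref{lctoglb} (via the subdivision property) to globalize, Theorem~A/Corollary~\ref{corsimA} for $L^p$-regularity, and Lemma~\ref{realvscxlcly} plus Proposition~\ref{sammelsu}\,(d) (resp.\ Proposition~\ref{evocxanl}) to upgrade to $\K$-analyticity. The one place where you diverge is the $L^p$-analyticity step: the paper never reruns the fixed-point argument. Instead it applies Corollary~\ref{corsimA} together with Remark~\ref{implalsolc} to the complexified local group $\wt{U}$, so that local $L^1$-regularity of $\wt{U}$ formally yields local $L^p$-regularity (restriction of the evolution along the continuous inclusion $L^p([0,1],\cg)\to L^1([0,1],\cg)$, with smoothness into $AC_{L^p}$ handled by Proposition~\ref{givsordr}); Lemma~\ref{realvscxlcly} then gives real analyticity of the local $L^p$-evolution on $U$, and Proposition~\ref{sammelsu}\,(d) globalizes. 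Your alternative --- redoing the contraction argument in the $L^p$-setting, observing that the contraction constant only involves $\|\gamma\|_{L^1}\leq\|\gamma\|_{L^p}$ and that the $L^p$-pushforward axiom makes the relevant map smooth --- is sound and self-contained, but does work the paper's formal implication machinery renders unnecessary; conversely, it makes explicit that the local solution theory for Banach--Lie groups is uniform in $p$, which the paper's route leaves implicit.
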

\begin{proof}
By Lemma~\ref{prepabanreg},
$G$ is locally $L^1$-regular,
whence $G$ is $L^1$-regular
(by Proposition~\ref{lctoglb})
and hence $L^p$-regular
(see Theorem~A). Let
\[
\Evol_G\colon L^p([0,1],\cg)\to AC_{L^p}([0,1],G)
\]
be the evolution map. To see that $\Evol_G$
is not only smooth, but real analytic,
let $U\sub G$ be an open symmetric
identity neighbourhood on which a chart is defined
and which injects into a complex analytic local
Lie group $\wt{U}$ with global chart,
such that $U$ is the fixed point set
of an antiholomorphic involution
and the inclusion map $U\to \wt{U}$
an isomorphism of local groups onto the latter.
Then $\wt{U}$ is locally $L^1$-regular
(by Lemma~\ref{prepabanreg})
and thus $\wt{U}$ is locally $L^p$-regular
(see Corollary~\ref{corsimA}
and Remark~\ref{implalsolc}).
As a consequence, $U$
(and hence also $G$)
is locally $L^p$-regular
and
\[
\Evol_G|_\Omega
\]
is real analytic on some
open $0$-neighbourhood $\Omega\sub L^p([0,1],\cg)$
(Lemma~\ref{realvscxlcly}).
Hence $\Evol_G$ is real analytic,
by Proposition~\ref{sammelsu}\,(d).
\end{proof}
\section{Measurable regularity for projective limits}\label{secPL}
We describe situations where measurable
regularity properties pass from the steps $G_n$
of a projective system
\[
\cdots \to G_2\to G_1
\]
of Lie groups to the projective limit $G=\pl\,G_n$.\vspace{-0.5mm}
As a tool, we first discuss projective limits
of Lebesgue spaces
and spaces of absolutely continuous
functions.
%
%and the Lie groups $AC_\cE([0,1],G)$.
%
The proofs of Lemmas~\ref{compaPL}--\ref{compaPLrc2}
have been relegated to an appendix (Appendix~\ref{appeB}).
\begin{la}\label{compaPL}
Let $(X,\Sigma,\mu)$ be a measure space,
$((E_n)_{n\in\N}, (\phi_{n,m})_{n\leq m})$ be a projective
system of Fr\'{e}chet spaces $E_n$ and continuous linear mappings\linebreak
$\phi_{n,m}\colon E_m\to E_n$ for $n\leq m$.
Let $E$ be a Fr\'{e}chet space such that
$E=\pl\, E_n$,\vspace{-1.3mm} with the limit maps
$\phi_n\colon E\to E_n$.
Then
\[
L^p(X,\mu,E)=\pl\, L^p(X,\mu,E_n)
\]
for each $p\in [1,\infty]$,
with bonding maps $L^p(X,\mu,\phi_{n,m})$ and the limit maps $L^p(X,\mu,\phi_n)$.
\end{la}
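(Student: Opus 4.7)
The plan is to define the natural map
\[
\Psi\colon L^p(X,\mu,E)\to \pl\, L^p(X,\mu,E_n),\quad [\gamma]\mto ([\phi_n\circ\gamma])_{n\in\N}
\]
and verify that it is an isomorphism of topological vector spaces. That $\Psi$ is well-defined, linear, and continuous follows from \ref{deflinpfwd} applied to each~$\phi_n$, together with the universal property of the projective limit in the category of locally convex spaces (applied to the compatible family $(L^p(X,\mu,\phi_n))_{n\in\N}$).

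To see $\Psi$ is a topological embedding, recall that the topology of $E=\pl\,E_n$ is generated by the seminorms $q\circ\phi_n$ for $n\in\N$ and $q\in P(E_n)$. Hence a fundamental system of continuous seminorms on $L^p(X,\mu,E)$ is given by $\|\cdot\|_{L^p,q\circ\phi_n}=\|L^p(X,\mu,\phi_n)(\cdot)\|_{L^p,q}$; these are precisely the pullbacks under~$\Psi$ of a fundamental system of continuous seminorms on the projective limit. Injectivity of~$\Psi$ is then also immediate: if $\Psi([\gamma])=0$, then for each~$n$ there is a $\mu$-null set $N_n$ off which $\phi_n\circ\gamma=0$; on the complement of the $\mu$-null set $\bigcup_n N_n$ all $\phi_n(\gamma(x))$ vanish, so $\gamma(x)=0$, since the limit maps separate points on~$E=\pl\,E_n$.

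The heart of the proof is surjectivity. Given a compatible family $([\gamma_n])_{n\in\N}$ in $\pl\,L^p(X,\mu,E_n)$, I choose representatives $\gamma_n\in\cL^p(X,\mu,E_n)$. For each pair $n\leq m$ the equality $\phi_{n,m}\circ\gamma_m=\gamma_n$ holds outside a $\mu$-null set $N_{n,m}$, so off the $\mu$-null set $N:=\bigcup_{n\leq m}N_{n,m}$ the threads $(\gamma_n(x))_{n\in\N}$ are coherent and determine a unique $\gamma(x)\in E$ with $\phi_n(\gamma(x))=\gamma_n(x)$ for all~$n$; extend by $\gamma(x):=0$ on~$N$. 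By construction $\phi_n\circ\gamma=\gamma_n$ $\mu$-almost everywhere, and the integrability condition is immediate, because the topology of~$E$ is generated by the seminorms $q\circ\phi_n$ and $\|q\circ\phi_n\circ\gamma\|_{\cL^p}=\|q\circ\gamma_n\|_{\cL^p}<\infty$ for each $n\in\N$ and $q\in P(E_n)$.

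The main obstacle is thus the Borel measurability of $\gamma\colon X\to E$ (and the separability of its image). I would use the closed topological embedding $\iota\colon E\hookrightarrow\prod_n E_n$, $v\mto(\phi_n(v))_n$. By construction $\iota(\gamma(X))\sub \{0\}\cup\prod_n(\gamma_n(X)\cup\{0\})$, and this latter set is separable as a countable product of separable metric spaces. Using \ref{basicsmeas}\,(b),(d),(f), Borel measurability of~$\gamma$ into~$E$ reduces to Borel measurability of $\iota\circ\gamma$ into this second-countable subspace of $\prod_n E_n$, which in turn reduces to componentwise Borel measurability; this holds because each component $\phi_n\circ\gamma$ agrees $\mu$-almost everywhere with the measurable function~$\gamma_n$. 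Modifying $\gamma$ on a $\mu$-null set if necessary (e.g., redefining it to be~$0$ on each exceptional set above), we obtain $\gamma\in\cL^p(X,\mu,E)$ with $\Psi([\gamma])=([\gamma_n])_n$, completing the proof.
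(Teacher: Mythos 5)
Your proposal is correct and follows essentially the same route as the paper's proof in Appendix~B: the same canonical map, injectivity and the topological embedding via the fundamental system of seminorms $q\circ\phi_n$, and surjectivity by assembling coherent threads after discarding a $\mu$-null set. The only (cosmetic) difference is in the measurability step, where you pass through the second-countable product $\prod_n(\gamma_n(X)\cup\{0\})$ and componentwise measurability, while the paper checks preimages of a countable basis of seminorm balls in the separable subspace $E\cap\prod_n\Spann(\gamma_n(X))$ directly; both rest on the same reduction to a separable metrizable situation.
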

\begin{la}\label{compaPLrc}
Let $(X,\Sigma,\mu)$ be a measure space,
$((E_n)_{n\in \N}, (\phi_{n,m})_{n\leq m})$ be a projective
system of locally convex spaces $E_n$ and continuous linear maps
$\phi_{n,m}\colon E_n\to E_m$ for $n\leq m$ in~$\N$.
Let $E$ be a locally convex space such that
$E=\pl\, E_n$,\vspace{-1.3mm} with the limit maps
$\phi_n\colon E\to E_n$.
Then
\[
L^\infty_{rc}(X,\mu,E)=\pl\, L^\infty_{rc}(X,\mu,E_n)
\]
with bonding maps $L^\infty_{rc}(X,\mu,\phi_{n,m})$
and the limit maps $L^\infty_{rc}(X,\mu,\phi_n)$.
\end{la}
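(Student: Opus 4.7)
The plan is to show that the natural map
\[
\Phi \colon L^\infty_{rc}(X,\mu,E) \to \pl\, L^\infty_{rc}(X,\mu,E_n), \quad [\gamma] \mto \bigl(L^\infty_{rc}(X,\mu,\phi_n)([\gamma])\bigr)_{n \in \N}
\]
is an isomorphism of topological vector spaces. That $\Phi$ is well defined and continuous linear is immediate from the functoriality recalled in \ref{deflinpfwd} together with the universal property of the projective limit. For injectivity, if $[\phi_n \circ \gamma] = 0$ for every $n$, pick $\mu$-null sets $N_n$ outside which $\phi_n \circ \gamma$ vanishes; the countable union $N := \bigcup_n N_n$ is $\mu$-null, and for $x \notin N$ the relations $\phi_n(\gamma(x)) = 0$ together with the fact that the limit maps $\phi_n$ jointly separate points on $E \sub \prod_n E_n$ force $\gamma(x) = 0$, so $[\gamma] = 0$.

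For surjectivity, start from a compatible family $(\zeta_n)_{n \in \N}$ with $\zeta_n = [\gamma_n]$. Collecting the countably many $\mu$-null sets on which the identities $\phi_{n,m}(\gamma_m) = \gamma_n$ fail into one null set $N$, and redefining each $\gamma_n$ to be $0$ on $N$, one may assume $\phi_{n,m}(\gamma_m(x)) = \gamma_n(x)$ for all $n \leq m$ and all $x \in X$, with $\gamma_n(N) = \{0\}$. For $x \in X$, the compatible tuple $(\gamma_n(x))_n$ lies in $E$, viewed as a closed subspace of $\prod_n E_n$ via \ref{topPL}; define $\gamma(x) \in E$ accordingly. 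Then $\phi_n \circ \gamma = \gamma_n$ pointwise on $X$, so $\Phi([\gamma]) = (\zeta_n)_n$ once we know $[\gamma] \in L^\infty_{rc}(X,\mu,E)$.

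The main technical hurdle is thus to verify that this $\gamma$ is measurable with relatively compact metrizable image in $E$. Let $K_n \sub E_n$ be the compact metrizable closure of $\gamma_n(X)$ (it is compact metrizable because $[\gamma_n] \in L^\infty_{rc}(X,\mu,E_n)$). By Tychonoff, $\prod_n K_n$ is compact, and as a countable product of metrizable compacta it is metrizable, hence second countable. The image $\gamma(X)$ lies in $K := E \cap \prod_n K_n$, which is closed in $\prod_n K_n$ (using that $E \sub \prod_n E_n$ carries the induced topology as a projective limit), hence compact and metrizable; so $\wb{\gamma(X)} \sub E$ is compact metrizable. For measurability, iterate \ref{basicsmeas}(f) to see that $\cB(\prod_n K_n) = \bigotimes_n \cB(K_n)$; since each component $\pi_n \circ \gamma = \gamma_n$ is measurable, $\gamma$ is measurable as a map $X \to \prod_n K_n$, and then measurable as a map $X \to K$ by \ref{basicsmeas}(b). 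Using $\cB(K) = \cB(E)|_K$ from \ref{basicsmeas}(d), $\gamma \colon X \to E$ is Borel measurable, so $[\gamma] \in L^\infty_{rc}(X,\mu,E)$.

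Finally, the topological identification is routine: the projective limit topology on $E$ is defined by the seminorms $q_n \circ \phi_n$ with $n \in \N$ and $q_n \in P(E_n)$, so
\[
\|[\gamma]\|_{L^\infty, q_n \circ \phi_n} = \|q_n \circ \phi_n \circ \gamma\|_{L^\infty} = \bigl\| L^\infty_{rc}(X,\mu,\phi_n)([\gamma])\bigr\|_{L^\infty,q_n},
\]
showing that the seminorms on $L^\infty_{rc}(X,\mu,E)$ coincide with those pulled back through $\Phi$ from the projective limit. Combined with bijectivity, $\Phi$ is an isomorphism of locally convex spaces.
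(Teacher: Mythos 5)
Your proof is correct, and its skeleton is the one the paper uses: the same map $\Phi=(L^\infty_{rc}(X,\mu,\phi_n))_{n\in\N}$, the same null-set cleanup of a compatible family, the pointwise definition of $\gamma$ inside $E\sub\prod_n E_n$, Tychonoff for relative compactness of the image, and the same seminorm computation (via seminorms of the form $q\circ\phi_n$) for the topological identification. Where you genuinely deviate is in the two measure-theoretic verifications. For injectivity you argue directly that the limit maps $\phi_n$ jointly separate points of $E$, so $\gamma$ vanishes off a countable union of null sets; the paper instead invokes Lemma~\ref{imagesepmet} to manufacture countably many seminorms $q_j=Q_j\circ\phi_{n_j}$ separating points on the span of the image and argues with those. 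For measurability of the glued map you exploit that $\prod_n K_n$ is compact metrizable, hence second countable, so that its Borel $\sigma$-algebra is the product $\sigma$-algebra and measurability follows componentwise; the paper again routes through Lemma~\ref{imagesepmet}, placing a coarser separable metrizable vector topology on $\Spann(\wb{\im\gamma})$ and repeating the countable-ball argument from its proof of Lemma~\ref{compaPL}. Your route is more elementary and avoids Lemma~\ref{imagesepmet} entirely; the paper's route has the advantage of running in parallel with its $L^p$ case, where images are only separable (no compact product is available). One small repair: \ref{basicsmeas}(f) is stated for two factors, so ``iterating'' it does not literally yield $\cB\big(\prod_{n\in\N}K_n\big)=\bigotimes_n\cB(K_n)$ for the infinite product; but since you have already noted that the countable product is second countable, every open set is a countable union of basic cylinder sets and the identity follows (equivalently, apply \ref{basicsmeas}(e) to the subbasic cylinder sets of $K$). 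This is a one-line fix, not a gap.
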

\begin{la}\label{compaPLrc2}
Let $a<b$ be real numbers,
$((E_n)_{n\in \N}, (\phi_{n,m})_{n\leq m})$ be a projective
system of Fr\'{e}chet spaces~$E_n$ and continuous linear maps
$\phi_{n,m}\colon E_n\to E_m$ for $n\leq m$ in~$\N$.
Let $E$ be a Fr\'{e}chet space such that
$E=\pl\, E_n$,\vspace{-1.3mm} with the limit maps
$\phi_n\colon E\to E_n$.
If
$\phi_{n,m}(E_m)$ is
dense in~$E_n$ for all $n,m\in \N$ such that $n\leq m$,
then
$R([a,b],E)=\pl\, R([a,b],E_n)$\vspace{-0.5mm}
with bonding maps $R([a,b],\phi_{n,m})$
and the limit maps $R([a,b],\phi_n)$.
\end{la}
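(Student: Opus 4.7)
My plan is to deduce the statement from Lemma~\ref{compaPLrc}, which identifies $L^\infty_{rc}([a,b],E)$ with $\pl L^\infty_{rc}([a,b],E_n)$\vspace{-1mm} via $[\gamma]\mapsto ([\phi_n\circ\gamma])_{n\in \N}$. By~\ref{regulatedfrech}, each $R([a,b],E_n)$ is a closed vector subspace of $L^\infty_{rc}([a,b],E_n)$; the bonding maps $L^\infty_{rc}([a,b],\phi_{n,m})$ send step functions to step functions, so they restrict to continuous linear maps $R([a,b],\phi_{n,m})\colon R([a,b],E_m)\to R([a,b],E_n)$. Consequently, under the identification of Lemma~\ref{compaPLrc}, the projective limit $\pl R([a,b],E_n)$\vspace{-1mm} corresponds to the closed subspace
\[
S:=\{[\gamma]\in L^\infty_{rc}([a,b],E)\colon [\phi_n\circ\gamma]\in R([a,b],E_n)\text{ for all }n\in \N\}
\]
of $L^\infty_{rc}([a,b],E)$, so that the lemma reduces to the equality $R([a,b],E)=S$. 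The inclusion $R([a,b],E)\sub S$ is immediate: if $\gamma\in \cR([a,b],E)$ is a uniform limit of step functions $\gamma_k\in \cT([a,b],E)$, then $\phi_n\circ\gamma$ is a uniform limit of the step functions $\phi_n\circ\gamma_k\in \cT([a,b],E_n)$.

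For the reverse inclusion, I recall that $E=\pl E_n$\vspace{-1mm} carries the initial topology with respect to the $\phi_n$, so the seminorms $\|\cdot\|_{L^\infty,q_n\circ\phi_n}$, with $n\in \N$ and $q_n$ a continuous seminorm on~$E_n$, form a fundamental system on $L^\infty_{rc}([a,b],E)$. The hypothesis that every $\phi_{n,m}$ has dense image, combined with a standard Mittag--Leffler argument for countable projective systems of Fr\'{e}chet spaces, yields that $\phi_n(E)$ is dense in~$E_n$ for every $n\in \N$. Given $[\gamma]\in S$, $n\in \N$, a continuous seminorm $q_n$ on $E_n$ and $\ve>0$, the fact that $[\phi_n\circ\gamma]\in R([a,b],E_n)$ provides a step function $\tau=\sum_{i=1}^Nv_i\one_{A_i}\in \cT([a,b],E_n)$ with $\|\phi_n\circ\gamma-\tau\|_{\cL^\infty,q_n}<\ve/2$, and density lets me pick $w_i\in E$ with $q_n(\phi_n(w_i)-v_i)<\ve/2$ for each~$i$. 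Then $\sigma:=\sum_{i=1}^Nw_i\one_{A_i}\in \cT([a,b],E)$ satisfies
\[
\|\gamma-\sigma\|_{\cL^\infty,q_n\circ\phi_n}\;\leq\;\|\phi_n\circ\gamma-\tau\|_{\cL^\infty,q_n}+\|\tau-\phi_n\circ\sigma\|_{\cL^\infty,q_n}\;<\;\ve,
\]
so $[\gamma]$ lies in the closure of $\{[\sigma]\colon\sigma\in \cT([a,b],E)\}$, which is $R([a,b],E)$ by~\ref{regulatedfrech}.

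The main obstacle is the reverse inclusion $S\sub R([a,b],E)$: without the density of the bonding maps, a given step function $\tau$ in $E_n$ cannot in general be lifted to a step function in~$E$ whose composition with $\phi_n$ is uniformly close to~$\tau$, and the identification would then fail. The Mittag--Leffler density of $\phi_n(E)$ in $E_n$ is precisely what makes this lifting possible and explains why the hypothesis on dense bonding maps is forced to appear in the statement.
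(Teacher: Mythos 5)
Your proof is correct, and it reaches the conclusion by a slightly different closing mechanism than the paper. Both arguments share the main ingredients: the identification $L^\infty_{rc}([a,b],E)\cong\pl\,L^\infty_{rc}([a,b],E_n)$ from Lemma~\ref{compaPLrc}, the density of $\phi_n(E)$ in~$E_n$ (your ``standard Mittag--Leffler argument'' is exactly Lemma~\ref{denseTG} of the paper, so you can cite that instead of an external fact), and the lifting of an $E_n$-valued step function to an $E$-valued one by lifting its finitely many values. Where you diverge is the surjectivity step: the paper shows that $\Phi_R=\Phi|_{R([a,b],E)}$ is a topological embedding whose image is \emph{complete} (all spaces involved being Fr\'{e}chet), hence closed, and then verifies density of the image in $\pl\,R([a,b],E_n)$ by checking, via \ref{topPL}(b), that step functions in each $E_n$ are approximable by $\phi_n\circ\eta$ with $\eta\in\cT([a,b],E)$; surjectivity follows from ``closed plus dense''. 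You instead work entirely inside $L^\infty_{rc}([a,b],E)$: you identify the projective limit with the subspace $S$ and prove $S\sub R([a,b],E)$ by a direct two-step approximation (first approximate $\phi_n\circ\gamma$ by a step function in~$E_n$, then lift), using that the seminorms $\|\cdot\|_{L^\infty,q\circ\phi_n}$ are fundamental on $L^\infty_{rc}([a,b],E)$ and invoking \ref{regulatedfrech} to conclude. This buys you an argument that never needs completeness of the image; the paper's version only ever has to approximate step functions, not arbitrary elements of the limit. Two small points you should make explicit: the family $\{q\circ\phi_n\}$ is \emph{directed} (given finitely many such seminorms, pass to $N=\max n_i$ and replace $q_i$ by $q_i\circ\phi_{n_i,N}$), so that approximation in one seminorm at a time indeed places $[\gamma]$ in the closure of the step-function classes; and the identification of $\pl\,R([a,b],E_n)$ with $S$ uses the surjectivity assertion of Lemma~\ref{compaPLrc} to realize a compatible family as $([\phi_n\circ\gamma])_{n\in\N}$ for a single $[\gamma]$. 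With these remarks added, your argument is a complete and valid alternative to the one in the paper.
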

\begin{la}\label{plACE}
Let $\cE$ be a bifunctor on Fr\'{e}chet spaces
$($resp., integral complete
locally convex spaces$)$
and $a<b$ be real numbers.
Let $((E_n)_{n\in \N}, (\phi_{n,m})_{n\leq m})$ be a projective
system of Fr\'{e}chet spaces $($resp., integral complete
locally convex spaces$)$~$E_n$ and continuous linear maps
$\phi_{n,m}\colon E_n\to E_m$ for $n\leq m$ in~$\N$.
Let $E$ be a locally convex space such that
$E=\pl\, E_n$,\vspace{-0.5mm} with the limit maps
$\phi_n\colon E\to E_n$.
We assume that
\[
\cE([a,b],E)= \pl\, \cE([a,b],E_n)\vspace{-0.5mm}
\]
as a locally convex space,
with bonding maps $\cE([a,b],\phi_{n,m})$
and the limit maps $\cE([a,b],\phi_n)$.
Then
\[
AC_\cE([a,b],E)=\pl\,AC_\cE([a,b],E_n)\vspace{-.5mm}
\]
as a locally convex space,
with bonding maps $AC_\cE([a,b],\phi_{n,m})$
and the limit maps $AC_\cE([a,b],\phi_n)$.
\end{la}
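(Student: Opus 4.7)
The plan is to reduce the assertion to the isomorphism
$\Phi\colon AC_\cE([a,b],F)\to F\times \cE([a,b],F)$, $\eta\mto(\eta(t_0),\eta')$
supplied by Definition~\ref{mostbaac}, combined with the elementary fact
that projective limits commute with finite products in the category of
topological vector spaces. Fix $t_0\in[a,b]$ and, for each Fr\'{e}chet (resp.\
integral complete) space~$F$, write $\Phi_F$ for the topological isomorphism
just mentioned.

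First I would check that the bonding and limit maps of the candidate system
are well-defined continuous linear maps. Concretely, for any continuous linear
$\lambda\colon F_1\to F_2$ and $\eta\in AC_\cE([a,b],F_1)$ with $\eta'=[\gamma]$,
one has, using \ref{basicweaki},
\[
(\lambda\circ\eta)(t)=\lambda(\eta(t_0))+\int_{t_0}^t\lambda(\gamma(s))\,ds,
\]
and $[\lambda\circ\gamma]=\cE([a,b],\lambda)([\gamma])\in\cE([a,b],F_2)$ by
Axiom~(B1); hence $\lambda\circ\eta\in AC_\cE([a,b],F_2)$ with derivative
$\cE([a,b],\lambda)([\gamma])$. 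This shows that $AC_\cE([a,b],\lambda)$ exists,
and the identity
\[
\Phi_{F_2}\circ AC_\cE([a,b],\lambda)=(\lambda\times\cE([a,b],\lambda))\circ\Phi_{F_1}
\]
together with the fact that $\Phi_{F_1}$ and $\Phi_{F_2}$ are homeomorphisms
yields continuity of $AC_\cE([a,b],\lambda)$. Applying this with $\lambda=\phi_{n,m}$
and $\lambda=\phi_n$, the maps $AC_\cE([a,b],\phi_{n,m})$ and $AC_\cE([a,b],\phi_n)$
form a compatible system, inducing a canonical continuous linear map
\[
\Psi\colon AC_\cE([a,b],E)\to\pl\,AC_\cE([a,b],E_n).
\]

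The key step is then to consider the commutative square whose top row is
$\Phi_E$, whose bottom row is $\pl\,\Phi_{E_n}\colon\pl\,AC_\cE([a,b],E_n)
\to\pl\,(E_n\times\cE([a,b],E_n))$, whose left column is $\Psi$, and whose right
column is the canonical comparison map
\[
T\colon E\times\cE([a,b],E)\longrightarrow\pl\,(E_n\times\cE([a,b],E_n)).
\]
Commutativity follows from the compatibility identity established in the
previous paragraph. Each $\Phi_{E_n}$ is an isomorphism of topological vector
spaces, hence so is $\pl\,\Phi_{E_n}$. Since projective limits commute with
finite products, $T$ is the product of the canonical isomorphisms
$E\to\pl\,E_n$ (given) and $\cE([a,b],E)\to\pl\,\cE([a,b],E_n)$ (the hypothesis),
so $T$ is an isomorphism of topological vector spaces. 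Combining, $\Psi$ is an
isomorphism of topological vector spaces, which is the desired conclusion.

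For concreteness one can exhibit the inverse of $\Psi$ directly: given a
compatible family $(\eta_n)\in\pl\,AC_\cE([a,b],E_n)$, the sequences
$(\eta_n(t_0))$ and $(\eta_n')$ are compatible (the former from
$\phi_{n,m}(\eta_m(t_0))=(\phi_{n,m}\circ\eta_m)(t_0)=\eta_n(t_0)$, the latter
from $\cE([a,b],\phi_{n,m})(\eta_m')=(\phi_{n,m}\circ\eta_m)'=\eta_n'$), so they
determine $v\in E$ and $[\gamma]\in\cE([a,b],E)$ under the two given limit
identifications, and $\Phi_E^{-1}(v,[\gamma])$ is the unique preimage. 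There is
no genuine obstacle in this proof; the only point that requires a small
verification is the compatibility of the $\Phi_{E_n}$ with the bonding maps,
which reduces to the formula $(\lambda\circ\eta)'=\cE([a,b],\lambda)(\eta')$
noted above.
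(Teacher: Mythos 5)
Your proposal is correct and follows essentially the same route as the paper: both reduce the claim to the isomorphism $\eta\mapsto(\eta(t_0),\eta')$ onto $F\times\cE([a,b],F)$, use that projective limits commute with finite products, and verify compatibility with the bonding and limit maps via the identity $(\lambda\circ\eta)'=\cE([a,b],\lambda)(\eta')$ coming from Axiom (B1) and \ref{basicweaki}. The only cosmetic difference is that you phrase the conclusion through an explicit comparison map $\Psi$ and a commutative square, whereas the paper chains the identifications directly; the content is identical.
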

\begin{proof}
%We may assume that
%\[
%E=\{(x_n)_{n\in \N}\in \prod_{n\in\N}E_n\colon\,(\forall n\leq m)\; \phi_{n,m}(x_m)=x_n\}.
%\]
For each $n\in\N$, the map
\[
\psi_n\colon AC_\cE([a,b],E_n)\to E_n\times \cE([a,b],E_n),\quad
\eta\mto (\eta(a),\eta')
\]
is an isomorphism of topological vector spaces and so is the corresponding map
$\psi\colon AC_\cE([a,b],E)\to E\times \cE([a,b],E)$.
Hence
\[
AC_\cE([a,b],E)\cong E\times \cE([a,b],E)=\pl\, (E_n\times \cE([a,b],E_n)
\cong \pl\, AC_\cE([a,b],E_n).
\]
In more detail, let us make the bonding maps
and limit maps explicit which are involved,
to ensure the final conclusion of the lemma.
First, note that the spaces
\[
E_n\times \cE([a,b],E_n)
\]
form a projective system
together with the bonding maps $\phi_{n,m}\times \cE([a,b], \phi_{n,m})$.
By the compatibility of projective limits and direct products,
we have that
\[
E\times \cE([a,b],E)=\pl\, (E_n\times \cE([a,b],E_n))
\]
for this system, with the limit maps $\phi_n\times \cE([a,b], \phi_n)$.
Next, note that the locally convex spaces
\[
AC_\cE([a,b],E_n)
\]
form a projective system with the bonding maps $AC_\cE([a,b],\phi_{n,m})$.
Since
\[
(\phi_{n,m}\times \cE([a,b], \phi_{n,m}))\circ \psi_m=\psi_n\circ
AC_\cE([a,b], \phi_{n,m})
\]
and thus
\[
\psi_n^{-1}\circ (\phi_{n,m}\times \cE([a,b], \phi_{n,m}))=
AC_\cE([a,b], \phi_{n,m})\circ \psi_m^{-1},
\]
we have
\[
E\times \cE([a,b],E)=\pl\, AC_\cE([a,b], E_n)
\]
for the preceding projective system, with the limit maps
$\psi_n^{-1}\circ (\phi_n\times \cE([a,b], \phi_n)$.
Since $\psi\colon AC_\cE([a,b],E)\to E\times \cE([a,b], E)$
is an isomorphism of topological vector spaces,
we conclude that
\[
AC_\cE([a,b],E)=\pl\,AC_\cE([a,b], E_n)
\]
for the preceding projective system,
with the desired limit maps
\[
\psi_n^{-1}\circ (\phi_n\times \cE([a,b], \phi_n)\circ \psi=AC_\cE([a,b],\phi_{n,m}).
\]
We used that $(\phi_n\times \cE([a,b],\phi_n))\circ \psi
=\psi_n\circ AC_\cE([a,b],\phi_{n,m})$
as both maps take $\eta\in AC_\cE([a,b], E)$
to $(\phi_n(\eta(a)),\phi_n\circ \eta')=
(\phi_n(\eta(a)),(\phi_n\circ \eta)')$.
\end{proof}
\begin{defn}\label{dplcha}
Let $((G_n)_{n\in \N}, (q_{n,m})_{n\leq m})$
be a projective system of Lie groups $G_n$ modelled
on locally convex spaces $E_n$ and smooth group
homomorphisms $q_{n,m}\colon G_m\to G_n$.
Let $G$ be a Lie group modelled on a locally convex space~$E$
such that
\[
G=\pl\,G_n\vspace{-.5mm}
\]
for the above projective system as a set, with limit maps $q_n\colon G\to G_n$
which are smooth group homomorphisms.
We say that a chart $\phi\colon U\to E$ of~$G$
with $e\in U$ is a \emph{projective limit chart}
if
\begin{itemize}
\item[(a)]
There exist continuous linear maps
\[
\alpha_n\colon E\to E_n\quad\mbox{and}\quad \alpha_{n,m}\colon E_m\to E_n
\]
such that $((E_n)_{n\in \N}, (\alpha_{n,m})_{n\leq m})$
is a projective system of locally convex spaces and
\[
E=\pl\,E_n\vspace{-.5mm}
\]
for this system as a locally convex space, with the limit maps $\alpha_n$; and
\item[(b)]
$G_n$ is modelled on $E_n$ and there exist charts $\phi_n\colon U_n\to V_n$
of $G_n$ such that
\[
\phi_{n,m}(U_m)\sub U_n\quad \mbox{and}\quad
\alpha_{n,m}(V_m)\sub V_n
\]
for all $n,m\in \N$ with $n\leq m$,
\[
U=\bigcap_{n\in \N}q_n^{-1}(U_n),\quad
V=\bigcap_{n\in \N}\alpha_n^{-1}(V_n),
\]
\[
\phi_n\circ q_{n,m}|_{U_m}=\alpha_{n,m}\circ \phi_m\quad\mbox{for all $n,m\in \N$
with $n\leq m$},
\]
\[
q_n(U)\sub U_n\quad\mbox{and}\quad \alpha_n(V)\sub V_m\quad
\mbox{for all $n\in \N$,}
\]
and
\[
\alpha_n \circ \phi=\phi_n\circ q_n|_U\quad\mbox{for all $n\in\N$.}
\]
\end{itemize}
\end{defn}
We mention that existence of projective limit
charts can also be characterized as follows
(see Appendix~\ref{appeB}
for the straightforward proof).
\begin{la}\label{reformug}
A Lie group $G=\dl\,G_n$\vspace{-.5mm}
as in Definition~\emph{\ref{dplcha}}
admits a projective limit chart if and only if
\[
L(G)=\pl\, L(G_n)\vspace{-.5mm}
\]
as a locally convex space with the bonding maps
$L(q_{n,m})$ and limit maps $L(q_n)$,
and there exist $C^\infty$-diffeomorphisms
\[
\psi\colon U\to W\quad\mbox{and}\quad \psi_n\colon U_n\to W_n
\]
from open identity neighbourhoods
$U\sub G$ and $U_n\sub G_n$ onto open sets
$W\sub L(G)$ and $W_n\sub L(G_n)$, respectively,
such that
\[
d\psi|_{L(G)}=\id_{L(G)},\quad d\psi_n|_{L(G_n)}=\id_{L(G_n)}\;\;\;\mbox{for all $n\in \N$,}
\]
\[
q_{n,m}(U_m)\sub U_n\quad\mbox{and}\quad L(q_{n,m})(W_m)\sub W_n
\]
for all $n,m\in \N$ with $n\leq m$,
\[
U=\bigcap_{n\in \N}q_n^{-1}(U_n),\quad
W=\bigcap_{n\in \N}L(q_n)^{-1}(W_n),
\]
\begin{equation}\label{compaby}
\psi_n\circ q_{n,m}|_{U_m}=L(q_{n,m})\circ \psi_m
\quad\mbox{for all $n,m\in \N$
with $n\leq m$},
\end{equation}
\begin{equation}\label{compby3}
q_n(U)\sub U_n\quad\mbox{and}\quad L(q_n)(W)\sub W_n\quad\mbox{for all
$n\in \N$}
\end{equation}
and
\begin{equation}\label{compby2}
L(q_n)\circ \psi=\psi_n\circ q_n|_U\quad\mbox{for all $n\in\N$.}
\end{equation}
\end{la}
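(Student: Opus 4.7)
For the forward direction, suppose $\phi\colon U\to V\subseteq E$ is a projective limit chart for $G$ in the sense of Definition~\ref{dplcha}, with auxiliary data $(\alpha_n,\alpha_{n,m},\phi_n,U_n,V_n)$. The idea is to post-compose $\phi$ and each $\phi_n$ with an affine isomorphism so that the resulting charts $\psi,\psi_n$ become centred at the identity and have derivative equal to the identity on the Lie algebra. Concretely, set $c:=\phi(e)$, $c_n:=\phi_n(e)$, and let $T:=d\phi(e)\colon L(G)\to E$ and $T_n:=d\phi_n(e)\colon L(G_n)\to E_n$ be the linearisations of $\phi$ and $\phi_n$ at~$e$ (which are topological isomorphisms, as $T_eG\cong L(G)$, $T_{c}E\cong E$, etc.). Define
\[
\psi\colon U\to W:=T^{-1}(V-c),\quad \psi(x):=T^{-1}(\phi(x)-c),
\]
and $\psi_n\colon U_n\to W_n:=T_n^{-1}(V_n-c_n)$ analogously. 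Then $\psi$ and $\psi_n$ are $C^\infty$-diffeomorphisms onto open subsets of $L(G)$ and $L(G_n)$ respectively, with $d\psi|_{L(G)}=\id_{L(G)}$ and $d\psi_n|_{L(G_n)}=\id_{L(G_n)}$.

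Next, differentiating the relation $\alpha_n\circ\phi=\phi_n\circ q_n|_U$ at~$e$ yields $\alpha_n\circ T=T_n\circ L(q_n)$, while evaluating the same relation at~$e$ yields $\alpha_n(c)=c_n$. Analogous identities hold for $\alpha_{n,m}$, $T_m$, $c_m$, etc. A direct substitution then shows that, under the isomorphisms $T^{-1}$ and $T_n^{-1}$, the bonding maps $\alpha_{n,m}$ and limit maps $\alpha_n$ are transported to $L(q_{n,m})$ and $L(q_n)$; in particular, the projective limit identity $E=\pl\,E_n$ in the category of locally convex spaces translates into $L(G)=\pl\,L(G_n)$ with the required bonding and limit maps. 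The compatibility conditions \eqref{compaby}, \eqref{compby3} and \eqref{compby2} for $\psi,\psi_n$, as well as the set-theoretic descriptions $U=\bigcap_n q_n^{-1}(U_n)$ and $W=\bigcap_n L(q_n)^{-1}(W_n)$, follow by a routine rewriting of the corresponding identities for $\phi,\phi_n,V,V_n$ using $\alpha_n(c)=c_n$ and $\alpha_n\circ T=T_n\circ L(q_n)$.

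For the converse, given $\psi,\psi_n$ satisfying the listed conditions along with $L(G)=\pl L(G_n)$ (with bonding maps $L(q_{n,m})$ and limit maps $L(q_n)$), we verify Definition~\ref{dplcha} by taking $E:=L(G)$, $E_n:=L(G_n)$, $\alpha_n:=L(q_n)$, $\alpha_{n,m}:=L(q_{n,m})$, $\phi:=\psi$, $\phi_n:=\psi_n$, $V:=W$, $V_n:=W_n$; every clause of the definition is then directly one of the hypotheses. The main obstacle is purely bookkeeping: keeping track of the affine shifts by $c$ and $c_n$ and the linear conjugations by $T,T_n$, and checking that they intertwine the two commuting families of maps so that the projective-limit structure is preserved. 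Once the identity $\alpha_n(c)=c_n$ is noted, all verifications reduce to straightforward diagram chases.
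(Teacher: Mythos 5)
Your proof is correct and follows essentially the same route as the paper: post-compose the given charts with the (inverse) linearisations at the identity so that the new charts $\psi,\psi_n$ have derivative $\id$ on the Lie algebras, transport the projective-limit structure via the intertwining relations $\alpha_n\circ T=T_n\circ L(q_n)$, and for the converse use the $\psi$'s themselves as the data of Definition~\ref{dplcha}. The only cosmetic deviations are your extra affine shift by $\phi(e)$, $\phi_n(e)$ (which the paper omits, as the lemma does not require $\psi(e)=0$) and your taking $E:=L(G)$, $E_n:=L(G_n)$ directly in the converse, where the paper instead conjugates by chosen isomorphisms $\beta\colon L(G)\to E$, $\beta_n\colon L(G_n)\to E_n$ onto the modelling spaces.
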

\begin{rem}\label{maysymm}
After shrinking $U$ and the $U_n$,
we can always achieve that $U=U^{-1}$ and $U_n=U_n^{-1}$
are symmetric identity neighbourhoods for all
$n\in \N$ in Definition~\ref{dplcha}.\\[2.3mm]
[In fact, $q_n(U\cap U^{-1})=q_n(U)\cap q_n(U)^{-1}\sub U_n\cap U_n^{-1}$
and thus
\[
q_n(U')\sub U_n'
\]
for all $n\in \N$ if we define $U':=U\cap U^{-1}$ and $U_n':=U_n\cap U_n^{-1}$.
We also set $V':=\phi(U')$ and $V_n':=\phi_n(U_n')$.
Then
\begin{equation}\label{stillea}
\alpha_n(V')=\alpha_n(\phi(U'))=\phi_n(q_n(U'))
\sub \phi_n(U_n')=V_n'
\end{equation}
for $n\in \N$. Likewise,
\[
q_{n,m}(U_m')\sub q_{n,m}(U_m)\cap q_{n,m}(U_m)^{-1}\sub U_n\cap U_n^{-1}=U_n'
\]
for $n\leq m$ in $\N$
and thus
\[
\alpha_{n,m}(V_m')=\alpha_{n,m}(\phi_m(U_m'))=\phi_n(q_{n,m}(U_m'))
\sub \phi_n(U_n')=V_n'.
\]
Realizing the projective limit as a subgroup of
$\prod_{n\in \N}G_n$ as usual, we see that $U=\bigcap_{n\in \N}q_n^{-1}(U_n)$
entails
\[
U'=\bigcap_{n\in \N} q_n^{-1}(U_n').
\]
By (\ref{stillea}), we have
\[
V'\sub \bigcap_{n\in \N}\alpha_n^{-1}(V_n').
\]
To see that equality holds, let $y\in \bigcap_{n\in \N}\alpha_n^{-1}(V_n')$.
Then $y\in V$. Abbreviate $x:=\phi^{-1}(y)$.
For each $n\in \N$, we have that
\[
\phi_n(q_n(x))=\alpha_n(\phi(x))=\alpha_n(y)\in V_n'
\]
and hence $q_n(x)\in U_n'$. Thus $x\in \bigcap_{n\in \N}q_n^{-1}(U_n')=U'$
and $y=\phi(x)\in V'$.
\end{rem}
\begin{prop}\label{passtoPL}
Let $\cE$ be a bifunctor on Fr\'{e}chet spaces
$($resp., integral complete
locally convex spaces$)$
which satisfies the locality axiom, the pushforward
axioms, has the subdivision property,
and such that smooth functions act smoothly on $AC_\cE$.
Let $((G_n)_{n\in \N}, (q_{n,m})_{n\leq m})$
be a projective system of Lie groups $G_n$ modelled
on Fr\'{e}chet spaces $($resp.,
integral complete locally convex spaces$)$ $E_n$ and smooth group
homomorphisms $q_{n,m}\colon G_m\to G_n$.
Let $G$ be a Lie group modelled on a locally convex space~$E$
such that
\[
G=\pl\,G_n\vspace{-.5mm}
\]
for the above projective system as a set,
with limit maps $q_n\colon G\to G_n$
which are smooth group homomorphisms.
Assume that
\begin{itemize}
\item[\rm(a)]
$G$ admits a projective limit
chart in the sense of Definition~\emph{\ref{dplcha}};
\item[\rm(b)]
$G_n$ is $\cE$-regular for each $n\in\N$; and
\item[\rm(c)]
$\cE([0,1],L(G))=\pl\,\cE([0,1],L(G_n))$\vspace{-.5mm}
with respect to the bonding maps $\cE([0,1],L(q_{n,m}))$
and limit maps $\cE([0,1],L(q_n))$.
\item[\rm(d)]
$U_n$ in Lemma~\emph{\ref{reformug}} can be chosen such that $U_n=U_n^{-1}$ and
\[
\Omega:=\bigcap_{n\in \N}\cE([0,1], L(q_n))^{-1}(\Evol_{G_n}^{-1}(AC_\cE([0,1],U_n)))
\]
is a $0$-neighbourhood in $\cE([0,1],L(G))$.
\end{itemize}
Then also $G$ is $\cE$-regular.
\end{prop}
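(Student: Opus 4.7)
\medskip

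\noindent
The plan is to establish local $\cE$-regularity of $G$ on the open $0$-neighbourhood $\Omega$ and then invoke the subdivision property together with Proposition~\ref{lctoglb} to conclude $\cE$-regularity. By Remark~\ref{maysymm}, we may assume the projective limit chart $\phi\colon U\to V$ has $U=U^{-1}$ and each $U_n=U_n^{-1}$, and that $\phi$ factors as $\phi=(\phi_n\circ q_n|_U)_{n\in\N}$ with $\alpha_n\circ\phi=\phi_n\circ q_n|_U$. Applying Lemma~\ref{plACE} to hypothesis (c) yields the identification
\[
AC_\cE([0,1],E) \;=\; \pl\, AC_\cE([0,1],E_n)
\]
with limit maps $AC_\cE([0,1],\alpha_n)$, and by intersection this restricts to a bijective identification $AC_\cE([0,1],V)=\bigcap_{n\in\N}AC_\cE([0,1],\alpha_n)^{-1}(AC_\cE([0,1],V_n))$. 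Transporting through the chart $\phi$ (and its pushforward $AC_\cE([0,1],\phi)$), we obtain a topological embedding
\[
\Phi\colon AC_\cE([0,1],U)\;\hookrightarrow\;\pl\, AC_\cE([0,1],U_n),\qquad \eta\mto (q_n\circ\eta)_{n\in\N},
\]
whose image consists of all compatible families.

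\medskip

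\noindent
Second, for $\gamma\in\Omega$ set $\eta_n:=\Evol_{G_n}(\cE([0,1],L(q_n))(\gamma))\in AC_\cE([0,1],U_n)$, which is well-defined by hypothesis (d). Functoriality $L(q_{n,m})\circ L(q_m)=L(q_n)$ combined with the uniqueness of evolutions in $G_n$ (Lemma~\ref{logarules}) yields $q_{n,m}\circ\eta_m=\eta_n$, so the family is compatible and defines a unique continuous $\eta\colon[0,1]\to U$ with $q_n\circ\eta=\eta_n$ and $\eta(0)=e$. Using the chart description above, $\eta\in AC_\cE([0,1],U)\subset AC_\cE([0,1],G)$. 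To see $\delta^\ell(\eta)=\gamma$, apply Lemma~\ref{deltanic} and the naturality of $\delta^\ell$ under smooth homomorphisms to get
\[
\cE([0,1],L(q_n))(\delta^\ell(\eta))\;=\;\delta^\ell(q_n\circ\eta)\;=\;\delta^\ell(\eta_n)\;=\;\cE([0,1],L(q_n))(\gamma)
\]
for every $n\in\N$, and conclude $\delta^\ell(\eta)=\gamma$ from the projective limit identification in (c). Thus $\Evol_G(\gamma)=\eta$ exists in $AC_\cE([0,1],G)$, proving local $\cE$-semiregularity.

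\medskip

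\noindent
For smoothness, we use Lemma~\ref{plACE} again and the fact that the continuous linear maps $\cE([0,1],L(q_n))$ and smooth maps $\Evol_{G_n}$ assemble into a smooth map into the projective limit. Concretely, the composition
\[
\Omega\;\xrightarrow{\;(\cE([0,1],L(q_n)))_{n}\;}\;\prod_{n}\cE([0,1],L(G_n))\;\xrightarrow{\;\prod\Evol_{G_n}\;}\;\prod_{n}AC_\cE([0,1],U_n)
\]
is smooth, lands in the projective subsystem, and corresponds under $\Phi$ to $\Evol_G|_\Omega$. Since $\Phi$ is a topological embedding coming from the chart, composing with $AC_\cE([0,1],\phi)$ shows that $\Evol_G|_\Omega\colon\Omega\to AC_\cE([0,1],G)$ is smooth (using that smoothness into a projective limit of locally convex spaces is tested componentwise). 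Hence $G$ is locally $\cE$-regular, and since $\cE$ has the subdivision property, Proposition~\ref{lctoglb} promotes this to $\cE$-regularity.

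\medskip

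\noindent
The main delicacy will be verifying rigorously that the compatible family $(\eta_n)$ really defines an element of $AC_\cE([0,1],U)$ rather than merely a continuous curve into $G$. This is the step where hypotheses (a), (c) and (d) combine essentially: without the projective limit chart description of $U$ one cannot transfer the projective limit identification of the modelling spaces to the group, and without (d) the $\eta_n$ need not all sit inside the coordinate domains~$U_n$ simultaneously, so the chart-based description of $AC_\cE([0,1],U)$ would not apply.
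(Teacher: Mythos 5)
Your proposal is correct and takes essentially the same route as the paper's proof: componentwise evolutions $\eta_n=\Evol_{G_n}([L(q_n)\circ\gamma])$ glued through the set-theoretic projective limit, the projective limit chart combined with hypothesis (c) and Lemma~\ref{plACE} to see that the glued curve actually lies in $AC_\cE([0,1],G)$, separation of points by the maps $\cE([0,1],L(q_n))$ to identify $\delta^\ell(\eta)=\gamma$, componentwise testing of smoothness into the projective limit, and finally Proposition~\ref{lctoglb} via the subdivision property. The only detail to adjust is that hypothesis (d) makes $\Omega$ a $0$-neighbourhood but not necessarily open, so the smoothness statement should be formulated on the interior $\Omega^0$, exactly as the paper does.
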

\begin{rem}\label{basfuauto}
By Lemma~\ref{compaPL}
and Lemma~\ref{compaPLrc},
condition (c) of Proposition~\ref{passtoPL}
is automatically satisfied if $\cE=L^p$
as a bifunctor on Fr\'{e}chet spaces
or $\cE=L^\infty_{rc}$ as a bifunctor
to integral complete locally convex spaces.
If $\cE=R$ as a bifunctor to Fr\'{e}chet spaces
and $L(q_{n,m})$ has dense image for all positive integers
$n\leq m$, then
condition (c) of Proposition~\ref{passtoPL}
is satisfied by Lemma~\ref{compaPLrc2}.
\end{rem}
Before proving Proposition~\ref{passtoPL},
let us spell out simple situations
in which condition (d) of the proposition
is satisfied:
\begin{la}\label{codaut}
Condition \emph{(d)} from Proposition \emph{\ref{passtoPL}}
is automatically satisfied in the following situations:
\begin{itemize}
\item[\rm(i)]
$L(q_{n,m})$ and $q_{n,m}$ are injective for all positive integers $n\leq m$
$($whence also $L(q_n)$ is injective on $L(G)=\pl\,L(G_n)$
and $q_n$ is injective on $G=\pl\, G_n)$. Or:\vspace{-.5mm}
\item[\rm(ii)]
$U_n=G_n$ for each $n\in \N$.
\end{itemize}
\end{la}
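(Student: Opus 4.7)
For (ii), observe that $U_n = G_n$ forces $AC_\cE([0,1], U_n) = AC_\cE([0,1], G_n)$, so $\Evol_{G_n}^{-1}(AC_\cE([0,1], G_n)) = \cE([0,1], L(G_n))$ and each factor in the intersection defining $\Omega$ equals $\cE([0,1], L(G))$. Hence $\Omega = \cE([0,1], L(G))$ trivially.

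For (i), the plan is to exploit injectivity of the bonding maps to reduce the infinite intersection to a single open $0$-neighbourhood at a carefully chosen level $N$. Starting from the projective-limit chart $\psi\colon U \to W$ and $\psi_n\colon U_n \to W_n$ furnished by Lemma~\ref{reformug} (and made symmetric via Remark~\ref{maysymm}), I will use that $W$ is open in the projective-limit topology on $L(G) = \pl L(G_n)$ to find some $N \in \N$ and an open symmetric $0$-neighbourhood $W_N' \subseteq W_N$ with $L(q_N)^{-1}(W_N') \subseteq W$; setting $U_N' := \psi_N^{-1}(W_N')$, I then refine the system by declaring $\tilde U_n := q_{N,n}^{-1}(U_N')$ for $n \geq N$ (an open symmetric identity neighbourhood of $G_n$, on which the chart of $G_n$ can be chosen compatibly via the pullback structure afforded by the injective $L(q_{N,n})$) and $\tilde U_n := U_n$ for $n < N$. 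The verification of Definition~\ref{dplcha} for the refined data is routine, using $q_{n,m}(\tilde U_m) \subseteq \tilde U_n$, which follows from $q_{N,n}(\tilde U_n) \subseteq U_N'$ together with the cocycle identity for the bonding maps.

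The key step will then be the following consequence of the naturality of evolutions under the smooth homomorphisms $q_{N,n}$: for $\gamma \in \cE([0,1], L(G))$ such that $\Evol_{G_N}(L(q_N)\gamma)([0,1]) \subseteq U_N'$, I have for each $n \geq N$ that
\[
q_{N,n}\bigl(\Evol_{G_n}(L(q_n)\gamma)([0,1])\bigr) \;=\; \Evol_{G_N}(L(q_N)\gamma)([0,1]) \;\subseteq\; U_N',
\]
whence $\Evol_{G_n}(L(q_n)\gamma)([0,1]) \subseteq q_{N,n}^{-1}(U_N') = \tilde U_n$; the case $n \leq N$ is analogous, using $q_{n,N}(U_N') \subseteq \tilde U_n$. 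Consequently $\Omega \supseteq \cE([0,1], L(q_N))^{-1}(\Evol_{G_N}^{-1}(AC_\cE([0,1], U_N')))$, and the right-hand side is an open $0$-neighbourhood by $\cE$-regularity of $G_N$ and continuity of $\cE([0,1], L(q_N))$, proving the claim. The main obstacle will be the construction of the refined compatible chart on the typically larger set $\tilde U_n = q_{N,n}^{-1}(U_N')$, which may strictly contain the original chart domain $U_n$; this is precisely where the injectivity of both $q_{N,n}$ and $L(q_{N,n})$ is essential, as it allows the $G_N$-chart at the origin to be lifted unambiguously along $q_{N,n}$ to produce the required compatible chart on $\tilde U_n$.
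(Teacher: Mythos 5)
Your treatment of (ii) coincides with the paper's (trivial) argument. For (i), however, there is a genuine gap exactly at the step you flag as the ``main obstacle''. Condition (d) of Proposition~\ref{passtoPL} requires the sets $U_n$ to be chart domains of a projective limit chart in the sense of Lemma~\ref{reformug} (the proof of the proposition uses the diffeomorphisms $\psi_n\colon U_n\to W_n$ and the charts $AC_\cE([0,1],\psi_n)$ they induce). Your replacement $\tilde U_n:=q_{N,n}^{-1}(U_N')$ for $n\geq N$ is in general not such a chart domain, and injectivity of $q_{N,n}$ and $L(q_{N,n})$ does not let you ``lift'' the chart $\psi_N|_{U_N'}$: the only candidate is $x\mapsto L(q_{N,n})^{-1}(\psi_N(q_{N,n}(x)))$, which need not be defined (the values $\psi_N(q_{N,n}(x))$ need not lie in the image of $L(q_{N,n})$), and even where defined it need not be continuous (since $L(q_{N,n})$ is merely injective continuous, not a topological embedding), need not have open image, and need not be a diffeomorphism with $d\tilde\psi_n|_{L(G_n)}=\id$. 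Indeed $q_{N,n}^{-1}(U_N')$ can be a huge subset of $G_n$ when the topology of $G_n$ is strictly finer than the one induced by $G_N$ (as for $C^\infty_K(M,H)\subseteq C^n_K(M,H)$, the motivating example), and there is no reason it should be diffeomorphic to any open subset of $L(G_n)$. Since your key computation only shows that $\Evol_{G_n}([L(q_n)\circ\gamma])$ takes values in $\tilde U_n$, and not in admissible chart domains $U_n$, condition (d) is not established; the ``routine'' verification of Definition~\ref{dplcha} for the refined data is precisely where the argument breaks down.

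The paper resolves the same difficulty by shrinking rather than enlarging. After identifying each $L(G_n)$ with a subspace of $L(G_1)$ via the injective maps $L(q_{1,n})$ and passing to a cofinal subsequence, it chooses a $0$-neighbourhood $P\subseteq L(G_1)$ with $L(q_1)^{-1}(P)$ contained in the chart image $W$, and sets $Q_n:=\psi_n^{-1}(P)\subseteq U_n$ and $Q:=\psi^{-1}(P)$. Because the $Q_n$ sit inside the given chart domains, the restrictions $\psi_n|_{Q_n}$ are charts for free and the compatibility conditions of Lemma~\ref{reformug} are inherited; injectivity of the maps $q_{1,n}$ enters only through the set-theoretic identity $q_{1,n}^{-1}(Q_1)=Q_n$ (the paper's (\ref{prsrs})), which traps the values of $\Evol_{G_n}([L(q_n)\circ\gamma])$ in $Q_n\subseteq U_n$ as soon as $\Evol_{G_1}([L(q_1)\circ\gamma])$ takes values in $Q_1$, so that $\Omega$ contains the open set $\cE([0,1],L(q_1))^{-1}(\Evol_{G_1}^{-1}(AC_\cE([0,1],Q_1)))$. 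If you want to salvage your argument, you should work inside the existing chart domains in this fashion instead of pulling $U_N'$ back to neighbourhoods on which no compatible chart is available.
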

\begin{proof}
In the situation of (ii), we simply have $\Omega=\cE([0,1],L(G)$.
To prove (i),
after identifying $L(G_n)$ with a vector subspace
of $L(G_1)$ by means of the injective linear map $L(q_{1,n})$,
we may assume that
\[
L(G_1)\supseteq L(G_2)\supseteq \cdots
\]
and $L(q_{n,m})$ is the inclusion map for all integers $n\leq m$.
We identify $L(G)$ with the vector subspace $\bigcap_{n\in \N}L(G_n)$
of $L(G_1)$. Then also $L(q_n)$ becomes the inclusion map
for each $n\in \N$. In the situation of
Lemma~\ref{reformug}, we then have
$U_n\supseteq U_m$ for all positive integers $n\leq m$
and $U=\bigcap_{n\in \N}U_n$.

By the definition of the topology on the projective limit $L(G)$,
there is $k\in \N$ and a $0$-neighbourhood $P\sub L(G_k)$
such that $L(G)\cap P=L(q_k)^{-1}(P)\sub U$.
After passing to a cofinal subsequence,
we may assume that $k=1$.
Thus
\[
P\sub U\sub U_n\quad\mbox{for all $n\in \N$}
\]
and $P=P\cap L(G_n)=L(q_{1,n})^{-1}(P)$ is an open $0$-neighbourhood
in $L(G_n)$ for each $n\in \N$.
As a consequence, $Q_n:=\psi_n^{-1}(P)\sub U_n$ is an open identity neighbourhood
in $G_n$ and $Q:=\psi^{-1}(P)$ is an open identity neighbourhood
in~$G$.
We have
\[
\psi_n(q_n(Q))=L(q_n)(\psi(Q))=L(q_n)(P)=P
\]
and thus
\[
q_n(Q)=\psi_n^{-1}(P)=Q_n
\]
for each $n\in \N$.
Hence
\[
Q\sub \bigcap_{n\in \N}q_n^{-1}(Q_n).
\]
To see that
\begin{equation}\label{intsc}
Q = \bigcap_{n\in \N}q_n^{-1}(Q_n),
\end{equation}
let
$x\in \bigcap_{n\in \N}q_n^{-1}(Q_n)$.
Then $x\in \bigcap_{n\in \N}q_n^{-1}(U_n)=U$.
We have
\[
\psi(x)=L(q_n)(\psi(x))=\psi_n (q_n(x))\in \psi_n(Q_n)=P
\]
for each $n\in \N$ and thus $x\in \psi^{-1}(P)=Q$.
Likewise,
\[
\psi_n(q_{n,m}(Q_m))=L(q_{n,m})(\psi_m(Q_m))=L(q_{n,m})(P)=P
\]
and thus
\begin{equation}\label{prsrs}
q_{n,m}(Q_m)=Q_n
\end{equation}
for all positive integers $n\leq m$.
We claim that the open $0$-neighbourhood
\begin{eqnarray}
\Omega'&:=& \{[\gamma]\in \cE([0,1],L(G))\colon \Evol_{G_1}([L(q_1)\circ \gamma])\in
AC_\cE([0,1],Q_1)\}\notag\\
&=& \cE([0,1],L(q_1))^{-1}(\Evol_{G_1}^{-1}(AC_\cE([0,1],Q_1)))\label{RHs}
\end{eqnarray}
in $\cE([0,1],L(G))$ coincides with the subset
\[
\Omega':=\bigcap_{n\in \N}\cE([0,1], L(q_n))^{-1}(\Evol_{G_n}^{-1}(AC_\cE([0,1],Q_n)))
\]
of the intersection $\Omega$
from Proposition~\ref{passtoPL}\,(d).
If this is true, then indeed $\Omega$ is a $0$-neighbourhood.
To prove the claim,
note first that $\Omega'$ is a subset of~(\ref{RHs})
by definition.
To prove the converse inclusion,
let $[\gamma]\in \cE([0,1],L(G))$
such that
\[
\Evol_{G_1}([L(q_1)\circ \gamma])\in AC_\cE([0,1],Q_1).
\]
Then
\begin{eqnarray*}
q_{1,n}\circ \Evol_{G_n}([L(q_n)\circ \gamma])
&=& \Evol_{G_1}([L(q_{1,n})\circ L(q_n)\circ \gamma])\\
&=& \Evol_{G_1}(L(q_1)\circ \gamma])
\in AC_\cE([0,1],Q_1),
\end{eqnarray*}
showing that $\Evol_{G_n}([L(q_n)\circ\gamma])$
takes its values in
\[
q_{1,n}^{-1}(Q_1)=Q_n
\]
(using (\ref{prsrs}) and the injectivity of $q_{1,n}$).
Thus
\[
\Evol_{G_n}([L(q_n)\circ \gamma])\in AC_\cE([0,1],Q_n)
\sub AC_\cE([0,1],U_n)
\]
for each $n\in \N$ and thus
$[\gamma] \in \Omega'$.
\end{proof}
{\bf Proof of Proposition~\ref{passtoPL}.}
Let $\psi_n\colon U_n\to W_n$
and $\psi\colon U\to W$ be as in Lemma~\ref{reformug}.
We may assume that $U$ and each $U_n$ is a symmetric
identity neighbourhood.
Hence $AC_\cE([0,1],\psi)$ and
$AC_\cE([0,1],\psi_n)$ are charts for
$AC_\cE([0,1],G)$ and $AC_\cE([0,1], G_n)$,
respectively.
Since $G_n$ is $\cE$-regular, we have a smooth evolution map
\[
\Evol_{G_n}\colon \cE([0,1],L(G_n))\to AC_\cE([0,1],G_n).
\]
Since $AC_\cE([0,1],U_n)$ is an open identity neighbourhood
in $AC_\cE([0,1],G_n)$, we deduce that
\[
\Omega_n:=(\Evol_{G_n})^{-1}(AC_\cE([0,1],U_n))
\]
is an open $0$-neighbourhood in $\cE([0,1],L(G_n))$.
Since
\[
q_{n,m}\circ \Evol_{G_m}=\Evol_{G_m}\circ L(q_{n,m})
\]
and $q_{n,m}(U_m)\sub U_n$, we deduce that
\[
AC_\cE([0,1],L(q_{n,m}))(\Omega_m)\sub \Omega_n\quad\mbox{for all $n\leq m$ in $\N$.}
\]
Hypothesis (c) implies that
\begin{equation}\label{pretly}
AC_\cE([0,1],L(G))=\pl\,AC_\cE([0,1],L(G_n))\vspace{-.8mm}
\end{equation}
using the bonding maps $AC_\cE([0,1],L(q_{n,m}))$
and limit maps $AC_\cE([0,1],L(q_n))$
(see Lemma~\ref{plACE}).
Define
\[
\Omega:=\{[\gamma]\in \cE([0,1],L(G))\colon (\forall n\in \N)\,
\cE([0,1],L(q_n))([\gamma])\in \Omega_n\}.
\]
If $[\gamma]\in \Omega$, then
\[
\eta_n:=\Evol_{G_n}(\cE([0,1],L(q_n))([\gamma]))
=\Evol_{G_n}([L(q_n)\circ \gamma])\in AC_\cE([0,1],G_n)
\]
for each $n\in \N$.
Then
\[
q_{n,m}\circ \eta_m=\Evol_{G_n}([L(q_{n,m})\circ L(q_m)\circ \gamma])
=\Evol_{G_n}([L(q_n)\circ \gamma])=\eta_n
\]
for all $n\leq m$ in $\N$,
whence there is a unique map
\[
\eta\colon [0,1]\to U\sub G
\]
such that $q_n\circ \eta=\eta_n$ for all $n\in \N$.
Define
\[
\zeta:=\psi\circ \eta\quad\mbox{and}\quad \zeta_n:=\psi_n\circ \eta_n\;\;\mbox{for
$n\in \N$.}
\]
Then
\[
L(q_{n,m})\circ \zeta_m=L(q_{n,m})\circ \psi_m\circ \eta_m
=\psi_n\circ q_{n,m}\circ \eta_m=\psi_n\circ\eta_n=\zeta_n
\]
for all $n\leq m$ in $\N$.
By (\ref{pretly}),
there is $\theta\in AC_\cE([0,1],L(G))$ such that
\begin{equation}\label{prelll}
L(q_n)\circ \theta=\zeta_n\quad\mbox{for all $n\in \N$,}
\end{equation}
Now
\begin{equation}\label{pre222}
L(q_n)\circ \zeta =L(q_n)\circ \psi \circ \eta
=\psi_n\circ q_n \circ \eta =\psi_n\circ\eta_n=\zeta_n
\end{equation}
for all $n\in \N$.
As the maps $L(q_n)$ separate points on $L(G)=\pl\,L(G_n)$\vspace{-.5mm}
for $n\in \N$, we deduce from (\ref{prelll}) and (\ref{pre222})
that
\begin{equation}\label{hencequl}
\zeta=\theta\in AC_\cE([0,1],L(G)).
\end{equation}
Hence $\eta=\phi^{-1}\circ \zeta\in AC_\cE([0,1],G)$.
Since
\[
\cE([0,1], L(q_n))(\delta^\ell(\eta))
=\delta^\ell (q_n\circ \eta)=
\delta^\ell(\eta_n)=
\cE([0,1],L(q_n))([\gamma])
\]
for each $n\in \N$ and the
maps $\cE([0,1],L(q_n))$ separate points on
$\cE([0,1],L(G))=\pl\,\cE([0,1],L(G_n))$,\vspace{-.8mm}
we deduce that
\[
\delta^\ell(\eta)=[\gamma]
\]
and thus $\eta=\Evol([\gamma])$.
If $\Omega$ is a $0$-neighbourhood in $\cE([0,1],L(G))$,
then $G$ is locally $\cE$-regular by the preceding.
Moreover,
\begin{eqnarray*}
\lefteqn{AC_\cE([0,1],L(q_n))\circ
AC_\cE([0,1],\psi)\circ \Evol|_{\Omega^0}}\qquad\qquad\\
&=&AC_\cE([0,1],\psi_n)\circ \Evol_{G_n}\circ \cE([0,1],L(q_n))|_{\Omega^0}
\end{eqnarray*}
by (\ref{hencequl}) for each $n\in \N$, which is a smooth
map (where $\Omega^0$ denotes the interior of~$\Omega$).
As a consequence,
the map $AC_\cE([0,1],\psi)\circ \Evol|_{\Omega^0}$
to
\[
AC_\cE([0,1],L(G))=\pl\, AC_\cE([0,1], L(G_n))\vspace{-.8mm}
\]
is smooth and hence $\Evol|_{\Omega^0}$
is smooth. Thus $G$ is locally $\cE$-regular and hence
$\cE$-regular, by Proposition~\ref{lctoglb}.\,\Punkt
\begin{prop}\label{mapKbanreg}
Let $M$ be a finite-dimensional smooth manifold,
$K\sub M$ be a compact set and $H$ be a Banach-Lie group.
Then $C^\infty_K(M,H)$ is $L^1$-regular.
\end{prop}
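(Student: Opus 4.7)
The plan is to exhibit $C^\infty_K(M,H)$ as a projective limit of Banach-Lie groups and apply Proposition~\ref{passtoPL} with $\cE=L^1$. Concretely, for each $k\in\N_0$ let $G_k:=C^k_K(M,H)$, with Lie algebra $L(G_k)=C^k_K(M,L(H))$; since $K\sub M$ is compact and $L(H)$ is a Banach space, $C^k_K(M,L(H))$ carries a natural $C^k$-Banach norm (defined via a finite atlas of coordinate patches covering~$K$), so $G_k$ is a Banach-Lie group. For $n\leq m$ the inclusion $q_{n,m}\colon G_m\emb G_n$ is a smooth group homomorphism, and $C^\infty_K(M,H)=\bigcap_{k}G_k=\pl\,G_k$ as a Lie group, with limit maps the inclusions $q_k\colon C^\infty_K(M,H)\to G_k$. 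Likewise, $L(C^\infty_K(M,H))=C^\infty_K(M,L(H))=\pl\,L(G_k)$ as a Fr\'echet space with continuous linear bonding and limit maps.

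Now I would verify the four hypotheses of Proposition~\ref{passtoPL}. For~(a), fix a chart $\phi_H\colon U_H\to V_H\sub L(H)$ of~$H$ with $\phi_H(e)=0$, $U_H=U_H^{-1}$, and $d\phi_H|_{L(H)}=\id$. Setting $U_k:=\{\gamma\in G_k\colon \gamma(M)\sub U_H\}$ and $\psi_k\colon U_k\to W_k$, $\psi_k(\gamma):=\phi_H\circ \gamma$ with $W_k:=\{\sigma\in L(G_k)\colon \sigma(M)\sub V_H\}$, and similarly $U,\psi,W$ for~$G=C^\infty_K(M,H)$, the conditions of Lemma~\ref{reformug} are immediate: post-composition with $\phi_H$ commutes with the inclusion bonding maps, and $U=\bigcap_k q_k^{-1}(U_k)$, $W=\bigcap_k L(q_k)^{-1}(W_k)$ by the very definition of $C^\infty$-functions. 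Hypothesis~(b) holds because each $G_k$ is a Banach-Lie group, hence $L^1$-regular by Theorem~C. Hypothesis~(c) is automatic by Remark~\ref{basfuauto}, since $L^1$ is a bifunctor on Fr\'echet spaces and Lemma~\ref{compaPL} yields $L^1([0,1],C^\infty_K(M,L(H)))=\pl\,L^1([0,1],L(G_k))$ with the required limit and bonding maps. Hypothesis~(d) follows from Lemma~\ref{codaut}\,(i): the inclusions $q_{n,m}$ and $L(q_{n,m})$ are injective, so $\Omega$ is automatically a $0$-neighbourhood.

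With (a)--(d) in place, Proposition~\ref{passtoPL} yields $L^1$-regularity of $G=C^\infty_K(M,H)$. The only technical step that requires any care is the projective-limit chart construction in~(a), in particular checking that the post-composition map $\gamma\mapsto \phi_H\circ\gamma$ really is a $C^\infty$-diffeomorphism at each level $G_k$ (this is standard for Banach-Lie groups of mappings; cf.\ \cite{GCX}) and that the two intersection formulas describing $U$ and $W$ coincide with the natural chart domains on~$G$. Neither of these presents a genuine obstacle, but they must be stated carefully; the rest is an immediate application of the general extension machinery developed in Section~\ref{secPL}.
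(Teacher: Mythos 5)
Your proposal is correct and follows essentially the same route as the paper: realize $C^\infty_K(M,H)$ as the projective limit of the Banach-Lie groups $C^k_K(M,H)$ with inclusion bonding maps, verify hypothesis (a) of Proposition~\ref{passtoPL} via a projective limit chart given by post-composition with a chart of~$H$ around~$e$, and then invoke Theorem~C for (b), Remark~\ref{basfuauto} (via Lemma~\ref{compaPL}) for (c), and Lemma~\ref{codaut}\,(i) for (d). The only cosmetic difference is that you check (a) through the reformulation in Lemma~\ref{reformug}, whereas the paper verifies Definition~\ref{dplcha} directly; these are interchangeable.
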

\begin{proof}
Let $E$ be the modelling space of~$G$
and $\tau\colon U\to V$ be a chart for $G$
with $\tau(e)=0$, defined an a symmetric open
identity neighbourhood $U\sub G$.
Then
\[
\phi:=C^\infty_K(M,\tau)\colon C^\infty_K(M,U)\to C^\infty_K(M,V)\sub C^\infty_K(M,E)
\]
is a chart for $C^\infty_K(M,H)$ and
\[
\phi_n:=C^n_K(M,\tau)\colon C^n_K(M,U)\to C^n_K(M,V)\sub C^n_K(M,E)
\]
is a chart for $C^n_K(M,H)$, for each $n\in \N$ (cf.\ \cite{GCX}).
We have that
\[
C^\infty_K(M,H)=\bigcap_{n\in \N}C^n_K(M,H)=\pl\,C^n_K(M,H)\vspace{-1mm}
\]
as a set (with the respective inclusion maps as
the limit maps and bonding maps).
Using the inclusion maps $\alpha_n\colon C^\infty_K(M,E)\to C^n_K(M,E)$
for $n\in \N$
and $\alpha_{n,m}\colon C^m_K(M,E)\to C^n_K(M,E)$ for positive
integers $n\leq m$, we see that all conditions from Definition~\ref{dplcha}
are satisfied and thus $\phi$ is a projective limit chart.
Hence condition~(a) from
Proposition~\ref{passtoPL}
is satisfied and by Remark~\ref{basfuauto}
and Lemma~\ref{codaut}\,(i),
also conditions~(c) and~(d)
are satisfied.
Since every Banach-Lie group is $L^1$-regular
by Theorem~C, also condition~(b)
is satisfied and hence $C^\infty_K(M,H)$
is $L^1$-regular by Proposition~\ref{passtoPL}.
\end{proof}
\begin{rem}\label{varL1mapgp}
The same argument shows that $C^\infty(M,H)$ is
$L^1$-regular if $H$ is a Banach-Lie group
and $M$ a compact smooth manifold
(possibly with boundary or corners).
\end{rem}
\begin{prop}
Let $\pi\colon P\to M$ be a smooth
principal bundle over a compact
smooth manifold~$M$,
whose structure group is a Banach-Lie group~$H$.
Then the gauge group $\Gau(P)$
is $L^1$-regular.
\end{prop}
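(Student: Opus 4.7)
The plan is to mimic the proof of Proposition~\ref{mapKbanreg} and Remark~\ref{varL1mapgp}, realizing $\Gau(P)$ as the projective limit $\Gau(P)=\pl\,\Gau^{C^n}(P)$ of the groups $\Gau^{C^n}(P)$ of $C^n$-gauge transformations of $P$, and then applying Proposition~\ref{passtoPL} with $\cE=L^1$ on Fr\'{e}chet spaces.

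First I would establish that, for each $n\in \N$, the group $\Gau^{C^n}(P)$ is an analytic Banach-Lie group modelled on the Banach space $\Gamma^{C^n}(\ad(P))$ of $C^n$-sections of the adjoint bundle $\ad(P):=P\times_H L(H)$, that $\Gau(P)$ is a Fr\'{e}chet-Lie group modelled on $\Gamma^{C^\infty}(\ad(P))=\pl\,\Gamma^{C^n}(\ad(P))$, and that all these Lie group structures are compatible in the projective limit. Since $H$ is a Banach-Lie group, the analytic diffeomorphism $\tau:=(\exp_H)^{-1}$ from an open symmetric identity neighbourhood $U\sub H$ onto an open $0$-neighbourhood $V\sub L(H)$ is $\Ad(H)$-equivariant, because $\exp_H$ intertwines $\Ad$ with conjugation; after passage to $\Ad(H)$-invariant subsets (or via local trivialisations of~$P$), $\tau$ descends to a fibre-bundle diffeomorphism between an open neighbourhood of the identity section of $\Ad(P):=P\times_H H$ and an open neighbourhood of the zero section of $\ad(P)$. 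Taking $C^n$-sections and using the identification $\Gau^{C^n}(P)\isom\Gamma^{C^n}(\Ad(P))$ provides simultaneous charts
\[
\phi_n\colon \cU_n\to\cV_n\sub \Gamma^{C^n}(\ad(P))\;\;(n\in\N),\quad \phi\colon \cU\to\cV\sub \Gamma^{C^\infty}(\ad(P))
\]
around the identity gauge transformation, with $\cV=\bigcap_n \alpha_n^{-1}(\cV_n)$.

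Next, I would verify that $\phi$ is a projective limit chart in the sense of Definition~\ref{dplcha}: the limit and bonding maps are the injective inclusions $q_n\colon \Gau(P)\to \Gau^{C^n}(P)$ and $q_{n,m}\colon \Gau^{C^m}(P)\to\Gau^{C^n}(P)$, while the analogous continuous linear inclusions $\alpha_n$ and $\alpha_{n,m}$ between modelling spaces exhibit $\Gamma^{C^\infty}(\ad(P))$ as a projective limit of Banach spaces. The compatibility identities $\alpha_n\circ \phi=\phi_n\circ q_n|_\cU$ and $\phi_n\circ q_{n,m}|_{\cU_m}=\alpha_{n,m}\circ\phi_m$ then hold automatically, because all the $\phi_n$ and $\phi$ arise by restriction from the same bundle diffeomorphism on a neighbourhood of the identity section of $\Ad(P)$.

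Finally, I would apply Proposition~\ref{passtoPL} with $\cE=L^1$: condition~(a) is the projective limit chart just constructed; condition~(b) holds because each Banach-Lie group $\Gau^{C^n}(P)$ is $L^1$-regular by Theorem~C; condition~(c) is automatic for $\cE=L^1$ on Fr\'{e}chet spaces by Remark~\ref{basfuauto}; and condition~(d) follows from Lemma~\ref{codaut}\,(i) because the bonding maps $q_{n,m}$ and $L(q_{n,m})$ are injective inclusions. This yields the $L^1$-regularity of $\Gau(P)$. The only non-routine point will be producing the simultaneous charts $(\phi_n)_{n\in\N}$ and $\phi$ from a single $\Ad$-equivariant diffeomorphism on $H$ and checking that $\phi$ is a genuine projective limit chart; every other ingredient is a direct invocation of a result already established in the paper.
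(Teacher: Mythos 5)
Your route is genuinely different from the paper's. The paper never touches gauge groups of finite differentiability class: it covers $M$ by finitely many compact submanifolds $M_j$ with boundary over which $P$ admits smooth sections, identifies $\Gau(P)$ with the subgroup $S$ of the finite product $G:=\prod_{j=1}^m C^\infty(M_j,H)$ cut out by the cocycle conditions $\gamma_i(x)=k_{i,j}(x)\gamma_j(x)k_{j,i}(x)$, notes that $G$ is $L^1$-regular (each factor by Remark~\ref{varL1mapgp}, which already packages the $C^n$-projective-limit argument, and finite products via Theorem~G), and then concludes with Proposition~\ref{equarg}, since $S$ is the simultaneous equalizer of the smooth point-evaluation homomorphisms $\alpha_{i,j,x},\beta_{i,j,x}\colon G\to H$. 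That buys the result entirely from ingredients already proved in the paper, with no new Lie theory about gauge groups. Your plan instead applies Proposition~\ref{passtoPL} directly to $\Gau(P)=\pl\,\Gau^{C^n}(P)$; conditions (b), (c), (d) are then indeed immediate from Theorem~C, Remark~\ref{basfuauto} and Lemma~\ref{codaut}\,(i). The price is precisely the point you flag as non-routine: you must first prove that each $\Gau^{C^n}(P)$ is a Banach-Lie group modelled on $\Gamma^{C^n}(\ad(P))$, that $\Gau(P)$ is modelled on $\Gamma^{C^\infty}(\ad(P))=\pl\,\Gamma^{C^n}(\ad(P))$, and that your charts form a projective limit chart in the sense of Definition~\ref{dplcha}. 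None of this is in the paper (the paper even \emph{defines} the Lie group structure on gauge groups via the identification with $S$), and setting it up essentially forces you through the same gluing/subgroup-of-a-product analysis that the paper's shorter proof uses; so your route is viable but strictly more work than the argument it replaces.

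One concrete claim in your sketch needs repair: for a general Banach-Lie group $H$ there is no reason that an identity neighbourhood $U$ on which $\exp_H$ restricts to a diffeomorphism can be shrunk to an $\Ad(H)$-invariant one, since adjoint orbits are typically unbounded; so ``passage to $\Ad(H)$-invariant subsets'' is not available, and the equivariance of the \emph{local} inverse $\tau=(\exp_H|_U)^{-1}$ cannot be used as stated. What does work is your parenthetical alternative: the fibrewise exponential $\ad(P)\to \Ad(P):=P\times_H H$ is globally defined and conjugation-equivariant, and compactness of $M$ (finitely many trivializations, with the transition maps $x\mto\Ad(k_{i,j}(x))$ bounded on the compact overlaps) lets you choose an open neighbourhood of the zero section of $\ad(P)$ which is mapped diffeomorphically onto an open neighbourhood of the identity section of $\Ad(P)$; taking $C^n$- resp.\ smooth sections with values in these neighbourhoods then yields compatible charts $\phi_n$, $\phi$ with $U=\bigcap_{n\in\N} q_n^{-1}(U_n)$ and $V=\bigcap_{n\in\N}\alpha_n^{-1}(V_n)$. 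With that repair, and with the Banach-Lie group structure on the $C^n$-gauge groups actually supplied, your argument goes through.
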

\begin{proof}
For some $m\in \N$,
we can cover~$M$ by the interiors
$M_j^0$ of
compact submanifolds $M_j$ of~$M$
with boundary for $j\in \{1,\ldots, m\}$
(of full dimension),
such that there is a smooth section $\sigma_j\colon M_j\to P$
for~$\pi$.
For all $i,j\in \{1,\ldots, m\}$,
there is a unique map $k_{i,j}\colon M_i\cap M_j\to H$
such that
\[
\sigma_i(x)k_{i,j}(x)=\sigma_j(x)\quad
\mbox{for all $x\in M_i\cap M_j$.}
\]
Then $\Gau(P)$ can be identified with the
Lie subgroup $S$ of
\[
G:=\prod_{j=1}^nC^\infty(M_j,H)
\]
consisting of all $\gamma=(\gamma_j)_{j=1,\ldots, m}\in G$
such that
\[
(\forall i,j)(\forall x\in M_i\cap M_j)\; \gamma_i(x)=
k_{i,j}(x)\gamma_j(x)k_{j,i}(x)\}
\]
(see \cite{Woc} or \cite{Jak}).
Each of the Lie groups $C^\infty(M_j,H)$
is $L^1$-regular (see Remark~\ref{varL1mapgp}),
whence also the finite direct product
$G=\prod_{j=1}^n C^\infty(M_j,H)$ is $L^1$-regular
(cf.\ Theorem G).
For $i,j\in \{1,\ldots, m\}$
and $x\in M_i\cap M_j$,
the mappings
\[
\alpha_{i,j,x}\colon G\to H,\quad \gamma\mto \gamma_i(x)
\]
and
\[
\beta_{i,j,k}\colon G\to H,\quad
\gamma\mto
k_{i,j}(x)\gamma_j(x)k_{j,i}(x)
\]
are smooth group homomorphisms.
Since
\[
S=\{\gamma\in G\colon (\forall i,j\in \{1,\ldots, m\}(\forall
x\in M_i\cap M_j)\; \alpha_{i,j,x}(\gamma)=\beta_{i,j,x}(\gamma)\},
\]
we deduce with
Proposition~\ref{equarg}
that $S$ (and hence also $\Gau(P)$) is $L^1$-regular.
\end{proof}
Also the following variant
of Proposition~\ref{passtoPL}
is useful.
\begin{prop}\label{regglocha}
Let $\cE$ be a bifunctor on Fr\'{e}chet spaces
$($resp., integral complete
locally convex spaces$)$
which satisfies the locality axiom, the pushforward
axioms, has the subdivision property,
and such that smooth functions act smoothly on $AC_\cE$.
Let $((G_n)_{n\in \N}, (q_{n,m})_{n\leq m})$
be a projective system of Lie groups $G_n$ modelled
on Fr\'{e}chet spaces $($resp.,
integral complete locally convex spaces$)$ $E_n$ and smooth group
homomorphisms $q_{n,m}\colon G_m\to G_n$.
Let $G$ be a Lie group modelled on a locally convex space~$E$
such that
\[
G=\pl\,G_n\vspace{-.5mm}
\]
for the above projective system as a set,
with limit maps $q_n\colon G\to G_n$
which are smooth group homomorphisms.
Assume that
\begin{itemize}
\item[\rm(a)]
$G$ admits a projective limit
chart $\phi\colon U\to V$ determined by charts
$\phi_n\colon U_n\to V_n$
as in Definition~\emph{\ref{dplcha}},
such that $U=G$ and $U_n=G_n$;
\item[\rm(b)]
$G_n$ is $\cE$-regular for each $n\in\N$; and
\item[\rm(c)]
$\cE([0,1],L(G))=\pl\,\cE([0,1],L(G_n))$\vspace{-.5mm}
with respect to the bonding maps $\cE([0,1],L(q_{n,m}))$
and limit maps $\cE([0,1],L(q_n))$.
\end{itemize}
Then $G$ is $\cE$-regular.
\end{prop}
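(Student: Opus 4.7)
The plan is to observe that Proposition~\ref{regglocha} is essentially a special case of Proposition~\ref{passtoPL}, where the simplifying hypothesis $U=G$ and $U_n=G_n$ makes the remaining technical condition automatic. Concretely, hypotheses (a), (b), (c) of Proposition~\ref{regglocha} are literally hypotheses (a), (b), (c) of Proposition~\ref{passtoPL}, so the only thing to check is that condition~(d) of Proposition~\ref{passtoPL} is satisfied in our setting.

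To this end, first I would note that by (a), the projective limit chart is determined by charts $\phi_n\colon U_n\to V_n$ with $U_n=G_n$, so in particular $U_n$ is symmetric ($U_n=U_n^{-1}=G_n$), as required by the setup of Proposition~\ref{passtoPL}\,(d). Next, because $G_n$ is $\cE$-regular by (b), the evolution map
\[
\Evol_{G_n}\colon \cE([0,1],L(G_n))\to AC_\cE([0,1],G_n)
\]
is defined on all of $\cE([0,1],L(G_n))$, and hence
\[
\Evol_{G_n}^{-1}(AC_\cE([0,1],U_n))=\Evol_{G_n}^{-1}(AC_\cE([0,1],G_n))=\cE([0,1],L(G_n)).
\]
Taking preimages under $\cE([0,1],L(q_n))$ and intersecting over $n\in\N$ therefore yields
\[
\Omega=\bigcap_{n\in\N}\cE([0,1],L(q_n))^{-1}(\cE([0,1],L(G_n)))=\cE([0,1],L(G)),
\]
which is trivially a $0$-neighbourhood in $\cE([0,1],L(G))$. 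This is exactly the content of Lemma~\ref{codaut}\,(ii).

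Having verified all four hypotheses of Proposition~\ref{passtoPL}, I would invoke that proposition to conclude that $G$ is $\cE$-regular. There is no genuine obstacle: the proof reduces to a bookkeeping argument combining the two cited results, and the key simplification provided by $U=G$, $U_n=G_n$ is that the evolution of every $[\gamma]\in\cE([0,1],L(G))$ is forced to exist globally rather than only on a $0$-neighbourhood, bypassing entirely the most delicate part of the proof of Proposition~\ref{passtoPL}.
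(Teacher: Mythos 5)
Your proposal is correct: hypotheses (a)--(c) of Proposition~\ref{regglocha} do give hypotheses (a)--(c) of Proposition~\ref{passtoPL} (with (a) of the former being the special case $U=G$, $U_n=G_n$), and since each $\Evol_{G_n}$ is globally defined, $\Evol_{G_n}^{-1}(AC_\cE([0,1],U_n))=\cE([0,1],L(G_n))$, so the set $\Omega$ in condition (d) is all of $\cE([0,1],L(G))$ --- exactly the content of Lemma~\ref{codaut}\,(ii), which the paper states precisely for this purpose. The only point you gloss over is that condition (d) refers to the domains $U_n$ of the normalized diffeomorphisms of Lemma~\ref{reformug} rather than of the charts in Definition~\ref{dplcha}; but the construction in Lemma~\ref{reformug} keeps the same domains, so with $U_n=G_n$ this is harmless, and your reduction is rigorous.

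It is, however, a genuinely different route from the paper's own proof, which does not invoke Proposition~\ref{passtoPL} as a black box. Instead, the paper argues directly in the global-chart setting: it identifies $G$ with $V\sub E$ and $G_n$ with $V_n\sub E_n$, forms $\eta_n:=\Evol_{G_n}([\alpha_n\circ\gamma])$ for every $[\gamma]\in\cE([0,1],E)$, glues these via hypothesis (c) and Lemma~\ref{plACE} (i.e.\ $AC_\cE([0,1],E)=\pl\,AC_\cE([0,1],E_n)$) to a map $\Psi\colon\cE([0,1],E)\to AC_\cE([0,1],E)$, observes that $\Psi$ is smooth because each $AC_\cE([0,1],\alpha_n)\circ\Psi=\Evol_{G_n}\circ\cE([0,1],\alpha_n)$ is smooth, sets $\Omega:=\Psi^{-1}(AC_\cE([0,1],G))$ (an open $0$-neighbourhood since $\Psi$ is continuous and $AC_\cE([0,1],G)$ is open in $AC_\cE([0,1],E)$), identifies $\Psi|_\Omega$ with $\Evol|_\Omega$, and concludes local $\cE$-regularity, hence $\cE$-regularity by the subdivision property (Proposition~\ref{lctoglb}). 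Your reduction is shorter and avoids repeating the projective-limit construction; the paper's direct argument buys a self-contained proof that bypasses the Lemma~\ref{reformug} normalization and symmetric-shrinking bookkeeping of Proposition~\ref{passtoPL} and exhibits the evolution explicitly as the restriction of the globally defined smooth map $\Psi$. Both rest on the same machinery (smoothness of the $\Evol_{G_n}$, hypothesis (c) via Lemma~\ref{plACE}, and Proposition~\ref{lctoglb}).
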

\begin{proof}
After replacing $G$ with $V$
and $G_n$ with $V_n$,
we may assume that $G$ is an open subset of~$E$ and $G_n$ is an open subset of $E_n$
for each $n\in \N$.
Moreover, $q_n=\alpha_n|_G$ for each $n\in \N$
and $q_{n,m}=\alpha_{n,m}|_{G_m}$ for all positive integers $n\leq m$.
We identify $L(G)$ with $E$ and $L(G_n)$ with $E_n$.
Then $L(q_n)=\alpha_n$ and $L(q_{n,m})=\alpha_{n,m}$.
If $[\gamma]\in \cE([0,1],E)$,
we can form
\[
\eta_n:=\Evol_{G_n}([\alpha_n\circ \gamma])\in AC([0,1], G_n)\sub AC([0,1],E_n).
\]
Then $\alpha_{n,m}\circ \eta_m=q_{n,m}\circ \eta_m=\Evol_{G_n}([L(q_{n,m})\circ L(q_m)\circ\gamma])
=\eta_m$ for all positive integers $n\leq m$.
Since
\[
AC([0,1],E)=\pl\, AC_\cE([0,1],E_n)\vspace{-.8mm}
\]
with the limit maps $AC_\cE([0,1],\alpha_n)$ and bonding maps
$AC_\cE([0,1],\alpha_{n,m})$,
we see that there is a unique $\Psi([\gamma]):=\eta$ in $AC_\cE([0,1],E)$
such that
\[
\alpha_n\circ \eta=\eta_n
\]
for all $n\in \N$. Since
\[
AC_\cE([0,1],\alpha_n)\circ \Psi=\Evol_{G_n}\circ \cE([0,1],\alpha_n)
\]
is smooth for all $n\in \N$, we deduce that $\Psi$ is smooth.
In particular, $\Psi$ is continuous and since $\Psi(0)=e\in AC_\cE([0,1],G)$
and $AC_\cE([0,1],G)$ is open in $AC_\cE([0,1],E)$,
we deduce that
\[
\Omega:=\Psi^{-1}(AC_\cE([0,1],G))
\]
is an open $0$-neighbourhood in $\cE([0,1],E)$.
As in the proof of Proposition~\ref{passtoPL},
we see that $\Psi([\gamma])=\Evol([\gamma])$ for each $[\gamma]\in
\Omega$. Since $\Evol|_\Omega=\Psi|_\Omega$ is smooth,
$G$ is locally $\cE$-regular and hence $\cE$-regular,
as we assume that $\cE$ has the subdivision property.
\end{proof}
See, e.g., \cite{ALG}
for the notion of a continuous inverse algebra $A$
and the fact that its group $A^\times$ of invertible elements
is an analytic Lie group.
For the concept of locally $m$-convex
topological algebra, see \cite{EAM}.
If a locally $m$-convex
continuous inverse algebra is integral
complete, then $A^\times$ is $C^0$-regular~\cite{GN1}.
\begin{prop}\label{algsreg}
Let $A$ be a continuous inverse
algebra.
If $A$ is locally $m$-convex and
a Fr\'{e}chet space,
then its unit group $A^\times$
is $L^1$-regular.
\end{prop}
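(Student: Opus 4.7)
The plan is to realize $A^\times$ as a projective limit of Banach-Lie unit groups and then invoke Proposition~\ref{regglocha}. First, since $A$ is a locally $m$-convex Fr\'{e}chet algebra, a standard Arens-Michael type argument gives a projective sequence of Banach algebras $A_1 \leftarrow A_2 \leftarrow \cdots$ with continuous unital algebra homomorphisms $q_{n,m}\colon A_m \to A_n$ for $n\leq m$ such that $A=\pl A_n$ as a locally $m$-convex topological algebra, with continuous unital algebra homomorphisms $q_n\colon A\to A_n$ as limit maps. The CIA hypothesis then yields the key spectral identity
\[
A^\times \; = \; \bigcap_{n\in\N} q_n^{-1}(A_n^\times),
\]
and each $q_n$ (resp.\ $q_{n,m}$) restricts to a smooth group homomorphism $A^\times\to A_n^\times$ (resp.\ $A_m^\times\to A_n^\times$) between analytic Lie groups.

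Next, I would set $G:=A^\times$ and $G_n:=A_n^\times$, each viewed as an open subset of its ambient algebra and hence a Lie group with a global chart given by the respective inclusion map into $A$, respectively~$A_n$. With the modelling spaces $E:=A$, $E_n:=A_n$, and the continuous linear maps $\alpha_n:=q_n\colon A\to A_n$, $\alpha_{n,m}:=q_{n,m}\colon A_m\to A_n$, the identity $A=\pl A_n$ shows that conditions (a) and (b) of Definition~\ref{dplcha} are satisfied with $U=G$, $V=G$, $U_n=G_n$, $V_n=G_n$: indeed, $q_{n,m}$ preserves units, the compatibilities $\phi_n\circ q_{n,m}|_{U_m}=\alpha_{n,m}\circ\phi_m$ and $\alpha_n\circ\phi=\phi_n\circ q_n|_U$ are tautological (all relevant charts are inclusion maps), and the identities $U=\bigcap_n q_n^{-1}(U_n)$ and $V=\bigcap_n\alpha_n^{-1}(V_n)$ are precisely the CIA identity displayed above. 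Thus $G$ admits a projective limit chart with $U=G$ and $U_n=G_n$, so hypothesis~(a) of Proposition~\ref{regglocha} holds.

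For hypothesis (b), each $A_n$ is a Banach algebra, so $G_n=A_n^\times$ is a real analytic Banach-Lie group and is $L^1$-regular by Theorem~C. For hypothesis (c), the modelling Lie algebras satisfy $L(G)=A$ and $L(G_n)=A_n$ (with $L(q_n)=q_n$, $L(q_{n,m})=q_{n,m}$), and Lemma~\ref{compaPL} gives
\[
L^1([0,1],A)=\pl\, L^1([0,1],A_n)
\]
with the appropriate bonding and limit maps. Since $L^1$ on Fr\'{e}chet spaces satisfies the locality axiom, the pushforward axioms, and the subdivision property, and smooth functions act smoothly on $AC_{L^1}$, Proposition~\ref{regglocha} applies and yields that $G=A^\times$ is $L^1$-regular.

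The only non-routine ingredient is the spectral identity $A^\times=\bigcap_n q_n^{-1}(A_n^\times)$; the remainder of the argument is bookkeeping to line up the projective limit chart axioms with the inclusions $A_n^\times\hookrightarrow A_n$. Note that the convenient form of Proposition~\ref{regglocha} (with $U_n=G_n$ and $U=G$) is essential here, as it removes the need to verify the more delicate $0$-neighbourhood condition~(d) of Proposition~\ref{passtoPL}.
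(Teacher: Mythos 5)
Your proposal is correct and follows essentially the same route as the paper: write $A=\pl\,A_n$ as a countable projective limit of Banach algebras (Arens--Michael), note that each $A_n^\times$ is $L^1$-regular by Theorem~C with the inclusion $A_n^\times\hookrightarrow A_n$ as a global chart, and apply Proposition~\ref{regglocha}; your explicit verification of Definition~\ref{dplcha} and of $A^\times=\bigcap_n q_n^{-1}(A_n^\times)$ just fills in details the paper leaves implicit. (Minor remark: that spectral identity holds for any complete locally $m$-convex algebra and does not use the CIA hypothesis, which is only needed to make $A^\times$ open with a Lie group structure.)
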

\begin{proof}
Like every locally $m$-convex Fr\'{e}chet algebra,
$A$ is a countable projective limit $A=\pl\,A_n$\vspace{-.5mm}
of Banach algebras~$A_n$. Since $A_n^\times$ is
$L^1$-regular by Theorem~C and the identity maps
on $A^\times$ and $A_n^\times$ are global charts,
we deduce with Proposition~\ref{regglocha}
that $A^\times$ is $L^1$-regular.
\end{proof}
Neeb and Wagemann \cite{NaW} constructed a regular
Lie group structure on the mapping group
$C^\infty([0,1],H)$, for each
regular Lie group~$H$.
\begin{prop}
$C^\infty(\R,H)$ is $L^1$-regular
for each Banach-Lie group~$H$.
\end{prop}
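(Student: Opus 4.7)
\emph{Plan.} The Lie group $G:=C^\infty(\R,H)$ arises as the countable projective limit $G=\pl\, G_n$ of the Fr\'{e}chet--Lie groups $G_n:=C^\infty([-n,n],H)$ under the restriction bonding maps $q_{n,m}\colon G_m\to G_n$ (for $n\le m$) and limit maps $q_n\colon G\to G_n$. Correspondingly $L(G)=C^\infty(\R,L(H))=\pl\, L(G_n)$, and by Lemma~\ref{compaPL} (cf.\ Remark~\ref{basfuauto}) also $L^1([0,1],L(G))=\pl\, L^1([0,1],L(G_n))$ with limit maps $L^1([0,1],L(q_n))$. By Remark~\ref{varL1mapgp} each $G_n$ is $L^1$-regular with smooth evolution $\Evol_{G_n}$. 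My plan is to apply Proposition~\ref{sammelsu}\,(c): the evaluation homomorphisms $\ev_x\colon G\to H$ for $x\in\R$ are smooth group homomorphisms into the $L^1$-regular Banach--Lie group $H$ (Theorem~C) and separate points on~$G$, so it remains only to establish $L^1$-semiregularity of~$G$ together with continuity of $\Evol_G\colon L^1([0,1],L(G))\to AC_{L^1}([0,1],G)$.

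For semiregularity, given $\gamma\in L^1([0,1],L(G))$ I set $\gamma_n:=L^1([0,1],L(q_n))(\gamma)$ and $\eta_n:=\Evol_{G_n}(\gamma_n)\in AC_{L^1}([0,1],G_n)$. Functoriality $q_{n,m}\circ\Evol_{G_m}=\Evol_{G_n}\circ L(q_{n,m})$ gives $q_{n,m}\circ\eta_m=\eta_n$ and hence determines a unique continuous map $\eta\colon[0,1]\to G$ with $q_n\circ\eta=\eta_n$ for every~$n$. To verify $\eta\in AC_{L^1}([0,1],G)$, I cover $[0,1]$ by subintervals $[s_j,s_{j+1}]$ on each of which $\eta$ lies in a left translate $g_jU$ of a chart neighbourhood $U\subseteq G$ of~$e$ coming from a projective limit chart $\phi\colon U\to V$ built from compatible charts $\phi_n:=C^\infty([-n,n],\tau)$ of the $G_n$, where $\tau$ is a symmetric chart of~$H$ around~$e$. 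On each subinterval the coherent family $(\phi_n(q_n(g_j)^{-1}\eta_n|_{[s_j,s_{j+1}]}))_n$ lies in $\pl\, AC_{L^1}([s_j,s_{j+1}],L(G_n))=AC_{L^1}([s_j,s_{j+1}],L(G))$ by Lemma~\ref{plACE}, and the locality axiom for $L^1$ then yields $\eta\in AC_{L^1}([0,1],G)$. Since $L(q_n)\circ\delta^\ell\eta=\delta^\ell\eta_n=\gamma_n=L(q_n)\circ\gamma$ for every~$n$ and the $L(q_n)$'s separate points on $L(G)$, we conclude $\delta^\ell\eta=\gamma$, so $\eta=\Evol_G(\gamma)$.

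Continuity of $\Evol_G$ into $AC_{L^1}([0,1],G)$ follows by the same chart--subdivision argument. In a neighbourhood of any $\gamma_0\in L^1([0,1],L(G))$ one may pick a single subdivision and fixed translates that work simultaneously for all nearby~$\gamma$, using that $\Evol_G$ is continuous into $C([0,1],G)$ (itself a consequence of continuity of each composition $\Evol_{G_n}\circ L^1([0,1],L(q_n))$ together with the initial topology on $G=\pl\, G_n$). After this reduction, Lemma~\ref{plACE} identifies the topology on each $AC_{L^1}([s_j,s_{j+1}],L(G))$ with the projective-limit topology of the $AC_{L^1}([s_j,s_{j+1}],L(G_n))$, and continuity then follows because the composition $AC_{L^1}([0,1],q_n)\circ\Evol_G=\Evol_{G_n}\circ L^1([0,1],L(q_n))$ is smooth for every~$n$. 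Proposition~\ref{sammelsu}\,(c) then yields that $G$ is $L^1$-regular. The main technical obstacle is this chart-level verification, which reconciles the projective-limit description of~$G$ with its chart-based Lie group structure; I plan to handle it by subdividing~$[0,1]$, left-translating into a projective-limit chart, and applying Lemma~\ref{plACE} together with the locality axiom componentwise. Note also that the more direct route via Proposition~\ref{passtoPL} fails here: with $U_n:=C^\infty([-n,n],U_H)$ the set~$\Omega$ in hypothesis~(d) is controlled by $\sup_{x\in\R}\int_0^1\|\gamma(t)(x)\|\,dt<\delta$, which is not a $0$-neighbourhood in the Fr\'{e}chet topology of $L^1([0,1],L(G))$, so Proposition~\ref{sammelsu}\,(c) is essential.
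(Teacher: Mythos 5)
There is a genuine gap, and it sits exactly at the point you flag as "the main technical obstacle": the projective limit chart you build from $\phi_n:=C^\infty([{-n},n],\tau)$ does not exist for the Lie group structure on $G=C^\infty(\R,H)$. Its putative domain $U=C^\infty(\R,U_H)=\bigcap_{n}q_n^{-1}(U_n)$ is a countable intersection of open sets but is \emph{not} open in~$G$: the Lie group topology on $C^\infty(\R,H)$ (the one coming from the Neeb--Wagemann construction, i.e.\ the compact-open $C^\infty$ topology) allows maps arbitrarily close to the constant~$e$ which leave $U_H$ outside a large compact set; equivalently, $V=C^\infty(\R,V_H)=\bigcap_n\alpha_n^{-1}(V_n)$ is not open in the Fr\'{e}chet space $C^\infty(\R,\ch)$. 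So $C^\infty(\R,\tau)$ is not a chart of~$G$ (Definition~\ref{dplcha} requires $U$ open), and the subdivision-and-left-translation argument you use both to verify $\eta\in AC_{L^1}([0,1],G)$ and to get continuity of $\Evol_G$ into $AC_{L^1}([0,1],G)$ (which Proposition~\ref{sammelsu}\,(c) genuinely needs, not just continuity into $C([0,1],G)$) cannot be run with these sets: you would need identity neighbourhoods of $G$ that are chart domains \emph{compatible} with the charts of the~$G_n$, and the natural candidates are not open. Your closing remark that Proposition~\ref{passtoPL} fails because $\Omega$ in hypothesis~(d) is not a $0$-neighbourhood is pointing at the same phenomenon; with the naive charts even hypothesis~(a) already fails.

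This is precisely the obstruction the paper's proof is built to avoid. It first splits $C^\infty(\R,H)=C^\infty(\R,H)_*\rtimes H$ and invokes Theorem~G (with $H$ being $L^1$-regular by Theorem~C), reducing to the based group $C^\infty(\R,H)_*$. On the based group the Neeb--Wagemann map $\delta^\ell\colon C^\infty(\R,H)_*\to C^\infty(\R,\ch)$ is a \emph{global} chart, and likewise $\delta^\ell\colon C^\infty([{-n},n],H)_*\to C^\infty([{-n},n],\ch)$; these global charts are compatible with the restriction maps, so one has a projective limit chart with $U=G$, $U_n=G_n$, and Proposition~\ref{regglocha} (the $U_n=G_n$ variant of \ref{passtoPL}, cf.\ Lemma~\ref{codaut}\,(ii)) applies directly, the steps $C^\infty([{-n},n],H)_*$ being $L^1$-regular by Remark~\ref{varL1mapgp} and Proposition~\ref{equarg}. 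If you want to salvage your route, you would have to replace your charts by these $\delta^\ell$-based global charts (and handle the $H$-factor separately), at which point you have essentially reconstructed the paper's argument; the coherent-gluing picture you describe for the evolutions is correct, but it only becomes a proof once charts compatible with the projective system actually exist.
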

\begin{proof}
Since $C^\infty(\R,H)=C^\infty(\R,H)_*\rtimes H$
where $H$ is $L^1$-regular by Theorem~C,
we deduce with Theorem~G that $C^\infty(\R,H)$
will be $L^1$-regular if we can show
that
\[
C^\infty(\R,H)_* :=\{\eta\in C^\infty(\R,H)\colon \eta(0)=e\}
\]
is $L^1$-regular.
Abbreviate $\ch:=L(H)$. As shown in \cite{NaW}, the map
\[
\phi\colon C^\infty(\R,H)_*\to C^\infty(\R,\ch),\quad
\eta\mto \delta^\ell(\eta)
\]
is a global chart for $C^\infty(\R,H)$.
Likewise,
\[
\phi_n\colon
\phi\colon C^\infty([{-n}.n],H)_*\to C^\infty([{-n},n],\ch),\quad
\eta\mto \delta^\ell(\eta)
\]
is a global chart for the Lie subgroup
\[
C^\infty([{-n},n],H)_*:=\{\eta\in C^\infty([{-n},n],H)\colon \eta(0)=e\}
\]
of $C^\infty([{-n},n],H)$.
Now
\[
C^\infty([{-n},n],H)_*:=\{\eta\in C^\infty([{-n},n],H)\colon \alpha(\eta)=\beta(\eta)\}
\]
with the smooth homomorphisms $\alpha,\beta\colon C^\infty([{-n},n],H)\to H$,
$\alpha(\eta):=\eta(0)$, $\beta(\eta):=e$.
Since $C^\infty([{-n},n],H)$
is $L^1$-regular (see Remark~\ref{varL1mapgp}),
we deduce with Proposition~\ref{equarg}
that $C^\infty([{-n},n],H)$ is $L^1$-regular.
Note that
\[
C^\infty(\R,\ch)=\pl\, C^\infty([{-n},n],\ch)\vspace{-1mm}
\]
as a locally convex space.
Using Proposition~\ref{regglocha}, we find that
$C^\infty([0,1],H)_*$\linebreak
$=\pl\, C^\infty([{-n},n],H)$\vspace{-.5mm}
is $L^1$-regular.
\end{proof}
\section{Measurable regularity for weak direct\\
products and some direct limits}\label{secDL}
\begin{numba}
Let $(H_j)_{j\in J}$
be a family of Lie groups $H_j$, with modelling space $E_j$.
Let
\[
E:=\bigoplus_{j\in J}E_j:=\Big\{(x_j)_{j \in J}\in\prod_{j\in J}E_j\colon\mbox{$x_j=0$
for all but finitely many $j$}\Big\}
\]
be the direct sum of the given locally convex spaces,
endowed with the locally convex direct sum topology.
Then
\[
G:=\bigoplus_{j\in J}H_j:=\Big\{(x_j)_{j \in J}\in\prod_{j\in J}H_j\colon\mbox{$x_j=e$
for all but finitely many $j$}\Big\}
\]
is a group under pointwise multiplication.
If $\phi_j\colon U_j\to V_j$ is a chart for $H_j$ defined on
an open identity neighbourhood $U_j=U_j^{-1}$ in $H_j$ with $\phi(e)=0$,
then $G$ can be given a Lie group structure modelled on~$E$
such that
\[
\phi:=\oplus_{j\in J}\phi_j\colon \bigoplus_{j\in J}U_j\to \bigoplus_{j\in J}V_j,\quad (x_j)_{j\in J}\mto
(\phi_j(x_j))_{j\in J}
\]
is a chart around the identity element (cf.\ \cite{MEA}).
Here
\[
\bigoplus_{j\in J} U_j:=\bigoplus_{j\in J}H_j\cap \prod_{j\in J}U_j\;\;\;\mbox{and}
\;\;\;
\bigoplus_{j\in J} V_j:=\bigoplus_{j\in J}E_j\cap \prod_{j\in J}V_j.
\]
The Lie group $\bigoplus_{j\in J}H_j$ is called
the \emph{weak direct product} of the family $(H_j)_{j\in J}$ of Lie groups.
\end{numba}
\begin{prop}\label{mrgsum}
If $(H_j)_{j\in J}$
is a family of $L^1$-regular Lie groups $H_j$
modelled on sequentially complete
\emph{(FEP)}-spaces,
then also the weak direct product
$G:=\bigoplus_{j\in J}H_j$
is modelled on a sequentially complete
\emph{(FEP)}-space and $L^1$-regular.
\end{prop}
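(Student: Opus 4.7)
The plan is to establish local $L^1$-regularity of $G:=\bigoplus_{j\in J}H_j$ on a suitable $0$-neighbourhood and then invoke Proposition~\ref{lctoglb} together with the subdivision property of~$L^1$ (Lemma~\ref{setsubdiv}) to upgrade to full $L^1$-regularity. That $G$ is modelled on a sequentially complete (FEP)-space follows directly from Lemma~\ref{lafep}(a), which also provides the decisive identification
\[
L^1([0,1],L(G))=\bigoplus_{j\in J}L^1([0,1],L(H_j))
\]
as a locally convex space, and similarly $AC_{L^1}([0,1],E)\isom\bigoplus_{j\in J}AC_{L^1}([0,1],E_j)$ after passing to the chart.

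First I would choose, for each $j\in J$, a chart $\phi_j\colon U_j\to V_j\sub E_j$ with $\phi_j(e)=0$ and $U_j=U_j^{-1}$, giving rise to the standard chart $\phi=\bigoplus_j\phi_j$ of~$G$ around~$e$. By $L^1$-regularity of $H_j$, the map $\Evol_{H_j}\colon L^1([0,1],L(H_j))\to AC_{L^1}([0,1],H_j)$ is smooth, hence continuous at~$0$, so there exists an open $0$-neighbourhood $\Omega_j\sub L^1([0,1],L(H_j))$ with $\Evol_{H_j}(\Omega_j)\sub AC_{L^1}([0,1],U_j)$. Then $\Omega:=\bigoplus_{j\in J}\Omega_j$ is an open $0$-neighbourhood in the direct sum topology on $L^1([0,1],L(G))$. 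For $[\gamma]=([\gamma_j])_{j\in J}\in\Omega$ only finitely many $[\gamma_j]$ are nonzero, so $\eta(t):=(\Evol_{H_j}([\gamma_j])(t))_{j\in J}$ defines a map $\eta\colon[0,1]\to G$ (only finitely many components differ from~$e$). Using the chart $\phi$ and the Lemma~\ref{lafep}(a) identification, one verifies $\eta\in AC_{L^1}([0,1],G)$ with $\delta^\ell(\eta)=[\gamma]$, so that $G$ is locally $L^1$-semiregular with $\Evol([\gamma])=\eta$.

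Next I would verify smoothness of $\Evol\colon\Omega\to AC_{L^1}([0,1],G)$. Read through the charts, this becomes
\[
\bigoplus_j\Omega_j\to\bigoplus_j AC_{L^1}([0,1],V_j),\quad ([\gamma_j])_j\mapsto\bigl(AC_{L^1}([0,1],\phi_j)(\Evol_{H_j}([\gamma_j]))\bigr)_{j\in J},
\]
i.e.\ the direct sum of the smooth maps $\widetilde{\Evol}_j\colon\Omega_j\to AC_{L^1}([0,1],V_j)$, each satisfying $\widetilde{\Evol}_j(0)=0$. Smoothness of such direct sums of pointed smooth maps on locally convex direct sums is the standard mechanism underlying the very construction of weak direct product Lie groups (cf.\ \cite{MEA}); applied here, it yields smoothness of $\Evol|_\Omega$. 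Hence $G$ is locally $L^1$-regular, and because $L^1$ has the subdivision property, Proposition~\ref{lctoglb} promotes this to $L^1$-regularity.

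The main obstacle is the smoothness step: one must show that the component-wise evolution extends to a smooth map on the full locally convex direct sum, not merely on each finite partial sum. The normalisation $\widetilde{\Evol}_j(0)=0$ is essential — without it the ``direct sum'' of the $\widetilde{\Evol}_j$ would fail even to land in $\bigoplus_j AC_{L^1}([0,1],V_j)$, since pointwise sums could have infinitely many nonzero coordinates. Once this direct-sum smoothness lemma is in hand, all remaining verifications (that $\eta([0,1])\sub G$, that $\eta$ is $AC_{L^1}$, that $\delta^\ell(\eta)=[\gamma]$) reduce to bookkeeping with the direct-sum identifications of Lemma~\ref{lafep}(a) and the fact that on each finite subset of~$J$ one is working inside the finite direct product Lie group $\prod_{j\in F}H_j$, whose $L^1$-regularity is immediate from the $L^1$-regularity of its factors.
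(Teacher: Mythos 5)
Your argument is correct and rests on the same two pillars as the paper's proof -- the identification $L^1([0,1],\bigoplus_{j\in J}E_j)=\bigoplus_{j\in J}L^1([0,1],E_j)$ \emph{as a locally convex space} from Lemma~\ref{lafep}\,(a) (which is precisely where $p=1$ enters), and the smoothness of direct sums of pointed smooth maps from \cite{MEA} -- but the two proofs globalize differently. The paper never localizes: since every class in $L^1([0,1],L(G))$ has a representative supported in a finite sub-product $\prod_{j\in F}H_j$, it obtains $\Evol_G$ on \emph{all} of $L^1([0,1],L(G))$ at once, writes it as $\Phi\circ\big(\oplus_{j\in J}\Evol_{H_j}\big)\circ\Sigma^{-1}$ with values in $C([0,1],G)$, and then invokes Proposition~\ref{givsordr} to upgrade smoothness into $C([0,1],G)$ to smoothness into $AC_{L^1}([0,1],G)$; neither the subdivision property nor Proposition~\ref{lctoglb} is needed there. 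You instead shrink to a box of $0$-neighbourhoods so that the component evolutions stay inside the chart domains, prove smoothness into $AC_{L^1}([0,1],G)$ directly through the chart $AC_{L^1}([0,1],\phi)$ -- which requires the additional identification $AC_{L^1}([0,1],E)\cong\bigoplus_{j\in J}AC_{L^1}([0,1],E_j)$, valid via $\eta\mapsto(\eta(0),\eta')$ together with the $L^1$-identity -- and then globalize via the subdivision property and Proposition~\ref{lctoglb}. Both routes work; the paper's is slightly leaner because mapping into $C([0,1],G)$ removes the need to control where the evolutions take their values. One small point to repair in your write-up: for uncountable $J$ the locally convex direct sum topology is not the box topology, so $\bigoplus_{j\in J}\Omega_j$ need not be open; it is merely a $0$-neighbourhood (after shrinking each $\Omega_j$ to a convex balanced one it contains a basic $0$-neighbourhood of the direct sum), so you should pass to its interior, or to such a basic open neighbourhood, before invoking local $L^1$-regularity.
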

\begin{proof}
Let $E_j$ be the modelling space of $G_j$ and $E:=\bigoplus_{j\in J}E_j$,
which is a sequentially complete (FEP)-space by Lemma~\ref{lafep}\,(a).
Pick a chart $\phi_j\colon U_j\to V_j\sub E_j$ for $H_j$ around~$e$
with $U_j=U_j^{-1}$ and $\phi_j(e)=0$. Let $\phi:=\oplus_{j\in J}\phi_j\colon U\to V$
be the corresponding chart of $G$, with $U:=\bigoplus_{j\in J}U_j$
and $V:=\bigoplus_{j\in J}V_j\sub E$.
We identify $L(G)$ with $E$ using the isomorphism $d\phi|_{L(G)}$,
and $L(H_j)$ with $E_j$ using $d\phi_j|_{L(H_j)}$.
If $F\sub J$ is a finite set, we
consider
\[
G_F:=\prod_{j\in F}H_j
\]
as a Lie subgroup of~$G$
and
identify $L(G_F)$
with $\prod_{j\in F}E_j$.
Let $\iota_F\colon G_F\to G$ be the inclusion map.
With identifications as before, $L(\iota_F)$ is the inclusion map
\[
\prod_{j\in F}E_j\to E.
\]
If $[\gamma]\in L^1([0,1],E)$, after changing the representative if necessary we may assume that
$\gamma\in \cL^1([0,1],\prod_{j\in F}E_j)$
for some finite subset $F\sub J$ (see Lemma~\ref{lafep}\,(a)).
Hence $\eta:=\Evol_{G_F}([\gamma])$ is defined
and
\[
\delta^\ell(\iota_F\circ\eta)=L(\iota_F)\circ\delta^\ell(\eta)=[\gamma]
\]
entails that
\begin{equation}\label{sreada1}
\iota_F\circ \eta=\Evol_G([\gamma]).
\end{equation}
Let $\gamma_j$ be the $j$-th component of~$\gamma$.
Then
\begin{equation}\label{sreada}
\eta=(\Evol_{G_j}([\gamma_j]))_{j\in F}.
\end{equation}
Recall from Lemma~\ref{lafep}\,(a)
that the summation map
\[
\Sigma \colon \bigoplus_{j\in J}L^1([0,1], E_j)\to L^1([0,1],E)
\]
is an isomorphism of topological vector spaces; the inverse map is
\[
\Sigma^{-1}\colon L^1([0,1],E)\to\bigoplus_{j\in J}L^1([0,1],E_j),\;\;
([\gamma])_{j\in J}\mto ([\gamma_j])_{j\in J}.
\]
Consider the map
\[
\Phi\colon \bigoplus_{j\in J}C([0,1],H_j)\to C([0,1],G)
\]
taking $(\gamma_j)_{j\in J}$
to the function with components $\gamma_j$.
Then $\Phi$ is a group homomorphism and smooth,
because $\Phi$ takes the open set $\bigoplus_{j\in J}C([0,1],U_j)$
into $C([0,1], U)$ and
\[
C([0,1],\phi)\circ \Phi\circ \bigoplus_{j\in J}C([0,1],\phi_j)^{-1}
\]
is the restriction of the continuous linear
summation map
\[
\bigoplus_{j\in J} C([0,1],E_j)\to C([0,1],E)
\]
and hence smooth.
Since each of the maps $\Evol_{H_j}\colon L^1([0,1],E_j)\to C([0,1],H_j)$ is smooth,
also
\[
\oplus_{j\in J}\Evol_{H_j}\colon \bigoplus_{j\in J}L^1([0,1], E_j)\to \bigoplus_{j\in J}
C([0,1],H_j)
\]
is smooth (see \cite[Proposition 7.1]{MEA}).
By (\ref{sreada1}) and (\ref{sreada}), we have
\[
\Evol_G=\Phi\circ \Big(\oplus_{j\in J}\Evol_{H_j}\Big)\circ\Sigma^{-1}.
\]
Hence $\Evol_G\colon L^1([0,1], E)\to C([0,1], G)$
is smooth, being a composition of smooth maps.
As a consequence,
$\Evol\colon L^1([0,1],E)\to AC_{L^1}([0,1],G)$ is smooth
(see Proposition~\ref{givsordr}) and thus $G$ is $L^1$-regular.
\end{proof}
%
%
%
%\begin{cor}
%Let $M$ be a finite-dimensional
%paracompact smooth manifold and
%$H$ be a Banach-Lie group.
%Then the test function
%group $C^k_c(M,H)$ is $L^1$-regular,
%for each $k\in \N_0\cup\{\infty\}$.
%\end{cor}
%
Theorem~D from the introduction now follows as a corollary.\\[2.3mm]
{\bf Proof of Theorem D.}
Let $(M_j)_{j\in J}$ be a locally finite family of
compact submanifolds $M_j\sub M$ with boundary,
of full dimension, such that the interiors $M_j^0$ cover~$M$.
Then
\[
G:=\bigoplus_{j\in J}C^k(M_j,H)
\]
is $L^1$-regular
(see Remark~\ref{varL1mapgp}
and Proposition~\ref{mrgsum}).
Let $E$ be the modelling space of~$H$.
Then
\[
S:=\{\gamma=(\gamma_j)_{j\in J}\in G\colon (\forall j_1,j_2\in J)
(\forall x\in M_{j_1}\cap M_{j_2})\;
\gamma_{j_1}(x)=\gamma_{j_2}(x)\}
\]
is a Lie subgroup of~$G$ modelled on
\[
F:=\{(\gamma_j)_{j\in J}\in \bigoplus_{j\in J} C^k(M_j,E)
\colon (\forall j_1,j_2\in J)(\forall x\in M_{j_1}\cap M_{j_2})\;
\gamma_{j_1}(x)=\gamma_{j_2}(x)\},
\]
which is a complemented vector subspace of $\bigoplus_{j\in J}C^k(M_j,E)$
(cf.\ Remark \ref{alsnee}).
To see this, let $\phi\colon U\to V\sub E$ be a chart for~$H$
defined on an open identity neighbourhood $U\sub H$ such that
$U=U^{-1}$.
Then
\[
\psi:=\oplus_{j\in J}C^k(M_j,\phi)\colon\bigoplus_{j\in J} C^k(M_j,U)\to\bigoplus_{j\in J}
C^k(M_j,V)
\]
is a chart for~$G$ such that
\[
\psi\left(S\cap \bigoplus_{j\in J}C^k(M_j,U)\right)=F\cap \bigoplus_{j\in J}C^k(M_j,V).
\]
As the maps
\[
G\to H,\quad \gamma\mto \gamma_j(x)
\]
are smooth group homomorphisms for all $j\in J$ and $x\in M_j$,
all hypotheses of Proposition~\ref{equarg} are satisfied and thus
$S$ is $L^1$-regular.
The map
\[
\Psi\colon C^k_c(M,E)\to F,\quad \gamma\mto(\gamma|_{M_j})_{j\in J}
\]
is an isomorphism of topological vector spaces.
Recall that
$C^k_c(M,\phi)$
is a chart for $C^k_c(M,H)$.
Moreover, $\psi$ restricts to a chart
\[
\psi_S\colon S\cap \bigoplus_{j\in J}C^k(M_j,U)\to
F\cap \bigoplus_{j\in J}C^k(M_j,V).
\]
It remains to observe that the map
\[
\Theta\colon C^k_c(M,H)\to S,\quad \gamma\mto (\gamma|_{M_j})_{j\in J}
\]
is an isomorphism of groups such that
\[
\Theta(C^k_c(M,U))=S\cap \bigoplus_{j\in J} C^k(M_j,U)
\]
and $\psi_S\circ \Theta\circ C^k_c(M,\phi)^{-1}$ is the restriction
of $\Psi$ to a $C^\infty$-diffeomorphism
\[
C^k_c(M,V)\to F\cap \bigoplus_{j\in J}C^k(M_j,E).
\]
Hence $\Theta$ is an isomorphism of Lie groups
and thus also the Lie group $C^k_c(M,H)$ is $L^1$-regular (being isomorphic
to the $L^1$-regular Lie group $S$).\,\vspace{2mm}\Punkt

\noindent
We record a second corollary to Proposition~\ref{mrgsum}.
\begin{cor}\label{gaucreg}
Let
$M$ be a finite-dimensional
paracompact smooth manifold,
$H$ be a Banach-Lie group
and $\pi\colon P\to M$ be a smooth principal
bundle with structure group~$H$.
Then $\Gau_c(P)$ is
$L^1$-regular.
\end{cor}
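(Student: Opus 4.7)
The plan is to realize $\Gau_c(P)$ as a Lie subgroup of a weak direct product of mapping groups that is already known to be $L^1$-regular, and then invoke Proposition~\ref{equarg}. By paracompactness of $M$ together with local triviality of $\pi$, I would first choose a locally finite family $(M_j)_{j\in J}$ of compact submanifolds of~$M$ with boundary of full dimension, whose interiors $M_j^0$ cover $M$ and each of which is contained in a trivializing open set for $\pi$; fix smooth local sections $\sigma_j\colon M_j\to P$ and let $k_{i,j}\colon M_i\cap M_j\to H$ be the transition functions determined by $\sigma_j(x)=\sigma_i(x)k_{i,j}(x)$. The ambient Lie group will be $G:=\bigoplus_{j\in J}C^\infty(M_j,H)$, which is $L^1$-regular by Remark~\ref{varL1mapgp} together with Proposition~\ref{mrgsum}.

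Next I would identify $\Gau_c(P)$ with the subgroup $S\sub G$ consisting of all tuples $(\gamma_j)_{j\in J}$ satisfying the pointwise compatibility $\gamma_i(x)=k_{i,j}(x)\gamma_j(x)k_{j,i}(x)$ on every overlap $M_i\cap M_j$; compact support of a gauge transformation translates into the weak-direct-sum condition, since the support meets only finitely many of the compact~$M_j$ by local finiteness of the cover. The defining relations for $S$ are equations $\alpha_{i,j,x}=\beta_{i,j,x}$ between the evaluation and the conjugated evaluation, viewed as smooth group homomorphisms $G\to H$, exactly as in the proof for compact~$M$. This puts me in position to apply Proposition~\ref{equarg}, provided its hypothesis on the modelling subspace is satisfied.

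The main obstacle is precisely verifying that hypothesis, because $L^1$ is not known to satisfy the embedding axiom on sequentially complete \emph{(FEP)}-spaces, so I must show that the modelling subspace
\[
F:=\Bigl\{(\eta_j)_{j\in J}\in \bigoplus_{j\in J} C^\infty(M_j,\ch) \colon \eta_i(x)=\Ad(k_{i,j}(x))\eta_j(x) \text{ on } M_i\cap M_j\Bigr\}
\]
of $S$ is \emph{complemented} in $L(G)=\bigoplus_j C^\infty(M_j,\ch)$, where $\ch:=L(H)$. To this end I would choose a smooth partition of unity $(\rho_k)_{k\in J}$ subordinate to $(M_k^0)$ and construct the continuous linear projection
\[
L(G)\to F,\quad (\eta_k)_k\mapsto \Bigl(x\mapsto \sum_{k\in J}\rho_k(x)\,\Ad(k_{j,k}(x))\,\eta_k(x)\Bigr)_{j\in J},
\]
which lands in $F$ by the cocycle identity $k_{i,j}k_{j,k}=k_{i,k}$ and restricts to the identity on $F$ because $\sum_k\rho_k\equiv 1$. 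With complementation established, Proposition~\ref{equarg} applied to $S\sub G$ yields the $L^1$-regularity of $\Gau_c(P)$.
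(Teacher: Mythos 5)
Your proposal is correct and follows essentially the same route as the paper: realize $\Gau_c(P)$ as the subgroup $S$ of the $L^1$-regular weak direct product $\bigoplus_{j\in J}C^\infty(M_j,H)$ cut out by the smooth homomorphisms $\alpha_{i,j,x}=\beta_{i,j,x}$, and then apply Proposition~\ref{equarg}. The only difference is cosmetic: where you build the partition-of-unity projection onto the modelling subspace $F$ by hand (with the $\Ad(k_{j,k})$ twist and the cocycle identity), the paper obtains the complementation by citing Lemma~\ref{toviasum}, whose proof is exactly this construction.
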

\begin{proof}
Let $(M_j)_{j\in J}$ be a locally finite family of compact submanifolds $M_j\sub M$ with boundary,
of full dimension, such that the interiors $M_j^0$ cover~$M$ for $j\in J$
and there is a smooth section $\sigma_j\colon M_j\to P$
for~$\pi$, for each $j\in J$.
For all $i,j\in J$,
there is a unique map $k_{i,j}\colon M_i\cap M_j\to H$
such that
\[
\sigma_i(x)k_{i,j}(x)=\sigma_j(x)\quad
\mbox{for all $x\in M_i\cap M_j$.}
\]
Then $\Gau_c(P)$ can be identified with the
Lie subgroup $S$ of
\[
G:=\bigoplus_{j\in J} C^\infty(M_j,H)
\]
consisting of all $\gamma=(\gamma_j)_{j\in J}\in G$
such that
\[
(\forall i,j\in J)(\forall x\in M_i\cap M_j)\; \gamma_i(x)=
k_{i,j}(x)\gamma_j(x)k_{j,i}(x)\}
\]
(see \cite{Jak} if $M$ is $\sigma$-compact).\footnote{We can use this identification
to define the Lie group structure on $\Gau_c(P)$.}
%
% MEHR ERKLAEREN
%
Each of the Lie groups $C^\infty(M_j,H)$
is $L^1$-regular (see Remark~\ref{varL1mapgp}),
whence also
$G=\bigoplus_{j\in J} C^\infty(M_j,H)$ is $L^1$-regular
(by Proposition~\ref{mrgsum}).
For $i,j\in J$
and $x\in M_i\cap M_j$,
the mappings
\[
\alpha_{i,j,x}\colon G\to H,\quad \gamma\mto \gamma_i(x)
\]
and
\[
\beta_{i,j,k}\colon G\to H,\quad
\gamma\mto
k_{i,j}(x)\gamma_j(x)k_{j,i}(x)
\]
are smooth group homomorphisms.
Since
\[
S=\{\gamma\in G\colon (\forall i,j\in \{1,\ldots, m\}(\forall
x\in M_i\cap M_j)\; \alpha_{i,j,x}(\gamma)=\beta_{i,j,x}(\gamma)\},
\]
we deduce with
Proposition~\ref{equarg}
that $S$ (and hence also $\Gau_c(P)$) is $L^1$-regular,
using that modelling space is complemented
(cf.\ Lemma~\ref{toviasum}).
%
% USE modelling space is complemented
%
% habe ich (oder Spezialfall) schon im Anhang
% spaeter detaillierter erklaeren oder verweisen
%
\end{proof}
See \cite[Lemma~7.1]{SEM} for the following tool:
\begin{la}\label{getC1tool} Let $M$ and $N$ be $C^1$-manifolds modeled on locally convex spaces
and $f\colon M\to N$ be a map.
Then $f$ is $C^1$ if and only if there exists a continuous map $\omega \colon TM\to TN$ with
the following properties:
\begin{itemize}
\item[\rm(a)]
$\omega(T_xM)\sub T_{f(x)}N$ for each $x\in M$;
%and the restriction $\omega|_{T_xM}\colon T_xM\to T_{f(x)}N$ is linear;
\item[\rm(b)]
If $\ve>0$ and $\gamma\colon \,]{-\ve},\ve[\,\to M$ is a $C^1$-map,
then $f\circ \gamma$ is $C^1$
with $(f\circ\gamma)'(0)=\omega(\gamma'(0))$.
\end{itemize}
In this case, $Tf=\omega$.
If $M$ is an open subset of a locally convex space~$X$,
it suffices to take paths of the form $\gamma(s)=x+sy$ in {\rm(b)},
for $x\in M$ and $y\in X$.\,\Punkt
\end{la}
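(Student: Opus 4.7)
The ``only if'' direction I would dispatch by taking $\omega:=Tf$: continuity of $Tf$ and property~(a) are built into the definition of the tangent map of a $C^1$-map, while property~(b) is precisely the pointwise Chain Rule (Lemma~\ref{chainpw}).

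For the converse, my plan is to first reduce to the case that $M$ and $N$ are open subsets of locally convex spaces. Given a point of $M$, pick charts $\phi\colon U\to V\sub X$ of $M$ around it and $\psi\colon U'\to V'\sub Y$ of $N$ around its image, chosen so that $f(U)\sub U'$, and set $\tilde f:=\psi\circ f\circ\phi^{-1}$ together with $\tilde\omega:=T\psi\circ\omega\circ T\phi^{-1}$. The latter is continuous and satisfies~(a) with respect to $\tilde f$. Given a $C^1$-path $\alpha$ in $V$, the path $\gamma:=\phi^{-1}\circ\alpha$ is $C^1$ in $U$, so by~(b) the composition $f\circ\gamma$ is $C^1$ at $0$ with $(f\circ\gamma)'(0)=\omega(\gamma'(0))=\omega(T\phi^{-1}(\alpha'(0)))$; composing with the $C^1$-map $\psi$ and applying Lemma~\ref{chainpw} shows that $\tilde f\circ\alpha$ is $C^1$ at $0$ with derivative $\tilde\omega(\alpha'(0))$. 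It therefore suffices to prove the reverse implication in the open case.

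Assume now $M$ is open in a locally convex space~$X$ and $N$ is open in $Y$. Identifying $TM=M\times X$ and $TN=N\times Y$, property~(a) forces $\omega$ to be of the form $\omega(x,y)=(f(x),g(x,y))$ for a continuous map $g\colon M\times X\to Y$; in particular $f$ is continuous, being the first coordinate of $\omega(\cdot,0)$. I would then apply~(b) to the affine path $\gamma(s):=x+sy$ (defined on a small enough interval by openness of~$M$) to conclude that the directional derivative $df(x,y):=\lim_{s\to 0}\frac{1}{s}(f(x+sy)-f(x))$ exists in $Y$ and equals $g(x,y)$. Hence $df=g$ is defined on all of $M\times X$ and continuous there, so $f$ is $C^1$ by Definition~\ref{defCk}, and the identification $Tf(x,y)=(f(x),df(x,y))=(f(x),g(x,y))=\omega(x,y)$ yields $Tf=\omega$. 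The final sentence of the lemma is already built into this argument, since only affine paths $s\mapsto x+sy$ were used in the open case.

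There is essentially no substantial obstacle: the content of the lemma is simply the observation that the intrinsic directional derivative along straight-line paths provided by~(b) coincides with the Michal--Bastiani derivative of Definition~\ref{defCk}. Continuity of $f$ and of the candidate differential $df=g$ is handed to us by continuity of~$\omega$, so beyond the chart bookkeeping above no analytical work remains.
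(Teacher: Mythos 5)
The paper itself gives no proof of this lemma: it is quoted from \cite[Lemma~7.1]{SEM} with only a citation, so there is no in-paper argument to measure you against. Your proof is correct and is the natural one: take $\omega:=Tf$ for the easy direction (the pointwise Chain Rule, Lemma~\ref{chainpw}, plus continuity of $Tf$ give (a) and (b)); for the converse, localize in charts and observe that along the affine paths $s\mto x+sy$ condition (b) exhibits the directional derivative $df(x,y)$ as the second component $g(x,y)$ of $\omega$, which is continuous, so $f$ is $C^1$ in the sense of Definition~\ref{defCk} (which, as you note, asks only for existence and continuity of $df$, not a priori linearity), and $Tf=\omega$ follows; since only affine paths enter the open case, the final addendum of the lemma comes for free. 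The one point you should reorder: in the chart reduction you shrink $U$ so that $f(U)\sub U'$, which already presupposes continuity of $f$, but you only justify continuity later inside the open case. The fix is the same observation made one step earlier: $f=\pi_{TN}\circ\omega\circ 0_M$, where $0_M\colon M\to TM$ is the (continuous) zero section, so (a) and continuity of $\omega$ give continuity of $f$ on all of $M$ before any charts are chosen. With that adjustment, and the implicit but routine remark that $C^1$-ness is local and $Tf|_{TU}=T\psi^{-1}\circ T\tilde f\circ T\phi=\omega|_{TU}$ on each chart domain, your argument is complete.
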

The next lemma motivates the definition of $\omega$
in the proof of
Proposition~\ref{moresitu}
(and is used in the proof of Corollary~\ref{moredl}).
\begin{la}\label{ifthn}
Let $\cE$ be a bifunctor on Fr\'{e}chet spaces $($resp., on sequentially complete \emph{(FEP)}-spaces,
resp., on integral complete locally convex spaces$)$ which satisfies the locality axiom, the
pushforward axioms, and such that smooth functions act smoothly on $AC_\cE$.
Let $G$ be a Lie group modelled on such a space.
If $G$ is $\cE$-regular and $[\gamma],\eta\in \cE([0,1],\cg)$,
then
\[
\theta\colon \R\to AC_\cE([0,1],G),\quad s\mto \Evol(\eta+s[\gamma])
\]
is a smooth curve in $AC_\cE([0,1],G)$ with right logarithmic derivative
\begin{equation}\label{horrib}
\delta^r(\theta)(s)=I_\cg([\Ad(\Evol(\eta+s[\gamma])).\gamma])
=I_\cg((\eta+s[\gamma]+[\gamma])\odot (\eta+s[\gamma])^{-1})
\end{equation}
for $s\in\R$, where $I_\cg\colon \cE([0,1],\cg)\to AC_\cE([0,1],\cg)$, $[\zeta]\mto
(t\mto \int_0^t \zeta(\tau)\,d\tau)$.
\end{la}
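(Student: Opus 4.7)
The plan is to reduce the computation of $\delta^r(\theta)(s)$ at arbitrary $s\in\R$ to a derivative at $0$ of a one-parameter curve through the identity, where Remark~\ref{drEvo}(c) applies directly. Smoothness of $\theta$ is immediate: the affine-linear map $\R\to \cE([0,1],\cg)$, $s\mapsto \eta+s[\gamma]$, is continuous and hence smooth, and by $\cE$-regularity the evolution map $\Evol\colon \cE([0,1],\cg)\to AC_\cE([0,1],G)$ is smooth, so $\theta$ is a smooth curve into $AC_\cE([0,1],G)$.

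For the right logarithmic derivative, I would use that $\Evol\colon (\cE([0,1],\cg),\odot)\to AC_\cE([0,1],G)_*$ is an isomorphism of groups to rewrite
\[
\theta(u)\theta(s)^{-1}=\Evol\bigl((\eta+u[\gamma])\odot(\eta+s[\gamma])^{-1}\bigr).
\]
The explicit formula $[\gamma_1]\odot[\gamma_2]^{-1}=[\Ad(\Evol([\gamma_2]))(\gamma_1-\gamma_2)]$, applied with $[\gamma_1]=\eta+u[\gamma]$ and $[\gamma_2]=\eta+s[\gamma]$, then collapses the product to
\[
(\eta+u[\gamma])\odot(\eta+s[\gamma])^{-1}=(u-s)[\xi_s],\qquad [\xi_s]:=[\Ad(\Evol(\eta+s[\gamma]))(\gamma)].
\]
Substituting in the definition $\delta^r(\theta)(s)=\frac{d}{du}\big|_{u=s}\theta(u)\theta(s)^{-1}$ and changing variable to $v=u-s$ yields
\[
\delta^r(\theta)(s)=\frac{d}{dv}\big|_{v=0}\Evol(v[\xi_s]),
\]
and Remark~\ref{drEvo}(c), applied with $\gamma$ replaced by $[\xi_s]$, identifies this derivative with the element $t\mapsto\int_0^t\xi_s(\tau)\,d\tau$ of $AC_\cE([0,1],\cg)$, i.e., with $I_\cg([\xi_s])$. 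This proves the first equality of~(\ref{horrib}); the second equality is the same $\odot^{-1}$-formula read backwards, now with $[\gamma_1]=\eta+s[\gamma]+[\gamma]$ and $[\gamma_2]=\eta+s[\gamma]$, so that $[\gamma_1]-[\gamma_2]=[\gamma]$ gives $(\eta+s[\gamma]+[\gamma])\odot(\eta+s[\gamma])^{-1}=[\xi_s]$.

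The only point requiring a little care is the application of Remark~\ref{drEvo}(c): one needs $v\mapsto\Evol(v[\xi_s])$ to be differentiable at~$0$ in the chart around~$e$ in which tangent vectors at the identity of $AC_\cE([0,1],G)$ are identified with $AC_\cE([0,1],\cg)$. This is ensured by the smoothness of $\Evol$ from $\cE$-regularity, together with the fact that $[\xi_s]\in\cE([0,1],\cg)$, which follows from Lemma~\ref{globpfw} applied to the smooth map $G\times\cg\to\cg$, $(g,v)\mapsto\Ad_g(v)$, linear in its second argument.
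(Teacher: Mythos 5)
Your proof is correct and follows essentially the same route as the paper: both rest on $\Evol$ being a (smooth) homomorphism from $(\cE([0,1],\cg),\odot)$ onto $AC_\cE([0,1],G)_*$, the explicit formula for $\odot$-inverses from Lemma~\ref{logarules}, and the identification $T_0\Evol=I_\cg$ of Remark~\ref{drEvo}. The only cosmetic difference is that the paper packages your hands-on computation of $\frac{d}{du}\big|_{u=s}\,\theta(u)\theta(s)^{-1}$ into the naturality relation $\delta^r(\Evol\circ c)=L(\Evol)\,\delta^r(c)$ combined with the affine form of right translations in the $\odot$-group, whereas you unwind that step directly via difference quotients.
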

\begin{proof}
Since $\Evol\colon (\cE([0,1],\cg),\odot )\to AC_\cE([0,1],G)$ is a
smooth homomorphism between Lie groups,
we have
\begin{eqnarray*}
(\delta^r\theta)(s)&=&L(\Evol)\delta^r(s\mto \eta+s[\gamma])\\
&=&I_\cg d\rho_{(\eta+s[\gamma])^{-1}}(\eta+s[\gamma],[\gamma])\\
&=&I_\cg([\Ad(\Evol(\eta+s[\gamma])).\gamma]).
\end{eqnarray*}
This establishes the first equality in (\ref{horrib})
and the second equality is merely a rewriting
with the help of (\ref{eqlodif1}) in Lemma~\ref{logarules}.
\end{proof}
\begin{prop}\label{moresitu}
Let $\cE$ be a bifunctor on Fr\'{e}chet spaces $($resp., on sequentially complete \emph{(FEP)}-spaces,
resp., on integral complete locally convex spaces$)$ which satisfies the locality axiom, the
pushforward axioms, and such that smooth functions act smoothly on $AC_\cE$.
Let $G$ be a $\cE$-semiregular Lie group modelled on such a space,
with Lie algebra $\cg$.
Define
\[
I_{\cg}\colon \cE([0,1],\cg)\to C([0,1],\cg), \quad \zeta\mto\left(t\mto
\int_0^t\zeta(\tau)\,d\tau\right).
\]
Assume that
\begin{itemize}
\item[\rm(a)]
$\Evol\colon \cE([0,1],\cg)\to C([0,1],G)$ is continuous;
\item[\rm(b)] The map
$\R\to C([0,1],G)$, $s\mto \Evol(\eta+s[\gamma])$
is $C^1$ for all $[\gamma],\eta\in \cE([0,1],\cg)$; and
\item[\rm(c)]
$\frac{d}{ds}\Big|_{s=0}\Evol(\eta+s[\gamma])
=
I_{\cg}([\Ad(\Evol(\eta))\gamma]).\Evol(\eta)$,
where the dot means multiplication in the tangent Lie group $T(C([0,1],G))$
and we identify
$T_1C([0,1],G)$ with $C([0,1],\cg)$.
\end{itemize}
Then $G$ is $\cE$-regular.
\end{prop}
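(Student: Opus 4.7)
The plan is to reduce to Proposition~\ref{sammelsu}(b): it suffices to show that $\Evol\colon\cE([0,1],\cg)\to AC_\cE([0,1],G)$ is $C^1$ on some open $0$-neighbourhood, as smoothness (and hence $\cE$-regularity) will then follow. I will first establish $C^1$-ness as a map into $C([0,1],G)$ using Lemma~\ref{getC1tool}, for which hypotheses (a)--(c) are tailor-made, and then upgrade to $C^1$-ness into $AC_\cE([0,1],G)$ by the closed-embedding argument already used in the proof of Proposition~\ref{givsordr}.

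For the first stage, I identify $T\cE([0,1],\cg)\cong\cE([0,1],\cg)^2$ and $TC([0,1],G)\cong C([0,1],TG)$, and define the candidate tangent map
\[
\omega\colon\cE([0,1],\cg)^2\to TC([0,1],G),\quad(\eta,[\gamma])\mapsto I_\cg([\Ad(\Evol(\eta))\gamma])\cdot\Evol(\eta),
\]
with multiplication taken in the tangent Lie group. Since $f\colon G\times\cg\to\cg$, $(x,y)\mapsto\Ad_x(y)$, is smooth and linear in its second argument, the Pushforward Axiom (P2) yields a smooth map $\wt f\colon C([0,1],G)\times\cE([0,1],\cg)\to\cE([0,1],\cg)$; composed with the continuous map $\Evol$ from hypothesis (a), the continuous linear operator $I_\cg$, and the smooth multiplication of $TC([0,1],G)$, this shows $\omega$ is continuous. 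Hypothesis (b) then ensures that each affine-linear path $s\mapsto\Evol(\eta+s[\gamma])$ is $C^1$, while hypothesis (c) identifies its derivative at $s=0$ with $\omega(\eta,[\gamma])$. Applying Lemma~\ref{getC1tool} to the open set $\cE([0,1],\cg)$, where it suffices to check affine paths, gives that $\Evol\colon\cE([0,1],\cg)\to C([0,1],G)$ is $C^1$.

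To upgrade, pick a chart $\phi\colon U\to V\subseteq\cg$ of $G$ with $U=U^{-1}$ and $e\in U$, set $\Omega:=\Evol^{-1}(C([0,1],U))$, and let $g:=AC_\cE([0,1],\phi)\circ\Evol|_\Omega$. By Lemma~\ref{closedemb}, $\zeta\mapsto(\zeta,\zeta')$ embeds $AC_\cE([0,1],V)$ as a closed subspace of $C([0,1],V)\times\cE([0,1],V)$, so by \cite[Lemma~10.1]{BGN} it suffices to verify that the two components of that composition are $C^1$. The first is $C([0,1],\phi)\circ\Evol|_\Omega$, which is $C^1$ by the first stage. The second, as in the proof of Proposition~\ref{givsordr}, equals $\wt\nu(C([0,1],\phi)\circ\Evol,\,\cE([0,1],L(\phi))(\cdot))$ where $\wt\nu$ is the smooth pushforward from (P2) of the smooth fibrewise linear map $V\times\cg\to\cg$ expressing $\zeta\mapsto\zeta'$ in terms of $\zeta$ and $\delta^\ell(\zeta)$; as a composition of a smooth map with a $C^1$-map and a continuous linear map, it is $C^1$. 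Hence $\Evol|_\Omega$ is $C^1$ into $AC_\cE([0,1],G)$, and Proposition~\ref{sammelsu}(b) supplies smoothness of $\Evol$, so $G$ is $\cE$-regular. The one delicate point is matching the tangent vector predicted by hypothesis (c) with the map $\omega$ demanded by Lemma~\ref{getC1tool}: this is an unwinding of the identifications $T_{\Evol(\eta)}C([0,1],G)\cong\Evol(\eta)\cdot C([0,1],\cg)$ and the multiplication structure of the tangent Lie group, after which everything is a direct recombination of already-established tools.
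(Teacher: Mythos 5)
Your argument is correct and follows essentially the same route as the paper: the paper's proof of Proposition~\ref{moresitu} consists exactly of your first stage (continuity of $\omega$ plus Lemma~\ref{getC1tool} applied to affine paths, with hypotheses (b) and (c) supplying the $C^1$-curves and their derivatives at $s=0$), leaving the passage from $C^1$-ness of $\Evol$ into $C([0,1],G)$ to $\cE$-regularity implicit, which you spell out correctly via the closed-embedding argument from the proof of Proposition~\ref{givsordr} together with Proposition~\ref{sammelsu}\,(b). Only one citation should be adjusted: smoothness of $\wt{f}\colon C([0,1],G)\times \cE([0,1],\cg)\to\cE([0,1],\cg)$, $(\tau,[\sigma])\mapsto[\Ad(\tau)\sigma]$, is not a direct instance of the pushforward axiom (P2) (whose first factor is an open subset of a locally convex space) but of its Lie-group version, Lemma~\ref{globpfw}.
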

\begin{proof}
Since $\Evol$ is continuous by assumption,
$(\cE([0,1],\cg),\odot)$ is a topological group by Proposition~\ref{sammelsu}\,(e).
Thus
$\omega\colon T\cE([0,1],\cg)\to T(C([0,1],G))$,
\begin{eqnarray*}
\omega(\eta,[\gamma])&:=&I_\cg(d\rho_{\eta^{-1}}(\eta,[\gamma])).\Evol(\eta)
=I_{\cg}([\Ad(\Evol(\eta))\gamma]).\Evol(\eta)\\
&=& I_\cg(([\gamma]+\eta)\odot \eta^{-1}).\Evol(\eta)
\end{eqnarray*}
is continuous.
Moreover, $\omega$ takes $T_\eta \cE([0,1],\cg)=\{\eta\}\times \cE([0,1],\cg)$
inside\linebreak
$T_{\Evol(\eta)}(C([0,1],G))$.
Hence, by Lemma~\ref{getC1tool}, $\Evol$ will be $C^1$ if we can show that,
for all $[\gamma],\eta\in \cE([0,1],\cg)$, the curve
\[
\xi\colon \R\to C([0,1],G),\quad \xi(s):=\Evol(\eta+s[\gamma])
\]
is $C^1$ and satisfies
\begin{equation}\label{willengh}
\xi'(0)=\omega(\eta,[\gamma]).
\end{equation}
But this is the case by hypotheses (b) and (c).
\end{proof}
\begin{cor}\label{moredl}
Let $\cE$ be a bifunctor on Fr\'{e}chet spaces $($resp., on sequentially complete \emph{(FEP)}-spaces,
resp., on integral complete locally convex spaces$)$ which satisfies the locality axiom, the
pushforward axioms, and such that smooth functions act smoothly on $AC_\cE$.
Let $(J,\leq)$ be a directed set,
\[
((G_j)_{j\in J}, (\alpha_{i,j})_{i\geq j})
\]
be a direct system of $\cE$-semiregular Lie groups $G_j$ modelled on spaces as just described and smooth homomorphisms
$\alpha_{i,j}\colon G_j\to G_i$.
Let $G$ be a Lie group modelled on a space as just described and $\alpha_j\colon G_j\to G$ be smooth homomorphisms
for $j\in J$ such that $\alpha_i\circ\alpha_{i,j}=\alpha_j$
for all $i,j\in J$ such that $i\geq j$. Let $\cg:=L(G)$ and $\cg_j:=L(G_j)$.
Assume that each $[\gamma]\in \cE([0,1],\cg)$
is of the form $[L(\alpha_j)\circ\zeta]$ for some $j\in J$ and $[\zeta]
\in \cE([0,1],\cg_j)$.
Then $G$ is $\cE$-semiregular. If, moreover, $\Evol\colon \cE([0,1],\cg)\to C([0,1],G)$
is continuous at~$0$ and each $G_j$ is $\cE$-regular, then $G$ is $\cE$-regular.
\end{cor}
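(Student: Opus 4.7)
The plan for the first assertion is a direct construction of the evolution. Given $[\gamma]\in \cE([0,1],\cg)$, the hypothesis lets me write $[\gamma]=[L(\alpha_j)\circ\zeta]$ for some $j\in J$ and $[\zeta]\in\cE([0,1],\cg_j)$. Since $G_j$ is $\cE$-semiregular, $\eta_j:=\Evol_{G_j}([\zeta])$ exists in $AC_\cE([0,1],G_j)$, and Lemma~\ref{actonelts} yields $\eta:=\alpha_j\circ\eta_j\in AC_\cE([0,1],G)$. The naturality of the left logarithmic derivative with respect to a smooth homomorphism $\alpha_j$ (obtained from $\omega_\ell\circ T\alpha_j=L(\alpha_j)\circ\omega_\ell^{G_j}$, which holds because $\alpha_j$ intertwines left translations, together with~(\ref{chainpth}) and Remark~\ref{dercomp}) gives $\delta^\ell(\eta)=L(\alpha_j)\circ\delta^\ell(\eta_j)=L(\alpha_j)\circ\zeta$. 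Since also $\eta(0)=e$, this means $\eta=\Evol_G([\gamma])$, establishing $\cE$-semiregularity of~$G$.

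\textbf{Plan for regularity.} Assuming in addition that each $G_j$ is $\cE$-regular and that $\Evol_G\colon\cE([0,1],\cg)\to C([0,1],G)$ is continuous at~$0$, I intend to apply Proposition~\ref{moresitu}. Condition (a) follows at once from Proposition~\ref{sammelsu}\,(e), which promotes continuity at~$0$ to global continuity. For (b) and (c), given $\eta,[\gamma]\in\cE([0,1],\cg)$ pick $j_0,j_1\in J$ and $[\zeta_0],[\zeta_1]$ with $\eta=[L(\alpha_{j_0})\circ\zeta_0]$ and $[\gamma]=[L(\alpha_{j_1})\circ\zeta_1]$; by directedness choose $j\geq j_0,j_1$ and set $\zeta_i':=L(\alpha_{j,j_i})\circ\zeta_i$, so the compatibility $\alpha_j\circ\alpha_{j,j_i}=\alpha_{j_i}$ yields $\eta=[L(\alpha_j)\circ\zeta_0']$ and $[\gamma]=[L(\alpha_j)\circ\zeta_1']$. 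The semiregularity argument then gives
\[
\Evol_G(\eta+s[\gamma])=\alpha_j\circ\Evol_{G_j}([\zeta_0'+s\zeta_1'])
\]
for all $s\in\R$. Smoothness of $\Evol_{G_j}$ (using $\cE$-regularity of~$G_j$) together with smoothness of $s\mapsto[\zeta_0'+s\zeta_1']$ and of post-composition with $\alpha_j$ on $C([0,1],G_j)$ establishes~(b).

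For (c), Lemma~\ref{ifthn} applied inside $G_j$ evaluates the derivative at $s=0$:
\[
\tfrac{d}{ds}\big|_{s=0}\Evol_{G_j}([\zeta_0'+s\zeta_1'])=I_{\cg_j}([\Ad(\Evol_{G_j}(\zeta_0'))\zeta_1']).\Evol_{G_j}(\zeta_0').
\]
Pushing this tangent vector forward under $T\alpha_j$ and combining the intertwining identities $L(\alpha_j)\circ I_{\cg_j}=I_\cg\circ L(\alpha_j)$ (integrals commute with continuous linear maps) and $L(\alpha_j)\circ\Ad_g=\Ad_{\alpha_j(g)}\circ L(\alpha_j)$ (naturality of $\Ad$ for smooth group homomorphisms) with $\alpha_j\circ\Evol_{G_j}(\zeta_0')=\Evol_G(\eta)$ transforms the right-hand side into $I_\cg([\Ad(\Evol_G(\eta))\gamma]).\Evol_G(\eta)$, which is~(c). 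Proposition~\ref{moresitu} then delivers $\cE$-regularity of~$G$.

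\textbf{Main obstacle.} No single step is hard, but the bookkeeping is the real work: one must ensure that a common index $j$ can be chosen simultaneously for $\eta$ and $[\gamma]$ (handled by directedness combined with the cocycle condition $\alpha_j\circ\alpha_{j,j_i}=\alpha_{j_i}$), and that the several naturality/intertwining identities for $\delta^\ell$, $\Ad$, $I_\cg$, and $\Evol$ pass cleanly from $G_j$ to $G$ under $\alpha_j$ at the level of equivalence classes in $\cE$. The continuity-at-$0$ hypothesis is indispensable because, for a direct limit of Lie groups modelled on integral complete or (FEP)-spaces, there is no a priori reason for $\Evol_G$ to be even continuous; once continuity is granted, Propositions~\ref{sammelsu}\,(e) and~\ref{moresitu} reduce the smoothness problem to the already-established smoothness of each $\Evol_{G_j}$.
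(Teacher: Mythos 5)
Your proposal is correct and follows essentially the same route as the paper: semiregularity via $\Evol_G([L(\alpha_j)\circ\zeta])=\alpha_j\circ\Evol_{G_j}([\zeta])$, and regularity by choosing a common index through directedness, applying Lemma~\ref{ifthn} in $G_j$, pushing the resulting $C^1$-curve and its derivative forward under $\alpha_j$, and concluding with Propositions~\ref{sammelsu}\,(e) and~\ref{moresitu}.
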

\begin{proof}
If $[\gamma]\in \cE([0,1],\cg)$ and $[\zeta]\in \cE([0,1],\cg_j)$ such that
$[\gamma]=[L(\alpha_j)\circ \zeta]$, then $\alpha_j\circ  \Evol_{G_j}([\zeta])$
is the left evolution of~$[\gamma]$.
Hence $G$ is $\cE$-semiregular.
Now assume that
\[
\Evol\colon \cE([0,1],\cg)\to C([0,1], G)
\]
is continuous
at~$0$ and hence continuous, by Proposition~\ref{sammelsu}\,(e).
If $[\gamma],[\eta]\in \cE([0,1],\cg)$,
then there are $i,j\in J$ and $[\zeta]\in \cE([0,1],\cg_i)$,
$[\xi]\in \cE([0,1],\cg_j)$ such that $[\gamma]=[L(\alpha_i)\circ\zeta]$
and $[L(\alpha_j)\circ\xi]$. Since $J$ is directed,
there is $\ell\in J$ such that $\ell\geq i,j$.
Let $\bar{\zeta}:=L(\alpha_{\ell,i})\circ\zeta$
and $\bar{\xi}:=L(\alpha_{\ell,j})\circ\xi$.
Then $[\bar{\zeta}]$, $[\bar{\xi}]\in \cE([0,1],\cg_\ell)$
and $[L(\alpha_\ell)\circ \bar{\zeta}]=[L(\alpha_\ell)\circ L(\alpha_{\ell,i})\circ\zeta]
=[L(\alpha_\ell\circ\alpha_{\ell,i})\circ\zeta]=[L(\alpha_i)\circ\zeta]=[\gamma]$;
likewise, $[L(\alpha_\ell)\circ\bar{\xi}]=[\xi]$.
Note that
\[
\theta\colon \R\to AC_\cE([0,1],G_\ell),
\quad \theta(s):=\Evol_{G_\ell}([\bar{\zeta}]+s[\bar{\xi}])
\]
is a $C^1$-curve in $AC_\cE([0,1],G_\ell)$
with
\[
\theta'(s)=
I_{\cg_\ell}([\Ad(\Evol_{G_\ell}([\bar{\zeta}])\bar{\xi})]).\Evol_{G_\ell}([\bar{\zeta}]),
\]
by Lemma~\ref{ifthn}.
Hence $\alpha_\ell\circ\theta\colon \R\to C([0,1],G)$,
$s\mto\Evol([\zeta]+s[\xi])$
is a $C^1$-curve and
\begin{eqnarray*}
\frac{d}{ds}\Big|_{s=0}\Evol([\zeta]+s[\xi]) &=&
T(\alpha_\ell)(\theta'(0))\\
&=&
L(\alpha_\ell)(I_{\cg_\ell}([\Ad(\Evol_{G_\ell}([\bar{\zeta}])\bar{\xi})])).\alpha_\ell(\Evol_{G_\ell}([\bar{\zeta}]))\\
&=&
I_{\cg}([\Ad(\Evol([\zeta])\xi]).\Evol([\zeta]).
\end{eqnarray*}
Now apply Proposition~\ref{moresitu}.
\end{proof}
To get ahead, we need a better understanding of Lebesgue spaces with
values in a locally convex direct limit. Our next result was stimulated by
the following fact~\cite{Muj}:\\[3mm]
{\bf Mujica's Theorem.}
\emph{If $X$ is a compact topological space and a locally convex space
$E$ is the direct limit of an ascending sequence $E_1\sub E_2\subseteq \cdots$
of locally convex spaces $($with continuous inclusion maps$)$, then the natural map}
\[
\Phi\colon \dl\, C(X,E_n)\to C(X,E)
\]
\emph{induced by the inclusion maps is a topological embedding.}\footnote{If the locally convex direct
limit topology is used on the left-hand side.}\\[3mm]
Hence $\Phi$ is an isomorphism of topological vector spaces
if $E=\bigcup_{n\in \N}E_n$ is compact regular.
This important special case was first obtained by Schmets~\cite{Jea}
(and, earlier, by Mujica in the special case of (LB)-spaces).
Our analogue of Mujica's Theorem for Lebesgue spaces reads as follows.
\begin{prop}\label{Mujileb}
Let $E_1\sub E_2\sub\cdots$ be an ascending sequence of locally convex spaces
such that the inclusion maps $E_n\to E_{n+1}$ are continuous linear.
Endow $E=\bigcup_{n\in\N}E_n$ with the locally convex direct limit topology
and assume the latter is Hausdorff. Let $(X,\Sigma,\mu)$ be a measure space. Then we have:
\begin{itemize}
\item[\rm(a)]
The injective continuous linear map
\[
\Phi\colon \bigcup_{n\in \N}L^\infty_{rc}(X,\mu,E_n)\to L^\infty_{rc}(X,\mu,E)
\]
induced by the inclusion maps is a topological embedding with respect to the locally convex direct limit topology on the
union on the left.
If the direct limit $E=\dl\,E_n$\vspace{-.5mm} is compact regular,\footnote{It suffices that every compact \emph{metrizable} subset of $E$ is a compact subset of some $E_n$.}
then $\Phi$ is an isomorphism
of topological vector spaces.
\item[\rm(b)]
If $E$ and each $E_n$ has the \emph{(FEP)}, then for each $p\in [1,\infty[$ the injective
continuous linear map
\[
\Phi\colon \bigcup_{n\in \N}L^p(X,\mu,E_n)\to L^p(X,\mu,E)
\]
induced by the inclusion maps is a topological embedding with respect to the locally convex direct limit topology
on the union on the left.
If $E=\dl\,E_n$\vspace{-.5mm} is a strict \emph{(LF)}-space, then $\Phi$ is an isomorphism
of topological vector spaces.
\end{itemize}
\end{prop}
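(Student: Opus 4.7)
The plan is to handle (a) and (b) in parallel via three steps: $\Phi$ is continuous and injective; $\Phi$ is a topological embedding; and $\Phi$ is surjective in the favourable cases (compact regular for (a), strict \textup{(LF)} for (b)).

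Continuity and injectivity are immediate. By \ref{deflinpfwd} (and \ref{Lpfepvsp} together with the \textup{(FEP)}-hypothesis in part (b)), each continuous linear inclusion $E_n \hookrightarrow E$ induces a continuous linear injection $L^\infty_{rc}(X,\mu,E_n) \to L^\infty_{rc}(X,\mu,E)$, respectively $L^p(X,\mu,E_n) \to L^p(X,\mu,E)$. Injectivity holds because equality $\mu$-almost everywhere is independent of the codomain. The universal property of the locally convex direct limit then produces $\Phi$ with the stated properties.

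For the topological embedding, fix a basic zero-neighbourhood of the direct limit; by Lemma~\ref{pbasisindl} it may be written
$$V = \left\{\sum_m [\gamma_m] : [\gamma_m] \in L^\infty_{rc}(X,\mu,E_m),\ \bigl\|(\|[\gamma_m]\|_{L^\infty,Q_m})_m\bigr\|_{\ell^\infty} < 1\right\}$$
(with the $\ell^p$-norm in (b)) for suitable continuous seminorms $Q_m \in P(E_m)$. A natural candidate zero-neighbourhood in $E$ is $W := \conv\bigl(\bigcup_m \{v\in E_m : Q_m(v) < 1\}\bigr)$, whose Minkowski functional $Q$ is a continuous seminorm on $E$ by Lemma~\ref{pbasisindl}. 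The core task is then to show that $\|[\gamma]\|_{L^\infty,Q}$ (or some constant multiple thereof) being small, for $[\gamma]$ in the image of $\Phi$, forces a \emph{measurable} decomposition $\gamma = \sum_m \gamma_m$ with $\gamma_m : X \to E_m$ satisfying $\sup_m Q_m(\gamma_m(x)) < 1$ almost everywhere. This measurable decomposition is the main obstacle: pointwise convex combinations in $W$ exist by construction, but must be assembled measurably in $X$. I plan to approximate $\gamma$ by finite-image measurable functions (as in the statement following Lemma~\ref{labaserc}, and via Lemma~\ref{simpappfep} in (b)), apply a Carathéodory-style decomposition in $W$ to each of the finitely many values, and pass to the uniform (respectively $L^p$-) limit. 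In the favourable cases the decomposition simplifies: under compact regularity in (a), the essentially compact metrizable image of $\gamma$ already lies in some fixed $E_N$ with induced topology matching that of $E_N$; under the strict \textup{(LF)}-hypothesis in (b), $\gamma$ itself factors through some $E_N$ by Lemma~\ref{lafep}(c), and continuous seminorms extend from $E_N$ to $E$, so that $Q$ controls a constant multiple of $Q_N$ on $E_N$.

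Surjectivity in the favourable cases is then comparatively routine. In (a), the essentially compact metrizable image of any $[\gamma] \in L^\infty_{rc}(X,\mu,E)$ lies in some $E_n$ by compact regularity, with the topology inherited from $E$ agreeing with that of $E_n$ on this compact set; thus $\gamma$ is measurable into $E_n$ with essentially compact image, placing $[\gamma]$ in the image of $\Phi$. In (b), surjectivity is exactly Lemma~\ref{lafep}(c). Combined with the embedding step, $\Phi$ is a topological-vector-space isomorphism in each favourable case, completing the proof.
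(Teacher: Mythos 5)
Your overall architecture is the paper's: continuity and injectivity from \ref{deflinpfwd}, the embedding step via the basic $0$-neighbourhoods of Lemma~\ref{pbasisindl} together with approximation by finitely-valued measurable functions, and surjectivity only under the extra hypotheses. The genuine gap is in how you propose to close the embedding step. You declare the ``core task'' to be an exact measurable decomposition $\gamma=\sum_m\gamma_m$ almost everywhere with $\sup_m Q_m(\gamma_m(x))<1$ (resp.\ $\ell^p$-control), to be obtained by decomposing finite-image approximants and ``passing to the limit''. That limit step does not deliver what you ask for: the decompositions of the approximants need not converge or assemble into a decomposition of $\gamma$ itself, and in the general case (no compact regularity, no strict (LF) hypothesis) such a measurable decomposition is precisely the measurable-selection problem you identified as the obstacle --- your plan leaves it unsolved. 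The correct move is to drop this goal: since every topological vector space is regular, the closures $\overline{W}$ of the basic $0$-neighbourhoods $W$ from Lemma~\ref{pbasisindl} still form a basis of $0$-neighbourhoods of the direct limit, so for openness onto the image it suffices to exhibit a $0$-neighbourhood $U$ of $L^\infty_{rc}(X,\mu,E)$ (resp.\ $L^p(X,\mu,E)$) with $\im(\Phi)\cap U\sub\Phi(\overline{W})$. Your approximation idea then works verbatim: a finitely-valued approximant $\sum_i c_i\one_{A_i}$ with values in the chosen $E$-neighbourhood is decomposed by splitting each of the finitely many values $c_i=\sum_j v_{i,j}$ and using disjointness of the $A_i$ to read off the seminorm estimates, so it lies in $W$; uniform (resp.\ $\cL^p$-) convergence then gives $[\gamma]\in\overline{W}$. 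In case (b) there is an additional piece of bookkeeping you gloss over: the seminorm $q$ on $E$ is an infimum over decompositions, so the splittings of the $c_i$ must be chosen with enough slack, relative to the weights $\mu(A_i)$, that the combined expression $\big(\sum_i\sum_j q_j(v_{i,j})^p\mu(A_i)\big)^{1/p}$ stays below $1$.

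On surjectivity, your argument for (a) matches the paper. For (b), however, you cite Lemma~\ref{lafep}(c). Formally that lemma precedes the proposition, but its $L^p$-statement for strict (LF)-spaces is exactly the content which Proposition~\ref{Mujileb} is announced to strengthen, and the paper's appendix proof of Lemma~\ref{lafep}(c) only establishes the (FEP) part; relying on it here is therefore circular in substance. The paper proves surjectivity directly: if $\mu(\gamma^{-1}(E\setminus E_N))>0$ for every $N$, one finds $n_1<n_2<\cdots$ with $\mu(\gamma^{-1}(E_{n_j+1}\setminus E_{n_j}))>0$, chooses quotient seminorms $q_j$ vanishing on $E_{n_j}$ with $\|\gamma\|_{\cL^p,q_j}\geq j$, and observes that $q:=\sum_j q_j$ is a continuous seminorm on the strict (LF)-space (on each step it is a finite sum), contradicting $\|\gamma\|_{\cL^p,q}<\infty$. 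You should supply this (or an equivalent) argument rather than quoting the lemma.
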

\begin{proof}
(a) By construction, $\Phi$ is injective, continuous and linear.
As a consequence, the locally convex direct limit topology on the left is Hausdorff.
To see that $\Phi$ is open onto its image,
it suffices to show that $\Phi(\overline{W})$ is a zero-neighbourhood
in $\im(\Phi)$ for $W$ in a basis of $0$-neighbourhoods in the locally convex direct limit
$\bigcup_{n\in\N}L^\infty_{rc}(X,\mu,E_n)$
(as every topological vector space is a regular
topological space, we still have a basis if we pass to the closures $\overline{W}$).
By Lemma~\ref{pbasisindl}, we may assume that
\[
W=\left\{\sum_{n=1}^\infty [\gamma_n]\colon ([\gamma_n])_{n\in\N}\in\bigoplus_{n\in\N}L^\infty_{rc}(X,\mu,E_n)\colon
\sup\{\|\gamma_n\|_{\cL^\infty,q_n}\colon n\in\N\}<1\right\}
\]
for some sequence $(q_n)_{n\in\N}$ of continuous seminorms $q_n\colon E_n\to[0,\infty[$.
By the same lemma,
\[
V:=\left\{\sum_{n=1}^\infty v_n\colon (v_n)_{n\in\N}\in\bigoplus_{n\in\N}E_n\colon
\sup\{q_n(v_n)\colon n\in\N\}<1\right\}
\]
is a $0$-neighbourhood in~$E$. Hence $L^\infty_{rc}(X,\mu,V^0)$
is an open $0$-neighbourhood in $L^\infty_{rc}(X,\mu,E)$
and it suffices to prove that
\[
\im(\Phi)\cap L^\infty_{rc}(X,\mu,V^0)\sub \Phi(\overline{W}).
\]
To this end, let $[\gamma] \in\im(\Phi)\cap L^\infty_{rc}(X,\mu,V^0)$.
We may assume that $\gamma\in \cL^\infty_{rc}(X,\mu,E_N)$ for some $n$
and (possibly after changing it to $0$ on a set of measure zero)
that $\wb{\gamma(X)}\sub V^0$. Thus $\gamma(X)\sub V$.
There is a sequence $(\gamma_n)_{n\in\N}$ of finitely-valued measurable functions
$\gamma_n\colon X\to E_N$ with $\gamma_n(X)\sub \gamma(X)\sub V$
such that $\gamma_n(x)\to\gamma(x)$ in $E_N$, uniformly in $x\in X$.
Hence $[\gamma_n]\to[\gamma]$ in $L^\infty_{rc}(X,\mu, E_N)$.
If we can show that
\[
[\gamma_n]\in W\;\;\mbox{for each $\,n\in\N$,}
\]
then $[\gamma]\in \wb{W}$ and hence $[\gamma]=\Phi([\gamma])\in\Phi(\wb{W})$.
Fix $n\in\N$. We can write
\[
\gamma_n=\sum_{i=1}^m c_i\one_{A_i}
\]
with some $m\in\N$, $c_i\in E_N$ for $i\in\{1,\ldots,m\}$ and disjoint, non-empty measurable sets $A_i\sub X$.
Since $c_i\in V$, we can find $k\in \N$ such that
\[
c_i=\sum_{j=1}^k v_{i,j}
\]
with $v_{i,j}\in E_j$ for $j\in\{1,\ldots, k\}$ and
\[
\max\{ q_j(v_{i,j})\colon j=1,\ldots, k\}<1.
\]
Then $\gamma_n=\sum_{i=1}^m\sum_{j=1}^k v_{i,j}\one_{A_i}
=\sum_{j=1}^k\eta_j$ with $\eta_j:=\sum_{i=1}^m v_{i,j}\one_{A_i}$.
We have $\eta_j\in\cL^\infty_{rc}(X,\mu,E_j)$ and
\[
\|\eta_j\|_{\cL^\infty,q_j}=\max\{q_j(v_{i,j})\colon i=1,\ldots,m\}<1
\]
(using that the sets $A_i$ are disjoint). As a consequence, $[\gamma_n]=\sum_{j=1}^k[\eta_j]\in W$.
This completes the proof that $\Phi$ is open onto its image.
If $E$ is compact regular, then $\Phi$ is surjective and hence (being also a linear topological embedding) an isomorphism of topological
vector spaces.

(b) Again, $\Phi$ is continuous, linear and injective.
To see that $\Phi$ is open onto its image,
it suffices to show that $\Phi(\overline{W})$ is a zero-neighbourhood
in $\im(\Phi)$ for $W$ in a basis of $0$-neighbourhoods in the locally convex direct limit
$\bigcup_{n\in\N}L^\infty_{rc}(X,\mu,E_n)$.
By Lemma~\ref{pbasisindl}, we may assume that
\[
W=\left\{\sum_{n=1}^\infty [\gamma_n]\colon ([\gamma_n])_{n\in\N}\in\bigoplus_{n\in\N}L^p(X,\mu,E_n)\colon
\Big(\sum_{n=1}^\infty (\|\gamma_n\|_{\cL^p,q_n})^p\Big)^{1/p}<1\right\}
\]
for some sequence $(q_n)_{n\in\N}$ of continuous seminorms $q_n\colon E_n\to[0,\infty[$.
Using (as in the proof of Lemma~\ref{pbasisindl})
that $E$ is a quotient of $\bigoplus_{n\in\N}E_n$, we deduce from
Lemma~\ref{psemiondsum}
that
\[
q(x):=\inf\left\{\|(q_n(x))_{n\in\N}\|_{\ell^p}\colon (x_n)_{n\in\N}\in\bigoplus_{n\in\N}E_n\;\mbox{with}\;
x=\sum_{n=1}^\infty x_n\right\}
\]
defines a continuous seminorm $q\colon E\to[0,\infty[$.
It therefore suffices
to prove that
\[
\im(\Phi)\cap \{[\gamma]\in L^p(X,\mu,E)\colon \|\gamma\|_{\cL^p,q}<1\}
\sub \Phi(\overline{W}).
\]
To this end, let $[\gamma] \in\im(\Phi)$ such that $\|\gamma\|_{\cL^p,q}<1$.
We may assume that $\gamma\in \cL^p(X,\mu,E_N)$ for some $n$
(possibly after changing $\gamma$ to $0$ on a set of measure zero).
There is a net $(\gamma_\alpha)_{\alpha\in A}$ of finitely-valued measurable functions
$\gamma_\alpha\colon X\to E_N$ with $\gamma_\alpha(X)\sub \gamma(X)\cup\{0\}$
and $\mu(\gamma_\alpha^{-1}(E_N\setminus\{0\}))<\infty$
such that
$\gamma_\alpha\to\gamma$ in $\cL^p(X,\mu, E_N)$
(see Lemma~\ref{simpappfep}).
If we can show that
\[
[\gamma_\alpha]\in W\;\;\mbox{for each $\,\alpha$,}
\]
then $[\gamma]\in \wb{W}$ and hence $[\gamma]=\Phi([\gamma])\in\Phi(\wb{W})$.
Fix $\alpha$. We can write
\[
\gamma_\alpha=\sum_{i=1}^m c_i\one_{A_i}
\]
with some $m\in\N$, $c_i\in E_N$ for $i\in\{1,\ldots,m\}$ and disjoint, non-empty measurable sets $A_i\sub X$.
Then
\[
\|\gamma_\alpha\|_{\cL^p,q}=\left(\sum_{i=1}^m (q(c_i))^p\mu(A_i)\right)^{1/p}<1
\]
and we can find $k\in \N$ such that
\[
c_i=\sum_{j=1}^k v_{i,j}
\]
with $v_{i,j}\in E_j$ for $j\in\{1,\ldots, k\}$ and
\begin{equation}\label{slightly}
\left(\sum_{i=1}^m \sum_{j=1}^k (q_j(v_{i,j}))^p\mu(A_i)\right)^{1/p}<1.
\end{equation}
Then $\gamma_\alpha=\sum_{i=1}^m\sum_{j=1}^k v_{i,j}\one_{A_i}
=\sum_{j=1}^k\eta_j$ with $\eta_j:=\sum_{i=1}^m v_{i,j}\one_{A_i}$.
We have $\eta_j\in\cL^p(X,\mu,E_j)$ and
\[
\|\eta_j\|_{\cL^p,q_j}=\left(\sum_{i=1}^m (q_j(v_{i,j}))^p\mu(A_i)\right)^{1/p}
\]
(using that the sets $A_i$ are disjoint). As a consequence, $[\gamma_\alpha]=\sum_{j=1}^k[\eta_j]$
with
\[
\left(\sum_{j=1}^k(\|\eta_j\|_{\cL^p,q_n})^p\right)^{1/p}
=
\left(\sum_{j=1}^k\sum_{i=1}^m (q_j(v_{i,j}))^p\mu(A_i)\right)^{1/p}<1,
\]
by (\ref{slightly}). Thus $\eta_j\in W$, completing the proof
that $\Phi$ is open onto its image.
If $E=\bigcup_{n\in\N}E_n$ is a strict (LF)-space,
let us show that $\Phi$ is surjective. We claim that for each $\gamma\in \cL^p(X,\mu,E)$,
there exists $N\in\N$ such that $\mu(\gamma^{-1}(E\setminus E_N))=0$.
If this is true, then $[\gamma]=[\gamma\one_B]\in L^p(X,\mu, E_N)$
with $B:=\gamma^{-1}(E_N)$ and the proof is complete.
To prove the claim, we assume it is wrong and deduce a contradiction.
Thus, suppose there is an element $\gamma\in\cL^p(X,\mu,E)$
such that $\mu(\gamma^{-1}(E\setminus E_N))>0$
for each $N\in\N$.
Since
\[
\mu(\gamma^{-1}(E\setminus E_N))=\sum_{n=N}^\infty
\mu(\gamma^{-1}(E_{n+1}\setminus E_n)),
\]
there exists $n\geq N$ such that $\mu(\gamma^{-1}(E_{n+1}\setminus E_n))>0$.
Thus, using a simple induction, we find a sequence
\[
n_1<n_2<\cdots
\]
of positive integers such that
$\mu(\gamma^{-1}(E_{n_j+1}\setminus E_{n_j}))>0$ for each $j\in\N$.
Let $\rho_j\colon E\to E/E_{n_j}$ be the canonical quotient map.
Since $\rho_j\circ \gamma\in \cL^p(X,\mu, E/E_{n_j})$ does not vanish
almost everywhere, we find a continuous seminorm $Q_j$ on $E/E_{n_j}$
such that $\rho_j\circ \gamma\|_{\cL^p,Q_j}>0$.
Then $q_j:=Q_j\circ \rho_j$ is a continuous seminorm on $E$ which vanishes on $E_{n_j}$.
After replacing $Q_j$ with a large multiple, we may assume that
\[
\|\gamma\|_{\cL^p,q_j}=\|\rho_j\circ\gamma\|_{\cL^p,Q_j}\geq j.
\]
Now $q:=\sum_{j=1}^\infty q_j$ coincides with the finite sum $\sum_{i=1}^{j-1} q_j$ on $E_{n_j}$
(as $q_i$ vanishes on $E_{n_i}$ and hence on $E_{n_j}$ for each $i\geq j$).
Hence $q|_{E_{n_j}}$ is a continuous seminorm for each $j$ and hence $q$ is a continuous seminorm on
$E=\dl\,E_{n_j}$.\vspace{-.5mm}
Since
\[
\|\gamma\|_{\cL^p,q}\geq \|\gamma\|_{\cL^p,q_j}\geq j
\]
for each $j\in\N$, we cannot have $\|\gamma\|_{\cL^p,q}<\infty$,
contradicting the hypothesis that $\gamma\in\cL^p(X,\mu,E)$.
\end{proof}
\begin{rem}
A result for $L^p$-spaces very similar to Proposition~\ref{Mujileb}
was already abtained by Mayoral et al.\
%Fernando Mayoral, Pedro Jos\'{e} Pa\'{u}l, Miguel Florencio
\cite{BDF}, using a different concept of
vector-valued $L^p$-space which makes sense for $\mu$ a Radon measure
on a $\sigma$-compact locally compact space~$X$
(defined on a $\sigma$-algebra $\Sigma$
containing the Borel $\sigma$-algebra
such that $(X,\Sigma,\mu)$ is a complete measure space).
In the cited paper, a mapping $\gamma\colon X\to E$
to a locally convex space~$E$ is called
\emph{$\mu$-measurable}\footnote{This property is also
known as \emph{Lusin-measurability}.}
if there is a sequence $(K_n)_{n\in\N}$ of compact subsets $K_n\sub X$
such that
\[
f|_{K_n}\colon K_n\to E
\]
is continuous and
\[
\mu\left(X\setminus \bigcup_{n\in \N}K_n\right)=0;
\]
$L^p(X,\mu,E)$
(denoted $L^p(\{E\})$ there) is defined as the space of equivalence
classes of $\mu$-measurable
mappings $\gamma\colon X\to E$
such that $q\circ \gamma\in \cL^p(X,\mu)$
for all continuous seminorms~$q$ on~$E$.
The special case of $\ell^p$-spaces was discussed earlier
in~\cite{FMP}.
%Carmen Fern\'{a}ndez
For related results concerning $\ell^1$, compare also~\cite{Mel}.
\end{rem}
Our next main goal is the following result:
\begin{prop}\label{unionbanreg}
Let $G$ be a Lie group whose Lie algebra $\cg:=L(G)$
is an \emph{(LB)}-space.
Assume that there exists a projective system
\[
((G_n)_{n\in\N},(\psi_{n,m})_{n\geq m})
\]
of Banach-Lie groups $G_n$ with Lie algebras $\cg_n:=L(G_n)$ and smooth homomorphisms
$\psi_{n,m}\colon G_m\to G_n$ such that $L(\psi_{n,m})$
is injective, and smooth homomorphisms $\psi_n\colon G_n\to G$
such that $\psi_n\circ\psi_{n,m}=\psi_m$ for all positive integers $n\geq m$
and
\[
\cg=\dl\,\cg_n\vspace{-.8mm}
\]
with the limit maps $L(\psi_n)$ and bonding maps $L(\psi_{n,m})$.
Then $L(\psi_n)$ is injective for all $n$, enabling
$x\in \cg_n$
to be identified
with $L(\psi_n)(x)\in \cg$.
Now $\cg_m\subseteq \cg_n$ if $m\leq n$ and $L(\psi_{n,m})\colon\cg_m\to\cg_n$
becomes the inclusion map. We show:
\begin{itemize}
\item[\rm(a)]
If $\cg=\bigcup_{n\in\N}\,\cg_n$\vspace{-.5mm}
is compact regular, then
$G$ is $L^\infty_{rc}$-regular.
\item[\rm(b)]
If $\cg=\bigcup_{n\in\N}\,\cg_n$\vspace{-.5mm} is a strict direct limit,
then $\cg$ is an \emph{(FEP)}-space and $G$ is $L^1$-regular.
\end{itemize}
\end{prop}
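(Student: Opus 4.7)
The plan is to apply Corollary~\ref{moredl} to the direct system $(G_n,\psi_{n,m})$ with limit maps $\psi_n\colon G_n\to G$, taking $\cE=L^\infty_{rc}$ for part~(a) and $\cE=L^1$ for part~(b). First I would observe that the injectivity of each $L(\psi_n)$ follows from the injectivity of the bonding maps $L(\psi_{n,m})$ together with the fact that $\cg=\dl\,\cg_n$ is a Hausdorff (LB)-space: one realizes the direct limit as the algebraic union $\bigcup_n\cg_n$ with $L(\psi_n)$ and $L(\psi_{n,m})$ being inclusions. Each Banach--Lie group $G_n$ is $L^1$-regular by Theorem~C, and hence, by the chain of implications in Theorem~A, also $L^p$-regular for every $p\in[1,\infty]$ and $L^\infty_{rc}$-regular. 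Functoriality of evolutions under the smooth homomorphism $\psi_n$, combined with uniqueness of left evolutions (Lemma~\ref{logarules}), then forces
\[
\Evol_G\bigl([L(\psi_n)\circ\zeta]\bigr)=\psi_n\circ\Evol_{G_n}([\zeta])
\]
whenever $[\zeta]\in\cE([0,1],\cg_n)$.

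For part~(a), compact regularity and Proposition~\ref{Mujileb}(a) yield that the canonical map
\[
\Phi\colon\bigcup_{n\in\N}L^\infty_{rc}([0,1],\cg_n)\to L^\infty_{rc}([0,1],\cg)
\]
is an isomorphism of topological vector spaces, so that every $[\gamma]\in L^\infty_{rc}([0,1],\cg)$ has the form $[L(\psi_n)\circ\zeta]$ for some $n$ and some $[\zeta]\in L^\infty_{rc}([0,1],\cg_n)$. This is precisely the surjectivity hypothesis needed by Corollary~\ref{moredl} (with $\cE=L^\infty_{rc}$), yielding $L^\infty_{rc}$-semiregularity of~$G$. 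For part~(b), the strict direct limit hypothesis makes $\cg$ a strict $(LF)$-space, hence an \emph{(FEP)}-space by Lemma~\ref{lafep}(c); thus $L^1([0,1],\cg)$ is defined, Proposition~\ref{Mujileb}(b) provides the analogous topological isomorphism for~$L^1$, and Corollary~\ref{moredl} again delivers $L^1$-semiregularity of~$G$.

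It remains in both parts to promote semiregularity to regularity via the ``moreover'' clause of Corollary~\ref{moredl}, which reduces to showing that $\Evol_G\colon \cE([0,1],\cg)\to C([0,1],G)$ is continuous at~$0$. The intended route is to transfer continuity through the topological isomorphism $\Phi$: each restriction $\Evol_G|_{\cE([0,1],\cg_n)}=\psi_n\circ\Evol_{G_n}$ is continuous at $0$, and a $0$-neighbourhood basis in the locally convex direct limit (Lemma~\ref{pbasisindl}), translated back along $\Phi$, should produce, for any preassigned chart neighbourhood of~$e$ in~$C([0,1],G)$, a $0$-neighbourhood in $\cE([0,1],\cg)$ that $\Evol_G$ sends inside it. The main obstacle I anticipate is precisely this matching: making the continuity estimates coming from the individual $\Evol_{G_n}$ uniform enough across the layers~$n$ so that what is assembled strata-wise defines a genuine $0$-neighbourhood for the direct-limit topology, rather than merely a set open on each stratum. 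Once continuity at~$0$ is secured, Corollary~\ref{moredl} concludes, giving $L^\infty_{rc}$-regularity in~(a) and $L^1$-regularity in~(b).
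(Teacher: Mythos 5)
Your framework matches the paper's: injectivity of the $L(\psi_n)$, $L^1$- (hence $L^\infty_{rc}$-)regularity of each $G_n$ via Theorem~C, surjectivity of $\Phi$ from Proposition~\ref{Mujileb} to get semiregularity through Corollary~\ref{moredl}, and the (FEP)-property of $\cg$ in case~(b) from Lemma~\ref{lafep}(c). But the step you flag as ``the main obstacle'' is not a technicality to be anticipated -- it \emph{is} the proof, and your proposal contains no argument for it. Stratum-wise continuity of $\Evol_G|_{\cE([0,1],\cg_n)}=\psi_n\circ\Evol_{G_n}$ at $0$ does not yield continuity of $\Evol_G$ at $0$ on the locally convex direct limit $\cE([0,1],\cg)=\dl\,\cE([0,1],\cg_n)$, because $\Evol_G$ is not linear (so no universal property applies) and the locally convex direct limit topology is not the final topology in the category of topological spaces; a set open on every stratum need not be a $0$-neighbourhood. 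Translating the neighbourhood basis of Lemma~\ref{pbasisindl} ``back along $\Phi$'' gives sets of the form $Q_1+Q_2+\cdots$, but $\Evol_G$ does not convert sums into anything controllable, so the individual continuity estimates for the $\Evol_{G_n}$ cannot simply be assembled.

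What the paper does to close exactly this gap is to exploit the group structure $(\cE([0,1],\cg),\odot)$ for which $\Evol$ \emph{is} a homomorphism: given nested identity neighbourhoods $U_n\sub G$ with $U_n U_{n-1}\cdots U_1\sub U_0$, one chooses $P_n\sub L^\infty_{rc}([0,1],\cg_n)$ with $\Evol_{G_n}(P_n)\sub C([0,1],V_n)$ for exponential (PE)-sets $V_n\sub U_n$, so that $\Evol(P_n\odot\cdots\odot P_1)=\Evol_{G_n}(P_n)\cdots\Evol_{G_1}(P_1)\sub C([0,1],U_0)$; the real work is then to show that $P:=\bigcup_n P_n\odot\cdots\odot P_1$ is a $0$-neighbourhood. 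This is done by producing convex $0$-neighbourhoods $Q_n$ with $Q_1+\cdots+Q_n\sub P_n\odot\cdots\odot P_1$, using that $[\gamma]\odot\eta=[\Ad(\Evol(\eta))^{-1}.\gamma]+\eta$ and that the operators $\Ad_{G_n}(\Evol_{G_n}(\eta)(t))^{-1}$, for $\eta\in P_{n-1}\odot\cdots\odot P_1$, form a product-exponential and hence uniformly expanding subset of $\GL(\cg_n)$ (Lemmas~\ref{expprodset}, \ref{pestable}, \ref{pwtofct}), so that $\Ad(\Evol(\eta))^{-1}.P_n$ uniformly contains a ball $Q_n$ independent of $\eta$. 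This uniform-expansion mechanism is the missing ingredient in your proposal; without it (or a substitute), the passage from semiregularity to regularity is not established.
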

\begin{rem}
More generally, $G$ as in Proposition~\ref{unionbanreg} is $L^1$-regular
whenever $\cg$ is an (FEP)-space and the natural map
\[
\dl\, L^1([0,1],\cg_n)\to L^1([0,1],\cg)
\]
induced by the maps $L^1([0,1],L(\psi_n))$
is surjective (only these properties are used in the proof of~(b)).
\end{rem}
The following special case is easier to remember.
Here $\cg:=L(G)$ and $\cg_n:=L(G_n)$, as usual.
\begin{cor}\label{unionbanreg2}
Let $G$ be a Lie group modelled on an \emph{(LB)}-space
and $(G_n)_{n\in\N}$ be a sequence of Banach-Lie groups
such that
\[
G_1\sub G_2\sub\cdots\sub G
\]
$($with each inclusion a smooth group homomorphism$)$ and
$\cg=\dl\,\cg_n$.\vspace{-.5mm}
\begin{itemize}
\item[\rm(a)]
If $\,\dl\,\cg_n$\vspace{-.5mm} is compactly regular,
then $G$ is $L^\infty_{rc}$-regular.
\item[\rm(b)]
If $\,\dl\,\cg_n$\vspace{-.5mm} is a strict \emph{(LB)}-space,
then $G$ is $L^1$-regular.\,\Punkt
\end{itemize}
\end{cor}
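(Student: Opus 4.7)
The plan is to apply Proposition~\ref{unionbanreg} directly, with $\psi_n\colon G_n\to G$ and $\psi_{n,m}\colon G_m\to G_n$ (for $n\geq m$) chosen to be the inclusion maps. The hypothesis that each inclusion $G_n\hookrightarrow G_{n+1}$ is a smooth group homomorphism implies, by composition, that $\psi_{n,m}$ is a smooth group homomorphism for every $n\geq m$, and likewise that each $\psi_n\colon G_n\to G$ is a smooth group homomorphism; the compatibility $\psi_n\circ\psi_{n,m}=\psi_m$ is automatic for inclusions. Applying the Lie functor, $L(\psi_{n,m})\colon\cg_m\to\cg_n$ is the inclusion of Banach spaces, hence continuous linear and injective, and $L(\psi_n)\colon\cg_n\to\cg$ is the corresponding inclusion into $\cg$.

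The remaining hypothesis of Proposition~\ref{unionbanreg}, namely that $\cg$ be an (LB)-space with $\cg=\dl\,\cg_n$ via the limit maps $L(\psi_n)$ and bonding maps $L(\psi_{n,m})$, is exactly what the corollary assumes. Conclusion (a) of the corollary is then the conclusion of Proposition~\ref{unionbanreg}(a), and conclusion (b) follows from Proposition~\ref{unionbanreg}(b). There is no real obstacle here; the corollary is just the special case of the proposition in which all structural homomorphisms are inclusion maps, so the whole task amounts to checking that the data fit the proposition's hypotheses.
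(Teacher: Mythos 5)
Your proposal is correct and is exactly the paper's intended argument: the corollary is stated as the special case of Proposition~\ref{unionbanreg} in which all the maps $\psi_n$ and $\psi_{n,m}$ are the (smooth, injective-on-Lie-algebras) inclusion maps, and the paper gives no further proof beyond this observation. The only tiny caveat is that smoothness of the inclusions $G_n\to G$ is part of the corollary's hypothesis (it is not obtained by composing the inclusions $G_n\to G_{n+1}$), but since it is assumed, nothing is missing.
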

Some auxiliary concepts and simple lemmas will help us to
prove the preceding proposition. As usual,
if $E$ is a Banach space, write $(\cL(E),\|.\|_{op})$ for the
Banach algebra of bounded linear operators $\alpha \colon E\to E$
and $\GL(E):=\cL(E)^\times$ for the group of invertible
operators (which is an open subset of $\cL(E)$ and a Banach-Lie group).
\begin{defn}
Let $(E,\|.\|)$ be a Banach space. We say that a non-empty set $M\sub \GL(E)$
of invertible operators is \emph{uniformly expanding} if
\[
\sup\{\|\alpha^{-1}\|_{op}\colon \alpha\in M\}<\infty.
\]
\end{defn}
\begin{la}\label{minexpauni}
If $(E,\|.\|)$ is a Banach space and $M\sub\GL(E)$ a uniformly expanding set
of invertible operators, define
\[
s:=\sup\{\|\alpha^{-1}\|_{op}\colon \alpha\in M\}\,\in\;\,]0,\infty[\,.
\]
Then
\begin{equation}\label{inclubd}
\alpha(B^E_r(0))\; \supseteq \; B^E_{r/s}(0)
\end{equation}
for all $\alpha\in M$ and $r>0$. \end{la}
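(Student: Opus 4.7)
The plan is to prove the set inclusion by showing that every element of the smaller ball $B^E_{r/s}(0)$ lies in the image $\alpha(B^E_r(0))$. The key observation is that $\alpha \in M \subseteq \GL(E)$ is invertible, so the natural candidate for a preimage of $y \in B^E_{r/s}(0)$ under $\alpha$ is $x := \alpha^{-1}(y)$, and the only thing to verify is that this $x$ actually lies in $B^E_r(0)$.

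First I would fix $\alpha \in M$, $r > 0$, and $y \in B^E_{r/s}(0)$, so that $\|y\| < r/s$. Setting $x := \alpha^{-1}(y)$, the operator norm bound gives
\[
\|x\| \;=\; \|\alpha^{-1}(y)\| \;\leq\; \|\alpha^{-1}\|_{op}\,\|y\| \;\leq\; s\,\|y\| \;<\; s\cdot\frac{r}{s} \;=\; r,
\]
using the definition of $s$ in the second inequality. Hence $x \in B^E_r(0)$, and by construction $\alpha(x) = y$, so $y \in \alpha(B^E_r(0))$. Since $y$ was arbitrary, this proves the inclusion \eqref{inclubd}.

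There is no real obstacle here; the statement is essentially an unpacking of the definition of the operator norm of $\alpha^{-1}$ together with the definition of~$s$. Note that the hypothesis $M \neq \emptyset$ ensures $s$ is a genuine supremum of a non-empty set of positive reals, hence $s \in \,]0,\infty[$ as stated (positivity follows since $\|\alpha^{-1}\|_{op} \geq \|\id_E\|_{op}/\|\alpha\|_{op} > 0$ for any $\alpha \in M$, or simply from the fact that $\alpha^{-1} \neq 0$). No further ingredients are needed.
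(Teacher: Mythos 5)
Your proof is correct and is essentially the paper's argument: the paper phrases it as $\alpha^{-1}(B^E_{r/s}(0))\subseteq B^E_{(r\|\alpha^{-1}\|_{op})/s}(0)\subseteq B^E_r(0)$ and then applies $\alpha$, while you verify the same operator-norm estimate pointwise for $x:=\alpha^{-1}(y)$. No substantive difference.
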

\begin{proof}
For $\alpha\in M$ and $r>0$, we have
\[
\alpha^{-1}(B^E_{r/s}(0))\, \sub \, B^E_{(r\|\alpha^{-1}\|_{op})/s}(0)\, \sub \, B^E_r(0).
\]
Thus $B^E_r(0)\supseteq \alpha^{-1}(B^E_{r/s}(0))$. Applying $\alpha$ to both sides of this inclusion, (\ref{inclubd}) follows. \end{proof}
\begin{defn}
Let $G$ be a Banach-Lie group.
We say that a subset $M\sub G$ is
\emph{product-exponential} (or, in short, a (PE)-\emph{subset}),
if there exists $n\in \N$ and non-empty bounded subsets $B_1,\ldots, B_n\sub L(G)$
such that
\[
M\sub \exp_G(B_1)\exp_G(B_2)\cdots\exp_G(B_n).
\]
\end{defn}
\begin{la}\label{expprodset}
Let $(E,\|.\|)$ be a Banach space and $M\sub\GL(E)$
be a \emph{(PE)}-subset. Then $M$ is uniformly expanding.
\end{la}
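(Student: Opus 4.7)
The plan is to unwind the definition of (PE)-subset for $G=\GL(E)$, where $\exp_G$ is the ordinary operator exponential $\exp\colon\cL(E)\to\GL(E)$, and then use the elementary estimate $\|\exp(A)\|_{op}\leq e^{\|A\|_{op}}$ available in any Banach algebra together with the identity $\exp(A)^{-1}=\exp(-A)$.

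First I would fix bounded subsets $B_1,\ldots,B_n\sub\cL(E)$ such that $M\sub\exp(B_1)\exp(B_2)\cdots\exp(B_n)$, and set
\[
r_i:=\sup_{A\in B_i}\|A\|_{op}<\infty\quad\text{for }i\in\{1,\ldots,n\}.
\]
Given any $\alpha\in M$, there exist $A_i\in B_i$ with $\alpha=\exp(A_1)\cdots\exp(A_n)$, whence
\[
\alpha^{-1}=\exp(-A_n)\exp(-A_{n-1})\cdots\exp(-A_1).
\]
Applying submultiplicativity of $\|.\|_{op}$ and the series estimate $\|\exp(B)\|_{op}\leq e^{\|B\|_{op}}$ yields
\[
\|\alpha^{-1}\|_{op}\;\leq\;\prod_{i=1}^n\|\exp(-A_i)\|_{op}\;\leq\;\prod_{i=1}^n e^{\|A_i\|_{op}}\;\leq\;\prod_{i=1}^n e^{r_i}.
\]
Since the right-hand side is a finite constant independent of the choice of $\alpha\in M$, we obtain $\sup_{\alpha\in M}\|\alpha^{-1}\|_{op}\leq\exp\bigl(r_1+\cdots+r_n\bigr)<\infty$, which is exactly the uniform expansion property.

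There is no serious obstacle here; the argument is a one-line computation once the relevant facts are assembled. The only thing worth emphasising is that we are using $\exp_G=\exp$ for $G=\GL(E)$, so that the group-theoretic exponential coincides with the operator-theoretic one and the standard Banach-algebra bound applies.
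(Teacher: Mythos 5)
Your proof is correct and follows essentially the same route as the paper's: write $\alpha=\exp(A_1)\cdots\exp(A_n)$, invert to get $\alpha^{-1}=\exp(-A_n)\cdots\exp(-A_1)$, and bound each factor's operator norm uniformly using only the boundedness of the sets $B_i$. The sole difference is in how that uniform factor bound is obtained: the paper uses continuity of $\exp$ at $0$ together with $\|\exp(m\alpha)\|_{op}=\|\exp(\alpha)^m\|_{op}\leq\|\exp(\alpha)\|_{op}^m$, whereas you invoke the sharper and more direct series estimate $\|\exp(A)\|_{op}\leq e^{\|A\|_{op}}$ with $r_i:=\sup_{A\in B_i}\|A\|_{op}<\infty$; both are valid, so your argument stands as written.
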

\begin{proof}
Let $B_1,\ldots, B_n\sub \cL(E)$ be non-empty bounded subsets
such that $M\sub\exp(B_1)\cdots\exp(B_n)$,
where $\exp(\alpha):=\sum_{k=0}^\infty\frac{1}{k!}\alpha^k$ for $\alpha\in\cL(E)$.
Since $\exp$ is continuous and $\exp(0)=\id_E$,
there is a $0$-neighbourhood $V\sub\cL(E)$
such that
\[
(\forall\alpha\in V)\quad \|\exp(\alpha)\|_{op}\leq 2.
\]
Since each of the sets $-B_1,\ldots,-B_n$ is bounded,
we find $m\in\N$ such that
\[
(\forall j\in\{1,\ldots, n\})\quad -B_j\sub mV
\]
and thus
\[
\|\exp(-B_j)\|_{op}\sub \|\exp(mV)\|_{op}\sub[0,2^m]
\]
for all $j\in \{1,\ldots, n\}$, exploiting that
\[
\|\exp(m\alpha)\|_{op}=\|\exp(\alpha)^m\|_{op}\leq (\|\exp(\alpha)\|_{op})^m\leq 2^m
\]
for each $\alpha\in V$.
If $\alpha_j\in B_j$ for $j\in\{1,\ldots,n\}$, then
\begin{eqnarray*}
\lefteqn{\|(\exp(\alpha_1)\exp(\alpha_2)\cdots\exp(\alpha_n))^{-1}\|_{op}}\qquad\quad\\
&=& \|\exp(-\alpha_n)\cdots\exp(-\alpha_2)\exp(-\alpha_1)\|_{op}\\
&\leq & \|\exp(-\alpha_n)\|_{op}\cdots\|\exp(-\alpha_2)\|_{op}\|\exp(-\alpha_1)\|_{op}\leq 2^m
\end{eqnarray*}
by the preceding, whence
\[
\sup\{\|\alpha^{-1}\|_{op}\colon\alpha\in M\} \leq 2^m<\infty.
\]
Thus $M$ is uniformly expanding.
\end{proof}
\begin{la}\label{pestable}
If $\psi\colon G\to H$ is a smooth homomorphism
between Banach-Lie groups
and $M$ a \emph{(PE)}-subset of~$G$, then
$\psi(M)$ is a \emph{(PE)}-subset of~$H$.
In particular, $\Ad_H(\psi(M))$ is a \emph{(PE)}-subset
of $\GL(\ch)$ with $\ch:=L(H)$.
\end{la}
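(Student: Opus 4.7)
The plan is to reduce everything to the naturality of the exponential map for a smooth homomorphism, namely the identity
\[
\psi \circ \exp_G \;=\; \exp_H \circ L(\psi),
\]
which holds because $t\mapsto \psi(\exp_G(tv))$ is a smooth one-parameter subgroup of~$H$ with initial velocity $L(\psi)(v)$, for each $v\in L(G)$.

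First I would take bounded subsets $B_1,\dots,B_n\subseteq L(G)$ with $M\subseteq \exp_G(B_1)\cdots\exp_G(B_n)$, which exist since $M$ is a (PE)-subset. Applying the group homomorphism $\psi$ and using the naturality identity repeatedly yields
\[
\psi(M)\;\subseteq\;\exp_H\!\bigl(L(\psi)(B_1)\bigr)\cdots \exp_H\!\bigl(L(\psi)(B_n)\bigr).
\]
Each set $L(\psi)(B_j)\subseteq L(H)$ is bounded, because $L(\psi)\colon L(G)\to L(H)$ is continuous linear between Banach spaces and continuous linear maps send bounded sets to bounded sets. Hence $\psi(M)$ is a (PE)-subset of~$H$.

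For the ``in particular'' statement, I would note that $\GL(\ch)$ is the unit group of the Banach algebra $\cL(\ch)$ and hence carries a canonical Banach-Lie group structure, and that the adjoint representation $\Ad_H\colon H\to\GL(\ch)$ is a smooth group homomorphism between Banach-Lie groups. Therefore $\Ad_H\circ\psi\colon G\to\GL(\ch)$ is a smooth homomorphism between Banach-Lie groups, and the first part of the lemma, applied to this composite, gives that $\Ad_H(\psi(M))=(\Ad_H\circ\psi)(M)$ is a (PE)-subset of~$\GL(\ch)$.

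No real obstacle is expected: the only point to verify with a bit of care is the naturality of $\exp$, which is standard, and that continuous linear maps preserve boundedness, which is immediate in the Banach setting.
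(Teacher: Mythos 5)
Your proposal is correct and follows essentially the same route as the paper: write $M\subseteq\exp_G(B_1)\cdots\exp_G(B_n)$, use $\psi\circ\exp_G=\exp_H\circ L(\psi)$ to get $\psi(M)\subseteq\exp_H(L(\psi)(B_1))\cdots\exp_H(L(\psi)(B_n))$ with bounded images $L(\psi)(B_j)$, and deduce the final claim by applying the first part to the smooth homomorphism $\Ad_H\circ\psi$. No gaps.
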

\begin{proof}
If $M$ is a (PE)-subset of $G$, then $M\sub \exp_G(B_1)\cdots\exp_G(B_n)$
for suitable $n\in\N$ and bounded sets $B_1,\ldots, B_n\sub G$. Since $\psi\circ\exp_G=\exp_H\circ L(\psi)$,
we then have
$\psi(M)\sub\exp_H(L(\psi)(B_1))\cdots\exp_H(L(\psi)(B_n))$.
As $L(\psi)(B_j)\sub L(H)$ is a bounded set for each $j\in\{1,\ldots, n\}$,
we see that $\psi(M)$ is a (PE)-subset of~$H$. Since $\Ad_H\colon H\to \GL(\ch)$
and hence also $\Ad_H\circ \psi\colon G\to\GL(\ch)$ is a smooth homomorphism between Banach-Lie groups,
the final assertion is a special case of the first.
\end{proof}
In the next lemma, we endow $C([0,1],\cL(E))$ with the supremum norm $\|.\|_\infty$,
\[
\|\gamma\|_\infty:=\sup\{\|\gamma(t)\|_{op}\colon t\in[0,1]\}
\]
for $\gamma\in C([0,1],\cL(E))$.
\begin{la}\label{pwtofct}
Let $\cE$ be $L^p$ with $p\in [1,\infty]$
or $L^\infty_{rc}$.
Let $(E,\|.\|)$ be a Banach space
and
\[
m_\gamma(\eta)(t):=\gamma(t)(\eta(t))
\]
for $\gamma\in C([0,1],\cL(E))$ and $[\eta]\in\cE([0,1],E)$.
Then
\[
\psi(\gamma)([\eta]):=[m_\gamma(\eta)]\in\cE([0,1],E).
\]
Moreover, $\psi(\gamma)\in \cL(\cE([0,1],E))$
and the map
\[
\psi \colon C([0,1],\cL(E))\to \cL(\cE([0,1],E))
\]
so obtained
is a homomorphism of unital Banach algebras with $\|\psi\|_{op}\leq 1$.
If $M\sub C([0,1],\GL(E))$
is a non-empty subset such that
\[
\{\gamma(t)\colon \gamma\in M,\, t\in[0,1]\}
\]
is a uniformly expanding subset of $\GL(E)$ $($for example,
a \emph{(PE)}-subset$)$,
then $\psi(M)$ is a uniformly expanding subset of $\GL(\cE([0,1],E))$.
\end{la}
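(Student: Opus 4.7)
The plan is to verify the claims sequentially: well-definedness and boundedness of $\psi(\gamma)$; the unital algebra-homomorphism structure of $\psi$ with $\|\psi\|_{op}\leq 1$; and finally preservation of uniform expansion, which reduces to the previous two steps via pointwise inversion.

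For the first step, I fix a representative $\eta$ of $[\eta]$ (with $\wb{\eta([0,1])}$ compact in case $\cE=L^\infty_{rc}$). The map $F\colon [0,1]\times E\to E$, $(t,v)\mto\gamma(t)(v)$, is jointly continuous (evaluation $\cL(E)\times E\to E$ is continuous bilinear and $\gamma$ is continuous), hence Borel measurable; since $[0,1]$ is second countable, $\cB([0,1]\times E)=\cB([0,1])\tensor\cB(E)$ by~\ref{basicsmeas}\,(f), so $t\mto(t,\eta(t))$ is measurable, and thus $m_\gamma(\eta)=F\circ(\id,\eta)$ is measurable. The pointwise estimate
\[
\|m_\gamma(\eta)(t)\|\leq \|\gamma(t)\|_{op}\|\eta(t)\|\leq\|\gamma\|_\infty\|\eta(t)\|
\]
yields $\|[m_\gamma(\eta)]\|\leq\|\gamma\|_\infty\|[\eta]\|$ in either the $L^p$- or the $L^\infty$-seminorm, and the image $m_\gamma(\eta)([0,1])\sub F([0,1]\times\wb{\eta([0,1])})$ is relatively compact in the $L^\infty_{rc}$ case (continuous image of a compact product); in the $L^p$ case, separability of $m_\gamma(\eta)([0,1])$ follows because $[0,1]\times\eta([0,1])$ has a dense countable subset and $F$ is continuous. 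Well-definedness on equivalence classes is immediate (null-set modifications of $\eta$ propagate to null-set modifications of $m_\gamma(\eta)$), linearity of $\psi(\gamma)$ is pointwise, and the norm estimate gives $\|\psi(\gamma)\|_{op}\leq\|\gamma\|_\infty$, whence $\|\psi\|_{op}\leq 1$.

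The algebra-homomorphism properties follow from the pointwise identities
\[
(\gamma_1\gamma_2)(t)(\eta(t))=\gamma_1(t)\bigl(\gamma_2(t)(\eta(t))\bigr),\qquad\one(t)(\eta(t))=\eta(t),
\]
which give $\psi(\gamma_1\gamma_2)=\psi(\gamma_1)\psi(\gamma_2)$ and $\psi(\one)=\id$; linearity of $\psi$ in $\gamma$ is immediate. For the final claim, fix $\gamma\in M$. The curve $\gamma^{-1}\colon t\mto\gamma(t)^{-1}$ lies in $C([0,1],\GL(E))$ because inversion is continuous on $\GL(E)$, and applying the homomorphism $\psi$ to $\gamma\cdot\gamma^{-1}=\gamma^{-1}\cdot\gamma=\one$ shows $\psi(\gamma)\in\GL(\cE([0,1],E))$ with $\psi(\gamma)^{-1}=\psi(\gamma^{-1})$. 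Setting $s:=\sup\{\|\alpha^{-1}\|_{op}\colon\alpha\in\{\gamma(t)\colon\gamma\in M,\,t\in[0,1]\}\}<\infty$, the first-step bound applied to $\gamma^{-1}$ then yields
\[
\|\psi(\gamma)^{-1}\|_{op}=\|\psi(\gamma^{-1})\|_{op}\leq\|\gamma^{-1}\|_\infty=\sup_{t\in[0,1]}\|\gamma(t)^{-1}\|_{op}\leq s
\]
uniformly in $\gamma\in M$, showing $\psi(M)$ is uniformly expanding. The only mildly delicate point anywhere in the argument is the measurability of $m_\gamma(\eta)$; once that is pinned down via joint continuity of $F$, everything else reduces to pointwise algebra and norm estimates.
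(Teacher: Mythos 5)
Your proof is correct, and its overall architecture coincides with the paper's: the norm estimate $\|\psi(\gamma)\|_{op}\leq\|\gamma\|_\infty$, the pointwise verification that $\psi$ is a unital algebra homomorphism, and the reduction of uniform expansion to $(\psi(\gamma))^{-1}=\psi(\gamma^{-1})$ together with $\|\gamma^{-1}\|_\infty\leq s$ are exactly the paper's steps (you are slightly more careful in making explicit that $\gamma^{-1}\in C([0,1],\GL(E))$ by continuity of inversion, which the paper leaves implicit). The one genuine difference is how you establish $[m_\gamma(\eta)]\in\cE([0,1],E)$: the paper simply observes that the evaluation map $\beta\colon\cL(E)\times E\to E$ is continuous bilinear, hence linear in its second argument, and invokes the pushforward axioms (Lemma~\ref{lebhavePA}, resting on Lemma~\ref{operders}), whereas you re-verify measurability, separability resp.\ relative compactness of the image, and the integrability estimate by hand. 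Your direct argument is sound and self-contained (it essentially reproduces the relevant special case of Lemma~\ref{operders}); the paper's citation is shorter and reuses machinery already in place. One cosmetic remark: when you invoke \ref{basicsmeas}\,(f) for $\cB([0,1]\times E)=\cB([0,1])\tensor\cB(E)$, the second-countable factor should occupy the slot required by that statement, i.e.\ apply it to $E\times[0,1]$ (or swap the factors by the obvious homeomorphism), since a general Banach space $E$ need not be second countable; with that trivial adjustment the measurability of $m_\gamma(\eta)=F\circ(\id,\eta)$ is justified exactly as you intend.
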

\begin{proof}
The evaluation map $\beta\colon \cL(E)\times E\to E$, $\beta(\alpha,x):=\alpha(x)$
is continuous bilinear, whence $m_\gamma([\eta]):=[\beta\circ(\gamma,\eta)]\in\cE([0,1],E)$
for all $\gamma\in C([0,1],\cL(E))$ and $[\eta]\in\cE([0,1],E)$ by the pushforward axiom (P2)
(see Lemma~\ref{lebhavePA}).
Since $\|\beta(\gamma(t),\eta(t))\|=\|\gamma(t)(\eta(t))\|\leq\|\gamma(t)\|_{op}\|\eta(t)\|
\leq \|\gamma\|_\infty\|\eta(t)\|$ almost everywhere, we see that
\[
\|\psi(\gamma)([\eta])\|_\cE\leq \|\gamma\|_\infty \|\eta\|_\cE,
\]
whence $\|\psi(\gamma)\|_{op}\leq \|\gamma\|_\infty$ and thus $\|\psi\|_{op}\leq 1$.
Moreover, apparently $\psi$ is linear, multiplicative and $\psi(1)=\id_{\cE([0,1],E)}$.
If $M$ is as described in the lemma,
then
\[
s:=\sup\{\|(\gamma(t))^{-1}\|_{op}\colon\gamma\in M,\,t\in[0,1]\}<\infty.
\]
Now
\[
\|(\psi(\gamma))^{-1}\|_{op}=\|\psi(\gamma^{-1})\|_{op}\leq
\|\gamma^{-1}\|_\infty=\sup\{\|(\gamma(t))^{-1}\|\colon t\in[0,1]\}\leq s
\]
for each $\gamma\in\psi(M)$ and thus
\[
\sup\{\|(\psi(\gamma))^{-1}\|_{op}\colon \gamma\in M\}\leq s<\infty.
\]
Thus $\psi(M)$ is uniformly expanding.
\end{proof}
{\bf Proof of Proposition~\ref{unionbanreg}.}
(a) Since $\Phi\colon\dl\, L^\infty_{rc}([0,1],\cg_n)\to L^\infty_{rc}([0,1],\cg)$
is an isomorphism (and hence surjective)
by Proposition~\ref{Mujileb}, Corollary~\ref{moredl} shows that~$G$ is
$L^\infty_{rc}$-semiregular.
Let $U\sub C([0,1],G)$ be an identity neighbourhood.
After shrinking $U$, we may assume that $U=C([0,1], U_0)$
for some identity neighbourhood $U_0\sub G$.
Pick identity neighbourhoods $U_n\sub G$ for $n\in\N$ such that
\[
U_nU_n\sub U_{n-1}\quad\mbox{for all $\,n\in\N$.}
\]
Then
\[
U_nU_{n-1}\cdots U_1\sub U_0\quad \mbox{for all $\,n\in\N$.}
\]
For each $n\in\N$, the exponential map $\exp_{G_n}\colon\cg_n\to G_n$
is a local diffeomorphism at~$0$, whence we find
an open $0$-neighbourhood $B_n\sub \cg_n$ such that $V_n:=\exp_{G_n}(B_n)$
is an open identity neighbourhood in $G_n$ and $\exp_{G_n}\colon B_n\to V_n$ a diffeomorphism.
After shrinking~$B_n$, we may assume that~$B_n$ is bounded and $V_n\sub U_n$; thus
$V_n$ is an open identity neighbourhood and a (PE)-set.
Now $C([0,1],V_n)$ is an open identity neighbourhood in $C([0,1],G_n)$.
Since~$G_n$ is $L^1$-regular, there is an open $0$-neighbourhood
$P_n\sub L^\infty_{rc}([0,1],\cg_n)$ such that $\Evol_{G_n}(P_n)\sub C([0,1], V_n)$.
We claim that
\[
P:=\bigcup_{n\in\N}(P_n\odot P_{n-1}\cdots\odot P_2\odot P_1)
\]
is a $0$-neighbourhood in $L^\infty_{rc}([0,1],\cg)$.
If this is true, then
\begin{eqnarray*}
\Evol(P_n\odot\cdots\odot P_1) &=& \Evol(P_n)\cdots \Evol(P_2)\Evol(P_1)\\
&=& \Evol_{G_n}(P_n)\cdots\Evol_{G_2}(P_2)\Evol_{G_1}(P_1)\\
&\sub& C([0,1],V_n)\cdots C([0,1], V_2)C([0,1],V_1)\\
&\sub & C([0,1], U_n)\cdots C([0,1],U_2)C([0,1],U_1)\\
&\sub&  C([0,1],U_n\cdots U_2U_1)\sub C([0,1], U_0) = U
\end{eqnarray*}
entails
\[
\Evol(P)=\bigcup_{n\in\N}\Evol(P_n\odot\cdots\odot P_1)\sub U.
\]
Hence $\Evol$ is continuous at~$0$
and thus $G$ is $L^\infty_{rc}$-regular, by Corollary~\ref{moredl}.
To establish the claim, it suffices to find a sequence $(Q_n)_{n\in\N}$
of open convex $0$-neighbourhoods $Q_n\sub L^\infty([0,1],\cg_n)$
such that, for all $n\in \N$,
\[
Q_1+\cdots+Q_n\sub P_n\odot\cdots\odot P_1.
\]
Then $Q:=\bigcup_{n\in\N}(Q_1+\cdots +Q_n)$ is a $0$-neighbourhood
in the locally convex direct limit $L^\infty_{rc}([0,1],\cg)=\dl\, L^\infty_{rc}([0,1],\cg_n)$
such that $Q\sub P$, and so~$P$ is a $0$-neighbourhood as well.
Let $F_n$ be the Banach space $L^\infty_{rc}([0,1],\cg_n)$;
we may assume that $P_n=B^{F_n}_{r_n}(0)$ for some $r_n>0$.
Set $Q_1:=P_1$. Let $n>1$.
For $k\in\{1,\ldots,n-1\}$ and $\gamma\in P_k$, we have
\begin{eqnarray*}
\Ad_{G_n}(\Evol_{G_n}(\gamma)(t))^{-1}
&=& \Ad_{G_n}(\Evol_{G_k}(\gamma)(t))^{-1}\sub \Ad_{G_n}(V_k)^{-1}\\
&=& \Ad_{G_n}(\exp_{G_k}(-B_k)).
\end{eqnarray*}
Thus
\begin{eqnarray*}
\lefteqn{\Ad_{G_n}(\Evol_{G_n}(\gamma_{n-1}\odot\cdots\odot\gamma_1)(t))^{-1}}\qquad\\
&=&\Ad_{G_n}((\Evol_{G_n}(\gamma_{n-1}(t)\cdots\Evol_{G_n}(\gamma_1)(t)))^{-1})\\
&=&\Ad_{G_n}(\Evol_{G_n}(\gamma_1(t))^{-1})\cdots
\Ad_{G_n}(\Evol_{G_n}(\gamma_{n-1})(t))^{-1})\\
&\sub&
\Ad_{G_n}(\exp_{G_1}(-B_1))\cdots\Ad_{G_n}(\exp_{G_{n-1}}(-B_{n-1})),
\end{eqnarray*}
from which we deduce (with Lemma~\ref{pestable}) that
\[
\{\Ad_{G_n}(\Evol_{G_n}(\eta)(t))^{-1}\colon \eta\in P_{n-1}\odot\cdots\odot P_1,\, t\in[0,1]\}
\]
is a (PE)-subset of $\GL(\cg_n)$ and hence uniformly expanding (by Lemma~\ref{expprodset}).
Thus
\[
M_n:=\{ [\zeta]\mto [ \Ad_{G_n}(\Evol_{G_n}(\eta))^{-1}.\zeta ] \colon\eta\in P_{n-1}\odot\cdots\odot P_1\}
\]
is a uniformly expanding subset of $\GL(L^\infty_{rc}([0,1],\cg_n))$, by Lemma~\ref{pwtofct}.
As a consequence,
\[
s_n:=\sup\{\|\alpha^{-1}\|_{op}\colon\alpha\in M_n\}<\infty
\]
and hence
\[
\Ad_{G_n}(\Evol_{G_n}(\eta))^{-1}.P_n=\Ad_{G_n}(\Evol_{G_n}(\eta))^{-1}.B^{F_n}_{r_n}(0)
\subseteq B^{F_n}_{r_n/s_n}(0)=:Q_n
\]
for each $\eta\in P_{n-1}\odot\cdots\odot P_1$. Since $[\gamma]\odot\eta=[\Ad_{G_n}(\Evol(\eta)(t))^{-1}\gamma(t)]+\eta$
for all $[\gamma]\in P_n$ and $\eta\in P_{n-1}\odot\cdots\odot P_1$,
we deduce that
\[
P_1\odot \eta\supseteq Q_n+\eta
\]
for each $\eta\in P_{n-1}\odot\cdots\odot P_1$.
Hence
\[
P_n\odot\cdots \odot P_1\supseteq Q_n+P_{n+1}\odot\cdots\odot P_1
\supseteq Q_n+Q_{n-1}+\cdots+Q_1,
\]
which completes the proof of~(a).

(b) Replace $L^\infty_{rc}$ with $L^1$ in the proof of~(a).
\,\vspace{2mm}\Punkt

\noindent
%
%\begin{rem}
%generalizations.
%Only need local group homs?
%Also can treat $C^1$-regularity with weaker hypotheses?
% NO, NOT SO CLEAR
%\end{rem}
%
As a first consequence, we see that
direct limits of finite-dimensional Lie groups
\cite{DI2}
are $L^\infty_{rc}$-regular:\\[2.3mm]
{\bf Proof of Theorem~E.}
Since $L(G)=\dl\,L(G_n)$\vspace{-.5mm} is a strict (LB)-space,
Corollary~\ref{unionbanreg2} applies.\,\Punkt\vspace{2.3mm}

\noindent
Another application are Lie groups
of real analytic maps.
We first consider groups of germs of Lie group-valued
analytic maps (as in \cite{DGS}):
\begin{cor}\label{germslinfty}
Let $\K\in\{\R,\C\}$.
Let $M$ is a $\K$-analytic manifold
modelled on a Fr\'{e}chet space,
$K\sub M$ a non-empty compact set
and $H$ a $\K$-analytic Banach-Lie group;
if $\K=\R$, assume that the topological
space unerlying~$M$ is regular.
Then $\Germ_\K(K,M,H)$
is an $L^\infty_{rc}$-regular
$\K$-analytic Lie group.
If $H$ is finite-dimensional, then $G:=\Germ_\K(K,M,H)$
has a $\K$-analytic evolution map
\[
\Evol\colon L^\infty_{rc}([0,1],L(G))\to AC_{L^\infty_{rc}}([0,1],G).
\]
\end{cor}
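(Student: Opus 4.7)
The plan is to realize $G := \Germ_\K(K,M,H)$ as a countable direct limit of Banach-Lie groups in the sense of Corollary \ref{unionbanreg2}\,(a). First I would fix a strictly descending fundamental system $U_1 \supseteq U_2 \supseteq \cdots$ of open neighbourhoods of $K$ in $M$ (in the real-analytic case, after passing first to a complexification $\wt{M}$ of a neighbourhood of $K$, which is available since $M$ is a regular $\K$-analytic Fréchet manifold, and working with open neighbourhoods in $\wt{M}$), chosen so that $\overline{U_{n+1}} \Subset U_n$ and small enough that each germ of $H$-valued analytic map near $K$ has a bounded representative on $U_n$ taking values in a fixed BCH-neighbourhood of $e\in H$. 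Let $\cg_n$ be the Banach space of bounded $\K$-analytic $L(H)$-valued maps on $U_n$ with supremum norm, and let $G_n$ be the $\K$-analytic Banach-Lie group built from $\cg_n$ via the Baker-Campbell-Hausdorff series (cf.\ \cite{DGS}), with bonding maps $\psi_{n,m}\colon G_m\to G_n$ and $L(\psi_{n,m})\colon \cg_m\to\cg_n$ given by restriction. By the standard theory of germ groups this realizes $G=\dl\,G_n$ and $\cg=\dl\,\cg_n$ as $\K$-analytic Lie groups.

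Injectivity of the $L(\psi_{n,m})$ is the identity principle for $\K$-analytic functions on connected neighbourhoods of $K$, so the hypotheses of Proposition \ref{unionbanreg} on injective bonding maps are met. The key technical step is to verify that $\cg$ is compactly regular. Using Cauchy estimates on the relatively compactly contained $U_n\Subset U_m$, any norm-bounded family in $\cg_m$ has uniformly bounded derivatives of all orders on $U_n$; applying Arzelà--Ascoli (reducing first to the finite-dimensional target via Montel in the classical case, or using equicontinuity together with the Banach-valued Ascoli theorem more generally) shows that $L(\psi_{n,m})$ is a compact operator for $m<n$. Hence $\cg=\dl\,\cg_n$ is a Silva space, which is automatically compactly regular (in fact, every compact subset of $\cg$ lies in some $\cg_n$).

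With these ingredients in place, Theorem~C furnishes $L^1$-regularity, and a fortiori $L^\infty_{rc}$-regularity, of each Banach-Lie group $G_n$. Corollary \ref{unionbanreg2}\,(a) then immediately yields $L^\infty_{rc}$-regularity of $G$, and the $\K$-analytic Lie-group structure on $G$ is the one produced by the BCH-charts inherited from the direct-limit construction.

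For the refinement when $H$ is finite-dimensional, Lemma \ref{prepabanreg} (applied in the real- or complex-analytic case as appropriate) shows that each $\Evol_{G_n}$ is $\K$-analytic on an open $0$-neighbourhood of $L^1([0,1],\cg_n)$. Combining Proposition \ref{Mujileb}\,(a), which under compact regularity identifies $L^\infty_{rc}([0,1],\cg)$ with $\dl\,L^\infty_{rc}([0,1],\cg_n)$ topologically, with the factorization of $\Evol$ used in the proof of Proposition \ref{unionbanreg}, one obtains that $\Evol$ is locally the direct limit of the $\Evol_{G_n}$ and hence $\K$-analytic on an open $0$-neighbourhood in $L^\infty_{rc}([0,1],L(G))$. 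Proposition \ref{sammelsu}\,(d) (in the real case) or Proposition \ref{evocxanl} (in the complex case) then propagates this local $\K$-analyticity to global $\K$-analyticity of $\Evol\colon L^\infty_{rc}([0,1],L(G))\to AC_{L^\infty_{rc}}([0,1],G)$. The main obstacle is compactness of the bonding maps $L(\psi_{n,m})$: the Montel/Ascoli reduction is clean when $L(H)$ is finite-dimensional, which is precisely why the strengthened $\K$-analyticity conclusion is stated only in that case, whereas the bare compact regularity (and thus $L^\infty_{rc}$-regularity) can be secured in the stated generality by a slightly more elaborate equicontinuity argument.
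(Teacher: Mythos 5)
Your overall skeleton is the same as the paper's: realize $G$ as an ascending union of Banach-Lie groups $G_n$ with Lie algebras $\cg_n=\Hol_b(U_n,\ch)$ built via BCH and $\exp_G$, apply Corollary~\ref{unionbanreg2}\,(a), and treat the real case through a complexification of (a neighbourhood of $K$ in) $M$. The genuine gap is in your verification of the one hypothesis that makes Corollary~\ref{unionbanreg2}\,(a) applicable, namely compact regularity of $\cg=\dl\,\cg_n$. Your argument — Cauchy estimates plus Arzel\`{a}--Ascoli show that the restriction maps $\Hol_b(U_m,\ch)\to\Hol_b(U_n,\ch)$ are compact, hence $\cg$ is a Silva space — fails whenever $\ch=L(H)$ is infinite dimensional: already the constant functions with values in the closed unit ball of $\ch$ form a bounded subset of $\Hol_b(U_m,\ch)$ whose image under restriction is not relatively compact, so the bonding maps are never compact operators in that situation, $\cg$ is not Silva, and Ascoli does not apply (Cauchy estimates give equicontinuity but not pointwise relative compactness). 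Your closing remark that compact regularity can nevertheless "be secured by a slightly more elaborate equicontinuity argument" is exactly the missing content; it is a nontrivial fact about Banach-valued germ spaces, which the paper does not reprove but imports from \cite{DGS}. Without it, the whole reduction to Corollary~\ref{unionbanreg2}\,(a) is unsupported for general Banach $H$. (A smaller point in the same vein: for $\K=\R$ the space of bounded real-analytic $L(H)$-valued maps with the sup norm is not complete; as in the paper, one must take the fixed points of the antiholomorphic involution inside $\Hol_b(U_n,\ch_\C)$ for $\tau$-invariant neighbourhoods $U_n\sub\wt{M}$, which is roughly what you gesture at but should be made explicit.)

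You also misdiagnose why finite-dimensionality of $H$ enters in the second assertion. It is not because the Montel/Ascoli reduction becomes "clean": compact regularity holds for all Banach $H$ (by the cited result), and in the complex case analyticity of $\Evol$ needs no dimension restriction at all, since Proposition~\ref{evocxanl} gives complex analyticity of $\Evol$ for any $L^\infty_{rc}$-regular complex analytic Lie group. The restriction matters only in the real case: one needs a complexification $H_\C$ of the \emph{group} $H$ (which general Banach-Lie groups need not admit), so that $\Germ_\C(K,\wt{M},H_\C)$ is a complexification of $G$; Lemma~\ref{realvscxlcly} then yields real analyticity of $\Evol$ on a $0$-neighbourhood, and Proposition~\ref{sammelsu}\,(d) globalizes it. Your substitute step — "$\Evol$ is locally the direct limit of the $\Evol_{G_n}$ and hence $\K$-analytic" — is not justified either: real analyticity of a map on an open subset of an (LB)-space does not follow from real analyticity of its restrictions to the steps without an additional argument producing a complex analytic extension on a neighbourhood in the complexified limit space, which is precisely what the complexification route supplies.
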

\begin{proof}
The case $\K=\C$: Let $U_1\supseteq U_2\supseteq\cdots$ be a basis of
open neighbourhoods of~$K$ in $M$ such that
each connected component of $U_n$ meets~$M$.
Let $G:=\Germ_\C(K,M,H)$, $\ch:=L(H)$ and $\cg:=\Germ_\C(K,M,\ch)$.
Then $\cg_n:=\Hol_b(U_n,\ch)$ with the supremum norm
is a Banach-Lie algebra. Moreover, $\cg_n$ can be identified with a vector subspace of~$\cg$
(identifying holomorphic functions with their associated germs around~$K$),
and
\begin{equation}\label{postdli}
\cg=\dl\,\cg_n\vspace{-.5mm}
\end{equation}
as a locally convex space (by definition).
The direct limit (\ref{postdli}) is compactly regular~\cite{DGS}.
The Lie group $G$ has an exponential map, given by
\[
\exp_G\colon \cg\to G,\quad [\gamma]\mto [\exp_H\circ \gamma].
\]
By the construction of the Lie group structure of~$G$ in~\cite{DGS},
$\exp_G$ is a local diffeomorphism at~$0$. Moreover, there is an open $0$-neighbourhood $V\sub\cg$
such that the Baker-Campbell-Hausdorff series converges on $V\times V$
and makes it a complex analytic local Lie group, with $\exp_G(V)$ open and $\exp_G|_V$
both a diffeomorphism and a homomorphism of local Lie groups (see \cite{DGS}).
Then $V\cap\cg_n$ and hence also $\exp_G(V\cap\cg_n)$ is a local Lie group with Lie algebra~$\cg_n$.
As a consequence, the subgroup
\[
G_n:=\langle \exp_G(V\cap \cg_n)\rangle=\langle \exp_G(\cg_n)\rangle
\]
is a Banach-Lie group with $L(G_n)=\cg_n$ and $\exp_{G_n}=\exp_G|_{\cg_n}$.
Let $i_{n,m}\colon G_m\to G_n$, $j_{n,m}\colon \cg_m\to \cg_n$,
$i_m\colon G_n\to G$ and $j_m\colon \cg_n\to\cg$ be the respective inclusion maps (for $n\geq m$).
Since $j_n$ and $j_{n,m}$ are continuous linear and hence complex analytic,
$\exp_{G_m}$ is a local diffeomorphism at~$0$ and $\exp_G$ as well as $\exp_{G_n}$ are complex analytic,
we deduce from $i_m\circ\exp_{G_m}=\exp_G\circ j_m$ and $i_{n,m}\circ\exp_{G_m}=\exp_{G_n}\circ j_{n,m}$
that $i_n$ and $i_{n,m}$ are complex analytic homomorphisms.
Hence~$G$ is $L^\infty_{rc}$-regular, by Corollary~\ref{unionbanreg2}.

The case $\K=\R$:
Let $\wt{M}$ be a complexification for~$M$ admitting an antiholomorphic involution
$\tau\colon\wt{M}\to\wt{M}$
such that $M\sub\wt{M}$ and $M$ is the fixed point set of~$\tau$ (see \cite{DGS}).
If $H$ is finite-dimensional, then $H$ has a complexification $H_\C$ with $H\sub H_\C$
(see \cite{Boulie}). Then $\Germ_\C(K,\wt{M}, H_\C)$
is a complexification of $\Germ_\R(K,M,H)$ (cf.\ \cite{DGS}).
Since $\Germ_\C(K,\wt{M},H_\C)$ is $L^\infty_{rc}$-regular (by the complex case
already treated), we deduce with Lemma~\ref{realvscxlcly}
that $\Germ_\R(K,M,H)$ is $L^\infty_{rc}$-regular with real analytic evolution~$\Evol$.
It remains to show that $G:=\Germ_\R(K,M,H)$ is $L^\infty_{rc}$-regular
when~$H$ is an arbitrary real Banach Lie group.
To achieve this,
let $U_1\supseteq U_2\supseteq\cdots$ be a basis of open
neighbourhoods of~$K$ in $\wt{M}$ such that $U_n=\tau(U_n)$ for each $n$
and each connected component of $U_n$ meets~$M$.
Let $\ch_\C=\ch\oplus i\ch$ be a complexification of~$\ch:=L(H)$
and $\sigma\colon \ch_\C\to\ch_\C$, $(x+iy)\mto x-iy$ for $x,y\in\ch$
be complex conjugation on~$\ch_\C$.
Then $\ck_n:=\Hol_b(U_n,\ch_\C)$ is a complex Banach-Lie algebra and
\[
\cg_n:=\{\gamma\in\Hol_b(U_n,\ch_\C)\colon \gamma=\sigma\circ\gamma\circ\tau\}
\]
is a closed real Lie subalgebra of $\ck_n$ such that $\ck_n=(\cg_n)_\C$
(cf.\ \cite{DGS}). Identifying $\gamma\in\cg_n$ with the germ of $\gamma|_{U_n\cap M}\colon U_n\cap M\to\ch$,
we can consider $\cg_n$ as a Lie subalgebra of $\cg:=\Germ_\R(K,M,\ch)$.
Moreover,
\[
\cg=\dl\,\cg_n\vspace{-.5mm}
\]
(cf.\ \cite{DGS}). We can now complete the proof as in the case $\K=\C$,
replacing $\C$ with $\R$ there.
\end{proof}
Taking $\K=\R$ and $M=K$,
we get the following result as a special case,
which subsumes Theorem~F:
\begin{cor}\label{anmupreg}
If $M$ is a real analytic compact manifold and $H$ a Banach-Lie group,
then the Lie group $G:=C^\omega(M,H)$
of all $H$-valued real analytic mappings on~$M$
is $L^\infty_{rc}$-regular.
If $H$ is a finite-dimensional
Lie group, then $\Evol_G\colon L^\infty_{rc}([0,1],L(G))\to AC_{L^\infty_{rc}}([0,1],G)$
is real analytic.\,\Punkt
\end{cor}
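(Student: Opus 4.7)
The plan is to obtain this corollary as a direct specialization of Corollary \ref{germslinfty} to the degenerate case in which the compact set $K$ exhausts the entire ambient manifold.

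First, I would identify $C^\omega(M,H)$ with $\Germ_\R(M,M,H)$. Indeed, taking $K = M$ inside $M$, any open neighbourhood $U \sub M$ of $K$ must satisfy $U = M$, so every germ around $K$ is represented by an analytic map defined on all of $M$, and two such germs coincide iff the underlying maps agree. Hence, as sets, $\Germ_\R(M,M,H) = C^\omega(M,H)$. Moreover, inspecting the construction of the Lie group structure on $\Germ_\R(K,M,H)$ from the proof of Corollary \ref{germslinfty} (via the ascending chain of Banach-Lie algebras $\cg_n := \{\gamma\in\Hol_b(U_n,\ch_\C) : \gamma = \sigma\circ\gamma\circ\tau\}$ inside a fixed complexification $\wt M$ of~$M$), the two Lie group structures coincide.

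Next, I would verify that the hypotheses of Corollary \ref{germslinfty} hold for the pair $(K,M) = (M,M)$ with $\K = \R$ and target $H$: the compact real analytic manifold~$M$ is modelled on the Fr\'{e}chet space $\R^n$; being metrizable, the underlying topological space is regular; $K = M$ is a non-empty compact subset; and $H$ is a Banach-Lie group. Thus Corollary \ref{germslinfty} applies and yields $L^\infty_{rc}$-regularity of the Lie group $C^\omega(M,H)$.

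Finally, for the real analyticity of $\Evol_G$ when $H$ is finite-dimensional, I would invoke the corresponding second assertion of Corollary \ref{germslinfty}, which in this same setting provides a real analytic evolution map $\Evol_G\colon L^\infty_{rc}([0,1],L(G)) \to AC_{L^\infty_{rc}}([0,1],G)$. There is no substantive obstacle here: the only point requiring care is the identification $C^\omega(M,H) \isom \Germ_\R(M,M,H)$ as real analytic Lie groups, after which Corollary \ref{germslinfty} delivers both conclusions immediately.
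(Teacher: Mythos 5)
Your proposal is correct and is essentially the paper's own argument: the paper obtains this corollary precisely by taking $\K=\R$ and $M=K$ in Corollary~\ref{germslinfty}, so that germs around $K=M$ are just real analytic maps on~$M$ and both conclusions (including real analyticity of $\Evol_G$ for finite-dimensional~$H$) follow immediately. Your extra care in checking the hypotheses (regularity of the underlying space, $K=M$ compact and non-empty) matches what is implicitly used there.
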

\begin{rem} Real analyticity of $\Evol_G$
in Corollaries~\ref{germslinfty} and~\ref{anmupreg}
also holds for all Banach-Lie groups $G$
for which a complex Banach-Lie group $G_\C$
exists such that the inclusion map $G\sub G_\C$ is a real analytic
homomorphism which makes $G_\C$ a complexification of~$G$.
Only this property is used in the proof.
\end{rem}
\begin{cor}
The Lie group $C^\omega(\R,H)$ of all real analytic $H$-valued maps on~$\R$
is $L^\infty_{rc}$-regular, for each Banach-Lie group~$H$.
\end{cor}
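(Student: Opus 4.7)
The plan is to realize $C^\omega(\R,H)$ as a projective limit of the germ groups treated in Corollary~\ref{germslinfty} and then invoke Proposition~\ref{passtoPL}. Concretely, abbreviate $\ch:=L(H)$, and for $n\in\N$ set
\[
G_n := \Germ_\R([-n,n],\R,H),\qquad \cg_n := \Germ_\R([-n,n],\R,\ch);
\]
germ restriction furnishes smooth homomorphisms $q_{n,m}\colon G_m\to G_n$ for $n\leq m$ and $q_n\colon C^\omega(\R,H)\to G_n$. Since $\R$ is connected, real analytic continuation shows that a compatible family of germs around the exhausting compacta $[-n,n]$ glues to a unique element of $C^\omega(\R,H)$, so
\[
C^\omega(\R,H)=\pl\,G_n \quad\text{and}\quad C^\omega(\R,\ch)=\pl\,\cg_n
\]
as Lie groups and as locally convex spaces, respectively (the topology on $C^\omega(\R,\ch)$ being by definition the projective-limit topology of the germ-group Lie algebras).

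I would then check the hypotheses of Proposition~\ref{passtoPL} with $\cE=L^\infty_{rc}$. Condition~(b) is immediate: each $G_n$ is $L^\infty_{rc}$-regular by Corollary~\ref{germslinfty}. Condition~(c) follows automatically, because each $\cg_n$ is integral complete and hence by Lemma~\ref{compaPLrc} one has $L^\infty_{rc}([0,1],\cg)=\pl\,L^\infty_{rc}([0,1],\cg_n)$ with the expected bonding and limit maps. For condition~(d), I aim to apply Lemma~\ref{codaut}\,(i): the bonding maps $L(q_{n,m})$ are injective, since if a germ around $[-m,m]$ restricts to the zero germ around $[-n,n]$, a representative vanishes on an open neighborhood of $[-n,n]$ inside a complex neighborhood of $[-m,m]$ each of whose connected components meets $[-m,m]$, so the identity theorem forces the representative (and hence the germ) to be zero; analogously $q_{n,m}$ is injective on the local group around~$e$.

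The substantive step is verifying condition~(a), namely that $C^\omega(\R,H)$ admits a projective limit chart in the sense of Definition~\ref{dplcha}. As recalled in the proof of Corollary~\ref{germslinfty}, the BCH series converges on an open $0$-neighborhood $W_n\subseteq\cg_n$ and makes $\exp_{G_n}\colon W_n\to V_n:=\exp_{G_n}(W_n)$ simultaneously a diffeomorphism onto an open identity neighborhood and an isomorphism of local Lie groups; the same applies to $\exp_G$ on some $W\subseteq\cg$. I will shrink these neighborhoods compatibly using the naturality $q_{n,m}\circ\exp_{G_m}=\exp_{G_n}\circ L(q_{n,m})$ and $q_n\circ\exp_G=\exp_{G_n}\circ L(q_n)$, and apply Remark~\ref{maysymm} to arrange symmetry, so that $L(q_{n,m})(W_m)\subseteq W_n$, $q_{n,m}(V_m)\subseteq V_n$, $W=\bigcap_n L(q_n)^{-1}(W_n)$, and $V:=\bigcap_n q_n^{-1}(V_n)=\exp_G(W)$. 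Setting $\psi:=\exp_G^{-1}|_V$ and $\psi_n:=\exp_{G_n}^{-1}|_{V_n}$, naturality of $\exp$ yields the compatibility identities (\ref{compaby}) and (\ref{compby2}) of Lemma~\ref{reformug}, while $d\psi|_\cg=\id_\cg$ and $d\psi_n|_{\cg_n}=\id_{\cg_n}$ are built in. Lemma~\ref{reformug} then supplies the desired projective limit chart, and Proposition~\ref{passtoPL} concludes that $C^\omega(\R,H)$ is $L^\infty_{rc}$-regular.

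The main obstacle is the last step: one must carefully and simultaneously shrink the BCH-neighborhoods $W$ and $W_n$ so that all four compatibility conditions of Lemma~\ref{reformug} hold, and in particular so that $W$ is indeed the preimage of the family $(W_n)_{n\in\N}$ under the limit maps. Once this bookkeeping is in place, the rest is routine assembly from the machinery of Sections~\ref{secPL} and the analytic input from Corollary~\ref{germslinfty}.
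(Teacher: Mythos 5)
Your plan founders at condition~(a) of Proposition~\ref{passtoPL}, and the ``bookkeeping'' you defer to the end is in fact a genuine obstruction, not a routine shrinking argument. In the projective limit topology on $C^\omega(\R,H)=\pl\,\Germ_\R([-n,n],\R,H)$ and on its model space $\pl\,\cg_n$, a basic identity (resp.\ zero) neighbourhood only constrains the germ around a \emph{single} interval $[-n_0,n_0]$; the behaviour near $[-n,n]$ for $n>n_0$ is completely unconstrained. By contrast, your BCH/exponential charts of $G_n$ are necessarily defined on \emph{proper} $0$-neighbourhoods $W_n\subsetneq\cg_n$: for a general Banach-Lie group $H$ the BCH series converges only on a bounded region and $\exp$ is not globally injective (take $H=\R/\Z$, say). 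Hence no admissible choice of the $W_n$ makes $W=\bigcap_n L(q_n)^{-1}(W_n)$ open: given any $n_0$ and any $0$-neighbourhood $P\sub\cg_{n_0}$, there is a real analytic $\gamma\colon\R\to\ch$ whose germ around $[-n_0,n_0]$ lies in $P$ but which is so large near $n$ that its germ around $[-n,n]$ leaves $W_n$ for large~$n$. So $U=\bigcap_n q_n^{-1}(U_n)$ cannot be an open identity neighbourhood, and the projective limit chart required by Definition~\ref{dplcha}/Lemma~\ref{reformug} does not exist in the exponential form you propose; conditions (b)--(d), which you verify correctly, are the easy part.

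The paper's proof is organized precisely to avoid this. It writes $C^\omega(\R,H)=C^\omega(\R,H)_*\rtimes H$, where $C^\omega(\R,H)_*$ is the subgroup of maps with $\gamma(0)=e$; since $H$ is $L^\infty_{rc}$-regular (Theorems~C and~A), Theorem~G reduces everything to the pointed group. The pointed germ groups $\Germ_\R([-n,n],\R,H)_*$ are $L^\infty_{rc}$-regular by Corollary~\ref{germslinfty} combined with Proposition~\ref{equarg}, and the left logarithmic derivative $\delta^\ell$ provides a \emph{global} chart at every level, i.e.\ $U_n=G_n$ and $U=G$ in the sense of Definition~\ref{dplcha} (see \cite{DGS}); therefore the global-chart variant Proposition~\ref{regglocha} applies, and all the delicate intersection conditions you were trying to arrange become vacuous. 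If you insist on running Proposition~\ref{passtoPL} on the full group, you would need charts whose domains are cut out by a condition at a single level -- for instance charts adapted to the semidirect product decomposition, where the only constraint is on $\gamma(0)$ -- which essentially reproduces the paper's argument rather than your exponential one.
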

\begin{proof}
We recall from \cite{DGS} that $C^\omega(\R,H)_*:=\{\gamma\in C^\omega(\R,H)\colon\gamma(0)=e\}$
is a Lie subgroup of $C^\omega(\R,H)$ and
\[
C^\omega(\R, H)=C^\omega(\R,H)_*\rtimes H
\]
as a Lie group. Since $H$ is $L^\infty_{rc}$-regular by Theorems~C and~A,
we need only prove $L^\infty_{rc}$-regularity for $C^\omega(\R,H)$;
then als $C^\omega(\R,H)$ will be $L^\infty_{rc}$-regular, by Theorem~G.
Since $\Germ([{-n},n],\R,H)$ is $L^\infty_{rc}$-regular by Corollary~\ref{germslinfty},
also its Lie subgroup
\[
\Germ([{-n},n],\R,H)_*:=\{[\gamma]\in\Germ([{-n},n],\R,H)\colon \gamma(0)=e\}
\]
is $L^\infty_{rc}$-regular, by Proposition~\ref{equarg} (as it is the kernel
of the point evaluation $\Germ([{-n},n],\R,H)\to H$, $[\gamma]\mto\gamma(e)$,
which is a smooth homomorphism to the $L^\infty_{rc}$-regular Lie group~$H$).
Now
\[
C^\omega(\R,H)_*\, =\,\pl\, \Germ([{-n},n],\R,H)_*
\]
and
\[
C^\omega(\R,H)_*\to C^\omega(\R,L(H)),\quad \gamma\mto \delta^\ell(\gamma)
\]
is a projective limit chart (see \cite{DGS}).
Thus Proposition~\ref{regglocha}
shows that $C^\omega(\R,H)_*$ (and hence also $C^\omega(\R,H)$)
is $L^\infty_{rc}$-regular.
\end{proof}
%
%
% HIER NOCH analytizitaet von Evol diskutieren
% %\noindent
%In Section~\ref{secdiffan},
%we deduce that diffeomorphism groups
%of real analytic compact manifolds
%are $L^\infty_{rc}$-regular.
%
%
%
%
%
%
%
%
%
%
%
%
%
\section{Regularity properties of
{\boldmath$\Diff_c(\R^n)$},
{\boldmath$\Diff_K(\R^n)$} and {\boldmath$\Diff(\bS_1)$}}\label{secRS}
After a brief introduction
to the diffeomorphism groups $\Diff_c(\R^n)$ and
$\Diff_K(\R^n)$,
we prove the $L^1$-regularity of
$\Diff_K(\R^n)$, $\Diff(\bS_1)$ and $\Diff_c(\R^n)$.
\begin{numba}\label{funtops}
If $U\sub \R^n$ is open
and $E$ a locally convex space,
we endow $C(U,E)$ with the topology of uniform convergence
on compact sets, determined by the seminorms
\[
\|.\|_{L,q}\colon C(U,E)\to [0,\infty[,\quad \gamma\mto \sup_{x\in L}q(\gamma(x))
\]
for $q$ in the set of continuous seminorms on~$E$ and $L$
ranging through the compact subsets of~$U$.
If $E$ and $U$ are as before and $r\in \N_0\cup\{\infty\}$,
we endow
the space $C^r(U,E)$ of all $C^r$-functions $\gamma\colon U\to E$
with the compact-open $C^r$-topology,
i.e., the initial topology with respect to the maps
\[
C^r(U,E)\to C(U,E),\quad \gamma\mto \frac{\partial^\alpha}{\partial x^\alpha}
\]
for $\alpha\in \N_0^n$ with $|\alpha|\leq r$.
Given a compact subset $K\sub U$, we give
\[
C^r_K(U,E):=\{\gamma\in C^r(U,E)\colon \gamma|_{U\setminus K}=0\}
\]
the induced topology. It is the locally convex vector
topology given by the seminorms
\[
\|\gamma\|_{C^k,q}:=\max_{|\alpha|\leq k}\|\partial^\alpha \gamma\|_{\cL^\infty,q},
\]
for all $k\in \N_0$ such that $k\leq r$ and all
continuous seminorms~$q$ on~$E$ (with multi-indices $\alpha\in \N_0^n$).
As usual, $C^r_c(U,E)=\bigcup_K C^r_K(U,E)$
is the locally convex direct limit of the spaces $C^r_K(U,E)$.
If $E$ is a Fr\'{e}chet space, then also $C^r_K(U,E)$
is a Fr\'{e}chet space. If $E$ is a separable Fr\'{e}chet space,
then also $C^r_K(U,E)$ is separable.\footnote{Since $C^r_K(U,E)$
is isomorphic to $C^r_K(\R^n,E)$,
it suffices to show that the Fr\'{e}chet space $C^r(\R^n,E)$ is separable.
Let $J:=\{\alpha\in \N_0^n\colon |\alpha|\leq r\}$.
We claim that $C(\R^n,E)$ is separable.
If this is true, then the Fr\'{e}chet space
$C(\R^n,E)^J$ (with the product topology)
will be separable.
Since $C^r(\R^n,E)\to C(\R^n,E)^J$,
$\gamma\mto (\partial^\alpha\gamma)_{|\alpha|\leq r}$ is a topological
embedding, the separability
of $C^r(\R^n,E)$ follows.
To prove the claim, for $m\in \N$ let $(h_{m,k})_{k\in \N}$
be a partition of unity on~$\R^n$
subordinate to $(B_{1/m}(x))_{x\in \R^n}$
\cite[I.8.6, Satz~3]{Shu}
(using balls with respect to some
norm on~$\R^n$).
Let $D\sub E$ be a countable dense subset.
Then the countable set $\{a h_{m,k}\colon m,k\in \N,\,a\in D\}$
is easily seen to be total in $C(\R^n,E)$
and thus $C(\R^n,E)$ is separable.}
\end{numba}
\begin{numba}\label{basicdfmo1}
Let $C^\infty_c(\R^n,\R^n)$ be the space of
all compactly supported, $\R^n$-valued smooth functions on~$\R^n$
and $\Diff_c(\R^n)$ be the set of all diffeomorphisms $\phi\colon \R^n\to\R^n$
which are compactly supported in the sense that
\[
\phi-\id_{\R^n}\in C^\infty_c(\R^n,\R^n).
\]
It is known that $\Diff_c(\R^n)$ is a Lie group
under composition of diffeomorphisms, with neutral element $\id_{\R^n}$
(cf.\ \cite{Mic}, see~\cite{DRN}).
The set
\[
\Omega:=\{\phi-\id_{\R^n}\colon \phi\in \Diff_c(\R^n)\}
\]
is open in $\Diff_c(\R^n)$
and the map
\[
\Phi\colon \Diff_c(\R^n)\to \Omega,\quad \phi\mto\phi-\id_{\R^n}
\]
is a global chart; moreover,
\[
\{\gamma\in C^\infty_c(\R^n,\R^n)\colon \|\gamma'\|_{\cL^\infty,\|.\|_{op}}<1\}
\]
is an open subset of $\Omega$ (see \cite{DRN}).
Here $\|\alpha\|_{op}$ denotes the
operator norm of a linear map $\alpha\colon \R^n\to \R^n$
with respect to the maximum norm $\|.\|_\infty$ on $\R^n$.
We can make $\Omega$ a Lie group
in such a way that $\Phi$ becomes an isomorphism
of Lie groups:
Its group multiplication is given by
\[
\gamma*\eta:=\Phi(\Phi^{-1}(\gamma)\circ\Phi^{-1}(\eta))
=(\id_{\R^n}+\gamma)\circ(\id_{\R^n}+\eta)-\id_{\R^n}=
\eta+\gamma\circ (\id_{\R^n}+\eta)
\]
for $\gamma,\eta\in \Omega$, and
the constant function~$0$ is the neutral element.
Note that $\Phi$ takes $\Diff_K(\R^n)$
onto $\Omega\cap C^\infty_K(\R^n,\R^n)=:\Omega_K$.
Hence $\Diff_K(\R^n)$ is a Lie subgroup of $\Diff_c(\R^n)$
modelled on $C^\infty_K(\R^n,\R^n)$
which has
\[
\Phi_K\colon \Diff_K(\R^n)\to\Omega_K,\quad \phi\mto\phi-\id_{\R^n}
\]
as a global chart. Again, $\Omega_K$
can be made a Lie group isomorphic to $\Diff_K(\R^n)$
using the multiplication~$*$.
To see that $\Diff_c(\R^n)$ and $\Diff_K(\R^n)$
are $L^1$-regular,
we need only show that $\Omega$
and $\Omega_K$ are $L^1$-regular.
\end{numba}
\begin{numba}
As usual for tangent bundles of open subsets of locally convex spaces,
we have\footnote{Thus, we are using the addition
of the locally convex space to trivialize the tangent bundle,
\emph{not} left or right multiplication
in the Lie group $(\Omega,*)$.}
\[
T\Omega=\Omega\times C^\infty_c(\R^n,\R^n)\quad \mbox{and}\quad
T\Omega_K=\Omega_K\times C^\infty_K(\R^n,\R^n).
\]
For fixed $\eta\in \Omega$, right translation with $\eta$ is the map
\[
\rho_\eta\colon \Omega\to\Omega,\quad \gamma\mto \eta+\gamma\circ (\id_{\R^n}+\eta)
\]
which is the restriction of the affine linear map $C^\infty_c(\R,\R^n)\to
C^\infty_c(\R^n,\R^n)$ given by the same formula, which is continuous
(see \cite{DRN}).
Hence
\[
d\rho_\eta(\gamma,\gamma_1)=\gamma_1\circ(\id_{\R^n}+\eta)\quad
\mbox{for all $(\gamma,\gamma_1)\in \Omega\times C^\infty_c(\R^n,\R^n)$.}
\]
We identify the Lie
algebra $L(\Omega)=T_0\Omega=\{0\}\times C^\infty_c(\R^n,\R^n)$
with $C^\infty_c(\R^n,\R^n)$.
Let us calculate the product of
$(\gamma,\gamma_1)\in T\Omega$ and $\eta\in \Omega$
(identified with $0_\eta\in T_\eta\Omega$)
in the tangent group $T\Omega$. We have
\[
(\gamma,\gamma_1)\cdot \eta=T\rho_\eta(\gamma,\gamma_1)=
(\gamma*\eta,\gamma_1\circ (\id_{\R^n}+\eta)).
\]
Likewise for $\Omega_K$.
\end{numba}
\begin{numba}\label{firsnum}
Given $\gamma\in \cL^1([0,1],C^\infty_c(\R^n,\R^n))$,
we want to find a continuous function
$\eta\colon [0,1]\to \Omega$ which is a Carath\'{e}odory
solution to
\[
(\eta(t),\eta'(t))=(0,\gamma(t))\cdot \eta(t)=(\eta(t),\gamma(t)\circ (\id_{\R^n}+\eta(t))
\quad \mbox{($t\in [0,1]$)}
\]
in $T\Omega=\Omega\times C^\infty_c(\R^n,\R^n)$
with $\eta(0)=0$.
As a differential equation in a locally convex space,
this requires
\[
\eta'(t)=\gamma(t)\circ (\id_{\R^n}+\eta(t))
\]
and hence that
\begin{equation}\label{checkthis}
\eta(t)=\int_0^t\gamma(s)\circ(\id_{\R^n}+\eta(s))\,ds\quad
\mbox{for all $t\in [0,1]$}
\end{equation}
in $C^\infty_c(\R^n,\R^n)$.
We shall see that, as a function of~$s$, the integrand in (\ref{checkthis})
is
an element of $\cL^1([0,1],C^\infty_c(\R^n,\R^n))$
for each $\eta\in C([0,1],\Omega)$
(Lemma~\ref{nogap});
thus validity of (\ref{checkthis}) implies
that $\eta\in AC_{L^1}([0,1],\Omega)$
and $\eta=\Evol^r([\gamma])$.
\end{numba}
\begin{numba}\label{varL1}
Likewise, if $\gamma\in \cL^1([0,1],C^\infty_K(\R^n,\R^n))$,
we wish to find a continuous map
$\eta\colon [0,1]\to \Omega_K$
such that
(\ref{checkthis}) holds.
Then $\eta\in AC_{L^1}([0,1],\Omega_K)$ (see Lemma~\ref{nogap})
and $\eta=\Evol^r([\gamma])$.
\end{numba}
In \ref{varL1}, we need to make sure that the integrand
of~(\ref{checkthis})
is an element of
$\cL^1([0,1],C^\infty_K(\R^n,\R^n))$
as a function of~$s$.
Moreover, in both~\ref{firsnum} and~\ref{varL1},
the smooth dependence of $\Evol^r([\gamma])$ on~$[\gamma]$
remains to be shown.
To enable these tasks,
we now provide several preparatory lemmas
devoted to measurability and differentiability
properties in related situations.
The point evaluation $\ev_x\colon C^\infty_c(\R^n,\R^n)\to\R^n$,
$f\mto f(x)$ is continuous linear for each $x\in \R^n$,
and these point evaluations separate points on $C^\infty_c(\R^n,\R^n)$.
Hence, if the integrand in~(\ref{checkthis})
is an element of $\cL^1([0,1], C^\infty_c(\R^n,\R^n))$,
then~(\ref{checkthis}) holds if and only if the
continuous functions
$\eta_x:=\ev_x\circ\eta\colon [0,1]\to\R^n$
satisfy
\[
\eta_x(t)
=\int_0^t\gamma(s)(x+\eta_x(s))\,ds\quad\mbox{for all $t\in [0,1]$,}
\]
for all $x\in \R^n$.
Setting $\zeta_x(t):=x+\eta_x(t)$, the latter is equivalent to
\begin{equation}\label{niceinteqx}
\zeta_x(t)=x+\int_0^t\gamma(s)(\zeta_x(s))\,ds\quad\mbox{for all $t\in [0,1]$,}
\end{equation}
meaning that
$\zeta_x\colon [0,1]\to \R^n$ is a Carath\'{e}odory solution to
\[
\zeta_x'(t)=\gamma(t)(\zeta_x(t)),\quad \zeta_x(0)=x.
\]
Our strategy is to discuss
the solutions to~(\ref{niceinteqx}),
and their dependence on $(\gamma,x)$.
\begin{numba}\label{thereduyea}
By the preceding,
if
$\eta\colon [0,1]\to \Omega$
is continuous and the integrand of (\ref{checkthis})
is an element of $\cL^1([0,1],C^\infty_c(\R^n,\R^n))$,
then the validity of (\ref{niceinteqx})
for all $x\in \R^n$ implies the validity
of~(\ref{checkthis}).
Likewise, if
$\eta\colon [0,1]\to \Omega_K$
is continuous and the integrand of (\ref{checkthis})
is in $\cL^1([0,1],C^\infty_K(\R^n,\R^n))$,
then the validity of~(\ref{niceinteqx})
for all $x\in \R^n$ implies the validity
of~(\ref{checkthis}).
\end{numba}
\begin{la}\label{nogap}
If $\gamma\in \cL^1([0,1],C^\infty_c(\R^n,\R^n))$
and $\eta\in C([0,1],C^\infty_c(\R^n,\R^n))$,
then
\[
(s\mto \gamma(s)\circ(\id_{\R^n}+\eta(s)))\in \cL^1([0,1],C^\infty_c(\R^n,\R^n)).
\]
If $\gamma\in \cL^1([0,1],C^\infty_K(\R^n,\R^n))$
and $\eta\in C([0,1],C^\infty_K(\R^n,\R^n))$,
then
\[
(s\mto \gamma(s)\circ(\id_{\R^n}+\eta(s)))\in \cL^1([0,1],C^\infty_K(\R^n,\R^n)).
\]
\end{la}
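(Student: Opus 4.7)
The plan is to prove the $C^\infty_K$-version first and then reduce the $C^\infty_c$-version to it by finding a common compact support for $\gamma$ and $\eta$.

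For the $C^\infty_K$-case, fix a compact $K\sub \R^n$ and set $F(s):=\gamma(s)\circ (\id_{\R^n}+\eta(s))$. First, the support analysis: if $x\notin K$, then $\eta(s)(x)=0$ because $\eta(s)\in C^\infty_K$, hence $F(s)(x)=\gamma(s)(x)=0$ because $\gamma(s)\in C^\infty_K$; thus $F(s)\in C^\infty_K(\R^n,\R^n)$. Next, measurability: the composition map
\[
\Phi\colon C^\infty_K(\R^n,\R^n)\times C^\infty_K(\R^n,\R^n)\to C^\infty_K(\R^n,\R^n),\quad (f,g)\mto f\circ (\id_{\R^n}+g),
\]
is continuous; on the Lie group chart around $0$ this is a consequence of the smoothness of $f*g=f\circ(\id_{\R^n}+g)+g$ recalled in~\ref{basicdfmo1}, and continuity extends to the full product by a standard Faà di Bruno / mean value estimate writing
$f\circ(\id+g)-f_0\circ(\id+g_0)=(f-f_0)\circ(\id+g_0)+\int_0^1 Df\circ(\id+g_0+t(g-g_0))\cdot(g-g_0)\,dt$.
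Since $s\mto (\gamma(s),\eta(s))$ is Borel measurable into the separable Fréchet product, $F=\Phi\circ(\gamma,\eta)$ is Borel measurable with image in the separable Fréchet space $C^\infty_K(\R^n,\R^n)$.

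It remains to bound the defining seminorms of $\cL^1([0,1],C^\infty_K(\R^n,\R^n))$. Faà di Bruno's formula gives, for each multi-index $\alpha\in\N_0^n$,
\[
\|\partial^\alpha F(s)\|_\infty \;\leq\; P_\alpha\!\Bigl(\max_{|\beta|\leq |\alpha|}\|\partial^\beta \eta(s)\|_\infty\Bigr)\cdot \max_{|\beta|\leq|\alpha|}\|\partial^\beta \gamma(s)\|_\infty
\]
with a universal polynomial $P_\alpha$. Since $\eta$ is continuous into the Fréchet space $C^\infty_K(\R^n,\R^n)$ and $[0,1]$ is compact, $\sup_{s\in[0,1]}\|\partial^\beta \eta(s)\|_\infty<\infty$ for every $\beta$, so for each $k\in\N_0$ there is $C_k>0$ with $\|F(s)\|_{C^k,\|.\|_\infty}\leq C_k\|\gamma(s)\|_{C^k,\|.\|_\infty}$. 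Integration then yields $\|F\|_{\cL^1,\|.\|_{C^k,\|.\|_\infty}}\leq C_k\,\|\gamma\|_{\cL^1,\|.\|_{C^k,\|.\|_\infty}}<\infty$, completing the $C^\infty_K$-case.

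For the $C^\infty_c$-case, I reduce to the previous one. Since $C^\infty_c(\R^n,\R^n)=\dl\,C^\infty_{K_n}(\R^n,\R^n)$ is a strict (LF)-space with Fréchet steps, it is compactly regular, so the compact set $\eta([0,1])$ lies in $C^\infty_{K_1}(\R^n,\R^n)$ for some compact $K_1$. Proposition~\ref{Mujileb}(b) (equivalently Lemma~\ref{lafep}(c)) gives $L^1([0,1],C^\infty_c(\R^n,\R^n))=\bigcup_n L^1([0,1],C^\infty_{K_n}(\R^n,\R^n))$, whence $[\gamma]\in L^1([0,1],C^\infty_{K_2}(\R^n,\R^n))$ for some compact $K_2$, and after modifying $\gamma$ on a null set we may assume $\gamma(s)\in C^\infty_{K_2}$ for all $s$. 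Setting $K:=K_1\cup K_2$, the $C^\infty_K$-case yields $F\in\cL^1([0,1],C^\infty_K(\R^n,\R^n))$, and composing with the continuous linear inclusion $C^\infty_K(\R^n,\R^n)\to C^\infty_c(\R^n,\R^n)$ gives $F\in\cL^1([0,1],C^\infty_c(\R^n,\R^n))$. The main obstacle I anticipate is the global continuity of $\Phi$ (not only on the Lie group chart); this is however forced by the Faà di Bruno estimate mentioned above, and all remaining steps—the support computation, the Borel measurability via continuity of $\Phi$, and the reduction via compact regularity—are routine.
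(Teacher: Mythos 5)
Your argument is essentially correct, but it takes a genuinely different route from the paper's. The paper settles both assertions in a few lines: the composition map $f(\sigma,\tau):=\tau\circ(\id_{\R^n}+\sigma)$ is smooth on $C^\infty_c(\R^n,\R^n)\times C^\infty_c(\R^n,\R^n)$ (quoted from the earlier work on $\Diff_c(\R^n)$) and linear in $\tau$, so Lemma~\ref{operders}\,(b) applies directly -- its hypotheses allow (FEP)-spaces, and $C^\infty_c(\R^n,\R^n)$ is a strict (LF)-space, hence (FEP) by Lemma~\ref{lafep}\,(c), while $C^\infty_K(\R^n,\R^n)$ is Fr\'{e}chet -- with the continuous curve $\eta$ in the first slot and the $\cL^1$-curve $\gamma$ in the linear slot; the $C^\infty_K$-version follows by restricting~$f$. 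You instead prove the Fr\'{e}chet case by hand (support analysis, continuity of composition on $C^\infty_K\times C^\infty_K$, measurability via second countability, and the Fa\`{a} di Bruno bound $\|F(s)\|_{C^k}\leq C_k\|\gamma(s)\|_{C^k}$, whose linearity in the $\gamma$-seminorm is indeed what makes the $\cL^1$-estimate work) and then reduce the $C^\infty_c$-case to it via compact regularity and Proposition~\ref{Mujileb}\,(b). This buys self-containedness and an explicit quantitative estimate, at the price of redoing what Lemma~\ref{operders} packages once. One caution: the multiplication $*$ recalled in \ref{basicdfmo1} is only defined on the open set $\Omega$ resp.\ $\Omega_K$, so global continuity of your composition operator really rests on your mean-value/Fa\`{a} di Bruno estimate (or on the cited smoothness of composition), as you yourself anticipated; granting that, the $C^\infty_K$-case is complete.

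The one step that needs repair is the reduction in the $C^\infty_c$-case. Modifying $\gamma$ on a null set so that it becomes $C^\infty_{K_2}$-valued changes the function whose membership in $\cL^1$ the lemma asserts: the modified composite agrees with $F(s)=\gamma(s)\circ(\id_{\R^n}+\eta(s))$ only almost everywhere, and since the Borel $\sigma$-algebra on $[0,1]$ is not complete, almost-everywhere equality does not give back Borel measurability of~$F$ itself (integrability is insensitive to null sets, but measurability of $F$ on the exceptional set is simply not addressed). The gap is easy to close within your framework: the sets $A_m:=\gamma^{-1}(C^\infty_{K_m}(\R^n,\R^n))$ are Borel (each step is closed in $C^\infty_c(\R^n,\R^n)$) and they cover $[0,1]$, since every value of $\gamma$ is compactly supported; on each $A_m$ your $C^\infty_K$-argument with $K:=K_m\cup K_1$ (where $K_1$ supports $\eta([0,1])$, from compact regularity) yields measurability of $F|_{A_m}$ with values in the separable space $C^\infty_K(\R^n,\R^n)$, so $F$ is measurable with separable image; for the seminorm integrals it then suffices that $[0,1]\setminus A_N$ is a null set for the index $N$ furnished by Proposition~\ref{Mujileb}\,(b), so no modification of $\gamma$ is needed at all. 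With this adjustment (or by following the paper's route through Lemma~\ref{operders}, which works with the original $\gamma$ in $C^\infty_c(\R^n,\R^n)$ throughout and avoids the issue), your proof is complete; for the way the lemma is used in Section~\ref{secRS} the class-level statement would essentially suffice, but the lemma as stated concerns the function itself.
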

\begin{proof}
The map
\[
f\colon C^\infty_c(\R^n,\R^n)\times C^\infty_c(\R^n,\R^n)\to C^\infty_c(\R^n,\R^n),\quad
f(\sigma,\tau):=\tau\circ (\id_{\R^n}+\sigma)
\]
is smooth (see~\cite{DRN}),
and $f(\sigma,\sbull)$ is linear for each $\sigma\in C^\infty_c(\R^n,\R^n)$.
Hence, by Lemma~\ref{operders}\,(b),
\[
f\circ (\eta,\gamma)\in \cL^1([0,1],C^\infty_c(\R^n,\R^n))
\]
for all $\gamma\in C([0,1],C^\infty_c(\R^n,\R^n))$
and $\eta\in \cL^1([0,1],C^\infty_c(\R^n,\R^n))$,
where
\[
(f\circ (\eta,\gamma))(s)=f(\eta(s),\gamma(s))=\gamma(s)\circ (\id_{\R^n}+\eta(s)).
\]
Given a compact set $K\sub \R^n$, the map $f$ restricts to
a smooth map
\[
f_K\colon C^\infty_K(\R^n,\R^n)\times C^\infty_K(\R^n,\R^n)\to C^\infty_K(\R^n,\R^n),\quad
(\sigma,\tau)\mto \tau\circ (\id_{\R^n}+\sigma),
\]
and again Lemma~\ref{operders}\,(b)
can be applied.
\end{proof}
{\bf{\boldmath$L^1$}-regularity of {\boldmath$\Diff_K(\R^n)$}}
\begin{la}\label{givsLon}
Let $K\sub \R^n$ be compact, $m\in \N_0$ and
$\gamma\in \cL^1([0,1],C^\infty_K(\R^n,\R^m))$.
Then
\[
\wh{\gamma}\colon [0,1]\times\R^n\to \R^m,\quad\wh{\gamma}(t,x):=\gamma(t)(x)
\]
is measurable. For any measurable function
$\zeta\colon [0,1]\to \R^n$, define a function
\[
(\wh{\gamma})_*(\zeta)\colon [0,1]\to \R^m
\]
via $(\wh{\gamma})_*(\zeta)(t):=\wh{\gamma}(t,\zeta(t))=\gamma(t)(\zeta(t))$.
Then $(\wh{\gamma})_* (\zeta)\in \cL^1([0,1],\R^m)$.
\end{la}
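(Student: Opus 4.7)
The plan is to reduce both claims to the continuity of the evaluation map and the fact that $C^\infty_K(\mathbb{R}^n,\mathbb{R}^m)$ is a separable Fr\'{e}chet space (see \ref{funtops}), so that product Borel $\sigma$-algebras behave as expected (see \ref{basicsmeas}(f)).

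First I would address measurability of $\widehat{\gamma}$. The evaluation map
\[
\mathrm{ev}\colon C^\infty_K(\R^n,\R^m)\times \R^n\to\R^m,\qquad (f,x)\mto f(x)
\]
is continuous, hence Borel measurable. Since $C^\infty_K(\R^n,\R^m)$ is separable and $\R^n$ is second countable, by \ref{basicsmeas}(f) the Borel $\sigma$-algebra on the product equals the product Borel $\sigma$-algebra, so $\mathrm{ev}$ is measurable with respect to $\cB(C^\infty_K(\R^n,\R^m))\tensor\cB(\R^n)$. Similarly, $\cB([0,1]\times\R^n)=\cB([0,1])\tensor\cB(\R^n)$, and the map $[0,1]\times\R^n\to C^\infty_K(\R^n,\R^m)\times\R^n$, $(t,x)\mto(\gamma(t),x)$, is measurable componentwise by \ref{basicsmeas}(e) (using that $\gamma$ is measurable by hypothesis). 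Composing with $\mathrm{ev}$ gives that $\widehat{\gamma}=\mathrm{ev}\circ(\gamma\times\id_{\R^n})$ is measurable.

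For the second assertion, the map $t\mto(t,\zeta(t))$ from $[0,1]$ to $[0,1]\times\R^n$ is measurable (again by \ref{basicsmeas}(e), since both components are), and composing with the measurable map $\widehat{\gamma}$ shows that $(\widehat{\gamma})_*(\zeta)$ is measurable. To establish the $\cL^1$-bound, I would use the continuous seminorm
\[
q\colon C^\infty_K(\R^n,\R^m)\to[0,\infty[,\qquad q(f):=\sup_{x\in\R^n}\|f(x)\|_\infty=\sup_{x\in K}\|f(x)\|_\infty
\]
(the $C^0$-seminorm from \ref{funtops}, noting that elements of $C^\infty_K(\R^n,\R^m)$ vanish off $K$). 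For any $t\in[0,1]$, whether $\zeta(t)\in K$ or not, one has $\|\gamma(t)(\zeta(t))\|_\infty\leq q(\gamma(t))$, so
\[
\int_0^1\|(\widehat{\gamma})_*(\zeta)(t)\|_\infty\,dt\;\leq\;\int_0^1 q(\gamma(t))\,dt\;=\;\|\gamma\|_{\cL^1,q}\;<\;\infty,
\]
since $\gamma\in\cL^1([0,1],C^\infty_K(\R^n,\R^m))$ by hypothesis. This proves $(\widehat{\gamma})_*(\zeta)\in\cL^1([0,1],\R^m)$.

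The only delicate point is the measurability of $\widehat{\gamma}$, where one must invoke the equality of the product Borel $\sigma$-algebra with the Borel $\sigma$-algebra of the product; this requires separability of $C^\infty_K(\R^n,\R^m)$, which is recorded in the footnote to \ref{funtops}. Once this is in place, the rest is routine composition and a pointwise estimate by the supremum seminorm.
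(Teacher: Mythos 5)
Your proof is correct and follows essentially the same route as the paper: measurability of $\wh{\gamma}$ via the continuous evaluation map composed with $\gamma\times\id_{\R^n}$, using \ref{basicsmeas}(f), and the $\cL^1$-bound via the pointwise estimate $\|\gamma(t)(\zeta(t))\|_\infty\leq\|\gamma(t)\|_{\cL^\infty,\|.\|_\infty}$. The only cosmetic difference is that you invoke separability of $C^\infty_K(\R^n,\R^m)$, which is not needed for \ref{basicsmeas}(f) here — second countability of the factor $\R^n$ already suffices, as in the paper.
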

\begin{proof}
Since $\R^n$ is second countable, the Borel $\sigma$-algebra
$\cB(C^\infty_K(\R^n,\R^m)\times \R^n)$ coincides with the product $\sigma$-algebra
$\cB(C^\infty_K(\R^n,\R^m))\tensor \cB(\R^n)$ (see \ref{basicsmeas}\,(f)).
Therefore the map
\[
\gamma\times\id_{\R^n}\colon [0,1]\times\R^n\to C^\infty_K(\R^n,\R^m)\times\R^n,\;\;
(t,x)\mto (\gamma(t),x)
\]
is Borel measurable.
The evaluation map
\[
\ve\colon C^\infty_K(\R^n,\R^m)\times\R^n\to\R^m,\quad\ve(f,x):=f(x)
\]
is $C^\infty$ (see, e.g., \cite{ZOO} or \cite{GaN}),
hence continuous and hence measurable.
Thus $\wh{\gamma}=\ve\circ (\gamma\times \id_{\R^n})$ is measurable.
As $(\id_{[0,1]},\zeta)\colon [0,1]\to [0,1]\times \R^n$,
$t\mto (t,\zeta(t))$
is measurable,
also the composition $(\wh{\gamma})_*(\zeta)=\wh{\gamma}\circ (\id_{[0,1]},\zeta)$
is measurable. On $\R^m$, we use the maximum-norm~$\|.\|_\infty$,
giving rise to a continuous norm $q:=\|.\|_{\cL^\infty,\|.\|_\infty}$
on $C^\infty_K(\R^n,\R^m)\sub \cL^\infty(\R^n,\R^m)$,
which in turn gives rise to a continuous seminorm $\|.\|_{\cL^1,q}$
on $\cL^1([0,1],C^\infty_K(\R^n,\R^m))$.
Now
\begin{equation}\label{immed2}
\|(\wh{\gamma})_*(\zeta)(t)\|_\infty=\|\gamma(t)(\zeta(t))\|_\infty
\leq \|\gamma(t)\|_{\cL^\infty,\|.\|_\infty}=q(\gamma(t)),
\end{equation}
whence $\int_0^1\|(\wh{\gamma})_*(\zeta)(t)\|_\infty\,dt\leq
\int_0^1q(\gamma(t))\,dt=\|\gamma\|_{\cL^1,q}<\infty$
and hence $(\wh{\gamma})_*(\zeta)\in \cL^1([0,1],\R^m)$,
with $\|(\wh{\gamma})_*(\zeta)\|_{\cL^1,\|.\|_\infty}\leq \|\gamma\|_{\cL^1,q}$.
\end{proof}
With notation as in the preceding lemma, we have:
\begin{la}\label{phimglat}
For each $m\in \N$ and each compact subset
$K\sub \R^n$,
the map
\[
\Phi_m\colon
L^1([0,1],C^\infty_K(\R^n,\R^m))\times C([0,1],\R^n)\to L^1([0,1],\R^m),
\]
$\Phi_m([\gamma],\zeta):=[(\wh{\gamma})_*(\zeta)]$, is smooth.
\end{la}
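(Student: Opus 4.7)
The plan is to recognize $\Phi_m$ as a direct instance of the pushforward construction from Proposition~\ref{oponLp}. Set $E_1:=\R^n$, $E_2:=C^\infty_K(\R^n,\R^m)$, $F:=\R^m$, and take $V:=\R^n$ as the (trivially open) subset of~$E_1$. Define
\[
f\colon V\times E_2\to F,\quad f(x,y):=y(x)=\ev(y,x),
\]
where $\ev\colon C^\infty_K(\R^n,\R^m)\times\R^n\to\R^m$ is the evaluation map. Then for each $\zeta\in C([0,1],\R^n)$ and $\gamma\in \cL^1([0,1],E_2)$,
\[
(f\circ(\zeta,\gamma))(s)=\gamma(s)(\zeta(s))=(\wh{\gamma})_*(\zeta)(s),
\]
so that $\Phi_m([\gamma],\zeta)=\wt{f}(\zeta,[\gamma])$ with $\wt{f}$ as in Proposition~\ref{oponLp}, up to the transposition $\sigma(\zeta,[\gamma])=([\gamma],\zeta)$, which is a smooth isomorphism of topological vector spaces.

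Next I would verify the hypotheses of Proposition~\ref{oponLp} (in the form of case~(b) of Lemma~\ref{operders}). The space $E_2=C^\infty_K(\R^n,\R^m)$ is a Fr\'{e}chet space (see \ref{funtops}) and $F=\R^m$ is finite-dimensional, hence Fr\'{e}chet. The evaluation map $\ev$ is smooth (as already used in the proof of Lemma~\ref{givsLon}; see, e.g., \cite{ZOO} or \cite{GaN}), so $f$ is $C^\infty$ as the composition of $\ev$ with the coordinate swap $(x,y)\mapsto(y,x)$. Finally, $f(x,\cdot)\colon E_2\to F$ is the point evaluation $y\mapsto y(x)$, which is continuous linear for each $x\in V$. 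Since $f$ is $C^\infty$, it is in particular $C^{k+1}$ for every $k\in\N_0\cup\{\infty\}$, and Proposition~\ref{oponLp} (with $p=1$) yields that
\[
\wt{f}\colon C([0,1],\R^n)\times L^1([0,1],E_2)\to L^1([0,1],\R^m)
\]
is $C^\infty$. Composing with the smooth transposition $\sigma$ gives smoothness of $\Phi_m$.

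There is essentially no hard step here: the entire apparatus of Section~\ref{mpbelbg} (culminating in Proposition~\ref{oponLp}) was built precisely to treat such maps, and the only substantive content of the proof is to exhibit the evaluation map as the relevant smooth $f$ that is linear in its second argument. The only minor point deserving a sentence is that we are applying Proposition~\ref{oponLp} with $E_1=\R^n$ not required to be Fr\'{e}chet, which is permitted because the Fr\'{e}chet (or (FEP)) hypothesis in case~(b) of Lemma~\ref{operders} is imposed only on $E_2$ and $F$.
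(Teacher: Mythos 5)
Your proof is correct, but it follows a genuinely different route from the paper. You identify $\Phi_m$ (after a harmless transposition of the factors) with the pushforward $\wt{f}$ of the evaluation map $f\colon \R^n\times C^\infty_K(\R^n,\R^m)\to\R^m$, $f(x,y)=y(x)$, which is smooth and linear in its second argument, and then invoke Proposition~\ref{oponLp} with $p=1$; the hypotheses are indeed satisfied, since case~(b) only requires $E_2=C^\infty_K(\R^n,\R^m)$ and $F=\R^m$ to be Fr\'{e}chet, and the smoothness of the evaluation map is exactly the fact the paper itself cites (via \cite{ZOO}, \cite{GaN}) in the proof of Lemma~\ref{givsLon}. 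The paper instead gives a self-contained, hands-on argument: it exploits linearity of $\Phi_m$ in $[\gamma]$ to reduce smoothness to the $C^{0,k}$-property for all $k$ (via \ref{basicCrs}), proves continuity through explicit Lipschitz-type estimates with the seminorm $p(f)=\sup_x\|f'(x)\|_{op}$, and carries out an induction in which the directional derivative $d_2\Phi_m(\gamma,\eta;\eta_1)=\beta(\Phi_{mn}([D\circ\tilde\gamma],\eta),\eta_1)$ is computed by hand using parameter-dependent integrals (\ref{pardep}) and Fubini's theorem (Lemma~\ref{thusFub}). What the paper's computation buys is an explicit derivative formula and quantitative estimates that run in parallel with the variant Lemma~\ref{phimglatdp} used for $\Diff_c(\R^n)$; what your argument buys is brevity and a clean illustration that Section~\ref{mpbelbg} was designed for precisely such maps. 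Both are valid; your one-line reduction to the evaluation map is the essential observation and it holds up.
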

\begin{proof}
Since $\Phi_m(\gamma,\zeta)$ is linear in $\gamma$,
it suffices to show that $\Phi_m$ is $C^{0,\infty}$
(see \ref{basicCrs} (b) and (a)). We show by induction that $\Phi_m$ is $C^{0,k}$
for each $k\in \N_0$. Let $k=0$ first;
we have to show that $\Phi_m$ is continuous.
From the preceding proof, we know that
\[
\|\Phi_m(\gamma,\zeta)\|_{L^1,\|.\|_\infty}
\leq \|\gamma\|_{L^1,q}
\]
with $q:=\|.\|_{\cL^\infty,\|.\|_\infty}$.
Consider the map
$D\colon C^\infty_K(\R^n,\R^m)\to C^\infty_K(\R^n,\R^{m\times n})$
such that $D(f)(x):=f'(x)\in \cL(\R^n,\R^m)\cong \R^{m\times n}$
is the Jacobi matrix of $f$ at $x$.
Then $D$ is continuous linear,
entailing that $p:=\|.\|_{\cL^\infty,\|.\|_{op}}\circ D$
is a continuous seminorm on $C^\infty_K(\R^n, \R^m)$.
Thus $p(f)=\sup_{x\in \R^n}\|f'(x)\|_{op}$ for
$f\in C^\infty_K(\R^n, \R^m)$.
Let $\tilde{\gamma}\in
\cL^1([0,1],C^\infty_K(\R^n,\R^m))$
and $\gamma:=[\tilde{\gamma}]\in L^1([0,1],C^\infty_K(\R^n,\R^m))$.
We have for all
$\eta,\eta_1\in C([0,1],\R^n)$
and $t\in [0,1]$
\begin{equation}\label{reuseindusp}
\tilde{\gamma}(t)(\eta(t))-\tilde{\gamma}(t)(\eta_1(t))
=
\int_0^1 (D(\tilde{\gamma}(t))(\eta_1(t)+s(\eta(t)-\eta_1(t))).(\eta(t)-\eta_1(t))\,ds
\end{equation}
and thus
\begin{eqnarray}
\lefteqn{\|\tilde{\gamma}(t)(\eta(t))-\tilde{\gamma}(t)(\eta_1(t))\|_\infty}
\qquad\notag \\
&\leq &
\int_0^1\|(D(\tilde{\gamma}(t))(\eta_1(t)
+s(\eta(t)-\eta_1(t))).(\eta(t)-\eta_1(t))\|_\infty\,ds\notag \\
&\leq&
\|D(\tilde{\gamma}(t))\|_{\cL^\infty,\|.\|_{op}}\|\eta-\eta_1\|_{\cL^\infty,\|.\|_\infty}\notag \\
&=& p(\tilde{\gamma}(t))\|\eta-\eta_1\|_{\cL^\infty,\|.\|_\infty}.\label{yesyep}
\end{eqnarray}
Integrating over~$t$, we deduce that
\[
\|\Phi_m(\gamma)(\eta)-\Phi_m(\gamma)(\eta_1)\|_{L^1,\|.\|_\infty}
\leq
\|\gamma\|_{L^1,p}\|\eta-\eta_1\|_{\cL^\infty,\|.\|_\infty}.
\]
As a consequence,
\begin{eqnarray*}
\lefteqn{\|\Phi_m(\gamma,\zeta)-\Phi_m(\gamma_1,\zeta_1)\|_{L^1,\|.\|_\infty}}\qquad\\
&\leq& \|\Phi_m(\gamma,\zeta)-\Phi_m(\gamma,\zeta_1)\|_{L^1,\|.\|_\infty}
+\|\Phi_m(\gamma,\zeta_1)-\Phi_m(\gamma_1,\zeta_1)\|_{L^1,\|.\|_\infty}\\
&\leq&
\|\gamma\|_{L^1,p}\|\eta-\eta_1\|_{\cL^\infty,\|.\|_\infty}
+ \|\gamma-\gamma_1\|_{L^1,q}
\end{eqnarray*}
for all $\gamma_1\in
L^1([0,1],C^\infty_K(\R^n,\R^m))$ and $\gamma,\eta,\eta_1$
as before, which can be made arbitrarily small
for $\gamma_1$ close to $\gamma$ and $\eta_1$ close to~$\eta$.
Thus $\Phi_m$ is continuous
at each $(\gamma,\eta)$ and
thus $\Phi_m$ is continuous.\\[2.1mm]
Let $k\in \N$ now and assume that $\Phi_m$
is $C^{0,k-1}$ for each $m\in \N$.
Let
$\gamma=[\tilde{\gamma}]\in
L^1([0,1],C^\infty_K(\R^n,\R^m))$
with
$\tilde{\gamma}\in
\cL^1([0,1],C^\infty_K(\R^n,\R^m))$
and
$\eta,\eta_1\in C([0,1],\R^n)$.
To calculate
$d_2\Phi_m(\gamma,\eta;\eta_1)$,
we consider the corresponding directional
difference quotients first.
For $t\in [0,1]$ and $\tau\in \R\setminus \{0\}$, we
have
\begin{equation}\label{hiernkla}
\frac{\tilde{\gamma}(t)(\eta(t)+\tau \eta_1(t))-\tilde{\gamma}(t)(\eta(t))}{\tau}
=
\int_0^1 (D(\tilde{\gamma}(t))(\eta(t)+s\tau \eta_1(t)).\eta_1(t)\,ds
\end{equation}
by (\ref{reuseindusp}).
The map
\[
\alpha
\colon L^1([0,1],\R^{m\times n})\to L^1([0,1],\R^m),\quad
\alpha([f]):=[s\mto f(s)\eta_1(s)]
\]
for $f\in \cL^1([0,1],\R^{m\times n})$,
$s\in [0,1]$
(given pointwise by multiplication of matrices and vectors)
is linear and continuous,
with $\|\alpha\|_{op}\leq \|\eta_1\|_{\cL^\infty,\|.\|_\infty}$.
We abbreviate $\bar{\gamma}:=D \circ \tilde{\gamma}$
and identify $\R^{m\times n}$ with~$\R^{mn}$.
Then the mapping\linebreak
$h\colon \R\times [0,1]\to L^1([0,1],\R^m)$,
\[
h(\tau,s):=
\alpha([(\wh{\bar{\gamma}})_*(\eta+s\tau\eta_1)])
=\alpha(\Phi_{mn}([\bar{\gamma}],\eta+s\tau\eta_1))
\]
is continuous, by induction,
and we record that
\[
h(0,s)=\alpha(\Phi_{mn}([\bar{\gamma}],\eta))=\Phi_{mn}([\bar{\gamma}],\eta)\eta_1
\]
is independent of~$s\in [0,1]$.
Now the theorem on parameter-dependent integrals
(see \ref{pardep}) shows that
\[
g\colon \R\to L^1([0,1],\R^m),\quad
g(\tau):=\int_0^1 h(\tau,s)\,ds
\]
is continuous.
We claim that
\[
g(\tau)=\frac{\Phi_m(\gamma,\eta+\tau\eta_1)-\Phi_m(\gamma,\eta)}{\tau}
\quad\mbox{for all $\tau\in \R\setminus \{0\}$.}
\]
If this is true, then the continuity of~$g$ implies
that the limit as $\tau\to 0$ exists;
we have
\begin{eqnarray*}
d_2\Phi_m(\gamma,\eta;\eta_1)
&= &\lim_{\tau\to0}\frac{\Phi_m(\gamma,\eta+\tau\eta_1)-\Phi_m(\gamma,\eta)}{\tau}\\
&=&\lim_{\tau\to0}g(\tau)=g(0)=\int_0^1h(0,s)\,ds
=\Phi_{mn}([\bar{\gamma}],\eta)\eta_1.
\end{eqnarray*}
The map
\[
\beta\colon L^1([0,1],\R^{m\times n})\times C([0,1],\R^n)
\to L^1([0,1],\R^m),\;\,
\beta([f],g):=[t\mto f(t)g(t)]
\]
given by pointwise multiplication
of matrices and vectors
is continuous bilinear with
$\|\beta\|_{\op}\leq 1$,
and hence smooth.
By the preceding,
we have
\begin{equation}\label{goodrhs}
d_2\Phi_m(\gamma,\eta;\eta_1)=\beta(\Phi_{mn}([\bar{\gamma}],\eta),\eta_1).
\end{equation}
The map
\[
L^1([0,1],D)\colon L^1([0,1],C^\infty_K(\R^n,\R^m))\to
L^1([0,1],C^\infty_K(\R^n,\R^{m\times n}))
\]
sending
$\gamma=[\tilde{\gamma}]$ to $[\bar{\gamma}]=[D\circ \tilde{\gamma}]$
is continuous linear.
The map $\Phi_{mn}$ is $C^{0,k-1}$ by induction.
Hence also $(\gamma,\eta)\mto \Phi_{mn}([\bar{\gamma}],\eta)$
is $C^{0,k-1}$ (see \ref{basicCrs}\,(c)).
Looking at the right hand side
of (\ref{goodrhs}),
we deduce with \ref{basicCrs}\,(d) that $d_2\Phi_m$ is $C^{0,k-1,\infty}$
as a function of $(\gamma,\eta,\eta_1)$
and hence (by \ref{basicCrs}\,(e))
$C^{0,k-1}$ as a function of $(\gamma,(\eta,\eta_1))$.
Hence $\Phi_m$ is $C^{0,k}$, by \ref{basicCrs}\,(f).\\[2.3mm]
To prove the claim made above, we consider the continuous linear functionals
\[
I_{\lambda,\theta}\colon L^1([0,1],\R^m)\to \R,\quad
I_{\lambda,\theta}([f]):=\int_0^1 \lambda(f(t))\theta(t)\,dt
\]
for $\theta\in \cL^\infty([0,1],\R)$
and $\lambda$ in the dual space~$(\R^m)'$.
Then
\begin{eqnarray*}
I_{\lambda,\theta}(g(\tau)) &= &\int_0^1 I_{\lambda,\theta}(h(\tau,s))\,ds\\
&=&
\int_0^1\int_0^1
\lambda\big(D(\tilde{\gamma}(t))(\eta(t)+s\tau\eta_1(t))\eta_1(t)\big)\theta(t)
\,dt\,ds\\
&=&
\int_0^1\lambda\left(\int_0^1
D(\tilde{\gamma}(t))(\eta(t)+s\tau\eta_1(t))\eta_1(t)
\,ds\right) \,\theta(t) \, dt\\
&=&\int_0^1\lambda\left(\frac{\tilde{\gamma}(t)(\eta(t)+\tau \eta_1(t))
-\tilde{\gamma}(t)(\eta(t))}{\tau}\right)
\theta(t)\,dt\\
&=&
I_{\lambda,\theta}\left(\frac{\Phi_n(\gamma,\eta+\tau \eta_1)-
\Phi_n(\gamma,\eta)}{\tau}\right),
\end{eqnarray*}
using (\ref{hiernkla}) for the penultimate equality
and Fubini's Theorem for the third equality
(justified by Lemma~\ref{thusFub}).
As the $I_{\lambda,\theta}$ separate points on $L^1([0,1],\R^m)$,
the claim is established.
\end{proof}
We hasten to check that the hypotheses of Fubini's Theorem were satisfied
in the preceding situation.
\begin{la}\label{thusFub}
The function
$f\colon [0,1]^2 \to \R$,
\[
(t,s)\mapsto
\lambda(D(\tilde{\gamma}(t))(\eta(t)+s\tau\eta_1(t))\eta_1(t))\theta(t)
\]
is in $\cL^1([0,1]^2,\R)$ with respect to Lebesgue-Borel measure on $[0,1]^2$.
\end{la}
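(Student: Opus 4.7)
The plan is to verify $\cL^1$-membership by splitting the task into measurability and an integrable majorant. Abbreviate $\bar{\gamma} := D\circ\tilde{\gamma}$, so that $\bar{\gamma}\in\cL^1([0,1],C^\infty_K(\R^n,\R^{m\times n}))$ because $D\colon C^\infty_K(\R^n,\R^m)\to C^\infty_K(\R^n,\R^{m\times n})$ is continuous linear and hence induces a continuous linear map on $\cL^1$ by \ref{deflinpfwd}. Applying Lemma~\ref{givsLon} to $\bar{\gamma}$, the map $\wh{\bar{\gamma}}\colon [0,1]\times \R^n\to \R^{m\times n}$, $(t,x)\mto \bar{\gamma}(t)(x)=D(\tilde{\gamma}(t))(x)$, is Borel measurable.

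First I would establish measurability of $f$ on $[0,1]^2$. The map $\sigma\colon [0,1]^2\to [0,1]\times\R^n$, $(t,s)\mto (t,\eta(t)+s\tau\eta_1(t))$, is continuous (since $\eta,\eta_1$ are continuous) and therefore Borel measurable. Composing, $(t,s)\mto \wh{\bar{\gamma}}(\sigma(t,s))=D(\tilde{\gamma}(t))(\eta(t)+s\tau\eta_1(t))$ is measurable into $\R^{m\times n}$. Pointwise multiplication by the continuous vector-valued map $(t,s)\mto \eta_1(t)$ and then application of the continuous linear functional $\lambda$ yields a measurable $\R$-valued function, which is finally multiplied by the measurable scalar $\theta(t)$; hence $f$ is Borel measurable.

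Next I would produce an integrable majorant by reusing the seminorm $p:=\|.\|_{\cL^\infty,\|.\|_{op}}\circ D$ introduced before (\ref{yesyep}). For every $(t,s)\in[0,1]^2$,
\[
|f(t,s)|\;\leq\; \|\lambda\|_{op}\,\|D(\tilde{\gamma}(t))(\eta(t)+s\tau\eta_1(t))\|_{op}\,\|\eta_1(t)\|_\infty\,|\theta(t)|
\;\leq\; C\, p(\tilde{\gamma}(t)),
\]
with the finite constant $C:=\|\lambda\|_{op}\,\|\eta_1\|_{\cL^\infty,\|.\|_\infty}\,\|\theta\|_{\cL^\infty}$. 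Since $\tilde{\gamma}\in\cL^1([0,1],C^\infty_K(\R^n,\R^m))$, the scalar function $t\mto p(\tilde{\gamma}(t))$ lies in $\cL^1([0,1],\R)$, so the bound, viewed as a function on $[0,1]^2$ that is constant in $s$, is integrable by Tonelli's theorem. Combining measurability with this majorant gives $f\in\cL^1([0,1]^2,\R)$.

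No serious obstacle is expected: the only subtle point is the measurability of $(t,x)\mto D(\tilde{\gamma}(t))(x)$, which is already packaged by Lemma~\ref{givsLon}, so the argument reduces to a bookkeeping chain of compositions together with the straightforward pointwise operator-norm estimate.
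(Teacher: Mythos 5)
Your proof is correct and follows essentially the same route as the paper: joint measurability is obtained from the Borel measurability of $(t,x)\mapsto D(\tilde{\gamma}(t))(x)$ (packaged in Lemma~\ref{givsLon}) composed with continuous maps, and integrability from the majorant $C\,p(\tilde{\gamma}(t))$ with $p=\|.\|_{\cL^\infty,\|.\|_{op}}\circ D$, using that $\tilde{\gamma}\in\cL^1$ and Tonelli/Fubini. The only difference is cosmetic: you argue at the level of the matrix-valued curve $\bar{\gamma}=D\circ\tilde{\gamma}$, while the paper decomposes into the scalar components $\partial\tilde{\gamma}_i/\partial x_j$, $\lambda_i$, $\eta_{1,j}$ before making the same estimates.
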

\begin{proof}
To see that $f$ is measurable,
write $\tilde{\gamma}(t)=(\tilde{\gamma}_1(t),\ldots,\tilde{\gamma}_m(t))$
(identifying $C^\infty_K(\R^n,\R^m)$ with $C^\infty_K(\R^n,\R)^m$).
Then $t\mto \frac{\partial \tilde{\gamma}_i(t)}{\partial x_j}$
is an element of $\cL^1([0,1],C^\infty_K(\R^n,\R))$
for each $j\in \{1,\ldots, n\}$.
Write $\eta_1=(\eta_{1,1},\ldots, \eta_{1,n})$
with continuous functions $\eta_{1,j}\colon [0,1]\to\R$
for $j\in\{1,\ldots, n\}$.
There are $\lambda_1,\ldots,\lambda_m\in \R$ such that
$\lambda(x_1,\ldots, x_m)=\lambda_1x_1+\cdots+\lambda_mx_m$.
The evaluation map
\[
\ve\colon C([0,1],\R^n)\times [0,1]\to \R^n,\quad (\kappa,t)\mto\kappa(t)
\]
is continuous and hence measurable.
The map $[0,1]^2\to C([0,1],\R^n)\times [0,1]$, $(s,t)\mto (\eta+s\tau\eta_1,t)$
is continuous and hence measurable.
Now the formula
\[
f(t,s)=\sum_{i=1}^m\sum_{j=1}^n\lambda_i
\frac{\partial\tilde{\gamma}_i(t)}{\partial x_j}(\ve(\eta+s\tau\eta_1,t))
\eta_{1,j}(t)\theta(t)
\]
shows that $f$ is measurable, being a sum of products of measurable
real-valued functions.
Using Fubini's theorem
for non-negative measurable functions on $[0,1]^2$, we find that
\begin{eqnarray*}
\lefteqn{\int_{[0,1]^2}|f(t,s)|\,d\lambda_2(t,s)}\qquad\qquad\\
&=& \int_0^1\int_0^1 |f(t,s)|\,ds\, dt\\
&\leq &
\|\theta\|_{\cL^\infty}
\sum_{i=1}^m\sum_{j=1}^n
\|\eta_{1,j}\|_{\cL^\infty}
|\lambda_i|
\int_0^1\int_0^1
\underbrace{\left|\frac{\partial\tilde{\gamma}_i(t)}{\partial x_j}
(\ve(\eta+s\tau\eta_1,t))\right|}_{\leq
\|\frac{\partial\tilde{\gamma}_i(t)}{\partial x_j}\|_{\cL^\infty}} ds\,dt\\
&\leq & \int_0^1 \left\|\frac{\partial\tilde{\gamma}_i}{\partial x_j}\right\|_{\cL^\infty} \,dt
= \|\partial/{\partial x_j}\circ \tilde{\gamma}_i\|_{\cL^1,p}<\infty
\end{eqnarray*}
with $p:=\|.\|_{\cL^\infty}:=\|.\|_{\cL^\infty,|.|}$.
\end{proof}
%
%
%
%\begin{la}
%For each $m\in \N$,
%the map
%\[
%\Phi_m\colon
%L^1([0,1],C^\infty_c(\R^n,\R^m))\times C([0,1],\R^n)\to
%L^1([0,1],\R^m),
%\]
%$\Phi_m([\gamma],\zeta):=[(\wh{\gamma})_*(\zeta)]$,
%is smooth.
%\end{la}
%
%\begin{proof}
%We can repeat the proof of Lemma~\ref{phimglat}
%verbatim, with $C^\infty_c$ in place of $C^\infty_K$.
%\end{proof}
%
%
%
%
\begin{numba}\label{moreset}
Consider
$D\colon C^\infty_K(\R^n,\R^n)\to C^\infty_K(\R^n,\R^{n\times n})$, $f\mto f'$
and the continuous seminorm
$p:=\|.\|_{\cL^\infty,\|.\|_{op}}\circ D$
on $C^\infty_K(\R^n,\R^n)$;
thus $p(f)=\sup_{x\in \R^n}\|f'(x)\|_{op}$ for
$f\in C^\infty_K(\R^n, \R^n)$.
Fix $L\in \,]0,1[$.
Then
%
%\[
%Q_c
%:=\{\gamma \in L^1([0,1],C^\infty_c(\R^n,\R^n))
%\colon \|\gamma\|_{L^1,p}<L\}
%\]
%
%
\[
Q_K:=
\{[\gamma]\in L^1([0,1],C^\infty_K(\R^n,\R^n))\colon
\|\gamma\|_{L^1,p}<L\}
\]
is an open $0$-neighbourhood in $L^1([0,1],C^\infty_K(\R^n,\R^n))$.\\[2.3mm]
%
%Then
%
%\begin{equation}\label{hncsemreg}
%Q_c=\bigcup_K Q_K,
%\end{equation}
%
%as a consequence of Proposition~\ref{Mujileb}\,(b).\\[2.3mm]
%
We define a map
$\Psi_K \colon Q_K\times \R^n\times
C([0,1],\R^n)\to C([0,1],\R^n)$ via
\[
\Psi_K([\gamma],x,\kappa)(t):=x+\int_0^t \gamma(s)(\kappa(s))\,ds
\]
for $[\gamma]\in Q_K$ with $\gamma\in \cL^1([0,1],C^\infty_K(\R,\R^n))$,
$x\in \R^n$, $\kappa\in C([0,1],\R^n)$ and $t\in [0,1]$.
\end{numba}
\begin{la}\label{hfway}
The map
$\Psi_K \colon Q_K\times \R^n\times
C([0,1],\R^n)\to C([0,1],\R^n)$
is smooth and defines a uniform family of contractions
in the final variable, in the sense that
\[
\Lip(\Psi_K([\gamma],x,\sbull))\leq L
\]
for all $[\gamma]\in Q_K$ and $x\in \R^n$.
\end{la}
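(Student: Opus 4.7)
The plan is to view $\Psi_K$ as a composition of three mappings that are already known (or easily seen) to be smooth, and then to estimate the Lipschitz constant by the familiar mean value inequality applied fibrewise.

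First I would write
\[
\Psi_K([\gamma],x,\kappa)(t) = x + J\bigl(\Phi_n([\gamma],\kappa)\bigr)(t),
\]
where $\Phi_n \colon L^1([0,1],C^\infty_K(\R^n,\R^n))\times C([0,1],\R^n)\to L^1([0,1],\R^n)$ is the map sending $([\gamma],\kappa)$ to $[s\mapsto \gamma(s)(\kappa(s))]$ as in Lemma~\ref{phimglat}, and
\[
J\colon L^1([0,1],\R^n)\to C([0,1],\R^n),\quad [f]\mapsto \Bigl(t\mapsto \int_0^t f(s)\,ds\Bigr)
\]
is the indefinite-integral operator. The map $J$ is continuous and linear (its image actually lies in $AC_{L^1}([0,1],\R^n)$, cf.\ Lemma~\ref{funda}), and $\Phi_n$ is smooth by Lemma~\ref{phimglat}. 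The map $\R^n\to C([0,1],\R^n)$, $x\mapsto (t\mapsto x)$, is continuous and linear, and addition in $C([0,1],\R^n)$ is continuous and bilinear (hence smooth). Writing $\Psi_K$ as a composition of these smooth ingredients, the smoothness of $\Psi_K$ follows from the Chain Rule.

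For the Lipschitz estimate in the third variable, I would fix $[\gamma]\in Q_K$ with representative $\gamma\in\cL^1([0,1],C^\infty_K(\R^n,\R^n))$, $x\in\R^n$, and $\kappa_1,\kappa_2\in C([0,1],\R^n)$. The mean value estimate already used in the proof of Lemma~\ref{phimglat} (cf.\ equation~(\ref{yesyep})) gives, for $\lambda_1$-almost every $s\in[0,1]$,
\[
\|\gamma(s)(\kappa_1(s))-\gamma(s)(\kappa_2(s))\|_\infty \,\le\, p(\gamma(s))\,\|\kappa_1-\kappa_2\|_{\cL^\infty,\|.\|_\infty}.
\]
Integrating over $[0,t]$ and taking the supremum over $t\in[0,1]$ yields
\[
\|\Psi_K([\gamma],x,\kappa_1)-\Psi_K([\gamma],x,\kappa_2)\|_{\cL^\infty,\|.\|_\infty}
\,\le\, \|\gamma\|_{L^1,p}\,\|\kappa_1-\kappa_2\|_{\cL^\infty,\|.\|_\infty}
\,<\, L\,\|\kappa_1-\kappa_2\|_{\cL^\infty,\|.\|_\infty},
\]
which is the desired uniform contraction property.

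Neither step presents a serious obstacle, as the heavy lifting has been done in Lemmas~\ref{givsLon} and~\ref{phimglat}. The one small point to double-check is that $J$ really lands in $C([0,1],\R^n)$ with continuity in the $\cL^\infty$-norm (so that the composition $J\circ\Phi_n$ has the codomain we want and is smooth); this is immediate from $\|Jf\|_{\cL^\infty,\|.\|_\infty}\le \|f\|_{L^1,\|.\|_\infty}$. Everything else is bookkeeping.
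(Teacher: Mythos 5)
Your proposal is correct and follows essentially the same route as the paper: the paper also writes $\Psi_K([\gamma],x,\kappa)=c_x+J(\Phi_n([\gamma],\kappa))$ with the continuous linear maps $x\mapsto c_x$ and $J$ and the smooth map $\Phi_n$ from Lemma~\ref{phimglat}, and derives the contraction estimate from the mean value bound (\ref{yesyep}) exactly as you do. No gaps.
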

\begin{proof}
For $x\in \R^n$ let $c_x\colon [0,1]\to\R^n$
be the constant function $t\mto x$. 
The map $\R^n\to C([0,1],\R^n)$, $x\mto c_x$
is continuous linear and hence smooth.
Moreover, the operator
\[
J\colon L^1([0,1],\R^n)\to C([0,1],\R^n)
\]
determined by
$J([f])(t):=\int_0^t f(s)\, ds$
is continuous and linear and hence smooth.
Now the formula
\[
\Psi_K([\gamma],x,\kappa)=c_x+J(\Phi_n([\gamma],\kappa))
\]
(with the smooth map $\Phi_n$ from Lemma~\ref{phimglat})
shows that $\Psi_K$ is smooth.
Given $\eta,\eta_1\in C([0,1],\R^n)$, we deduce from (\ref{yesyep})
that
\begin{eqnarray*}
\|\Psi_K([\gamma],x,\eta)-\Psi_K([\gamma],x,\eta_1)\|_\infty &=&
\sup_{t\in [0,1]}
\left\|\int_0^t \gamma(s)(\eta(s))-\gamma(s)(\eta_1(s))\,ds\right\|_\infty\\
&\leq &
\sup_{t\in[0,1]}
\int_0^t \underbrace{\|\gamma(s)(\eta(s))-
\gamma(s)(\eta_1(s))\|_\infty}_{\leq p(\gamma(t))\|\eta-\eta_1\|_{L^\infty}} \,ds\\
&\leq &
\|\gamma\|_{\cL^1,p} \|\eta-\eta_1\|_{L^\infty}
\leq L\|\eta-\eta_1\|_{L^\infty}
\end{eqnarray*}
with $p$ as in \ref{moreset}.
This ends the proof.
\end{proof}
For each $([\gamma],x)\in Q_K\times\R^n$,
the contraction
\[
\Psi_K([\gamma],x,\sbull)\colon C([0,1],\R^n)\to C([0,1],\R^n)
\]
of the Banach space $C([0,1],\R^n)$ has a unique
fixed point $\zeta_{[\gamma],x}\in C([0,1],\R^n)$,
by Banach's Contraction Principle;
thus
\begin{equation}\label{willreco}
\Psi_K([\gamma],x,\zeta_{[\gamma],x})=\zeta_{[\gamma],x}.
\end{equation}
Since $\Psi_K$ is smooth, Lemma~\ref{PARFIX}
shows that also the map
\[
Q_K\times \R^n\to C([0,1],\R^n),\quad ([\gamma],x)\mto\zeta_{[\gamma],x}
\]
is smooth.
Define $F_K([\gamma])(t)(x):=\zeta_{[\gamma],x}(t)$
for $[\gamma]\in Q_K$, $x\in \R^n$ and $t\in [0,1]$.
Using the exponential laws from \cite{AaS},
we deduce:
\begin{itemize}
\item[(a)]
$F_K([\gamma])(t)\in C^\infty(\R^n,\R^n)$
for all $[\gamma]\in Q_K$ and $t\in[0,1]$;
\item[(b)] $F_K([\gamma])\in C([0,1],C^\infty(\R^n,\R^n))$ for all $[\gamma]\in Q_K$;
\item[(c)]
$F_K\colon Q_K \to C([0,1],C^\infty(\R^n,\R^n))$
is smooth.
\end{itemize}
As a consequence, also the map
\[
E_K\colon Q_K\to C([0,1],C^\infty(\R^n,\R^n)),\quad [\gamma]\mto F_K([\gamma])- I
\]
is smooth, where $I\colon [0,1]\to C^\infty(\R^n,\R^n)$ is the constant function
$t\mto\id_{\R^n}$.
If $x\in \R^n\setminus K$, then $\zeta_{[\gamma],x}$ is the fixed point
of the map $\Psi_K([\gamma],x,\sbull)$
determined by
\[
\Psi_K([\gamma],x,\kappa)(t)=x+\int_0^t\gamma(s)(\kappa(s))\,ds.
\]
Since $\gamma(s)(x)=0$ for each $s\in [0,1]$, also the constant map $c_x$
is a fixed point and thus $\zeta_{[\gamma],x}=c_x$ by uniqueness
of the latter.
As a consequence,
\[
E_K([\gamma])(t)\in C^\infty_K(\R^n,\R^n)
\]
for each $t\in [0,1]$ and thus $E_K$ can be considered as a smooth map
\[
E_K\colon Q_K\to C([0,1],C^\infty_K(\R^n,\R^n)).
\]
Since $E_K(0)=E_\emptyset(0)=0$ and $E_K$ is continuous,
there is an open $0$-neighbour\-hood $P_K\sub Q_K$
such that
\[
E_K(P_K)\sub \Omega_K.
\]
For each $[\gamma]\in P_K$, we have $\eta:=E_K([\gamma])\in C([0,1],\Omega_K)$
and $\zeta_x(t):=x+\gamma(t)(x)=F_K([\gamma])(t)(x)=\zeta_{[\gamma],x}$
satisfies~(\ref{niceinteqx}) by~(\ref{willreco}).
Hence $\eta\colon [0,1]\to \Omega_K$
satisfies~(\ref{checkthis}) by the discussion
in~(\ref{firsnum}) and thus $\eta=\Evol_{\Omega_K}([\gamma])$.
We now deduce from Proposition~\ref{lctoglb} and Lemma~\ref{setsubdiv}
that $(\Omega_K,*)$
(and hence also $\Diff_K(\R^n)$) is $L^1$-regular.\vfill\pagebreak

\noindent
{\bf{\boldmath$L^1$}-regularity of {\boldmath$\Diff(\bS_1)$}}\\[2.3mm]
In the Fr\'{e}chet space
\[
C^\infty_{2\pi}(\R,\R):=\{\gamma\in C^\infty(\R,\R)\colon (\forall x\in \R)\;
\gamma(x+2\pi)=\gamma(x)\},
\]
the set
\[
\Omega_{2\pi}:=\{\gamma\in C^\infty_{2\pi}(\R,\R)\colon
(\forall x\in [0,2\pi])\, \gamma'(x)>-1\}
\]
is open and convex (hence simply connected).
It is a well-known fact the $\Omega_{2\pi}$
is the universal covering group
of the identity component $\Diff(\bS_1)_0$ of $\Diff(\bS_1)$,
with the group multiplication
\[
\Omega_{2\pi}\times\Omega_{2\pi}\to \Omega_{2\pi},\quad
(\gamma,\eta)\mto \eta+\gamma\circ (\id_{\R}+\eta)
\]
(see, e.g., \cite{GaN}).
The universal covering map takes
$\gamma\in \Omega_{2\pi}$ to $\phi_\gamma\in \Diff(\bS_1)_0$,
\[
\phi_\gamma(e^{it}):=e^{i\gamma(t)}e^{it}=e^{i(t+\gamma(t))}\quad\mbox{for all $t\in\R$.}
\]
Setting $n=1$ and
replacing $\Omega_K$ with $\Omega_{2\pi}$
and $C^\infty_K(\R^n,\R^m)$ with $C^\infty_{2\pi}(\R,\R^m)$
in the preceding discussion of $\Diff_K(\R^n)\cong\Omega_K$,
we see that $\Omega_{2\pi}$ (and hence $\Diff(\bS_1)$)
is $L^1$-regular.\\[4mm]
%
%
%{\bf{\boldmath$L^1$}-semiregularity of {\boldmath$\Diff_c(\R^n)$}:}\\[2.3mm]
%
%Since $L^1([0,1],C^\infty_c(\R^n,\R^n))$ is the union of
%the spaces $L^1([0,1], C^\infty_K(\R^n,\R^n))$
%and $\Omega$ is the union of its Lie subgroups
%$\bigcup_K \Omega_K$,
%the $L^1$-(semi)regularity of each $\Omega_K$
%implies that also $(\Omega,*)$ (and hence $\Diff_c(\R^n)$)
%is $L^1$-semiregular.\\[3mm]
%
%
%
{\bf{\boldmath$L^1$}-regularity of {\boldmath$\Diff_c(\R^n)$}}
\begin{la}\label{gvsLone}
Let $U\sub \R^n$ be an open set and $V\sub U$ be an open, convex subset
with compact closure $\wb{V}\sub U$.
Let $m\in \N_0$ and
$\gamma\in \cL^1([0,1],C^\infty(U,\R^m))$.
Then the map
\[
\wh{\gamma}\colon [0,1]\times U\to \R^m,\quad\wh{\gamma}(t,x):=\gamma(t)(x)
\]
is measurable. For any measurable function
$\zeta\colon [0,1]\to V$, define a function
\[
(\wh{\gamma})_*(\zeta)\colon [0,1]\to \R^m
\]
via $(\wh{\gamma})_*(\zeta)(t):=\wh{\gamma}(t,\zeta(t))=\gamma(t)(\zeta(t))$.
Then $(\wh{\gamma})_* (\zeta)\in \cL^1([0,1],\R^m)$.
\end{la}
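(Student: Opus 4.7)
The plan is to follow the template of Lemma~\ref{givsLon} verbatim, adjusting only the seminorm used to control the $\cL^1$-norm of the composition, since the target space $C^\infty(U,\R^m)$ now carries the compact-open $C^\infty$-topology (with many seminorms) rather than a single norm.

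First I would establish measurability of $\wh{\gamma}$. The space $U\sub\R^n$ is second countable, so by \ref{basicsmeas}\,(f) we have $\cB(C^\infty(U,\R^m)\times U)=\cB(C^\infty(U,\R^m))\tensor\cB(U)$. The evaluation map
\[
\ve\colon C^\infty(U,\R^m)\times U\to\R^m,\quad (f,x)\mto f(x)
\]
is smooth (e.g.\ by \cite{AaS} or \cite{GaN}), hence continuous, hence Borel measurable. Since $\gamma$ is measurable and $\id_U$ is measurable, the map $\gamma\times\id_U\colon [0,1]\times U\to C^\infty(U,\R^m)\times U$ is measurable, whence $\wh{\gamma}=\ve\circ(\gamma\times\id_U)$ is measurable. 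Then $(\wh{\gamma})_*(\zeta)=\wh{\gamma}\circ(\id_{[0,1]},\zeta)$ is measurable as a composition of measurable maps.

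Next I would establish the $\cL^1$-bound. The crucial observation is that although $\gamma(t)\in C^\infty(U,\R^m)$ need not be bounded on all of~$U$, the values $\zeta(t)\in V$ lie in the compact set $\wb{V}\sub U$. Define
\[
q\colon C^\infty(U,\R^m)\to[0,\infty[,\quad q(f):=\sup_{x\in\wb{V}}\|f(x)\|_\infty,
\]
which is one of the continuous seminorms defining the compact-open $C^\infty$-topology on $C^\infty(U,\R^m)$ (cf.~\ref{funtops}; take $L=\wb{V}$ and $k=0$). Then for each $t\in[0,1]$,
\[
\|(\wh{\gamma})_*(\zeta)(t)\|_\infty=\|\gamma(t)(\zeta(t))\|_\infty\le q(\gamma(t)),
\]
and integrating over~$t$ yields
\[
\int_0^1\|(\wh{\gamma})_*(\zeta)(t)\|_\infty\,dt\le\|\gamma\|_{\cL^1,q}<\infty,
\]
where finiteness of $\|\gamma\|_{\cL^1,q}$ follows from $\gamma\in\cL^1([0,1],C^\infty(U,\R^m))$ applied to the continuous seminorm~$q$. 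Hence $(\wh{\gamma})_*(\zeta)\in\cL^1([0,1],\R^m)$.

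There is no real obstacle: the only substantive change from Lemma~\ref{givsLon} is the replacement of the single norm $\|\cdot\|_{\cL^\infty,\|.\|_\infty}$ (available because of compact support) by the continuous seminorm $q$ indexed by the compact set $\wb{V}\sub U$. Everything else—the use of second countability of $U$, the smoothness/continuity of evaluation, and the final integration—is identical to the proof already given for the compactly supported case.
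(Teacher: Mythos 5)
Your proposal is correct and follows essentially the same route as the paper's proof: measurability via second countability of $U$, the product $\sigma$-algebra identity from \ref{basicsmeas}\,(f) and continuity of the evaluation map, and then the $\cL^1$-estimate using the continuous seminorm $f\mto\sup_{x\in\wb{V}}\|f(x)\|_\infty$ on $C^\infty(U,\R^m)$, exactly as in the paper (which writes this seminorm as $\|.\|_{\wb{V},\|.\|_\infty}$). No gaps.
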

\begin{proof}
Since $U$ is second countable, the Borel $\sigma$-algebra
$\cB(C^\infty(U,\R^m)\times U)$ coincides with the product $\sigma$-algebra
$\cB(C^\infty(U,\R^m))\tensor \cB(U)$ (see \ref{basicsmeas}\,(f)).
Therefore the map
\[
\gamma\times\id_U\colon [0,1]\times U\to C^\infty(U,\R^m)\times U,\;\;
(t,x)\mto (\gamma(t),x)
\]
is Borel measurable.
The evaluation map
\[
\ve\colon C^\infty(U,\R^m)\times U\to\R^m,\quad\ve(f,x):=f(x)
\]
is $C^\infty$ (see, e.g., \cite{ZOO} or \cite{GaN}),
hence continuous and hence measurable.
Thus $\wh{\gamma}=\ve\circ (\gamma\times \id_{\R^n})$ is measurable.
As $(\id_{[0,1]},\zeta)\colon [0,1]\to [0,1]\times U$,
$t\mto (t,\zeta(t))$
is measurable,
also the composition $(\wh{\gamma})_*(\zeta)=\wh{\gamma}\circ (\id_{[0,1]},\zeta)$
is measurable. On $\R^m$, we use the maximum-norm~$\|.\|_\infty$,
giving rise to a continuous norm $q:=\|.\|_{\wb{V},\|.\|_\infty}$
on $C^\infty(U,\R^m)$,
which in turn gives rise to a continuous seminorm $\|.\|_{\cL^1,q}$
on $\cL^1([0,1],C^\infty_(U,\R^m))$.
Now
\begin{equation}\label{immed3}
\|(\wh{\gamma})_*(\zeta)(t)\|_\infty=\|\gamma(t)(\zeta(t))\|_\infty
\leq \|\gamma(t)\|_{\wb{V},\|.\|_\infty}=q(\gamma(t)),
\end{equation}
whence $\int_0^1\|(\wh{\gamma})_*(\zeta)(t)\|_\infty\,dt\leq
\int_0^1q(\gamma(t))\,dt=\|\gamma\|_{\cL^1,q}<\infty$
and hence $(\wh{\gamma})_*(\zeta)\in \cL^1([0,1],\R^m)$,
with $\|(\wh{\gamma})_*(\zeta)\|_{\cL^1,\|.\|_\infty}\leq \|\gamma\|_{\cL^1,q}$.
\end{proof}
With notation as in the preceding lemma, we have:
\begin{la}\label{phimglatdp}
For each $m\in \N$,
the map
\[
\Phi_m\colon
L^1([0,1],C^\infty(U,\R^m))\times C([0,1],V)\to L^1([0,1],\R^m),
\]
$\Phi_m([\gamma],\zeta):=[(\wh{\gamma})_*(\zeta)]$, is smooth.
\end{la}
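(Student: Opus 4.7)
The plan is to mimic the proof of Lemma~\ref{phimglat} closely, substituting the appropriate seminorms and exploiting the convexity of $V$ and the compactness of $\wb{V}$. Since $\Phi_m$ is linear in $[\gamma]$, it suffices by \ref{basicCrs}\,(b) and (a) to show that $\Phi_m$ is $C^{0,\infty}$, which I establish by induction on $k\in\N_0$, showing $\Phi_m$ is $C^{0,k}$. Throughout, I use the continuous seminorm $q:=\|\sbull\|_{\wb{V},\|.\|_\infty}$ on $C^\infty(U,\R^m)$ and, once the derivative operator is in play, the seminorm $p$ on $C^\infty(U,\R^n)$ given by $p(f):=\sup_{x\in\wb{V}}\|f'(x)\|_{op}$, which is continuous since $\wb{V}\sub U$ is compact and $D\colon C^\infty(U,\R^n)\to C^\infty(U,\R^{n\times n})$, $f\mto f'$, is continuous linear.

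For the base case $k=0$ (continuity of $\Phi_m$), I use Lemma~\ref{gvsLone} to bound $\|\Phi_m([\gamma],\zeta)\|_{L^1,\|.\|_\infty}\leq\|\gamma\|_{L^1,q}$. For the Lipschitz-in-$\zeta$ estimate, given $\zeta,\zeta_1\in C([0,1],V)$, I apply the mean value theorem in integral form: here the convexity of $V$ is crucial, since $\zeta_1(t)+s(\zeta(t)-\zeta_1(t))\in V\sub\wb{V}$ for all $s\in[0,1]$, so that the integrand $D(\tilde{\gamma}(t))(\zeta_1(t)+s(\zeta(t)-\zeta_1(t)))(\zeta(t)-\zeta_1(t))$ is controlled by $p(\tilde{\gamma}(t))\|\zeta-\zeta_1\|_\infty$. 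Integrating in $t$ and combining with the $q$-bound yields continuity of $\Phi_m$ at each point.

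For the inductive step, suppose $\Phi_m$ is $C^{0,k-1}$ for every $m$. Fix $\gamma=[\tilde{\gamma}]\in L^1([0,1],C^\infty(U,\R^m))$, $\zeta\in C([0,1],V)$ and $\zeta_1\in C([0,1],\R^n)$. Since $V$ is open and $\zeta([0,1])\sub V$ is compact with $\zeta_1([0,1])$ bounded, there exists $\ve>0$ with $\zeta(t)+s\tau\zeta_1(t)\in V$ for all $t\in[0,1]$, $s\in[0,1]$, $|\tau|<\ve$, using convexity again. As in Lemma~\ref{phimglat}, the mean value theorem yields
\[
\frac{\tilde{\gamma}(t)(\zeta(t)+\tau\zeta_1(t))-\tilde{\gamma}(t)(\zeta(t))}{\tau}
=\int_0^1 D(\tilde{\gamma}(t))(\zeta(t)+s\tau\zeta_1(t))\zeta_1(t)\,ds.
\]
Using the inductive hypothesis applied to $\Phi_{mn}$ with $[\bar{\gamma}]:=L^1([0,1],D)([\gamma])$, I define
\[
h(\tau,s):=\alpha(\Phi_{mn}([\bar{\gamma}],\zeta+s\tau\zeta_1)),
\]
where $\alpha\colon L^1([0,1],\R^{m\times n})\to L^1([0,1],\R^m)$, $\alpha([f])=[t\mto f(t)\zeta_1(t)]$, is continuous linear. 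By parameter-dependent integrals (\ref{pardep}), $g(\tau):=\int_0^1 h(\tau,s)\,ds$ is continuous, and via Fubini's theorem applied to suitable continuous linear functionals $I_{\lambda,\theta}$ as in the proof of Lemma~\ref{phimglat} (with integrability verified exactly as in Lemma~\ref{thusFub}, using $q(\partial\tilde{\gamma}_i/\partial x_j)$ in place of $\|\partial\tilde{\gamma}_i/\partial x_j\|_{\cL^\infty}$), one identifies $g(\tau)$ with the difference quotient. Passing to $\tau\to 0$ gives
\[
d_2\Phi_m([\gamma],\zeta;\zeta_1)=\beta(\Phi_{mn}([\bar{\gamma}],\zeta),\zeta_1),
\]
with $\beta$ the continuous bilinear pointwise multiplication. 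Applying \ref{basicCrs}\,(c)--(f) to this formula, as in the proof of Lemma~\ref{phimglat}, promotes $\Phi_m$ from $C^{0,k-1}$ to $C^{0,k}$.

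The main obstacle, compared to Lemma~\ref{phimglat}, is handling the fact that the second variable ranges over $C([0,1],V)$ rather than the full space $C([0,1],\R^n)$. The two places this bites are the mean value theorem and the derivation of $d_2\Phi_m$, where the auxiliary curves $\zeta_1+s(\zeta-\zeta_1)$ respectively $\zeta+s\tau\zeta_1$ must remain inside $V$ so that the derivatives $D(\tilde{\gamma}(t))(\cdot)$ can be evaluated; here the hypotheses that $V$ is convex with $\wb{V}\sub U$ compact are exactly what is needed, and they also ensure that $p$ and $q$ are genuine continuous seminorms on $C^\infty(U,\R^m)$.
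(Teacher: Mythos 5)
Your proposal is correct and follows essentially the same route as the paper's proof: the same reduction to $C^{0,\infty}$ via linearity in $[\gamma]$, the same seminorms $q=\|\cdot\|_{\wb{V},\|.\|_\infty}$ and $p(f)=\sup_{x\in\wb V}\|f'(x)\|_{op}$, the same mean value theorem estimates (with the segment kept inside $V$), and the same inductive identification $d_2\Phi_m([\gamma],\zeta;\zeta_1)=\beta(\Phi_{mn}([\bar\gamma],\zeta),\zeta_1)$ via parameter-dependent integrals, Fubini with the functionals $I_{\lambda,\theta}$, and \ref{basicCrs}. The only cosmetic difference is that the paper obtains the small parameter $\delta$ (your $\ve$) purely from compactness of $\zeta([0,1])$ in the open set $V$ rather than invoking convexity, which is not needed at that step.
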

\begin{proof}
Since $\Phi_m(\gamma,\zeta)$ is linear in $\gamma$,
it suffices to show that $\Phi_m$ is $C^{0,\infty}$
(see \ref{basicCrs} (b) and (a)). We show by induction that $\Phi_m$ is $C^{0,k}$
for each $k\in \N_0$. Let $k=0$ first;
we have to show that $\Phi_m$ is continuous.
From the preceding proof, we know that
\[
\|\Phi_m(\gamma,\zeta)\|_{L^1,\|.\|_\infty}
\leq \|\gamma\|_{L^1,q}
\]
with $q:=\|.\|_{\wb{V},\|.\|_\infty}$.
Consider the map
$D\colon C^\infty(U,\R^m)\to C^\infty(U,\R^{m\times n})$
such that $D(f)(x):=f'(x)\in \cL(\R^n,\R^m)\cong \R^{m\times n}$
is the Jacobi matrix of $f$ at $x$.
Then $D$ is continuous linear,
entailing that $p:=\|.\|_{\wb{V},\|.\|_{op}}\circ D$
is a continuous seminorm on $C^\infty(U, \R^m)$.
Thus $p(f)=\sup_{x\in \wb{V}}\|f'(x)\|_{op}$ for
$f\in C^\infty(U, \R^m)$.
Let $\tilde{\gamma}\in
\cL^1([0,1],C^\infty(U,\R^m))$
and $\gamma:=[\tilde{\gamma}]\in L^1([0,1],C^\infty(U,\R^m))$.
We have for all
$\eta,\eta_1\in C([0,1],V)$
and $t\in [0,1]$
\begin{equation}\label{reuseinduspdp}
\tilde{\gamma}(t)(\eta(t))-\tilde{\gamma}(t)(\eta_1(t))
=
\int_0^1 (D(\tilde{\gamma}(t))(\eta_1(t)+s(\eta(t)-\eta_1(t))).(\eta(t)-\eta_1(t))\,ds
\end{equation}
and thus
\begin{eqnarray}
\lefteqn{\|\tilde{\gamma}(t)(\eta(t))-\tilde{\gamma}(t)(\eta_1(t))\|_\infty}
\qquad\notag \\
&\leq &
\int_0^1\|(D(\tilde{\gamma}(t))(\eta_1(t)
+s(\eta(t)-\eta_1(t))).(\eta(t)-\eta_1(t))\|_\infty\,ds\notag \\
&\leq&
\|D(\tilde{\gamma}(t))\|_{\wb{V},\|.\|_{op}}\|\eta-\eta_1\|_{\cL^\infty,\|.\|_\infty}\notag \\
&=& p(\tilde{\gamma}(t))\|\eta-\eta_1\|_{\cL^\infty,\|.\|_\infty}.\label{yesyepdp}
\end{eqnarray}
Integrating over~$t$, we deduce that
\[
\|\Phi_m(\gamma)(\eta)-\Phi_m(\gamma)(\eta_1)\|_{L^1,\|.\|_\infty}
\leq
\|\gamma\|_{L^1,p}\|\eta-\eta_1\|_{\cL^\infty,\|.\|_\infty}.
\]
As a consequence,
\begin{eqnarray*}
\lefteqn{\|\Phi_m(\gamma,\zeta)-\Phi_m(\gamma_1,\zeta_1)\|_{L^1,\|.\|_\infty}}\qquad\\
&\leq& \|\Phi_m(\gamma,\zeta)-\Phi_m(\gamma,\zeta_1)\|_{L^1,\|.\|_\infty}
+\|\Phi_m(\gamma,\zeta_1)-\Phi_m(\gamma_1,\zeta_1)\|_{L^1,\|.\|_\infty}\\
&\leq&
\|\gamma\|_{L^1,p}\|\eta-\eta_1\|_{\cL^\infty,\|.\|_\infty}
+ \|\gamma-\gamma_1\|_{L^1,q}
\end{eqnarray*}
for all $\gamma_1\in
L^1([0,1],C^\infty(U,\R^m))$ and $\gamma,\eta,\eta_1$
as before, which can be made arbitrarily small
for $\gamma_1$ close to $\gamma$ and $\eta_1$ close to~$\eta$.
Thus $\Phi_m$ is continuous
at each $(\gamma,\eta)$ and
thus $\Phi_m$ is continuous.\\[2.1mm]
Let $k\in \N$ now and assume that $\Phi_m$
is $C^{0,k-1}$ for each $m\in \N$.
Let
$\gamma=[\tilde{\gamma}]\in
L^1([0,1],C^\infty(U,\R^m))$
with
$\tilde{\gamma}\in
\cL^1([0,1],C^\infty(U,\R^m))$;
let
$\eta\in C([0,1],V)$
and $\eta_1\in C([0,1],\R^n)$.
Since $\eta([0,1])$ is a compact subset
of the open set $V$ and $\eta_1([0,1])$ is compact,
there is $\delta>0$ such that
\[
\eta(t)+r\eta_1(t)\in V\quad\mbox{for all $r\in [-\delta,\delta]$ and $t\in[0,1]|$.}
\]
To calculate
$d_2\Phi_m(\gamma,\eta;\eta_1)$,
we consider the corresponding directional
difference quotients first.
For $t\in [0,1]$ and $\tau\in \;]{-\delta},\delta[\; \setminus \{0\}$, we
have
\begin{equation}\label{hiernkladp}
\frac{\tilde{\gamma}(t)(\eta(t)+\tau \eta_1(t))-\tilde{\gamma}(t)(\eta(t))}{\tau}
=
\int_0^1 (D(\tilde{\gamma}(t))(\eta(t)+s\tau \eta_1(t)).\eta_1(t)\,ds
\end{equation}
by (\ref{reuseinduspdp}).
The map
\[
\alpha
\colon L^1([0,1],\R^{m\times n})\to L^1([0,1],\R^m),\quad
\alpha([f]):=[s\mto f(s)\eta_1(s)]
\]
for $f\in \cL^1([0,1],\R^{m\times n})$,
$s\in [0,1]$
(given pointwise by multiplication of matrices and vectors)
is linear and continuous,
with $\|\alpha\|_{op}\leq \|\eta_1\|_{\cL^\infty,\|.\|_\infty}$.
We abbreviate $\bar{\gamma}:=D \circ \tilde{\gamma}$
and identify $\R^{m\times n}$ with~$\R^{mn}$.
Then the mapping\linebreak
$h\colon \R\times [0,1]\to L^1([0,1],\R^m)$,
\[
h(\tau,s):=
\alpha([(\wh{\bar{\gamma}})_*(\eta+s\tau\eta_1)])
=\alpha(\Phi_{mn}([\bar{\gamma}],\eta+s\tau\eta_1))
\]
is continuous, by induction,
and we record that
\[
h(0,s)=\alpha(\Phi_{mn}([\bar{\gamma}],\eta))=\Phi_{mn}([\bar{\gamma}],\eta)\eta_1
\]
is independent of~$s\in [0,1]$.
Now the theorem on parameter-dependent integrals
(see \ref{pardep}) shows that
\[
g\colon \;]{-\delta},\delta[\; \to L^1([0,1],\R^m),\quad
g(\tau):=\int_0^1 h(\tau,s)\,ds
\]
is continuous.
Then
\[
g(\tau)=\frac{\Phi_m(\gamma,\eta+\tau\eta_1)-\Phi_m(\gamma,\eta)}{\tau}
\quad\mbox{for all $\tau\in \;]{-\delta},\delta[\; \setminus \{0\}$};
\]
this can be shown as in the proof of Lemma~\ref{phimglat}
(using the next lemma).
Now the continuity of~$g$ implies
that the limit as $\tau\to 0$ exists;
we have
\begin{eqnarray*}
d_2\Phi_m(\gamma,\eta;\eta_1)
&= &\lim_{\tau\to0}\frac{\Phi_m(\gamma,\eta+\tau\eta_1)-\Phi_m(\gamma,\eta)}{\tau}\\
&=&\lim_{\tau\to0}g(\tau)=g(0)=\int_0^1h(0,s)\,ds
=\Phi_{mn}([\bar{\gamma}],\eta)\eta_1.
\end{eqnarray*}
The map
\[
\beta\colon L^1([0,1],\R^{m\times n})\times C([0,1],\R^n)
\to L^1([0,1],\R^m),\;\,
\beta([f],g):=[t\mto f(t)g(t)]
\]
given by pointwise multiplication
of matrices and vectors
is continuous bilinear with
$\|\beta\|_{\op}\leq 1$,
and hence smooth.
By the preceding,
we have
\begin{equation}\label{goodrhsdp}
d_2\Phi_m(\gamma,\eta;\eta_1)=\beta(\Phi_{mn}([\bar{\gamma}],\eta),\eta_1).
\end{equation}
The map
\[
L^1([0,1],D)\colon L^1([0,1],C^\infty(U,\R^m))\to
L^1([0,1],C^\infty(U,\R^{m\times n}))
\]
sending
$\gamma=[\tilde{\gamma}]$ to $[\bar{\gamma}]=[D\circ \tilde{\gamma}]$
is continuous linear.
The map $\Phi_{mn}$ is $C^{0,k-1}$ by induction.
Hence also $(\gamma,\eta)\mto \Phi_{mn}([\bar{\gamma}],\eta)$
is $C^{0,k-1}$ (see \ref{basicCrs}\,(c)).
Looking at the right hand side
of (\ref{goodrhs}),
we deduce with \ref{basicCrs}\,(d) that $d_2\Phi_m$ is $C^{0,k-1,\infty}$
as a function of $(\gamma,\eta,\eta_1)$
and hence (by \ref{basicCrs}\,(e))
$C^{0,k-1}$ as a function of $(\gamma,(\eta,\eta_1))$.
Hence $\Phi_m$ is $C^{0,k}$, by \ref{basicCrs}\,(f).
\end{proof}
The following analogue of Lemma~\ref{thusFub} was used.
\begin{la}\label{thusFubdp}
The function
$f\colon [0,1]^2 \to \R$,
\[
(t,s)\mapsto
\lambda(D(\tilde{\gamma}(t))(\eta(t)+s\tau\eta_1(t))\eta_1(t))\theta(t)
\]
is in $\cL^1([0,1]^2,\R)$ with respect to Lebesgue-Borel measure on $[0,1]^2$.
\end{la}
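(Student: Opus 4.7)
The plan is to run the argument of Lemma \ref{thusFub} verbatim, merely replacing the global $\cL^\infty$-seminorm (which was finite because the functions were compactly supported in~$K$) by the seminorm $q := \|.\|_{\bar V, |.|}$ on $C^\infty(U,\R)$, which is continuous precisely because $\bar V \subseteq U$ is compact. The hypothesis on $\delta$ from Lemma \ref{phimglatdp} ensures that the arguments at which we evaluate $\tilde\gamma(t)$ stay inside~$\bar V$, which is exactly what is needed to make this substitution work.

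First I would establish measurability of~$f$ by writing things in coordinates. Decomposing $\tilde\gamma(t) = (\tilde\gamma_1(t),\ldots,\tilde\gamma_m(t))$, we have $t \mapsto \partial \tilde\gamma_i(t)/\partial x_j \in \cL^1([0,1],C^\infty(U,\R))$ for each $i,j$, since differentiation is continuous linear on $C^\infty(U,\R)$. Writing $\eta_1 = (\eta_{1,1},\ldots,\eta_{1,n})$ and $\lambda(x) = \sum_i \lambda_i x_i$, we obtain
\[
f(t,s) = \sum_{i=1}^m \sum_{j=1}^n \lambda_i \, \frac{\partial \tilde\gamma_i(t)}{\partial x_j}\bigl(\ve(\eta + s\tau\eta_1,\,t)\bigr)\, \eta_{1,j}(t)\,\theta(t),
\]
where $\ve\colon C([0,1],\R^n)\times [0,1]\to \R^n$ is the (continuous) evaluation map. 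The map $(s,t)\mapsto \eta+s\tau\eta_1$ is continuous into $C([0,1],\R^n)$ (in fact into $C([0,1],V)$, by the choice of~$\delta$), whence measurable; the evaluation $C^\infty(U,\R)\times U\to \R$ is continuous, hence Borel measurable; and measurability of $f$ then follows from the usual closure properties of measurable functions.

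Second, for integrability, I would bound pointwise
\[
|f(t,s)| \;\leq\; \|\theta\|_{\cL^\infty}\, \sum_{i,j}|\lambda_i|\, \|\eta_{1,j}\|_{\cL^\infty}\, \left|\frac{\partial \tilde\gamma_i(t)}{\partial x_j}\bigl(\eta(t)+s\tau\eta_1(t)\bigr)\right|.
\]
The argument $\eta(t)+s\tau\eta_1(t)$ lies in $V\subseteq \bar V$ for all $(t,s)\in [0,1]^2$ (by the hypothesis on $\delta$ and $|\tau|<\delta$), so each absolute value is bounded by $q(\partial\tilde\gamma_i(t)/\partial x_j)$. Fubini's theorem for non-negative measurable functions then gives
\[
\int_{[0,1]^2}|f(t,s)|\,d\lambda_2(t,s) \;\leq\; \|\theta\|_{\cL^\infty}\sum_{i,j}|\lambda_i|\,\|\eta_{1,j}\|_{\cL^\infty}\,\bigl\|\partial\tilde\gamma_i/\partial x_j\bigr\|_{\cL^1,q} \;<\; \infty,
\]
since $t\mapsto q(\partial\tilde\gamma_i(t)/\partial x_j)$ is in $\cL^1([0,1],\R)$. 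Hence $f\in \cL^1([0,1]^2,\R)$.

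There is no real obstacle here: the proof is routinely parallel to that of Lemma \ref{thusFub}. The only subtle point is the choice of seminorm---one must use $q = \|.\|_{\bar V,|.|}$ rather than any global sup-norm, because $\partial\tilde\gamma_i/\partial x_j$ need not be bounded on~$U$. This substitution works precisely because $\bar V$ is compact in~$U$ (giving continuity of~$q$ on $C^\infty(U,\R)$) and because $\delta$ was chosen in Lemma \ref{phimglatdp} so that $\eta+r\eta_1$ stays in~$V$ for $|r|\leq\delta$.
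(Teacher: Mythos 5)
Your proposal is correct and coincides with the paper's own proof: the same componentwise decomposition of $\tilde{\gamma}$, the same use of the continuous evaluation map to get measurability, and the same Fubini estimate with the seminorm $\|.\|_{\wb{V},|.|}$ replacing the global sup-seminorm of the compactly supported case. Your explicit remark that $\eta(t)+s\tau\eta_1(t)$ stays in $V$ by the choice of $\delta$ (which the paper leaves implicit) is exactly the point that justifies the bound, so nothing is missing.
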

\begin{proof}
To see that $f$ is measurable,
write $\tilde{\gamma}(t)=(\tilde{\gamma}_1(t),\ldots,\tilde{\gamma}_m(t))$
(identifying $C^\infty(U,\R^m)$ with $C^\infty(U,\R)^m$).
Then $t\mto \frac{\partial \tilde{\gamma}_i(t)}{\partial x_j}$
is an element of $\cL^1([0,1],C^\infty(U,\R))$
for each $j\in \{1,\ldots, n\}$.
Write $\eta_1=(\eta_{1,1},\ldots, \eta_{1,n})$
with continuous functions $\eta_{1,j}\colon [0,1]\to\R$
for $j\in\{1,\ldots, n\}$.
There are $\lambda_1,\ldots,\lambda_m\in \R$ such that
$\lambda(x_1,\ldots, x_m)=\lambda_1x_1+\cdots+\lambda_mx_m$.
The evaluation map
\[
\ve\colon C([0,1],\R^n)\times [0,1]\to \R^n,\quad (\kappa,t)\mto\kappa(t)
\]
is continuous and hence measurable.
The map $[0,1]^2\to C([0,1],\R^n)\times [0,1]$, $(s,t)\mto (\eta+s\tau\eta_1,t)$
is continuous and hence measurable.
Now the formula
\[
f(t,s)=\sum_{i=1}^m\sum_{j=1}^n\lambda_i
\frac{\partial\tilde{\gamma}_i(t)}{\partial x_j}(\ve(\eta+s\tau\eta_1,t))
\eta_{1,j}(t)\theta(t)
\]
shows that $f$ is measurable, being a sum of products of measurable
real-valued functions.
Using Fubini's theorem
for non-negative measurable functions on $[0,1]^2$, we find that
\begin{eqnarray*}
\lefteqn{\int_{[0,1]^2}|f(t,s)|\,d\lambda_2(t,s)}\qquad\qquad\\
&=& \int_0^1\int_0^1 |f(t,s)|\,ds\, dt\\
&\leq &
\|\theta\|_{\cL^\infty}
\sum_{i=1}^m\sum_{j=1}^n
\|\eta_{1,j}\|_{\cL^\infty}
|\lambda_i|
\int_0^1\int_0^1
\underbrace{\left|\frac{\partial\tilde{\gamma}_i(t)}{\partial x_j}
(\ve(\eta+s\tau\eta_1,t))\right|}_{\leq
\|\frac{\partial\tilde{\gamma}_i(t)}{\partial x_j}\|_{\wb{V},|.|}} ds\,dt\\
&\leq & \int_0^1 \left\|\frac{\partial\tilde{\gamma}_i}{\partial x_j}\right\|_{\wb{V},|.|} \,dt
= \|\partial/{\partial x_j}\circ \tilde{\gamma}_i\|_{\cL^1,p}<\infty
\end{eqnarray*}
with $p:=\|.\|_{\wb{V},|.|}$.
\end{proof}
\begin{numba}\label{moresetdp}
For $z\in \Z^n$
and $r>0$, let $B_r(z)\sub\R^n$ be the ball
with respect to $\|.\|_\infty$.
For $z\in \Z^n$, the balls $B_1(z)$
form a locally finite open cover of $\R^n$ by
relatively compact open sets $B_1(z)$;
likewise for $B_3(z)$.
Hence
\[
\rho_1\colon C^\infty_c(\R^n,\R^n)\to\bigoplus_{z\in \Z^n}C^\infty(B_1(0),\R^n),
\quad\gamma\mto (f|_{B_1(z)})_{z\in \Z^n}
\]
and the corresponding maps
\[
\rho_3\colon C^\infty_c(\R^n,\R^n)\to\bigoplus_{z\in \Z^n}C^\infty(B_3(0),\R^n)
\]
and $\rho_4\colon C^\infty_c(\R^n,\R^n)\to\bigoplus_{z\in \Z^n}C^\infty(B_4(0),\R^n)$
are linear topological embeddings with closed (and complemented) image (see, e.g., \cite{ZOO}).
Explicitly,
$\im(\rho_1)$ is the set of all $\{(\gamma_z)_{z\in\Z^n}
\in \bigoplus_{z\in \Z^n}C^\infty(B_1(z),\R^n)$ such that
\begin{equation}\label{imzw}
(\forall z,w\in \Z^n)(\forall x\in B_1(z)\cap B_1(w))\quad \gamma_z(x)=\gamma_w(x).
\end{equation}
As a consequence, also the maps
\begin{eqnarray}
R_3 & := & L^1([0,1],\rho_3)\colon
L^1([0,1],C^\infty_c(\R^n,\R^n))\to L^1\left([0,1],
\bigoplus_{z\in \Z^n}C^\infty(B_3(z),\R^n)\right)\notag \\
& & \hspace*{15mm}\cong
\bigoplus_{z\in \Z^n}L^1([0,1],C^\infty(B_3(z),\R^n)),\label{intosumm}
\end{eqnarray}
$R_4:=L^1([0,1],\rho_4)\colon L^1([0,1],C^\infty_c(\R^n,\R^n))\to
\bigoplus_{z\in \Z^n}L^1([0,1],C^\infty(B_4(z),\R^n))$
and
\begin{eqnarray}
R_1& :=& C([0,1],\rho_1)\colon
C([0,1],C^\infty_c(\R^n,\R^n))\to C\left([0,1],
\bigoplus_{z\in \Z^n}C^\infty(B_1(z),\R^n)\right)\notag\\
& & \hspace*{15mm}\cong
\bigoplus_{z\in \Z^n}C([0,1],C^\infty(B_1(z),\R^n))\label{intosumm2}
\end{eqnarray}
are linear topological embeddings
with closed image (where we use Mujica's Theorem
and its analogue for Lebesgue spaces
discussed above to rewrite the spaces as direct sums).\\[2.3mm]
Since $\rho_1$ is a topological embedding and $\rho_1(0)=0$,
there exist open $0$-neighbourhoods $W_z\sub C^\infty(B_1(z),\R^n)$
such that
\begin{equation}\label{inOmc}
(\rho_1)^{-1}\left(\bigoplus_{z\in \Z^n}W_n\right)\sub \Omega.
\end{equation}
\end{numba}
\begin{numba}
Consider
the continuous seminorm
$p_z:=\|.\|_{\wb{B}_2(z),\|.\|_{op}}\circ D+\|.\|_{\wb{B}_2(z),\|.\|_\infty}$
on $C^\infty(B_3(z),\R^n)$, where
$D\colon C^\infty(B_3(z),\R^n)\to C^\infty(B_3(z),\R^{n\times n})$, $f\mto f'$;
thus
\[
p_z(f)=\sup_{x\in \wb{B}_2(z)}(\|f'(x)\|_{op}+\|f(x)\|_\infty)\quad\mbox{for
$f\in C^\infty(B_3(z), \R^n)$.}
\]
Fix $L\in \,]0,1[$.
Then
\[
Q_z
:=\{\gamma \in L^1([0,1],C^\infty(B_3(0),\R^n))
\colon \|\gamma\|_{L^1,p_z}<L\}
\]
is an open $0$-neighbourhood in $L^1([0,1],C^\infty(B_3(z),\R^n))$.
We define a map
$\Psi_z \colon Q_z\times B_1(z)\times
C([0,1], B_2(z))\to C([0,1],B_2(z))$ via
\[
\Psi_z([\gamma],x,\kappa)(t):=x+\int_0^t \gamma(s)(\kappa(s))\,ds
\]
for $[\gamma]\in Q_z$ with $\gamma\in \cL^1([0,1],C^\infty(B_3(z),\R^n))$,
$x\in B_1(z)$, $\kappa\in C([0,1],B_2(z))$ and $t\in [0,1]$.
\end{numba}
\begin{la}\label{hfwaydp}
The map
$\Psi_z \colon Q_z\times B_1(z)\times
C([0,1],B_2(z))\to C([0,1],B_2(z))$
is smooth and defines a uniform family of contractions
in the final variable, in the sense that
\[
\Lip(\Psi_z([\gamma],x,\sbull))\leq L
\]
for all $[\gamma]\in Q_z$ and all $x\in B_1(z)$.
\end{la}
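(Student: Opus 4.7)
The plan is to mirror the proof of Lemma~\ref{hfway}, with one extra verification: that $\Psi_z$ really maps into $C([0,1],B_2(z))$ rather than merely into $C([0,1],\R^n)$. The seminorm $p_z$ was crafted precisely for this: the summand $\|\cdot\|_{\wb{B}_2(z),\|\cdot\|_\infty}$ controls the size of the image, and the summand $\|\cdot\|_{\wb{B}_2(z),\|\cdot\|_{op}}\circ D$ controls the Lipschitz constant.

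First I would check well-definedness. For $[\gamma]\in Q_z$, $x\in B_1(z)$, $\kappa\in C([0,1],B_2(z))$ and $t\in[0,1]$, estimate
\[
\|\Psi_z([\gamma],x,\kappa)(t)-z\|_\infty
\;\leq\; \|x-z\|_\infty + \int_0^t\|\gamma(s)(\kappa(s))\|_\infty\,ds
\;\leq\; 1+\|\gamma\|_{L^1,q},
\]
where $q:=\|\cdot\|_{\wb{B}_2(z),\|\cdot\|_\infty}$. Since $q\leq p_z$ pointwise on $C^\infty(B_3(z),\R^n)$, we have $\|\gamma\|_{L^1,q}\leq\|\gamma\|_{L^1,p_z}<L<1$, so the total is strictly less than $1+L<2$, confirming $\Psi_z([\gamma],x,\kappa)(t)\in B_2(z)$.

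Next I would establish smoothness by the factorization
\[
\Psi_z([\gamma],x,\kappa) \;=\; c_x + J\bigl(\Phi_n([\gamma],\kappa)\bigr),
\]
where $\Phi_n\colon L^1([0,1],C^\infty(B_3(z),\R^n))\times C([0,1],B_2(z))\to L^1([0,1],\R^n)$ is smooth by Lemma~\ref{phimglatdp} (applied with $U=B_3(z)$, $V=B_2(z)$, $m=n$), $J\colon L^1([0,1],\R^n)\to C([0,1],\R^n)$ is the continuous linear integration operator $[f]\mapsto(t\mapsto\int_0^t f(s)\,ds)$, and $x\mapsto c_x$ (the constant function) is continuous linear. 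Hence $\Psi_z$ is smooth as a composition of smooth maps.

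Finally, for the contraction estimate, fix $[\gamma]\in Q_z$, $x\in B_1(z)$ and $\eta,\eta_1\in C([0,1],B_2(z))$. Convexity of $B_2(z)$ permits the mean value theorem in integral form: for each $s\in[0,1]$,
\[
\gamma(s)(\eta(s))-\gamma(s)(\eta_1(s))
=\int_0^1 D(\gamma(s))\bigl(\eta_1(s)+r(\eta(s)-\eta_1(s))\bigr)\,(\eta(s)-\eta_1(s))\,dr,
\]
with integrand evaluated in $\wb{B}_2(z)$, whence
$\|\gamma(s)(\eta(s))-\gamma(s)(\eta_1(s))\|_\infty\leq p_z(\gamma(s))\,\|\eta-\eta_1\|_\infty$. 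Integrating over $t$ gives
\[
\|\Psi_z([\gamma],x,\eta)-\Psi_z([\gamma],x,\eta_1)\|_\infty
\;\leq\;\|\gamma\|_{L^1,p_z}\,\|\eta-\eta_1\|_\infty
\;\leq\; L\,\|\eta-\eta_1\|_\infty,
\]
uniformly in $([\gamma],x)\in Q_z\times B_1(z)$, which is the claimed contraction bound. The only substantive step is the first one (image control), which is why $p_z$ was set up to dominate $q$; the rest is a direct transcription of Lemma~\ref{hfway} with $C^\infty_K(\R^n,\R^n)$ replaced by $C^\infty(B_3(z),\R^n)$ and with $\Phi_n$ now supplied by Lemma~\ref{phimglatdp}.
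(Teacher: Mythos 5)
Your proposal is correct and follows essentially the same route as the paper's proof: smoothness via the factorization $\Psi_z([\gamma],x,\kappa)=c_x+J(\Phi_n([\gamma],\kappa))$ with Lemma~\ref{phimglatdp}, the contraction bound via the estimate (\ref{yesyepdp}) (your mean value theorem step reproduces it), and the check that the image stays in $C([0,1],B_2(z))$ using that the $\|\cdot\|_{\wb{B}_2(z),\|\cdot\|_\infty}$-part of $p_z$ bounds the integral term by $L<1$. No gaps.
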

\begin{proof}
For $x\in \R^n$, let $c_x\colon [0,1]\to \R^n$
be the constant function $t\mto x$. 
The map $\R^n\to C([0,1],\R^n)$, $x\mto c_x$
is continuous linear and hence smooth.
Moreover, the operator
\[
J\colon L^1([0,1],\R^n)\to C([0,1],\R^n)
\]
determined by
$J([f])(t):=\int_0^t f(s)\, ds$
is continuous and linear and hence smooth.
Consider the mapping
\[
\Phi_z\colon
L^1([0,1],C^\infty(B_3(z),\R^m))\times C([0,1],B_2(0))\to L^1([0,1],\R^n),
\]
$\Phi_z([\gamma],\zeta):=[(\wh{\gamma})_*(\zeta)]$,
which is smooth
by Lemma~\ref{phimglatdp}).
Now the formula
\[
\Psi_z([\gamma],x,\kappa)=c_x+J(\Phi_z([\gamma],\kappa))
\]
shows that $\Psi_z$ is smooth.
Given $\eta,\eta_1\in C([0,1],B_2(0))$, we deduce from (\ref{yesyepdp})
that
\begin{eqnarray*}
\|\Psi_z([\gamma],x,\eta)-\Psi_z([\gamma],x,\eta_1\|_\infty &=&
\sup_{t\in [0,1]}
\left\|\int_0^t \gamma(s)(\eta(s))-\gamma(s)(\eta_1(s))\,ds\right\|_\infty\\
&\leq &
\sup_{t\in[0,1]}
\int_0^t \underbrace{\|\gamma(s)(\eta(s))-
\gamma(s)(\eta_1(s))\|_\infty}_{\leq p(\gamma(t))\|\eta-\eta_1\|_{L^\infty}} \,ds\\
&\leq &
\|\gamma\|_{\cL^1,p_z} \|\eta-\eta_1\|_{L^\infty}
\leq L\|\eta-\eta_1\|_{L^\infty}
\end{eqnarray*}
with $p_z$ as in \ref{moresetdp}.
It only remains to observe that $\Psi_z([\gamma],x,\eta)(t)\in B_2(z)$
always since
$\|\int_0^t \gamma(s)(\eta(s))\,ds\|_\infty
\leq \int_0^t\|\gamma(s)(\eta(s))\|_\infty\,ds\leq
\int_0^t \|\gamma(s)\|_{\wb{B}_2(z),\|.\|_\infty}\,ds$
$\leq
\int_0^t \|\gamma(s)\|_{\wb{B}_2(z),\|.\|_\infty}\,ds\leq
\int_0^t p_z(\gamma(s))\,ds\leq
\|\gamma\|_{\cL^1,p_z}\leq 1$.
\end{proof}
\begin{numba}\label{nowlocsol}
If $([\gamma],x)\in Q_z\times B_1(z)$,
we have $x\in \wb{B}_r(z)$ for some $r\in\;]0,1[$
and $\Psi_z([\gamma],x,\sbull)$ restricts to a contraction
\[
C([0,1], \wb{B}_{1+r}(z))\to C([0,1],\wb{B}_{1+r}(z))
\]
of the complete metric space $C([0,1],\wb{B}_{1+r}(z))=\{\zeta\in C([0,1],\R^n)\colon
\|\zeta\|_\infty\leq 1+r\}$.
The latter has a unique fixed point $\zeta_{[\gamma],x}$
by Banach's Contraction Principle,
which then also is the unique fixed point of the contraction
$\Psi_z([\gamma],x,\sbull)$ of $C([0,1],B_2(0))$.
Thus
\begin{equation}\label{willrecodp}
\Psi_z([\gamma],x,\zeta_{[\gamma],x})=\zeta_{[\gamma],x}.
\end{equation}
Since $\Psi_z$ is smooth, Lemma~\ref{PARFIX}
shows that also the map
\[
Q_z\times B_1(0)\to C([0,1],B_2(0)),\quad ([\gamma],x)\mto\zeta_{[\gamma],x}
\]
is smooth.
Define $F_z([\gamma])(t)(x):=\zeta_{[\gamma],x}(t)$
for $[\gamma]\in Q_z$, $x\in B_1(0)$ and $t\in [0,1]$.
Using the exponential laws from \cite{AaS},
we deduce:
\begin{itemize}
\item[(a)]
$F_z([\gamma])(t)\in C^\infty(B_1(z),\R^n)$
for all $[\gamma]\in Q_z$ and $t\in [0,1]$;
\item[(b)] $F_z([\gamma])\in C([0,1],C^\infty(B_1(z),\R^n))$ for all $[\gamma]\in Q_z$;
\item[(c)]
$F_z\colon Q_z \to C([0,1],C^\infty(B_1(z),\R^n))$
is smooth.
\end{itemize}
\end{numba}
\begin{numba}
As a consequence, also the map
\[
E_z\colon Q_z\to C([0,1],C^\infty(B_1(z),\R^n),\quad [\gamma]\mto F_z([\gamma])- I
\]
is smooth, where $I\colon [0,1]\to C^\infty(B_1(z),\R^n)$ is the constant function
$t\mto\id_{B_1(z)}$.
If $\gamma=0$, then $\zeta_{[\gamma],x}$ is the fixed point
of the map $\Psi_z([\gamma],x,\sbull)$
determined by
\[
\Psi_z([0],x,\kappa)(t)=x.
\]
Hence $\zeta_{0,x}=c_x$.
As a consequence,
\[
E_z(0)(t)=0
\]
and thus $E_z(0)=0$.
Since $E_z$ is continuous,
there is an open $0$-neighbourhood $P_z\sub Q_z$
such that
\[
E_z(P_z)\sub W_z
\]
(where $W_z$ is as in (\ref{inOmc})).
\end{numba}
\begin{numba}
Consider
$D\colon C^\infty(B_4(z),\R^n)\to C^\infty(B_4(z),\R^{n\times n})$, $f\mto f'$
and the continuous seminorm
$q_z:=\|.\|_{\wb{B}_3(z),\|.\|_{op}}\circ D+\|.\|_{\wb{B}_3(0),\|.\|_\infty}$
on $C^\infty(B_4(z),\R^n)$ for $z\in \Z^n$;
thus
\[
q_z(f)=\sup_{x\in \wb{B}_3(z)}(\|f'(x)\|_{op}+\|f(x)\|_\infty)\quad\mbox{for
$f\in C^\infty(B_4(z), \R^n)$.}
\]
Then
\[
S_z
:=\{\gamma \in L^1([0,1],C^\infty(B_4(0),\R^n))
\colon \|\gamma\|_{L^1,q_z}<L\}
\]
is an open $0$-neighbourhood in $L^1([0,1],C^\infty(B_4(z),\R^n))$.
We define a map
$\Theta_z \colon S_z\times B_2(z)\times
C([0,1], B_3(z))\to C([0,1],B_3(z))$ via
\[
\Theta_z([\gamma],x,\kappa)(t):=x+\int_0^t \gamma(s)(\kappa(s))\,ds
\]
for $[\gamma]\in S_z$ with $\gamma\in \cL^1([0,1],C^\infty(B_4(z),\R^n))$,
$x\in B_2(z)$, $\kappa\in C([0,1],B_3(z))$ and $t\in [0,1]$.
\end{numba}
The following lemma can be shown like Lemma~\ref{hfwaydp}.
\begin{la}\label{hfwaydpdp}
For all $z\in \Z^n$, $[\gamma]\in S_z$ and $x\in B_2(z)$, the map
\[
\Theta_z([\gamma],x,\sbull)\colon
C([0,1],B_3(z))\to C([0,1],B_3(z)),\quad\kappa\mto \Theta_z([\gamma],x,\kappa)
\]
is a contraction, with $\Lip(\Theta_z([\gamma],x,\sbull))\leq L$.\,\Punkt
\end{la}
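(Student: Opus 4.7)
The plan is to repeat the proof of Lemma~\ref{hfwaydp} essentially verbatim, with every radius shifted up by one unit: balls of radii $1,2,3$ are replaced by balls of radii $2,3,4$, the seminorm $p_z$ (which controls first derivative and sup-norm on $\wb{B}_2(z)$) is replaced by $q_z$ (controlling first derivative and sup-norm on $\wb{B}_3(z)$), and the base points $x\in B_1(z)$ are replaced by $x\in B_2(z)$.

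First I would check well-definedness, i.e.\ that $\Theta_z([\gamma],x,\kappa)([0,1])\subseteq B_3(z)$ for all $[\gamma]\in S_z$, $x\in B_2(z)$, $\kappa\in C([0,1],B_3(z))$. Since $\|x\|_\infty$ (relative to $z$) is less than $2$ and
\[
\left\|\int_0^t \gamma(s)(\kappa(s))\,ds\right\|_\infty \leq \int_0^t\|\gamma(s)\|_{\wb{B}_3(z),\|.\|_\infty}\,ds\leq \int_0^t q_z(\gamma(s))\,ds\leq \|\gamma\|_{\cL^1,q_z}<L<1,
\]
the value $\Theta_z([\gamma],x,\kappa)(t)$ lies in $B_{2+L}(z)\subseteq B_3(z)$, giving the required containment.

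Second I would obtain the contraction estimate. Given $\eta,\eta_1\in C([0,1],B_3(z))$, the line segment between $\eta(s)$ and $\eta_1(s)$ lies in the convex set $\wb{B}_3(z)$, so the Mean Value Theorem (as in the derivation of~(\ref{yesyepdp})) yields
\[
\|\gamma(s)(\eta(s))-\gamma(s)(\eta_1(s))\|_\infty\leq \|D(\gamma(s))\|_{\wb{B}_3(z),\|.\|_{op}}\,\|\eta-\eta_1\|_\infty\leq q_z(\gamma(s))\,\|\eta-\eta_1\|_\infty
\]
for $\lambda_1$-almost every $s\in[0,1]$. Integrating over $[0,t]$ and taking the supremum over $t\in[0,1]$, we conclude
\[
\|\Theta_z([\gamma],x,\eta)-\Theta_z([\gamma],x,\eta_1)\|_\infty\leq \|\gamma\|_{\cL^1,q_z}\,\|\eta-\eta_1\|_\infty\leq L\,\|\eta-\eta_1\|_\infty,
\]
which is the required Lipschitz bound. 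There is no significant obstacle here; the only point requiring mild care is that the seminorm $q_z$ was chosen precisely to dominate both the sup-norm on $\wb{B}_3(z)$ (needed for well-definedness) and the operator-norm of the derivative on $\wb{B}_3(z)$ (needed for the contraction estimate), so both steps go through with the same seminorm simultaneously.
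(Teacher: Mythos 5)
Your proposal is correct and is exactly the argument the paper intends: Lemma~\ref{hfwaydpdp} is stated with the remark that it "can be shown like Lemma~\ref{hfwaydp}," and your radius-shifted repetition (well-definedness via $\|\gamma\|_{\cL^1,q_z}<L<1$ and the contraction estimate via the Mean Value Theorem with $q_z$ dominating both the sup-norm and the derivative's operator norm on $\wb{B}_3(z)$) is precisely that adaptation.
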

\begin{numba}
Now consider the open $0$-neighbourhood
\[
\cU:=
R_3^{-1}\left(\bigoplus_{z\in \Z^n} P_z\right)\cap R_4^{-1}\left(\bigoplus_{z\in \Z^n}S_z\right)
\]
in $L^1([0,1],C^\infty_c(\R^n,\R^n))$.
Since $E_z$ is smooth for each $z\in \Z^n$ and $E_z(0)=0$, also the map
\[
\oplus_{z\in \Z^n} E_z\colon \bigoplus_{z\in \Z^n} Q_z\to \bigoplus_{z\in\Z^n}
C([0,1],C^\infty(B_1(z),\R^n)),\quad (\gamma_z)_{z\in \Z^n}\mto
(E_z(\gamma_z))_{z\in \Z^n}
\]
is smooth (see \cite{MEA}).
We claim that
\begin{equation}\label{dffclaim}
(\bigoplus_{z\in \Z^n}E_z)(R_3([\gamma]))\in R_1(C([0,1],\Omega))
\end{equation}
for each $[\gamma]\in \cU$.
If this is true, then $(\bigoplus_{z\in \Z^n}E_z)\circ R_3|_{\cU}$
is smooth also as a map $\cU\to \im(R_1)$ (since $\im(R_1)$
is a closed vector subspace and thus \cite[Lemma~10.1]{BGN} applies).
As a consequence, also the map
\[
E:=R_1^{-1}\circ (\bigoplus_{z\in \Z^n}E_z)\circ R_3|_{\cU}\colon \cU\to
C([0,1],C^\infty_c(\R^n,\R^n))
\]
is smooth. Let $[\gamma]\in \cU$ with $\gamma\in \cL^1([0,1], C^\infty_c(\R,\R^n))$.
If $x\in \R^n$, we can find $w\in \Z^n$ such that $x\in B_1(w)$.
Let us write $h\colon [0,1]\to C^\infty(B_1(w),\R^N)$ for
the $w$-component of
\[
R_1(E([\gamma]))=(\bigoplus_{z\in \Z^n}E_z)(R_3([\gamma]).
\]
Then $h=E_w([s\mto \gamma(s)|_{B_3(w)}])$
and
\[
h(t)=E([\gamma])(t)|_{B_1(w)},
\]
entailing that $E([\gamma])(t)(x)=h(t)(x)=E_w([s\mto \gamma(s)|_{B^3(w)}])(t)$
for all $t\in [0,1]$.
Thus $[0,1]\to E([\gamma])(t)(x)$ is a Carath\'{e}odory solution to
\[
y'(t)=\gamma(t)(y(t)),\quad y(0)=x
\]
and thus~(\ref{checkthis}) is satisfied by the continuous function
$\eta:=E([\gamma])\colon [0,1]\to \Omega\sub C^\infty_c(\R^n,\R^n)$.
As a consequence,
$\eta=\Evol_\Omega^r([\gamma])$ is the right evolution
of $[\gamma]$ (see \ref{thereduyea}).
Since $L^1$ has the subdivision property by Lemma~\ref{setsubdiv},
we deduce with (d)$\impl$(c) in Proposition~\ref{lctoglb}
that $\Evol^r$ exists on all of $L^1([0,1],C^\infty_c(\R^n,\R^n))$.
Since $\Evol^r|_{\cU}=E$ is smooth, (b)$\impl$(a) in Proposition~\ref{lctoglb}
now shows
that the Lie group $(\Omega,*)$ (and hence also $\Diff_c(\R^n)$)
is $L^1$-regular.
\end{numba}
\begin{numba}
To complete the proof of $L^1$-regularity for $\Diff_c(\R^n)$,
it only remains to prove~(\ref{dffclaim}).
To this end, let $[\gamma]\in \cU$ with $\gamma\in \cL^1([0,1],C^\infty_c(\R^n,\R^n))$.
For $z\in \Z^n$, define $\gamma_z\in \cL^1([0,1],C^\infty(B_3(z),\R^n))$
via $\gamma_z(t):=\gamma(t)|_{B_3(z)}$.
Define $\tilde{\gamma}_z\in \cL^1([0,1],C^\infty(B_4(0),\R^n))$
via $\tilde{\gamma}_z(t):=\gamma(t)|_{B_4(z)}$.
Let $z,w\in \Z^n$ and $x\in B_1(z)\cap B_1(w)$.
Let us write
$\zeta_{[\gamma_z],x}^z\colon [0,1]\to B_2(z)$ for the unique
fixed point of $\Psi_z([\gamma_z],x,\sbull)$
and $\zeta_{[\gamma_w],x}^w\colon [0,1]\to B_2(w)$ for the unique
fixed point of $\Psi_w([\gamma_w],x,\sbull)$.
Thus $\zeta^z_{[\gamma_z],x}$ is
a Carath\'{e}odory
solution to
\begin{equation}\label{Car1}
y'(t)=x+\int_0^t \gamma(t)|_{B_3(z)}(y(t))\,dt,\quad y(0)=x.
\end{equation}
and $\zeta^w_{[\gamma_w],x}$ is a
Carath\'{e}odory
solution to
\begin{equation}\label{Car2}
y'(t)=x+\int_0^t \gamma(t)|_{B_3(w)}(y(t))\,dt,\quad y(0)=x.
\end{equation}
Since $B_2(z)$ and $B_2(w)$ are subsets of $B_3(z)$,
both $\zeta^z_{[\gamma_z],x}$ and $\zeta^w_{[\gamma_w],x}$ can be considered
as elements of $C([0,1],B_3(z))$.
Since both $B_3(z)$ and $B_3(w)$ are subsets of $B_4(z)$,
we deduce from (\ref{Car1}) and (\ref{Car2}) that both
$\zeta^z_{[\gamma_z],x}$ and $\zeta^w_{[\gamma_w],x}$ are Carath\'{e}odory
solutions to
\[
y'(t)=x+\int_0^t \gamma(t)|_{B_4(w)}(y(t))\,dt,\quad y(0)=x
\]
and hence fixed points of $\Theta_z([\tilde{\gamma}_z],x,\sbull)$.
As the contraction
\[
\Theta_z([\tilde{\gamma}_z],x,\sbull)\colon C([0,1],B_3(z))\to
C([0,1],B_3(z))
\]
has at most one fixed point, we deduce that
$\zeta^z_{[\gamma_z],x}=\zeta^w_{[\gamma_w],x}$.
Thus
\[
E_z([\gamma_z])(t)(x)=\zeta^z_{[\gamma_z],x}(t)-x=\zeta^w_{[\gamma_w],x}(t)-x=E_w([\gamma_w])(t)(x)
\]
for each $t\in [0,1]$. Hence
\[
(E_z([\gamma_z])(t))_{z\in \Z^n}
\in \im(\rho_1)
\]
for each $t\in [0,1]$, using~(\ref{imzw}).
As a consequence,
\[
(E_z([\gamma_z]))_{z\in \Z^n}=(\oplus_{z\in \Z^n}E_z)(R_3([\gamma]))\in
\im(R_1).
\]
Since $(E_z([\gamma_z])(t))_{z\in \Z^n}\in W_z$,
we deduce that $\rho_1^{-1}((E_z([\gamma_z])(t))_{z\in \Z^n})\in \Omega$
for each $t$ (see (\ref{inOmc})), whence
$(E_z([\gamma_z]))_{z\in \Z^n}=(\oplus_{z\in \Z^n}E_z)(R_3([\gamma]))\in
R_1(\Omega)$. Thus~(\ref{dffclaim}) holds and the proof for $L^1$-regularity of
$\Diff_c(\R^n)$ is complete.
\end{numba}
\section{Local {\boldmath$L^1$}-Lipschitz condition and uniqueness for
initial value problems}\label{L1lip}
It is well known from the classical Picard-Lindel\"{o}f Theorem
that solutions to initial value problems in normed
spaces are unique if the right hand side of the differential equation
is continuous and satisfies a local Lipschitz condition.
As a preparation for the discussion of $L^1$-regularity for $\Diff_c(M)$,
we now describe weaker conditions ensuring uniqueness.\footnote{The alert reader
may notice that the condition
was already satisfied in the preceding section.
However, making it explicit would not much shorten the proof, as we
used Banach's Contraction Theorem anyway, and could exploit its uniqueness assertion.}
For differential equations on subsets
of finite-dimensional spaces, similar
conditions have been used e.g.\ in
\cite[Appendix C.3, Theorem 54]{Son}.
\begin{defn}\label{defl1lip}
Let $(E,\|.\|)$ be a normed space, $U\sub E$ be a subset
and $a<b$ be real numbers. We say that
a function $f\colon [a,b]\times U\to E$
satisfies a (global) \emph{$L^1$-Lipschitz condition}
if there exists a measurable function $g\colon [a,b]\to
[0,\infty]$ with $L:=\int_a^b g(t)\,d\lambda_1(t)<\infty$
such that
\[
\Lip(f(t,\sbull))\leq g(t)\quad \mbox{for all $t\in [a,b]$.}
\]
\end{defn}
\begin{rem}
\begin{itemize}
\item[(a)]
Here $\Lip(f(t,\sbull))\in [0,\infty]$ means the infimum of all Lipschitz constants
for the mapping $f_t:=f(t,\sbull)\colon U\to E$, $y\mto f(t,y)$.
\item[(b)]
If the function
$h\colon [a,b]\to [0,\infty]$, $t\mto \Lip(f(t,\sbull))$ is measurable,
then $g$ as required in Definition~\ref{defl1lip} exists if and only if $g:=h$
can be chosen there (i.e., if and only if $h$
is integrable). In all of our applications,
$h$ is measurable, but we do not need this requirement
to formulate Definition~\ref{defl1lip}.
\end{itemize}
\end{rem}
\begin{defn}\label{deflcl1lip}
Let $M$ be a $C^1$-manifold modelled on a normed space $(E,\|.\|)$,
$J\sub \R$ be a non-degenerate interval
and $f\colon J \times M\to TM$ be a function with $f(t,p)\in T_p(M)$
for all $(t,p)\in J \times M$.
We say that $f$ \emph{satisfies a local $L^1$-Lipschitz condition}
if for all $t_0\in J$ and $p\in M$, there exists a chart
\[
\kappa\colon U_\kappa\to V_\kappa\sub E
\]
of $M$ with $p\in U_\kappa$ and a relatively open
subinterval $[a,b]\sub J$ which is a neighbourhood of $t_0$ in~$J$
such that the map
\begin{equation}\label{fkapp}
f_\kappa\colon [a,b]\times V_\kappa\to E,\quad (t,y)\mto
d\kappa (f(t,\kappa^{-1}(y)))
\end{equation}
satisfies an $L^1$-Lipschitz condition.
\end{defn}
\begin{rem}\label{onethnoll}
If $(E,\|.\|)$ is a normed space and
$g\colon W\to E$ a $C^1$-map on an open subset $W\sub E$,
then each $x\in W$ has an open neighbourhood $W_0\sub W$
such that $\sup_{y\in W_0}\|dg(y,\sbull)\|_{op}<\infty$
(e.g., any $W_0$ such that $dg(W_0\times B^E_r(0))\sub B^E_1(0)$
for some $r>0$, which exists by continuity of $dg\colon W\times E\to E$).
Choosing $g$ as the transition map (change of charts) from one chart
to another, we deduce that
$f_\kappa$ will actually satisfy an $L^1$-Lipschitz condition
for \emph{each} chart around $p$ in the situation of Definition~\ref{deflcl1lip},
for suitable $[a,b]$.
\end{rem}
\begin{prop}[Uniqueness of solutions]\label{propunq}
Let $a<b$ be real numbers,
$M$ be a $C^1$-manifold modelled on a normed space $(E,\|.\|)$
and
\[
f\colon [a,b] \times M\to TM
\]
be a function with $f(t,p)\in T_p(M)$
for all $(t,p)\in [a,b] \times M$, which satisfies a local
$L^1$-Lipschitz condition.
If $\gamma\colon [a,b]\to M$ and $\eta\colon [a,b]\to M$
are absolutely continuous curves which are Carath\'{e}odory solutions to
\[
y'=f(t,y)
\]
and satisfy $\gamma(t_0)=\eta(t_0)$ for some $t_0\in [a,b]$,
then $\gamma=\eta$.
\end{prop}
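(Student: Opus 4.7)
\textbf{Proof plan for Proposition~\ref{propunq}.}
The plan is to reduce to a local statement in a chart and then invoke a Gronwall-type inequality for $L^1$-coefficients. Define
\[
A := \{t \in [a,b] \colon \gamma|_I = \eta|_I \text{ where } I = [\min(t_0,t),\max(t_0,t)]\}.
\]
Then $t_0 \in A$, and $A$ is closed in $[a,b]$ by continuity of $\gamma$ and $\eta$. Since $[a,b]$ is connected, it suffices to show $A$ is open; by symmetry I only need to show that if $t_1 \in A$ and $t_1 < b$, then $[t_1, t_1+\delta] \subseteq A$ for some $\delta > 0$ (the case $t_1 > a$ is analogous).

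Fix such $t_1$ and set $p := \gamma(t_1) = \eta(t_1)$. Using the local $L^1$-Lipschitz condition at $(t_1,p)$, choose a chart $\kappa \colon U_\kappa \to V_\kappa \subseteq E$ with $p \in U_\kappa$ and a relatively open subinterval $[a',b'] \subseteq [a,b]$ which is a neighbourhood of $t_1$ such that $f_\kappa$ from~(\ref{fkapp}) satisfies an $L^1$-Lipschitz condition with bounding function $g \in \cL^1([a',b'],[0,\infty])$. By continuity of $\gamma$ and $\eta$, shrink to $\delta > 0$ with $[t_1,t_1+\delta] \subseteq [a',b']$ and $\gamma([t_1,t_1+\delta]) \cup \eta([t_1,t_1+\delta]) \subseteq U_\kappa$. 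Write $\tilde\gamma := \kappa \circ \gamma|_{[t_1,t_1+\delta]}$ and $\tilde\eta := \kappa \circ \eta|_{[t_1,t_1+\delta]}$. Both are absolutely continuous curves in~$V_\kappa$ which, by the Chain Rule and the definition of a Carath\'{e}odory solution on a manifold, satisfy
\[
\tilde\gamma(t) = \kappa(p) + \int_{t_1}^t f_\kappa(s,\tilde\gamma(s))\,ds, \qquad \tilde\eta(t) = \kappa(p) + \int_{t_1}^t f_\kappa(s,\tilde\eta(s))\,ds
\]
for all $t \in [t_1,t_1+\delta]$.

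Subtracting and using the $L^1$-Lipschitz estimate gives the continuous function $u(t) := \|\tilde\gamma(t)-\tilde\eta(t)\|$ satisfying
\[
u(t) \leq \int_{t_1}^t g(s)\, u(s)\, ds \qquad\text{for all } t \in [t_1,t_1+\delta].
\]
The key Gronwall-type lemma I would invoke states: if $u \colon [t_1,t_1+\delta] \to [0,\infty)$ is continuous, $g \in \cL^1([t_1,t_1+\delta],[0,\infty])$, and $u(t) \leq \int_{t_1}^t g(s)u(s)\,ds$, then $u \equiv 0$. The standard proof sets $v(t) := \int_{t_1}^t g(s)u(s)\,ds$; then $v$ is absolutely continuous with $v'(t) = g(t)u(t) \leq g(t) v(t)$ almost everywhere, so $(v(t)\exp(-\int_{t_1}^t g))' \leq 0$ almost everywhere, and since $v(t_1)=0$ one concludes $v \equiv 0$ and hence $u \equiv 0$. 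Thus $\tilde\gamma = \tilde\eta$ on $[t_1,t_1+\delta]$, so $[t_1,t_1+\delta] \subseteq A$, as required.

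The only mildly nontrivial step is the Gronwall lemma for $L^1$-coefficients, but this is classical and follows from the absolute continuity of $v$ together with the standard integrating-factor trick; the reduction to a chart is routine given the Chain Rule (Lemma~\ref{chainpw}) and Remark~\ref{onethnoll}, which ensures the chart-level $L^1$-Lipschitz condition is genuinely independent of the choice of chart.
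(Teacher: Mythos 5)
Your proposal is correct, and its skeleton coincides with the paper's: both reduce uniqueness to a connectedness argument (a nonempty, closed subset of $[a,b]$ that is shown to be open), and both localize via the chart supplied by the local $L^1$-Lipschitz condition, using that the chart-level integral equation for $\kappa\circ\gamma$ and $\kappa\circ\eta$ holds by the Chain Rule. The difference lies in the local uniqueness step. The paper does not use Gronwall at all: it shrinks the subinterval $[\alpha,\beta]$ around the point of agreement so that $L:=\int_\alpha^\beta g(s)\,ds<1$ (possible since $\int_{[\alpha,\beta]\cap[\tau-r,\tau+r]}g\to 0$ as $r\to 0$), and then the estimate $\|\gamma(t)-\eta(t)\|\le\int g(s)\|\gamma(s)-\eta(s)\|\,ds\le L\,\|\gamma-\eta\|_{[\alpha,\beta]}$ yields $\|\gamma-\eta\|_{[\alpha,\beta]}\le L\|\gamma-\eta\|_{[\alpha,\beta]}$, forcing the sup-norm to vanish. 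You instead keep the interval (shrinking only to stay inside the chart) and invoke the Gronwall inequality with $L^1$-coefficient, proved via the integrating factor $\exp\bigl(-\int_{t_1}^t g\bigr)$ applied to the absolutely continuous majorant $v(t)=\int_{t_1}^t g\,u$. Both routes are sound; the paper's smallness trick is marginally more self-contained (no auxiliary Gronwall lemma, no need to check that the product of absolutely continuous functions is absolutely continuous), while your Gronwall argument is more robust in that the local step needs no quantitative shrinking of the time interval. Your slightly different choice of the set $A$ (agreement on the whole interval from $t_0$ rather than pointwise equality) is immaterial, since it is likewise closed and the same chart argument gives openness. Also your implicit use of Remark~\ref{onethnoll} to justify chart-independence matches the paper's intent.
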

\begin{proof}
We show that $\gamma|_{[t_0,b]}=\eta|_{[t_0,b]}$;
the proof that $\gamma|_{[a,t_0]}=\eta_{[a,t_0]}$ is similar.
We may therefore assume now that $t_0=a$.
The set
\[
A:=\{t\in [a,b]\colon \gamma(t)=\eta(t)\}
\]
is closed in $[a,b]$ as $\gamma$ and $\eta$ are continuous
and $M$ is Hausdorff. Moreover, $A$ is non-empty as $t_0\in A$.
If we can show that $A$ is also open in $[a,b]$,
then $A=[a,b]$ will follow from the connectedness of $[a,b]$
(completing the proof).
Let $\tau\in A$. By Definition~\ref{deflcl1lip},
there exist $\alpha<\beta$ in $[a,b]$ such that $[\alpha,\beta]$ is a neighbourhood
of $\tau$ in $[a,b]$ and a chart $\kappa\colon U_\tau\to V_\tau\sub E$
such that
\[
\gamma(\tau)=\eta(\tau)\in U_\kappa
\]
and the map $f_\kappa\colon [\alpha,\beta]\times V_\kappa\to E$
(analogous to (\ref{fkapp})) satisfies an $L^1$-Lipschitz condition.
Let $g\colon [\alpha,\beta]\to[0,\infty]$ be an integrable
function with $\Lip(f_\kappa(t,\sbull))\leq g(t)$ for all $t\in [\alpha,\beta]$.
Since
\[
\lim_{r\to 0} \int_{[\alpha,\beta]\cap [\tau-r,\tau+r]}g(s)\,ds=0,
\]
after shrinking the neighbourhood
$[\alpha,\beta]$ of $\tau$, we may assume that
\[
L :=\int_\alpha^\beta g(s)\,ds<1.
\]
Abbreviate
$\|\gamma-\eta\|_{[\alpha,\beta]}:=\sup\{\|\gamma(t)-\eta(t)\|\colon t\in [\alpha,\beta]\}$.
For each $t\in [\tau,\beta]$, we obtain
\begin{eqnarray*}
\|\gamma(t)-\eta(t)\| &=& \left\|\int_\tau^t (f(s,\gamma(s))-f(s,\eta(s)))\,ds\right\|\\
&\leq& \int_\tau^s\underbrace{\|f(s,\gamma(s))-f(s,\eta(s))\|}_{\leq\Lip(f(s,\sbull))\|\gamma(s)-\eta(s)\|\leq g(s)\|\gamma(s)-\eta(s)\|}\, ds\\
&\leq & \int_\tau^t g(s)\|\gamma(s)-\eta(s)\|\,ds
\leq L\|\gamma-\eta\|_{[\alpha,\beta]}.
\end{eqnarray*}
A similar argument shows that
$\|\gamma(t)-\eta(t)\|\leq L\|\gamma-\eta\|_{[\alpha,\beta]}$
also for $t\in [\alpha,\tau]$. Hence
$\|\gamma(t)-\eta(t)\|\leq L\|\gamma-\eta\|_{[\alpha,\beta]}$ for all $t\in [\alpha,\beta]$
and thus
\[
\|\gamma-\eta\|_{[\alpha,\beta]}\leq L \|\gamma-\eta\|_{[\alpha,\beta]},
\]
which is impossible unless $\|\gamma-\eta\|_{[\alpha,\beta]}=0$ and thus
$\gamma|_{[\alpha,\beta]}=\eta|_{[\alpha,\beta]}$. Thus $[\alpha,\beta]\sub A$,
entailing that $A$ is a neighbourhood of $\tau$ and hence open in $[a,b]$
(as $\tau\in A$ was arbitrary), which completes the proof.
\end{proof}
\begin{defn}\label{defgloflo}
Let $M$ be a $C^1$-manifold modelled on a normed space $(E,\|.\|)$. Let
$t_0,T$ be real numbers
and $f\colon [t_0,t_0+T] \times M\to TM$ be a function with $f(t,p)\in T_p(M)$
for all $(t,p)\in [t_0,t_0+T] \times M$, which satisfies a local
$L^1$-Lipschitz condition.
If the initial value problem
\[
y'(t)=f(t,y'(t)),\quad y(t_0)=p
\]
has a $($necessarily unique$)$ solution
$\gamma_p\colon [t_0,t_0+T]\to M$ for each $p\in M$,
then we say that \emph{$f$ admits a global flow
for initial time~$t_0$}
and write
\[
\Phi^f_{t,t_0}(p):=\gamma_p(t)
\]
for $t\in [t_0,t_0+T]$ and $p\in M$.
In this way, for each $t\in [t_0,t_0+T]$ we obtain a mapping
$\Phi_{t,t_0}^f \colon M\to M$.
\end{defn}
\section{{\boldmath$L^1$}-regularity of {\boldmath$\Diff_c(M)$}
and {\boldmath$\Diff_K(M)$}}\label{secDiM}
%for general {\boldmath$M$}}
%
Throughout this section, let $M$ be a
paracompact finite-dimensional smooth manifold
and $K\sub M$ be a compact subset
(starting with \ref{nowsigm}, we shall assume that~$M$ is $\sigma$-compact).
Our goal is to see that
$\Diff_c(M)$ and $\Diff_K(M)$
are $L^1$-regular.
For general information on the Lie group structure of $\Diff_c(M)$,
the reader is referred to \cite{Mic}, \cite{DIF}, and \cite{GaN}.
The Lie group $\Diff_c(M)$ can also be regarded
as a special case of the diffeomorphism groups of orbifolds
discussed in \cite{Shm}.
\begin{numba}\label{bscsdf}
Let $g$ be a smooth Riemannian metric on~$M$
and $\exp\colon \cD\to M$ be the Riemannian
exponential function, defined on its maximal domain
$\cD$ which is an open neighbourhood
of the zero-section $M\to TM$, $p\mto 0_p\in T_pM$
in $TM$. Let $\pi_{TM}\colon TM\to M$ be the bundle
projection.
For some open neighbourhood $\cW\sub \cD$ of the zero-section, the map
\begin{equation}\label{intloca}
A=(A_1,A_2) \colon \cW\to M\times M,\quad v\mto (\pi_{TM}(v),\exp(v))
\end{equation}
has open image and is a diffeomorphism onto its image;
it is called a \emph{local addition}.
In particular, for each $p\in M$ the set
$\cW_p:=\cV\cap T_pM$ is open in $T_pM$ and the
function $\exp_p:=\exp|_{T_pM}$ restricts to a $C^\infty$-diffeomorphism
from $\cW_p$ onto the open neighbourhood
$\exp_p(\cW_p)$ of~$p$ in~$M$.
Let us write $\cX_c(M)$ for the space of compactly supported smooth
vector fields on~$M$.
There is an open $0$-neighbourhood $\cV\sub \cX_c(M)$
with $X(M)\sub \cW$ for all $X\in \cV$
such that
\[
\cU:=\{\exp \circ X\colon X\in \cV\}\sub \Diff_c(M)
\]
and the map
\[
\Phi\colon \cU\to\cV,\quad
\Phi(\phi)(p):=(\exp_p|_{\cW_p})^{-1}(\phi(p))
%\phi\mto \pr_2\circ A^{-1}\circ (\id_M,\phi)
\]
is a chart for $\Diff_c(M)$, with inverse
\[
\Phi^{-1}\colon \cV\to \cU,\quad X\mto A_2\circ X=\exp\circ X.
\]
Since $\exp(0_p)=p$, we have $A_2(0_p)=p$ and $A^{-1}(p,p)=0_p$,
for each $p\in M$, entailing that
\[
\Phi(\cU\cap \Diff_K(M))=\cV\cap \cX_K(M).
\]
Hence $\Diff_K(M)$ is a submanifold of $\Diff_c(M)$
modelled on the Fr\'{e}chet space $\cX_K(M)$ of smooth
vector fields supported in~$K$.
\end{numba}
\begin{numba}
Let $\cC$ be the set of connected components $C$ of~$M$.
The above set $\cV$ can be chosen as $\cV=\bigoplus_{C\in \cC}\cV_C$
with open $0$-neighbourhoods $\cV_C\sub \cX_c(C)$,
making it clear that the weak direct product
\[
\bigoplus_{C\in \cC}\Diff_c(C)
\]
can be considered as an open subgroup
of $\Diff_c(M)$.
By
Proposition~\ref{mrgsum}, the weak direct product (and hence also $\Diff_c(M)$)
will be $L^1$-regular if we can show that $\Diff_c(C)$ is
$L^1$-regular for each connected component
$C\sub M$. Moreover, $\Diff_K(M)$ has $\bigoplus_{C\in \cC}\Diff_{K\cap C}(C)$
as an open subgroup and hence is $L^1$-regular if each $\Diff_{K\cap C}(C)$
is so.\\[2.3mm]
\emph{We may} (\emph{and will}) \emph{therefore assume throughout
the rest of this section that $M$ is a connected
and $\sigma$-compact finite-dimensional smooth manifold}.\\[2.3mm]
Thus $\cX_c(M)$ is a strict (LF)-space;
if $(K_j)_{j\in \N}$ is a sequence of compact subsets $K_j \sub M$ such that
$M=\bigcup_{j\in \N}K_j$ and $K_j\sub (K_{j+1})^0$ (the interior)
for each $j\in \N$, then
\[
\cX_c(M)=\dl_K \,\cX_K(M)=\dl_{j\in \N} \, \cX_{K_j}(M).
\]
\end{numba}
\begin{numba}\label{nowsigm}
We shall identify the Lie algebra of $G:=\Diff_c(M)$ with
the locally convex space $\cX_c(M)$ (with the negative of the traditional
Lie bracket of vector fields) by means of the isomorphism
\[
d\Phi|_{T_{\id_M}G}\colon L(G)\to \cX_c(M).
\]
Likewise, we identify the Lie algebra of $G_K:=\Diff_K(M)$
with $\cX_K(M)$.
We shall see later that $\Diff_c(M)$ is
$L^1$-regular, with smooth right evolution
\[
\Evol^r\colon L^1([0,1],\cX_c(M))\to AC([0,1],\Diff_c(M)),
\]
and
observe in Remark~\ref{fornowsig}
that
$\Evol^r([\lambda\circ \gamma])(t)\in \Diff_K(M)$
for $[\gamma]$ in an open $0$-neighbourhood $\Omega_K$
in $L^1([0,1],\cX_K(M))$, where $\lambda\colon \cX_K(M)\to \cX_c(M)$
is the inclusion map.
Then
\begin{equation}\label{trivstu}
\Evol^r([\gamma])
\in AC([0,1],\Diff_K(M)),
\end{equation}
as the conclusion of Lemma~\ref{intsubmf}
is available by Remark~\ref{L1inpart}.
Moreover,
the map
\[
h\colon \Omega_K\to AC([0,1],\Diff_K(M)),\quad [\gamma]\mto \Evol^r([\gamma])
\]
is smooth, as $AC([0,1],\Diff_K(M))$ is a submanifold
of $AC([0,1], \Diff_c(M))$ since $\Diff_K(M)$ is a submanifold
of $\Diff_c(M)$ (using that the conclusion of Lemma~\ref{ACHsubACG}
applies by Remark~\ref{L1inpart}).
For $[\gamma]\in L^1([0,1],\cX_K(M))$,
consider $\eta:= \Evol^r([\lambda\circ \gamma])$ as an element of
$AC([0,1],\Diff_K(M))$, as in \ref{trivstu}.
If $j \colon G_K\to G$
is the inclusion map, then we can identify $L(j)$
with~$\lambda$,
and thus
\[
L^1([0,1],\lambda)(\delta^r_{G_K}(\eta))=\delta^r_G(j\circ \eta)=[\lambda\circ
\gamma]=L^1([0,1],\lambda)([\gamma])
\]
(where we wrote the Lie group $G_K$
as an index for clarity).
Hence $\delta^r_{G_K}(\eta)=[\gamma]$.
We deduce that $h$ is the right evolution map
$\Evol^r_{G_K}$ for $G_K=\Diff_K(M)$ on $\Omega_K$.
Hence $\Diff_K(M)$ is $L^1$-regular, by
Proposition~\ref{lctoglb}.
%
% waere schoen, auch allgemeineres lemma ueber \cE-Reg von
% Untergruppen zu haben ohne Egalisatoren - SPAETER
%
Thus, it only remains to show that $\Diff_c(M)$
is $L^1$-regular, and to show the validity of Remark~\ref{fornowsig}.
\end{numba}
The proof of the following proposition,
which is similar to the familiar case
of a $C^k$-curve $\gamma$
(see, e.g., \cite{DIF};
cf.\ \cite{Shm} and \cite{KaM})
has been relegated to the appendix (Appendix~\ref{appDiff}).
\begin{prop}\label{handsonEv}
Let $\gamma\in \cL^1([0,1],\cX_c(M))$
such that
\[
f:=\wh{\gamma}\colon [0,1]\times M\to TM, \quad
(t,p)\mto
\gamma(t)(p)
\]
satisfies a local $L^1$-Lipschitz condition.
Let $\eta\in AC_{L^1}([0,1], \Diff_c(M))$ with $\eta(0)=\id_M$.
Then $\eta=\Evol^r([\gamma])$ if and only if
$f$ admits global flow for initial time $t_0=0$
and
\[
\eta(t)(p)=\Phi^f_{t,0}(p)
\]
for all $t\in [0,1]$ and $p\in M$,
%$($i.e., $\eta=(\Phi^f)^\vee)$,
with notation as in Definition~\emph{\ref{defgloflo}}.\,\Punkt
\end{prop}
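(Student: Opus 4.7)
The plan is to reduce the proposition to pointwise initial value problems on~$M$ via the evaluation maps $\ev_p\colon \Diff_c(M)\to M$, $\phi\mto\phi(p)$, which are smooth for each $p\in M$ with $T_\psi\ev_p(v)=v(p)$ at $\psi\in\Diff_c(M)$ (identifying $v\in T_\psi\Diff_c(M)$ with a compactly supported section of $\psi^*TM$). Lemma~\ref{actonelts} and formula~(\ref{chainpth}) then yield $y_p:=\ev_p\circ\eta\in AC_{L^1}([0,1],M)$ with $y_p(0)=p$ and $y_p'(t)=\dot\eta(t)(p)$ for $\lambda_1$-almost every $t$. Unwinding $\omega_r$, the identity $\delta^r(\eta)=[\gamma]$ is equivalent to $\dot\eta(t)=\gamma(t)\circ\eta(t)$ as vector fields along~$\eta(t)$ for almost every~$t$; evaluating at~$p$ gives $y_p'(t)=\gamma(t)(\eta(t)(p))=f(t,y_p(t))$ a.e., so $y_p$ is a Carathéodory solution of $y'=f(t,y)$ with $y(0)=p$.

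For the forward direction, this shows that every $y_p$ solves the IVP. Proposition~\ref{propunq}, applicable by the local $L^1$-Lipschitz hypothesis on~$f$, furnishes uniqueness of such solutions, so the global flow $\Phi^f_{t,0}$ exists on $[0,1]\times M$ and coincides with $y_p(t)=\eta(t)(p)$, as required. For the converse, assume the global flow exists and $\eta(t)(p)=\Phi^f_{t,0}(p)$. By Lemma~\ref{deltanic}, $\delta^r(\eta)=[\zeta]$ for some $\zeta\in L^1([0,1],\cX_c(M))$; applying the previous paragraph with $\zeta$ in place of $\gamma$ gives $\dot\eta(t)(p)=\zeta(t)(\eta(t)(p))$ for $t$ outside a $p$-dependent null set, while the flow equation gives $\dot\eta(t)(p)=\gamma(t)(\eta(t)(p))$ off another $p$-dependent null set. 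Combining, $\zeta(t)(\eta(t)(p))=\gamma(t)(\eta(t)(p))$ for $t$ outside a null set $N_p\subseteq[0,1]$.

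To promote these $p$-dependent identities to the global identity $[\zeta]=[\gamma]$, I use that $M$ is second countable: choosing a countable dense set $D\subseteq M$ and setting $N:=\bigcup_{p\in D}N_p$ (still null), for $t\notin N$ the smooth vector fields $\zeta(t),\gamma(t)\in\cX_c(M)$ agree on the dense subset $\eta(t)(D)\subseteq M$ (since $\eta(t)$ is a diffeomorphism) and hence on all of~$M$ by continuity. Thus $[\zeta]=[\gamma]$, so $\delta^r(\eta)=[\gamma]$ and $\eta=\Evol^r([\gamma])$, completing the converse. The main obstacle is precisely this bookkeeping between pointwise a.e.\ identities for fixed~$p$ and the desired $L^1([0,1],\cX_c(M))$-identity; second countability of~$M$ together with continuity of smooth vector fields resolves it by the density argument just outlined.
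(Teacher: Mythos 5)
Your argument is correct and follows essentially the same route as the paper's proof: reduce to the pointwise Carath\'{e}odory problems $y_p'=f(t,y_p)$ via the evaluation maps (using the identification $T_\psi\ev_p(v)=v(p)$, which the paper establishes as Lemma~\ref{pinpointdr}), invoke the local $L^1$-Lipschitz uniqueness for the forward implication, and use a countable dense subset plus continuity for the converse. The only cosmetic differences are that the paper compares the two continuous sections along $\eta(t)$ at the points of a dense set $D\sub M$, while you compare the vector fields $\zeta(t)$ and $\gamma(t)$ on $\eta(t)(D)$, and that the paper first secures almost-everywhere differentiability of the $\Diff_c(M)$-valued curve $\eta$ itself (Lemma~\ref{havedersae}, via compact regularity of $\Diff_c(M)=\bigcup_j \Diff_{K_j}(M)$), whereas you only differentiate the finite-dimensional curves $y_p$.
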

Let $n$ be the dimension of~$M$.
\begin{numba}\label{onballQ}
Consider
the continuous seminorm
$p:=\|.\|_{\wb{B}_4(0),\|.\|_{op}}\circ D+\|.\|_{\wb{B}_4(0),\|.\|_\infty}$
on $C^\infty(B_5(0),\R^n)$, where
$D\colon C^\infty(B_5(0),\R^n)\to C^\infty(B_5(0),\R^{n\times n})$, $f\mto f'$;
thus
\[
p(f)=\sup_{x\in \wb{B}_4(0)}(\|f'(x)\|_{op}+\|f(x)\|_\infty)\quad\mbox{for
$f\in C^\infty(B_5(0), \R^n)$.}
\]
Fix $L\in \,]0,1[$.
Then
\[
Q
:=\{\gamma \in L^1([0,1],C^\infty(B_5(0),\R^n))
\colon \|\gamma\|_{L^1,p}<L\}
\]
is an open $0$-neighbourhood in $L^1([0,1],C^\infty(B_5(0),\R^n))$.
We define a map
$\Psi \colon Q\times B_3(0)\times
C([0,1], B_4(0))\to C([0,1],B_4(0))$ via
\[
\Psi([\gamma],x,\kappa)(t):=x+\int_0^t \gamma(s)(\kappa(s))\,ds
\]
for $[\gamma]\in Q$ with $\gamma\in \cL^1([0,1],C^\infty(B_5(0),\R^n))$,
$x\in B_3(0)$, $\kappa\in C([0,1],B_4(0))$ and $t\in [0,1]$
(recalling that the integrand always is an $\cL^1$-function of~$s$
by Lemma~\ref{gvsLone}).
\end{numba}
The following lemma can be proved exactly as Lemma~\ref{hfwaydp}
(increasing all radii by $2$):
\begin{la}\label{hfwaydpM}
The map
$\Psi \colon Q \times B_3(0)\times
C([0,1],B_4(0))\to C([0,1],B_4(0))$
is smooth and defines a uniform family of contractions
in the final variable, in the sense that
\[
\Lip(\Psi([\gamma],x,\sbull))\leq L
\]
for all $[\gamma]\in Q$ and all $x\in B_3(0)$.\,\Punkt
\end{la}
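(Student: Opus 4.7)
The plan is to follow the proof of Lemma~\ref{hfwaydp} essentially verbatim, with all the ball radii enlarged by $2$; since the hint explicitly sanctions this reduction, the role of the proposal is just to check that each ingredient still works at the new scale.

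First, I would decompose $\Psi$ into a composition of smooth pieces, exactly as in the proof of Lemma~\ref{hfwaydp}. The map $c\colon \R^n \to C([0,1],\R^n)$ sending $x$ to the constant curve $c_x$ is continuous linear, hence smooth, and restricts to $B_3(0) \to C([0,1],\R^n)$. The integration operator $J \colon L^1([0,1],\R^n) \to C([0,1],\R^n)$, $J([f])(t) := \int_0^t f(s)\,ds$, is continuous linear and hence smooth (with $\|J\|_{op}\leq 1$). The key input is Lemma~\ref{phimglatdp} applied with $U := B_5(0)$ and $V := B_4(0)$, which supplies a smooth map
\[
\Phi_n \colon L^1([0,1],C^\infty(B_5(0),\R^n)) \times C([0,1],B_4(0)) \to L^1([0,1],\R^n).
\]
Then the identity $\Psi([\gamma],x,\kappa) = c_x + J(\Phi_n([\gamma],\kappa))$ exhibits $\Psi$ as a composition of smooth maps, provided we know the output actually lies in $C([0,1],B_4(0))$.

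The contraction estimate is obtained by the same pointwise-plus-integration argument as in Lemma~\ref{hfwaydp}. For $\eta,\eta_1 \in C([0,1],B_4(0))$, using inequality~(\ref{yesyepdp}) from the proof of Lemma~\ref{phimglatdp} (with $\wb V = \wb{B}_4(0)$, so the relevant seminorm on $C^\infty(B_5(0),\R^n)$ is dominated by $p$) gives
\[
\|\gamma(s)(\eta(s)) - \gamma(s)(\eta_1(s))\|_\infty \leq p(\gamma(s))\,\|\eta - \eta_1\|_{L^\infty}
\]
for almost every $s$, and integrating over $[0,t]$ and taking the supremum over $t$ yields
\[
\|\Psi([\gamma],x,\eta) - \Psi([\gamma],x,\eta_1)\|_\infty \leq \|\gamma\|_{\cL^1,p}\,\|\eta - \eta_1\|_{L^\infty} \leq L\,\|\eta - \eta_1\|_{L^\infty},
\]
which is the required uniform Lipschitz bound.

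The one step that needs genuine verification, and which I expect to be the only place where the shift of radii matters, is that $\Psi([\gamma],x,\kappa)$ actually takes values in $B_4(0)$ rather than escaping it. For any $t \in [0,1]$ one has
\[
\|\Psi([\gamma],x,\kappa)(t)\|_\infty \leq \|x\|_\infty + \int_0^t \|\gamma(s)(\kappa(s))\|_\infty\,ds \leq \|x\|_\infty + \|\gamma\|_{\cL^1, \|\cdot\|_{\wb{B}_4(0),\|\cdot\|_\infty}},
\]
and the second seminorm is dominated by $p$, so the integral is bounded by $\|\gamma\|_{\cL^1,p} < L < 1$. With $\|x\|_\infty < 3$ this gives $\|\Psi([\gamma],x,\kappa)(t)\|_\infty < 4$, confirming $\Psi([\gamma],x,\kappa) \in C([0,1],B_4(0))$. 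This is really the only point where the choice of radii matters — the gap between $3$ (for $x$) and $4$ (for the codomain) absorbs the $L^1$-norm of $\gamma$, analogously to how the gap between $1$ and $2$ worked in Lemma~\ref{hfwaydp}.
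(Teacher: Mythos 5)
Your proposal is correct and follows exactly the route the paper intends: the paper proves this lemma simply by the remark that it goes "exactly as Lemma~\ref{hfwaydp}, increasing all radii by $2$", and your decomposition $\Psi([\gamma],x,\kappa)=c_x+J(\Phi_n([\gamma],\kappa))$ via Lemma~\ref{phimglatdp} (with $U=B_5(0)$, $V=B_4(0)$), the contraction estimate from (\ref{yesyepdp}), and the final containment check $\|x\|_\infty+\|\gamma\|_{\cL^1,p}<3+L<4$ are precisely the translated steps of that proof. The codomain verification you single out is indeed the only radius-sensitive point, mirroring the $1$-versus-$2$ gap used at the end of the proof of Lemma~\ref{hfwaydp}.
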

\begin{numba}
There exists a locally finite cover
$(U_j)_{j\in J}$ of $M$ by relatively compact,
open subsets $U_j\sub M$ such that
charts
\[
\kappa_j\colon U_j\to B_5(0)\sub\R^n
\]
with image $B_5(0)$ can be defined on~$U_j$
and the smaller sets $\kappa_j^{-1}(B_1(0))$
form an open cover of~$M$
(this follows from \cite[Chapter II, Theorem 3.3]{Lan},
as there exist diffeomorphisms $B_3(0)\to B_5(0)$
which leave $B_1(0)$ invariant).
Since we assume that $M$ is $\sigma$-compact,
the set~$J$ is countable.
\end{numba}
\begin{numba}
For each $j\in J$ and vector field $X\in \cX_c(M)$, we write
\[
X^{(j)}:=d\kappa_j\circ X\circ \kappa_j^{-1}\colon B_5(0)\to\R^n
\]
for its representative in the local chart~$\kappa_j$.
Then
\[
\cX_c(M)\to C^\infty(B_5(0),\R^n),\quad X\mto X^{(j)}
\]
is a continuous linear map.
Moreover, for each $r\in [1,5]$, the map
\[
\rho_r
\colon \cX_c(M)\to\bigoplus_{j\in J}C^\infty(U_j,\R^n),\quad \rho(X):=(X^{(j)}|_{B_r(0)})_{j\in J}
\]
is a linear topological embedding which admits a continuous linear
right inverse, whence its image is closed and complemented
as a topological vector space (see Lemma~\ref{toviasum}).
Hence also
\begin{eqnarray*}
R_5&:=& L^1([0,1],\rho_5)\colon L^1([0,1],\cX_c(M))\to L^1\left([0,1],\bigoplus_{j\in J}C^\infty(B_5(0),\R^n)\right)\\
& & \hspace*{55mm}\cong \bigoplus_{j\in J}L^1([0,1],C^\infty(B_5(0),\R^n))
\end{eqnarray*}
and
\begin{eqnarray*}
R_1&:=& C([0,1],\rho_1)\colon C([0,1],\cX_c(M))\to
C\left([0,1],\bigoplus_{j\in J}C^\infty(B_1(0),\R^n)\right)\\
& & \hspace*{52mm}\cong \bigoplus_{j\in J}C([0,1],C^\infty(B_1(0),\R^n))
\end{eqnarray*}
are linear topological embeddings with
closed (and complemented) image.
\end{numba}
\begin{numba}
Let $Q$ be as in \ref{onballQ};
then
\[
\cQ:=R_5^{-1}\left(\bigoplus_{j\in J}Q\right)=\{[\gamma]\in L^1([0,1],\cX_c(M)) \colon
(\forall j\in J)\; [t\mto (\gamma(t))^{(j)}]\in Q\}
\]
is an open $0$-neighbourhood in $L^1([0,1],\cX_c(M))$.
\end{numba}
\begin{numba}\label{defgmmn}
Let us write $R_5([\gamma])=([\gamma_j])_{j\in J}$
with $\gamma_j\in\cL^1([0,1],C^\infty(B_5(0),\R^n))$.
\end{numba}
\begin{la}
For each $[\gamma]\in \cQ$, the map
\[
\wh{\gamma}\colon [0,1]\times M\to TM,\quad \wh{\gamma}(t,p):=\gamma(t)(p)
\]
satisfies a local $L^1$-Lipschitz condition.
\end{la}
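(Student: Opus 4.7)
The plan is to exhibit, for each $(t_0,p_0)\in [0,1]\times M$, a single chart of $M$ around $p_0$ and a subinterval of $[0,1]$ containing $t_0$ on which the chart-transported function admits an integrable Lipschitz majorant. The cover $(\kappa_j^{-1}(B_1(0)))_{j\in J}$ of $M$ furnishes the chart: pick $j\in J$ with $p_0\in\kappa_j^{-1}(B_1(0))$, and take $\kappa$ to be $\kappa_j$ restricted to $\kappa_j^{-1}(B_4(0))$, which is a chart of $M$ onto the convex open set $V_\kappa=B_4(0)\subseteq\R^n$. For the subinterval, I would simply take $[a,b]=[0,1]$, which is a relatively open subinterval of $[0,1]$ and a neighborhood of every $t_0$.

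Fix a representative $\gamma\in\cL^1([0,1],\cX_c(M))$ and, as in \ref{defgmmn}, write $\gamma_j\in\cL^1([0,1],C^\infty(B_5(0),\R^n))$, $\gamma_j(t)=(\gamma(t))^{(j)}$. Unwinding definitions, the pushforward map is
\[
f_\kappa\colon [0,1]\times B_4(0)\to\R^n,\qquad f_\kappa(t,y)=\gamma_j(t)(y).
\]
Define the candidate majorant by $g(t):=p(\gamma_j(t))$, where $p$ is the continuous seminorm of \ref{onballQ}. Since $[\gamma]\in\cQ$, by definition $[\gamma_j]\in Q$, so
\[
\int_0^1 g(t)\,dt=\|[\gamma_j]\|_{L^1,p}<L<\infty.
\]
Measurability of $g=p\circ\gamma_j$ follows from measurability of $\gamma_j$ and continuity of~$p$.

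The Lipschitz bound $\Lip(f_\kappa(t,\sbull))\le g(t)$ is the routine step: for $y,y'\in B_4(0)$, the line segment $[y,y']$ lies in $\wb{B}_4(0)$ by convexity, so the mean value theorem in integral form (\ref{mvt}) yields
\[
\|\gamma_j(t)(y)-\gamma_j(t)(y')\|_\infty\le\sup_{z\in\wb{B}_4(0)}\|(\gamma_j(t))'(z)\|_{op}\,\|y-y'\|_\infty\le p(\gamma_j(t))\|y-y'\|_\infty,
\]
the last inequality being the definition of $p$. Combining these facts, $f_\kappa$ satisfies an $L^1$-Lipschitz condition in the sense of Definition~\ref{defl1lip}, so $\wh\gamma$ satisfies a local $L^1$-Lipschitz condition in the sense of Definition~\ref{deflcl1lip}.

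There is essentially no obstacle here: the key point was the foresight built into \ref{onballQ} of including the $\|\cdot\|_{\wb{B}_4(0),\|\cdot\|_{op}}\!\circ\! D$ summand in the seminorm $p$, which is exactly what is needed to bound the Lipschitz constant of $\gamma_j(t)|_{B_4(0)}$. The only thing to be careful about is to choose the chart radius ($4$) strictly smaller than the radius on which the seminorm controls the derivative ($5$) and strictly larger than the radius of the cover ($1$), but this is already built into the setup.
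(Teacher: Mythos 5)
Your argument is correct and follows essentially the same route as the paper's proof: pick $j$ with $p_0\in\kappa_j^{-1}(B_1(0))$, work in the chart $\kappa_j$ (the paper restricts to $\kappa_j^{-1}(B_3(0))$, you to $\kappa_j^{-1}(B_4(0))$, both of which lie inside $\wb{B}_4(0)$ where the seminorm $p$ controls the derivative), take the whole interval $[0,1]$, and bound $\Lip(f_\kappa(t,\sbull))$ by $p(\gamma_j(t))$, whose integral is $<L$ since $[\gamma_j]\in Q$ by definition of $\cQ$. The only cosmetic slip is in your closing remark (the seminorm controls the derivative on $\wb{B}_4(0)$, not on all of $B_5(0)$), which does not affect the proof.
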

\begin{proof}
Abbreviate $f:=\wh{\gamma}$.
If $p\in M$, then $p\in \kappa_j^{-1}(B_3(0))$ for some $j\in J$.
Let $U_\kappa:=\kappa_j^{-1}(B_3(0))$ and
$\kappa:=\kappa_j|_{U_\kappa}\colon U_\kappa\to B_2(0)$.
Define
\[
f_\kappa\colon [0,1]\times B_3(0)\to\R^n,\quad f_\kappa(t,y):=
d\kappa(f(t,\kappa^{-1}(y)))
\]
as in Definition~\ref{deflcl1lip}.
For $t\in [0,1]$, we have
$f_\kappa(t,\sbull)=(\rho_j\circ \gamma)(t)|_{B_2(0)}$.
Hence
\begin{eqnarray*}
\Lip (f_\kappa(t,\sbull))&=& \Lip(\rho_j\circ \gamma)(t)|_{B_3(0)})
=\sup_{y\in B_3(0)} \|(\rho_j(\gamma(t)))'(y)\|_{op}\\
& \leq & p(\rho_j(\gamma(t)))=:h(t)
\end{eqnarray*}
with
\[
\int_0^1h(t)\,dt=\|[\rho_j\circ \gamma]\|_{L^1,p}
=\|L^1([0,1],\rho_j)([\gamma])\|_{L^1,p}\leq L<1
\]
as $L^1([0,1],\rho_j)([\gamma])\in Q$ by definition of~$\cQ$.
\end{proof}
\begin{numba}\label{wneF}
As in \ref{nowlocsol},
we see that if
$([\gamma],x)\in Q\times B_3(0)$,
then the contraction
$\Psi([\gamma],x,\sbull)\colon C([0,1],B_4(0))\to C([0,1],B_4(0))$
has a unique fixed point
$\zeta_{[\gamma],x}\in C([0,1],B_4(0))$.
Thus
\begin{equation}\label{willrecodpD}
\Psi([\gamma],x,\zeta_{[\gamma],x})=\zeta_{[\gamma],x}.
\end{equation}
Since $\Psi$ is smooth, Lemma~\ref{PARFIX}
shows that also the map
\[
Q\times B_3(0)\to C([0,1],B_4(0)),\quad ([\gamma],x)\mto\zeta_{[\gamma],x}
\]
is smooth.
Define $F([\gamma])(t)(x):=\zeta_{[\gamma],x}(t)$
for $[\gamma]\in Q$, $x\in B_3(0)$ and $t\in [0,1]$.
Using the exponential laws from \cite{AaS},
we deduce:
\begin{itemize}
\item[(a)]
$F([\gamma])(t)\in C^\infty(B_3(0),\R^n)$
for all $[\gamma]\in Q$ and $t\in [0,1]$;
\item[(b)] $F([\gamma])\in C([0,1],C^\infty(B_3(0),\R^n))$ for all $[\gamma]\in Q$;
\item[(c)]
$F\colon Q \to C([0,1],C^\infty(B_3(0),\R^n))$
is smooth.
\end{itemize}
Define $H([\gamma])(t):=F([\gamma])(t)|_{B_2(0)}$ for $[\gamma]\in Q$ and $t\in [0,1]$.
Thus
\[
H=C([0,1],\varrho)\circ F \colon Q\to C([0,1],C^\infty(B_2(0),\R^n)),
\]
where $\varrho\colon C^\infty(B_3(0),\R^n)\to C^\infty(B_2(0))$,
$\kappa\mto\kappa|_{B_2(0)}$ and
\[
C([0,1],\varrho)\colon C([0,1],C^\infty(B_3(0),\R^n))\to
C([0,1],C^\infty(B_2(0),\R^n)), \;\;\kappa\mto \varrho\circ\kappa
\]
are continuous linear maps (cf.\ \cite{ZOO}). Hence $H$ is smooth, being a composition
of smooth maps.
\end{numba}
\begin{la}\label{wgvAC}
For each $[\gamma]\in Q$, the map $H([\gamma])\colon [0,1]\to C^\infty(B_2(0),\R^n)$
is absolutely continuous.
\end{la}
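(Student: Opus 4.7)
\textbf{Proof proposal for Lemma~\ref{wgvAC}.} The plan is to exhibit an explicit derivative for $H([\gamma])$ and verify the fundamental theorem of calculus pointwise. Define
\[
\tau\colon [0,1]\to C^\infty(B_2(0),\R^n),\quad \tau(s):=\gamma(s)\circ H([\gamma])(s),
\]
where we regard $H([\gamma])(s)\in C^\infty(B_2(0),\R^n)$ as taking values in $B_4(0)\subset B_5(0)$ (which is the case because $F([\gamma])(s)(x)=\zeta_{[\gamma],x}(s)\in B_4(0)$ for all $x\in B_3(0)$, by construction). The goal is to show $\tau\in\cL^1([0,1],C^\infty(B_2(0),\R^n))$ and that $H([\gamma])(t)-H([\gamma])(0)=\int_0^t\tau(s)\,ds$ for every $t\in[0,1]$.

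First I would verify the $\cL^1$ property. Let $W\subseteq C^\infty(B_2(0),\R^n)$ be the open set of functions whose image is contained in $B_5(0)$. The composition map
\[
f\colon W\times C^\infty(B_5(0),\R^n)\to C^\infty(B_2(0),\R^n),\quad (\sigma,\gamma_0)\mto\gamma_0\circ\sigma,
\]
is smooth and linear in its second argument (a standard fact about composition on spaces of smooth maps between open subsets of finite-dimensional spaces, see \cite{AaS}). Since $H([\gamma])\in C([0,1],W)$ (noted already in \ref{wneF}) and $\gamma\in\cL^1([0,1],C^\infty(B_5(0),\R^n))$, the pushforward axiom (P1) for $L^1$ established in Lemma~\ref{lebhavePA} (equivalently, Lemma~\ref{operders}\,(b)) yields $\tau=f\circ(H([\gamma]),\gamma)\in\cL^1([0,1],C^\infty(B_2(0),\R^n))$.

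Next I would confirm the identity $H([\gamma])(t)=H([\gamma])(0)+\int_0^t\tau(s)\,ds$ in $C^\infty(B_2(0),\R^n)$. Since two elements of $C^\infty(B_2(0),\R^n)$ agree iff they agree at every $x\in B_2(0)$, it suffices to check this after applying each point evaluation $\ev_x$. The evaluation $\ev_x\colon C^\infty(B_2(0),\R^n)\to\R^n$ is continuous linear, so by \ref{basicweaki} it commutes with the weak integral:
\[
\ev_x\!\left(\int_0^t\tau(s)\,ds\right)=\int_0^t\ev_x(\tau(s))\,ds=\int_0^t\gamma(s)(\zeta_{[\gamma],x}(s))\,ds.
\]
On the other hand, $\ev_x(H([\gamma])(t))=F([\gamma])(t)(x)=\zeta_{[\gamma],x}(t)$, and $\zeta_{[\gamma],x}(0)=x$. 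The fixed-point identity (\ref{willrecodpD}), namely $\Psi([\gamma],x,\zeta_{[\gamma],x})=\zeta_{[\gamma],x}$, gives precisely $\zeta_{[\gamma],x}(t)-x=\int_0^t\gamma(s)(\zeta_{[\gamma],x}(s))\,ds$, which matches. Hence $H([\gamma])\in AC_{L^1}([0,1],C^\infty(B_2(0),\R^n))$ with $H([\gamma])'=[\tau]$.

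The main obstacle is the first step, namely checking that the composition map $(\sigma,\gamma_0)\mto\gamma_0\circ\sigma$ lands in the right function space and is smooth in the sense required to apply the pushforward axiom; but this is a standard calculus fact on finite-dimensional charts and is already implicit in the smoothness of $\Psi$ in Lemma~\ref{hfwaydpM}. Everything else is bookkeeping with Fubini-type commutations between weak integrals and point evaluations.
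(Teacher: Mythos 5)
Your overall strategy is exactly the paper's: take $s\mto\gamma(s)\circ(\mbox{flow map})(s)$ as the candidate derivative, get it into $\cL^1([0,1],C^\infty(B_2(0),\R^n))$ via smoothness of composition plus Lemma~\ref{operders}\,(b), and then identify the resulting primitive with $H([\gamma])$ by applying point evaluations and the fixed-point identity (\ref{willrecodpD}). The identification step in your last paragraph is verbatim the paper's computation. However, there is a genuine gap in the first step: the set $W:=\{\sigma\in C^\infty(B_2(0),\R^n)\colon \im(\sigma)\sub B_5(0)\}$ is \emph{not} open in $C^\infty(B_2(0),\R^n)$. The compact-open $C^\infty$-topology only controls $\sigma$ and its derivatives on compact subsets of the (non-compact) ball $B_2(0)$, so every neighbourhood of a given $\sigma_0\in W$ contains functions whose values near $\partial B_2(0)$ leave $B_5(0)$; a constraint on the image over the whole open ball is never an open condition. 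Consequently Lemma~\ref{operders}\,(b) (equivalently the pushforward axiom), which requires an \emph{open} set $V$ and a continuous curve in $C([a,b],V)$, does not apply to your $f$ on $W\times C^\infty(B_5(0),\R^n)$, and the standard smoothness-of-composition results you invoke are likewise stated for domains cut out by conditions on compact sets; the smoothness of $\Psi$ in Lemma~\ref{hfwaydpM} does not substitute for this, since it concerns a different (pointwise-evaluation) operator.

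This is precisely why the paper does not work with $H([\gamma])$ as the inner function but with $F([\gamma])\in C([0,1],C^\infty(B_3(0),\R^n))$: it sets $Z:=\{\kappa\in C^\infty(B_3(0),\R^n)\colon \kappa(\wb{B}_2(0))\sub B_4(0)\}$, which \emph{is} open because $\wb{B}_2(0)$ is a compact subset of the inner function's domain $B_3(0)$, notes $F([\gamma])(s)\in Z$, and uses the smooth map $Z\times C^\infty(B_5(0),\R^n)\to C^\infty(B_2(0),\R^n)$, $(\kappa,\sigma)\mto\sigma\circ\kappa$ (restricted to $B_2(0)$, via \cite[Lemma~11.4]{ZOO}), which is linear in $\sigma$; then Lemma~\ref{operders}\,(b) applies and yields $\gamma(s)\circ F([\gamma])(s)$ as an $\cL^1$-function, after which your point-evaluation argument closes the proof. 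So your proposal is repairable by this one change of set-up (enlarge the domain of the inner map and formulate the openness condition on the compact set $\wb{B}_2(0)$), but as written the appeal to the pushforward machinery fails.
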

\begin{proof}
The set
\[
Z:=\{\kappa\in C^\infty(B_3(0),\R^n)\colon \kappa(\wb{B}_2(0)\sub B_4(0)\}
\]
is an open $0$-neighbourhood
in $C^\infty(B_3(0),\R^n)$, whence $C([0,1],Z)$
is an open $0$-neighbourhood in $C([0,1],C^\infty(B_3(0),\R^n))$.
By \cite[Lemma~11.4]{ZOO}, the map
\[
f\colon Z\times C^\infty(B_5(0),\R^n)\to C^\infty(B_2(0),\R^n),\quad
f(\tau,\sigma):=\sigma\circ\tau
\]
is smooth. Moreover, $f(\tau,\sbull)$ is linear for
each $\tau\in Z$, and we have $F([\gamma])\in Z$.
Hence
\[
f\circ (F([\gamma]),\gamma) \in \cL^1([0,1],C^\infty(B_2(0),\R^n)),
\]
by Lemma~\ref{operders}\,(b), entailing that
$\kappa\colon [0,1]\to C^\infty(B_2(0),\R^n)$,
\[
\kappa(t):=
\id_{B_2(0)}+\int_0^t
f(F([\gamma])(s),\gamma(s))\,ds
=
\id_{B_2(0)}+\int_0^t
\gamma(s)\circ F([\gamma])(s)\,ds
\]
is an absolutely continuous function.
Hence, the proof will be complete if we can show that
\[
H([\gamma])=\kappa.
\]
It suffices to show that $H([\gamma])(t)(x)=\kappa(t)(x)$ for each $t\in [0,1]$
and $x\in B_2(0)$. Since $\ve_x\colon C^\infty(B_2(0),\R^n)\to \R^n$,
$\tau\mto \tau(x)$ is a continuous linear map,
we have
\begin{eqnarray*}
\kappa(t)(x)&=&\ve_x(\kappa(t))=x+\int_0^t \ve_x(\gamma(s)\circ F([\gamma])(s))\,ds\\
&=& x+\int_0^t \gamma(s)(F([\gamma])(s)(x))\,ds
=x+\int_0^t \gamma(s)(\zeta_{[\gamma],x})\,ds\\
&=&\Psi([\gamma],x,\zeta_{[\gamma],x}(t))=\zeta_{[\gamma],x}(t)
=H([\gamma])(t)(x),
\end{eqnarray*}
exploiting for the second equality that
weak integrals and continuous linear maps 
can be interchanged. This finishes the proof.
\end{proof}
\begin{numba}
Given $[\gamma]\in \cQ$, we have $[\gamma_j]\in Q_j$
for each $j\in J$ (using the notation from~\ref{defgmmn}).
For $p\in M$, we have $p\in \kappa_j^{-1}(B_3(0))$ for some $j\in J$.
Let $x:=\kappa(p)$.
By~(\ref{willrecodpD})
and definition of $\Psi$, we have $\zeta_{[\gamma_j],x}\in AC_{L^1}([0,1],B_4(0))$
and
\[
\zeta_{[\gamma_j],x}(t)= x+\int_0^t \gamma_j(s)(\zeta_{[\gamma_j],x}(s))\,ds\quad\mbox{for all
$t\in [0,1]$,}
\]
i.e., $\zeta_{[\gamma_j],x}$ is a Carath\'{e}odory solution to
\[
y'(t)=\gamma_j(t)(y(t))=\wh{\gamma_j}(t,y(t)),\quad y(0)=x.
\]
Hence
\begin{equation}\label{etawillwell}
\eta_{[\gamma],p}\colon [0,1]\to M, \quad
t\mto \kappa_j^{-1}(\zeta_{[\gamma_j],x}(t))
\end{equation}
is a Carath\'{e}odory solution to
\[
y'(t)=\gamma(t)(y(t))=\wh{\gamma}(t,y(t)),\quad y(0)=p;
\]
since $\wh{\gamma}$ satisfies a local $L^1$-Lipschitz condition
by the first half of Proposition~\ref{handsonEv},
such solutions are unique by Proposition~\ref{propunq}
and thus $\eta_{[\gamma],p}$ is well defined,
independent of the choice of~$j$.
By the preceding,
$\wh{\gamma}$ admits a global flow for the initial time $t_0=0$,
and the latter is given by
\[
\Phi_{t,0}^{\wh{\gamma}}(p)=\eta_{[\gamma],p}(t)\quad\mbox{for $\, t\in [0,1]$, $p\in M$.}
\]
We shall also write
\begin{equation}\label{defnett}
\eta(t)(p)\quad\mbox{ or } \quad \eta_{[\gamma]}(t)(p)
\end{equation}
for $\Phi^{\wh{\gamma}}_{t,0}(p)$.
\end{numba}
\begin{numba}
Since $\rho_1$ is a topological embedding, after shrinking $\cV$
we may assume that
\[
\cV=\rho_1^{-1}\left(\bigoplus_{j\in J}\cV_j\right)
\]
with suitable open zero-neighbourhoods $\cV_j\sub C^\infty(B_1(0),\R^n)$.
\end{numba}
\begin{numba}
For $j\in J$, we obtain a Riemannian metric $g_j$ on $B_5(0)\sub\R^n$
via
\[
g_j((x,y),(x,z)):=g(T\kappa_j^{-1}(x,y),T\kappa_j^{-1}(x,z))
\]
for $x\in B_5(0)$, $y,z\in \R^n$. This metric makes $\kappa_j$ an isometry.
If $\exp_j\colon \cD_j\to B_5(0)$ is the Riemannian exponential map
for $(B_5(0),g_j)$, then
\[
T\kappa_j^{-1}(\cD_j)\sub \cD
\]
and
\begin{equation}\label{standumr}
\kappa_j^{-1}\circ \exp_j=\exp\circ T\kappa_j^{-1}|_{\cD_j}.
\end{equation}
Let $\pr_1\colon B_5(0)\times\R^n\to B_5(0)$ be the projection onto the first component.
We consider the smooth map
\[
(\pr_1,\exp_j)\colon \cD_j\to B_5(0)\times B_5(0),\quad (x,y)\mto (x,\exp_j(x,y)).
\]
For each $x\in \wb{B}_4(0)\times \{0\}$, the smooth map $(\pr_1,\exp_j(x,y))$
has invertible derivative
\[
(v,w)\mto (v,v+w)
\]
at $(x,0)$ and hence is a local diffeomorphism
around $(x,0)$. 
Since $(\pr_1,\exp_j)$
is injective on the compact set $\wb{B}_4(0)\times\{0\}$,
there is an open subset $O_j\sub \cD_j$ containing $\wb{B}_4(0)\times\{0\}$
such that $(\pr_1,\exp_j)(O_j)$
is open in $B_5(0)\times B_5(0)$ and
$\psi_j:=(\pr_1,\exp_j)|_{O_j}$ is a $C^\infty$-diffeomorphism
onto its image
(see, e.g., \cite[Lemma~4.6]{DaG}).
After shrinking $O_j$ if necessary, we may assume that
\begin{equation}\label{thscompb}
T\kappa_j^{-1}(O_j)\sub\cW
\end{equation}
(with $\cW$ as in (\ref{intloca})).
As the compact set $\Delta:=\{(x,x)\colon x\in \wb{B}_4(0)\}$
is contained in the open set
$(\pr_1,\exp_j)(O_j)$,
there is $s_j\in \;]0,1]$ such that
\[
\Delta+(\{0\}\times B_{s_j}(0))
=\bigcup_{x\in \wb{B}_3(0)} \{x\}\times B_{s_j}(x) \sub (\pr_1,\exp_j)(O_j).
\]
Thus $\psi_j^{-1}$ restricts to a $C^\infty$-diffeomorphism of the form
\[
(\id_{B_4(0)},\theta_j)\colon
\bigcup_{x\in B_4(0)} \{x\} \times B_{s_j}(x)\to
\psi_j^{-1}\left( \bigcup_{x\in B_4(0)} \{x\}\times B_{s_j}(x) \right)
\]
whose range is an open subset of $O_j$.
As a consequence, $\exp(x,\sbull)$
takes the open $0$-neighbourhood $O_{j,x}:=\{y\in \R^n\colon (x,y)\in O_j\}$
diffeomorphically onto an open subset of $B_5(0)$
which contains $B_{s_j}(x)$, and
\begin{equation}\label{litpiec}
(\exp(x,\sbull)|_{O_{j,x}})^{-1}|_{B_{s_j}(x)}=\theta_j(x,\sbull).
\end{equation}
\end{numba}
\begin{numba}
Then also the map
\[
h_j\colon B_2(0)\times B_{s_j}(0)\to B_4(0),\quad (x,z)\mto \theta_j(x,x+z)
\]
is smooth, and
\[
Z_j:=\{\gamma\in C^\infty(B_2(0),\R^n)\colon \gamma(\wb{B}_1(0))\sub B_{s_j}(0)\}
\]
is an open subset of $C^\infty(B_2(0),\R^n)$.
As a consequence, also
\[
\id_{B_2(0)}+Z_j=\{\gamma\in C^\infty(B_2(0),\R^n)\colon (\forall x\in \wb{B}_1(0))\;
\gamma(x)\in B_{s_j}(x)\}
\]
is an open subset of $C^\infty(B_2(0),\R^n)$.
Consider the map
\[
(h_j)_*\colon Z_j\to C^\infty(B_1(0),\R^n)
\]
determined by $(h_j)_*(\gamma)(x):= h_j(x,\gamma(x))$ for $\gamma\in Z_j$
and $x\in B_1(0)$. Then $(h_j)_*$ is smooth,
by \cite[Proposition~4.23]{ZOO}.
Now consider the map
\[
(\theta_j)_*\colon \id_{B_2(0)}+Z_j\to C^\infty(B_1(0),\R^n)
\]
determined by $(\theta_j)_*(\gamma)(x):= \theta_j(x,\gamma(x))$
for $\gamma\in \id_{B_2(0)}+Z_j$ and $x\in B_1(0)$.
Since $(\theta_j)_*(\gamma)=(h_j)_*(\gamma-\id_{B_2(0)})$,
also $(\theta_j)_*$ is smooth.
In particular, $(\theta_j)_*$ is continuous.
Since $(\theta_j)_*(\id_{B_2(0)})=0$,
there exists an open neighbourhood $Y_j\sub \id_{B_2(0)}+Z_j$
of $\id_{B_2(0)}$ such that
\begin{equation}\label{YjVj}
(\theta_j)_*(Y_j)\sub \cV_j.
\end{equation}
Since $H$ is continuous by
(\ref{wneF}) and $H(0)(t)=\id_{B_2(0)}$ for all $t\in [0,1]$,
there is an open $0$-neighbourhood $P_j\sub Q$ such that
\begin{equation}\label{QjYj}
H(P_j)\sub C([0,1],Y_j).
\end{equation}
Now
\[
\cP:=R_5^{-1}\left(\bigoplus_{j\in J}P_j\right)
\]
is an open $0$-neighbourhood in $L^1([0,1],\cX_c(M))$,
and $\cP\sub\cQ$.
\end{numba}
\begin{numba}
Given $[\gamma]\in \cP$, write
$R_5([\gamma])=([\gamma_j])_{j\in J}$ with $\gamma_j\in \cL^1([0,1],C^\infty(B_5(0),\R))$.
Let $\eta(t):=\Phi^{\wh{\gamma}}_{t,0}$
If $p\in M$ such that $p\in \kappa_j^{-1}(B_1(0))$,
let $x:=\kappa_j(p)$. Then
\begin{eqnarray*}
(\theta_j)_*(H([\gamma_j])(t))(x)
&=& \theta_j(x,H([\gamma_j])(t)(x))=
\theta_j(x,\zeta_{[\gamma_j],x}(t))\\
&=& \theta_j (\kappa_j(p), \kappa_j(\eta(t)(p)))
= d\kappa_j (\exp_p|_{\cW_p})^{-1}(\eta(t)(p))
\end{eqnarray*}
by (\ref{standumr}), (\ref{thscompb}),
and (\ref{litpiec}).
Therefore
\[
(\theta_i)_*(H([\gamma_j])(t))(x)=
d(\kappa_j\circ\kappa_i^{-1})((\theta_j)_*(H([\gamma_j])(t))(x))
\]
if $p\in \kappa_j^{-1}(B_1(0))\cap \kappa_i^{-1}(B_1(0))$.
Thus
\[
((\theta_j)_*(H([\gamma_j])(t)))_{j\in J}\in \im(\rho_1)
\]
and hence
\[
((\theta_j)_*(H([\gamma_j])))_{j\in J}\in \im (R_1),
\]
whence we find a unique $\theta=\theta_{[\gamma]}\in C([0,1],\cX_c(M))$
such that
\[
R_1(\theta)
=
((\theta_j)_*(H([\gamma_j])))_{j\in J}.
\]
Then $\rho_1(\theta(t))=
((\theta_j)_*(H([\gamma_j]))(t))_{j\in J}
\in\bigoplus_{j\in J}\cV_j$,
whence
\[
\theta(t)\in \cV
\]
and hence
\[
\exp\circ \theta(t)\in \Diff_c(M).
\]
Define $\eta(t):=\eta_{[\gamma]}(t):=\Phi^{\wh{\gamma}}_{t,0}$.
If $p\in M$ and $j\in J$ with $p\in \kappa_j^{-1}(B_1(0))$,
then
\begin{eqnarray*}
\exp(\theta(t)(p))&=& \kappa_j^{-1}\exp_jT\kappa_j \theta(t)(\kappa_j^{-1}(x)))
= \kappa_j^{-1}\exp_j (\theta_j)_*(H([\gamma_j]))(t)(x)\\
&=& \kappa_j^{-1}\exp_j (\exp_j|_{\cW_j})^{-1}
H([\gamma_j])(t)(x)=\kappa_j^{-1}H([\gamma_j])(t)(x)\\
&=&
\kappa_j^{-1}(\zeta_{[\gamma_j],x}(t))=\Phi^{\wh{\gamma}}_{t,0}(p)
=\eta(t)(p)
\end{eqnarray*}
with $x:=\kappa_j(p)$.
Hence
\[
\eta(t)=\exp\circ\theta(t)=\Phi(\theta(t))\in \Diff_c(M),
\]
enabling us to consider $\eta$ as a map $\eta\colon [0,1]\to \Diff_c(M)$.
\end{numba}
\begin{numba}
Since $\eta=\Phi\circ \theta$,
the map $\eta\colon [0,1]\to \Diff_c(M)$
will be absolutely continuous
if we can show that $\theta\colon [0,1]\to \cX_c(M)$
is absolutely continuous, which is equivalent to
absolute continuity of the map
\[
[0,1]\to\bigoplus_{j\in J} C^\infty(B_1(0),\R^n),\quad
t\mto \rho_1(\theta(t)),
\]
by Lemma~\ref{intsubmf} (since $\im(\rho_1)$ is complemented
in the direct sum).
Note that $J_0:=\{j\in J\colon [\gamma_j]\not=0\}$
is a finite set. If $j\in J\setminus J_0$, then $\zeta_{[\gamma_j],x}(t)=x$
for all $x\in B_2(0)$ and $t\in [0,1]$, entailing that
\[
\eta(t)(p)=p
\]
for all $p\in $
(see (\ref{etawillwell}) and (\ref{defnett}))
Therefore
\[
\theta(t)(p)=(\exp_p|_{\cW_p})^{-1}(\eta(t)(p))=0
\]
for all $j\in J\setminus J_0$, $t\in [0,1]$, and $p\in U_j$.
Hence
\[
\im(\rho_1\circ \theta)\sub\bigoplus_{j\in J_0}C^\infty(B_1(0),\R^n)
\]
and thus $\rho_1\circ \theta$ (and hence $\theta$)
will be absolutely continuous if we can show
that its components
\[
[0,1]\to C^\infty(B_1(0),\R^n),\quad t\mto (\theta_j)_*(H([\gamma_j])(t))
\]
are absolutely continuous for all $j\in J_0$.
Since $(\theta_j)_*$ is smooth, it suffices to know that $H([\gamma_j])$
is absolutely continuous for each $j\in J_0$.
But this is the case by Lemma~\ref{wgvAC}.
Hence $\eta$ is absolutely continuous,
and thus
\[
\eta=\Evol^r([\gamma]),
\]
by Proposition~\ref{handsonEv}.
\end{numba}
\begin{numba}
In view of Proposition~\ref{lctoglb}
and the final assertion of Proposition~\ref{givsordr},
%
%
% GENAUER: Analoga fuer Rechtsevolution!
%
to complete the proof of $L^1$-regularity of $\Diff_c(M)$,
it only remains to check that the map
\[
\cP\to C([0,1],\Diff_c(M)),\quad [\gamma]\mto\Evol^r([\gamma])
\]
is smooth. Since $\Evol^r([\gamma])=\eta_{[\gamma]}=\Phi\circ \theta_{[\gamma]}
=C([0,1],\Phi)(\eta_{[\gamma]})$,
where $C([0,1],\Psi\colon C([0,1],\cV)\to C([0,1],\cU)$
is smooth (being a the inverse of a chart for the Lie group
$C([0,1],\Diff_c(M))$), we need only show that
\[
\cP\to C([0,1],\cX_c(M)),\quad [\gamma]\mto\theta_{[\gamma]}
\]
is smooth. This will hold if the map
\begin{equation}\label{furmpp}
\cP\to \bigoplus_{j\in J}
C([0,1],C([0,1],C^\infty(B_1(0),\R^n)),\quad [\gamma]\mto R_1(\theta_{[\gamma]})
\end{equation}
is smooth. But
\[
R_1(\theta_{[\gamma]})=((\theta_j)_*(H([\gamma_j])))_{j\in J}
=\left(\oplus_{j\in J}((\theta_j)_*\circ H)\right)(R_5([\gamma]),
\]
showing that the map in (\ref{furmpp}) coincides with
\[
\left(\oplus_{j\in J}((\theta_j)_*\circ H)\right)\circ R_5|_{\cP}.
\]
This map is smooth since $R_5$ is continuous linear
and $\oplus_{j\in J}((\theta_j)_*\circ H)$ is smooth (by \cite[Proposition~7.1]{MEA})
as each of the maps $(\theta_j)_*\circ H$
is so.
\end{numba}
\begin{rem}\label{fornowsig}
Given a compact subset $K\sub M$,
let
\[
[\gamma]\in \Omega_K:=\cL^1([0,1],\cX_K(M))\cap\cP.
\]
If $p\in K$, choose $j\in J$ with $p\in \kappa_j^{-1}(B_1(0))$
and set $x:=\kappa_j(p)$. Then
\begin{eqnarray*}
\Evol^r([\gamma])(t)(p)&=& \eta_{[\gamma]}(t)(p)
=\eta_{[\gamma],t}(p)\\
&=& \kappa_j^{-1}(\zeta_{[\gamma_j],x}(t))-\kappa_j^{-1}(x)=p
\end{eqnarray*}
for each $t\in [0,1]$,
using that $\zeta_{[\gamma],x}(t)=x$ for all $t$
since also the constant fucntion $t\mto x$
is a fixed point of
\[
\Psi([\gamma_j],x,\sbull): \kappa\mto x+\int_0^t \gamma_j(s)(\kappa(s)(x))\,ds
\]
(as $\gamma(s)(x)=0$).
Hence $\Evol^r([\gamma])\in C([0,1],\Diff_K(M))$,
as used in~\ref{nowsigm}.
\end{rem}
\section{Consequences of regulated regularity}\label{secregreg}
In this section, we prove
Theorems~H and~I from the introduction,
devoted to the strong Trotter property
and the strong commutator property
(as defined in the introduction).
We begin with a lemma from \cite{GaN}
due to K.-H. Neeb.
\begin{la}\label{rootC1}
Let $E$ be a locally convex space,
$U\sub E$ be an open set, $r>0$,
$\gamma\colon [0,r]\to U$
be a $C^1$-curve and
$f\colon U\to F$ be a $C^2$-map with $df(\gamma(0),.)=0$.
Then
\[
\eta\colon [0,r^2]\to U,\quad t\mto f(\gamma(\sqrt{t}))
\]
is $C^1$ with $\eta'(0)=\frac{1}{2}d^2f(\gamma(0),\gamma'(0),\gamma'(0))$.
\end{la}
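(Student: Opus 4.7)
\textbf{Proof plan for Lemma \ref{rootC1}.}

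The plan is to show that $\eta$ has continuous derivative on all of $[0,r^2]$ by treating $t>0$ and $t=0$ separately and then matching them. For $t>0$, the map $t\mapsto\sqrt t$ is smooth, so the chain rule (Lemma \ref{chainpw}) gives
\[
\eta'(t)=\frac{1}{2\sqrt t}\,df\bigl(\gamma(\sqrt t),\gamma'(\sqrt t)\bigr).
\]
The key observation is that, because $df(\gamma(0),\cdot)=0$, the term $df(\gamma(s),\gamma'(s))$ vanishes to first order in $s$, which will cancel the $1/\sqrt t$ factor.

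First I would extract the $s^2$-behaviour of $f\circ\gamma$ near $s=0$. Writing $x_0:=\gamma(0)$, $v_0:=\gamma'(0)$, and $v(s):=\int_0^1\gamma'(\tau s)\,d\tau$, the fundamental theorem gives $\gamma(s)-x_0=s\,v(s)$ with $v$ continuous and $v(0)=v_0$. Apply the second-order Taylor formula with integral remainder to $f$ at $x_0$ (available because $f$ is $C^2$; this is a standard consequence of \ref{mvt} applied twice). Using $df(x_0,\cdot)=0$ and bilinearity of $d^2f(x,\cdot,\cdot)$ in its last two slots, this yields
\[
f(\gamma(s))-f(x_0)=s^2\int_0^1(1-\tau)\,d^2f\!\bigl(x_0+\tau s\,v(s),v(s),v(s)\bigr)\,d\tau.
\]
Substituting $s=\sqrt t$ and dividing by $t$,
\[
\frac{\eta(t)-\eta(0)}{t}=\int_0^1(1-\tau)\,d^2f\!\bigl(x_0+\tau\sqrt t\,v(\sqrt t),v(\sqrt t),v(\sqrt t)\bigr)\,d\tau.
\]
By continuity of $d^2f$ and the parameter-dependent-integral principle \ref{pardep}, the right-hand side converges as $t\to 0^+$ to $\tfrac12 d^2f(x_0,v_0,v_0)$, establishing differentiability at $0$ with the claimed value of $\eta'(0)$.

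Next I would show $\eta'$ is continuous at $0$. The idea is to apply the same Taylor trick to $df$ in its first argument. Since $df$ is $C^1$ and $df(x_0,w)=0$ for all $w$, the mean value theorem \ref{mvt} gives
\[
df(\gamma(s),\gamma'(s))=s\int_0^1 d^2f\!\bigl(x_0+\tau s\,v(s),v(s),\gamma'(s)\bigr)\,d\tau.
\]
Plugging this into the expression for $\eta'(t)$ above, the factor $s=\sqrt t$ cancels $1/(2\sqrt t)$ and one obtains
\[
\eta'(t)=\tfrac12\int_0^1 d^2f\!\bigl(x_0+\tau\sqrt t\,v(\sqrt t),v(\sqrt t),\gamma'(\sqrt t)\bigr)\,d\tau
\]
for all $t>0$, a formula that by \ref{pardep} extends continuously to $t=0$ with value $\tfrac12 d^2f(x_0,v_0,v_0)=\eta'(0)$.

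The only mildly delicate point is checking that the Taylor-with-integral-remainder formula and the parameter-dependent-integral argument go through in a general locally convex target $F$, but both are covered by the tools already collected in \ref{mvt} and \ref{pardep}. No regularity beyond $C^2$ of $f$ and $C^1$ of $\gamma$ is used; in particular no $\gamma''$ is needed, so the argument cannot be replaced by a naive chain rule on $g(s):=df(\gamma(s),\gamma'(s))$.
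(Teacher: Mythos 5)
Your argument is correct, and it rests on the same key idea as the paper's proof: for $t>0$ one has $\eta'(t)=\frac{1}{2\sqrt t}\,df(\gamma(\sqrt t),\gamma'(\sqrt t))$, and the hypothesis $df(\gamma(0),\cdot)=0$ is used to show this expression vanishes to first order in $\sqrt t$, cancelling the singular factor. The difference is in the packaging. The paper normalizes $\gamma(0)=0$, writes $\gamma(\sqrt t)=\sqrt t\,\gamma^{[1]}(0,1,\sqrt t)$ and subtracts the zero term $df(0,\gamma'(\sqrt t))$, so that $\eta'(t)=\frac12 (df)^{[1]}(0,\gamma'(\sqrt t);\gamma^{[1]}(0,1,\sqrt t),0;\sqrt t)$; continuity of the maps $\gamma^{[1]}$ and $(df)^{[1]}$ from \ref{BGN} then gives the continuous extension to $t=0$ at once, with no weak integrals appearing, and the $C^1$ property follows from the standard fact that a continuous extension of the derivative to the endpoint yields differentiability there. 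You instead express the same cancellation via the Mean Value Theorem \ref{mvt} applied to $x\mapsto df(x,\gamma'(s))$, and prove differentiability at $t=0$ separately through a second-order Taylor remainder, using \ref{pardep} for the limits. This buys you an explicit proof of $\eta'(0)$ (so you never need the endpoint-extension fact the paper uses implicitly), at the cost of two small points you should spell out: the existence of the weak integrals in a possibly non-complete $F$ (the iterated-MVT form is guaranteed by \ref{mvt}, and the single-integral $(1-\tau)$ remainder exists because the candidate value $f(\gamma(s))-f(\gamma(0))$ is already an element of $F$ and the identity can be checked after applying continuous linear functionals), and the symmetry of $d^2f$ in its last two arguments, which you use tacitly when matching the value from your two steps. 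Both are routine, so the plan goes through.
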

\begin{proof}
We may assume that $\gamma(0)=0$ and $f(0)=0$.
Noting that
\[
\gamma(\sqrt{t})=
\sqrt{t}\, \frac{\gamma(\sqrt{t})-\overbrace{\gamma(\sqrt{0})}^{=0}}{\sqrt{t}}
=\sqrt{t}\, \gamma^{[1]}(0,1,\sqrt{t}),
\]
we get for $t\in \, ]0,r^2]$
\begin{eqnarray*}
\eta'(t) &=& \frac{1}{2\sqrt{t}}df(\gamma(\sqrt{t}); \gamma'(\sqrt{t}))
\underbrace{-\frac{1}{2\sqrt{t}}df(0, \gamma'(\sqrt{t}))}_{=0}\\
&=& \frac{1}{2}(df)^{[1]}(0,\gamma'(\sqrt{t});\gamma^{[1]}(0,1,\sqrt{t}),
0;\sqrt{t})
\end{eqnarray*}
The right-hand-side makes sense also for $t=0$
and is continuous on $[0,r^2]$.
So $\eta$ is $C^1$ and
$\eta'(0) =
\frac{1}{2}(df)^{[1]}(0,\gamma'(0);\gamma'(0),0;0)
= \frac{1}{2}d^2f(0,\gamma'(0), \gamma'(0))$.
\end{proof}
The following consequence (see also
\cite[proof of Proposition II.6.3]{Nee})
is relevant for our ends.
\begin{la}\label{givesC1here}
If $G$ is a Lie group, $r>0$
and $\gamma_1,\gamma_2\in C^1([0,r],G)$ with $\gamma_1(0)=\gamma_2(0)=e$,
then $\eta\colon [0,r^2]\to G$,
\[
\eta(t):=\gamma_1(\sqrt{t})\gamma_2(\sqrt{t})
\gamma_1(\sqrt{t})^{-1}\gamma_2(\sqrt{t})^{-1}
\]
is $C^1$, and $\eta'(0)=[\gamma_1'(0),\gamma_2'(0)]$
in the Lie algebra $L(G)=T_e(G)$.
\end{la}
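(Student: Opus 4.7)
The plan is to reduce the statement to Lemma~\ref{rootC1} by working in a chart around $e$. Choose a chart $\phi\colon U\to V\sub E$ of~$G$ with $\phi(e)=0$, defined on an open symmetric identity neighbourhood~$U$. Shrinking $U$ if necessary, one can define a smooth map
\[
c\colon W \to E, \quad c(x,y):=\phi\bigl(\phi^{-1}(x)\phi^{-1}(y)\phi^{-1}(x)^{-1}\phi^{-1}(y)^{-1}\bigr)
\]
on an open neighbourhood $W\sub V\times V$ of $(0,0)$. The key feature is that $c$ vanishes on both coordinate axes: $c(x,0)=0$ and $c(0,y)=0$ for $(x,0),(0,y)\in W$, since $\phi^{-1}(0)=e$ and $eye^{-1}y^{-1}=e$ etc. Pick $r_1\in\;]0,r]$ so small that $\gamma_1(s),\gamma_2(s)\in U$ for $s\in[0,r_1]$ and $(\phi\circ\gamma_1(s),\phi\circ\gamma_2(s))\in W$ for such~$s$. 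Setting $\tilde\gamma(s):=(\phi\circ\gamma_1(s),\phi\circ\gamma_2(s))$, we obtain a $C^1$-curve in $E\times E$ with $\tilde\gamma(0)=(0,0)$ and $\tilde\gamma'(0)=(v,w)$, where $v,w\in E$ correspond to $\gamma_1'(0),\gamma_2'(0)$ under $d\phi|_\cg$.

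Now $\phi\circ\eta(t)=c(\tilde\gamma(\sqrt{t}))$ for $t\in[0,r_1^2]$. Since $c$ vanishes on the axes through the origin, both partials $\partial_1c(0,0)$ and $\partial_2c(0,0)$ vanish, hence $dc(0,0;\cdot)=0$. Lemma~\ref{rootC1}, applied with $f:=c$ (which is $C^2$, indeed $C^\infty$) and the curve $\tilde\gamma$, then gives that $t\mto c(\tilde\gamma(\sqrt{t}))$ is a $C^1$-map on $[0,r_1^2]$ with derivative at~$0$ equal to $\tfrac{1}{2}d^2c\bigl((0,0);(v,w),(v,w)\bigr)$. Hence $\phi\circ\eta$ is $C^1$ on $[0,r_1^2]$, and therefore~$\eta$ is $C^1$ on $[0,r_1^2]$; on $\;]0,r^2]$, $\eta$ is $C^1$ as a composition of $C^1$-maps, so glueing yields $\eta\in C^1([0,r^2],G)$.

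It remains to identify the value $\eta'(0)$ with $[\gamma_1'(0),\gamma_2'(0)]$. Writing $c(x,y)=B(x,y)+R(x,y)$ with~$B$ the continuous bilinear map arising from the quadratic part at $(0,0)$ and $R=O(\|\cdot\|^3)$, the vanishing of $c$ on the axes forces the pure $xx$- and $yy$-quadratic contributions to vanish, so $B$ really is bilinear and $d^2c((0,0);(v,w),(v,w))=2B(v,w)$; thus $(\phi\circ\eta)'(0)=B(v,w)$. The standard identification (which is precisely how the Lie bracket on $L(G)$ is characterised in Bastiani calculus, cf.\ the computation in~\cite{GaN} or \cite{Nee}) gives $B(v,w)=[v,w]$ in~$\cg$, identified with~$E$ via $d\phi|_\cg$; this can if desired be verified directly by specialising to $\gamma_i(s):=\exp_G(sX_i)$ for $X_1,X_2\in\cg$ in a Lie group with smooth exponential, or by expanding the group multiplication to second order in any chart. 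The main subtle point is this last identification, but it is classical and only uses the $C^2$-structure of the multiplication together with smoothness of inversion.
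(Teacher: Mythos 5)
Your proof is correct and follows essentially the same route as the paper: express the commutator map in a chart around $e$, observe that its differential at $(0,0)$ vanishes, and apply Lemma~\ref{rootC1} to the $C^1$-curve $(\phi\circ\gamma_1,\phi\circ\gamma_2)$. The only difference is that you spell out why $dc(0,0;\cdot)=0$ and partially justify the identification $\tfrac{1}{2}d^2c((0,0);(v,w),(v,w))=[v,w]$, which the paper simply asserts as its displayed formula $d^2f(0,0;x,y;x,y)=2[x,y]$.
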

\begin{proof}
It is clear that $\eta|_{]0,r^2]}$ is $C^1$.
Let $U\sub G$, $V\sub U$ be open identity neighbourhoods with
$VVV^{-1}V^{-1}\sub U$. Identify $U$ with an open set in~$E$ using a chart,
such that $e=0$.
The map
\[
f\colon V\times V\to U,\quad f(x,y):=xyx^{-1}y^{-1}
\]
is smooth with $df(0,0,v,w)=0$ and
\begin{equation}\label{hencen}
d^2f(0,0;x,y;x,y)=2[x,y].
\end{equation}
After shrinking~$r$, we may assume that $\eta([0,r^2])\sub U$.
The assertions now follow from Lemma~\ref{rootC1} and (\ref{hencen}).
\end{proof}
{\bf Proof of Theorem~H.}
Assume that $G$ has the strong Trotter property
and let $\gamma,\eta\colon [0,1]\to G$ be
$C^1$-curves such that $\gamma(0)=\eta(0)=e$.
Then
\[
\zeta\colon [0,1]\to G,\quad \zeta(t):=\gamma(\sqrt{t})\eta(\sqrt{t})
(\gamma(\sqrt{t}))^{-1}(\eta(\sqrt{t}))^{-1}
\]
is a $C^1$-curve with $\zeta'(0)=[\gamma'(0),\eta'(0)]$ (see
Lemma~\ref{givesC1here}).
By the strong Trotter property,
\begin{eqnarray*}
\lefteqn{\left(
\gamma\Big(\frac{\sqrt{t}}{n}\Big)
\eta\Big(\frac{\sqrt{t}}{n}\Big)
\gamma\Big(\frac{\sqrt{t}}{n}\Big)^{-1}
\eta\Big(\frac{\sqrt{t}}{n}\Big)^{-1}\right)^{n^2}}\\
&=& \zeta_{n^2}(t/{n^2})\to
\exp_G(t\zeta'(0))=\exp_G(t[\gamma'(0),\eta'(0)])
\end{eqnarray*}
as $n\to\infty$, uniformly in compact subsets
of $[0,\infty[$.\,\vspace{1.3mm}\Punkt

\noindent
{\bf Proof of Theorem I.}
If $\zeta\colon [0,1]\to G$ is a $C^1$-curve with
$\zeta(0)=e$ and $m\in \N$,
we define
$\zeta_n\colon [0,m]\to G$ via
\[
\zeta_n(t):=(\zeta(t/n))^n
\]
for $n\in \N$ such that $n\geq m$.
We claim that
\begin{equation}\label{AllTro}
\zeta_n(t)\to \exp_G(t\zeta'(0)) \quad \mbox{as $n\to \infty$,}
\end{equation}
uniformly in $t\in [0,m]$ (entailing
that $G$ has the strong
Trotter property).\\[2.3mm]
To establish the claim, let $U$ be an open identity neighbourhood
in~$G$. We show that there exists $n_0\geq m$ such that
\begin{equation}\label{concreteunif}
(\forall n\geq n_0)(\forall t\in [0,m])
\quad \zeta_n(t)\in U\exp_G(t\zeta'(0))\cap\exp_G(t\zeta'(0))U.
\end{equation}
For $v\in \cg$, let
$c_v\colon [0,1]\to \cg$ be the constant curve given by
\[
c_v(s):=v\quad \mbox{for all $\,s\in [0,1]$.}
\]
Because the map $\cg\to C([0,1],\cg)\sub \cR([0,1],\cg)$,
$v\mto c_v$ is continuous, the set
\[
K:=\{c_{t\zeta'(0)}\colon t\in [0,m]\}
\]
is compact. Also the map
$\evol\colon \cR([0,1],\cg)\to G$, $\evol(\sigma):=\Evol(\sigma)(1)$
is continuous. Hence
\[
\evol(K)\sub G
\]
is compact, whence there exists an open
identity neighbourhood~$V\sub U$ such that
$gVg^{-1}\sub U$ and thus
$gV\sub gU\cap Ug$,
for all $g\in \evol(K)$. Since $\evol(c_{t\zeta'(0)})=\exp_G(t\zeta'(0))$,
we deduce that
\begin{equation}\label{easierunif}
\exp_G(t\zeta'(0))V\sub U\exp_G(t\zeta'(0))\cap \exp_G(t\zeta'(0))U,
\quad\mbox{for all $t\in [0,m]$.}
\end{equation}
Next, we show that there is an open $0$-neighbourhood
$Q$ in $\cR([0,1],\cg)$ such that
\begin{equation}\label{wannagt}
\evol(\theta+\sigma)\in \evol(\theta)V\quad\mbox{for all
$\theta\in K$ and $\sigma\in Q$.}
\end{equation}
To this end,
let $W\sub G$ be an open identity neighbourhood such that
$W^{-1}W\sub V$.
Again using that $\evol\colon \cR([0,1],\cg)\to G$ is continuous,
for each $\theta\in K$ we find an open $0$-neighbourhood
$P_\theta\sub \cR([0,1],\cg)$ such that
\[
\evol(\theta+P_\theta)\sub \evol(\theta)W.
\]
Let $Q_\theta\sub \cR([0,1],\cg)$ be an open $0$-neighbourhood
such that $Q_\theta+Q_\theta \sub P_\theta$.
Then $K\sub \bigcup_{j=1}^\ell (\theta_j+Q_{\theta_j})$
for some finite subset $\{\theta_1,\ldots,\theta_\ell\}\sub K$.
Moreover,
\[
Q:=\bigcap_{j=1}^\ell Q_\theta
\]
is an
open $0$-neighbourhood in $\cR([0,1],\cg)$.
Then (\ref{wannagt}) holds.
In fact, for $\theta\in K$ we have $\theta\in \theta_j+Q_j$
for some $j\in\{1,\ldots,\ell\}$.
Since $\theta-\theta_j\in Q_j\sub P_j$, we have
\[
\evol(\theta)=\evol(\theta_j+(\theta-\theta_j))\in\evol(\theta_j)W
\]
and thus
\begin{equation}\label{revrs}
\evol(\theta_j)\in\evol(\theta)W^{-1}.
\end{equation}
For $\sigma\in Q\sub Q_j$, we have $(\theta-\theta_j)+\sigma
\in Q_j+Q_j\sub P_j$ and thus
\[
\evol(\theta+\sigma)=\evol(\theta_j+(\theta-\theta_j)+\sigma)
\in \evol(\theta_j)W\sub \evol(\theta)W^{-1}W\sub
\evol(\theta)V,
\]
using (\ref{revrs}) for the penultimate inclusion.
Thus (\ref{wannagt}) is established.\\[2.3mm]
For $t\in [0,m]$,
consider the continuous curve
$\alpha_{n,t} \colon [0,1] \to G$
defined piecewise for $s\in [k/n, (k+1)/n]$
with $k\in \{0,\ldots, n-1\}$ as
\begin{equation}\label{explpiec}
\alpha_{n,t}(s):=
\zeta(t/n)^k \zeta((s-k/n)t).
\end{equation}
Then $\alpha_{n,t}|_{[k/n,(k+1)/n]}$ is $C^1$
for each $k\in \{1,\ldots,n-1\}$,
entailing that $\alpha_{n,t}\in AC_{\cR}([0,1],G)$
with $\beta_{n,t}:=\delta^\ell(\alpha_{n,t})\in \cR([0,1],\cg)$.
By construction,
\[
\zeta_n(t) = \alpha_{n,t}(1) = \evol(\beta_{n,t}).
\]
The explicit formula (\ref{explpiec}) for $\alpha_{n,t}|_{[k/n,(k+1)/n]}$
shows that
\begin{equation}\label{finste1}
\beta_{n,t}(s) = t \delta^\ell(\zeta)((s-k/n)t)
\end{equation}
for all $k\in \{0,1,\ldots, n-1\}$ and $t\in [k/n,(k+1)/n]$.
We have
\[
\{\tau\in \cR([0,1],\cg)\colon \|\tau\|_{L^\infty,q}\leq 1\}\sub Q
\]
for a continuous seminorm $q$ on $\cg$.
Since $\delta^\ell\zeta\colon [0,1]\to \cg$
is continuous, there is $\ve\in \,]0,1]$ such that
\begin{equation}\label{finste2}
q(\delta^\ell\zeta(x)-\delta^\ell(\zeta(0))\leq \frac{1}{m}
\quad\mbox{for
all $x\in [0,\ve]$.}
\end{equation}
Choose $n_0\geq m$ so large that $\frac{m}{n_0}\leq\ve$.
Let $n\geq n_0$.
Then
\begin{equation}\label{finste3}
{\textstyle (s-\frac{k}{n})t\; \leq \; \frac{m}{n_0}\; \leq\; \ve}
\end{equation}
for all $t\in [0,m]$ and $s\in [0,1]$,
with $k\in \{0,1,\ldots, n-1\}$ such that
$s\in [k/n,(k+1)/n]$.
Combining (\ref{finste3}) with (\ref{finste2})
and (\ref{finste1}), we see that
\begin{eqnarray*}
\beta_{n,t}(s) &=&
t \delta^\ell(\zeta)(0)+
t (\delta^\ell(\zeta)((s-k/n)t) -\delta^\ell(\zeta)(0))\\
&\in&
t \delta^\ell(\zeta)(0)+t \wb{B}^q_{1/m}(0)\, \sub\,
t \delta^\ell(\zeta)(0)+ \wb{B}^q_1(0).
\end{eqnarray*}
Since $\delta^\ell(\zeta)(0)=\zeta'(0)$,
we deduce that
\[
\|\beta_{n,t}-c_{t\zeta'(0)}\|_{L^\infty,q}\leq 1
\]
for all $n\geq n_0$ and $t\in [0,m]$.
Thus $\beta_{n,t}-c_{t\zeta'(0)}\in Q$ and hence
\begin{eqnarray*}
\zeta_n(t) &=& \evol(\beta_{n,t})
\,=\,\evol(c_{t\zeta'(0)}+(\beta_{n,t}-c_{t\zeta'(0)}))\\
&\in&
\evol(c_{t\zeta'(0)})V=\exp_G(t\zeta'(0))V
\sub U\exp_G(t\zeta'(0))\cap \exp_G(t\zeta'(0))U
\end{eqnarray*}
for all $n\geq n_0$ and $t\in [0,m]$,
using~(\ref{wannagt}) and (\ref{easierunif}).
Thus (\ref{concreteunif}) holds
and the proof is complete.\,\Punkt
%
%
%
%
%\section{Further examples}\label{secexa}
%Falls noetig.
%
%
%Trotter in DL easy wenn kompakt reg
%
%\begin{prop}\label{gauTrott}
%when gauge groups Trotter
%\end{prop}
%
%(Vorher schwache direkte Produkte,
%DL)
%
%\begin{prop}\label{DiffTrott}
%$\Diff(M)$ strong Trotter
%\end{prop}
%
%symmetry groups of principal bundles.
%
%
%
%
%
%
%
%
%
%
%
%
\appendix
\section{Proofs for Section~\ref{prels}}\label{appsecprel}
We provide proofs for results compiled in the indicated section,
and some auxiliary results.\\[4mm]
{\bf Proof of \ref{topPL}.}
(a) The topology on~$X$ is initial with respect to the mappings $\phi_j$ for $j\in J$.
Therefore, intersections
\[
\bigcap_{j\in F}\phi_j^{-1}(U_j)
\]
form
a basis of open neighbourhoods of~$x$,
for finite subsets $F\sub J$ and
open neighbourhoods $U_j\sub X_j$ of $\phi_j(x)$
for $j\in F$. Since $J$ is directed, we find $j_0\in J$
such that $j\leq j_0$ for all $j\in F$.
Then $U:=\bigcap_{j\in F}(\phi_{j,j_0})^{-1}(U_j)$
is an open neighbourhood of $\phi_{j_0}(x)$ in~$X_{j_0}$
and $(\phi_{j_0})^{-1}\sub
\bigcap_{j\in F}\phi_j^{-1}(U_j)$,
as $\phi_j(\phi_{j_0}^{-1}(U))=\phi_{j,j_0}(\phi_{j_0}(\phi_{j_0}^{-1}(U)))
\sub\phi_{j,j_0}(U)\sub \phi_{j,j_0}(\phi_{j,j_0}^{-1}(U_j))\sub U_j$
for each $j\in J$.

(b) If $D$ is dense in~$X$,
then $\phi_j(D)$ is dense in $\phi_j(X)$,
by continuity. Conversely, assume that $\phi_j(D)$ is dense
in $\phi_j(X)$ for all $j\in J$.
Let $x\in X$ and $V$ be an open neighbourhood of $x$ in~$X$.
By (a), we may assume that $V=\phi_j^{-1}(U)$
for some $j\in J$ and open neighbourhood~$U$ of $\phi_j(x)$ in~$X_j$.
Since $\phi_j(D)$ is dense in $\phi_j(X)$,
we find $y\in D$ such that $\phi_j(y)\in U$.
Then $y\in \phi_j^{-1}(U)=V$
and thus $D$ is dense in~$X$.\,\Punkt
\begin{la}\label{Sepp}
Let $(X,\Sigma)$ be a measurable space, $n\in \N$,
$Y_1,\ldots, Y_n$ be metric spaces
and $\gamma_j\colon (X,\Sigma)\to (Y_j,\cB(Y_j))$ be a measurable
map with separable image, for $j\in \{1,\ldots, n\}$.
Then the following holds:
\begin{itemize}
\item[\rm(a)]
$\gamma:=(\gamma_1,\ldots,\gamma_n)\colon X\to Y_1\times\cdots\times Y_n$
is measurable with respect to the $\sigma$-algebra $\cB(Y)$
on the direct product topological space $Y:=Y_1\times\cdots\times Y_n$.
\item[\rm(b)]
If $Z$ is a topological space
and $f\colon Y_1\times\cdots\times Y_n\to Z$ a continuous map,
then the map $f\circ (\gamma_1,\ldots,\gamma_n)\colon (X,\Sigma)\to (Z,\cB(Z))$
is measurable.
\end{itemize}
\end{la}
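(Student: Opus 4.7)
The obstacle is a standard measure-theoretic subtlety: in general $\cB(Y_1\times\cdots\times Y_n)$ may be strictly larger than the product $\sigma$-algebra $\cB(Y_1)\otimes\cdots\otimes\cB(Y_n)$, and fact \ref{basicsmeas}(f) only yields equality under a second-countability assumption. The plan is to exploit the separability of each image by restricting to closures of the images, where second countability holds.

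Concretely, I would first set $S_j:=\overline{\gamma_j(X)}\sub Y_j$. Since $\gamma_j(X)$ is separable and $Y_j$ is a metric space, its closure $S_j$ is a separable metric space, hence second countable. By \ref{basicsmeas}(d), $\cB(S_j)=\cB(Y_j)|_{S_j}$, so \ref{basicsmeas}(b) gives that the co-restriction $\wt\gamma_j:=\gamma_j|^{S_j}\colon(X,\Sigma)\to(S_j,\cB(S_j))$ is measurable.

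Next, put $S:=S_1\times\cdots\times S_n\sub Y$ with the subspace topology (equivalently, the product of the subspace topologies). Iterating \ref{basicsmeas}(f) yields $\cB(S)=\cB(S_1)\otimes\cdots\otimes\cB(S_n)$, since each factor $S_j$ is second countable. Applying \ref{basicsmeas}(e) inductively to $(\wt\gamma_1,\ldots,\wt\gamma_n)$, we conclude that $\wt\gamma:=(\wt\gamma_1,\ldots,\wt\gamma_n)\colon(X,\Sigma)\to(S,\cB(S))$ is measurable. Now \ref{basicsmeas}(d) gives $\cB(S)=\cB(Y)|_S$, and the inclusion $(S,\cB(Y)|_S)\hookrightarrow(Y,\cB(Y))$ is measurable by \ref{basicsmeas}(a); composing with $\wt\gamma$ establishes part~(a).

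For part~(b), continuity of $f\colon Y\to Z$ implies $f^{-1}(U)\in\cB(Y)$ for every open $U\sub Z$, so $f\colon(Y,\cB(Y))\to(Z,\cB(Z))$ is measurable by \ref{basicsmeas}(e) (applied with $\cE$ the set of open subsets of $Z$). Composing with the measurable map $\gamma$ from part~(a) yields measurability of $f\circ\gamma$. The only real work is in (a); (b) is then immediate.
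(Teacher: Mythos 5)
Your proof is correct and follows essentially the same route as the paper: reduce to a separable, hence second-countable, subspace of the product so that the Borel $\sigma$-algebra coincides with the product $\sigma$-algebra (via \ref{basicsmeas}\,(f)), then conclude componentwise measurability and compose for part~(b). The only cosmetic difference is that you pass to the closures $\overline{\gamma_j(X)}$, whereas the paper works directly with the images $\gamma_j(X)$ equipped with the induced topology; both are separable metric spaces, so the argument is the same.
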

\begin{proof}
(a) By \ref{basicsmeas}\,(b),
we need only show that $\gamma$ is measurable as a map
to $\im(\gamma_1)\times\cdots\times \im(\gamma_n)$,
equipped with the trace of $\cB(Y)$.
The latter coincides with the Borel-$\sigma$-algebra with respect to
the induced topology~$\cO$ (see \ref{basicsmeas}\,(d)).
But $\cO$ coincides with the product topology
on $\gamma_1(X)\times\cdots\times \gamma_n(X)$,
if we use the topology induced by~$Y_j$ on $\gamma_j(X)$,
for each~$j$.
After replacing~$Y_j$ with $\gamma_j(X)$ if necessary,
we may therefore assume that each $Y_j$ is separable and hence
second countable. As a consequence,
$\cB(Y_1\times\cdots\times Y_n)=\cB(Y_1)\tensor\cdots\tensor \cB(Y_n)$
is the product $\sigma$-algebra (see \ref{basicsmeas}\,(f)).
But $\gamma$ is measurable with respect to this product $\sigma$-algebra,
as all of its components $\gamma_j$ are measurable (see \ref{basicsmeas}\,(e)).

(b) The maps $\gamma\!:\!(X,\Sigma)\to (Y,\cB(Y))$
and $f\colon (Y,\cB(Y))\to (Z,\cB(Z))$
are measurable,
hence also $f\circ\gamma$.
\end{proof}
\begin{rem}\label{spaceandnorm}
The preceding lemma
entails that $\cL^p(X,\mu,E)$
and $\cL^\infty_{rc}(X,\mu,E)$
are vector subspaces of
$E^X$, for each Fr\'{e}chet space~$E$ and $p\in [1,\infty]$.
\end{rem}
The next two lemmas and Lemma~\ref{labasemea}\,(a)
will help us to prove Lemma~\ref{labase1}.
Afterwards, we develop machinery for the proof of
Lemma~\ref{funda}.
\begin{la}\label{limmea}
Let $(X,\Sigma)$ be a measurable space, $(Y,d)$ be a metric space
and $\gamma_n\colon X\to Y$ be a measurable function
with separable image, for each $n\in \N$.
\begin{itemize}
\item[\rm(a)]
If $\gamma(x):={\displaystyle \lim_{n\to\infty}}\gamma_n(x)$ exists for
each $x\in X$, then $\gamma\colon X\to Y$, $x\mto\gamma(x)$
is a measurable map with separable image.
\item[\rm(b)]
If the metric space $(Y,d)$ is complete, then
\[
C:=\{x\in X\colon \mbox{$\gamma_n(x)$ converges as $n\to\infty$}\}
\]
is a measurable set.
\end{itemize}
\end{la}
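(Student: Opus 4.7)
The plan is to handle (a) and (b) separately, with (a) requiring more care.

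For part (a), I would first verify that the image $\gamma(X)$ is separable. Choose a countable dense subset $D_n \subseteq \gamma_n(X)$ for each $n\in\N$ and set $D := \bigcup_{n\in\N} D_n$, which is countable. For any $x\in X$, the limit $\gamma(x) = \lim_{n\to\infty}\gamma_n(x)$ lies in the closure of $\bigcup_{n\in\N}\gamma_n(X)$, which is contained in $\overline{D}\subseteq Y$. Hence $\gamma(X)\subseteq\overline{D}$ is a subset of a separable metric space and is therefore itself separable. For measurability, by \ref{basicsmeas}\,(e) it suffices to show that $\gamma^{-1}(U)\in\Sigma$ for every open set $U\subseteq Y$. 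The key identity is
\[
\gamma^{-1}(U) \;=\; \bigcup_{k=1}^{\infty}\bigcup_{N=1}^{\infty}\bigcap_{n\geq N} \gamma_n^{-1}(U_k),\qquad U_k := \{y\in Y\colon d(y,Y\setminus U) > 1/k\},
\]
with the convention $d(y,\emptyset):=\infty$ (so $U_k=Y$ when $U=Y$). For the forward inclusion, if $\gamma(x)\in U$ then $d(\gamma(x),Y\setminus U) > 2/k$ for some $k$, and $d(\gamma_n(x),\gamma(x))<1/k$ for all large $n$, whence the triangle inequality gives $d(\gamma_n(x),Y\setminus U)>1/k$ eventually. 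For the reverse inclusion, continuity of $d(\cdot,Y\setminus U)$ lets one pass to the limit in the inequality $d(\gamma_n(x),Y\setminus U)\geq 1/k$. Each $U_k$ is open, so $\gamma_n^{-1}(U_k)\in\Sigma$, and the right-hand side lies in $\Sigma$ by countable set operations.

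For part (b), completeness of $(Y,d)$ means that $(\gamma_n(x))_{n\in\N}$ converges if and only if it is Cauchy, so
\[
C \;=\; \bigcap_{k=1}^{\infty}\bigcup_{N=1}^{\infty}\bigcap_{n,m\geq N} (\gamma_n,\gamma_m)^{-1}\bigl(\{(y,z)\in Y\times Y\colon d(y,z)<1/k\}\bigr).
\]
Each pair $(\gamma_n,\gamma_m)\colon X\to Y\times Y$ is measurable with separable image by Lemma~\ref{Sepp}\,(a), and the set $\{(y,z)\colon d(y,z)<1/k\}$ is open in $Y\times Y$; hence every bracketed term is in $\Sigma$, and countable set operations keep us in $\Sigma$.

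The main obstacle is the measurability half of (a): a pointwise limit of Borel measurable maps into an arbitrary metric space need not be Borel measurable in general, and the Borel $\sigma$-algebra of~$Y$ need not be countably generated. What makes the argument work is the hypothesis that each image $\gamma_n(X)$ is separable, which allows topological control of~$U$ to be replaced by the metric control furnished by the continuous function $d(\cdot,Y\setminus U)$; this in turn produces a countable family of measurable sets that behaves well under pointwise convergence without any second countability assumption on the ambient space~$Y$.
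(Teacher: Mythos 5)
Your proposal is correct. For part (b) it is essentially the paper's own argument: the paper likewise writes $C=\bigcap_{\ell}\bigcup_{N}\bigcap_{n,m\geq N}(\gamma_n,\gamma_m)^{-1}(O_\ell)$ with $O_\ell=\{(y,z)\colon d(y,z)<1/\ell\}$, the only cosmetic difference being that the paper first replaces $Y$ by the closure of $\bigcup_n\gamma_n(X)$ so that $Y$ is separable and then uses $\cB(Y\times Y)=\cB(Y)\otimes\cB(Y)$, whereas you invoke Lemma~\ref{Sepp}\,(a) directly; both routes use the separability of the images at exactly this point. For part (a) you genuinely diverge: after the same reduction the paper simply cites \cite[Lemma~2.5]{MEA}, while you give a self-contained proof, establishing separability of $\gamma(X)$ by hand and proving measurability via the identity $\gamma^{-1}(U)=\bigcup_k\bigcup_N\bigcap_{n\geq N}\gamma_n^{-1}(U_k)$ with $U_k=\{y\colon d(y,Y\setminus U)>1/k\}$, which is verified correctly (both inclusions and the openness of $U_k$ check out, including the $U=Y$ convention). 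Your version buys independence from the external reference at the cost of a page of routine work; the paper buys brevity. One small remark on your closing commentary rather than on the proof: the $U_k$ argument does not actually use separability of the images at all --- a pointwise limit of Borel measurable maps into an \emph{arbitrary} metric space is Borel measurable by exactly this argument --- so in (a) the separable-image hypothesis is needed only for the separable-image part of the conclusion, while the place where it is genuinely indispensable for measurability is the product step in (b) (via Lemma~\ref{Sepp}\,(a), or the paper's reduction to a second countable codomain).
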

\begin{proof}
After replacing~$Y$ with the closure of $\bigcup_{n\in\N}\gamma_n(X)$,
we may assume that the metric space~$Y$ is separable (see \ref{basicsmeas}
(b) and (d)).

(a) Is a special case of \cite[Lemma~2.5]{MEA}.
%Because $(Y,d)$ is separable metric space,
%there exists a topological embedding $\phi\colon Y\to \R^\N$ ZITAT.
%We may therefore assume that $Y\sub \R^\N$, with the induced topology
%and hence also with the induced $\sigma$-algebra. ZITAT.
%We therefore only need to show that $\gamma$ is measurable as a map
%to $\R^\N$ ZITAT.
%We now use that $\cB(\R^\N)$ coincides with the initial $\sigma$-algebra
%on $\R^\N$
%with respect to the projections $\pr_k\colon [0,1]^\N\to\R$, $(t_n)_{n\in \N}\mto t_k$
%for $k\in \N$ ZITAT.
%Hence $\gamma$ will be measurable if each of its components
%$\pr_k\circ \gamma\colon X\to\R$ are measurable ZITAT.
%But $\pr_k=\lim_{n\to\infty}(\pr_k\circ \gamma_n)$ is the pointwise limit of
%the measurable real-valued functions $\pr_k\circ \gamma_n$
%and hence measurable ZITAT.

(b) Since~$Y$ is separable and hence second countable,
we have $\cB(Y\times Y)=\cB(Y)\tensor \cB(Y)$ (see, e.g., \cite[Lemma~2.7]{MEA}).
For $\ell\in \N$, the set
\[
O_\ell:=\{(y,z)\in Y\times Y\colon d(y,z)<1/\ell\}
\]
is open in $Y\times Y$. Hence $O_\ell\in \cB(Y\times Y)=\cB(Y)\tensor \cB(Y)$.
Fix $x\in X$. Because $Y$ is complete, the sequence $(\gamma_n(x))_{n\in\N}$
converges if and only if it is a Cauchy sequence,
requiring that for each $\ell\in \N$, there is $N\in \N$ such that
$(\gamma_n(x),\gamma_m(x))\in O_\ell$ for all $n,m\geq N$.
Hence
\[
C=\bigcap_{\ell\in \N}\bigcup_{N\in \N}\bigcap_{n,m\geq N}(\gamma_n, \gamma_m)^{-1}(O_\ell)\,.
\]
Since $(\gamma_n,\gamma_n)\colon (X,\Sigma)\to  (Y\times Y,\cB(Y)\tensor \cB(Y))$
is a measurable map (as it has measurable components) and each $O_\ell$
is measurable, $C$ is measurable.
\end{proof}
{\bf Proof of Lemma~\ref{imagesepmet}.}
For $m\in \N$,
let~$K_m$ be the $m$-fold sum
\[
K_m:=[{-m},m]K+\cdots+[{-m},m]K:=\{x_1+\cdots+x_m\colon x_1,\ldots, x_m\in[{-m},m]K\}.
\]
Lemma~\ref{quotcpmet} implies that $K_m$ is compact and metrizable,
as it is the image of a
continuous map $[{-m},m]^m\times K^m\to E$
on the metrizable compact space $[{-m},m]^m\times K^m$.
Let $d_m$ be a metric on $K_m$ defining its topology and set
\[
K_{m,j}:=\{(x,y)\in K_m\times K_m\colon d_m(x,y)\geq 1/j\}
\]
for $j\in \N$.
Then $\bigcup_{j\in \N} K_{m,j}=(K_m\times K_m)\setminus \Delta_m$,
where $\Delta_m:=\{(x,x)\colon x\in K_m\}\sub K_m\times K_m$
is the diagonal. Moreover, each of the sets $K_{m,j}$
is compact.
For each $(x,y)\in K_{m,j}$, we have $x\not= y$
and hence find a continuous seminorm $q_{m,j,x,y}\in P(E)$
such that $q_{m,j,x,y}(x-y)\not=0$ and thus $q_{m,j,x,y}(x-y)=1$
without loss of generality.
Then the sets
\[
B^{q_{m,j,x,y}}_{1/2}(x)\times B^{q_{m,j,x,y}}_{1/2}(y)
\]
form an open cover of $K_{m,j}$ for $(x,y)\in K_{m,j}$,
and hence there is a finite subset $F_{m,j}\sub K_{m,j}$ such that
\[
K_{m,j}\sub\bigcup_{(x,y)\in F_{m,j}}(B^{q_{m,j,x,y}}_{1/2}(x)\times B^{q_{m,j,x,y}}_{1/2}(y)).
\]
Thus, if $(v,w)\in K_{m,j}$, we find $(x,y)\in F_{m,j}$
such that
\begin{equation}\label{nbhdinlarge}
(v,w)\in B^{q_{m,j,x,y}}_{1/2}(x)\times B^{q_{m,j,x,y}}_{1/2}(y)=:V\times W.
\end{equation}
Then $V$ and $W$ are open neighourhoods
of $v$ and $w$, respectively, in~$E$. Since $q(y-x)=1$,
these neighbourhoods are disjoint (otherwise
a contradiction would result from the triangle
inequality).
Now $\Gamma:=\{q_{m,j,x,y}\colon m,j\in \N,\; (x,y)\in F_{m,j}\}\sub P(E)$
is a countable set,
\[
E_K:=\Spann(K)=\bigcup_{m\in \N}K_m\quad\mbox{and}\quad \bigcup_{m,j\in \N}K_{m,j}=
(E_K\times E_K)\setminus \Delta,
\]
where $\Delta:=\{(x,x) \colon x \in E_K\}$ is the diagonal in~$E_K\times E_K$.
We give $E_K$ the locally convex vector topology~$\cO'$
defined by the countable set~$\Gamma$
of seminorms. Then $\cO'$ is Hausdorff,
since any $(v,w)\in (E_K\times E_K)\setminus \Delta$
is contained in $K_{m,j}$ for some $m,j\in \N$
and hence $E_K\cap B^{q_{m,j,x,y}}_{1/2}(x)$
and $E_K\cap B^{q_{m,j,x,y}}_{1/2}(y)$
(with notation as in (\ref{nbhdinlarge}))
are disjoint open neighbourhoods of~$v$
and~$w$ in $(E_K,\cO')$.\,\vspace{1.3mm}\Punkt

\noindent
Let $\cF(X,\Sigma,E)$ be the space of all measurable
maps $\gamma\colon (X,\Sigma)\to (E,\cB(E))$ with finite image.
Then $\cF(X,\Sigma,E)\cap \cL^1(X,\mu,E)$
is the vector subspace of all $\gamma\in \cF(X,\Sigma,E)$
such that $\mu(\gamma^{-1}(E\setminus\{0\}))<\infty$.\\[2.4mm]
If $X$ is a locally compact topological space, we write
$C_c(X,E)$ for the space of all compactly supported continuous $E$-valued
functions on~$X$. Recall that a \emph{Radon measure on~$X$} is a measure
$\mu \colon \cB(X)\to[0,\infty[$
which is finite on compact sets and inner regular (see, e.g. \cite{BCR}).
\begin{la}\label{labasemea}
Let $(X,\Sigma,\mu)$ be a measure space $E$ be a Fr\'{e}chet space.
Then:
\begin{itemize}
\item[\rm(a)]
$\cF(X,\Sigma, E)$ is dense in $\cL^\infty_{rc}(X,\Sigma,E)$
and
$\cF(X,\Sigma,E)\cap\cL^1(X,\mu,E)$ is dense in $\cL^1(X,\mu,E)$.
\item[\rm(b)]
The weak integral
\[
\int_X \gamma(x)\,d\mu(x)
\]
exists in $E$, for each $\gamma\in \cL^1(X,\mu,E)$,
and
\begin{equation}\label{standintest}
p\left(\int_X\gamma(x)\,d\mu(x)\right)\leq \int_Xp(\gamma(x)\,d\mu(x)
\end{equation}
for each continuous seminorm
$p$ on~$E$.
\item[\rm(c)]
If $X$ is a locally compact space
and $\mu$ a Radon measure on~$X$, then $C_c(X,E)$ is dense
in $\cL^1(X,\mu,E)$.
\end{itemize}
\end{la}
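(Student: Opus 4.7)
The plan is to prove (a), (b), and (c) in sequence, each step feeding into the next.

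For the first assertion of (a): given $\gamma\in\cL^\infty_{rc}(X,\Sigma,E)$, the compact set $\overline{\gamma(X)}$ is metrizable since $E$ is a Fr\'{e}chet space; for a continuous seminorm $q$ on~$E$ and $\ve>0$ I cover $\overline{\gamma(X)}$ by finitely many $q$-balls $B^q_\ve(v_1),\ldots,B^q_\ve(v_n)$, disjointify the measurable preimages to a partition $A_1,\ldots,A_n$ of~$X$, and set $\tilde\gamma := \sum_{i=1}^n v_i \mathbf{1}_{A_i}\in\cF(X,\Sigma,E)$; this gives $\|\gamma-\tilde\gamma\|_{\cL^\infty,q}\leq\ve$. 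For the $\cL^1$-assertion, I would apply a standard vector-valued selector construction (along the lines of \cite[Proposition~3.18]{MEA}) relative to a fundamental sequence $(q_n)_{n\in\N}$ of seminorms on~$E$ and a countable dense subset $\{v_k\}\sub\overline{\gamma(X)}\cup\{0\}$ with $v_1=0$: the simple functions $\gamma_n(x):=v_{k_n(x)}$, with $k_n(x)\leq n$ chosen to minimise $q_n(v_k-\gamma(x))$, satisfy $q_n(\gamma_n-\gamma)\leq q_n\circ\gamma$ pointwise (by comparison with $v_1=0$) and $\gamma_n\to\gamma$ pointwise, so dominated convergence with majorant $2q\circ\gamma\in\cL^1(X,\mu,\R)$ yields $\gamma_n\to\gamma$ in every seminorm $\|.\|_{\cL^1,q}$; that $\gamma_n\in\cF\cap\cL^1$ follows because the nonzero values of $\gamma_n$ contribute to $\|\gamma_n\|_{\cL^1,q_1}\leq 2\|\gamma\|_{\cL^1,q_1}<\infty$.

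For (b), I first define $I(\gamma):=\sum_{i=1}^n\mu(A_i)v_i$ for simple $\gamma=\sum v_i\mathbf{1}_{A_i}\in\cF\cap\cL^1$ (disjoint $A_i$; $\mu(A_i)<\infty$ whenever $v_i\ne 0$); the triangle inequality gives the estimate $p(I(\gamma))\leq\|\gamma\|_{\cL^1,p}$ for every continuous seminorm~$p$ on~$E$. For general $\gamma\in\cL^1(X,\mu,E)$ I pick simple $\gamma_n\to\gamma$ in $\cL^1$ from~(a); the estimate implies $(I(\gamma_n))_{n\in\N}$ is Cauchy in the Fr\'{e}chet space~$E$, hence convergent to some $w\in E$. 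Given $\lambda\in E'$, choosing a continuous seminorm $q$ with $|\lambda|\leq q$ yields $\lambda\circ\gamma_n\to\lambda\circ\gamma$ in $\cL^1(X,\mu,\R)$, so $\lambda(w)=\lim\lambda(I(\gamma_n))=\lim\int\lambda\circ\gamma_n\,d\mu=\int\lambda\circ\gamma\,d\mu$, showing $w=\int_X\gamma\,d\mu$ is the weak integral; passing to the limit in the simple-function estimate gives~(\ref{standintest}).

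For (c), by~(a) it suffices to $\cL^1$-approximate each simple $\gamma=\sum v_i\mathbf{1}_{A_i}$ with $\mu(A_i)<\infty$ by elements of $C_c(X,E)$. Inner regularity of the Radon measure~$\mu$ together with Urysohn's lemma on the locally compact space~$X$ produce $\phi_{i,n}\in C_c(X,\R)$ with $\phi_{i,n}\to\mathbf{1}_{A_i}$ in $L^1(X,\mu,\R)$; the functions $\sum_i v_i\phi_{i,n}\in C_c(X,E)$ then approximate~$\gamma$ in every $\|.\|_{\cL^1,q}$, since the seminorm on a rank-one element $v\phi$ factors as $q(v)\|\phi\|_{L^1}$. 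The main obstacle will be the $\cL^1$-density in~(a): the vector-valued selector $k_n$ must be measurable, the pointwise convergence must be genuine despite the seminorm~$q_n$ also depending on~$n$, and the resulting $\gamma_n$ must lie in $\cF\cap\cL^1$, all of which requires careful bookkeeping tailored to the Fr\'{e}chet topology on~$E$. Once~(a) is in hand, (b) and~(c) are routine extensions by continuity from simple functions.
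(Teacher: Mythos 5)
Your treatment of the $\cL^\infty_{rc}$-half of (a), of (b) and of (c) runs along the same lines as the paper's proof (finite covers of the precompact image; defining $I$ on simple integrable functions with the estimate $p(I(\gamma))\leq\|\gamma\|_{\cL^1,p}$, extending by completeness of~$E$ and identifying the limit by testing with functionals; reduction to indicators of finite-measure sets plus Urysohn, where you should add that controlling $\mu(W\setminus A_i)$ for the open set carrying the Urysohn function needs a small outer-regularity step, which the paper obtains on relatively compact sets by complementation). The genuine gap is the $\cL^1$-density in~(a). Your selector, minimizing $q_n(v_k-\gamma(x))$ over $k\leq n$ for a \emph{fixed} enumeration of a countable dense subset of $\wb{\gamma(X)}\cup\{0\}$, need not converge pointwise to $\gamma(x)$: for that you would need $\{v_1,\ldots,v_n\}$ to become a fine net of $\gamma(X)$ with respect to $q_n$, and density of the whole sequence gives nothing of the kind (for a non-relatively-compact image there is not even a finite $q$-net). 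Concretely, take $E=\R^\N$ with $q_n(y)=\max_{i\leq n}|y_i|$, let $\gamma$ on $[0,1]$ take the value $w=(1,1,1,\ldots)$ on $[0,\frac12]$ and run through the eventually-zero $0$--$1$ sequences on $(\frac12,1]$, and choose as dense set exactly these eventually-zero sequences (with $v_1=0$), enumerated by the position of the last~$1$. Then every $v_k$ with $k\leq n$ has a zero among its first $n$ coordinates, so $q_n(v_k-w)=1$ for all $k\leq n$; with the natural minimal-index tie-break the selector returns $\gamma_n(x)=0$ for all $n$ at every $x\in[0,\frac12]$, whence $\|\gamma_n-\gamma\|_{\cL^1,q_1}\geq\frac12$ for all~$n$. (A smaller inaccuracy: membership of $\gamma_n$ in $\cF\cap\cL^1$ cannot be read off from $\|\gamma_n\|_{\cL^1,q_1}$ alone, since a nonzero value may be $q_1$-null; one must use all seminorms, or arrange finite-measure support directly.)

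The repair is to abandon the attempt to produce one sequence converging in all seminorms simultaneously: density only requires approximation with respect to a single continuous seminorm $p$ and a single $\ve$ at a time, since finitely many seminorms may be replaced by their pointwise maximum. Either run your selector with the fixed seminorm $p$ in place of $q_n$ — then $p(\gamma_n(x)-\gamma(x))=\min_{k\leq n}p(v_k-\gamma(x))\downarrow 0$ pointwise by density, comparison with $v_1=0$ gives the majorant $p\circ\gamma\in\cL^1(X,\mu,\R)$, and dominated convergence yields $\|\gamma_n-\gamma\|_{\cL^1,p}\to 0$ — or follow the paper's route: truncate to the finite-measure set $\{p\circ\gamma\geq 1/N\}$ with the complement contributing less than $\ve/3$, cover the image over this set by finitely many $p$-balls of radius $\ve/(3m)$ centred at points of the dense set, disjointify, and discard a further small tail. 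Either way the approximants visibly have finite-measure support, and the remainder of your argument for (b) and (c) then goes through unchanged.
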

\begin{proof}
(a) Let $p$ be a continuous seminorm on~$E$, and $\ve>0$.
If $\gamma\in \cL^1(X,\mu,E)$,
then $X_n:=\{x\in X\colon p(\gamma(x))\geq\frac{1}{n}\}\in \Sigma$
(cf.\ Remark~\ref{spaceandnorm})
and $(p\circ \gamma)^{-1}(]0,\infty[)=\bigcup_{n\in \N}X_n$,
where $X_1\sub X_2\sub\cdots$.
Thus $\mu(X_n)<\infty$, as
\[
\frac{1}{n}\mu(X_n)\leq\int_{X_n}p(\gamma(x))\,d\mu(x)\leq
\int_Xp(\gamma(x))\,d\mu(x)<\infty.
\]
Moreover,
\begin{eqnarray*}
\|\gamma\|_{\cL^1,p}&=&\int_Xp(\gamma(x))\,d\mu(x)=
\int_{(p\circ \gamma)^{-1}(]0,\infty[)}p(\gamma(x))\,d\mu(x)\\
&=& \lim_{n\to\infty}\int_{X^n}p(\gamma(x))\,d\mu(x),
\end{eqnarray*}
whence there exists $N\in \N$ such that
\[
\int_{X\setminus X_n}p(\gamma(x))\,d\mu(x)\,<\,\frac{\ve}{3}
\]
for all $n\geq N$.
If $\mu(X_N)=0$, then $\eta:=0$ is an element of
$\cF(X,\Sigma,E)\cap \cL^1(X,\mu,E)$ such that
$\|\gamma-\eta\|_{\cL^1,p}<\ve$.
If $m:=\mu(X_N)>0$,
let $D:=\{y_n\colon n\in \N\}$ be a countable
dense subset of~$\gamma(X)$.
Then
\[
\gamma(X_N)\sub \wb{\gamma(X)}=\wb{D}\sub D+B^p_{\ve/(3m)}(0)
=\bigcup_{n\in \N}B^p_{\ve/(3m)}(y_n).
\]
We define $A_1:=\{x\in X_N\colon \gamma(x)\in B^p_{\ve/(3m)}(y_1)\}\in\Sigma$
and $A_n:=\{x\in X_N\colon \gamma(x)
\in B^p_{\ve/(3m)}(y_n)\} \setminus\bigcup_{k=1}^{n-1}A_k\in \Sigma$
for integers $n\geq 2$.
Then
\[
X_N=\bigcup_{n\in \N}A_n
\]
is a countable union of disjoint sets.
Arguing as above, we find $n_0\in \N$ such that
\[
\int_{X_N\setminus \bigcup_{n=1}^{n_0}A_n}p(\gamma(x))\,d\mu(x)\,<\,\frac{\ve}{3}.
\]
Define $\eta:=\sum_{n=1}^{n_0}y_n \one_{A_n}$, using the characteristic function
$\one_{A_n}\colon X\to \{0,1\}$ of $A_n$.
Then $\eta\in \cF(X,\Sigma,E)\cap \cL^1(X,\mu,E)$ and
\begin{eqnarray*}
\|\gamma-\eta\|_{\cL^1,p} &=& \int_Xp(\gamma(x)-\eta(x))\,d\mu(x)\\
&=& \int_{X\setminus X_N}p(\gamma(x))\,d\mu(x)
+ \int_{X_N\setminus \bigcup_{n=1}^{n_0}A_n} p(\gamma(x))\,d\mu(x)\\
& & \quad +\int_{\bigcup_{n=1}^{n_0}A_n}\underbrace{p(\gamma(x)-\eta(x))}_{\leq \ve/(3m)}
\,d\mu(x)\\
&<&\frac{\ve}{3}+\frac{\ve}{3}+\frac{\ve}{3m}
\mu\underbrace{\left(\bigcup_{n=1}^{n_0}A_n\right)}_{\leq\mu(X_N)=m}
<\ve.
\end{eqnarray*}
Hence $\cF(X,\Sigma,E)\cap \cL^1(X,\mu,E)$ is dense
in $\cL^1(X,\mu,E)$.\\[2.3mm]
If $\gamma\in \cL^\infty_{rc}(X,\mu,E)$,
then $\gamma(X)$ is precompact in~$E$,
whence there exists a finite set $\{y_1,\ldots, y_n\}\sub \gamma(X)$
such that
\[
\gamma(x)\sub \bigcup_{k=1}^nB^p_\ve(y_k).
\]
Then $A_1:=\gamma^{-1}(B^p_\ve(y_1))\in \Sigma$
and $A_k:= \gamma^{-1}(B^p_\ve(y_k))\setminus (A_1\cup\cdots\cup A_{k-1})
\in \Sigma$ for $k\in \{2,\ldots, n\}$.
Moreover,
\[
\eta:=\bigcup_{k=1}^n y_k\one_{A_k}\in \cF(X,\Sigma,E)
\]
and $\|\gamma-\eta\|_{\cL^\infty,p}<\ve$ by construction.
Hence $\cF(X,\Sigma,E)$ is dense in $\cL^\infty_{rc}(X,\mu,E)$.\vspace{1mm}

(b) If $\gamma\in \cF(X,\Sigma,E)\cap \cL^1(X,\mu,E)$,
then $\gamma=\sum_{k=1}^ny_k\one_{A_k}$
for some $n\in \N_0$, $y_k\in E$ and disjoint sets
$A_1,\ldots,A_n\in \Sigma$ such that $\mu(A_k)<\infty$
for each $k\in \{1,\ldots, n\}$
(for example, if $\gamma(X)=\{y_1,\ldots, y_n$
with pairwise distinct elements $y_1,\ldots, y_n$,
we can take $A_k:=\gamma^{-1}(\{y_k\})$).
We define
\[
I(\gamma):=\sum_{k=1}^n\mu(A_k)y_k
\]
(declaring the empty sum as $0$, if $n=0$).
Without changing $(\gamma)$, we may omit those
indices $k$ such that $y_k=0$ or $A_k=\emptyset$.
We may therefore assume that $y_k\not=0$
and $A_k\not=\emptyset$ for each
$k\in \{1,\ldots, n\}$.
Then $I(\gamma)$ is well-defined. In fact, assume that also
$\gamma=\sum_{j=1}^mz_j\one_{B_j}$ with
disjoint measurable subsets $B_1,\ldots, B_m$ of finite measure,
such that each $z_j$ is non-zero and
each $B_j\not=\emptyset$.
Then
\begin{equation}\label{henceun}
\bigcup_{k=1}^nA_k=
\{x\in X\colon \gamma(x)\not=0\}=\bigcup_{j=1}^mB_j.
\end{equation}
If $k\in \{1,\ldots, n\}$ and $j\in \{1,\ldots, m\}$,
then either $A_k\cap B_j=\emptyset$ (whence
$\mu(A_k\cap B_j)=0$ and thus
$y_k\mu(A_k\cap B_j)=0=z_j\mu(A_k\cap B_j)$)
or there exists $c\in A_k\cap B_j$, whence
$y_k=\gamma(c)=z_j$ and again $y_k\mu(A_k\cap B_j)=z_j\mu(A_k\cap B_j)$.
Hence, using that $A_k=\bigcup_{j=1}^mA_k\cap B_j$
(by (\ref{henceun}) which is a disjoint union,
\[
\sum_{k=1}^ny_k\mu(A_k)=
\sum_{k=1}^n\sum_{j=1}^m y_k\mu(A_k\cap B_j)
=\sum_{k=1}^n\sum_{j=1}^m z_j\mu(A_k\cap B_j)
=\sum_{j=1}^m z_j \mu(B_j).
\]
If also $\eta\in \cF(X,\Sigma,E)\cap\cL^1(X,\mu,E)$
and $r,s\in \R$, write $\eta=\sum_{i=1}^\ell w_\ell\one_{C_\ell}$
with disjoint measurable subsets $C_\ell$ of finite
measure and $w_\ell\in E$. Then $\gamma=\sum_{k=1}^n\sum_{i=1}^\ell
y_k\one_{A_k\cap C_i}$. Writing $\eta$ and $r\gamma+s\eta$
likewise as a linear combination of the characteristic
functions $\one_{A_k\cap C_i}$, we easily find that
\[
I(r\gamma+s\eta)=rI(\gamma)+sI(\eta).
\]
Thus $I$ is linear. Moreover,
$I$ is continuous with respect to the
topology induced by $\cL^1(X,\mu,E)$ on
$\cF(X,\Sigma,E)\cap\cL^1(X,\mu,E)$,
as
\[
p(I(\gamma))=p\left(\sum_{k=1}^n\mu(A_k)y_k\right)
\leq\sum_{k=1}^np(y_k)\mu(A_k)=\int_Xp(\gamma(x))\,d\mu(x)=
\|\gamma\|_{\cL^1,p}
\]
for each $\gamma$ as above and each continuous seminorm
$p$ on~$E$. Since $E$ is complete
and $\cF(X,\Sigma,E)\cap \cL^1(X,\mu,E)$ is dense
in $\cL^1(X,\mu,E)$,
the continuous linear map~$I$ has a unique continuous
linear extension
\[
J\colon \cL^1(X,\mu,E)\to E.
\]
For each continuous linear functional $\lambda\colon E\to \R$,
both $\lambda\circ J$ and the map
\[
h\colon \cL^1(X,\mu,E)\to \R,\qquad \gamma\mto \int_X\lambda(\gamma(x))\,d\mu(x)
\]
are continuous linear extensions of $\lambda\circ I$,
whence $J=h$ by density of $\cF(X,\Sigma,E)\cap \cL^1(X,\mu,E)$.
Thus
\[
\lambda(J(\gamma))=\int_X\lambda(\gamma(x))\,d\mu(x)
\]
for each $\lambda\in E'$ and thus $J(\gamma)$ is the weak
integral $\int_X\gamma(x)\,d\mu(x)$ in~$E$.
If $p$ is a continuous seminorm
on~$E$, then $\cL^1(X,\mu,E)\to \R$, $\gamma\mto p(J(\gamma))$
and $\gamma\mto \|\gamma\|_{\cL^1,p}$ are continuous
functions on $\cL^1(X,\mu,E)$ such that $p(J(\gamma))\leq \|\gamma\|_{\cL^1,p}$
for each $\gamma$ in the dense subset $\cF(X,\Sigma,E)\cap \cL^1(X,\mu,E)$.
Hence $p(J(\gamma))\leq \|\gamma\|_{\cL^1,p}$ for all
$\gamma\in \cL^1(X,\mu,E)$,
establishing~(\ref{standintest}).\vspace{1mm}

(c) Since $\cF(X,\cB(X),E)\cap\cL^1(X,\mu,E)$ is dense in $\cL^1(X,\mu,E)$
and every $\gamma$ in the former space is a linear combination
of maps of the form $v\one_A$ with $A\in\cB(X)$ and $\mu(A)<\infty$,
it suffices to show that $v\one_A$ is in the closure of
$C_c(X,E)$. Since $\mu$ is inner regular,
there exists an ascending sequence $K_1\sub K_2\sub\cdots$
of compact subsets of $A$ such that $\mu(K_n)\geq \mu(A)-\frac{1}{n}$
and thus $\mu(A\setminus K_n)\to 0$ as $n\to\infty$.
Let $V_n\sub X$ be a relatively compact, open
subset of~$X$ such that $K_n\sub V_n$.
Since $\mu|_{\wb{V_n}}$ is outer regular,
there exists a relatively open subset $W_n\sub \wb{V_n}$
such that $K_n\sub W_n$ and $\mu(W_n)\leq \mu(K_n)+\frac{1}{n}$.
After replacing $W_n$ with its intersection with~$V_n$,
we may assume that~$W_n$ is open in~$C$.
Now Urysohn's Lemma \cite[2.12]{Ru1}
provides $\eta_n\in C_c(X,\R)$ such that
$\one_{K_n}\leq \eta_n\leq \one_{W_n}$.
Then $v\eta_n\in C_c(X,E)$.
If~$q$ is a continuous seminorm on~$E$, then
\begin{eqnarray*}
\|v\eta_n-v\one_{K_n}\|_{\cL^1,p}& =& q(v)\int_{V_n\setminus K_n}|\eta_n(x)|\,d\mu(x)\\
& \leq & q(v)(\mu(V_n\setminus A_n)+\mu(A_n\setminus K_n))
\leq \frac{2q(v)}{n},
\end{eqnarray*}
which tends to~$0$ as $n\to\infty$.
\end{proof}
{\bf Proof of Lemma~\ref{labase1}.}
By Lemma~\ref{labasemea},
the weak integral $\int_X\gamma\,d\mu$
exists in~$E$ for each $\gamma\in \cL^1(X,\mu,E)$,
and can be estimated as desired.
The remaining assertions
on $L^\infty_{rc}(X,E)$
are covered by \cite[Proposition~3.21]{MEA}.\footnote{Where
the symbol $L^\infty$ is used in place of $L^\infty_{rc}$.}
If $E$ is a Banach space space
(resp., a Fr\'{e}chet space),
then $L^p(X,\mu,E)$
are normable (resp., have the property that
the vector topology can be defined
using a countable set of seminorms).
It only remains to show
that
$L^p(X,\mu,E)$ is complete.
To this end, let $(q_n)_{n\in \N}$
be a sequence of seminorms on~$E$
defining its vector topology.
Let $\gamma_n\in \cL^p(X,\mu,E)$
such that $([\gamma_n])_{n\in \N}$
is a Cauchy sequence in $L^p(X,\mu,E)$.
The Cauchy sequence will converge
if we can show that it has a convergent
subsequence. After passing to a subsequence,
we may therefore assume that
\[
(\forall n_0\in \N)\;(\forall n,m\geq n_0)\quad
\|\gamma_n-\gamma_m\|_{\cL^p,q_{n_0}}\leq 2^{-n_0}.
\]
We claim that $\sum_{n=1}^\infty (\gamma_{n+1}-\gamma_n)$
converges in $L^p(X,\mu,E)$. If this is true,
then $\gamma_m=\gamma_1+\sum_{n=1}^{m-1}(\gamma_{n+1}-\gamma_n)$
converges to $\gamma_1+\sum_{n=1}^\infty (\gamma_{n+1}-\gamma_n)$
as $m\to\infty$, whence $L^p(X,\mu,E)$ is complete.
We first prove the claim if
$p=\infty$. After replacing the representatives
by~$0$ on a set of measure~$0$,
we may assume that
$\|\gamma_n-\gamma_m\|_{\cL^\infty,q_{n_0}}=\sup_{x\in X}q_{n_0}(\gamma_n(x)-\gamma_m(x))$
for all $n_0\in \N$ and $n,m\geq n_0$.
Let $x\in X$.
For each $n_0\in \N$,
we have
\[
\sum_{n=n_0}^\infty q_{n_0}(\gamma_{n+1}(x)-\gamma_n(x))
\leq\sum_{n=n_0}^\infty 2^{-n}<\infty
\]
and thus $\sum_{n=1}^\infty q_{n_0}(\gamma_{n+1}(x)-\gamma_n(x))<\infty$.
Therefore $\sum_{n=1}^\infty(\gamma_{n+1}(x)-\gamma_n(x))$
is an absolutely convergent series
in the Fr\'{e}chet space~$E$
and hence convergent.
By Lemma~\ref{limmea}, the function
\[
\gamma\colon X\to E,\quad \gamma(x):=\sum_{n=1}^\infty(\gamma_{n+1}(x)-\gamma_n(x))
=\lim_{N\to\infty}\sum_{n=1}^N(\gamma_{n+1}(x)-\gamma_n(x))
\]
is measurable
and has separable image.
For each $n_0\in \N$ and $x\in X$, we have
\begin{eqnarray*}
q_{n_0}(\gamma(x))
&\leq&
\sum_{n=1}^\infty q_{n_0}(\gamma_{n+1}(x)-\gamma_n(x))\\
&=&
\sum_{n=1}^{n_0-1}q_{n_0}(\gamma_{n+1}(x)-\gamma_n(x))
+ \sum_{n=n_0}^\infty q_{n_0}(\gamma_{n+1}(x)-\gamma_n(x))\\
&\leq&
\sum_{n=1}^{n_0-1}\|\gamma_n\|_{\cL^\infty,q_{n_0}}
+ \sum_{n=n_0}^\infty 2^{-n}.
\end{eqnarray*}
As a consequence,
$\im(\gamma)$ is bounded in~$E$
and thus $\gamma\in \cL^\infty(X,\mu,E)$,
with $\|\gamma\|_{\cL^\infty,q_{n_0}}\leq
\sum_{n=1}^{n_0-1}\|\gamma_n\|_{\cL^\infty,q_{n_0}}
+ \sum_{n=n_0}^\infty 2^{-n}$.
If $n_0\in \N$,
we have for all $m\in \N$ with $m\geq n_0-2$
and $x\in X$
\begin{eqnarray*}
q_{n_0}(\gamma(x)-\sum_{n=0}^m (\gamma_{n+1}(x)-\gamma_n(x)))
&\leq & \sum_{n=m+2}^\infty q_{n_0}(\gamma_{n+1}(x)-\gamma_n(x)))\\
&\leq& \sum_{n=m+2}^\infty 2^{-n}=
\frac{2^{-m-2}}{1-1/2}=2^{-m-1}.
\end{eqnarray*}
Thus $\|\gamma-\sum_{n=1}^m (\gamma_{n+1}-\gamma_n)\|_{\cL^\infty,q_{n_0}}\leq
2^{-m-1}$ tends to $0$ as $m\to\infty$
and hence $\gamma=\lim_{m\to\infty}\sum_{n=1}^m(\gamma_{n+1}-\gamma_n)$
in $\cL^\infty(X,\mu,E)$ (establishing the claim).

If $p\in [1,\infty[$,
for each $n_0\in \N$ and $N\in \N$ we have
\begin{eqnarray*}
\lefteqn{\sqrt[p]{\int_X \Big(\sum_{n=1}^Nq_{n_0}(\gamma_{n+1}-\gamma_n)\Big)^p\,d\mu}}\\
&=&
\|\sum_{n=1}^N q_{n_0}\circ(\gamma_{n+1}-\gamma_n)\|_{\cL^p}
\leq \sum_{n=1}^N\|q_{n_0}\circ(\gamma_{n+1}-\gamma_n)\|_{\cL^p}\\
&=& \sum_{n=1}^N\|\gamma_{n+1}-\gamma_n\|_{\cL^p,q_{n_0}}
\leq \sum_{n=1}^\infty\|\gamma_{n+1}-\gamma_n\|_{\cL^p,q_{n_0}}.
\end{eqnarray*}
Letting $N\to\infty$, the Monotone Convergence Theorem
entails that
\[
\sqrt[p]{\int_X \Big(\sum_{n=1}^\infty q_{n_0}(\gamma_{n+1}-\gamma_n)\Big)^p
\,d\mu}
\leq \sum_{n=1}^\infty\|\gamma_{n+1}-\gamma_n\|_{\cL^p,q_{n_0}}<\infty.
\]
Hence $\Big(\sum_{n=1}^\infty q_{n_0}(\gamma_{n+1}-\gamma_n)\Big)^p\in \cL^1(X,\R)$.
Hence, after replacing each of the maps $\gamma_n$ by~$0$
on a set of measure zero, we may assume that
\[
\sum_{n=1}^\infty q_{n_0}(\gamma_{n+1}-(x)\gamma_n(x))<\infty
\]
for all $n_0\in \N$ and all $x\in X$.
Hence, for each $x\in X$, the series $\sum_{n=1}^\infty
(\gamma_{n+1}(x)-\gamma_n(x))$ in~$E$ is absolutely
convergent and hence convergent to some $\gamma(x)\in E$.
By Lemma~\ref{limmea}, the function
\[
\gamma\colon X\to E,\quad \gamma(x):=\sum_{n=1}^\infty(\gamma_{n+1}(x)-\gamma_n(x))
=\lim_{N\to\infty}\sum_{n=1}^N(\gamma_{n+1}(x)-\gamma_n(x))
\]
is measurable
and has separable image.
Since
\begin{eqnarray*}
\int_X (q_{n_0}(\gamma(x)))^p\,d\mu(x)
&\leq & \int_X \Big(\sum_{n=1}^\infty q_{n_0}(\gamma_{n+1}(x)-\gamma_n(x))\Big)^p\,d\mu(x)\\
&\leq & \Big(\sum_{n=1}^\infty\|\gamma_{n+1}-\gamma_n\|_{\cL^p,q_{n_0}}\Big)^p<\infty
\end{eqnarray*}
for each $n_0\in \N$, we have $\gamma\in \cL^p(X,\mu,E)$.
Finally, $\sum_{n=1}^m(\gamma_{n+1}-\gamma_n)\to \gamma$ in $\cL^p(X,\mu,E)$
as $m\to \infty$ since
\[
\|\gamma-\sum_{n=1}^m(\gamma_{n+1}-\gamma_n)\|_{\cL^p,q_{n_0}}
= \sqrt[p]{\int_X q_{n_0}\Big(\sum_{n=n_0}^\infty (\gamma_{n+1}(x)-\gamma_n(x))\Big)\,d\mu}
\to 0
\]
by dominated convergence, using that the integrands are majorized
by the integrable function
$\Big(\sum_{n=1}^\infty q_{n_0}\circ (\gamma_{n+1}-\gamma_n)\Big)^p$.\vspace{1.3mm}\Punkt

\noindent
{\bf Proof of Lemma~\ref{indeedmetr}.}
Let $M^1_+(K)$
be the set of all Radon probability measures
on~$K$. Endow $M^1_+(K)$
with the vague topology,
which makes it a compact topological
space and turns the map $\Phi\colon M^1_+(K)\to C(K)'$, $\mu\mapsto
I_\mu$ (with $I_\mu(f):=\int_K f\,d\mu$)
into a topological embedding
with respect to the weak $*$-topology
on the dual $C(K)$ of the Banach space $C(K)$
of continuous real-valued functions on~$K$
with the supremum norm
(see \cite[Chapter 2, Corollary~4.7]{BCR}).
Because $K$ is metrizable,
$M^1_+(K)$ is metrizable
(see \cite[Satz 31.5\,(a)]{Bau}).
The weak $*$-topology on $C(K)'$ is initial
with respect to the linear maps
\[
\ve_\gamma\colon C(K)'\to\R,\quad \lambda\mto\lambda(\gamma).
\]
Hence $\ve_\gamma\circ\Phi\colon M^1_+(K)\to\R$,
$\mu\mto\int_K\gamma\,d\mu$
is continuous for each $\gamma\in C(K)$.
For each $\mu\in M^1_+(K)$,
the barycentre
\[
b_\mu:=\int_Kx\,d\mu(x)
\]
exists in the compact set $\wb{\conv(K)}$
(see \cite[Chapter 2, Proposition~5.3]{BCR}
or \cite[Theorem 3.27]{Ru2}).
The map
\[
\beta\colon M^1_+(K)\to \wb{\conv(K)}, \quad \mu\mto b_\mu
\]
is continuous: Because $\wb{\conv(K)}=:L$
is compact, the topology induced by~$E$ on
$L$ coincides with the weak topology,
which is initial with respect to the mappings
$\lambda|_L$ for $\lambda\in E'$.
Hence $\beta$ will be continuous
if we can show that $\lambda\circ \beta$ is continuous
for each $\lambda\in E'$.
But
\[
(\lambda\circ\beta)(\mu)=\int_K\lambda(x)\,d\mu(x)=I_\mu(\lambda|_L),
\]
showing that
$\lambda\circ \beta=\ve_{\lambda|_L}\circ\Phi$,
which indeed is continuous.
Now Lemma~\ref{quotcpmet} shows that
$\im(\beta)$ is compact and
metrizable.
It only remains to recall that $\im(\beta)=\wb{\conv(K)}$,
see \cite[Chapter 2, Proposition~5.3]{BCR}.\,\vspace{1.3mm}\Punkt

\noindent
{\bf Proof of Lemma~\ref{labaserc}.}
If $\mu=0$, then all assertions
are trivial. We may therefore assume that
$\mu(X)>0$. As weak integrals are linear in~$\mu$,
we may assume that $\mu(X)=1$.
If $\gamma\in \cL^\infty_{rc}(X,\mu,E)$,
then $K:=\wb{\im(\gamma)}\sub E$ compact and metrizable.
Since~$E$ is assumed integral complete,
it satisfies the metric CCP.
Thus $\wb{\conv(K)}$ is compact. Now consider
the image measure $\gamma_*(\mu)\colon \cB(\wb{\conv(K)})\to [0,1]$
of $\mu$ under the measurable map
$\gamma\colon X\to \wb{\conv(K)}$;
thus $\gamma_*(\mu)(A):=\mu(\gamma^{-1}(A))$
for Borel sets $A\sub \wb{\conv(K)}$.
By \cite[Theorem 3.27]{Ru2},
the weak integral
\[
b:=\int_{\wb{\conv(K)}}x\, d\gamma_*(\mu)(x)
\]
exists in~$E$. Testing with continuous
linear functions~$\lambda\in E'$
and using the Transformation Theorem
for integrals with respect to image measures
(\cite[19.2, Korollar 1]{Bau}),
we see that $b=\int_X\gamma\,d\mu$.
If $q\in P(E)$, the Hahn-Banach theorem
provides $\lambda\in E'$ such that
$q(b)=\lambda(b)$ and $\lambda(B^q_1(0))\sub [{-1},1]$.
Thus
$q(b)=\lambda(b)=\int_X\lambda (\gamma(x))\,d\mu
\leq \int_X|\lambda (\gamma(x))|\,d\mu
\leq \int_X q(\gamma(x))\,d\mu=\|\gamma\|_{\cL^1,q}
\leq \|\gamma\|_{\cL^\infty,q}\mu(X)$.\,\vspace{1.3mm}\Punkt

\noindent
Let $\|.\|:=\|.\|_2$ be the euclidean norm on $\R^k$ and let $B_r(x):=\{y\in \R^k
\colon \|y-x\|<r\}$ for $x\in \R^k$ and $r>0$.
Abbreviate $B_r:=B_r(0)$.
Then $\lambda_k(B_r(x))=\lambda_k(B_r)$
for all $x\in \R^k$.
The following discussion of Lebesgue points and
absolutely continuous functions
was inspired by the treatment of the scalar-valued case in
\cite[\S7]{Ru1}.
\begin{defn}
Let $E$ be a Fr\'{e}chet space
and $\gamma\in \cL^1(\R^k,\lambda_k,E)$.
A point $x\in \R^k$ is called a \emph{Lebesgue point}
of~$\gamma$ if
\[
\lim_{r\to 0}
\frac{1}{\lambda_k(B_r)}
\int_{B_r(x)}p(\gamma(y)-\gamma(x))\,d\lambda_k(y)\,=\,0
\]
for each continuous seminorm~$p$ on~$E$.
\end{defn}
\begin{rem}
If $x\in \R^k$ is a Lebesgue point for
$\gamma\in \cL^1(\R^k,\lambda_k,E)$, then
\[
\lim_{r\to 0}\frac{1}{\lambda_k(B_r)}
\int_{B_r(x)}\gamma(y)\,d\lambda_k(y)\,=\, \gamma(x)
\]
in particular, as
\begin{eqnarray*}
\lefteqn{p\left(
\frac{1}{\lambda_k(B_r)}
\int_{B_r(x)}\gamma(y)\,d\lambda_k(y) - \gamma(x)\right)}\qquad\qquad\\
&=&
p\left(
\frac{1}{\lambda_k(B_r)}
\int_{B_r(x)}(\gamma(y)- \gamma(x))\, d\lambda_k(y)\right)\\
&\leq&
\frac{1}{\lambda_k(B_r)}
\int_{B_r(x)}p(\gamma(y)-\gamma(x))\,d\lambda_k(y)\to 0
\end{eqnarray*}
as $r\to 0$, for each continuous seminorm $p$ on~$E$.
\end{rem}
Lemma~\ref{Lebesguenshri}
implies that the same Lebesgue points
are obtained if the euclidean norm is replaced with
any norm $\|.\|$ on $\R^k$.
\begin{defn}
Let $\rho\colon \R^k\to[0,\infty]$
be a measurable function.
We write $M_\rho\colon \R^k\to[0,\infty]$
for the \emph{maximal function} of the measure $\rho d\lambda_k$,
defined via
\[
M_\rho(x):=\sup_{r\in \,]0,\infty[}\frac{1}{\lambda_k(B_r)}\int_{B_r(x)}\rho(y)\,d\lambda_k(y).
\]
Then $M_\rho$ is lower semicontinuous and hence
Borel measurable (see \cite[7.2]{Ru1}).
\end{defn}
\begin{la}\label{Leblarge}
If $E$ is a Fr\'{e}chet space and
$\gamma\in \cL^1(\R^k,\lambda_k,E)$,
then the set
$L_\gamma$ of all Lebesgue points of $\gamma$
is a Borel set in $\R^k$,
and $\lambda_k(\R^k\setminus L_\gamma)=0$.
\end{la}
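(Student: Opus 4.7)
The strategy is to adapt the classical Lebesgue differentiation theorem in $\R^k$ (as in \cite[\S7]{Ru1}) to the Fr\'{e}chet-valued setting by reducing, via a countable defining family of seminorms, to scalar maximal inequalities applied to $q\circ\gamma$.

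First I would fix a sequence $(q_n)_{n\in\N}$ of continuous seminorms defining the Fr\'{e}chet topology of~$E$, and introduce
\[
T_r^{(n)}(x):=\frac{1}{\lambda_k(B_r)}\int_{B_r(x)}q_n(\gamma(y)-\gamma(x))\,d\lambda_k(y),
\qquad
T^{(n)}(x):=\limsup_{r\to 0,\,r\in\Q_+}T_r^{(n)}(x),
\]
so that $L_\gamma=\bigcap_{n,m\in\N}\{x\in\R^k\colon T^{(n)}(x)<1/m\}$. The measurability half of the lemma reduces to Borel measurability of each $T_r^{(n)}$: by Lemma~\ref{Sepp}\,(b) applied with the continuous map $(u,v)\mto q_n(v-u)$ to the pair of measurable maps $(x,y)\mto\gamma(x)$ and $(x,y)\mto\gamma(y)$ (which have separable image), the integrand is jointly Borel; a Fubini argument using the integrable bound $q_n(\gamma(y))+q_n(\gamma(x))$ on $B_r(x)$ then shows $T_r^{(n)}$ is Borel, whence so is $T^{(n)}$ and $L_\gamma$.

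For $\lambda_k(\R^k\setminus L_\gamma)=0$, it suffices to prove $\lambda_k\{T^{(n)}>\alpha\}=0$ for all $n\in\N$ and $\alpha>0$. Fix $n,\alpha$. Since $\lambda_k$ is a Radon measure on the locally compact space $\R^k$, Lemma~\ref{labasemea}\,(c) supplies, for any $\ve>0$, a function $g\in C_c(\R^k,E)$ with $\|\gamma-g\|_{\cL^1,q_n}<\ve$. Setting $h:=\gamma-g$, the triangle inequality gives
\[
T_r^{(n)}(x)\leq T_r^{(n)}g(x)+\frac{1}{\lambda_k(B_r)}\int_{B_r(x)}q_n(h(y))\,d\lambda_k(y)+q_n(h(x)),
\]
and uniform continuity of $q_n\circ g$ on the compact support of $g$ forces $\limsup_{r\to 0}T_r^{(n)}g(x)=0$ for every $x$. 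Hence $T^{(n)}(x)\leq M_{q_n\circ h}(x)+q_n(h(x))$. Now the classical Hardy--Littlewood weak-type $(1,1)$ inequality $\lambda_k\{M_\rho>\beta\}\leq (3^k/\beta)\|\rho\|_{\cL^1}$ (see \cite[Theorem 7.4]{Ru1}) applied to $\rho=q_n\circ h$, combined with Chebyshev's inequality for $q_n\circ h$, yields
\[
\lambda_k\{T^{(n)}>\alpha\}\;\leq\;\lambda_k\{M_{q_n\circ h}>\alpha/2\}+\lambda_k\{q_n\circ h>\alpha/2\}\;\leq\;\frac{2(3^k+1)}{\alpha}\,\ve.
\]
Letting $\ve\to 0$ gives $\lambda_k\{T^{(n)}>\alpha\}=0$, and a countable union over $n$ and $\alpha=1/m$ finishes the proof.

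The main obstacle is not the analytic core (which follows the scalar template faithfully), but rather the bookkeeping around measurability: one must ensure joint Borel measurability of $(x,y)\mto q_n(\gamma(y)-\gamma(x))$ despite $E$ being infinite-dimensional, which is where the separability clause in the definition of $\cL^1(X,\mu,E)$ enters decisively through Lemma~\ref{Sepp}.
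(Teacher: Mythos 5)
Your proof is correct and follows essentially the same route as the paper: both reduce to a countable defining family of seminorms, approximate $\gamma$ in $\|\cdot\|_{\cL^1,q_n}$ by a compactly supported continuous function via Lemma~\ref{labasemea}\,(c), and then combine the Hardy--Littlewood weak-type $(1,1)$ bound with Chebyshev, exactly as in the paper's use of \cite[7.4 and 7.5\,(1)]{Ru1}. The only divergence is bookkeeping: you obtain Borel measurability of $x\mto T^{(n)}_r(x)$ from joint measurability of $(x,y)\mto q_n(\gamma(y)-\gamma(x))$ (Lemma~\ref{Sepp}) plus Tonelli, whereas the paper composes a measurable $\cL^1(\R^k,E)$-valued map with the continuous seminorm $\|\cdot\|_{\cL^1,q_j}$ and takes suprema over rational radii; your tacit passage from rational radii to all radii is the routine comparison $T^{(n)}_r\leq (r'/r)^k\,T^{(n)}_{r'}$ for rational $r'\geq r$ (or continuity in $r$, which the paper checks explicitly).
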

\begin{proof}
Let $q_1\leq q_2\leq\cdots$ be an ascending sequence
of continuous seminorms on~$E$ defining the locally convex vector
topology of~$E$. Let $j\in \N$.
For $r>0$, the map $h_r\colon E \times \R^k \to \cL^1(\R^k,E)$,
$h_r(v,x):=v\one_{B_r(x)}$ is continuous,
% h_r(v_1,x_1)-h_r(v,x)= (v_1-v) \one_{B_r(x_1)} (beschr)
% +v(\one_{B_r(x_1)}-\one_{B_r(x)})
and also the map
\[
\R^k\to \cL^1(\R^k,E),\quad x\mto \gamma\cdot \one_{B_r(x)}
\]
is continuous (exploiting that $\lambda_k(B_r(y)\Delta B_r(x))\to 0$
as $r\to0$, where
$A\Delta  B:=(A\cup B)\setminus (A\cap B)$
denotes the symmetric difference).
Hence
\[
\R^k\to \cL^1(\R^k,E),\;\;
x \mto \gamma(x)\one_{B_r(x)}-\gamma\cdot \one_{B_r(x)}
=h_r(\gamma(x),x)-\one_{B_r(x)}
\]
is a measurable map. Since $\|.\|_{\cL^1,q_j}$ is a continuous
seminorm, we deduce that the map
\[
g_{j,r}\colon \R^k\to \R,\;\,x\mto \|\gamma(x)\one_{B_r(x)}
-\gamma\cdot\one_{B_r(x)}\|_{\cL^1,q_j}
=\int_{B_r(x)}q_j(\gamma(x)-\gamma(y))\,d\lambda_k(y)
\]
is measurable.
Hence also $f_{j,r}\colon \R^k\to \R$,
\[
f_{j,r}(x):=\frac{1}{\lambda_k(B_r)}
\int_{B_r(x)}q_j(\gamma(x)-\gamma(y))\,d\lambda_k(y)
\]
is measurable. We shall later write $f_{j,r,\gamma}:=f_{j,r}$
to emphasize the dependence on~$\gamma$.
For fixed $x\in \R^k$, the map
\[
]0,\infty[\,\to\, [0,\infty[,\quad
r\mto g_{j,r}(x)=\int_{\R^k}q_j(\gamma(x)\one_{B_r(x)}-
\gamma(y)\one_{B_r(y)})\,d\lambda_k(y)
\]
is continuous, exploiting that $\lambda_k(B_r(x))$
is continuous in~$r$.
Hence also the map
\[
]0,\infty[\,\to\, [0,\infty[,\quad
r\mto f_{j,r}(x)=\frac{1}{\lambda_k(B_r)}
\int_{\R^k}q_j(\gamma(x)\one_{B_r(x)}-
\gamma(y)\one_{B_r(y)})\,d\lambda_k(y)
\]
is continuous and thus
\[
T_{j,r}(x):=\sup\{f_{j,s}(x)\colon s\in \,]0,r]\}
=\sup\{f_{j,s}(x)\colon s\in \,]0,r]\cap\Q\}.
\]
Being the pointwise supremum of a countable
family of measurable functions, the function
\[
T_{j,r}\colon \,]0,\infty[\,\to [0,\infty]
\]
is measurable. By definition, $T_t\leq T_r$ if $t\leq r$.
Hence
\begin{eqnarray*}
L_\gamma &=& \{x\in \R^k\colon (\forall j\in \N)\,\mbox{$T_{j,r}(x)\to 0$ as $r\to 0$}\}\\
&=& \bigcap_{j\in \N}\{x\in \R^k\colon \mbox{$T_{j,1/n}(x)\to 0$ as $n\to \infty$}\}.
\end{eqnarray*}
As the set $A_j$ of all $x\in \R^k$
such that $(T_{j,1/n}(x))_{n\in \N}$ converges is a Borel set
(see Lemma~\ref{limmea}\,(b))
and $T_{j,\gamma} \colon A_j\to [0,\infty]$, $x\mto\lim_{n\to\infty}T_{j,1/n}(x)$
is a measurable map, we see that
$L_\gamma=\bigcap_{j\in \N} T^{-1}_{j,\gamma}(\{0\})$
is a Borel set in~$\R^k$.

Let $\ve>0$. Let $j,n\in \N$.
By Lemma~\ref{labasemea}\,(c),
there exists $\eta\in C_c(\R^k,E)$ such that
$\|\gamma-\eta\|_{\cL^1,q_j}<\frac{1}{n}$. Put $\xi:=\gamma-\eta$.
Since $\eta$ is continuous, $T_{j,\eta}=0$.
Because
\[
f_{j,r,\zeta}(x)\leq \frac{1}{\lambda_k(B_r)}\int_{B_r(x)}q_j(\zeta(y))\,d\lambda_k(y)
+p_j(\zeta(x)),
\]
we have $T_{j,\zeta}\leq M_{q_j\circ \zeta}+ p_j\circ \zeta$.
Since $f_{j,r,\gamma}\leq f_{j,r,\eta}+f_{j,r,\xi}$, it follows that
\[
T_{j,\gamma}\leq M_{p_j\circ\xi}+p_j\circ \xi.
\]
Therefore
\begin{eqnarray*}
\lefteqn{\{x\in \R^k\colon T_{j,\gamma}(x)>2\ve\}}\qquad\qquad\\
&\sub & \{x\in \R^k\colon
M_{p_j\circ \xi}(x)>\ve\}\cup\{x\in \R^k\colon p_j(\xi(x))>\ve\}=:S_{\ve,n}.
\end{eqnarray*}
Since $\|p_j\circ \xi\|_{\cL^1}=\|\xi\|_{\cL^1,p_j}<\frac{1}{n}$,
\cite[7.5\,(1) and Theorem 7.4]{Ru1}
show that
\[
\lambda_k(S_{\ve,n})\leq \frac{3^k}{\ve}\|p_j\circ\xi\|_{\cL^1}
+\frac{1}{\ve}\|p_j\circ\xi\|_{\cL^1}\leq
\frac{3^k+1}{\ve n}.
\]
Hence $\lambda_k(\{x\in \R^k\colon T_{j,\gamma}(x)>2\ve\})
\leq\frac{3^k+1}{\ve n}$.
As $n$ was arbitrary, $\lambda_k(\{x\in \R^k\colon T_{j,\gamma}(x)>2\ve\})=0$
follows, and we deduce that
\[
\lambda_k(\{x\in \R^k\colon T_{j,\gamma}(x)>0\})=0.
\]
Thus also $\R^k\setminus L_\gamma=\bigcup_{j\in \N}\{x\in \R^k\colon T_{j,\gamma}(x)>0\}$
has measure zero.
\end{proof}
The following concept is well known (see, e.g., \cite[7.9]{Ru1}:
\begin{defn}
Let $x\in \R^k$. A sequence $(A_n)_{n\in \N}$ of Borel sets in $\R^k$
is said to shrink to $x$ nicely
if there exist $\alpha>0$ and a sequence of balls $B_{r_n}(x)$ with $r_n\to 0$
such that $A_n\sub B_{r_n}(x)$ for all $n\in \N$ and
\[
\lambda_k(E_n)\geq \alpha\lambda_k(B_{r_n}(x)).
\]
\end{defn}
Analogous to \cite[7.10]{Ru1}, also in the vector-valued case we have:
\begin{la}\label{Lebesguenshri}
Let $E$ be a Fr\'{e}chet space
and $\gamma\in \cL^1(\R^k,E)$.
For each $x\in \R^k$, let $(A_n(x))_{n\in \N}$
be a sequence of Borel sets which shrinks to~$x$ nicely.
Then
\[
\gamma(x)=\lim_{n\to\infty}\frac{1}{\lambda_k(A_n(x))}\int_{A_n(x)}\gamma\,d\lambda_k
\]
at every Lebesgue point $x$ of $\gamma$,
and thus $\lambda_k$-almost everywhere.
\end{la}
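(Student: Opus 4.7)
The plan is to mimic the scalar argument of Rudin (7.10) in the Fréchet-valued setting, reducing the claim to a seminorm estimate and invoking Lemma~\ref{Leblarge} for the ``almost everywhere'' statement.

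First, fix a Lebesgue point $x$ of $\gamma$, together with the shrinking data: reals $r_n\to 0$ and $\alpha>0$ with $A_n(x)\subseteq B_{r_n}(x)$ and $\lambda_k(A_n(x))\geq \alpha\lambda_k(B_{r_n}(x))=\alpha\lambda_k(B_{r_n})$. Using linearity of the weak integral in~$\gamma$, rewrite the difference as a centered weak integral:
\[
\frac{1}{\lambda_k(A_n(x))}\int_{A_n(x)}\gamma\,d\lambda_k-\gamma(x)
=\frac{1}{\lambda_k(A_n(x))}\int_{A_n(x)}(\gamma(y)-\gamma(x))\,d\lambda_k(y).
\]

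Next, for any continuous seminorm $p$ on~$E$, apply the standard estimate (\ref{standintest}) from Lemma~\ref{labasemea}\,(b) to move $p$ inside the integral, and then enlarge the domain of integration from $A_n(x)$ to $B_{r_n}(x)$, paying the factor $1/\alpha$:
\[
p\!\left(\frac{1}{\lambda_k(A_n(x))}\int_{A_n(x)}(\gamma(y)-\gamma(x))\,d\lambda_k(y)\right)
\leq \frac{1}{\alpha}\cdot\frac{1}{\lambda_k(B_{r_n})}\int_{B_{r_n}(x)}p(\gamma(y)-\gamma(x))\,d\lambda_k(y).
\]
Since $x$ is a Lebesgue point, the right-hand side tends to $0$ as $n\to\infty$ by definition (applied to the seminorm~$p$). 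This holds for every continuous seminorm $p$ on the Fréchet space~$E$, hence the difference converges to $0$ in~$E$, proving the first assertion.

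For the second assertion, Lemma~\ref{Leblarge} gives $\lambda_k(\R^k\setminus L_\gamma)=0$, so the set of points at which the conclusion fails has Lebesgue measure zero. The only mildly delicate point is the enlargement step, where one must check that $A_n(x)\subseteq B_{r_n}(x)$ forces $p(\gamma(y)-\gamma(x))$ to be integrated over a subset of $B_{r_n}(x)$ (immediate) and that $\lambda_k(B_{r_n}(x))=\lambda_k(B_{r_n})$ (translation invariance); but there is no real obstacle, the argument is essentially a one-line transcription of Rudin's scalar proof, with the Hahn--Banach/seminorm mechanism from (\ref{standintest}) doing the only work specific to the vector-valued setting.
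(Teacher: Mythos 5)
Your proof is correct and follows essentially the same route as the paper: the paper's argument is precisely the estimate $\frac{1}{\lambda_k(A_n(x))}\int_{A_n(x)}q(\gamma(y)-\gamma(x))\,d\lambda_k(y)\leq \frac{1}{\alpha\lambda_k(B_{r_n}(x))}\int_{B_{r_n}(x)}q(\gamma(y)-\gamma(x))\,d\lambda_k(y)$ followed by the Lebesgue point property, with the seminorm-inside-the-weak-integral step (your use of (\ref{standintest})) and Lemma~\ref{Leblarge} handling the vector-valued and almost-everywhere aspects exactly as you describe. No gaps.
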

\begin{proof}
Let $x$ be a Lebesgue point of~$\gamma$ and $\alpha>0$ and $(B_{r_n}(x))_{n\in \N}$
be the positive number and the balls associated to the sequence
$(E_n(x))_{n\in \N}$. Let $q$ be a continuous seminorm on~$E$.
Then
\begin{eqnarray}
\lefteqn{\frac{1}{\lambda_k(A_n(x))}\int_{A_n(x)}q(\gamma(y)-\gamma(x))\,d\lambda_k(y)}
\qquad \notag \\
&\leq&
\frac{1}{\alpha \lambda_k(B_{r_n}(x))}\int_{B_{r_n}(x)}q(\gamma(y)
-\gamma(x))\,d\lambda_k(y).\label{righlef}
\end{eqnarray}
Since $x$ is a Lebesgue point of~$\gamma$, the right hand side
of (\ref{righlef}) tends to~$0$ as $n\to\infty$,
entailing that also the left hand side tends to~$0$.
\end{proof}
{\bf Proof of Lemma~\ref{funda}}
(compare, e.g., \cite[Theorem 7.11]{Ru1}
for the well-known scalar-valued case).
We may assume that $J=\R$ (as we can extend
$\gamma$ by $0$ outside~$J$).
The weak integrals needed to define $\eta$
exist by (\ref{labasemea}).
To complete the proof,
we need only show
that $\eta$ is differentiable with derivative $\gamma(x)$
at each Lebesgue point~$x$
of~$\gamma$ (recalling Lemma~\ref{Leblarge}).
Since
\[
p(\eta(y)-\eta(x))\leq \left| \int_x^y p(\gamma(t))\,d\lambda_1(t)\right|
\to 0
\]
as $y\to x$ for each continuous seminorm $p$ on $E$,
the map $\eta$ is continuous.
If $x\in \R$ and
$(t_{x,n})_{n\in \N}$ is a sequence of real number
$t_{x,n}>x$ converging to~$x$,
then the sets $A_n(x):=[x,t_{x,n}]$
shrink nicely to~$x$,
entailing that
\[
\frac{\eta(t_{x,n})-\eta(x)}{t_{x,n}-x}=
\frac{1}{\lambda_1([x,t_{x,n}])}\int_{[x,t_{x,n}]}\gamma\,d\lambda_1
\to \gamma(x)
\]
at each Lebesgue point $x$ of~$\gamma$.
We have shown that right-sided derivative of~$\eta$
exists at~$x$ and equals~$\gamma(x)$.
Likewise, if $(s_{x,n})_{n\in \N}$ is a sequence of real number $s_n<x$
converging to $x$, then the sets $[s_{x,n},x]$
shrink to~$x$ nicely and thus
\[
\frac{\eta(x)-\eta(s_{x,n})}{x-s_{x,n}}=
\frac{1}{\lambda_1([s_{x,n},x])}\int_{[s_{x,n},x]}\gamma\,d\lambda_1
\to \gamma(x)
\]
at each Lebesgue point~$x$.
Hence $\gamma(x)$ is also the left-sided derivative of~$\eta$
at~$x$. As a consequence, $\eta$ is differentiable at $x$
and $\eta'(x)=\gamma(x)$.\,\vspace{1.3mm}\Punkt

\noindent
{\bf Proof of Lemma~\ref{funda2}.}
Let $\gamma_j\in \cL^\infty_{rc}(J,E)$
for $j\in \{1,2\}$ such that
$\eta(t):=\int_{t_0}^t\gamma_1(s)\,ds
=\int_{t_0}^t\gamma_2(s)\,ds$ for all $t\in J$.
Since $E$ is integral complete
and hence has the metric CCP,
the set $K:=\wb{\conv(\wb{\im(\gamma_1)}\cup\wb{\im(\gamma_2)})}\sub E$
is metrizable and compact (see Lemma~\ref{indeedmetr}).
By Lemma~\ref{imagesepmet},
there is a metrizable vector topology~$\cO'$
on $F:=\Spann(K)$
which is coarser than the topology induced by~$E$.
As $(F,\cO')$ and $E$ induce the same topology on~$K$,
we have
$\gamma_j\in \cL^\infty_{rc}(J,F)$
for $j\in \{1,2\}$.
By the proof of Lemma~\ref{labaserc},
the weak integrals
$\int_{t_0}^t\gamma_1(s)\,ds$ and
$\int_{t_0}^t\gamma_2(s)\,ds$ also exist in~$F$,
for all $t\in J$, and coincide with
$\eta(t)$.
Let $\wt{F}$ be a completion of~$F$
such that $F\sub \wt{F}$
and consider
$\gamma_j$ as an element of $\cL^\infty_{rc}(J,\wt{F})$
for $j\in \{1,2\}$.
Since $\wt{F}$ is a Fr\'{e}chet space,
we deduce with Lemma~\ref{funda} that $[\gamma_1]=[\gamma_2]$
in $L^\infty_{rc}(J,\wt{F})$.
As a consequence, $[\gamma_1]=[\gamma_2]$ also in $L^\infty_{rc}(J,F)$
and hence also in $L^\infty_{rc}(J,E)$.\,\Punkt
\begin{la}
Let $E$ be a Fr\'{e}chet space, $J\sub \R$ a non-degenerate
interval, $\eta\colon J\to E$ be a continuous
map and $A$ the set of all $t\in J$
such that $\gamma$ is differentiable at~$t$.
Then $A$ is a Borel set in $J$.
\end{la}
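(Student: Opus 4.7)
The plan is to express $A$ as a countable combination of Borel sets using the Cauchy characterisation of differentiability in a Fr\'{e}chet space. Let $(q_n)_{n\in\N}$ be an ascending sequence of continuous seminorms defining the topology of~$E$. Since $E$ is complete and metrisable, $\eta$ is differentiable at $t\in J$ if and only if the difference quotient $s\mto\frac{\eta(s)-\eta(t)}{s-t}$ has a limit as $s\to t$, $s\in J\setminus\{t\}$, which by the Cauchy criterion is equivalent to the condition that for every $n,k\in\N$ there exists $m\in\N$ such that
\[
q_n\!\left(\frac{\eta(s_1)-\eta(t)}{s_1-t} - \frac{\eta(s_2)-\eta(t)}{s_2-t}\right) < \frac{1}{k}
\]
for all $s_1,s_2\in J$ with $0<|s_i-t|<1/m$.

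The first step would be to define, for $n,m\in\N$,
\[
h_{n,m}(t) \;:=\; \sup\!\left\{q_n\!\left(\frac{\eta(s_1)-\eta(t)}{s_1-t} - \frac{\eta(s_2)-\eta(t)}{s_2-t}\right)\colon s_1,s_2\in J,\;0<|s_i-t|<\tfrac{1}{m}\right\},
\]
and then verify that
\[
A \;=\; \bigcap_{n,k\in\N}\bigcup_{m\in\N}\{t\in J\colon h_{n,m}(t)<1/k\},
\]
via the above Cauchy criterion (the $\Rightarrow$-direction requires the standard trick of replacing $1/k$ with $1/(3k)$ to pass from a limit-style estimate to the Cauchy estimate through the triangle inequality). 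Once the Borel measurability of each $h_{n,m}$ is established, Borelness of $A$ follows immediately as a countable combination of preimages of open sets.

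To establish the Borel measurability of $h_{n,m}$, I would use the continuity of $\eta$ and the density of $\Q\cap J$ in $J$ to rewrite $h_{n,m}$ as a \emph{countable} supremum
\[
h_{n,m}(t) = \sup\!\left\{q_n\!\left(\frac{\eta(s_1)-\eta(t)}{s_1-t} - \frac{\eta(s_2)-\eta(t)}{s_2-t}\right)\colon s_1,s_2\in\Q\cap J,\;0<|s_i-t|<\tfrac{1}{m}\right\}.
\]
For each fixed pair $(s_1,s_2)\in(\Q\cap J)^2$, the set $U:=\{t\in J\colon 0<|s_i-t|<1/m,\,i=1,2\}$ is (relatively) open in~$J$, and the function $F_{s_1,s_2}\colon J\to[0,\infty[$ that equals the integrand on~$U$ and~$0$ elsewhere is Borel measurable (continuous on~$U$, identically~$0$ off~$U$). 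Then $h_{n,m}=\sup_{s_1,s_2\in\Q\cap J}F_{s_1,s_2}$ is a countable supremum of Borel functions, hence Borel.

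The one step requiring genuine care --- and the main potential obstacle --- is the density reduction to rationals. Concretely, for any $t\in J$ and any real pair $(s_1,s_2)$ admissible in the sup defining $h_{n,m}(t)$, one must produce rational approximants $s_i'\in \Q\cap J$ with $0<|s_i'-t|<1/m$ such that $\frac{\eta(s_i')-\eta(t)}{s_i'-t}\to \frac{\eta(s_i)-\eta(t)}{s_i-t}$ in~$E$; this relies on continuity of $\eta$ at $s_i\neq t$ (so the difference quotient depends continuously on $s_i$) together with density of~$\Q\cap J$ in~$J$, which uses only that $J$ is a non-degenerate interval. Continuity of the seminorm $q_n$ then transfers the approximation through. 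Once this is in place, everything else is a routine bookkeeping of countable unions and intersections, and $A\in\cB(J)$ follows.
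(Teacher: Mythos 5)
Your proposal is correct and follows essentially the same route as the paper: characterise differentiability via the Cauchy criterion for the difference quotients (using completeness of the Fr\'{e}chet space), form the suprema $h$ over shrinking punctured windows, reduce to rational $s_1,s_2$ by continuity of $\eta$ and of the seminorms to get Borel measurability of these suprema, and then write $A$ as a countable intersection/union of their sublevel sets. The paper phrases the final step as $A=\bigcap_j\{t\colon h_{j,1/n}(t)\to 0\}$ rather than your $\bigcap_{n,k}\bigcup_m$ formulation, but this is only a cosmetic difference.
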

\begin{proof}
Let $p_1\leq p_2\leq\cdots$ be a sequence of continuous
seminorms on~$E$ defining its locally convex vector topology.
For $j\in \N$ and $\ve>0$, let
$h_{j,\ve}(t)$ be the supremum
of the real numbers
\begin{equation}\label{willfixs}
p_j\left(\frac{\eta(s_1)-\eta(t)}{s_1-t}
-\frac{\eta(s_2)-\eta(t)}{s_2-t}
\right)=:g_{j,s_1,s_2}(t),
\end{equation}
for $s_1, s_2\in (J\cap\,]x-\ve,x+\ve[)\setminus\{t\}$.
Note that (\ref{willfixs})
is a continuous function of $s_1$ and $s_2$.
Therefore the same supremum is obtained if
we take $s_1, s_2\in \Q\cap (J\cap\,]x-\ve,x+\ve[)\setminus\{t\}$
instead. If we set $g_{j,s_1,s_2}(t):=0$ if $t\in \{s_1,s_2\}$,
then
\[
g_{j,s_1,s_2}\colon J\to [0,\infty[
\]
is a measurable function.
Since $h_{j,\ve}$ is the supremum of
these for countably many $(s_1,s_2)$
as just described, also $h_{j,\ve}$ is measurable.
Now $x\in A$ if and only if $(\frac{\gamma(s)-\gamma(t)}{s-t})_{s\not=t}$
is a Cauchy net (using the preorder given by
$s_1\precsim s_2$ if and only if $|s_2-x|<|s_1-x|$
to make $J\setminus\{t\}$ a directed set).
The latter holds if and only if $h_{j,\ve}(t)\to 0$
as $\ve\to0$, for all $j\in \N$.
Since $h_{j,\ve}(t)$ is a decreasing functions of~$\ve$,
equivalently $h_{j,1/n}(t)\to 0$ as $n\to\infty$, for all $j\in \N$.
Thus
\[
A=\bigcap_{j\in \N}\Big\{t\in J\colon \lim_{n\to\infty}\, h_{j,1/n}(t)=0\Big\},
\]
entailing that $A$ is a Borel set.
\end{proof}
%
%
%
%
%
%
%
% Wird folgendes Lemma ueberhaupt jemals gebraucht?
%
%
%
{\bf Proof of Lemma~\ref{stepswithin}.}
Let $\gamma\in \cR([a,b],E)$ and $(\eta_n)_{n\in \N}$
be a sequence in $\cT([a,b],E)$
such that $\eta_n\to\gamma$ uniformly.
By Lemma~\ref{imagesepmet},
there is a sequence $q_1\leq q_2\leq\cdots$ of continuous seminorms
on~$E$ such that the vector topology $\cO'$ on $F:=\Spann(\wb{\gamma([a,b])})$
defined by $(q_n|_F)_{n\in \N}$
is Hausdorff. Then $E$ and $(F,\cO')$ induce the same topology on
$K:=\wb{\gamma([a,b])}$.
After increasing the seminorms
if necessary, we may assume that $2q_n\leq q_{n+1}$
for each $n\in \N$.
Hence, for each $x\in K$
\begin{equation}\label{yeah1}
K\cap B^{q_n}_2(x)\quad\mbox{with $n\in \N$}
\quad \mbox{is a basis of neighbourhoods of~$x$ in~$K$.}
\end{equation}
For each $n\in \N$,
there is $m_n\in \N$ such that
\begin{equation}\label{enabapprox}
\sup_{t\in [a,b]}q_n(\gamma(t)-\eta_k(t))\;\leq 1\quad\mbox{for all $k\geq m_n$.}
\end{equation}
We have $\eta_{m_n}([a,b])=\{y_{n,1},\ldots, y_{n,\ell_n}\}$ for some $\ell_n\in \N$
and pairwise distinct elements $y_{n,1},\ldots, y_{n,\ell_n}\in E$.
By (\ref{enabapprox}), we find $z_{n,j}\in \gamma([a,b])$
for $j\in \{1,\ldots,\ell_n\}$ such that $q_n(y_{n,j}-z_{n,j})\leq 1$.
Define $\gamma_n\colon [a,b]\to E$ via
\[
\gamma_n(t):=z_{n,j}\quad\mbox{if $\eta_{m_n}(t)=y_{n,j}$.}
\]
Then $\gamma_n\in \cT([a,b],E)$,
$\gamma_n([a,b])\sub \gamma([a,b])$
and $\sup_{t\in [a,b]}q_n(\gamma(t)-\gamma_n(t))\leq 2$.
By (\ref{yeah1}), this implies
that $\gamma_n\to \gamma$ uniformly.\,\vspace{1.3mm}\Punkt

\noindent
{\bf Proof of Lemma~\ref{psemiondsum}.}
Write $Q$ for the map described in (\ref{formulasemn}).
Since
\[
Q|_{E_1\times\cdots E_N}\colon (x_1,\ldots, x_N)\mto \left(\sum_{n=1}^N q_n(x_n)^p\right)^{1/p}
\]
(resp.\ $(x_1,\ldots, x_N)\mto \max\{q_1(x_1),\ldots, q_N(x_N)\}$) is a continuous seminorm on $E_1\times\cdots\times E_N$
for each $N\in\N$, and $\bigoplus_{n\in\N}E_n=\dl(E_1\times\cdots\times E_N)$\vspace{-.5mm},
we deduce that $Q$ is a continuous seminorm on the direct sum. Thetrefore, the topology defined by the seminorms $Q$
is coarser than the locally convex direct sum topology.
That the topologies coincide if $p=\infty$ is well-known and corresponds to the fact
that the topology on a countable direct sum is the box topology.
If $p<\infty$, then $Q_\infty\leq Q_p$ holds pointwise for the seminorms
defined by (\ref{formulasemn}), applied with $\infty$ (in place of $p$) and $p$,
respectively. The topology defined by the $Q_p$ is therefore finer than the
topology defined by the $Q_\infty$, which is the locally convex direct sum topology.
As it is also coarser, the topologies coincide.\,\Punkt\vspace{2.3mm}

\noindent
{\bf Proof of Lemma~\ref{pbasisindl}.}
It is well-known that the summation map $\bigoplus_{n\in \N}E_n\to E$
is a topological quotient map, if we use the locally convex direct sum topology on the left.
Since linear quotient maps between locally convex spaces are open maps,
the assertion follows from Lemma~\ref{psemiondsum}.\,\Punkt\vspace{2.3mm}

\noindent
{\bf Proof of Lemma~\ref{metrinsep}.}
Let $D$ be a countable dense subset of $E$
and $\{U_n\colon n\in\N\}$ be a basis of $0$-neighbourhoods
for~$F$.
For each $n\in\N$, there is a continuous seminorm $q_n$ on~$E$
such that $F\cap B^{q_n}_1(0)\sub U_n$.
After replacing $q_n$ with the pointwise maximum $\max\{q_1,\ldots, q_n\}$,
we may assume that $q_1\leq q_2\leq\cdots$.
Now $\{q_n|_F\colon n\in\N\}$ is a directed set of seminorms on $F$ defining its topology.
For $n,m\in\N$, let
\[
D_{n,m}:=\{y\in D\colon B^{q_n}_{1/m}(y)\cap F\not=\emptyset\}.
\]
For $y\in D_{n,m}$, pick $z_{n,m,y}\in B^{q_n}_{1/m}(y)\cap F$.
Then
\[
D_F:=\{z_{n,m,y}\colon n,m\in\N,y\in D_{n,m}\}
\]
is a countable subset of $F$. To see that $D_F$ is dense in $F$, it suffices to show that
\[
B^{q_n}_{2/m}(x)\cap D_F
\]
is non-empty for all $x\in F$ and $n,m\in \N$.
By density of $D$ in $E$, we find $y\in B^{q_n}_{1/m}(x)\cap D$.
Then $x\in B^{q_n}_{1/m}(y)\cap F$, entailing that $y\in D_{n,m}$.
Now $z_{n,m,y}\in D_F$ is an element such that $z_{n,m,y}\in B^{q_n}_{1/m}(y)$,
whence
\[
q_n(x-z_{n,m,y})\leq q_n(x-y)+q_n(y-z_{n,m,y})< \frac{1}{m}+\frac{1}{m}=\frac{2}{m}
\]
and thus $z_{n,m,y}\in B^{q_n}_{2/m}(x)\cap D_F$.\,\Punkt\vspace{2.3mm}

\noindent
{\bf Proof of \ref{Lpfepvsp}.}
Let $\gamma_1,\gamma_2\in \cL^p(X,\mu,E)$.
Then $\Spann(\gamma_1(X)\cup\gamma_2(X))$
is a separable vector subspace of~$E$,
entailing that
\[
\wb{\Spann(\gamma_1(X)\cup\gamma_2(X))}=\bigcup_{n\in \N}F_n
\]
with topological vector subspaces $F_1\sub F_2\sub\cdots$ of~$E$
which are eseparable Fr\'{e}chet spaces.
Since $F_n$ is closed in~$E$ and hence a Borel set, we deduce that
\[
A_n:=(\gamma_1)^{-1}(F_n)\cap (\gamma_2)^{-1}(F_n)\in\Sigma
\]
for all $n\in \N$. Moreover, $A_1\sub A_2\sub\cdots$
with $\bigcup_{n\in\N}A_n=X$. Since $F_n$ is a separable
metric space and the addition map $\alpha_n\colon F_n\times F_n\to F_n$
is continuous, we see with Lemma~\ref{Sepp}\,(b)
that
\[
\gamma_1|_{A_n} +\gamma_2|_{A_n}=\alpha_n\circ (\gamma_1|_{A_n}^{F_n},
\gamma_2|_{A_n}^{F_n})
\]
is a measurable map to $F_n$ and hence to~$E$,
for each~$n$. Since $(\gamma_1+\gamma_2)|_{A_n}$
is measurable and $(A_n)_{n\in \N}$ is a countable cover of~$X$
by measurable sets, we deduce that $\gamma_1+\gamma_2$ is measurable.
Now $\im(\gamma_1+\gamma_2)=\bigcup_{n\in \N} (F_n\cap \im(\gamma_1+\gamma_2))$
is a countable union of separable sets and hence
separable. Moreover, $\|\gamma_1+\gamma_2\|_{\cL^p,q}\leq
\|q\circ \gamma_1+q\circ \gamma_2\|_{\cL^p}\leq
\|q\circ \gamma_1\|_{\cL^p}
\|q\circ \gamma_2\|_{\cL^p}=\|\gamma_1\|_{\cL^p,q}+\|\gamma_2\|_{\cL^p,q}<\infty$
for each continuous seminorm $q\in P(E)$. Hence
$\gamma_1+\gamma_2\in \cL^p(X,\mu,E)$.
If $\gamma\in \cL^p(X,\Sigma,\mu)$ and $\|\gamma\|_{\cL^p,q}=0$
for each $q\in P(E)$,
pick separable Fr\'{e}chet spaces $F_1\sub F_2\cdots$ in~$E$
with
\[
\wb{\Spann (\gamma(X))}=\bigcup_{n\in \N}F_n.
\]
Then $\|\gamma|_{\gamma^{-1}(F_n)}\|_{\cL^p,q}=0$
for all $q\in P(F_n)$,
entailing that $\gamma_{\gamma^{-1}(F_n)}\colon \gamma^{-1}(F_n)\to F_n$
is $0$ outside a set $N_n\sub X$ of measure $\mu(N_n)=0$.
Then $\mu(\bigcup_{n\in \N}N_n)=0$ and $\gamma(x)=0$
for all $x\in X\setminus\bigcup_{n\in \N}N_n$.
Hence $[\gamma]=0$ and thus $L^p(X,\mu,E)$
is Hausdorff.\,\Punkt\vspace{2.3mm}

\noindent
\begin{numba}
Let $M$ be a smooth manifold modelled
on a locally convex space~$X$ and $F$ be a locally convex space.
Recall that a smooth vector bundle over~$M$, with typical
fibre~$F$ is a smooth manifold~$E$, together
with a surjective smooth map $\pi\colon E\to M$
and a vector space structure on
\[
E_x:=\pi^{-1}(\{x\})
\]
for each $x\in M$, such that for each $x\in M$
there exists an open neighbourhood
$U$ of~$x$ in~$M$ and
a so-called local trivialization $\theta\colon E_U\to U\times F$,
with $E_U:=\pi^{-1}(U)$. Thus $\theta$ is a $C^\infty$-diffeomorphism
such that
\[
\pr_1\circ \theta=\pi
\]
(where $\pr_1\colon U\times F\to U$, $(x,y)\mto x$)
and $\pr_2\circ \theta|_{E_x}\colon E_cx\to F$ is linear
(and hence an isomorphism of topological vector spaces)
for each $x\in U$,
where $\pr_2\colon U\times F\to F$, $(x,y)\mto y$.
\end{numba}
\begin{numba}\label{deftosec}
If $\pi\colon E\to M$ is a smooth vector bundle
with typical fibre~$F$
A smooth section of~$E$ is a smooth map
$\sigma\colon M\to E$ such that $\pi\circ \sigma=\id_M$.
We write $\Gamma(E)$ for the vector space
of all smooth sections of~$E$ (with pointwise addition
and multiplication with scalars).
We endow $\Gamma(E)$ with the vector topology
making the map
\[
\Gamma(E)\to \prod_{\theta}C^\infty(U_\theta,F),\quad
\sigma\mto(\pr_2\circ \theta\circ \sigma|_{U_\theta})
\]
a topological embedding onto a closed vector subspace
(for $\theta\colon E_{U_\theta}\to U_\theta\times F$
ranging through the set of all local trivializations
of~$E$).\\[2.3mm] Now assume that $M$ is paracompact and finite-dimensional.
If $K\sub M$ is a compact set,
we write $\Gamma_K(E)$ for the closed vector subspace
of all $\sigma\in \Gamma(E)$ such that $\sigma(x)=0\in E_x$ for all
$x\in M\setminus K$.
We endow
\[
\Gamma_c(E)=\bigcup_{K}\Gamma_K(E)
\]
with the locally convex direct limit topology
(for $K$ ranging through the set of all
compact subsets of~$M$).
As the inclusion map $\Gamma_K(E)\to \Gamma(E)$
for each compact set $K\sub M$
is continuous linear, also the inclusion map
$\Gamma_c(E)\to \Gamma(E)$
is continuous. Since $\Gamma(E)$ is Hausdorff,
we deduce that $\Gamma_c(E)$ is Hausdorff.
\end{numba}
\begin{numba}
If $\pi\colon E\to M$ is a smooth vector bundle
and $U\sub M$ is an open subset, then $E|_U:=\pi^{-1}(U)\sub E$
with the given vector space structure on each fibre
and the restriction $\pi|_{E|_U}\colon E|_{U}\to U$.
If $K\sub M$ is compact and $K\sub U$,
then the linear map
\[
\Gamma_K(E)\to \Gamma_K(E|_U),\quad \sigma\mto\sigma|_U
\]
is a bijection and in fact an isomorphism
of topological vector spaces (as is clear from the definition
of the topologies).
Hence also the inverse map
\begin{equation}\label{exbyzer}
\Gamma_K(E|_U)\to \Gamma_K(E),\quad \sigma\mto \wt{\sigma}
\end{equation}
(with $\wt{\sigma}(x)=\sigma(x)$ if $x\in U$,
$\wt{\sigma}(x)=0$ if $x\in M\setminus K$)
is an isomorphism of topological vector spaces.
\end{numba}
\begin{la}\label{toviasum}
Let $F$ be a locally convex space and $\pi\colon E\to M$ be a smooth vector bundle
over a finite-dimensional paracompact smooth manifold~$M$,
with typical fibre~$F$.
Let $(U_j)_{j\in J}$ be a locally finite cover of~$M$
by open, relatively compact subsets $U_j\sub M$.
Then the map
\[
\Phi\colon \Gamma_c(E)\to \bigoplus_{j\in J}\Gamma(E|_{U_j}),\quad
\sigma\mto (\sigma|_{U_j})_{j\in J}
\]
is a topological embedding onto a closed
vector subspace which is complemented
in the direct sum as a topological vector space.
\end{la}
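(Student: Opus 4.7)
The strategy is to construct an explicit continuous linear left inverse $\Psi$ for $\Phi$, from which the topological embedding property, closedness of the image, and complementation all follow. Since $M$ is a finite-dimensional paracompact smooth manifold, it admits a smooth partition of unity $(\chi_j)_{j\in J}$ subordinate to $(U_j)_{j\in J}$; in particular $\Supp(\chi_j)\sub U_j$ is compact. For $(\sigma_j)_{j\in J}\in\bigoplus_{j\in J}\Gamma(E|_{U_j})$, the product $\chi_j\sigma_j\in \Gamma(E|_{U_j})$ vanishes outside the compact set $\Supp(\chi_j)\sub U_j$, so extension by zero (as in~(\ref{exbyzer})) yields $\widetilde{\chi_j\sigma_j}\in\Gamma_{\Supp(\chi_j)}(E)\sub\Gamma_c(E)$. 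Because only finitely many $\sigma_j$ are non-zero, the sum $\Psi((\sigma_j)_{j\in J}):=\sum_{j\in J}\widetilde{\chi_j\sigma_j}\in\Gamma_c(E)$ is finite, defining a linear map $\Psi\colon\bigoplus_{j\in J}\Gamma(E|_{U_j})\to\Gamma_c(E)$. The identity $\sum_{j\in J}\chi_j=1$ immediately gives $\Psi\circ\Phi=\id_{\Gamma_c(E)}$, because for $\sigma\in\Gamma_c(E)$ we have $\sum_j \widetilde{\chi_j\,\sigma|_{U_j}}=\sum_j \chi_j\sigma=\sigma$ (the sum being locally, hence actually, finite since $(\chi_j)_{j\in J}$ is locally finite and $\sigma$ has compact support).

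Next I would verify that $\Phi$ is well-defined into the direct sum and continuous. Well-definedness uses that for $\sigma\in\Gamma_K(E)$ with $K\sub M$ compact, the set $J_K:=\{j\in J\colon U_j\cap K\neq\emptyset\}$ is finite by local finiteness of the cover, and $\sigma|_{U_j}=0$ for $j\notin J_K$. Since $\Gamma_c(E)=\dl_K\,\Gamma_K(E)$, the continuity of $\Phi$ reduces to the continuity of each $\Phi|_{\Gamma_K(E)}$, and this map lands in the finite-dimensional partial sum $\prod_{j\in J_K}\Gamma(E|_{U_j})\sub\bigoplus_{j\in J}\Gamma(E|_{U_j})$, where the induced topology coincides with the product topology; each component $\sigma\mto\sigma|_{U_j}$ is continuous linear by the definition of the topology on $\Gamma(E|_{U_j})$.

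For the continuity of $\Psi$, I would invoke the universal property of the locally convex direct sum: $\Psi$ is continuous iff its restriction to each summand $\Gamma(E|_{U_j})$ is continuous. But this restriction is the composition $\sigma\mto \chi_j\sigma\mto\widetilde{\chi_j\sigma}$, where the first step is continuous linear $\Gamma(E|_{U_j})\to\Gamma_{\Supp(\chi_j)}(E|_{U_j})$ (multiplication by a fixed smooth compactly supported function) and the second step is the continuous linear extension-by-zero $\Gamma_{\Supp(\chi_j)}(E|_{U_j})\to\Gamma_{\Supp(\chi_j)}(E)\sub\Gamma_c(E)$ from~(\ref{exbyzer}), followed by the continuous inclusion into the direct limit $\Gamma_c(E)$. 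Hence $\Psi$ is continuous.

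From $\Psi\circ\Phi=\id$, continuity of both maps gives that $\Phi$ is a topological embedding: a net $\Phi(\sigma_\alpha)\to\Phi(\sigma)$ forces $\sigma_\alpha=\Psi(\Phi(\sigma_\alpha))\to\Psi(\Phi(\sigma))=\sigma$. The continuous linear operator $P:=\Phi\circ\Psi$ on $\bigoplus_{j\in J}\Gamma(E|_{U_j})$ satisfies $P^2=\Phi\circ(\Psi\circ\Phi)\circ\Psi=\Phi\circ\Psi=P$ with $\im(P)=\im(\Phi)$, so $P$ is a continuous linear projection onto $\im(\Phi)$. Consequently $\im(\Phi)=\ker(\id-P)$ is closed and is complemented by $\ker(P)$, so that $\bigoplus_{j\in J}\Gamma(E|_{U_j})=\im(\Phi)\oplus\ker(P)$ as a topological direct sum. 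The main technical point to verify carefully is continuity of $\Psi$ into the direct limit $\Gamma_c(E)$, and in particular that extension by zero on a fixed compact support is continuous, but this is precisely the content of the isomorphism (\ref{exbyzer}) noted in the excerpt.
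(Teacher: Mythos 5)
Your proposal is correct and follows essentially the same route as the paper: a partition of unity subordinate to $(U_j)_{j\in J}$ yields a continuous linear left inverse (your $\Psi$, the paper's $\alpha$) via multiplication and extension by zero on each summand, continuity coming from the universal properties of the direct sum and of the direct limit $\Gamma_c(E)=\bigcup_K\Gamma_K(E)$, after which $\Psi\circ\Phi=\id$ gives the embedding, closedness and complementation (the paper writes the splitting directly as $\bigoplus_{j\in J}\Gamma(E|_{U_j})=\im(\Phi)\oplus\ker(\alpha)$, which is equivalent to your projection $P=\Phi\circ\Psi$).
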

\begin{proof}
It is clear from the definition of the topology that the linear map
\[
\Gamma(E)\to \Gamma(E|_{U_j}),\quad \sigma\mto \sigma|_{U_j}
\]
is continuous for each $j\in J$. Hence also the restriction to
$\Gamma_K(E)$ is continuous linear for each compact
set $K\sub M$, entailing that the linear map $\Phi$ is continuous.
%The point evaluations
%\[
%\ve_{j,x}\colon \Gamma(E|_{U_j})\to E_x
%\]
%are continuous linear for each $j\in J$ and $x\in U_j$
%(if we endow $E_x$ with the topology induced by~$F$,
%making it a locally convex space isomorphi to~$F$).
%The formula
%\[
%\im(\Phi)=\{(\sigma_j)_{j\in J}\in \bigoplus_{j\in J}\Gamma(E|_{U_j})\colon
%(\forall i,j\in J)\,(\forall x\in U_i\cap U_j)\;\; \ve_{i,x}(\sigma_i)=
%\ve_{j,x}(\sigma_j)\}
%\]
%therefore shows that $\im(\Phi)$ is closed.
Pick a partition of unity $(h_j)_{j\in J}$
subordinate to $(U_j)_{j\in J}$,
with compact supports $K_j:=\wb{\{x\in M\colon h_j(x)\not=0\}}$.
Then the multiplication operator
\[
\Gamma(E|_{U_j})\to \Gamma_{K_j}(E|_{U_j}),\quad \sigma\mto h_j\,\sigma
\]
is linear and continuous
(see \cite{ZOO} or \cite{SEC})
and hence also the map
\[
\alpha_j\colon \Gamma(E|_{U_j})\to \Gamma(E),\quad \sigma\mto (h_j\sigma)\wt{\;}
\]
is continuous linear (with notation as in (\ref{exbyzer})).
This map takes its values in $\Gamma_K(E)$,
which injects continuously in $\Gamma_c(E)$.
We can therefore consider $\alpha_j$ as a continuous linear map
to $\Gamma_c(E)$.
By the universal property of the locally convex direct sum,
also the map
\[
\alpha\colon \bigoplus_{j\in J}\Gamma(E|_{U_j})\to \Gamma_c(E),\quad
(\sigma_j)_{j\in J}\mto \sum_{j\in J}\alpha_j(\sigma_j)
\]
is continuous linear.
Now $\alpha\circ \Phi=\id$, entailing that $\Phi$ is a topological
embedding and
\[
\bigoplus_{j\in J}\Gamma(E|_{U_j})=\im(\Phi)\oplus \ker(\alpha)
\]
as a topological vector space. Notably,
$\im(\Phi)$ is closed in $\bigoplus_{j\in J}\Gamma(E|_{U_j})$.
\end{proof}
\begin{rem}\label{alsnee}
The same argument applies if smooth sections are replaced with $C^k$-sections
with $k\in \N_0$. In particular, the map
\[
C^k_c(M,E)\to\bigoplus_{j\in J}C^k(U_j,E),\quad \gamma\mto (\gamma|_{U_j})_{j\in J}
\]
is a linear topological embedding onto a closed and complemented
topological vector subspace for each $k\in \N_0\cup\{\infty\}$, locally convex space $E$,
paracompact finite-dimensional smooth manifold~$M$
and locally finite cover $(U_j)_{j\in J}$ of~$M$
by relatively compact, open subsets $U_j\sub M$.
\end{rem}
{\bf Proof of Lemma~\ref{lafep}.}
(a)
If $S\sub\bigoplus_{j\in J}E_j=:E$
is a separable closed vector subspace,
let $D\sub S$ be a dense countable subset.
Since $D$ is countable, there exists
a countable subset $J_0\sub J$ such that $D\sub\bigoplus_{j\in J_0}E_j$.
As the projections $\pr_i\colon E\to E_i$, $(x_j)_{j\in J}\mto x_i$
are continuous linear, $F:=\bigoplus_{j\in J_0}E_j=\bigcap_{i\in J\setminus J_0}\ker\pr_i$
is a closed vector subspace of $E$ with $D\sub F$
and hence $S\sub F$. Now $S_i:=\wb{\pr_i(S)}\sub E_i$
is a separable closed vector subspace
for each $i\in J_0$
and hence $S_i=\bigcup_{k\in \N}F_{i,k}$
with separable Fr\'{e}chet subspaces $F_{i,1}\sub F_{i,2}\sub\cdots$
of~$E_i$, as $E_i$ has the (FEP).
If $J_0$ is infinite, enumerate $J_0=\{j_1,j_2,\ldots\}$;
if $J_0$ has a finite number, $m$, of elements,
write $J_0=\{j_1,\ldots, j_m\}$.
Then
\[
Y:=\bigoplus_{j\in J_0}S_j=\bigcap_{i\in J\setminus J_0}
\ker(\pr_i)\cap\bigcap_{j\in J_0}
(\pr_j)^{-1}(S_j)
\]
is a closed vector subspace which contains $D$
and thus $S \sub Y
=\bigcup_{n\in \N}Y_n$
with the separable Fr\'{e}chet spaces
$Y_n:=F_{j_1,n}\times \cdots\times F_{j_n,n}\sub \bigoplus_{j\in J}E_j$
(if $J_0$ is infinite), resp.,
\[
Y_n:=F_{j_1,n}\times\cdots\times F_{j_m,n}
\]
(if $J_0$ is finite).
Then $(Y_n\cap S)_{n\in \N}$
is an ascending sequence of separable Fr\'{e}chet subspaces
of~$\bigoplus_{j\in J}E_j$ with $S=\bigcup_{n\in \N} (Y_n\cap S)$.\\[2.3mm]
If $\gamma=(\gamma_j)_{j\in J}\in \cL^p(X,\mu, E)$,
then $S:=\wb{\Spann(\gamma(X))}$ is a closed
vector subspace of~$E$
and $S\sub \bigoplus_{j\in J_0}E_j$
with some countable subset $J_0\sub J$.
Then
\[
J_1:=\{j\in J_0\colon \mu((\gamma_j)^{-1}(E_j\setminus\{0\}))>0\}
\]
is a finite set, from which $L^p(X,\mu,E)=\bigoplus_{j\in J}L^p(X,\mu,E_j)$
follows. In fact, if $J_1$ was infinite,
we could choose a bijection $\N\to J$, $n\to j_n$.
Since $[\gamma_{j_n}]\not=0$, we would find a continuous
seminorm $q_{j_n}$ on $E_{j_n}$ such that $\|\gamma_{j_n}\|_{\cL^p,q_{j_n}}>0$;
after replacing $q_{j_n}$ with a multiple, we may assume that
\[
\|\gamma_{j_n}\|_{\cL^p,q_{j_n}}\geq n
\]
for all $n\in \N$. For $j\in J\setminus J_1$,
choose any $q_j\in P(E_j)$.
Then
\[
q\colon E\to [0,\infty[,\quad q((y_j)_{j\in J}):=\sum_{j\in J}q_j(y_j)
\]
is a continuous seminorm on~$E$.
For each $n\in \N$, we have
\[
\|\gamma\|_{\cL^p,q}
=\sqrt[p]{\int_X q(\gamma(x))^p\,d\mu(x)}
\geq \sqrt[p]{\int_X q_{j_n}(\gamma(x_{j_n}))^p\,d\mu(x)}
=\|\gamma_{j_n}\|_{\cL^p,q_{j_n}}\geq n.
\]
Hence $\|\gamma\|_{\cL^p,q}=\infty$, contradicting $\gamma\in \cL^p(X,\mu,E)$.\\[2.3mm]
As each inclusion map
\[
L^p(X,\mu,E_j)\to L^p(X,\mu, E)
\]
is continuous and linear, the universal property of the locally convex direct sums
provides the continuity of the linear summation map
\[
\Sigma \colon \bigoplus_{j\in J}L^p(X,\mu, E_j)\to L^p(X,\mu,E),\; (\gamma_j)_{j\in J}\mto\sum_{j\in J}\gamma_j.
\]
If $J$ is countable or $p=1$, let us show that also $\Sigma^{-1}$ is continuous.
To this end, let $Q$ be a continuous seminorm on $\bigoplus_{j\in J}L^p(X,\mu, E_j)$.
After increasing $Q$, we may assume that
\[
Q((\gamma_j)_{j\in J})=\sum_{j\in J}Q_j(\gamma_j)
\]
with continuous seminorm $Q_j$ on $L^1(X,\mu, E_j)$.
After increasing each $Q_j$ if necessary, we may assume that
\[
Q_j=\|.\|_{L^1,q_j}
\]
for a continuous seminorm $q_j$ on $E_j$, for each $j\in J$.
Then $q\colon E\to [0,\infty[$,
\[
q((v_j)_{j\in J}):=\sum_{j\in J}q_j(v_j)
\]
is a continuous seminorm on~$E$.
If $[\gamma]\in L^1(X,\mu, E)$,
we may assume that the representative $\gamma$
has been chosen in $\bigoplus_{j\in J}\cL^1(X,\mu,E_j)$
(as just shown). Thus $\gamma=\sum_{j\in F}\gamma_j$
for some finite subset $F\sub J$ and suitable $\gamma_j\in\cL^1(X,\mu,E_j)$.
Thus
\begin{eqnarray*}
Q(\Sigma^{-1}([\gamma]))&=&\sum_{j\in F}\|\gamma_j\|_{\cL^1,q_j}
=\sum_{j\in F}\int_X q_j(\gamma_j(x))\, d\mu(x)\\
&=&\int_X\sum_{j\in F}q_j(\gamma_j(x))\,d\mu(x)
=\int_Xq((\gamma_j(x))_{j\in J})\, d\mu(x)\\
&=& \int_Xq(\gamma(x))\,d\mu(x)=\|[\gamma]\|_{L^1,q}
\leq \|[\gamma]\|_{L^1,q}
\end{eqnarray*}
and thus $\Sigma^{-1}$ is continuous.

(b) If $S$ is a closed vector subspace
of~$F$, then $S$ is also closed in~$E$ and thus $S=\bigcup_{n\in\N}F_n$
with an ascending sequence $(F_n)_{n\in \N}$
of separable Fr\'{e}chet spaces. Hence~$F$ has the (FEP).

(c) By Proposition~9 in \cite[Chapter II, \S4, no.\,6]{Bou},
$E$ induces the given topology on each~$F_n$.
If $S\sub E$ is a closed vector subspace,
then $S=\bigcup_{n\in \N}(S\cal F_n)$ where $S\cap F_n$
is a Fr\'{e}chet space for each $n\in \N$.

(d) Let $(U_j)_{j\in J}$ be a locally finite cover of~$M$
by open, relatively compact subsets $U_j\sub M$
such that there exists a trivialization $\theta_j\colon E|_{U_j}\to U_j\times F$.
Then $\Gamma(E|_{U_j})\cong C^\infty(U_j,F)$
is a Fr\'{e}chet space for each $j\in J$ (cf.\ \cite{ZOO}
or \cite{SEC}).
Since, by Lemma~\ref{toviasum}, there exists a linear topological
embedding
\[
\Phi\colon \Gamma_c(E)\to\bigoplus_{j\in J}\Gamma(E|_{U_j})
\]
with closed image, we deduce with (a) and (b)
that $\Gamma_c(E)$ has the (FEP)
and with (a) that, for each $\gamma\in \cL^p(X,\mu,E)$,
there exists a finite subset $J_0\sub J$
and $A\in \Sigma$ with $\mu(X\setminus A)=0$
and
\[
(\Phi\circ \gamma)(A)\sub \bigoplus_{j\in J_0}\Gamma(E|_{U_j}).
\]
Let $\one_A\colon X\to \{0,1\}$ be the characteristic function (indicator function)
of~$A$.
Then
$K:=\wb{\bigcup_{j\in J_0}U_j}$ is a compact subset of~$M$ and
\[
[\gamma]=[\one_A\gamma]\in L^p(X,\mu, \Gamma_K(E)).\vspace{-1mm}
\]

(e) This is a special case of~(d).\,\Punkt\vspace{2.3mm}

\noindent
{\bf Proof of Lemma~\ref{fepweakint}.}
If $\gamma\in \cL^1(X,\mu,E)$,
then $S:=\wb{\Spann(\gamma(X))}$ is a separable closed
vector subspaces of~$E$ and thus $S=\bigcup_{n\in \N}F_n$
with separable Fr\'{e}chet spaces $F_1\sub F_2\sub\cdots$
in~$E$ (as $E$ has the (FEP)).
Since $F_n$ is complete, hence closed in~$E$
and thus Borel, we have $A_n:=\gamma^{-1}(F_n)\in \Sigma$
for each $n\in \N$,
$A_1\sub A_2\sub\cdots$ and $X=\bigcup_{n\in \N}A_n$.
Since $\gamma|_{A_n}\in \cL^1(A_n,\mu|_{A_n},F_n)$,
the weak integral
\[
z_n:=\int_{A_n}\gamma|_{A_n}\, d(\mu|_{A_n})=\int_X \one_{A_n}\gamma\,d\mu
\]
exists in~$F_n$ (and hence in~$E$), for each $n\in \N$.
We claim that $(z_n)_{n\in \N}$ is a Cauchy sequence in~$E$.
If this is true, then
\[
z:=\lim_{n\to\infty}z_n
\]
exists in~$E$ since $E$ is assumed sequentially complete.
For each $\lambda\in E'$, we have
\[
\lambda(z)=\lim_{n\to\infty}\lambda(z_n)=\lim_{n\to\infty}
\int_X\one_{A_n}(\lambda\circ \gamma)\,d\mu=
\int_X(\lambda\circ\gamma)\,d\mu
\]
by dominated convergence (with $|\lambda\circ\gamma|$
as a majorant) and thus $z=\int_X\gamma\,d\mu$.\\[2.3mm]
To prove the claim, let $q\in P(E)$.
Since $q\circ\gamma\in \cL^1(X,\mu,\R)$,
we have
\[
\int_X\one_{A_n}(q\circ\gamma)\,d\mu
\to \int_X(q\circ\gamma)\,d\mu
\]
as $n\to \infty$ by dominated convergence,
entailing that $(\int_X\one_{A_n}(q\circ\gamma)\,d\mu)_{n\in \N}$
is a Cauchy sequence. Hence, given $\ve>0$,
there exists $N\in \N$ such that for all $m\geq n\geq N$:
\begin{eqnarray*}
q(z_m-z_n)&=&q\left(
\int_X (\one_{A_m}-\one_{A_n})\gamma\,d\mu\right)\\
&\leq &\int_X (\one_{A_m}-\one_{A_n})(q\circ \gamma)\,d\mu\\
&=& \int_X\one_{A_m}(q\circ\gamma)\,d\mu-\int_X\one_{A_n}(q\circ\gamma)\,d\mu
<\ve
\end{eqnarray*}
Thus $(z_n)_{n\in \N}$ indeed is a Cauchy sequence.\,\Punkt\vspace{2.3mm}

\noindent
{\bf Proof of Lemma~\ref{simpappfep}.}
Let $\gamma\in \cL^p(X,\mu,E)$.
We have $\overline{\Spann\gamma(X)}=\bigcup_{n\in\N}F_n$
for suitable vector subspaces
$F_1\sub F_2\sub\cdots$ of $E$
which are separable Fr\'{e}chet spaces and hence closed.
Then $A_n:=\gamma^{-1}(F_n)$ are Borel sets with
$A_1\sub A_2\sub\cdots$ and $X=\bigcup_{n\in\N}A_n$.
For each continuous seminorm $q$ on $E$, we have
\[
\|\gamma-\gamma\cdot \one_{A_n}\|_{\cL^p,q}=\left(\int_X(q(\gamma(x)))^p(1-\one_{A_n}(x))\,d\mu(x)\right)^{1/p}\to 0
\]
by dominated convergence. Hence $\gamma\cdot \one_{A_n}\to \gamma$.
Hence, if each $\gamma\one_{A_n}$ is in the closure of $\cF(X,\mu,E)\cap \cL^p(X,\mu,E)$ (or the subset specified in (\ref{strabi}),
then also $\gamma$ is in the closure.
After replacing $\gamma$ with $\gamma\one_{A_n}$ and $E$ with $F_n$,
we may therefore assume that $E$ is a separable Fr\'{e}chet space.
This case was already settled.\,\Punkt\vspace{2.3mm}
%
%THIS CASE WAS SETTLED.... MUSS VERSCHAERFEN UND GENAUE VERWEISE
%EINBAUEN!

%
%
%
%
%
%
\noindent
{\bf Proof of Lemma~\ref{funda3}.}
By Lemma~\ref{fepweakint},
the integrals needed to define~$\eta$
exist in~$E$.
To see that $\eta$ is continuous on the left
at each $t\in J$ (a similar
argument shows that $\eta$ is continuous on the right at~$t$).
Let $q\in P(E)$.
For each $t_1\in J$ such that $t_1\leq t$,
we have that
\[
q(\eta(t)-\eta(t_1))=q\left(\int_{t_1}^t\gamma(s)\,ds\right)
\leq \int_{t_1}^t(q\circ \gamma)(s)\,ds
= \int_J \one_{[t_1,t]}(s)(q\circ \gamma)(s)\,ds\to0
\]
as $t_1\to t$, by dominated convergence.\\[2.3mm]
To see that the linear map $L^1(J,E)\to C(J,E)$, $[\gamma]\mto \eta_\gamma$
(with $\eta_\gamma(t):=\int_{t_0}^t\gamma(s)\,d\mu(s)$)
is injective,
let $\gamma\in \cL^1(J,E)$ such that
$[\gamma]\not=0$.
Then $\|\gamma\|_{\cL^1,q}\not=0$ for some $q\in P(E)$.
Let $\pi_q\colon E\to \wt{E}_q$
and the norm $\|.\|_q$ on $\wt{E}_q$
be as in~\ref{banquot}.
Then
$\|\pi_q \circ \gamma\|_{\cL^1, \|.\|_q}=\|\gamma\|_{\cL^1,q}\not=0$,
whence $[\pi_q\circ \gamma]\not=0$ in $L^1(J,\wt{E}_q)$.
Since
\[
(\pi_q\circ \eta_\gamma)(t)=\int_{t_0}^t(\pi_q\circ \gamma)(s)\,d\mu(s)
\]
for each $t\in J$, Lemma~\ref{funda} shows that $\pi_q\circ \eta_\gamma\not=0$.
Hence $\eta_\gamma\not=0$.\,\Punkt\vspace{2.3mm}

\noindent
{\bf Proof of \ref{henceemba}.}
If $[\gamma]=[\gamma_1]$ with $\gamma_1$
in $\cL^1([a,b],F)$ or $L^\infty_{rc}([a,b],F)$,
then $\gamma$ can be replaced with $\gamma_1$
in the definition of~$\eta$.
But then
the weak integral defining $\eta(t)$
can be formed in~$F$, and coincide with those
in~$E$. Thus $\eta(t)\in F$ for all $t\in [a,b]$.
If, conversely,
\[
\lambda_1(\{t\in [a,b]\colon \gamma(t)\in E\setminus F\})>0,
\]
then
\[
[q\circ \gamma]\not=0
\]
in $L^1([a,b], E/F)$ (resp., $L^\infty_{rc}([a,b],E/F)$),
where
\[
q\colon E\to E/F,\quad x\mto x+F
\]
is the canonical quotient map.
Now
\[
(q\circ \eta)(t)=\int_a^tq(\gamma(s))\,ds
\]
for all $t\in [a,b]$.
If $E$ is a Fr\'{e}chet space,
then also $E/F$ is a Fr\'{e}chet space
and the uniqueness
assertion
in Lemma~\ref{funda} shows that $q\circ \eta\not=0$
and thus $\eta([a,b])\not\sub F$.
If $\gamma\in \cL^\infty_{rc}([a,b],E)$,
then $q\circ \gamma\in \cL^\infty_{rc}([a,b], E/F)$.
If $q\circ \eta$ would vanish, it would also be a primitive
of the constant curve with value $0$
and the contradiction $[q\circ \gamma]=[0]$
would follow
analogously to the uniqueness part of Lemma~\ref{funda2}
(whose proof only requires the existence
of the weak integrals at hand, not
integral completeness).
Thus $q\circ \eta\not=0$ and thus $\eta([a,b])\not\sub F$.\\[2.3mm]
Now assume that $E$ is a strict (LF)-space,
say $E=\dl\, E_n$ as a locally convex space with
an ascending sequence $E_1\sub E_2\sub\cdots$ of Fr\'{e}chet spaces
such that $E_{n+1}$ induces the given topology on~$E_n$ for each~$n$.
If a vector subspace $F\sub E$ is a Fr\'{e}chet space
in the induced topology, then $F\sub E_N$
for some $N\in \N$ as a consequence of the Grothendieck Factorization Theorem
\cite[24.33]{MaV} (or simply using that the locally convex direct limit
$E=\bigcup_{n\in \N}E_n$ is regular, whence $F$ cannot contain a zero-sequence
leaving each $E_n$).
By Lemma~\ref{lafep}\,(c), we may assume that $\gamma\in \cL^1([a,b],E_n)$
for some $n\in \N$; we may assume that $n\geq N$.
Then $F$ is a closed vector subspace of the Fr\'{e}chet space
$E_n$ and hence we can replace $\gamma$ by an element of
$\cL^1([a,b],F)$ by the special case of the lemma for Fr\'{e}chet spaces
already discussed.\,\Punkt\vspace{2.3mm}

\noindent
{\bf Proof of \ref{regulatedfrech}.}
To prove the assertion, let $(q_m)_{m\in \N}$
be a sequence in $P(E)$ defining the locally convex
vector topology on~$E$.
If $\gamma$ is in the closure,
then we find a sequence $\gamma_n\in \cT([a,b],E)$
such that $\|\gamma-\gamma_n\|_{\cL^\infty,q_m}\to 0$
for all $m\in \N$. For each $m$ and $n$,
there is a measurable set $A_{m,n}\sub X$ with $\mu(A_{m,n})=0$
such that $\|\gamma-\gamma_n\|_{\cL^\infty,q_m}
=\sup_{x\in X\setminus A_{m,n}}
q_m(\gamma(x)-\gamma_n(x))$.
Then also $A:=\bigcup_{n,m\in \N}A_{m,n}$ is measurable
and $\mu(A)=0$.
Define $\eta_n(x):=\gamma_n(x)$ if $x\in X\setminus A$,
$\eta_n(x):=\gamma(x)$ if $x\in A$.
Then $[\eta_n]=[\gamma_n]$ in $L^\infty_{rc}([a,b],E)$
and $\eta_n\to \gamma$ uniformly.\,\Punkt\vspace{1.3mm}

\noindent
{\bf Proof of \ref{Lebspprod}.}
In fact, the projections
$\pi_j\colon E_1\times E_2\to E_j$
onto the components are continuous linear for $j\in \{1,2\}$
and also the mappings
\[
\lambda_1\colon E_1\to E_1\times E_2,\quad x\mto (x,0)
\]
and $\lambda_2\colon E_2\to E_1\times E_2$, $y\mto (0,y)$
are continuous linear.
Thus
\[
\Phi:=(L^p(X,\mu,\pi_1),L^p(X,\mu,\pi_2)\colon
L^p(X,\mu,E_1\times E_2)
\to L^p(X,\mu,E_1)\times L^p(X,\mu, E_2)
\]
is continuous linear and also
\[
\Psi\colon L^p(X,\mu,E_1)\times L^p(X,\mu, E_2)\to L^p(X,\mu,E_1\times E_2),
\]
$\Psi([\gamma_1],[\gamma_2]):=L^p(X,\mu,\lambda_1)([\gamma_1])+
L^p(X,\mu,\lambda_2)([\gamma_2])$ is continuous linear.
Since $\Psi\circ\Phi$ is the identity map
on $L^p(X,\mu,E_1\times E_2)$
and $\Phi\circ \Psi$ is the identity map on
$L^p(X,\mu,E_1)\times L^p(X,\mu, E_2)$,
we see that $\Phi$ is an isomorphism of topological
vector spaces with inverse~$\Psi$.\,\Punkt\vspace{1.3mm}

\noindent
{\bf Proof of Lemma~\ref{chainpw}.}
The set $U^{[1]}:=\{(x,y,t)\in U\times E\times \R\colon x+ty\in U\}$
is open in $E\times E\times \R$.
Since $f\colon U\to F$ is $C^1$, the map
\[
f^{[1]}\colon U^{[1]}\to F,\quad
(x,y,t)\mto
\left\{
\begin{array}{cl}
\frac{f(x+ty)-f(x)}{t} &\mbox{if $t\not=0$;}\\
df(x,y) & \mbox{if $t=0$}
\end{array}
\right.
\]
is continuous (see \cite{BGN} or \cite{GaN}).
For $t\in \R\setminus\{0\}$ such that $t_0+t\in J$,
we have
\begin{eqnarray*}
\frac{f(\gamma(t_0+t))-f(\gamma(t_0))}{t}
&=&\frac{f\left(\gamma(t_0)+t \frac{\gamma(t_0+t)-\gamma(t_0)}{t}\right)-
f(\gamma(t_0))}{t}\\
&=& f^{[1]}\left(\gamma(t_0),\frac{\gamma(t_0+t)-\gamma(t_0)}{t},t\right),
\end{eqnarray*}
which converges to $f^{[1]}(\gamma(t_0),\gamma'(t_0),0)=
df(\gamma(t_0),\gamma'(t_0))$
as $t\to0$. Thus $(f\circ \gamma)'(t_0)
=df(\gamma(t_0),\gamma'(t_0))$.\,\Punkt\vspace{2.3mm}

\noindent
{\bf Proof of Lemma~\ref{locLip}.}
Since $df\colon V\times E\to F$ is continuous
and $df(x,0)=0\in B^p_1(0)$,
there is a convex open neighbourhood $V_1 \sub V$ of~$x$
and an open $0$-neighbourhood $W\sub E$ such that
$df(V_1\times W)\sub B^p_1(0)$.
We may assume that $W=B^q_1(0)$ for some $q\in P(E)$.
Then
\[
p(df(y,z))\leq q(z)\quad\mbox{for all $y\in V_1$ and $z\in E$.}
\]
For all $y,z\in V_1$, we obtain
\begin{eqnarray*}
p(f(z)-f(y))& =& q\left(
\int_0^1df(y+t(z-y),z-y)\,dt\right)\\
&\leq& \int_0^1 p(df(y+t(z-y),z-y))
\leq q(z-y),
\end{eqnarray*}
as desired.\,\vspace{1.3mm}\Punkt
\noindent
{\bf Proof of Lemma~\ref{locLip2}.}
The map $d_1f\colon V\times E_2\times E_1\to F$,
$d_1f(x,v,h):=(D_{(h,0)}f)(x,v)$ is continuous
and $d_1(K\times \{0\}\times\{0\})+\{0\}
\sub B^p_1(0)$.
Using the Wallace Lemma (see \ref{Wallace}),
we find an open subset $U\sub V$ such that $K\sub U$
and continuous seminorms
$q_1\in P(E_1)$ and $q_2\in P(E_2)$ such that
\[
d_1f(u,v,h)\in B^p_1(0)\quad
\mbox{for all $u\in U$, $h\in B^{q_1}_1(0)$
and $v\in B^{q_2}_1(0)$.}
\]
As a consequence,
\[
p(d_1f(u,v,h))\leq q_1(h)q_2(v)
\quad\mbox{for all $u\in U$, $h\in E_1$ and $v\in E_2$.}
\]
Since $f(K\times \{0\})=\{0\}\sub B^p_1(0)$,
the Wallace Lemma shows that we may assume that,
moreover,
\[
f(U\times B^{q_2}_1(0))\sub B^p_1(0)
\]
after shrinkling $U$ and increasing $q_2$.
Thus
\begin{equation}\label{easybutuse}
p(f(u,v))\leq q_2(v)
\quad \mbox{for all $u\in U$ and $v\in E_2$.}
\end{equation}
After increasing $q_1$ if necessary, we may assume
that $K+B^{q_1}_2(0)\sub U$.
Let $y,z\in K+B^{q_1}_1(0)$.
If $q_1(y-z)<1$, choose
$x\in K$ such that $y\in B^{q_1}_1(x)$.
Then $z\in B^{q_1}_2(x)$, by the triangle
inequality.
For all $v,w\in E_2$,
we deduce that
\begin{eqnarray*}
f(z,v)-f(y,w)
&=& f(z,v-w)+ f(z,w)-f(y,w)\\
&=&f(z,v-w)+\int_0^1 d_1f(y+t(z-y),w,z-y)\,dt.
\end{eqnarray*}
Hence
\begin{eqnarray*}
p(f(z,v)-f(y,w))
&\leq&
p(f(z,v-w))\\
& & \;\;+ \sup_{t\in [0,1]}
\underbrace{p(d_1f(y+t(z-y),w,z-y))}_{\leq q_1(z-y)q_2(w)}\\
&\leq& q_2(v-w)+ q_1(z-y)q_2(w).
\end{eqnarray*}
If $q_1(y-z)\geq 1$,
we estimate with (\ref{easybutuse})
\begin{eqnarray*}
p(f(z,v)-f(y,w))
&\leq & p(f(z,v-w))+ p(f(z,w))+p(f(y,w))\\
&\leq&q_2(v-w)+2q_2(w)\leq q_2(v-w)+2q_1(y-z)q_2(w).
\end{eqnarray*}
We therefore always have (\ref{easireqat})
if we replace $q_1$ with~$2q_1$.\,\vspace{2.3mm}\Punkt

\noindent
{\bf Proof of Lemma~\ref{banana}.}
Using Lemma~\ref{locLip},
we find a continuous seminorm
$q\in P(E)$ such that $B^q_2(x)\sub V$
and
\[
p(f(z)-f(y))\leq q(z-y)\quad\mbox{for all $z,y\in B^q_2(x)$.}
\]
Thus $p(f(z)-f(y))=0$ for all $z,y\in B^q_2(x)$ such that $q(z-y)=0$.
Equivalently, $\pi_p(f(z))=\pi_p(f(y))$ for
all $z,y\in B^q_2(x)$ such that $\pi_q(z)=\pi_q(y)$.
We therefore get a well-defined map
\[
\wt{f} \colon \pi_q(B^q_2(x))\to \wt{F}_p,\quad \pi_q(y)\mto \pi_p(f(y))
\]
on the open ball $\pi_q(B^q_2(x))=\{v\in E_q\colon \|v-\pi_q(x)\|_q<2\}$
in the normed space~$E_q$.
The map $\wt{f}$ is Lipschitz continuous with Lipschitz constant~$1$, as
\begin{eqnarray*}
\|\wt{f}(\pi_q(z))-\wt{f}(\pi_q(y))\|_p &=& \|\pi_p(f(z)-f(y))\|_p\\
&=& p(f(z)-f(y))\leq q(z-y)=\|\pi_q(z)-\pi_q(y)\|_q
\end{eqnarray*}
for all $z,y\in B^q_2(0)$. In particular, $wt{f}$
is uniformly continuous and hence
extends uniquely to a continuous (and indeed
Lipschitz continuous) map
\[
g\colon B^{\|.\|_q}_2(x)
\to \wt{F}_p
\]
on the open ball $B^{\|.\|_q}_2(x)\sub \wt{E}_q$.
Since $df\colon V\times E\to F$ is~$C^1$,
we can repeat the reasoning.
After increasing~$q$ if necessary,
we may assume that there is a continuous
map
\[
h\colon
B^{\|.\|_q}_2(x)
\times
B^{\|.\|_q}_2(0)
\to \wt{F}_p
\]
such that $h(\pi_q(y),\pi_q(z))=\pi_p(df(y,z))$
for all $(y,z)\in B^q_2(x)\times B^q_2(0)$.
Then
\[
h(v,rw)=rh(v,w)\quad\mbox{for all $(v,w)
B^{\|.\|_q}_2(x) \times B^{\|.\|_q}_2(0)$ and $r\in \,]0,1]$}
\]
as $h$ is continuous and equality holds
for all $(v,w)$ in the dense subset
$\pi_q(B^q_2(x))\times \pi_q(B^q_2(0))$.
Therefore
\[
H \colon B^{\|.\|_q}_2(x)
\times \wt{E}_q\to\wt{F}_p,\quad
H(v,w):=n h\left(v,\frac{1}{n}w\right)
\]
for all
$n\in \N$, $v\in B^{\|.\|_q}_2(x)$
and $w\in B^{\|.\|_q}_{2n}(0)$
is well-defined.
Since $H$ is continuous on the open sets
$B^{\|.\|_q}_2(x)\times B^{\|.\|_q}_{2n}(x)$ for $n\in \N$
which cover $B^{\|.\|_q}_2(x)\times \wt{E}_q$,
the map~$H$ is continuous.
Now, if $y\in B^q_2(x)$ and $y\in B^q_{2n}(0)$, then
\begin{eqnarray*}
H(\pi_q(y),\pi_q(z))&=&nh\left(\pi_q(y),\frac{1}{n}\pi_q(z)\right)
=nh\left(\pi_q(y),\pi_q\left(\frac{1}{n}z\right)\right)\\
&=& ndf\left(y,\frac{1}{n}z \right)
=df(y,z).
\end{eqnarray*}
Since $\pi_q(B^q_2(x))\times E_q$ is dense
in $B^{\|.\|_q}_2(x)\times \wt{E}_q$,
we deduce that
\[
H(v,.)\colon \wt{E}_q\to \wt{F}_p
\]
is linear for each $v\in B^{\|.\|_q}_1(x)$.
The parameter-dependent integral
\[
g_1 \colon B^{\|.\|_q}_1(x)\times B^{\|.\|_q}_1(0) \times \;]{-1},1[\;\to\;
\wt{F}_p,\quad a(v,w,t):=\int_0^1H(v+stw,w)\,ds
\]
is continuous, by~\ref{pardep}.
For all $y\in B^q_1(x)$,
$z\in B^q_1(0)$ and $t\in \;]{-1},1[\;\setminus \{0\}$,
we have
\begin{eqnarray*}
g_1(\pi_q(y),\pi_q(z)) & = & \int_0^1H(\pi_q(y+stz),\pi_q(z))\,ds
= \int_0^1df(y+stz,z)\,ds\\
&=& \frac{1}{t}(f(y+tz)-f(y))
=\frac{1}{t}(g(\pi_q(y)+t\pi_q(z))-g(\pi_q(y))).
\end{eqnarray*}
Hence
\[
g_1(v,w,t)= \frac{1}{t}(v+tw)-g(v))
\]
for all
$(x,y,t)\in
B^{\|.\|_q}_1(x)\times B^{\|.\|_q}_1(0) \times (]{-1},1[\;\setminus\{0\})$,
as both sides of the equation are continuous functions
of $(x,y,t)\in
B^{\|.\|_q}_1(x)\times B^{\|.\|_q}_1(0) \times (]{-1},1[\;\setminus\{0\})$
which agree on the dense subset $\pi_q(B^q(x))\times \pi_q(B^q_1(0))$.
Abbreviate $U:=B^{\|.\|_q}_1(\pi_q(x))$.
By the preceding, the map
\[
(g|_U)^{[1]}\colon U^{[1]}=U^{]1[}\cup
(U\times B^{\|.\|_q}_1(0) \times \;]{-1},1[)\to \wt{F}_p,
\]
\[
(v,w,t)\mto\left\{
\begin{array}{cl}
g_1(v,w,t)&\mbox{if $\,(v,w,t)\in U\times B^{\|.\|_q}_1(0) \times \;]{-1},1[$;}\\
\frac{1}{t}(g(v+tw)-g(v))&\mbox{if $\,(v,w,t)\in U^{]1[}$}
\end{array}
\right.
\]
is well-defined, and it is continuous
as it is piecewise defined
and continuous on the two open pieces.
From $\ref{BGN}$, we deduce that $g|_U$
is $C^1$, with $dg=g^{[1]}(.,0)=H|_{U\times \wt{E}_q}$.
By construction, $\pi_p\circ f|_{B^q_1(0)}=g|_U\circ \pi_q|_{B^q_1(0)}^U$.\,\Punkt
\section{Details for Section~\ref{secPL}}\label{appeB}
We study the compatibility of
Lebesgue spaces with countable projective limits.
\begin{la}\label{denseTG}
Let $((G_n)_{n\in \N}, (\phi_{n,m})_{n\leq m})$
be a projective system of metrizable, complete
topological groups $G_n$ and continuous
homomorphisms $\phi_{n,m}\colon G_m\to G_n$
such that $\phi_{n,m}(G_m)$ is dense in~$G_n$,
for all $n,m\in \N$ with $n\leq m$.
Let $G:=\pl\,G_n$\vspace{-1.3mm}
be a projective limit.
Then also each limit map
$\phi_n\colon G\to G_n$
has dense image.
\end{la}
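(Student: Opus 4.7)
Fix $n\in\N$, a point $g_n\in G_n$, and $\ve>0$; the goal is to construct an element $g=(g'_j)_{j\in\N}\in G$ with $d_n(\phi_n(g),g_n)<\ve$, where $d_k$ denotes a choice of compatible complete metric on $G_k$ (available since each $G_k$ is metrizable and complete). Set $\ve_k:=\ve\cdot 2^{-(k-n)-2}$, so that $\sum_{k\geq n}\ve_k<\ve/2$.

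The main step is to construct inductively a sequence $h_n,h_{n+1},\ldots$ with $h_k\in G_k$. Start with $h_n:=g_n$. Given $h_k$, use continuity of each of the finitely many maps $\phi_{j,k}\colon G_k\to G_j$ for $j\in\{n,\ldots,k-1\}$ at the point $h_k$ to obtain $\delta_k>0$ such that $d_k(x,h_k)<\delta_k$ forces $d_j(\phi_{j,k}(x),\phi_{j,k}(h_k))<\ve_{k+1}$ for all such $j$; shrinking if necessary, assume also $\delta_k\leq\ve_{k+1}$. Then invoke density of $\phi_{k,k+1}(G_{k+1})$ in $G_k$ to pick $h_{k+1}\in G_{k+1}$ with $d_k(\phi_{k,k+1}(h_{k+1}),h_k)<\delta_k$. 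Using the relation $\phi_{j,k+1}=\phi_{j,k}\circ\phi_{k,k+1}$, this single choice simultaneously guarantees
\[
d_j(\phi_{j,k+1}(h_{k+1}),\phi_{j,k}(h_k))<\ve_{k+1}\quad\text{for all }j\in\{n,\ldots,k\}.
\]

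For each $j\geq n$, the sequence $(\phi_{j,k}(h_k))_{k\geq\max(j,n)}$ is then Cauchy in $(G_j,d_j)$ by telescoping against the summable bounds $\ve_{k+1}$, hence converges to some $g'_j\in G_j$ by completeness. For $j<n$, set $g'_j:=\phi_{j,n}(g'_n)$. Compatibility $\phi_{i,j}(g'_j)=g'_i$ for $i\leq j$ follows from continuity of the bonding maps after passing to limits in $\phi_{i,j}(\phi_{j,k}(h_k))=\phi_{i,k}(h_k)$, so $g:=(g'_j)_{j\in\N}\in G$. Moreover, $d_n(\phi_n(g),g_n)=d_n(g'_n,h_n)\leq\sum_{k\geq n}\ve_{k+1}<\ve$, giving density.

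The only genuine obstacle is coordinating the approximations across levels: a single choice of $h_{k+1}$ must produce smallness in every one of $G_n,\ldots,G_k$ at once. This is handled by taking $\delta_k$ to be the minimum of $\ve_{k+1}$ and finitely many continuity moduli for $\phi_{n,k},\ldots,\phi_{k-1,k}$ at $h_k$, which is positive precisely because only \emph{finitely} many indices $j$ need to be controlled at each stage. The geometric bookkeeping with summable $\ve_k$ then propagates the Cauchy property down every level $j\geq n$ and keeps the final error at level $n$ below $\ve$.
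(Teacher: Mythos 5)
Your argument is correct, but it takes a genuinely different route from the paper's. The paper stays inside the category of topological groups: it fixes in advance, for every $n$, a sequence $(U_{n,k})_k$ of identity neighbourhoods with $U_{n,k}U_{n,k}\subseteq U_{n,k-1}$ and $\phi_{n-1,n}(U_{n,k})\subseteq U_{n-1,k}$, then chooses the approximating points $x_n$ with $\phi_{n-1,n}(x_n)\in x_{n-1}U_{n-1,n-1}$; since the bonding maps are homomorphisms, this single, point-independent system of identity neighbourhoods controls all lower levels at once (translation invariance does the bookkeeping), and one concludes by convergence of the resulting one-sided Cauchy sequences in the complete groups. You instead argue purely metrically: you choose compatible complete metrics $d_k$ and replace the translation-invariant control by point-dependent continuity moduli $\delta_k$ for the finitely many maps $\phi_{j,k}$ at the current point $h_k$, combined with a summable error sequence---this is exactly the classical abstract Mittag--Leffler argument for countable projective limits of complete metric spaces with dense bonding images, and it never uses the group structure in the construction itself. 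What your approach buys is generality and independence from the algebra; what it costs is the one place where the hypotheses enter, namely the existence of a compatible \emph{complete} metric on a metrizable complete topological group, which you assert in passing but should justify in one line (e.g.\ take a left-invariant metric $d_L$ from the Birkhoff--Kakutani theorem and use $d(x,y):=d_L(x,y)+d_L(x^{-1},y^{-1})$, which is compatible, induces the two-sided uniformity, and is therefore complete). What the paper's approach buys is that no metrics need to be chosen and the controlling neighbourhoods are fixed once and for all, the homomorphism property playing the role of your continuity moduli. Both proofs share the same skeleton---successive approximation with geometrically decaying errors, then completeness---so the difference is one of mechanism rather than substance; your final bookkeeping (telescoping the bounds $\ve_{k+1}$, compatibility of the limits $g_j'$, and the estimate $d_n(\phi_n(g),g_n)<\ve$) is sound.
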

\begin{proof}
It suffices to show that $\phi_1$ has dense
image. Let $x_1\in G_1$ and $V\sub G_1$ be an open identity neighbourhood.
We construct an element $y\in G$ such that $\phi_1(y)\in x_1\wb{V}$,
where $\wb{V}$ is the closure of~$V$.
There is a sequence $(U_{1,k})_{k\in \N_0}$
of open identity neighbourhoods $U_{1,k}\sub G_1$ such that
$U_{1,0}=V$ and $U_{1,k}U_{1,k}\sub U_{1,k-1}$ for all
$k\in \N$. Thus
\[
U_{1,k}U_{1,k+1}\cdots U_{1,\ell-1}U_{1,\ell}\sub U_{1,\ell}\sub U_{1,k-1}\quad
\mbox{for all $k\in \N$ and $\ell>k$.}
\]
Recursively, we choose sequences $(U_{n,k})_{k\in \N_0}$
of open identity neighbourhoods in $G_n$ for $n\in \{2,3,\ldots\}$
such that
\[
\phi_{n-1,n}(U_{n,k})\sub U_{n-1,k}\quad\mbox{for all $k\in \N_0$}
\]
and $U_{n,k}U_{n,k}\sub U_{n,k-1}$ for all $k\in \N$.
The open set $x_1U_{1,1}$ contains $\phi_{1,2}(x_2)$
for some $x_2\in G_2$. Recursively, we find
$x_n\in G_n$ for $n\in \{2,3,\ldots\}$
such that $\phi_{n-1,n}(x_n)\in x_{n-1}U_{n-1,n-1}$.
Let $k\in \N$ and $N\in \N$ with $N\geq k$.
For all $n,m\in \N$ with $m>n\geq N$, we then have
\[
\phi_{n,m}(x_m)\in x_nU_{n,n}U_{n,n+1}\cdots U_{n,m-2}U_{n,m-1}\sub x_nU_{n,n-1},
\]
entailing that $x_n^{-1}\phi_{n,m}(x_m)\in U_{n,n-1}$ and hence
\begin{equation}\label{furthlim}
\phi_{k,n}(x_n)^{-1}\phi_{k,m}(x_m)\in U_{k,n-1}\sub U_{k,N-1}\quad\mbox{for all $m>n\geq N$.}
\end{equation}
Hence $(\phi_{k,n}(x_n))_{n\geq k}$ is a Cauchy sequence
in $G_k$ and thus convergent, to $y_k\in G_k$,
say. Letting $m\to\infty$ in (\ref{furthlim}),
we find that $(\phi_{k,N}(\phi_{N,n}(x_n)))^{-1}y_k=
\phi_{k,n}(x_n)^{-1}y_k\in \wb{U_{k,N-1}}$.
Letting $n\to \infty$, we see that $(\phi_{k,N}(y_N))^{-1}y_k\in \wb{U_{k,N-1}}$
and thus
\begin{equation}\label{thucnv}
\lim_{N\to\infty}\phi_{k,N}(y_N)=y_k
\quad\mbox{for each $k\in \N$.}
\end{equation}
Consider $z_k:=(\phi_{1,k}(y_k),\phi_{2,k}(y_k),\ldots, \phi_{k-1,k}(y_k),y_k,e,e,\cdots)
\in \prod_{k\in \N}G_k$.
By (\ref{thucnv}), the sequence $(z_k)_{k\in \N}$
converges to some $z\in \pl\,G_k=G$.
If $\ell\in \N$,
taking $k=N=1$ in (\ref{furthlim})
and let $m\to\infty$. using that $\phi_{1,n}(x_m)=\phi_{1,\ell}(\phi_{\ell,m}(x_m))$
if $m\geq\ell$,
we see that $(x_1)^{-1}\phi_{1,\ell}(y_\ell)\in \wb{U_{1,0}}\sub \wb{V}$.
Since $\phi_{1,\ell}(y_\ell)$ converges to $\phi_1(z)$
as $\ell\to\infty$, we deduce that $\phi_1(z)\in x_1\wb{V}$.
\end{proof}
{\bf Proof of Lemma~\ref{compaPL}.}
After passing to an isomorphic locally convex space, we may assume that~$E$
is the vector subspace
\[
\pl\,E_n=\{(x_n)_{n\in \N}\in \prod_{n\in \N}E_n\colon (\forall n\leq m)\;
\phi_{n,m}(x_m)=x_n\}\vspace{-.3mm}
\]
of the direct product, with the projection onto the component~$E_n$
as the limit map~$\phi_n$.
We realize $\pl\, L^p(X,\mu,E_n)$
as the vector subspace
\[
\{([\gamma_n])_{n\in \N}\in \prod_{n\in \N}L^p(X,\mu,E_n)\colon
(\forall n\leq m)\; [\gamma_n]=[\phi_{n,m}\circ \gamma_m]\}
\]
of the direct product, with the projections $\pr_n$
as the limit maps. Then
\[
\Phi:=(L^p(X,\mu,\phi_n))_{n\in \N}\colon L^p(X,\mu,E)\to
\pl\, L^p(X,\mu,E_n)
\]
is a continuous linear map. If $\Phi(\gamma)=0$,
then there are subsets $A_n\sub X$ such that $\mu(A_n)=0$
and $\phi_n\circ \gamma|_{x\setminus A_n}=0$.
After replacing $\gamma(x)$ with $0$ for $x$ in the set
$\bigcup_{n\in \N}A_n$ of measure~$0$,
we obtain $\gamma=0$. Hence~$\Phi$
is injective. To see that $\Phi$
is surjective,
let $([\gamma_n])_{n\in \N}\in\pl\,L^p(X,\mu,E_n)$.\vspace{-.3mm}
For all $n\leq m$, there is a subset
$A_{n,m}\sub X$ such that $\mu(A_{n,m})=0$ and
\[
\phi_{n,m}\circ \gamma_m|_{A_{n,m}}=\gamma_n|_{A_{n,m}}.
\]
Then the countable union $A:=\bigcup_{n\in \N}\bigcup_{m\geq n}A_{n,m}$
has measure $0$ as well and
\[
\phi_{n,m}\circ \gamma_m|_A=\gamma_n|_A\quad\mbox{for all $n\leq m$ in $\N$.}
\]
After re-defining $\gamma_n(x):=0$
for $x\in A$, we may assume that $\gamma_n=\phi_{n,m}\circ \gamma_m$
for all $n,m\in \N$ with $n\leq m$.
For each $x\in X$, we have
\[
\gamma(x):=(\gamma_n(x))_{n\in \N}
\in E;
\]
thus $\phi_n(\gamma(x))=\gamma_n(x)$
for all $n\in \N$.
Note that
$\prod_{n\in \N}\Spann(\gamma_n(X))$ is a separable
metrizable vector space
and
$\im(\gamma)$ is contained
in
\[
F:=E\cap \prod_{n\in \N}\Spann(\gamma_n(X)),
\]
which is separable. Let $\{x_1,x_2,\ldots\}\sub F$ be a countable
dense subset and $(q_j)_{j\in \N}$ be a sequence
of seminorms $q_1\leq q_2\leq\cdots$ on~$E$
defining its vector topology. Then countable set
\[
\cE:=\{F\cap B_{1/i}^{q_j}(x_k)\colon i,j,k\in \N\}
\]
of balls is a basis for
the topology on~$F$,
whence the Borel $\sigma$-algebra is generated by~$\cE$,
i.e., $\cB(F)=\sigma(\cE)$.
After increasing each of the $q_j$ in turn,
we may assume that $q_j=Q_j\circ \phi_{n_j}$
for some $n_j\in \N$ and some continuous
seminorm~$Q_j$ on~$E_{n_j}$.
Since
\begin{eqnarray*}
\gamma^{-1}(B_{1/i}^{q_j}(x_k))
&=& \{x\in X\colon q_j(\gamma(x)-x_k)
<1/i\}\\
&=& \{x\in X\colon Q_j(\gamma_j(x)-\phi_j(x_k))<1/i\}
=\gamma_j^{-1}(B^{Q_j}_{1/i}(\phi_j(x_k)))
\end{eqnarray*}
is measurable for all $i,j,k\in \N$,
we deduce that $\gamma$ is measurable.
Since $\|\gamma\|_{\cL^p,q_j}=\|\gamma_j\|_{\cL^p,Q_j}<\infty$
for each $j\in \N$, we see that $\gamma\in \cL^p(X,\mu,E)$.
By construction, $\Phi([\gamma])=([\gamma_n])_{n\in\N}$.
Thus $\Phi$ is surjective.
To see that $\Phi$ is a topological
embedding, note that seminorms of the form
$\|.\|_{L^p,Q\circ \phi_j}$
define the vector topology on $L^p(X,\mu,E)$,
for $j\in \N$ and~$Q$ ranging through the continuous
seminorms on~$E_n$.
Since $\|.\|_{L^p, Q}\circ \pr_j$
is a continuous seminorm on $\pl\,L^p(X,\mu,E)$\vspace{-1.3mm}
and
\[
\|.\|_{L^p, Q}\circ \pr_j \circ \Phi= \|.\|_{L^p,Q\circ \phi_j},
\]
we see that $\Phi^{-1}$ is continuous
and thus $\Phi$ a topological embedding.\,\vspace{2.3mm}\Punkt

\noindent
{\bf Proof of Lemma~\ref{compaPLrc}.}
After passing to an isomorphic locally convex space, we may assume that~$E$
is the closed vector subspace
\[
\pl\,E_n=\{(x_n)_{n\in \N}\in \prod_{n\in \N}E_n\colon (\forall n\leq m)\;
\phi_{n,m}(x_m)=x_n\}\vspace{-.3mm}
\]
of the direct product, with the projection onto the component~$E_n$
as the limit map~$\phi_n$.
We realize $\pl\, L^p(X,\mu,E_n)$\vspace{-1.3mm}
as the vector subspace
\[
\{([\gamma_n])_{n\in \N}\in \prod_{n\in \N}L^p(X,\mu,E_n)\colon
(\forall n\leq m)\; [\gamma_n]=[\phi_{n,m}\circ \gamma_m]\}
\]
of the direct product, with the projections $\pr_n$
as the limit maps. Then
\[
\Phi:=(L^p(X,\mu,\phi_n))_{n\in \N}\colon L^p(X,\mu,E)\to
\pl\, L^p(X,\mu,E_n)\vspace{-1.3mm}
\]
is a continuous linear map. Let $\gamma\in L^p(X,\mu,E)$
such that $\Phi(\gamma)=0$.
Using Lemma~\ref{imagesepmet},
we find a sequence $(q_j)_{j\in \N}$
of continuous seminorms $q_j$ on~$E$
such that the $q_j|_F$ define a Hausdorff
vector topology on $F:=\Spann(\wb{\gamma(X)})$.
After increasing $q_j$ if necessary,
we may assume that $q_j=Q_j\circ \phi_{n_j}$
for some $n_j\in \N$ and some continuous seminorm
$Q_n$ on $E_{n_j}$. Since $\|\phi_{n_j}\circ \gamma\|_{\cL^\infty,Q_n}=0$,
there is a measurable set $A_n\sub X$ such that $\mu(A_n)=0$
and $\phi_{n_j}\circ \gamma|_{X\setminus A_n}=0$.
After redefining $\gamma(x):=0$
on the set $\bigcup_{n\in \N}A_n$ of measure~$0$,
we achieve that $q_j(\gamma(x))=0$
for all $x\in X$ and $j\in \N$,
whence $\gamma(x)=0$ by choice of the $q_j$.
Thus $[\gamma]=0$ and thus $\Phi$ is injective.

To see that $\Phi$
is surjective,
let $([\gamma_n])_{n\in \N}\in\pl\,L^\infty_{rc}(X,\mu,E_n)$.\vspace{-1.3mm}
As in the preceding proof, we may assume
that $\gamma_n=\phi_{n,m}\circ \gamma_m$
for all $n,m\in \N$ with $n\leq m$.
For each $x\in X$, we have
\[
\gamma(x):=(\gamma_n(x))_{n\in \N}
\in E;
\]
thus $\phi_n(\gamma(x))=\gamma_n(x)$
for all $n\in \N$.
By Tychonoff's Theorem,
the metrizable topological space
$\prod_{n\in \N}\wb{\gamma_n(X)}$ is
compact,
whence also the closed subset
\[
E\cap \prod_{n\in \N}\wb{\gamma_n(X)}
\]
is metrizable and compact.
Since $\im(\gamma)$ is contained in the latter
set, we deduce that $\wb{\im(\gamma)}$
is compact and metrizable.
If we can show that $\gamma$ is measurable,
the $\gamma\in \cL^\infty_{rc}(X,E)$
and $\Phi([\gamma])=([\gamma_n])_{n\in \N}$
by construction, completing the proof of surjectivity.
Using Lemma~\ref{imagesepmet},
we find a sequence $(q_j)_{j\in \N}$
of continuous seminorms $q_1\leq q_2\leq\cdots$ on~$E$
such that the $q_j|_F$ define
a separable Hausdorff vector topology~$\cO'$
on $F:=\Spann(\wb{\im(\gamma)})$.
As the latter induced the given topology on
the compact set $\wb{\gamma(X)}$,
we need only show that $\gamma$ is measurable
as a mapping to $(F,\cO')$.
Let $\{x_1,x_2,\ldots\}$ be a countable
dense subset of $(F,\cO')$.
As in the preceding proof,
we see that $\gamma\colon X\to (F,\cO')$
is measurable.

To see that $\Phi$ is a topological
embedding, note that seminorms of the form
$\|.\|_{L^\infty,Q\circ \phi_j}$
define the vector topology on $L^\infty_{rc}(X,\mu,E)$,
for $j\in \N$ and~$Q$ ranging through the continuous
seminorms on~$E_n$.
Since $\|.\|_{L^\infty, Q}\circ \pr_j$
is a continuous seminorm on $\pl\,L^\infty_{rc}(X,\mu,E)$\vspace{-1.3mm}
and
\[
\|.\|_{L^\infty, Q}\circ \pr_j \circ \Phi= \|.\|_{L^\infty,Q\circ \phi_j},
\]
we see that $\Phi^{-1}$ is continuous
and thus $\Phi$ a topological embedding.\,\vspace{2.3mm}\Punkt

\noindent
{\bf Proof of Lemma~\ref{compaPLrc2}.}
Lemma~\ref{compaPLrc}
and its proof apply
if we set $X:=[a,b]$ and let $\mu$
be Lebesgue-Borel measure on~$X$.
Let $\Phi\colon L^\infty_{rc}([a,b],E)\to\pl\, L^\infty_{rc}([0,1],E_n)$\vspace{-0.6mm}
be as in the proof
of Lemma~\ref{compaPLrc}.
We can realize the projective limit $\pl\,R([a,b],E_n)$\vspace{-0.6mm}
as a vector subspace of $\pl\,L^\infty_{rc}([a,b],E_n)$.\vspace{-1.3mm}
Then
\[
\Phi_R:=(R([a,b],\phi_n))_{n\in \N}=\Phi|_{R([a,b],E)}\colon R([a,b],E)\to
\pl\,R([a,b],E_n)\vspace{-1.3mm}
\]
is a topological embedding
(since $\Phi$ is a topological embedding).
It only remains to see that $\Phi_R$
is surjective.
If each $E_n$ is a Fr\'{e}chet space,
then also $E$, $R([a,b],E)$
and each $R([a,b],E_n)$ are Fr\'{e}chet spaces
(cf.\ \ref{regulatedfrech}).
Hence $\Phi_R$ has complete (and hence closed)
image. As a consequence, we need
only show that $\Phi_R$ has dense
image in $\pl\, R([a,b],E_n)$.\vspace{-0.6mm}
Now $\im(\Phi_R)$
is dense if and only if $R([a,b],\phi_j)(\im(\Phi_R))$ is
dense in $R([a,b],E_n)$
for each $n\in \N$.
It suffices to show that
$\pr_n(\im(\Phi_R))=\im(\pr_n \circ \Phi_R)=\im\, R([a,b],\phi_n)$
is dense in $R([a,b],E_n)$
for each $n\in \N$.
This will hold if we can show that
$\cT([a,b],E)\sub \wb{\im\,R([a,b],\phi_n)}$.
To do so, let $\gamma\in \cT([a,b],E_n)$.
Then $\im \gamma=\{y_1,\ldots,y_\ell\}$
with some $\ell\in \N$ and pairwise distinct elements
$y_1,\ldots, y_\ell\in E_n$.
If $V\sub E_n$ is an open $0$-neighbourhood,
we find $z_j\in E$ such that $\phi_n(z_j)\in y_j+V$
for all $j\in \{1,\ldots,\ell\}$,
since $\im(\phi_n)$ is dense
in~$E_n$ by Lemma~\ref{denseTG}.
If we set $\eta(t):=z_j$ for all
$t\in [a,b]$ such that $\gamma(t)=y_j$,
we obtain a function $\eta\in \cT([a,b],E)$
such that $(\phi_n\circ \eta)(t)-\gamma(t)=\phi_n(z_j)-y_j\in V$
for all $t\in [a,b]$ (where $j\in \{1,\ldots,\ell\}$
is chosen such that $\gamma(t)=y_j$).
Hence $\gamma\in \wb{\im R([a,b],\phi_n)}$,
as desired.\,\vspace{2.3mm}\Punkt

\noindent
{\bf Proof of Lemma~\ref{reformug}.}
If $G$ has a projective limit chart,
let $\alpha_{n,m}\colon E_m\to E_n$, $\alpha_n\colon E\to E_n$,
$\phi_n\colon U_n\to V_n\sub E_n$
and $\phi\colon U\to V\sub E$ be as in Definition~\ref{dplcha}.
Let $x_n:=q_n(e)\in V_n$; then
\[
\beta_n:=T_{x_n}(\phi_n)^{-1}\colon E_n\to L(G_n)
\]
is an isomorphism of topological vector spaces,
identifying $E_n$ with $\{x_n\}\times E_n=T_{x_n}E_n$
via $(x_n,y)\mto y$.
Therefore $W_n:=\beta_n(V_n)$ is open in~$L(G_n)$ and
\[
\psi_n:=\beta_n\circ \phi_n\colon U_n\to W_n
\]
is a $C^\infty$-diffeomorphism.
From
\[
\phi_n\circ q_{n,m}|_{U_m}=\alpha_{n,m}\circ \phi_m,
\]
we deduce that $(T_e\phi_n)\circ L(q_{n,m})=(T_{x_m}\alpha_{n,m})\circ (T_e\phi_m)$
and thus
\[
L(q_{n,m})\circ \beta_m=\beta_n\circ \alpha_{n,m},
\]
entailing that
\[
L(q_{n,m})\circ \psi_m = L(q_{n,m})\circ \beta_m\circ \phi_m
=\beta_n\circ \alpha_{n,m}\circ \phi_m
=\beta_n\circ \phi_n\circ q_{n,m}|_{U_m}
=\psi_n\circ q_{n,m}|_{U_m}.
\]
Thus (\ref{compaby}) holds.
Moreover,
\[
L(q_{n,m})(W_m)=L(q_{n,m})(\beta_m(V_m))
=\beta_n (\alpha_{n,m}(V_m))\sub
\beta_n(V_n)=W_n.
\]
Set $x:=\phi(e)\in E$ and define
\[
\beta:=T_x\phi^{-1}\colon E\to L(G),
\]
identifying $T_xE=\{x\}\times E$ with~$E$.
Then $W:=\beta(V)$ is open in $L(G)$
and
\[
\psi:=\beta\circ \phi\colon U\to W
\]
is a $C^\infty$-diffeomorphism.
From
$\alpha_n \circ \phi=\phi_n\circ q_n|_U$
we deduce that
\[
\alpha_n\circ d\phi|_{L(G)}=d\phi_n|_{L(G_n)}\circ L(q_n)
\]
and hence
\[
\beta_n\circ \alpha_n=L(q_n)\circ \beta.
\]
Thus
\[
L(q_n)(W)=L(q_n)(\beta(V))=\beta_n(\alpha_n(V))
\sub \beta_n(V_n)=W_n,
\]
i.e., (\ref{compby3}) holds.
Moreover,
\[
L(q_n)\circ \psi=L(q_n)\circ \beta\circ \phi
=\beta_n\circ \alpha_n\circ \phi
=\beta_n\circ \phi_n\circ q_n|_U
=\psi_n\circ q_n|_U,
\]
i.e., (\ref{compby2}) holds.
Since $W_n=\beta_n(V_n)$,
we have
\[
L(q_n)^{-1}(W_n)=(\beta_n^{-1}\circ L(q_n))^{-1}(V_n)
=((\alpha_n\circ \beta^{-1})^{-1}(V_n)=\beta(\alpha_n^{-1}(V_n))
\]
and hence
\[
\bigcap_{n\in \N}L(q_n)^{-1}(W_n)
=\beta\left(\bigcap_{n\in \N}\alpha_n^{-1}(V_n)\right)
=\beta(V)=W.
\]
The proof of the converse implication is similar;
given $\psi_n\colon U_n\to W_n\sub L(G_n)$ and $\psi\colon U\to W\sub L(G)$
as in Lemma~\ref{reformug}, let $E$ be the modelling space of $G$
and $E_n$ be the modelling space of~$E_n$.
Let $\beta\colon L(G)\to E$ and $\beta_n\colon L(G_n)\to E_n$
be an isomorphism of topological vector spaces.
Then the conditions described in Definition~\ref{dplcha}
are satisfied if we set
\[
V_n:=\beta_n(W_n) \qquad\mbox{and}\qquad
\alpha_n:= \beta_n\circ L(q_n)\circ \beta^{-1}
\]
for $n\in \N$,
\[
\alpha_{n,m}:= \beta_n\circ L(q_{n,m})\circ \beta^{-1}_m
\]
for positive integers $n\leq m$,
and $V:=\beta(W)$.\,\Punkt
\section{Details for Section~\ref{secDiM}}\label{appDiff}
In this appendix, we prove
Proposition~\ref{handsonEv} from Section~\ref{secDiM}
and discuss various concepts which are useful
for the proof.
\begin{numba}
Recall that, if $M$ is a $C^k$-manifold modelled on a locally convex space~$E$
with $k\in \N\cup\{\infty\}$ and $p\in M$,
then $T_pM$ is the space of all tangent vectors $v$ to $M$ at~$p$.
Interpreting these as geometric tangent vectors,
they are $\sim$-equivalence classes $[\gamma]$ of $C^k$-curves
$\gamma\colon ]{-\ve},\ve[\;\to M$ with $\gamma(0)=p$,
where $\gamma\sim \eta$ if and only if
\begin{equation}\label{eqgeotan}
(\phi\circ \gamma)'(0)=(\phi\circ \eta)'(0)
\end{equation}
for some (and hence every)
chart $\phi$ of~$M$ around $p$.
It is useful for us to deviate from this classical
definition and consider, instead, (larger) equivalence classes
$v=[\gamma]$ with continuous curves
\[
\gamma\colon J\to M
\]
defined on a non-degenerate interval $J\sub \R$
with $0\in J$ such that $\gamma(0)=p$ and $\gamma$
is \emph{differentiable at $0$} in the sense
that $\phi\circ \gamma$
is differentiable at $0$ for some (and hence every)
chart of $M$ around $p$ (cf.\ Lemma~\ref{chainpw}).
Also for such curves, we write $\gamma\sim\eta$
if and only if (\ref{eqgeotan})
holds for some (and hence every)\footnote{See Lemma~\ref{chainpw}.}
chart $\phi$ of $M$
around~$p$.
\end{numba}
\begin{defn}\label{pwdiffmf}
Let $M$ be a $C^k$-manifold modelled
on a locally convex space $E$ with $k\in \N\cup\{\infty\}$.
Let $J\sub \R$ be a non-degenerate
interval, $\eta\colon J\to M$ be a continuous curve
and $t\in J$.
We say that $\eta$ is \emph{differentiable at $t$}
if the $E$-valued curve $\phi\circ \eta$ is differentiable
at $t$ for some (and hence every) chart $\phi$ of $M$ around $p$.
In this case, we set $\dot{\eta}(t):=[s\mto\eta(t+s)]$.
\end{defn}
\begin{la}\label{havedersae}
Let $M$ be a $C^k$-manifold modelled
on a Fr\'{e}chet space $E$ with $k\in \N\cup\{\infty\}$
and $\eta\in AC_{L^1}([a,b],M)$ with real numbers
$a<b$.
Write $\dot{\eta}=[\gamma]$
as in Definition~\emph{\ref{defetadot}}.
Then
there is a Borel set $A\sub [a,b]$ with $\lambda_1(A)=0$
such that
$\eta$ is differentiable at each $t\in [a,b]\setminus A$
and
\[
\dot{\eta}(t)=\gamma(t)
\]
for all $t\in [a,b]\setminus A$,
with $\dot{\eta}(t)$ as in
Definition~\emph{\ref{pwdiffmf}}.
The same conclusion holds
if $M$ is a $C^k$-manifold modelled on a strict \emph{(LF)}-space
which is a union $M=\bigcup_{n\in\N}M_n$
of $C^k$-manifolds $M_1\sub M_2\sub\cdots$
modelled on Fr\'{e}chet spaces
such that the inclusion maps $M_n\to M_{n+1}$
and $j_n\colon M_n\to M$ are topological embeddings and $C^k$ for all $n\in \N$,
and $AC_{L^1}([a,b],M)=\bigcup_{n\in \N}AC_{L^1}([a,b],M_n)$
as a set.
\end{la}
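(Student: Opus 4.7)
The plan is to reduce the pointwise differentiability statement, separately on each piece of the partition furnished by Definition~\ref{defetadot}, to the Fundamental Theorem of Calculus for Fr\'{e}chet-space-valued absolutely continuous maps recorded in Lemma~\ref{funda}.

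I first handle the Fr\'{e}chet case. Take a partition $a=t_0<t_1<\cdots<t_n=b$ and charts $\phi_j\colon U_j\to V_j\sub E$ with $\eta([t_{j-1},t_j])\sub U_j$, producing $\eta_j:=\phi_j\circ\eta|_{[t_{j-1},t_j]}\in AC_{L^1}([t_{j-1},t_j],E)$, $\eta_j'=[\gamma_j]$, and $\gamma(t)=T(\phi_j)^{-1}(\eta_j(t),\gamma_j(t))$ for $t\in[t_{j-1},t_j[$, exactly as in Definition~\ref{defetadot}. By Lemma~\ref{funda}, for each $j$ there is a Borel null set $A_j\sub[t_{j-1},t_j]$ such that $\eta_j$ is differentiable at every $t\in[t_{j-1},t_j]\setminus A_j$ with $\eta_j'(t)=\gamma_j(t)$. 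Set $A:=\{t_0,t_1,\ldots,t_n\}\cup\bigcup_{j=1}^n A_j$, a Borel null set. Given $t\in[a,b]\setminus A$, there is a unique $j$ with $t\in\,]t_{j-1},t_j[$, so $\phi_j\circ\eta$ is differentiable at $t$ as an $E$-valued map; by Definition~\ref{pwdiffmf}, $\eta$ is differentiable at $t$ in the manifold sense, and $\dot{\eta}(t)=T(\phi_j)^{-1}(\eta_j(t),\eta_j'(t))=T(\phi_j)^{-1}(\eta_j(t),\gamma_j(t))=\gamma(t)$, as required.

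For the strict (LF)-space case, the hypothesis yields $n\in\N$ with $\eta\in AC_{L^1}([a,b],M_n)$. Applying the previous paragraph to $\eta$ viewed in $M_n$, I obtain a Borel null set $A'\sub[a,b]$ and a function $\gamma^{(n)}\colon[a,b]\to TM_n$, built from Definition~\ref{defetadot} inside $M_n$, such that $\eta$ is differentiable in $M_n$ at every $t\in[a,b]\setminus A'$ with $\dot{\eta}_{M_n}(t)=\gamma^{(n)}(t)$. Because $j_n\colon M_n\to M$ is a $C^k$-embedding, Lemma~\ref{chainpw} applied in suitable charts shows that $\eta=j_n\circ\eta$ is also differentiable at each such $t$ in $M$, with $\dot{\eta}(t)=Tj_n(\gamma^{(n)}(t))$.

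The only remaining task, and the main obstacle, is to verify that $Tj_n\circ\gamma^{(n)}$ agrees with the function $\gamma\colon[a,b]\to TM$ of Definition~\ref{defetadot} (applied inside~$M$) outside a further null set, since the two functions are built from unrelated partitions and chart systems. The plan is to fix a common refinement $a=u_0<u_1<\cdots<u_L=b$ of both partitions so that on each $[u_{l-1},u_l]$ some chart $\phi$ of $M$ and some chart $\psi$ of $M_n$ both have $\eta([u_{l-1},u_l])$ in their domain. The transition $\tau:=\phi\circ j_n\circ\psi^{-1}$ is a $C^1$-map between open subsets of Fr\'{e}chet spaces, and $\phi\circ\eta|_{[u_{l-1},u_l]}=\tau\circ(\psi\circ\eta|_{[u_{l-1},u_l]})$. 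Combining Lemma~\ref{chainpw} with the almost-everywhere pointwise derivatives supplied by Lemma~\ref{funda} on both sides, the expressions of $Tj_n(\gamma^{(n)}(t))$ and $\gamma(t)$ in the chart $\phi$ coincide for $\lambda_1$-almost every $t$; enlarging $A'$ by the corresponding null set finishes the proof.
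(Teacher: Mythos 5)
Your proof is correct and follows essentially the same route as the paper: the Fréchet case is handled by applying Lemma~\ref{funda} chartwise on a partition and uniting the exceptional null sets, and the (LF) case is reduced to some $M_n$ and then compared with the $M$-representative of $\dot{\eta}$ via transition maps, Lemma~\ref{chainpw} and further null sets. The only difference is organizational: the paper does the (LF) comparison in a single pass, choosing one partition with $M_n$-chart domains contained in $M$-chart domains, whereas you first settle the $M_n$-case and then compare on a common refinement, which amounts to the same argument.
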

\begin{proof}
In the first situation,
let $a=t_0<t_1<\cdots< t_m=b$
such that $\phi_i([t_{i-1},t_i])\sub U_i$
for a chart $\phi_i\colon U_i\to V_i\sub E$,
for each $i\in\{1,\ldots,m\}$.
Then $\eta_i:=\phi_i\circ \eta|_{[t_{i-1},t_i]}\in AC_{L^1}([t_{i-1},t_i],E)$
for $i\in \{1,\ldots,m\}$.
Write $\eta_i'=[\gamma_i]$ with $\gamma_i\in \cL^1([t_{i-1},t_i],E)$.
There is a Borel set $A_i\sub [t_{i-1},t_i]$
such that $\eta_i'(t)$ exists for all $t\in [t_{i-1},t_i]\setminus A_i$
and $\eta_i'(t)=\gamma_i(t)$
(see Lemma~\ref{funda}). There is a Borel set $B_i\sub [t_{i-1},t_i[$
of measure $0$ such that
$\gamma(t)=T(\phi_i^{-1})(\eta_i(t),\gamma_i(t))$
for all $t\in [t_{i-1},t_i[\;\setminus B_i$.
Then $A:=\{t_0,t_1,\ldots, t_m\}\cup \bigcup_{i=1}^m(A_i\cup B_i)$
has the required properties.\\[2.3mm]
In the second situation, let $\zeta\in AC_{L^1}([a,b],M)$.
By hypothesis, we have $\zeta\in AC_{L^1}([a,b], M_n)$ for some $n\in \N$;
write $\eta$ for $\zeta$, considered as a curve in $M_n$.
Using that~$M_n$ carries the topology induced by~$M$,
for suitable $a=t_0<t_1<\cdots<t_m=b$
we find charts $\phi_i\colon U_i\to V_i\sub E_n$ for $M_n$
and $\psi_i\colon X_i\to Y_i\sub E$ for~$M$ such that $U_i\sub X_i$
and $\eta([t_{i-1},t_i])\sub U_i$ for all $i\in \{1,\ldots,m\}$.
Let $\eta_i$, $\gamma_i$, $\gamma$ with $\dot{\eta}=[\gamma]$,
$A$, $A_i$ and $B_i$ be as in the first situation. 
Write $\dot{\zeta}=[\theta]$.
Set $\zeta_i:=\psi_i\circ\zeta|_{[t_{i-1},t_i]}$.
Then $\zeta_i=\tau_i\circ\eta_i$ with $\tau_i:=\psi_i\circ \phi_i^{-1}$
entails $\zeta_i'=[d\tau_i\circ (\eta_i,\gamma_i)]$,
whence there is a Borel set $C_i\sub [t_{i-1},t_i[$
of measure zero such that $\theta(t)=T\psi^{-1}_i(\zeta_i(t),d\tau_i(\eta_i(t),\gamma_i(t)))$ for all $t\in [t_{i-1},t_i[\;\setminus C_i$
and thus
\[
\theta(t)=T\psi_i^{-1}T\tau_i(\eta_i(t),\gamma_i(t))=Tj_nT\phi_i^{-1}(\eta_i(t),\gamma_i(t))
=Tj_n\gamma(t)
\]
for all $t\in [t_{i-1},t_i[\;\setminus (B_i\cup C_i)$.
Since $\zeta_i'(t)=d\tau_i(\eta_i(t),\eta_i'(t))$ for all
$t\in \;]t_{i-1},t_i[\;\setminus A_i$,
we also deduce that $\zeta'(t)$ exists for all
$t\in \;]t_{i-1},t_i[\;\setminus A_i$, and is given by
\[
\zeta'(t)=T\psi_i^{-1}(\zeta_i(t),\zeta_i'(t))=
T\psi_i^{-1}(\zeta_i(t),d\tau_i(\eta_i(t),\eta_i'(t)))=\theta(t)
\]
for $t\in \;]t_{i-1},t_i[\;\setminus (A_i\cup B_i\cup C_i)$.
Summing up, $\zeta'(t)$ exists for all $t\in [a,b]\setminus (A\cup C_1\cup\cdots\cup
C_m)$ and concides with $\theta(t)$ there.
\end{proof}
Let $M$ be a $\sigma$-compact finite-dimensional
smooth manifold. For $p\in M$, let $\ve_p\colon \Diff_c(M)\to M$
be the smooth map $\phi\mto \phi(p)$ (see, e.g., \cite{DIF}
for the smoothness).
Let $C^\infty_c(M,TM)$ be the set of all smoth maps
$X\colon M\to TM$ such that $\{p\in M\colon X(p)\not=0_{\pi_{TM}(X(p))}\}$
is relatively compact in~$M$.
For $\phi\in \Diff_c(M)$, let
\[
\Gamma_\phi:=\{X\in C^\infty_c(M,TM)\colon \pi_{TM}\circ X=\phi\},
\]
which is a vector space under pointwise operations.
For $\phi\in \Diff_c(M)$ and $[\gamma]\in T_\phi\Diff_c(M)$,
define
\[
\alpha_\phi([\gamma])\colon M\to TM,\quad p\mto T\ve_p([\gamma])=[\ve_p\circ\gamma],
\]
i.e., $\alpha_\phi([\gamma])=(T\ve_p([\gamma]))_{p\in M}$.
Given $\psi\in \Diff_c(M)$, let us write
\[
\rho_\psi\colon \Diff_c(M)\to \Diff_c(M),\quad \phi\mto \phi\circ \psi
\]
for right translation with~$\psi$.
\begin{la}\label{pinpointdr}
Let $\phi\in \Diff_c(M)$.
\begin{itemize}
\item[\rm(a)]
For each
$[\gamma]\in T_\phi\Diff_c(M)$,
we have that $\alpha_\phi([\gamma])\in \Gamma_\phi$.
\item[\rm(b)]
The map $\alpha_\phi\colon T_\phi\Diff_c(M)\to \Gamma_\phi$ is
an isomorphism of vector spaces.
\item[\rm(c)]
Let $\psi\in \Diff_c(M)$. For each $X\in \Gamma_\phi$, we have $X\circ\psi\in
\Gamma_{\phi\circ\psi}$.
The map
\[
R_\phi(\psi)\colon \Gamma_\phi\to \Gamma_{\phi\circ\psi},\quad X\mto X\circ \psi
\]
is an isomorphism of vector spaces such that
\[
R_\phi(\psi)\circ\alpha_\phi=\alpha_{\phi\circ \psi}\circ T_\phi \rho_\psi.
\]
\item[\rm(d)]
$\alpha_{\id_M}$ is the usual identification of $L(\Diff_c(M))=
T_{\id_M}\Diff_c(M)$ with $\cX_c(M)$, i.e., it coincides
with $d\Phi|_{L(\Diff_c(M))}$ where $\Phi$ is
one of the usual charts for $\Diff_c(M)$ with
$\Phi^{-1}\colon X\mto \exp_g \circ X$ \emph{(cf.\ \ref{bscsdf}).}
\end{itemize}
\end{la}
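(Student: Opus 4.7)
My plan is to prove the four parts in the order (d), (c), (a), (b), since the identification at the identity in (d) and the naturality in (c) give enough structure to deduce everything else by transport of structure under right translation.

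First, for (d), I would compute directly in the chart $\Phi$. Given $[\gamma]\in L(\Diff_c(M))$ with $\gamma(0)=\id_M$, after shrinking the interval we may assume $\gamma$ lands in $\cU$, so $X_t:=\Phi(\gamma(t))$ defines a $C^1$-curve in $\cX_c(M)$ with $X_0=0$. Pointwise, $\gamma(t)(p)=\Phi^{-1}(X_t)(p)=\exp_p(X_t(p))$, and since $d\exp_p|_{0_p}=\id_{T_pM}$ together with $X_0(p)=0$, the chain rule of Lemma~\ref{chainpw} gives
\[
\alpha_{\id_M}([\gamma])(p)=(\ev_p\circ\gamma)'(0)=\frac{d}{dt}\Big|_{t=0}X_t(p)=d\Phi([\gamma])(p),
\]
where the last equality uses continuity and linearity of the point evaluation $\cX_c(M)\to T_pM$. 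This proves (d) and, in particular, that $\alpha_{\id_M}$ is a linear isomorphism.

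For (c), the compatibility is a pointwise calculation: for $[\gamma]\in T_\phi\Diff_c(M)$ and $p\in M$,
\[
\alpha_{\phi\circ\psi}(T_\phi\rho_\psi([\gamma]))(p)=\frac{d}{dt}\Big|_{t=0}\gamma(t)(\psi(p))=\alpha_\phi([\gamma])(\psi(p))=(R_\phi(\psi)\alpha_\phi([\gamma]))(p).
\]
That $R_\phi(\psi)$ maps $\Gamma_\phi$ into $\Gamma_{\phi\circ\psi}$ follows because $\psi$ is a diffeomorphism: composition with $\psi$ preserves smoothness, $\pi_{TM}\circ(X\circ\psi)=\phi\circ\psi$, and the non-vanishing set is $\psi^{-1}$ of the original one, hence still relatively compact. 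Its inverse is $R_{\phi\circ\psi}(\psi^{-1})$, so $R_\phi(\psi)$ is a linear isomorphism.

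For (a) and (b), I would use the chart around $\phi$ obtained as $\Phi_\phi:=\Phi\circ\rho_{\phi^{-1}}$, which is a diffeomorphism from a neighbourhood of $\phi$ onto $\cV$. Writing $X_t:=\Phi_\phi(\gamma(t))$, we have $\gamma(t)=\Phi^{-1}(X_t)\circ\phi$, so $\gamma(t)(p)=\exp_{\phi(p)}(X_t(\phi(p)))$, and the same chain-rule argument as in (d) yields $\alpha_\phi([\gamma])=X_0'\circ\phi$ where $X_0':=d\Phi_\phi([\gamma])\in\cX_c(M)$. Thus $\alpha_\phi([\gamma])$ is a smooth $TM$-valued map over $\phi$ whose non-vanishing set equals $\phi^{-1}(\{X_0'\neq 0\})$, which is relatively compact; this gives (a). The explicit formula also displays $\alpha_\phi$ as the composition $R_{\id_M}(\phi)\circ\alpha_{\id_M}\circ(T_{\id_M}\rho_\phi)^{-1}\circ T_\phi(\text{id})$, which unwinds to the identity $\alpha_\phi\circ T_{\id_M}\rho_\phi=R_{\id_M}(\phi)\circ\alpha_{\id_M}$, the special case $(\phi,\psi)=(\id_M,\phi)$ of (c). Since each factor is a linear isomorphism (using (d) and the bijectivity of $R_{\id_M}(\phi)\colon\cX_c(M)\to\Gamma_\phi$ with inverse $R_\phi(\phi^{-1})$), (b) follows. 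The only step requiring any real care is verifying in (a) that the pointwise definition of $\alpha_\phi([\gamma])$ genuinely assembles to a smooth compactly supported object; this is handled by the chart computation above, exploiting that $\Phi^{-1}$ has the simple form $X\mapsto\exp_g\circ X$.
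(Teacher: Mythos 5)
Your argument is correct, and it rests on the same two pillars as the paper's proof: the computation of $\alpha_{\id_M}$ through the exponential chart $\Phi$, and the right-translation identity $\alpha_{\phi\circ\psi}\circ T_\phi\rho_\psi=R_\phi(\psi)\circ\alpha_\phi$ (which, as in the paper, is legitimately proved pointwise in $TM^M$ before (a) is known). The one genuine divergence is how (a) and (b) are reached: the paper never computes in a chart around a general $\phi$; instead it works with the images $V_\phi:=\im(\alpha_\phi)$, uses the surjectivity of $T_{\id_M}\rho_\phi$ and the bijection $R_{\id_M}(\phi)\colon\cX_c(M)\to\Gamma_\phi$ to conclude $V_\phi=\im\bigl(R_{\id_M}(\phi)\circ\alpha_{\id_M}\bigr)=\Gamma_\phi$, so smoothness and compact support of $\alpha_\phi([\gamma])$ come out indirectly. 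You instead compute directly in the chart $\Phi\circ\rho_{\phi^{-1}}$ around $\phi$, which yields the explicit formula $\alpha_\phi([\gamma])=d(\Phi\circ\rho_{\phi^{-1}})([\gamma])\circ\phi$; this gives (a) concretely and makes (b) an immediate composition of isomorphisms, at the cost of one more chart computation. Also for (d): the paper only evaluates $\alpha_{\id_M}$ on the special curves $t\mto\exp\circ(tX)$ and deduces $\alpha_{\id_M}=h^{-1}=d\Phi$ by an algebraic argument, whereas you verify $\alpha_{\id_M}=d\Phi$ on arbitrary representatives; both are fine. One cosmetic remark: since tangent vectors here may be represented by curves that are merely continuous and differentiable at $0$, your phrase ``$C^1$-curve in $\cX_c(M)$'' is slightly too strong, but your argument only uses differentiability at $0$ together with Lemma~\ref{chainpw} and the continuity and linearity of the point evaluations, so nothing is lost.
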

\begin{proof}
It is clear that $R_\phi(\psi)$ takes $\Gamma_\phi$ into $\Gamma_{\phi\circ\psi}$,
and is a linear map.
Moreover, $R_\phi(\id_M)=\id_{\Gamma_\phi}$ and
$R_{\phi\circ\psi}(\theta)\circ R_\phi(\psi)=R_\phi(\psi\circ\theta)$
if also $\theta\in \Diff_c(M)$.
We easily deduce that
\[
R_\phi(\psi)\colon \Gamma_\phi\to \Gamma_{\phi\circ\psi}
\]
is an isomorphism of vector spaces with inverse $R_{\phi\circ\psi}(\psi^{-1})$.
If $[\gamma]\in T_\phi\Diff_c(M)$,
then $T\ve_p([\gamma])\in T_{\ve_p(\phi)}M=T_{\phi(p)}M$
and thus $\pi_{TM}(T\ve_p([\gamma]))=\phi(p)$.
Hence $\pi_{TM}\circ\alpha_\phi([\gamma])=\phi$ and thus
\begin{equation}\label{almgamphi}
\pi_{TM}\circ \alpha_\phi([\gamma])=\phi.
\end{equation}
As we do not know yet that the map
$\alpha_\phi([\gamma])\colon M\to TM$ is smooth (nor compactly supported),
we cannot
conclude that $\alpha_\phi([\gamma])\in \Gamma_\phi$ yet.
Define
\[
V_\phi:=\im(\alpha_\phi)\sub TM^M.
\]
For $\psi\in \Diff_c(M)$ and $[\gamma]\in T_\phi\Diff_c(M)$, we have
\[
\alpha_{\phi\circ\psi}(T\rho_\psi([\gamma]))
=\alpha_{\phi\circ\psi}([t\mto \gamma(t)\circ \psi])
=(p\mto [t\mto \gamma(t)(\psi(p))])
=\alpha_\phi([\gamma])(\psi(p))
\]
and thus
\begin{equation}\label{praecc}
(\alpha_{\phi\circ\psi}\circ T\rho_\psi)([\gamma])= \alpha_\phi([\gamma])\circ \psi.
\end{equation}
As a consequence,
\[
f\circ \psi\in V_{\phi\circ\psi}\quad\mbox{for all $f\in V_\phi$ and $\psi\in\Diff_c(M)$,}
\]
enabling us to define a map
\[
r_\phi(\psi)\colon V_\phi\to V_{\phi\circ\psi},\quad f\mto f\circ \psi.
\]
Note that $r_\phi(\id_M)=\id_{V_\phi}$ and
$r_{\phi\circ\psi}(\theta)\circ r_\phi(\psi)=r_\phi(\psi\circ\theta)$
for each $\theta\in \Diff_c(M)$.
We easily deduce that
$r_\phi(\psi)$ is an isomorphism of vector spaces
for all $\phi,\psi\in \Diff_c(M)$
(with inverse $r_{\phi\circ\psi}(\psi^{-1})$).
It is clear that
\[
W_\phi:=\{f\in TM^M\colon \pi_{TM}\circ f=\phi\}
\]
is a vector space under the pointwise operations.
Since each of the maps $T_p\ve_p\colon T_\phi\Diff_c(M)\to T_{\phi(p)}M$
is linear, we deduce that $\alpha_\phi$ is linear
as a map to $W_\phi$. In particular, $V_\phi$
is a vector subspace of $W_\phi$ and we can consider $\alpha_\phi$
as a \emph{linear} surjective map
\[
\alpha_\phi\colon T_\phi\Diff_c(M)\to V_\phi.
\]
Fix a smooth Riemannian metric $g$ on~$M$, with Riemannian exponential function
$\exp\colon \cD\to M$ on an open neighbourhood
$\cD\sub TM$ of the zero-section. Consider the map
\[
h\colon \cX_c(M)\to T_{\id_M}\Diff_c(M),\quad X\mto [t\mto \exp\circ (tX)].
\]
Then $h=T\Phi^{-1}(0,\sbull)$ for a typical
chart $\Phi$ of $\Diff_c(M)$ around $\id_M$
such that $\Phi(\id_M)=0$,
with $\Phi^{-1}\colon X \mto \exp\circ X$.
Now
\[
(\alpha_{\id_M}\circ h)(X)=([t\mto \exp(tX(p))])_{p\in M}=X,
\]
using that $c\colon t\mto \exp(tX(p))$ is the geodesic starting
(for $t=0$) at $p$ with velocity $[t\mto c(t)]=\dot{c}(0)=X(p)$.
Since $h$ is an isomorphism of topological vector space,
it is surjective and hence the injectivity of
\begin{equation}\label{givesgive}
X\mto (\alpha_{\id_M}\circ h)(X)=X
\end{equation}
entails that $\alpha_{\id_M}$ is injective.
Moreover, since $h$ is surjective, we deduce from (\ref{givesgive})
that
\[
V_{\id_M}=\im (\alpha_{\id_M})=\im(\alpha_{\id_M}\circ h)=\cX_c(M)=\Gamma_{\id_M}.
\]
Thus
\[
\alpha_{\id_M}\colon T_{\id_M}\Diff_c(M)\to\Gamma_{\id_M}
\]
is an isomorphism of vector spaces
and $\alpha_{\id_M}=h^{-1}=d\Phi|_{L(\Diff_c(M))}$,
establishing~(d).
Now
\begin{eqnarray*}
V_\phi=\im(\alpha_\phi)&=&\im(\alpha_\phi\circ T_{\id_M}(\rho_\phi))
=\im(r_{\id_M}(\phi)\circ \alpha_{\id_M})\\
&=&\im(R_{\id_M}(\phi)\circ \alpha_{\id_M})=\im(R_{\id_M}(\phi))=\Gamma_\phi.
\end{eqnarray*}
Notably, $\im(\alpha_\phi)=V_\phi\sub \Gamma_\phi$ and so~(a)
holds. Further, $\alpha_\phi\colon T_\phi\Diff_c(M)\to V_\phi$ $=\Gamma_\phi$
is an isomorphism, establishing~(b).
By (\ref{praecc}),
we have~(c).
\end{proof}
\begin{rem}\label{mkeasia}
By Lemma~\ref{pinpointdr}\,(b),
we can identify $T_\phi\Diff_c(M)$
with $\Gamma_\phi$ by means of the isomorphism $\alpha_\phi$
(and we can give $\Gamma_\phi$ the unique locally convex vector topology
making $\alpha_\phi$ an isomorphism of topological vector spaces).
By Lemma~\ref{pinpointdr}\,(c),
the tangent map $T_\phi \rho_\psi$ of right translation with $\psi$ on $\Diff_c(M)$
corresponds to the right translation
$R_\phi(\psi)\colon \Gamma_\phi\to \Gamma_{\phi\circ\psi}$.
\end{rem}
{\bf Proof of Proposition~\ref{handsonEv}.}
Write $M=\bigcup_{j\in \N}K_j$ with compact subsets
$K_j\sub M$ such that $K_j$ is contained in the interior
of $K_{j+1}$, for each $j\in \N$.
We know that the Fr\'{e}chet-Lie group
$\Diff_{K_j}(M)$
is a submanifold of $\Diff_c(M)$
for each $j\in \N$, and
\begin{equation}\label{uiscoreg}
\Diff_c(M)=\bigcup_{j\in \N}\Diff_{K_j}(M).
\end{equation}
If $\eta\in AC_{L^1}([0,1],\Diff_c(M))$
and $\eta'=[\theta]$,
then $\eta([0,1])$ is a compact subset
of $\Diff_c(M)$ and hence
contained in $\Diff_{K_j}(M)$ for some $j\in \N$,
by the compact regularity of the union~(\ref{uiscoreg})
(which follows from \cite[Corollary 3.6]{HMO} and \cite[Remark~5.2]{JFA}).
Hence $\eta\in AC_{L^1}([0,1],\Diff_{K_j}(M))$
(see Lemma~\ref{intsubmf} and Remark~\ref{L1inpart}).
By Lemma~\ref{havedersae}, we find a Borel subset $B\sub [0,1]$
of measure $\lambda_1(B)=0$ such that
$\eta$ is differentiable at each $t\in [0,1]\setminus B$,
with $\dot{\eta}(t)=\theta(t)$.
Assume that $\eta(0)=\id_M$.\\[2.3mm]
If $\eta=\Evol^r([\gamma])$, i.e., $\eta$
is a Carath\'{e}odory solution to
\[
y'(t)=\gamma(t).y(t)
\]
(with multiplication in the tangent Lie group $T\Diff_c(M)$),
then
\[
\dot{\eta}(t)=\gamma(t).\eta(t)
\]
for all $t\in [0,1]\setminus A$, after increasing $A$
if necessary.
Note that we identify $L(\Diff_c(M))$ with $\cX_c(M)$
in Proposition~\ref{handsonEv} by means of $\alpha_{\id_M}$
(with notation as in Lemma~\ref{pinpointdr}.
Making this identification, $\alpha_{\id_M}$ turns
into an identity map.
Now
\begin{equation}\label{bizzzz}
([s\mto\eta(t+s)(p)])_{p\in M}=\alpha_{\eta(t)}(\dot{\eta}(t))
=\alpha_{\id_M}(\gamma(t))\circ\eta(t)=\gamma(t)\circ \eta(t)
\end{equation}
for each $t\in [0,1]\setminus A$,
exploiting Lemma~\ref{pinpointdr}\,(c).
For each $p\in M$, we have $\eta_p:=\ve_p\circ \eta\in AC_{L^1}([0,1],M)$.
Moreover,
$\eta_p'(t)=(\ve_p\circ \eta)'(t)$ exists for all $t\in [0,1]\setminus A$
and is given by
\[
\eta_p'(t)=(\ve_p\circ \eta)'(t)=[s\mto \eta(t+s)(p)]=\gamma(t)(\eta(t)(p))
=\gamma(t)(\eta_p(t))
\]
(see (\ref{bizzzz}).
Hence $\eta_p$ is a Carath\'{e}odory solution to
\[
y'(t)=\gamma(t)(y(t))=f(t,y(t)),\quad y(0)=p.
\]
Thus $f$ admits a global flow for initial time $t_0=0$
and $\eta(t)(p)=\eta_p(t)=\Phi_{t,0}^f(p)$
for all $p\in M$ and $t\in [0,1]$, i.e., $\eta(t)=\Phi^f_{t,t_0}$.\\[2.3mm]
Conversely, assume that $f$ admits a global flow
for initial time $t_0:=0$ and $\eta(t)=\Phi^f_{t,t_0}$
for each $t\in [0,1]$.
As we assume that the finite-dimensional smooth manifold~$M$
is $\sigma$-compact, there is a countable dense subset
$D\sub M$. For each $p\in D$, the curve
\[
\eta_p:=\ve\circ\eta\in AC_{L^1}([0,1],M)
\]
is a Carath\'{e}odory solution to
\[
y'(t)=\gamma(t)(y(t))=f(t,y(t)),\quad y(0)=p.
\]
Hence, there is a Borel set $A_p\sub [0,1]$ with $\lambda_1(D_p)=0$
such that $\eta_p$ is differentiable at each
$t\in [0,1]\setminus A_p$ and
\begin{equation}\label{nochma0}
\eta_p'(t)=f(t,\eta_p(t))=\gamma(t)(\eta_p(t)).
\end{equation}
Now $A:=B\cup\bigcup_{p\in D} A_p$ is a Borel set with $\lambda_1(A)=0$.
For each $t\in [0,1]\setminus A$, we know that $\eta$
is differentiable at~$t$.
Thus
\[
\alpha_{\eta(t)}(\eta'(t))=([t\mto \eta(t)(p)])_{p\in M}\in \Gamma_{\eta(t)}
\]
can be formed and is a smooth (and hence continuous)
function $M\to TM$. Also $\gamma(t)\circ\eta(t)\colon M\to TM$
is continuous. Therefore
\begin{equation}\label{havestb}
\alpha_{\eta(t)}(\dot{\eta}(t))=\gamma(t)\circ \eta(t)
\end{equation}
will hold if we can show that
\[
(\alpha_{\eta(t)}(\eta'(t)))(p)=\gamma(t)(\eta(t)(p))
\]
for all $p\in D$, which can be rewritten as
\begin{equation}\label{nochma}
\dot{\eta}_p(t)=\gamma(\eta_p(t)),
\end{equation}
noting that
$[t\mto \eta(t)(p)]=[t\mto \eta_p(t)]=\dot{\eta}_p(t)$.
Since (\ref{nochma}) holds by (\ref{nochma0}),
we have established~(\ref{havestb}).
Using Lemma~\ref{pinpointdr}\,(c),
we can rewrite (\ref{havestb}) as
\[
\dot{\eta}(t)=\gamma(t).\eta(t).
\]
Thus $\theta(t)=\dot{\eta}(t)=\gamma(t).\eta(t)$ for all $[0,1]\setminus A$,
entailing that $\eta$ is a Carath\'{e}odory solution
to $y'(t)=f(t,y(t))$, $y(0)=\id_M$
and thus $\eta=\Evol^r([\gamma])$.\,\Punkt\vspace{-1.5mm}
{\small
Helge Gl\"{o}ckner, Universit\"{a}t Paderborn, Institut f\"{u}r Mathematik,\\
Warburger Str.\ 100, 33098 Paderborn, Germany.
\,\,Email: {\tt  glockner\at{}math.upb.de}}
\end{document}